\DeclareFontFamily{U}{medshuffle}{}
\DeclareFontShape{U}{medshuffle}{m}{n}{
  <5-8> s*[1.6] shuffle7
  <8->  s*[1.6] shuffle10
}{}
\DeclareSymbolFont{MedShuffle}{U}{medshuffle}{m}{n}
\DeclareMathSymbol\medshuffle{\mathop}{MedShuffle}{"001}
\DeclareMathOperator{\cone}{cone}
\DeclareMathOperator{\Ext}{Ext}
\DeclareMathOperator{\Inj}{Inj}
\DeclareMathOperator{\IndInj}{IndInj}
\DeclareMathOperator{\Sat}{Sat}
\DeclareMathOperator{\Mod}{\mathbf{Mod}}
\DeclareMathOperator{\Rep}{\mathbf{Rep}}
\DeclareMathOperator{\REP}{\mathbf{REP}}
\DeclareMathOperator{\EXT}{EXT}
\DeclareMathOperator{\uRep}{\ul{\Rep}}
\DeclareMathOperator{\uExt}{\ul{Ext}}
\DeclareMathOperator{\ini}{in}
\DeclareMathOperator{\Ch}{\mathbf{Ch}}
\DeclareMathOperator{\lev}{\mathrm{lev}}
\DeclareMathOperator{\ulev}{\ul{\lev}}
\newcommand{\cor}{\mathrm{cor}}
\newcommand{\rss}{r\text{-}\mathrm{ss}}
\newcommand{\smlim}{\varprojlim\!{}'\,}
\renewcommand{\Vec}{\mathbf{Vec}}
\newcommand{\GrVec}{\mathbf{GrVec}}
\newcommand{\umu}{\ul{\smash{\mu}}}
\newcommand{\FI}{\mathbf{FI}}
\newcommand{\OI}{\mathbf{OI}}
\newcommand{\ldb}{\{\!\{}
\newcommand{\rdb}{\}\!\}}
\newcommand{\fgen}{\mathrm{fg}}
\newcommand{\sm}{\mathrm{sm}}
\newcommand{\lf}{\mathrm{lf}}
\let\whitetri a
\let\blacktri b
\newcommand{\stacks}[1]{\cite[\href{http://stacks.math.columbia.edu/tag/#1}{Tag~#1}]{stacks}}
\title{The representation theory of the increasing monoid}
\author{Sema G\"unt\"urk\"un}
\address{Department of Mathematics, University of Michigan, Ann Arbor, MI   \newline  (current address: Department of Mathematics, University of Connecticut, Storrs, CT)}
\email{\href{mailto:gunturku@umich.edu}{gunturku@umich.edu}}
\urladdr{\url{http://www-personal.umich.edu/~gunturku/}}
\author{Andrew Snowden}
\address{Department of Mathematics, University of Michigan, Ann Arbor, MI}
\email{\href{mailto:asnowden@umich.edu}{asnowden@umich.edu}}
\urladdr{\url{http://www-personal.umich.edu/~asnowden/}}
\thanks{AS was supported by NSF DMS-1303082 and DMS-1453893 and a Sloan Fellowship.}
\date{\today}
\subjclass[2010]{%
20C32, 
13A99, 
13P10, 
16S37, 
18G30. 
}
\begin{document}

\begin{abstract}
We study the representation theory of the increasing monoid. Our results provide a fairly comprehensive picture of the representation category: for example, we describe the Grothendieck group (including the effective cone), classify injective objects, establish properties of injective and projective resolutions, construct a derived auto-duality, and so on. Our work is motivated by numerous connections of this theory to other areas, such as representation stability, commutative algebra, simplicial theory, and shuffle algebras.
\end{abstract}

\maketitle
\tableofcontents

\newpage
\section{Introduction}

The {\bf increasing monoid} $\cI$ is the monoid of all (tame\footnote{See \S \ref{ss:inc} for the definition of tame.}) order-preserving injections from the set $\{1,2,\ldots\}$ of positive integers to itself. The purpose of this paper is to investigate the representation theory of $\cI$. Our results, outlined in \S \ref{ss:results} below, give a fairly comprehensive picture of the structure of representations.

\subsection{Motivation}

We begin by explaining why the representation theory of $\cI$ is worthy of study, and how it connects to some other topics.

\subsubsection{Symmetric ideals}

Cohen \cite{Cohen,cohen2} proved that ideals in the infinite variable polynomial ring $R=\bk[x_1,x_2,\ldots]$ that are stable under the infinite symmetric group $\fS_{\infty}$ satisfy the ascending chain condition---that is, $R$ is $\fS_{\infty}$-noetherian---and used this theorem to prove a finiteness property of the variety of metabelian groups. This theorem has received intense interest in the last decade (see, e.g., \cite{AschenbrennerHillar, HillarSullivant, KLS, LNNR, NR, NR2}) as researchers from disparate areas have realized its utility in establishing stabilization phenomena; see \cite{Draisma} for a good introduction. In all treatments of $\fS_{\infty}$-ideals, the increasing monoid has played a key role: it respects the natural monomial order on $R$, and thus allows one to carry out equivariant Gr\"obner theory, which is crucial for establishing finiteness results. The theory developed in this paper should therefore shed some new light on the theory of $\fS_{\infty}$-ideals. In fact, we have already used the results of this paper to prove a new result in this direction \cite{inchilb}.

\subsubsection{$\OI$-modules}

A related topic, which has also seen a surge of interest in recent years, is the representation theory of combinatorial categories, especially the categories $\FI$ and $\OI$; see, for example, \cite{fimodule, symc1, catgb}. The category of $\OI$-modules is (essentially) equivalent to the category of graded $\cI$-modules studied in this paper (see \S \ref{ss:oiequiv}). Thus the results of this paper establish a structure theory for $\OI$-modules. In fact, this paper can be viewed as an ``$\OI$ analog'' of the paper \cite{symc1}, which establishes structural results for $\FI$-modules. As a consequence, the results herein should be of use wherever $\OI$-modules are employed. For example, recent work of Putman, Sam, and the second author \cite{fiq} shows that the homology of unipotent groups admits the structure of a finitely generated $\OI$-module (in certain cases). It would be interesting to apply the results of this paper to refine the results of loc.\ cit.; see \S \ref{ques:unipotent} for more details.

\subsubsection{Semi-simplicial vector spaces}

The celebrated Dold--Kan theorem shows that the category of simplicial vector spaces is equivalent to the category of chain complexes of vector spaces supported in non-negative degrees. Chain complexes of vector spaces have a simple structure: there are two indecomposable objects up to shift. Thus, via Dold--Kan, simplicial vector spaces are also rather simple, and one can establish a complete structure theory for them.

Semi-simplicial vector spaces, on the other hand, are far more complicated. However, the category of semi-simplicial vector spaces is (essentially) equivalent to the category of graded $\cI$-modules (see \S \ref{ss:ssequiv}). Thus the results of this paper can be seen as a structure theory for semi-simplicial vector spaces.

\subsubsection{Commutative algebra}

There are two interesting connections between commutative algebra and the theory of $\cI$-modules. First, many definitions, constructions, and results for $\cI$-modules are direct analogues of concepts in commutative algebra. For example, we define versions of Hilbert series, Krull dimension, Tor, Betti tables, the Koszul complex, and local cohomology for $\cI$-modules, and establish familiar relationships such as that between Krull dimension and the degree of the pole of the Hilbert series at $t=1$. The richness of commutative algebra therefore hints that there may be considerable depth to the theory of $\cI$-modules. In this direction, we formulate a version of the Boij--S\"oderberg problem for $\cI$-modules (\S \ref{ques:boij}).

Second, a direct overlap between commutative algebra and $\cI$-modules is found in monomial ideals. Precisely, we introduce an $\cI$-module $\bA^n$, called the $n$th principal module, which is isomorphic, as a vector space, to the $n$-variable polynomial ring. We show that, under this linear isomorphism, monomial submodules of $\bA^n$ correspond bijectively to monomial ideals of the polynomial ring (Proposition~\ref{prop:prinpoly}.). An interesting problem is to understand how properties of monomial ideals relate to the corresponding properties of monomial $\cI$-modules (\S \ref{ques:monomial}).

\subsubsection{Shuffle algebras}

An {\bf $(n,m)$-shuffle} is a bijection $[n] \amalg [m] \to [n+m]$ that is order-preserving on each factor. A {\bf shuffle algebra} is a graded vector space $A = \bigoplus_{n \ge 0} A_n$ equipped with a multiplication map $\ast_{\sigma} \colon A_n \times A_m \to A_{n+m}$ for each $(n,m)$-shuffle $\sigma$, satisfying some natural axioms (see \cite[\S 2]{dotsenko}). Commutative shuffle algebras are the ``ordered version'' of twisted commutative algebras, which have played a prominent role in representation stability. We show that the category of graded $\cI$-modules is (essentially) equivalent to the category of modules over the commutative shuffle algebra $A$ freely generated by one element in degree one (\S \ref{ss:shuffle}). Thus the results of this paper can be viewed as a structure theory for $A$-modules. While shuffle algebras have cropped up in several places \cite{dotsenko,laudone,ronco,sam}, this paper represents the first effort to understand their modules in any detail (as far as we know). As this paper is entirely devoted to the module theory of $A$, which is the simplest shuffle algebra, there is likely much more to discover in this direction.

\subsection{Fundamental definitions} \label{ss:fund}

Before stating our results, we must first introduce a few key concepts. Fix a field $\bk$. By an $\cI$-module, we will mean a representation of $\cI$ over $\bk$, or, equivalently, a left module over the monoid algebra $\bk[\cI]$. As general $\cI$-modules are rather wild, we focus on two more reasonable classes:
\begin{itemize}
\item Let $\cI_{>n}$ be the submonoid of $\cI$ consisting of those elements that fix each of the numbers $1, \ldots, n$. We say that an $\cI$-module $M$ is {\bf smooth} if every vector $x \in M$ is fixed by some $\cI_{>n}$, where $n$ depends on $x$. We let $\Rep(\cI)$ denote the category of smooth representations.
\item A {\bf graded} $\cI$-module is an $\cI$-module $M$ equipped with a grading $M=\bigoplus_{n \ge 0} M_n$ such that $\sigma \in \cI$ maps $M_n$ to $M_{\sigma(n)}$, and acts by the identity on $M_n$ if $\sigma(n)=n$. (We put $\sigma(0)=0$ for all $\sigma \in \cI$.) We let $\uRep(\cI)$ denote the category of graded $\cI$-modules.
\end{itemize}
This paper is concerned with determining the structure of these two categories.

There are three families of $\cI$-modules that will play a central role in our work:
\begin{itemize}
\item For $r \in \bN$, let $\ul{\bB}^r$ be the one-dimensional graded $\cI$-module that is concentrated in degree $r$, and where $\sigma$ acts by the identity if $\sigma(r)=r$ and by~0 otherwise. These are exactly the simple objects in the category $\uRep(\cI)$.
\item For $r \in \bN$, let $\ul{\bA}^r$ be the graded $\cI$-module that has for a basis elements $e_{i_1,\ldots,i_r}$ where $1 \le i_1 < \cdots < i_r$ are integers, and $\sigma \in \cI$ acts in the obvious way, i.e., $\sigma e_{i_1,\ldots,i_r}=e_{\sigma(i_1),\ldots,\sigma(i_r)}$. The element $e_{i_1,\ldots,i_r}$ is assigned degree $i_r$. We call $\ul{\bA}^r$ the $r$th {\bf principal module}. The principal modules are exactly the indecomposable projective objects in $\uRep(\cI)$.
\item A {\bf constraint word} is a word $\lambda=\lambda_1 \ldots \lambda_r$ in the alphabet $\{ \whitetri, \blacktri \}$. We define a graded $\cI$-module $\ul{\bE}^{\lambda}$, called the {\bf standard module} associated to $\lambda$, as follows. First, $\ul{\bE}^{\lambda}$ has for a basis elements of the form $e_{i_1,\ldots,i_r}$ where $1 \le i_1 < \cdots <i_r$, subject to the following constraint: if $\lambda_k=\blacktri$ then $i_k-i_{k-1}=1$ (with the convention $i_0=0$). We define $\sigma e_{i_1,\ldots,i_r}$ to be $e_{\sigma(i_1),\ldots,\sigma(i_r)}$ if this is a basis element of $\ul{\bE}^{\lambda}$, and~0 otherwise. The $\ul{\bE}^{\lambda}$ interpolate between the $\ul{\bB}^r$ and the $\ul{\bA}^r$: indeed, if $\lambda=\blacktri^r$ then $\ul{\bE}^{\lambda}=\ul{\bB}^r$, while if $\lambda=\whitetri^r$ then $\ul{\bE}^{\lambda}=\ul{\bA}^r$.
\end{itemize}
We let $\bB^r$, $\bA^r$, and $\bE^{\lambda}$ be the modules obtained from $\ul{\bB}^r$, $\ul{\bA}^r$, and $\ul{\bE}^{\lambda}$ by forgetting the grading. These are all smooth. It is still true that the $\bB^r$ are the simple objects of $\Rep(\cI)$, however, it is no longer true that the $\bA^r$ are projective; in fact, $\Rep(\cI)$ has no non-zero projective objects.

\subsection{Main results} \label{ss:results}

We now describe some of our main results in the graded case. Many (but not all) of these results have analogs in the smooth case. We note that all results hold over any coefficient field $\bk$.

\subsubsection{The Grothendieck group}
The classes of the standard modules $\ul{\bE}^{\lambda}$ form a basis for the Grothendieck group $\ul{\rK}(\cI)$ of the category $\uRep(\cI)^{\fgen}$ of finitely generated graded $\cI$-modules (Theorem~\ref{thm:groth}). Moreover, $\uRep(\cI)$ admits a (non-symmetric) monoidal operation $\odot$, called the {\bf concatenation product}, that endows $\ul{\rK}(\cI)$ with the structure of a non-commutative ring. As such, it is isomorphic to the non-commutative polynomial $\bZ\{a,b\}$ in two variables (Proposition~\ref{prop:grothring}). The isomorphism $\bZ\{a,b\} \to \ul{\rK}(\cI)$ takes $a$ to $[\ul{\bA}^1]$ and $b$ to $[\ul{\bB}^1]$, and a general word $\lambda$ to $[\ul{\bE}^{\lambda}]$. We also determine the effective cone in $\ul{\rK}(\cI)$ (\S \ref{ss:effective}).

\subsubsection{Generators for the derived category}
The standard modules $\ul{\bE}^{\lambda}$ generate $\rD^b_{\fgen}(\uRep(\cI))$ as a triangulated category (Theorem~\ref{thm:dergen}). This implies that the $[\ul{\bE}^{\lambda}]$ span $\ul{\rK}(\cI)$, but is far stronger (and more useful): for example, it typically allows one to prove finiteness statements about a derived functor by simply computing what the functor does to standard modules.

\subsubsection{Hilbert series}
Let $M$ be a finitely generated graded $\cI$-module. The Hilbert series of $M$ is defined to be
\begin{displaymath}
\ul{\rH}_M(t) = \sum_{n \ge 0} \dim(M_n) t^n.
\end{displaymath}
It was shown in \cite[\S 7.1]{catgb}, using the language of $\OI$-modules, that this is a rational function. We take this result a step further. We first observe that $\dim(M_n)$ can be viewed as the multiplicity of the simple object $\ul{\bB}^n$ in $M$. We extend this by defining an invariant $\umu_{\lambda}(M)$ that is, roughly speaking, the multiplicity of $\ul{\bE}^{\lambda}$ in $M$ (\S \ref{ss:highermult}). We then define the \emph{non-commutative Hilbert series} of $M$ to be
\begin{displaymath}
\ul{\rG}_M = \sum_{\lambda} \umu_{\lambda}(M) \lambda,
\end{displaymath}
where the sum is over all words $\lambda$ in $a$ and $b$. We regard this as a non-commutative power series in the variables $a$ and $b$. We prove that it is rational (Theorem~\ref{thm:noncommhilb}).

\subsubsection{Classification of injective modules}
We explicitly construct the injective envelope $\ul{\bI}^{\lambda}$ of $\ul{\bE}^{\lambda}$, and find that it is finitely generated. We show that every indecomposable injective object of $\uRep(\cI)$ is isomorphic to $\ul{\bI}^{\lambda}$ for some $\lambda$. When $\lambda=\whitetri^r$, we find that $\ul{\bI}^{\lambda}=\ul{\bE}^{\lambda}=\ul{\bA}^r$. Thus the principal objects $\ul{\bA}^r$ are injective, and so all projective objects of $\uRep(\cI)$ are injective. One interesting feature of $\ul{\bI}^{\lambda}$ is that it admits a standard filtration, that is, a finite length filtration where the graded pieces are standard modules. (See \S \ref{s:injectives} for these claims.) 

\subsubsection{Injective resolutions}
Every finitely generated graded $\cI$-module $M$ has finite injective dimension (Theorem~\ref{thm:injres}). In fact, $M$ admits an injective resolution of the form $M \to I^{\bullet}$ where each $I^n$ is a finite sum of indecomposable injectives, and $I^n=0$ for $n \gg 0$. As a consequence, we find that $\ul{\rK}(\cI)$ admits a canonical pairing, namely the Euler characteristic of $\Ext^{\bullet}(-,-)$. We give a recurrence that allows one to effectively compute this pairing (\S \ref{ss:Fseries}).

\subsubsection{Projective resolutions}
Let $M$ be a finitely generated graded $\cI$-module. Just as in commutative algebra, one can speak of the minimal projective resolution of $M$ and its linear strands. We show that only finitely many linear strands are non-zero; in other words, the (Castelnuovo--Mumford) regularity of $M$ is finite. Furthermore, we show that each linear strand (after some simple transformation) canonically admits the structure of a finitely generated graded $\cI$-module. Thus the minimal resolution of $M$ exhibits strong finiteness properties. We note, however, that if $M$ is not projective then it has infinite projective dimension.

We define the graded Betti table $\beta(M)$ of $M$ in \S \ref{ss:betti} just as in commutative algebra: the $i$th row records the number of generators appearing in the $i$th linear strand of various degrees. The finiteness result for regularity ensures that $\beta(M)$ has only finitely many non-zero rows. The finite generation result for each linear strand implies that for each $i$, the function $j \mapsto \beta(M)_{i,j}$ is eventually a polynomial in $j$. (See Theorem~\ref{thm:betti} for these statements.) An interesting open problem is to find an analog of Boij--S\"oderberg theory in this setting to describe the cone of Betti tables; see \S \ref{ques:boij}.

\subsubsection{Koszul duality}
We construct a canonical equivalence $\cD \colon \rD^b_{\fgen}(\uRep(\cI))^{\op} \to \rD^b_{\fgen}(\uRep(\cI))$ that squares to the identity (Corollary~\ref{cor:Dfg}). This can be seen as a version of Koszul duality. The existence of this functor on the full derived category $\rD(\uRep(\cI))$ is fairly formal. The fact that it preserves the bounded finitely generated subcategory is a much deeper result; in fact, it is essentially a reformulation of the properties of projective resolutions mentioned above. The duality functor induces an involution of the ring $\ul{\rK}(\cD) \cong \bZ\{a,b\}$ that takes $a$ to $-b$ and $b$ to $-a$.

\subsubsection{Level categories}
We define a notion of {\bf level} for graded $\cI$-modules. It turns out that the level of a module is equal to its Krull--Gabriel dimension, though that is a non-trivial result (Proposition~\ref{prop:krull}). We let $\uRep(\cI)_{\le r}$ be the full subcategory of $\uRep(\cI)$ spanned by the objects of level $\le r$. These categories filter $\uRep(\cI)$; we refer to this as the {\bf level filtration}. We completely determine the structure of the graded pieces $\uRep(\cI)_r=\uRep(\cI)_{\le r}/\uRep(\cI)_{\le r-1}$. Precisely, we show that $\uRep(\cI)_r$ is equivalent to the category of multi-graded $\cI^{r+1}$-modules that are locally of finite length; equivalently, this can be described as the $(r+1)$st tensor power of the category $\uRep(\cI)^{\lf}$. (See Corollary~\ref{cor:levstruc}.) We also show that $\rD^b_{\fgen}(\uRep(\cI))$ admits a semi-orthogonal decomposition where the pieces are $\{\rD^b_{\fgen}(\uRep(\cI)_r)\}_{r \ge 0}$ (Theorem~\ref{thm:semiorth}).

\subsection{Open problems} \label{ss:problems}

While our work gives a fairly good picture of the structure of $\cI$-modules, it also suggests a number of further avenues of inquiry. We highlight a few potentially interesting problems here.

\subsubsection{The Boij--S\"oderberg problem} \label{ques:boij}
As mentioned, we show that the Betti table $\beta(M)$ of a finitely generated graded $\cI$-module is reasonably well-behaved: it has finitely many non-zero rows and each row is eventually a polynomial function. It therefore seems reasonable to consider the Boij--S\"oderberg problem in this context: can one describe the cone of all Betti tables?

We have one interesting observation to offer regarding this problem. In classical Boij--S\"oderberg theory, the pure resolutions constructed by Eisenbud--Fl\o ystad--Weyman \cite{efw} play an important role (in characteristic~0). We note that these complexes have manifestations in $\uRep(\cI)$: indeed, the EFW complex is built out of Schur functors, and thus can be transformed, via Schur--Weyl duality, to a complex of $\FI$-modules; one can then restrict to $\OI$ and apply the equivalence between $\OI$-modules and $\uRep(\cI)$. This yields a large class of pure resolutions in $\uRep(\cI)$.

\subsubsection{Regularity bounds} \label{ques:reg}
For a graded $\cI$-module $M$, let $t_i(M)$ be the maximal non-zero degree occurring in the $i$th Tor group $\rL_i \cT(M)$, as defined in \S \ref{ss:koszul}. The regularity $\rho(M)$ is defined as the maximum value of $t_i(M)-i$ over $i \ge 0$. Alternatively, $\rho(M)$ is the index of the final non-zero row in the Betti table $\beta(M)$. As mentioned, we show that $\rho(M)$ is finite when $M$ is finitely generated.

Analogous results are known in the setting of $\FI$-modules and $\FI_d$-modules. In fact, even stronger results are known there. Church and Ellenberg \cite{ce} showed that the regularity of an $\FI$-module can be bounded in terms of just $t_0(M)$ and $t_1(M)$, while Sam and the second author \cite{tcareg} proved that the regularity of an $\FI_d$-module can be bounded in terms of $t_0(M), \ldots, t_n(M)$ for $n \approx \tfrac{1}{4} d^2$.

Given these results, it is natural to wonder if such an improvement can exist for graded $\cI$-modules. We suspect that there may be such a result in the following form: given $r \ge 0$, there exists $n(r)$ such that $\rho(M)$ can be bounded in terms of $t_0(M), \ldots, t_{n(r)}(M)$ for all $M \in \uRep(\cI)_{\le r}$. Recall that $\uRep(\cI)_{\le r}$ consists of those modules of level (or, equivalently, Krull--Gabriel dimension) at most $r$. We have not seriously attempted to prove this statement, however.

\subsubsection{Other examples of $\cI$-modules} \label{ques:monomial}
We introduce many invariants of $\cI$-modules, such as local cohomology, (non-commutative) Hilbert series, Betti tables, and so on. However, we have only computed these invariants on standard modules, and the results are typically far too simple to be representative of the general situation. It would therefore be interesting to compute these invariants on more interesting $\cI$-modules. Here are a few potentially interesting sources of examples:
\begin{itemize}
\item As mentioned, we establish a bijection between monomial submodules of $\bA^r$ and monomial ideals in $\bk[x_1, \ldots, x_r]$. Thus each monomial ideal gives an example of an $\cI$-module.
\item Suppose $\bk$ has characteristic~0. Let $L_{\lambda}$ be the $\FI$-module that in degree $n \ge \vert \lambda \vert$ is the irreducible corresponding to $\lambda[n]=(n-\vert \lambda \vert, \lambda_1, \lambda_2, \ldots)$, and that vanishes in lower degrees. We can restrict $L_{\lambda}$ to $\OI$ and then apply the equivalence with $\uRep(\cI)$ to obtain a graded $\cI$-module $L_{\lambda}'$.
\item Let $\Rep(\fS_{\infty})$ be the category of algebraic representations of the infinite symmetric group, as studied in \cite{infrank}. There is a natural restriction functor $\Rep(\fS_{\infty}) \to \Rep(\cI)$, and so every algebraic representation of $\fS_{\infty}$ yields a smooth $\cI$-module.
\end{itemize}
A theorem of Hochster \cite[Theorem~II.4.1]{stanley} relates the local cohomology of a monomial ideal (in the classical sense) to the topology of the associated Stanley--Reisner complex. A theorem of Sam and the second author \cite[Proposition~7.4.3]{symc1} relates the local cohomology of $L_{\lambda}$ (in the sense of $\FI$-modules) to the Borel--Weil--Bott rule. It would be interesting to see how the local cohomology of the corresponding $\cI$-modules (in the sense of this paper) compares in these cases.

\subsubsection{Homology of unipotent groups} \label{ques:unipotent}
Let $U_n(R) \subset \GL_n(R)$ be the group of strictly upper-triangular matrices, where $R$ is a ring whose additive group is finitely generated. Let $M_n=\rH_n(U_n(R), \bk)$ be the group homology of $U_n(R)$ with coefficients in the field $\bk$. By \cite[Theorem~1.3]{fiq}, the collection of groups $\{M_n\}_{n \ge 0}$ admits the structure of a finitely generated $\OI$-module; we can therefore regard it as a finitely generated graded $\cI$-module. Can one describe this $\cI$-module from the perspective of this paper? For example, can one give its class in $\ul{\rK}(\cI)$? When $R=\bZ$ and $\bk=\bQ$, the group $M_n$ is understood \cite[Theorem~1.1]{dwyer}, and so this could be a reasonably feasible problem. (We note that describing the class of $M$ in $\ul{\rK}(\cI)$ is strictly stronger than determining the dimensions of the $M_n$'s, since the Hilbert series map $\ul{\rK}(\cI) \to \bZ\lbb t \rbb$ has a large kernel; in other words, the class in the Grothendieck group records some non-trivial information about the transition maps.)

\subsubsection{Some $\Ext$ calculations} \label{ques:ext}
As mentioned, $\ul{\rK}(\cI)$ admits a canonical pairing, given by the Euler characteristic of $\Ext^{\bullet}$, and we have given recurrences that allows one to effectively compute the pairing (\S \ref{ss:Fseries}). Can one give a closed-form formula for the pairing? Or, even better, can one compute the individual $\Ext$ groups between standard modules?

\subsubsection{$\OI_d$-modules for $d>1$}
The results of this paper can be viewed as a structure theory for $\OI$-modules. A natural problem is to extend these results to $\OI_d$-modules, for $d>1$. In other words, if this paper is the ``$\OI$-analog'' of the paper \cite{symc1} on $\FI$-modules, we are asking for the ``$\OI_d$-analog'' of the paper \cite{symu1} on $\FI_d$-modules. As \cite{symu1} suggests, there should be some geometry present in the general theory of $\OI_d$-modules that is not seen in the $d=1$ case, which means the results and methods of this paper will probably not trivially generalize to the $d>1$ case.

\subsection{Technical highlights}

Since this is a very long paper without a singular goal, it is difficult to summarize the main ideas of proofs in a useful way. Instead, we give a sampling here of some important constructions and intermediate results, which will hopefully provide the reader some direction.

There are three really important technical theorems we wish to highlight:
\begin{itemize}
\item Theorem~\ref{thm:canongr} (the canonical grading): every finite length (or even locally finite length) smooth $\cI$-module admits a canonical grading. In other words, the forgetful functor $\Phi \colon \uRep(\cI)^{\lf} \to \Rep(\cI)^{\lf}$ is an equivalence of categories. This result surprised us when we discovered it, and is still somewhat surprising to us now. This theorem is needed to even define the completion functor $\ul{\Xi}$ (discussed below).
\item Theorem~\ref{thm:multone} (the multiplicity one theorem): the trivial representation has multiplicity one in the principal module $\bA^r$. In other words, if $K$ denotes the kernel of the augmentation map $\bA^r \to \bk$ then no subquotient of $K$ is isomorphic to the trivial representation. This theorem is the key ingredient used to establish the main properties of the truncation functors (discussed below), which have a number of uses.
\item Theorem~\ref{thm:level} (the theorem on level). We define the {\bf rank} of a constraint word $\lambda$, or the associated standard module $\bE^{\lambda}$, to be the number of $a$'s in $\lambda$. For example, $\bA^r$ has rank $r$, while $\bB^r$ has rank~0. We say that a finitely generated smooth $\cI$-module has {\bf level} $\le r$ if it admits a finite length filtration such that each graded piece is isomorphic to a subquotient of some standard module of rank $\le r$. The theorem on level (which holds in both the smooth and graded cases) states that any proper quotient of a rank $r$ standard module has level $<r$. It is an extremely important result on the structure of $\cI$-modules. For example, it immediately implies that the standard modules generate the derived category, and thus span the Grothendieck group.
\end{itemize}

Most of the work in this paper is devoted to studying various constructions (or functors) involving $\cI$-modules. There are four functors in particular that are extremely important:
\begin{itemize}

\item The forgetful functor $\Phi \colon \uRep(\cI) \to \Rep(\cI)$, which simply forgets the grading. It may seem like a simple operation, and it is, but it has some non-trivial properties: for example, it is continuous (Proposition~\ref{prop:Phi-cont}, not too hard) and takes injective objects to injective objects (Corollary~\ref{cor:phiinj}, quite hard). Essentially by definition, we have $\Phi(\ul{\bE}^{\lambda})=\bE^{\lambda}$.

\item The direct limit functor $\Psi \colon \uRep(\cI) \to \Rep(\cI)$. Given a graded $\cI$-module $M$, we can form the following directed system of vector spaces:
\begin{displaymath}
M_1 \stackrel{\alpha_1}{\longrightarrow} M_2 \stackrel{\alpha_2}{\longrightarrow} M_3 \stackrel{\alpha_3}{\longrightarrow} \cdots
\end{displaymath}
(Here $\alpha_i$ is one of the basic generators of $\cI$, see \S \ref{ss:alpha}.) We define $\Psi(M)$ to be the direct limit. It naturally carries the structure of a smooth $\cI$-module. We show that $\Psi$ realizes $\Rep(\cI)$ as the Serre quotient of $\uRep(\cI)$ by a certain category of torsion modules (Proposition~\ref{prop:phigamma}). The effect of $\Psi$ on standard modules is easy to describe (Proposition~\ref{prop:psistd}): if $\lambda$ ends in the letter $a$, that is, it has the form $\mu a$, then $\Psi(\ul{\bE}^{\lambda})=\bE^{\mu}$; otherwise $\Psi(\ul{\bE}^{\lambda})=0$.

\item The invariants functor $\Gamma \colon \Rep(\cI) \to \uRep(\cI)$. Given a smooth $\cI$-module $M$, we let $\Gamma(M)_n$ denote the invariants of $M$ under the monoid $\cI^{\ge n}$ introduced in \S \ref{ss:fund}. It is not hard to see that $\Gamma(M)=\bigoplus_{n \ge 0} \Gamma(M)_n$ admits the structure of a graded $\cI$-module. We show that it is naturally the right adjoint of $\Psi$ (Proposition~\ref{prop:phigamma}). Of the main functors studied in this paper, $\Gamma$ is the only one that is not exact. Using the completion functor (discussed below), we show that the derived functor of $\Gamma$ has amenable finiteness properties (Propositions~\ref{prop:gammafinite} and~\ref{prop:rgammacolim}). The effect of $\Gamma$ on standard modules is quite simple: $\Gamma(\bE^{\lambda})=\ul{\bE}^{\lambda a}$.

\item The completion functor $\ul{\Xi} \colon \Rep(\cI) \to \uRep(\cI)$, which is the most difficult of the four. Given a smooth $\cI$-module $M$, the truncation $\tau^{<n}(M)$ (discussed below) is locally of finite length; therefore, it admits a canonical grading. We define $\ul{\Xi}(M)$ to be the inverse limit of the truncations in the graded category. We show that $\ul{\Xi}$ is exact and is naturally left adjoint to $\Phi$. (This is why $\Phi$ takes injectives to injectives.)  The unit map $M \to \Phi(\ul{\Xi}(M))$ is always injective, and the cokernel is in a sense smaller. This yields an exact sequence that leads to some important inductive arguments; for example, this is how we study the derived functor of $\Gamma$. With great effort, we compute the effect of $\ul{\Xi}$ on standard modules (Proposition~\ref{prop:xistd}): writing $\lambda=\mu a^n$, where $\mu$ does not end in an $a$, we have $\ul{\Xi}(\bE^{\lambda})=\bigoplus_{0 \le i \le n} \ul{\bE}^{\mu a^i}$.

\end{itemize}
There are a few other functors worth mentioning here:
\begin{itemize}
\item The concatenation product $\odot$ (\S \ref{ss:concat}). Given a graded $\cI$-module $M$ and a smooth (or graded) $\cI$-module $N$, we define an interesting $\cI$ action on the tensor product $M \otimes N$. We denote the resulting smooth (or graded) $\cI$-module by $M \odot N$. The effect on standard modules is what one might guess (Proposition~\ref{prop:stdcat}): $\ul{\bE}^{\lambda} \odot \ul{\bE}^{\mu} = \ul{\bE}^{\lambda \odot \mu}$, where $\lambda \odot \mu$ denotes the concatenation of the two words. In particular, we see that we can build a general standard module $\ul{\bE}^{\lambda}$ by successively concatenating $\ul{\bA}^1$ and $\ul{\bB}^1$. This is a useful observation, as it sometimes allows us to prove results for standard modules just by proving them for simple and principal modules (this idea is used, in effect, in the proof of Theorem~\ref{thm:Dfin}).

\item The transpose functor $\dag$ (\S \ref{ss:transpose}). Given a graded $\cI$-module $M$, we define $M^{\dag}$ to be the graded $\cI$-module with the same underlying graded vector space, but where the action of $\cI$ is ``transposed:'' on the $n$th graded piece, the actions of $\alpha_i$ and $\alpha_{n+1-i}$ are interchanged. The effect on standard modules is straightforward (Proposition~\ref{prop:transstd}): $(\ul{\bE}^{\lambda})^{\dag}=\ul{\bE}^{\lambda^{\dag}}$, where $\lambda^{\dag}$ denotes the reversed word. The transpose functor can be handy in constructing more complicated operations: for example, when acting on standard modules, $\Gamma$, $\Psi$, and $\ul{\Xi}$ all affect the rightmost letters; if one wanted the leftmost letters affected instead, simply throw in a transpose functor.

\item The truncation functors $\tau_{\ge r}$ and $\tau^{<r}$ (\S \ref{ss:trunc}). Given a smooth $\cI$-module, we show that there exists a unique exact sequence
\begin{displaymath}
0 \to \tau_{\ge r}(M) \to M \to \tau^{<r}(M) \to 0
\end{displaymath}
such that $\tau_{\ge r}(M)$ has no subquotient isomorphic to $\bB^s$ with $s<r$, and $\tau^{<r}(M)$ has no subquotient isomorphic to $\bB^s$ with $s \ge r$. These functors are exact: this is a non-trivial result (Proposition~\ref{prop:tau-exact}) that relies on the multiplicity one theorem. We have already seen that these functors figure into the very definition of the important completion functor. They also have other uses: for example, we use them to prove the the trivial module is injective (Corollary~\ref{cor:trivinj}).

\item The shift functor $\Sigma$ (\S \ref{ss:shift}) and the coinduction functor $\sC$ (\S \ref{ss:coinduction}). We have an isomorphism of monoids $i \colon \cI \to \cI_{\ge 2}$ given by $\alpha_j \mapsto \alpha_{j+1}$; of course, $\cI_{\ge 2}$ is also a submonoid of $\cI$. The shift functor $\Sigma$ is defined by restricting an $\cI$-module to $\cI_{\ge 2}$, and then transferring this back to an $\cI$-module via $i$. Similary, the coinduction functor $\sC$ is defined by first transferring the $\cI$-module to an $\cI_{\ge 2}$-module via $i$, and then applying coinduction along the inclusion. These functors are both exact, and $(\Sigma, \sC)$ forms an adjoint pair (Proposition~\ref{prop:coind}). From this, we see that coinduction takes injective objects to injective objects. For this reason, it plays an important role in our study of injectives.

\end{itemize}

As one can see, we develop quite an extensive toolkit. This greatly simplifies---and in some cases simply makes possible---the task of proving our main theorems. To see the machinery in action, take a look at the proofs of Theorem~\ref{thm:level} (the theorem on level) or Theorem~\ref{thm:groth} (on the structure of the Grothendieck group), where many of these functors, and their basic properties, are employed in unison.

\subsection{Outline}

We now outline the contents of the paper. In addition to summarizing the contents of each section, we hope that this provides a rough guide to the logical structure of our results (in contrast to the previous section, which paid no heed to logical order).

\S \ref{s:inc}. \textit{The increasing monoid.}
We establish some basic properties of the increasing monoid $\cI$ and its generators $\{\alpha_i\}_{i \ge 1}$.

\S \ref{s:reps}. \textit{Representation categories.}
We introduce the various categories of representations and establish some technical categorical results. We also explain the connections to $\OI$-modules, semi-simplicial vector spaces, and shuffle algebras.

\S \ref{s:monomial}. \textit{Monomial modules.}
We introduce monomial modules generally, and the three monomial modules $\bB^r$, $\bA^r$, and $\bE^{\lambda}$ specifically. We establish some basic properties of these modules (e.g., every smooth representation is a quotient of a sum of $\bA^r$'s).

\S \ref{s:grobner}. \textit{Gr\"obner theory.}
We develop two versions of Gr\"obner theory, and use them to establish some facts about representations. The first version studies submodules of $\bA^r$ by relating them to monomial submodules. The main application of this version is the theorem that finitely generated $\cI$-modules (either smooth or graded) are noetherian. This result was essentially already known, but the smooth case does not seem to appear in the literature, so we have included a proof. The second version of Gr\"obner theory studies inhomogeneous submodules of an arbitrary graded module by relating them to homogeneous submodules. We use this to relate certain properties of graded modules and their underlying smooth module (such as indecomposability).

\S \ref{s:cat}. \textit{Concatenation, shift, and transpose.}
We introduce these three fundamental operations, study how they interact with each other, and compute their effect on standard modules.

\S \ref{s:finlen}. \textit{Finite length modules.}
We show that the $\bB^r$'s and $\ul{\bB}^r$'s account for all simple modules. We then compute the $\Ext^1$ groups between them, by directly analyzing the $2 \times 2$ matrices that define an extension. A consequence of these computations is that the trivial representation is injective and projective in the category $\Rep(\cI)^{\lf}$ of smooth modules that are locally of finite length. A second consequence is the existence of the canonical grading (every smooth $\cI$-module that is locally of finite length admits a canonical grading).

\S \ref{s:mult}. \textit{Multiplicities.}
For $n \in \bN$, we let $\mu_n(M)$ be the multiplicity of the simple object $\bB^n$ in the smooth $\cI$-module $M$. We prove the important multiplicity one theorem, which states $\mu_0(\bA^r)=1$. The multiplicity one theorem has a number of consequences, e.g., we deduce from it that $\mu_n(M)$ is finite for all $n$ whenever $M$ is finitely generated. We also introduce the truncation functors $\tau_{\ge r}$ and $\tau^{<r}$. Using the multiplicity one theorem, we show that these functors are exact. From this, we deduce that injective objects of $\Rep(\cI)^{\lf}$ remain injective in $\Rep(\cI)$. In particular, we find that the trivial representation is injective in $\Rep(\cI)$.

\S \ref{s:psigamma}. \textit{$\Rep(\cI)$ as a Serre quotient of $\uRep(\cI)$.}
We introduce the $\Psi$ and $\Gamma$ functors, and establish their main properties: namely, that they form an adjoint pair and realize $\Rep(\cI)$ as a Serre quotient of $\uRep(\cI)$. We also compute their effect on standard modules. Finally, we show that any smooth module that admits a grading (such as $\bE^{\lambda}$) is $\Gamma$-acyclic. This result will be important later when we study the derived functors of $\Gamma$.

\S \ref{s:xi}. \textit{Completions.}
We define the completion functor $\ul{\Xi}$, and show that it is exact and naturally the left adjoint of $\Phi$. We note that the definition of $\ul{\Xi}$ hinges on the existence of the canonical grading and the truncation functors. We explicitly compute $\ul{\Xi}$ on standard modules. Finally, we give a number of applications of the completion functor, such as: (a) injectivity of principle modules; (b) finiteness properties of $\rR \Gamma$; and (c) finiteness properties of the $\Ext$ functors on $\Rep(\cI)$ and $\uRep(\cI)$.

\S \ref{s:level}. \textit{The theorem on level.}
We introduce the notion of level, and prove a number of simple results concerning how our various functors interact with it. We then study a notion of saturation, and the effect of concatenating with the simple object $\bB^1$. After this, we prove the extremely important theorem on level, which states that any proper quotient of a rank $r$ standard object has level $<r$. We close the section by giving some consequences of the theorem, e.g., that the standard modules generate the derived category (this is the first of the main results listed in \S \ref{ss:results} to be proved).

\S \ref{s:groth}. \textit{Grothendieck groups, Hilbert series, and Krull dimension.}
We prove our main theorem on Grothendieck groups, namely, that the classes of standard modules form a basis. The theorem on level implies that they span; to prove linear independence, we use the plethora of functors available to us at this point to construct sufficiently many linear functionals to distinguish the classes in question. Using this description of the Grothendieck group, we give simple proofs of some facts about Hilbert series. We end with a brief discussion of Krull dimension.

\S \ref{s:coinduction}. \textit{Induction and coinduction.}
We define the induction and coinduction functors, which are the left and right adjoints of the shift functor. The induction functor is not so important, and we give it little attention. The coinduction functor, on the other hand, figures prominently in our study of injective objects, so we analyze it in some detail.

\S \ref{s:injectives}. \textit{Injective modules.}
We prove our main results on injectives: namely, we classify the indecomposable injectives, and prove that finitely generated $\cI$-modules have nice injective resolutions. To do this, we first construct the finite length injective objects, which is easy to do using coinduction. Then, using a combination of coinduction and some of our other functors, we show how to explicitly produce injectives that do not have finite length, and we produce enough of them to complete the classification. From the explicit form of these injectives, it is easy to write down an injective resolution of standard modules, from which it follows that they have finite injective dimension. Since they generate the derived category, it follows that all finitely generated modules have finite injective dimension.

\S \ref{s:loccoh}. \textit{Local cohomology and saturation.}
We review the general theory of saturation and local cohomology introduced in \cite{symu1}, and then specialize it to $\cI$-modules. Using the injective resolutions of standard modules found in the previous section, we compute the derived satuation and local cohomology of standard modules; it turns out that all the higher derived functors vanish in this case. We then introduce a new concept: we say that a module is $r$-semistandard if it admits a filtration where the graded pieces are rank $r$ standard objects. Using our computations of local cohomology, we show that the $r$-semistandard objects form an abelian subcategory, which is rather surprising. Finally, we show that these abelian subcategories yield a semi-orthogonal decomposition of the derived category.

\S \ref{s:levcat}. \textit{Structure of level categories.}
We explicitly identify the structure of the level categories (i.e., the graded pieces of the level filtration). The key input here is the classification of injective modules.

\S \ref{s:koszul}. \textit{Koszul duality.}
In this section, we prove our main results about projective resolutions. We begin by defining the Tor functor on graded $\cI$-modules and introducing the notion of a minimal projective resolution. We then construct the Koszul complex, which is a very natural complex that computes Tor. We then show that the definition of the Koszul complex can be modified to produce a derived auto-equivalence $\cD$ of $\uRep(\cI)$; this is rather formal. We then prove the much more significant theorem that $\cD$ preserves the bounded and finitely generated derived category. The proof of this theorem is quite short given the tools available to us at this point: since the standard modules generate the derived category, it suffices to show that $\cD(\ul{\bE}^{\lambda})$ belongs to the bounded and finitely generated derived category; since $\cD$ is compatible with concatenation produces, the task is further reduced to the case where $\lambda$ is a word of length~1, where it can be handled directly. We then introduce the Betti table of a graded $\cI$-module and deduce finiteness results about it from those of $\cD$.

\S \ref{s:groth2}. \textit{Grothendieck groups revisited.}
In this final section, we prove some deeper results about the Grothendieck group. We begin by defining the $\Ext$-pairings on $\rK(\cI)$ and $\ul{\rK}(\cI)$, which exist thanks to our results on injective resolutions. Next, we define the invariant $\mu_{\lambda}$, which is, in a sense, the multiplicity of $\bE^{\lambda}$ in an $\cI$-module. We use these invariants to define the non-commutative Hilbert series $\rG_M$, which we prove to be rational. We also introduce some variants of $\rG_M$, which can be proved to be rational by the same methods, and yield a finite procedure for computing the pairings on the Grothendieck group. We close by determining the effective cone in the Grothendieck group.

Appendix~\ref{s:catbg}. \textit{Categorical background.}
Here we collect a variety of abstract results on abelian categories.

\subsection{Notation}

We collect here some of the most important notation introduced in the body of the paper. This list is not meant to be exhaustive.
\begin{itemize}
\item $\cA^{\rf}$: the category of finite length objects in the abelian category $\cA$.
\item $\cA^{\lf}$: the category of locally finite objects in the abelian category $\cA$.
\item $\cA^{\fgen}$: the category of finitely generated objects in the abelian category $\cA$.
\item $\rD(\cA)$: the derived category of the abelian category $\cA$.
\item $\rD^b_{\fgen}(\cA)$: the bounded and finitely generated derived category of $\cA$.
\item $\bN$: the set of non-negative integers.
\item $\bN_+$: the set of positive integers.
\item $\fS_n$: the symmetric group on $n$ letters.
\item $\cI$: the increasing monoid (\S \ref{ss:inc}).
\item $\alpha_i$, for $i \ge 1$: one of the basic generators of $\cI$ (\S \ref{ss:alpha}).
\item $\cI_{\ge n}$: the submonoid of $\cI$ generated by the $\alpha_i$ with $i \ge n$ (\S \ref{ss:smooth}).
\item $\bk$: the coefficient field (fixed throughout).
\item $\REP(\cI)$: the category of all $\cI$-modules (\S \ref{ss:bigrep}).
\item $\Rep(\cI)$: the category of smooth $\cI$-modules (\S \ref{ss:rep}).
\item $\uRep(\cI)$: the category of graded $\cI$-modules (\S \ref{ss:urep}).
\item $\fa_n$: the ideal of $\bk[\cI]$ generated by the $\alpha_i-1$ with $i \ge n$ (\S \ref{ss:inv}).
\item $\Phi$: the forgetful functor $\uRep(\cI) \to \Rep(\cI)$ (\S \ref{ss:urep}).
\item $\bA^r$: the $r$th principal module (\S \ref{ss:principal}).
\item $\bB^n$: the $n$th simple module (\S \ref{ss:simple}).
\item $\bE^{\lambda}$: the standard module associated to $\lambda$ (\S \ref{ss:gap}).
\item $\ini(-)$: initial term or module, in the sense of Gr\"obner theory (\S \ref{s:grobner}).
\item $\odot$: the concatenation product (\S \ref{ss:concat}).
\item $\Sigma$: the shift functor (\S \ref{ss:shift}).
\item $M^{\dag}$: the transpose of the graded $\cI$-module $M$ (\S \ref{ss:transpose}).
\item $\mu_n(M)$: the multiplicity of $\bB^n$ in $M$ (\S \ref{ss:mult}).
\item $\tau^{<n}$, $\tau_{\ge n}$: the truncation functors (\S \ref{ss:trunc}).
\item $\Psi$: the direct limit functor $\uRep(\cI) \to \Rep(\cI)$ (\S \ref{ss:psigamma}).
\item $\Gamma$: the right adjoint of $\Psi$, given by $M \mapsto \bigoplus_{n \ge 0} M^{\cI_{\ge n}}$ (\S \ref{ss:psigamma}).
\item $\ul{\Xi}$: the completion functor, left adjoint of $\Phi$ (\S \ref{ss:xi}).
\item $\Rep(\cI)_{\le r}$: the level $\le r$ category (\S \ref{ss:level}).
\item $\rK(\cI)$: the Grothendieck group of $\Rep(\cI)^{\fgen}$ (\S \ref{ss:groth}).
\item $\rH_M$: the Hilbert series of $M$ (\S \ref{ss:hilb}).
\item $\sC$: the co-induction functor (\S \ref{ss:coinduction}).
\item $\sI$: the induction functor (\S \ref{ss:induction}).
\item $\bJ^n$: the injective envelope of the simple $\bB^n$ (\S \ref{ss:fininj}).
\item $\bI^{\lambda}$: the injective module associated to $\lambda$ (\S \ref{ss:inj}).
\item $\cH_{\le r}$: the local cohomology functor with respect to $\Rep(\cI)_{\le r}$ (\S \ref{ss:loccoh}).
\item $\cS_{>r}$: the saturation functor with respect to $\Rep(\cI)_{\le r}$ (\S \ref{ss:loccoh}).
\item $\rL_i \cT$: the $i$th Tor functor (\S \ref{ss:tor}).
\item $\cK(M)$: the Koszul complex on $M$ (\S \ref{ss:koszul}).
\item $\cD$: the duality functor (\S \ref{ss:duality}).
\item $\langle,\rangle$: the pairing on $\ul{\rK}(\cI)$ (\S \ref{ss:pairing}).
\item $\mu_{\lambda}$: the higher multiplicity invariant (\S \ref{ss:highermult}).
\item $\rG_M$: the non-commutative Hilbert seires of $M$ (\S \ref{ss:noncommhilb}).
\end{itemize}
We typically use an underline to denote something in the graded setting, and no underline to denote something in the smooth setting. For example, $\ul{\bE}^{\lambda}$ is the standard module in the graded category and $\bE^{\lambda}$ is the standard module in the smooth category; similarly, $\ul{\Sigma}$ is the shift functor on graded modules and $\Sigma$ is the shift functor on smooth modules.

\subsection*{Acknowledgements}

We thank Steven Sam for helpful discussions.

\section{The increasing monoid} \label{s:inc}

\subsection{The increasing monoid} \label{ss:inc}

The {\bf big increasing monoid}, denoted $\cI^{\rm big}$, is the monoid of all order-preserving injective maps $\bN_+ \to \bN_+$, where $\bN_+$ denotes the set of positive integers. We say that $\sigma \in \cI^{\rm big}$ is {\bf tame} if there exists an integer $\ell \ge 0$, called the {\bf length} of $\sigma$, such that $\sigma(n)=n+\ell$ for all $n \gg 0$. The {\bf small increasing monoid}, denoted $\cI$, is the submonoid of $\cI^{\rm big}$ consisting of all tame elements. One easily verifies that it is indeed a submonoid, and that the function $\ell \colon \cI \to \bN$ assigning to each element its length is a monoid homomorphism. For the questions studied in this paper, it turns out not to matter which version of the increasing monoid one uses, as we explain in \S \ref{ss:smooth}. We will therefore always work with the small version $\cI$.

\begin{remark}
The two versions of the increasing monoid are analogous to the two versions of the infinite symmetric group: the big version $\fS_{\infty}^{\rm big}$ is the group of all bijections of $\bN_+$, while the small version $\fS_{\infty}$ is the union of the $\fS_n$. The length homomorphism on $\cI$ is analogous to the sign homomorphism on $\fS_{\infty}$.
\end{remark}

\begin{remark}
Given $\sigma \in \cI^{\rm big}$, we extend it to a function $\bN \to \bN$ by declaring $\sigma(0)=0$.
\end{remark}

\subsection{The generators} \label{ss:alpha}

For $k \ge 1$, let $\alpha_k$ denote the element of $\cI$ defined by
\begin{displaymath}
\alpha_k(n) = \begin{cases}
n+1 & \text{if $n \ge k$} \\
n & \text{if $n<k$.} \end{cases}
\end{displaymath}
Clearly, $\ell(\alpha_k)=1$. One can show that the $\alpha_k$ are the only elements of length~1; in fact, this follows from Proposition~\ref{prop:alphagen} below.

The following proposition gives the basic relations satisfied by the $\alpha$'s. We will use it constantly throughout the paper:

\begin{proposition}[Fundamental relation] \label{prop:alpharel}
For $n>m\ge 1$ we have $\alpha_n \alpha_m=\alpha_m \alpha_{n-1}$.
\end{proposition}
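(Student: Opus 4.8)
The plan is to prove the identity by the most direct route: since elements of $\cI$ are honest functions $\bN_+ \to \bN_+$ and the product in $\cI$ is composition of functions, it suffices to check that $(\alpha_n\alpha_m)(j) = (\alpha_m\alpha_{n-1})(j)$ for every $j \in \bN_+$, where by convention $(\alpha_n\alpha_m)(j) = \alpha_n(\alpha_m(j))$. (The equality on $0$ is automatic from $\sigma(0)=0$.) Before running the computation I would record the only consequence of the hypothesis $n>m\ge 1$ that gets used, namely $n-1\ge m$, so that the three thresholds entering the two sides --- $m$ on both sides, together with $n$ on the left and $n-1$ on the right --- sit in the order $m\le n-1<n$.

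Next I would do a case analysis on the position of $j$ relative to these thresholds: $j<m$; $m\le j\le n-2$; $j=n-1$; and $j\ge n$. In the first case nothing moves on either side and both composites fix $j$. In the second case exactly one generator on each side ($\alpha_m$ on the left, $\alpha_{n-1}$ on the right) increments, and both sides return $j+1$. In the last case both generators on each side increment and both sides return $j+2$. The one case that genuinely mixes the thresholds is $j=n-1$: on the left, $\alpha_m$ sends $n-1$ to $n$ since $n-1\ge m$, and then $\alpha_n(n)=n+1$; on the right, $\alpha_{n-1}(n-1)=n$ and then $\alpha_m(n)=n+1$ since $n\ge m$. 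So both sides give $n+1$, which finishes the verification.

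I do not expect any real obstacle: the argument is a short finite computation. The only points needing a little care are pinning down the composition convention at the outset (the stated identity does depend on it) and the boundary value $j=n-1$, where one must invoke $n-1\ge m$ --- precisely the spot where the hypothesis $n>m$ is used and where the right-hand index drops from $n-1$ to $m$ across the shift.
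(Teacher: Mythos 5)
Your proof is correct and is exactly the direct verification the paper has in mind (the paper itself says only "This is a simple verification that is left to the reader"). The case split at $j<m$, $m\le j\le n-2$, $j=n-1$, $j\ge n$ is complete, and your identification of $j=n-1$ as the only case where the hypothesis $n>m$ does real work is accurate.
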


\begin{proof}
This is a simple verification that is left to the reader.
\end{proof}

\begin{proposition} \label{prop:alphagen}
Let $\sigma \in \cI$.
\begin{enumerate}
\item There exists a unique strictly increasing sequence of integers $1 \le n_1<n_2<\cdots<n_r$ such that $\sigma=\alpha_{n_r} \cdots \alpha_{n_1}$.
\item There exists a unique weakly increasing sequence of integers $1 \le m_1 \le m_2 \le \cdots \le m_s$ such that $\sigma=\alpha_{m_1} \cdots \alpha_{m_s}$.
\end{enumerate}
Moreover, $r=s$ is the length of $\sigma$.
\end{proposition}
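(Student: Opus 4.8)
The plan is to treat the two normal forms in parallel, using the length homomorphism $\ell$ as the organizing principle. Since $\ell$ is a monoid homomorphism and $\ell(\alpha_k)=1$, any expression of $\sigma$ as a product of $k$ generators forces $k=\ell(\sigma)$; this immediately gives the ``moreover'' clause and reduces both parts to showing existence and uniqueness of a product of exactly $r:=\ell(\sigma)$ generators of the required shape. I would first handle existence by induction on $\ell(\sigma)$. If $\ell(\sigma)=0$ then $\sigma$ is the identity (an order-preserving injection $\bN_+\to\bN_+$ with $\sigma(n)=n$ for $n\gg 0$ and $\sigma(n)\le n$ always, hence $\sigma=\mathrm{id}$), and the empty product works. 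If $\ell(\sigma)\ge 1$, let $k$ be the smallest integer with $\sigma(k)>k$ (equivalently $\sigma(k)\ge k+1$; such $k$ exists since $\ell(\sigma)>0$). Then I claim $\sigma=\sigma'\alpha_k$ where $\sigma'\in\cI$ has $\ell(\sigma')=\ell(\sigma)-1$: concretely $\sigma'$ is defined by $\sigma'(n)=\sigma(n)$ for $n<k$ and $\sigma'(n)=\sigma(n+1)$ for $n\ge k$ — one checks this is order-preserving and injective, that $\sigma'\alpha_k=\sigma$, and that its length dropped by one. By induction $\sigma'=\alpha_{n_r}\cdots\alpha_{n_2}$ with $n_r>\cdots>n_2\ge 1$, and I must verify $n_2>k$, i.e.\ that all the generators produced for $\sigma'$ have index exceeding $k$; this follows from the minimality of $k$ (for $n<k$, $\sigma'(n)=\sigma(n)=n$, so $\sigma'$ fixes $1,\dots,k-1$ as well, forcing each $n_i\ge k$, and distinctness upgrades this to $n_i>k$ for the second-smallest). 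Wait — more carefully, I get $n_i\ge k$ for all $i$, and since the $n_i$ are distinct and at most one of them can equal any given value, I need $n_2>k$; but the point is $\sigma'$ fixes $1,\dots,k-1$ but need not fix $k$, so the correct statement is $n_i\ge k$, and then $\sigma=\alpha_{n_r}\cdots\alpha_{n_2}\alpha_k$ with $n_r>\cdots>n_2\ge k$. To get strict inequality $n_2>k$ I use that if some $n_i=k$ then using the fundamental relation I could rewrite and reduce, but cleaner: choose $k$ maximal among smallest-moved points across the factorization, or simply observe that two equal indices never occur because $\alpha_k\alpha_k$ moves $k$ to $k+2$ whereas... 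Actually the genuinely clean route is: prove (b) first (weakly increasing, reading generators right to left by ``peeling off the last $\alpha$ at the largest moved coordinate''), then deduce (a) from (b) by repeatedly applying the fundamental relation $\alpha_n\alpha_m=\alpha_m\alpha_{n-1}$ to convert a weakly increasing word into a strictly decreasing one.

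Concretely, for part (b) I would induct as follows: given $\sigma\ne\mathrm{id}$, let $j$ be the largest integer in the image-gap sense — more precisely, let $\sigma=\alpha_{m_1}\cdots\alpha_{m_s}$ be built by peeling from the \emph{left}: set $m_1$ to be the smallest moved point of $\sigma$ and write $\sigma=\alpha_{m_1}\sigma''$ where $\sigma''(n)=\sigma(\alpha_{m_1}^{-1}\cdots)$ appropriately; induction gives $\sigma''=\alpha_{m_2}\cdots\alpha_{m_s}$ and one checks $m_2\ge m_1$ because $\sigma''$'s smallest moved point is $\ge m_1$. Then, to pass from a weakly increasing word $\alpha_{m_1}\cdots\alpha_{m_s}$ to a strictly increasing-exponents normal form, scan for an adjacent pair $m_i\le m_{i+1}$; if $m_i=m_{i+1}$ rewrite — hmm, the relation needs $n>m$, so $\alpha_{m_{i+1}}\alpha_{m_i}$ with $m_{i+1}>m_i$ becomes $\alpha_{m_i}\alpha_{m_{i+1}-1}$, which is exactly the move turning a weakly increasing left-to-right reading into the strictly decreasing-indices form $\alpha_{n_r}\cdots\alpha_{n_1}$ demanded in (a). A bubble-sort style argument shows this terminates at the unique strictly-ordered form. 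Uniqueness in both parts: suppose $\alpha_{n_r}\cdots\alpha_{n_1}=\alpha_{n'_r}\cdots\alpha_{n'_1}$ are two strictly-decreasing-index expressions of the same $\sigma$; evaluate both sides at well-chosen inputs, or compare smallest moved points ($n_1$ is the smallest moved point of $\sigma$ on the left and $n'_1$ on the right, so $n_1=n'_1$), then cancel $\alpha_{n_1}$ (it is right-cancellable in $\cI$ because it is injective as a map and has a one-sided inverse on the relevant range) and induct on $r$. The analogous argument handles (b), with $m_s$ now the relevant extremal invariant.

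The main obstacle I anticipate is the bookkeeping in the inductive step of existence — specifically, verifying that after peeling off one generator the remaining element again lies in $\cI$ (is tame) with length exactly one less, and that the peeled-off index sits correctly relative to the indices produced recursively (strictly below them for form (a), or correctly ordered for form (b)). This requires being careful about the convention $\sigma(0)=0$ and about the distinction between "fixes $1,\dots,k-1$" and "smallest moved point equals $k$." Everything else — the fundamental relation rewriting, the termination of the sorting procedure, the cancellation in uniqueness — is routine once the right extremal invariant (smallest moved point for (a), largest index for (b)) is identified and shown to be well-defined on $\sigma$ independently of the expression. I would present (b) as the primary construction, derive (a) from it via the fundamental relation, and prove uniqueness for both by the smallest-moved-point induction, since that invariant is manifestly intrinsic to $\sigma$.
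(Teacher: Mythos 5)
Your approach is genuinely different from the paper's and more laborious. The paper simply computes the explicit formula for $\sigma(k)$ given a decomposition $\sigma = \alpha_{n_r}\cdots\alpha_{n_1}$ with $n_r > \cdots > n_1$: $\sigma$ is the identity below $n_1$, shifts by $1$ on $[n_1,n_2-1)$, by $2$ on $[n_2-1,n_3-2)$, and so on, so the $n_i$ are the successive "jump points" of $\sigma$ and can be read off directly. This gives existence and uniqueness simultaneously with no induction and no rewriting. Your peel-off-one-generator induction, supplemented by a bubble-sort using the fundamental relation, is a viable alternative, but it is longer and (as written) has two concrete problems.

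First, your formula for $\sigma'$ in the right-peeling step is wrong. If $\sigma = \sigma'\alpha_k$, then for $n\ge k$ one needs $\sigma'(n+1) = \sigma(n)$, i.e.\ $\sigma'(n) = \sigma(n-1)$ for $n>k$, with $\sigma'(k)$ chosen to keep $\sigma'$ order-preserving (the natural choice being $\sigma'(k)=k$). Your stated $\sigma'(n)=\sigma(n+1)$ shifts the wrong way; check it against $\sigma=\alpha_1$, $k=1$. Once the formula is fixed, $\sigma'$ fixes $1,\ldots,k$ (not just $1,\ldots,k-1$), so the strict inequality $n_2 > k$ that you worry about falls out immediately, and the ``pivot to (b) first'' that you resort to is unnecessary — though it also works.

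Second, the parenthetical ``$\alpha_{n_1}$ is right-cancellable in $\cI$'' is false. The fundamental relation itself gives the counterexample: $\alpha_{k+1}\alpha_k = \alpha_k\alpha_k$, yet $\alpha_{k+1}\neq\alpha_k$. (Having a one-sided inverse on the relevant range gives you \emph{left}-cancellability, not right.) What is actually true, and what your uniqueness argument needs, is that the map $\tau \mapsto \tau\alpha_{n_1}$ is injective when restricted to $\cI_{>n_1}$: if $\tau_1,\tau_2$ both fix $1,\ldots,n_1$ and $\tau_1\alpha_{n_1}=\tau_2\alpha_{n_1}$, then $\tau_1(n)=\tau_2(n)$ for $n\neq n_1$ because $\alpha_{n_1}$ is injective, and $\tau_1(n_1)=n_1=\tau_2(n_1)$ by assumption. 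The strict inequalities $n_r>\cdots>n_2>n_1$ (and $n'_r>\cdots>n'_2>n'_1=n_1$) place the residual products in $\cI_{>n_1}$, so the cancellation is legitimate — but this hypothesis must be stated and used; the unqualified cancellability claim is a genuine error, and it is exactly the kind of error the fundamental relation exists to warn you about.
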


\begin{proof}
(a) Suppose that we have a decomposition $\sigma=\alpha_{n_r} \cdots \alpha_{n_1}$ with $n_r>n_{r-1}>\cdots>n_1 \ge 1$. We then have:
\begin{displaymath}
\sigma(k) = \begin{cases}
k & \text{if $1 \le k < n_1$} \\
k+1 & \text{if $n_1 \le k < n_2-1$} \\
k+2 & \text{if $n_2-1 \le k < n_3-2$} \\
\vdots & \vdots \\
k+r & \text{if $n_r-r+1 \le k$}
\end{cases}
\end{displaymath}
We can thus recover the $n$'s from $\sigma$, and so we have uniqueness.

Conversely, we can start with $\sigma$ and define the $n$'s so that the above holds: for example, $n_1$ is the minimal value of $k$ for which $\sigma(k)>k$. With these $n$'s, we then have $\sigma=\alpha_{n_r} \cdots \alpha_{n_1}$, and so we have existence.

(b) The proof is similar to (a), and the details are left to the reader.
\end{proof}

\begin{corollary} \label{cor:alphagen}
The $\alpha$'s generate $\cI$, and the relations in Proposition~\ref{prop:alpharel} generate all relations among the $\alpha$'s.
\end{corollary}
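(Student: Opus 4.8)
The statement to prove is Corollary~\ref{cor:alphagen}: the $\alpha$'s generate $\cI$, and the relations of Proposition~\ref{prop:alpharel} generate all relations among the $\alpha$'s. The first assertion is immediate: by Proposition~\ref{prop:alphagen}(a), every $\sigma \in \cI$ can be written as a product $\alpha_{n_r} \cdots \alpha_{n_1}$, so the $\alpha$'s generate. The content is entirely in the second assertion. My plan is to show that the fundamental relation suffices to bring an arbitrary word in the $\alpha$'s into the normal form of Proposition~\ref{prop:alphagen}(b), i.e.\ the weakly increasing form $\alpha_{m_1} \cdots \alpha_{m_s}$ with $m_1 \le \cdots \le m_s$; since that normal form is \emph{unique} (again by Proposition~\ref{prop:alphagen}(b)) and depends only on the element $\sigma$ represented, two words represent the same element of $\cI$ iff they have the same normal form, and hence iff one can be transformed into the other using the fundamental relations. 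This is the standard ``confluence / rewriting'' strategy for presenting a monoid.

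Concretely, I would set up the free monoid $F$ on symbols $\{\alpha_k\}_{k \ge 1}$ and let $N \subseteq F \times F$ be the congruence generated by the pairs $(\alpha_n \alpha_m, \alpha_m \alpha_{n-1})$ for $n > m \ge 1$. There is a surjective monoid homomorphism $F \twoheadrightarrow \cI$ (by the first assertion), and it clearly factors through $F/N$ since the relations hold in $\cI$ (Proposition~\ref{prop:alpharel}); call the induced surjection $\pi \colon F/N \to \cI$. I want to show $\pi$ is injective. The key step is: \textbf{every word in $F$ is $N$-equivalent to a weakly increasing word}. To see this, orient each fundamental relation as a rewriting rule $\alpha_n \alpha_m \leadsto \alpha_m \alpha_{n-1}$ (legal precisely when $n > m$, i.e.\ when the word is strictly decreasing at that adjacent pair), which moves the larger index to the left while decrementing it by one. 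I would argue termination by a monovariant: e.g.\ the sum $\sum_i i \cdot (\text{index of the $i$th letter, counted from the left})$, or more simply the number of ``inversions'' (pairs of positions $i<j$ with the $i$th index strictly greater than the $j$th index) strictly decreases under each application — one must check that decrementing $\alpha_n$ to $\alpha_{n-1}$ and swapping it past $\alpha_m$ cannot create new inversions with letters outside the swapped pair, which holds because $n-1 \ge m$. A word admitting no legal rewrite is exactly one with no strictly-decreasing adjacent pair, i.e.\ a weakly increasing word. Hence every $N$-class contains a weakly increasing representative.

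Now suppose $w, w' \in F$ have $\pi([w]) = \pi([w'])$, i.e.\ they represent the same $\sigma \in \cI$. Replace each by a weakly increasing word in its $N$-class; these still represent $\sigma$, since $N$-equivalent words map to the same element of $\cI$. But Proposition~\ref{prop:alphagen}(b) says the weakly increasing word representing $\sigma$ is \emph{unique}, so the two weakly increasing words are literally equal in $F$, hence $[w] = [w']$ in $F/N$. Thus $\pi$ is injective, so $F/N \cong \cI$, which is precisely the claim that the fundamental relations generate all relations. The main obstacle — really the only non-routine point — is the termination argument for the rewriting system: one must pin down a monovariant that provably strictly decreases under \emph{every} instance of the rule $\alpha_n\alpha_m \leadsto \alpha_m \alpha_{n-1}$ with $n>m$, accounting for the fact that the index $n$ changes (to $n-1$) as it moves left; the inversion-count monovariant handles this cleanly, but it requires the small observation that $n - 1 \ge m$ so the rewrite never increases the index-value of any letter and never introduces a new descent at the relocated letter. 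Everything else follows formally from the uniqueness statements already established in Proposition~\ref{prop:alphagen}.
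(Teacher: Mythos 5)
Your overall strategy — bring any word to the weakly increasing normal form by applying the fundamental relations, then invoke the uniqueness from Proposition~\ref{prop:alphagen}(b) — is exactly the paper's (which states this step without detail). The formal setup via the free monoid $F$ and the congruence $N$ is sound, and the reduction to termination of the rewriting system is the right move. However, both monovariants you propose for termination are in fact wrong, and since you yourself flag termination as ``the only non-routine point,'' this is a genuine gap.

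For the inversion count: consider $\alpha_3\alpha_3\alpha_3\alpha_1$ and rewrite the last two letters (here $n=3$, $m=1$), obtaining $\alpha_3\alpha_3\alpha_1\alpha_2$. The original word has $3$ inversions, at positions $(1,4),(2,4),(3,4)$; the rewritten word has $4$, at $(1,3),(2,3),(1,4),(2,4)$. The problem is that a letter $\alpha_a$ to the \emph{left} of the swapped pair with $a=n$ exactly is not in inversion with $\alpha_n$ before the rewrite, but is in inversion with the relocated $\alpha_{n-1}$ afterward; your observation that $n-1\ge m$ only rules out a new inversion within the swapped pair itself, not with letters farther left. For the position-weighted sum $\sum_i i\cdot\mathrm{idx}(i)$: rewriting $\alpha_5\alpha_1$ to $\alpha_1\alpha_4$ changes the weighted sum from $1\cdot 5 + 2\cdot 1 = 7$ to $1\cdot 1 + 2\cdot 4 = 9$. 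In general the change upon rewriting at position $p$ is $n-m-p-1$, which is positive whenever $p < n-m-1$.

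The fix is simpler than either proposal: use the \emph{unweighted} sum of indices. Under $\alpha_n\alpha_m \leadsto \alpha_m\alpha_{n-1}$ the sum drops by exactly $1$, and since the fundamental relation preserves word length and all indices remain $\ge 1$ (note $n-1\ge m\ge 1$), the sum is bounded below by the length of the word; hence the rewriting terminates. With that monovariant substituted in, the remainder of your argument is correct and completes the proof.
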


\begin{proof}
The proposition obviously implies that the $\alpha$'s generate $\cI$. Since the fundamental relations are sufficient to transform any word in the $\alpha$'s into a word of the type appearing in Proposition~\ref{prop:alphagen}(a) (or Proposition~\ref{prop:alphagen}(b)), it follows that they generate all relations.
\end{proof}

\subsection{Smooth actions} \label{ss:smooth}

Let $\cI^{\rm big}_{>n}$ be the submonoid of $\cI^{\rm big}$ consisting of those elements $\sigma$ that fix each of the numbers $1, \ldots, n$. Let $\cI_{>n}=\cI \cap \cI^{\rm big}_{>n}$. Clearly, $\cI_{>n}$ is the submonoid of $\cI$ generated by the $\alpha_k$ with $k>n$.

\begin{definition}
Let $X$ be an $\cI$-set. We say that an element $x \in X$ is {\bf smooth} if $x$ is fixed by $\cI_{>r}$ for some $r$. We say that $X$ is {\bf smooth} if every element is.
\end{definition}

We make a similar definition for $\cI^{\rm big}$. We almost exclusively study smooth representations in this paper. The following proposition thus explains why we could work with either version of the increasing monoid:

\begin{proposition}
The restriction functor
\begin{displaymath}
\sF \colon \{ \text{smooth $\cI^{\rm big}$-sets} \} \to \{ \text{smooth $\cI$-sets} \}
\end{displaymath}
is an isomorphism of categories.
\end{proposition}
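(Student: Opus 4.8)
The plan is to construct an explicit quasi-inverse to the restriction functor $\sF$. The natural candidate is a functor $\sG$ that takes a smooth $\cI$-set $X$ and endows it with an action of the big monoid $\cI^{\rm big}$; we then check that $\sF \circ \sG$ and $\sG \circ \sF$ are each the identity functor (on the nose, since the claim is an isomorphism of categories, not merely an equivalence).

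First I would define the action of an arbitrary $\tau \in \cI^{\rm big}$ on a smooth $\cI$-set $X$. Given $x \in X$, smoothness provides some $r$ with $\cI_{>r} \cdot x = x$. The key observation is that any $\tau \in \cI^{\rm big}$ agrees, on the initial segment $\{1,\ldots,r\}$, with some element $\sigma$ of the small monoid $\cI$: indeed one can take $\sigma$ to agree with $\tau$ on $\{1,\ldots,r\}$ and to be ``tame'' beyond that (e.g. $\sigma(n) = n + (\tau(r)-r)$ for $n \ge r$, adjusting so it remains a strictly increasing injection — concretely $\sigma = \alpha_{n_k}\cdots\alpha_{n_1}$ for the appropriate indices $n_i \le \tau(r) \le r + k$ reading off the ``jumps'' of $\tau$ below $r$). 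I would then set $\tau \cdot x := \sigma \cdot x$. The main obstacle — and the heart of the proof — is showing this is well-defined: if $\sigma, \sigma' \in \cI$ both agree with $\tau$ on $\{1,\ldots,r\}$ and $\cI_{>r}$ fixes $x$, then $\sigma \cdot x = \sigma' \cdot x$. For this I would argue that $\sigma' = \rho \sigma$ where $\rho$ fixes $\sigma(\{1,\ldots,r\})$ pointwise — but one must be careful: $\rho$ need not lie in $\cI_{>s}$ for the relevant $s$. The cleaner route is to use Proposition~\ref{prop:alphagen}(a): write both $\sigma$ and $\sigma'$ in their canonical form $\alpha_{n_r}\cdots\alpha_{n_1}$; agreement on $\{1,\ldots,r\}$ forces the generators $\alpha_{n_i}$ with $n_i \le r$ (suitably interpreted via the formula in that proposition) to coincide, and the remaining generators all have index $> r$, hence lie in $\cI_{>r}$ and act trivially on $x$ after the common prefix is applied — here one also uses that $\cI_{>r} x = x$ implies $\cI_{>r'} x = x$ for $r' \ge r$, and that conjugating a high-index generator by a low-index one (via the fundamental relation, Proposition~\ref{prop:alpharel}) keeps the index high. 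One should also check independence of the choice of $r$, which is immediate since enlarging $r$ only shrinks the freedom in choosing $\sigma$.

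Next I would verify that $\sG(X)$ is genuinely an $\cI^{\rm big}$-set: that is, $1 \cdot x = x$ (clear, take $\sigma = 1$) and $(\tau \tau') \cdot x = \tau \cdot (\tau' \cdot x)$. For associativity, pick $r$ with $\cI_{>r} x = x$, choose $\sigma' \in \cI$ agreeing with $\tau'$ on $\{1,\ldots,r\}$, then note $\tau' \cdot x = \sigma' \cdot x$ is fixed by $\cI_{>r'}$ for $r' = \sigma'(r) \ge \tau'(r)$; pick $\sigma \in \cI$ agreeing with $\tau$ on $\{1,\ldots,r'\}$, so $\tau \cdot (\tau' \cdot x) = \sigma \sigma' \cdot x$. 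Finally one checks $\sigma\sigma'$ agrees with $\tau\tau'$ on $\{1,\ldots,r\}$: for $k \le r$, $\sigma'(k) = \tau'(k) \le \tau'(r) \le r'$, so $\sigma(\sigma'(k)) = \tau(\tau'(k))$. Also $\sG(X)$ is smooth as an $\cI^{\rm big}$-set: the same $r$ witnesses it, since for $\tau \in \cI^{\rm big}_{>r}$ one may take $\sigma = 1$. Functoriality of $\sG$ on morphisms is immediate since $\cI$-equivariant maps between smooth $\cI$-sets are automatically $\cI^{\rm big}$-equivariant by construction.

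To finish, I would check the two composites are identities. For $\sG \circ \sF$: starting from a smooth $\cI^{\rm big}$-set $Y$, restricting and re-extending gives back the same underlying set with, for $\tau \in \cI^{\rm big}$ and $x \in Y$, the action $\tau \cdot x = \sigma \cdot x$ where $\sigma \in \cI$ agrees with $\tau$ on $\{1,\ldots,r\}$ and $\cI_{>r}^{\rm big}$ fixes $x$ (such $r$ exists by smoothness of $Y$); but then $\sigma^{-1}\tau$, interpreted appropriately, lies in $\cI^{\rm big}_{>r}$ — more precisely $\tau$ and $\sigma$ differ by an element fixing $\{1,\ldots,r\}$ — so $\sigma \cdot x = \tau \cdot x$ already in $Y$, giving $\sG(\sF(Y)) = Y$. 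For $\sF \circ \sG$: starting from a smooth $\cI$-set $X$, extending to $\cI^{\rm big}$ and restricting back to $\cI$ recovers the original action, because for $\sigma \in \cI \subseteq \cI^{\rm big}$ we may take the ``approximating'' element to be $\sigma$ itself. Hence $\sF$ is an isomorphism of categories, with inverse $\sG$. The one genuinely delicate point, which deserves the most care in writing up, is the well-definedness argument in paragraph two; everything else is routine bookkeeping with the generators and the fundamental relation.
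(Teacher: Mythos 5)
Your proposal is correct and follows essentially the same strategy as the paper: construct an explicit quasi-inverse $\sG$ by approximating an arbitrary $\tau \in \cI^{\mathrm{big}}$ acting on a smooth element $x$ by a tame element, and use smoothness of $x$ to show the approximation is good enough. The paper organizes this slightly differently: it fixes a \emph{canonical} factorization $\tau = \sigma_1 \sigma_2$ with $\sigma_1 \in \cI$ determined by $\tau|_{\{1,\ldots,r\}}$ and $\sigma_2 \in \cI^{\mathrm{big}}_{>r}$, and defines $\tau \cdot x := \sigma_1 \cdot x$. This makes the choice of approximant canonical and so there is no separate well-definedness step beyond checking independence of $r$; the verification that the result is a smooth $\cI^{\mathrm{big}}$-action (which you carry out) is left to the reader.

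Two remarks on your well-definedness argument, which you rightly identify as the delicate point. First, using the strictly increasing canonical form (Proposition~\ref{prop:alphagen}(a)) does work, but your stated reason is slightly off: after applying the common prefix $\alpha_{n_j}\cdots\alpha_{n_1}$ (the factors with $n_i - (i-1) \le r$) to $x$, the resulting element $y$ is fixed by $\cI_{>r+j}$, not merely by $\cI_{>r}$ — this is exactly your ``conjugating shifts the index up'' observation made precise — and it is the bound $r+j$, not $r$, that one must compare the remaining indices against (fortunately $n_{j+1} > r+j$, so all is well). Second, the argument is cleaner if you instead use the weakly increasing form of Proposition~\ref{prop:alphagen}(b): writing $\sigma = \alpha_1^{n_1}\alpha_2^{n_2}\cdots$, one has $\sigma(k) = k + n_1 + \cdots + n_k$, so agreement of $\sigma,\sigma'$ on $\{1,\ldots,r\}$ forces $n_i = n_i'$ for $i \le r$, hence $\sigma = \alpha_1^{n_1}\cdots\alpha_r^{n_r}\rho$ and $\sigma' = \alpha_1^{n_1}\cdots\alpha_r^{n_r}\rho'$ with $\rho,\rho' \in \cI_{>r}$, which act trivially on $x$ \emph{before} the common prefix is applied; no tracking of shifting smoothness bounds is needed. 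This is also how the paper's choice of $\sigma_1$ works implicitly.
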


\begin{proof}
Let $X$ be a smooth $\cI$-set. We claim that there is a unique smooth action of $\cI^{\rm big}$ on $X$ extending that of $\cI$. To see this, suppose that $\sigma \in \cI^{\rm big}$ and $x \in X$. Let $r$ be such that $x$ is fixed by $\cI_{>r}$. We can then factor $\sigma$ as $\sigma_1 \sigma_2$ where $\sigma_1 \in \cI$ and $\sigma_2 \in \cI^{\rm big}_{>r}$: indeed, we take $\sigma_1(n)=\sigma(n)$ for $1 \le n \le r$, and then put $\sigma_1(n)=n+\ell$ for $n \ge r$, where $\ell=\sigma(r)-r$; we then take $\sigma_2(n)=1$ for $1 \le n \le r$ and $\sigma_2(n)=\sigma(n)-\ell$ for $n \ge r+1$. We define $\sigma x$ to be $\sigma_1 x$. We leave to the reader the routine verification that this does indeed define a smooth action of $\cI^{\rm big}$ on $X$. To verify uniqueness, suppose that $\bullet$ is a second smooth action of $\cI^{\rm big}$ on $X$ extending the action of $\cI$. Let $\sigma$ and $x$ be as above, let $r$ be such that $x$ is fixed by $\cI^{\rm big}_{>r}$ under the action $\bullet$ (and thus by $\cI_{>r}$), and factor $\sigma$ as $\sigma_1 \sigma_2$ as above. Then $\sigma \bullet x = \sigma_1 \bullet x = \sigma_1 x = \sigma x$, which shows that $\bullet$ agrees with the previously defined action.

The above construction defines a functor
\begin{displaymath}
\sG \colon \{ \text{smooth $\cI$-sets} \} \to \{ \text{smooth $\cI^{\rm big}$-sets} \}.
\end{displaymath}
It is clear that $\sF \circ \sG$ and $\sG \circ \sF$ are both equal to the identity functor, and so $\sF$ and $\sG$ are mutually inverse isomorphisms of categories.
\end{proof}

The following proposition and corollary give a convenient criterion for an element of an $\cI$-set to be smooth.

\begin{proposition} \label{prop:fixed}
Let $X$ be an $\cI$-set and let $x \in X$. Suppose $\alpha_n x = x$. Then $x$ is fixed by all of $\cI_{\ge n}$.
\end{proposition}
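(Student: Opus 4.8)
The plan is to prove, by induction on $k \ge n$, that $\alpha_k x = x$. Granting this, the proposition follows at once: by definition (see \S\ref{ss:smooth}), $\cI_{\ge n}$ is the submonoid of $\cI$ generated by the $\alpha_k$ with $k \ge n$, so an element of an $\cI$-set that is fixed by each of these generators is fixed by the entire submonoid.

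The base case $k = n$ is exactly the hypothesis. For the inductive step, assume $\alpha_k x = x$ for some $k \ge n$, and aim to show $\alpha_{k+1} x = x$. Since $k+1 > n \ge 1$, the fundamental relation (Proposition~\ref{prop:alpharel}) applies and yields $\alpha_{k+1}\alpha_n = \alpha_n\alpha_{k}$. Evaluating both sides at $x$ and repeatedly using $\alpha_n x = x$ together with the inductive hypothesis $\alpha_k x = x$:
\[
\alpha_{k+1} x = \alpha_{k+1}(\alpha_n x) = (\alpha_{k+1}\alpha_n)x = (\alpha_n\alpha_k)x = \alpha_n(\alpha_k x) = \alpha_n x = x .
\]
This closes the induction and completes the proof.

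There is really no obstacle here; the only points requiring a moment's care are that the fundamental relation must be applied with the larger index on the left (which is why we need $k+1 > n$, not merely $k \ge n$), and that one should invoke the generator description of $\cI_{\ge n}$ to pass from "$x$ is fixed by each $\alpha_k$, $k \ge n$" to "$x$ is fixed by $\cI_{\ge n}$." One could equally well start the induction by deriving $\alpha_{n+1}x = x$ directly from $\alpha_{n+1}\alpha_n = \alpha_n\alpha_n$ and $\alpha_n^2 x = \alpha_n x = x$, but the uniform inductive formulation above already covers this case.
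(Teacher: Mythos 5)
Your proof is correct and takes essentially the same approach as the paper: an induction on the index using the fundamental relation from Proposition~\ref{prop:alpharel}. The only cosmetic difference is that the paper applies the relation in the form $\alpha_{i+1}\alpha_i = \alpha_i^2$ at the current index $i$, whereas you anchor to the original index $n$ via $\alpha_{k+1}\alpha_n = \alpha_n\alpha_k$; the two instances are equivalent and lead to the same short inductive computation.
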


\begin{proof}
Suppose $\alpha_i x = x$. We have the identity $\alpha_{i+1} \alpha_i = \alpha_i^2$ in $\cI$ by Proposition~\ref{prop:alpharel}. Applying both sides to $x$, we find $\alpha_{i+1} x = x$. We thus see that $x$ is fixed by $\alpha_i$ for all $i \ge n$, and these generate $\cI_{\ge n}$.
\end{proof}

\begin{corollary}
An element of an $\cI$-set is smooth if and only if it is fixed by some $\alpha_n$.
\end{corollary}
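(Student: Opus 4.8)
The plan is to read off both implications directly from the definition of smoothness together with Proposition~\ref{prop:fixed}, which already packages all of the real content. No auxiliary construction is needed; this is purely a matter of unwinding definitions in the two directions.

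For the ``only if'' direction, I would argue as follows. Suppose $x \in X$ is smooth, so by definition $x$ is fixed by $\cI_{>r}$ for some $r$. The generator $\alpha_{r+1}$ fixes each of $1, \ldots, r$ and hence lies in $\cI_{>r}$; therefore $\alpha_{r+1} x = x$, exhibiting $x$ as fixed by a single $\alpha_n$ (namely $n = r+1$). For the ``if'' direction, suppose conversely that $\alpha_n x = x$ for some $n$. Then Proposition~\ref{prop:fixed} immediately gives that $x$ is fixed by every element of $\cI_{\ge n}$, the submonoid generated by the $\alpha_i$ with $i \ge n$. Since $\cI_{\ge n}$ coincides with $\cI_{>n-1}$ (both being the submonoid generated by $\alpha_k$ for $k \ge n$, equivalently the elements fixing $1, \ldots, n-1$), this says precisely that $x$ is fixed by $\cI_{>n-1}$, so $x$ is smooth.

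I do not expect any genuine obstacle: the statement is essentially a restatement of Proposition~\ref{prop:fixed} combined with the observation that membership in $\cI_{>r}$ is detected by the single generator $\alpha_{r+1}$. The only point meriting a moment's care is the trivial notational bookkeeping identifying $\cI_{\ge n}$ with $\cI_{>n-1}$, so that the conclusion of Proposition~\ref{prop:fixed} lines up exactly with the definition of smoothness phrased in terms of the $\cI_{>r}$.
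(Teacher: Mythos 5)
Your proof is correct and is exactly the intended argument: the corollary follows immediately from Proposition~\ref{prop:fixed} in one direction, and from the observation that $\alpha_{r+1} \in \cI_{>r}$ in the other. The paper leaves the proof implicit, and your identification $\cI_{\ge n} = \cI_{>n-1}$ is the right bookkeeping to line the two statements up.
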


\section{Representation categories} \label{s:reps}

\subsection{The category of all $\cI$-modules} \label{ss:bigrep}

Fix, for the remainder of the paper, a field $\bk$. We let $\REP(\cI)$ be the category of all representations of $\cI$ over $\bk$. Thus $\REP(\cI)$ is simply the category of left modules over the monoid algebra $\bk[\cI]$. As such, it is a Grothendieck abelian category. This category will only play a tangential role in this paper.

\subsection{The category of smooth $\cI$-modules} \label{ss:rep}

Let $\Rep(\cI)$ be the full subcategory of $\REP(\cI)$ spanned by smooth modules. As any subquotient of a smooth module is again smooth, we see that $\Rep(\cI)$ is an abelian subcategory of $\REP(\cI)$.

Let $\{M_i\}_{i \in I}$ be a family of smooth $\cI$-modules. It is clear that the coproduct $\bigoplus_{i \in I} M_i$ (taken in the category of all $\cI$-modules) is again smooth, and is thus the coproduct in $\Rep(\cI)$. It follows that arbitrary colimits in $\Rep(\cI)$ can be computed in $\REP(\cI)$. We thus see that $\Rep(\cI)$ is cocomplete and that filtered colimits in $\Rep(\cI)$ are exact. It is clear that $\Rep(\cI)$ has a generator: simply take the direct sum of all smooth cyclic $\cI$-modules. (Or, more accurately, one from each isomorphism class. It is clear that the collection of all isomorphism classes forms a set.) We thus see that $\Rep(\cI)$ is a Grothendieck abelian category.

For an $\cI$-module $M$, we let $M^{\sm}$ be the set of all smooth elements in $M$; it is easily verified to be an $\cI$-submodule of $M$, and is clearly the maximal smooth submodule. Explicitly,
\begin{displaymath}
M^{\sm} = \bigcup_{r \ge 1} M^{\cI_{\ge r}} = \varinjlim_{r \to \infty} M^{\cI_{\ge r}}.
\end{displaymath}
Here $M^{\cI_{\ge r}}$ denotes the subspace of $M$ consisting of elements invariant under $\cI_{\ge r}$. One easily sees that $M \mapsto M^{\sm}$ is right adjoint to the inclusion functor $\Rep(\cI) \to \REP(\cI)$.

Let $\{M_i\}_{i \in I}$ be a diagram of smooth $\cI$-modules. The limit $\varprojlim M_i$, as computed in $\REP(\cI)$, need not be smooth. We define the {\bf smooth limit}, denoted $\varprojlim' M_i$, to be the maximal smooth submodule of $\varprojlim M_i$. Explicitly,
\begin{displaymath}
\varprojlim_{i \in I}\!{}' M_i
= \varinjlim_{r \to \infty} \left [\varprojlim_{i \in I} M_i \right]^{\cI_{\ge r}}
= \varinjlim_{r \to \infty} \left[\varprojlim_{i \in I} M_i^{\cI_{\ge r}} \right]
\end{displaymath}
In the second step, we have used the fact that formation of invariants commutes with limits. One readily verifies that $\varprojlim' M_i$ is the limit of the diagram $\{M_i\}$ in the category $\Rep(\cI)$. When the category $I$ is discrete, so that limits over $I$ are just products, we write $\prod'_{i \in I} M_i$ for the smooth limit, and refer to it as the {\bf smooth product}. One has the explicit description
\begin{displaymath}
\prod_{i \in I}\!{}'\, M_i = \varinjlim_{r \to \infty} \left[ \prod_{i \in I} M_i^{\cI_{\ge r}} \right]
\end{displaymath}
Of course, the smooth product is the product in the category $\Rep(\cI)$.

\begin{proposition}
Products in $\Rep(\cI)$ are not exact, that is, Grothendieck's (AB4*) axiom does not hold.
\end{proposition}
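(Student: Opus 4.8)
The plan is to exhibit a family of short exact sequences in $\Rep(\cI)$ whose product fails to be exact on the right, i.e.\ to show that forming the product of surjections need not yield a surjection. The cleanest source of such sequences is the augmentation map. Recall that $\bA^0 = \bk$ is the trivial module, and that each principal module $\bA^r$ surjects onto $\bk$; but the most useful sequence here is $0 \to K_n \to \bA^n \to \bk \to 0$ where $K_n$ is the augmentation kernel, or, even simpler, a sequence of \emph{finite length} modules built from the simples $\bB^r$. Since each $\bB^r$ is a $1$-dimensional module on which $\alpha_i$ acts by $1$ for $i>r$ and by $0$ for $i \le r$, one checks directly that there is a non-split extension $0 \to \bB^{r} \to E_r \to \bB^{r-1}\to 0$ for suitable $r$ (the $\Ext^1$ computations among simples promised in \S\ref{s:finlen} furnish these). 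One then contemplates the product $\prod'_{n} E_{r(n)}$ against $\prod'_n \bB^{r(n)-1}$, choosing $r(n) \to \infty$.

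The key point that makes the product non-exact is precisely the discrepancy, visible already in the explicit formula
\begin{displaymath}
\textstyle\prod'_{i \in I} M_i = \varinjlim_{r \to \infty}\left[\prod_{i \in I} M_i^{\cI_{\ge r}}\right],
\end{displaymath}
between the smooth product and the honest product: an element of $\prod' M_i$ must be \emph{uniformly} smooth, fixed by a single $\cI_{\ge r}$ working for all coordinates at once. So if $M_n \to N_n$ is surjective for each $n$, a uniformly smooth element $(y_n)$ of $\prod' N_n$ lifts coordinatewise to some $(x_n) \in \prod M_n$, but there may be no uniform $r$ with $\alpha_r x_n = x_n$ for all $n$, even when such uniform bounds exist for the $y_n$ and the kernels. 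To make this concrete I would take a sequence of surjections $\pi_n \colon \bA^{1} \to \bB^{0} = \bk$ (each the augmentation), so that the target side $\prod'_n \bk$ consists of \emph{all} sequences (every element of $\bk$ is fixed by all of $\cI$, so here the smooth product is the full product). Pick the constant sequence $(1,1,1,\ldots)$. A lift in $\prod_n \bA^{1}$ is forced to pick, in the $n$th slot, an element of $\bA^1$ of augmentation $1$; but one shows that no choice of such lifts is fixed by any common $\cI_{\ge r}$, because in $\bA^1$ the fixed space of $\cI_{\ge r}$ consists only of (combinations of) the basis vectors $e_i$ with $i < r$, and these have augmentation zero — the only $\cI_{\ge r}$-fixed vector with augmentation $1$ would have to involve $e_i$ with $i \ge r$, hence be non-invariant. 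Wait: more carefully, $\bA^1$ has basis $e_1, e_2, \ldots$ with $\alpha_k e_i = e_i$ if $i<k$ and $e_{i+1}$ if $i \ge k$; the augmentation sends each $e_i \mapsto 1$; a vector $\sum c_i e_i$ is fixed by $\alpha_k$ iff $c_i = c_{i+1}$ for $i \ge k$, which together with finite support forces $c_i = 0$ for $i \ge k$. Hence any $\cI_{\ge r}$-invariant vector in $\bA^1$ lies in $\langle e_1,\ldots,e_{r-1}\rangle$ and has augmentation $c_1 + \cdots + c_{r-1}$, which \emph{can} be $1$. So a single coordinate is fine; the obstruction must be the failure of a \emph{uniform} $r$. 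Thus the right diagnostic is: the lift $(x_n)_n$ would need $x_n$ fixed by a common $\cI_{\ge r}$, i.e.\ $x_n \in \langle e_1,\ldots,e_{r-1}\rangle$ for every $n$, which is possible; so in fact $\prod'_n \bA^1 \to \prod'_n \bk$ \emph{is} surjective here, and I must instead engineer the kernel to obstruct uniformity.

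The fix is to use kernels $K_n \subset \bA^1$, $K_n = \ker(\bA^1 \to \bk)$, with sequence $0 \to K_n \to \bA^1 \to \bk \to 0$, but choose the \emph{target} sequences not all equal: replace $\bk$ by simples $\bB^{m(n)}$ with $m(n)\to\infty$, using surjections $\bA^{m(n)+1} \twoheadrightarrow \bB^{m(n)}$ (or $\bA^{r}$ for suitable $r$). On the target, $\prod'_n \bB^{m(n)}$: an element is a sequence $(\lambda_n)$; for it to be uniformly smooth we need a common $\cI_{\ge r}$ fixing every $\bB^{m(n)}$, but $\alpha_k$ acts by $0$ on $\bB^{m(n)}$ once $k \le m(n)$, so $\cI_{\ge r}$ fixes $\bB^{m(n)}$ pointwise iff $r > m(n)$; since $m(n) \to \infty$ this fails for every fixed $r$ unless $\lambda_n = 0$ for all but finitely many $n$. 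Hence $\prod'_n \bB^{m(n)} = \bigoplus_n \bB^{m(n)}$. Now I would instead work with an exact sequence of \emph{torsion-free} modules so the target product does not collapse — concretely take $0 \to \bA^1 \xrightarrow{\,j\,} \bA^1 \to Q_n \to 0$ using an injection whose cokernel is nonzero finite length, engineered so that lifts of a fixed uniformly-smooth element of $\prod' Q_n$ are forced to have unbounded "support depth". I expect the main obstacle to be exactly this: \emph{choosing the test sequences and lifts so that one can prove no uniform smoothness bound exists}, i.e.\ producing an element of $\prod' N_n$ for which \emph{every} coordinatewise lift to $\prod M_n$ has $\alpha_r x_n \ne x_n$ for infinitely many $n$, no matter $r$. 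Once such a configuration is pinned down, non-exactness of $\prod'$ — hence the failure of (AB4*) — is immediate from the formula for the smooth product, since surjectivity of $\prod' M_i \to \prod' N_i$ would supply a uniformly smooth lift. The rest is the routine verification (omitted here) that the chosen sequences are indeed short exact in $\Rep(\cI)$ and that the collapse/non-collapse computations of the relevant invariant subspaces $(\,\cdot\,)^{\cI_{\ge r}}$ are as claimed.
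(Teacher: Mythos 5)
You have the right diagnostic in hand — an element of the smooth product $\prod' M_i$ must be \emph{uniformly} smooth, fixed by a single $\cI_{\ge r}$ across all coordinates, and you correctly compute $(\bA^1)^{\cI_{\ge r}} = \langle e_1,\ldots,e_{r-1}\rangle$ — but the write-up never actually produces a family of surjections whose smooth product fails to surject. Your first candidate (the constant family $\bA^1 \to \bk$) you correctly discard as giving a surjective smooth product; your second (targets $\bB^{m(n)}$ with $m(n) \to \infty$) you abandon because the target product collapses to a direct sum; and your final sketch is impossible: there is no short exact sequence $0 \to \bA^1 \to \bA^1 \to Q_n \to 0$ with $Q_n$ nonzero of finite length, because $\Hom_\cI(\bA^1, \bA^1) \cong (\bA^1)^{\cI_{>1}} = \bk e_1$ by the representing property of the principal module, so every endomorphism of $\bA^1$ is a scalar and hence either zero or an isomorphism. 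What remains is a plan for what a counterexample ought to look like, not a counterexample.

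The fix is to vary the \emph{source} rather than the target, exploiting exactly the collapse phenomenon you already noticed. Take $M_n = \bigoplus_{k \ge n} \bk e_k$, the cyclic submodule of $\bA^1$ generated by $e_n$, with augmentation $\epsilon_n \colon M_n \to \bk$. Since an $\cI_{\ge r}$-invariant element of $\bA^1$ is supported on $e_1,\ldots,e_{r-1}$, one has $M_n^{\cI_{\ge r}} = 0$ whenever $n \ge r$; hence a uniformly smooth $(x_n) \in \prod'_n M_n$, say $\cI_{\ge r}$-invariant, satisfies $x_n = 0$ for $n \ge r$, so its image under $\prod'_n \epsilon_n$ lies in $\bigoplus_n \bk$, a proper submodule of $\prod'_n \bk = \prod_n \bk$. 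Thus $\prod'_n \epsilon_n$ is not surjective and (AB4*) fails.
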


\begin{proof}
For an integer $n \ge 1$, let $M_n=\bigoplus_{k \ge n} \bk e_k$, with $\cI$ acting by $\sigma e_k=e_{\sigma(k)}$. This is a smooth $\cI$-module. Let $\epsilon_n \colon M_n \to \bk$ be the augmentation map (i.e., the linear map sending each $e_i$ to 1), which is a surjection of $\cI$-modules (giving $\bk$ the trivial action). We claim that $\prod'_{n \ge 1} \epsilon_n$ is not surjective. Indeed, suppose $x=(x_n)_{n \ge 1}$ is an element of $\prod'_{n \ge 1} M_n$. Then, by definition, $x$ belongs to $\prod_{n \ge 1} M_n^{\cI_{\ge r}}$ for some $r$. Note that $M_n^{\cI_{\ge r}}=0$ for $n \ge r$. Thus $x_n=0$ for $n \ge r$, and so $\epsilon_n(x_n)=0$ for $n \ge r$ as well. We thus see that the image of $\prod'_{n \ge 1} \epsilon_n$ is contained in $\bigoplus_{n \ge 1} \bk$, which is a proper submodule of $\prod'_{n \ge 1} \bk=\prod_{n \ge 1} \bk$.
\end{proof}

Due to the proposition, we must take care when dealing with the derived functors of $\lim'$: for example, we are not guaranteed that $\rR^i \lim'$ vanishes for $i>1$, or that Mittag--Leffler systems are $\lim'$-acyclic.

\subsection{The category of graded $\cI$-modules} \label{ss:urep}

A {\bf graded $\cI$-module} is an $\cI$-module $M$ equipped with a decomposition $M=\bigoplus_{n \ge 0} M_n$ such that two conditions hold:
\begin{enumerate}
\item Given $x \in M_n$ and $\sigma \in \cI$, the element $\sigma x$ belongs to $M_{\sigma(n)}$.
\item Given $x \in M_n$ and $\sigma \in \cI$ such that $\sigma(n)=n$, we have $\sigma x = x$.
\end{enumerate}
By convention, every element of $\cI$ fixes~0; thus every degree~0 element of a graded $\cI$-module is $\cI$-invariant. We let $\uRep(\cI)$ denote the category of graded $\cI$-modules. We use the notation $\uHom_{\cI}$ for the $\Hom$ sets in $\uRep(\cI)$; that is, for $M,N \in \uRep(\cI)$, we let $\uHom_{\cI}(M,N)$ denote the space of all homogeneous $\cI$-linear maps $M \to N$. If $M$ is a graded $\cI$-module then the action of $\cI$ on $M$ is smooth by condition~(b). We thus have a forgetful functor
\begin{displaymath}
\Phi \colon \uRep(\cI) \to \Rep(\cI).
\end{displaymath}

\begin{proposition} \label{prop:Phi-cont}
The forgetful functor $\Phi$ is continuous and cocontinuous.
\end{proposition}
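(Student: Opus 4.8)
The plan is to verify continuity (preservation of limits) and cocontinuity (preservation of colimits) separately, in each case by reducing the statement about graded $\cI$-modules to the corresponding fact about ordinary $\cI$-modules, which is easy because $\REP(\cI)$ is a Grothendieck abelian category and because the grading decomposition is itself a (bi)product.

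\emph{Cocontinuity.} First I would show $\Phi$ preserves arbitrary colimits. Let $\{M_i\}_{i\in I}$ be a diagram in $\uRep(\cI)$. The key observation is that a colimit in $\uRep(\cI)$ can be computed degreewise: set $N_n = \varinjlim_{i\in I} (M_i)_n$, taken in $\Vec$, and $N=\bigoplus_{n\ge 0} N_n$. One checks that $N$ carries a natural $\cI$-action satisfying conditions (a) and (b), and that $N$, with the evident maps, is the colimit of $\{M_i\}$ in $\uRep(\cI)$. On the other hand, the colimit of the underlying ungraded modules $\{\Phi(M_i)\}$ in $\Rep(\cI)$ is computed in $\REP(\cI)$ (as recalled in \S\ref{ss:rep}), and since the monoid algebra $\bk[\cI]$ acts compatibly with the grading, this ungraded colimit is canonically $N$ with its grading forgotten. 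Hence $\Phi$ preserves colimits. (Alternatively, one can simply exhibit the left adjoint of $\Phi$: the functor $\Rep(\cI)\to\uRep(\cI)$ is not quite a section, but $\Phi$ does have a right adjoint given by a suitable graded-invariants construction, so $\Phi$ automatically preserves colimits; however the degreewise argument is cleanest and self-contained.)

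\emph{Continuity.} Now I would show $\Phi$ preserves limits. Again the point is that limits in $\uRep(\cI)$ are computed degreewise: for a diagram $\{M_i\}$, put $P_n=\varprojlim_{i\in I}(M_i)_n$ in $\Vec$ and $P=\bigoplus_{n\ge 0} P_n$; one verifies $P$ is a graded $\cI$-module and is the limit in $\uRep(\cI)$. The subtlety is that the limit of the underlying smooth modules is the \emph{smooth} limit $\varprojlim'$, not the naive limit in $\REP(\cI)$ — but here the grading saves us. Indeed, using condition (b), an element of $\prod_i \Phi(M_i)$ of degree $n$ is automatically fixed by every $\sigma$ with $\sigma(n)=n$, in particular by $\cI_{\ge n+1}$; so the degree-$n$ part of the naive product is already smooth, and $\bigoplus_{n\ge 0}\bigl(\prod_i M_i\bigr)_n$ — which is exactly $P$ — is the smooth limit. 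Comparing with the explicit formula for $\varprojlim'$ in \S\ref{ss:rep} confirms $\Phi(\varprojlim_{\uRep} M_i)=\varprojlim'_{\Rep}\Phi(M_i)$.

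\emph{Main obstacle.} The routine part is checking that the degreewise constructions really do satisfy the graded $\cI$-module axioms and the universal properties; the only genuinely non-formal point — and the one worth stating carefully — is that in the graded setting the limit is computed by an honest product/inverse limit, with no smoothing correction needed, precisely because homogeneity forces each graded piece to be fixed by a cofinite submonoid $\cI_{\ge n+1}$. This is the step I expect to require the most care, since it is exactly where the graded and smooth worlds differ (recall from \S\ref{ss:rep} that $\Rep(\cI)$ fails (AB4*)), and it is what makes the statement non-trivial rather than a formality.
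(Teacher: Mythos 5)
Your argument is correct and is essentially the paper's own: the paper also reduces (via exactness) to products and coproducts, and for products it performs exactly the computation you describe — identifying $\bigl(\prod_i \Phi(M_i)\bigr)^{\cI_{\ge r}}$ with $\bigoplus_{n<r}\prod_i (M_i)_n$ and taking the union over $r$ — so the homogeneity of each graded piece is what makes the naive degreewise product already smooth. Your side remark about a right adjoint "given by a suitable graded-invariants construction" is also consistent with the paper (\S\ref{ss:phiright} gives $F(N)_r = N^{\cI_{>r}}$ explicitly), though the paper derives the existence of that adjoint from this very proposition rather than the other way around, so you were right to set it aside in favor of the direct degreewise argument.
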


\begin{proof}
Since the forgetful functor is exact, it suffices to show that it commutes with products and co-products. Co-products is clear: for both graded and smooth modules, corproducts are computed using the coproduct of the underlying vector space. We now treat products. Let $\{M_i\}_{i \in I}$ be a family of graded $\cI$-modules. We have
\begin{displaymath}
\left( \prod_{i \in I} \left[ \bigoplus_{n \ge 0} M_{i,n} \right] \right)^{\cI_{\ge r}}
= \prod_{i \in I} \left[ \bigoplus_{0 \le n < r} M_{i,n} \right]
= \bigoplus_{0 \le n < r} \left[ \prod_{i \in I} M_{i,n} \right].
\end{displaymath}
In the first equality, we used the fact that taking invariants commutes with products and coproducts, as well as the fact that $M_{i,n}^{\cI_{\ge r}}$ vanishes for $n \ge r$, and equals $M_{i,n}$ for $n<r$. In the second step, we used the fact that products commute with finite coproducts. Taking the union over $r$, we now find
\begin{displaymath}
\prod_{i \in I}{}'\! \left[ \bigoplus_{n \ge 0} M_{i,n} \right] = \bigoplus_{n \ge 0} \left[ \prod_{i \in I} M_{i,n} \right].
\end{displaymath}
The left side computes the result of first applying the forgetful functor and then taking products, while the right side computes the result of first taking products and then forgetting. The result follows.
\end{proof}

Let $M$ be a graded $\cI$-module. We let $M_{\ge n}$ be the subspace that is $M_k$ in degree $k \ge n$ and~0 in degree $k<n$. This is clearly a homogeneous $\cI$-submodule of $M$. We let $M^{<n}=M/M_{\ge n}$. We also put $M_+=M_{\ge 1}$. We say that $M$ is {\bf pure} if $M_0=0$, and denote the category of pure modules by $\uRep(\cI)_+$.

\begin{remark}
Since elements of $\cI$ fix~0, the degree~0 part of a graded $\cI$-module cannot interact with the positive degree part. In other words, the category $\uRep(\cI)$ decomposes as $\Vec \oplus \uRep(\cI)_+$. It may therefore seem somewhat odd to include the degree~0 piece in the definition. However, we will see several instances in which having the degree~0 part is more natural; see, for example, Remarks~\ref{rmk:catzero} and~\ref{rmk:urepbest}.
\end{remark}

\subsection{Coinvariants} \label{ss:inv}

Let $M$ be a an $\cI$-module. We write $M_{\cI_{\ge n}}$ for the coinvariant space under $\cI_{\ge n}$. This is the maximal quotient of $M$ on which $\cI_{\ge n}$ acts trivially; explicitly, it is the quotient of $M$ by the subspace spanned by elements of the form $\sigma x -x$ with $\sigma \in \cI_{\ge n}$ and $x \in M$. Let $\fa_n$ be the right ideal of $\bk[\cI]$ generated by the elements $\sigma-1$ with $\sigma \in \cI_{\ge n}$. Thus $M_{\cI_{\ge n}}=M/\fa_n M$.

\begin{proposition} \label{prop:ideal}
The right ideal $\fa_n$ is a two-sided ideal of $\bk[\cI]$. It is generated, as a left or right ideal, by the elements $\alpha_i-1$ with $i \ge n$. 
\end{proposition}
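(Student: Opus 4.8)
The plan is to establish the two-sided ideal property first, and then the statement about generators, with the generator statement actually doing most of the work for both parts. Recall $\fa_n$ is defined as the \emph{right} ideal generated by all $\sigma - 1$ with $\sigma \in \cI_{\ge n}$. To show it is two-sided, it suffices to show it is closed under left multiplication by arbitrary elements of $\bk[\cI]$, and since $\cI$ generates $\bk[\cI]$ as a $\bk$-module, it suffices to show $\gamma(\sigma - 1) \in \fa_n$ for all $\gamma \in \cI$ and $\sigma \in \cI_{\ge n}$. Write $\gamma(\sigma - 1) = \gamma\sigma - \gamma = (\gamma\sigma\gamma^{-1} - 1)\gamma - (\gamma - \gamma)$; of course $\gamma$ is not invertible in $\cI$, so this must be handled more carefully. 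The key observation is the conjugation-type identity coming from the fundamental relation (Proposition~\ref{prop:alpharel}): for $\gamma \in \cI$ and $i$ sufficiently large relative to the length and support of $\gamma$, one has $\gamma \alpha_i = \alpha_{i + \ell(\gamma)} \gamma$ — i.e., $\alpha_i$ commutes past $\gamma$ at the cost of shifting its index up by $\ell(\gamma)$. Iterating the fundamental relation in the form $\alpha_n \alpha_m = \alpha_m \alpha_{n-1}$ gives exactly this.

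Granting that identity, I would argue as follows for the generator claim. First, $\fa_n$ is generated as a right ideal by the $\alpha_i - 1$ with $i \ge n$: every $\sigma \in \cI_{\ge n}$ is a product $\alpha_{m_1} \cdots \alpha_{m_s}$ with all $m_j \ge n$ by Proposition~\ref{prop:alphagen}, and then the telescoping identity
\begin{displaymath}
\alpha_{m_1} \cdots \alpha_{m_s} - 1 = \sum_{j=1}^{s} \alpha_{m_1} \cdots \alpha_{m_{j-1}} (\alpha_{m_j} - 1)
\end{displaymath}
exhibits $\sigma - 1$ as a left combination of the $\alpha_{m_j} - 1$; but since we only want a right-ideal statement, we instead rewrite it as a telescoping sum with the $(\alpha_{m_j}-1)$ on the \emph{left}, namely $\sigma - 1 = \sum_{j=1}^s (\alpha_{m_j} - 1)\alpha_{m_{j+1}} \cdots \alpha_{m_s}$, which lands in the right ideal generated by the $\alpha_i - 1$, $i \ge n$. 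Hence that smaller right ideal equals $\fa_n$. For the two-sided property, it now suffices to check $\gamma(\alpha_i - 1) \in \fa_n$ for all $\gamma \in \cI$ and $i \ge n$. Using the commutation identity $\gamma \alpha_i = \alpha_{i + \ell(\gamma)}\gamma$, valid once $i$ is large enough — and for small $i$ one uses the finitely many remaining relations, which can be checked directly, or better, one notes that $\gamma \alpha_i$ can always be rewritten via Proposition~\ref{prop:alphagen} as $\alpha_{j} \gamma'$ with $j \ge n$ and $\gamma' \in \cI$ when $i \ge n$ (this is the real content) — we get $\gamma(\alpha_i - 1) = \alpha_j \gamma' - \gamma = (\alpha_j - 1)\gamma' + (\gamma' - \gamma)$. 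The first term lies in $\fa_n$; for the second, one observes that $\gamma' - \gamma$ itself is forced into $\fa_n$ because $\gamma'$ and $\gamma$ have the same "truncation" below level $n$ — concretely, $\gamma^{-1}\gamma'$ (interpreting this appropriately) lies in $\cI_{\ge n}$, so $\gamma' = \gamma \cdot (\text{element of } \cI_{\ge n})$ up to lower-order corrections, and one closes by induction on length. Finally, the left-ideal generation claim follows from the two-sided claim together with the telescoping identity above written with the generators on the left.

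The main obstacle I anticipate is the bookkeeping around the commutation identity $\gamma \alpha_i = \alpha_{j}\gamma'$: one must verify that when $i \ge n$ the shifted index $j$ is still $\ge n$, and that the "remainder" $\gamma' - \gamma$ genuinely lies in $\fa_n$ rather than merely having the right length. The cleanest route is probably to avoid case analysis on small $i$ entirely by working with the explicit formula for $\sigma(k)$ from the proof of Proposition~\ref{prop:alphagen}(a): given $\gamma$ with canonical decomposition and $i \ge n$, directly compute the canonical decomposition of $\gamma\alpha_i$ and read off both $j$ and $\gamma'$, then verify $\gamma' - \gamma \in \fa_n$ by expressing $\gamma'$ as $\gamma \tau$ for an explicit $\tau \in \cI_{\ge n}$. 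This is routine but fiddly; everything else is short telescoping algebra.
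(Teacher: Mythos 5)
Your first step — showing $\fa_n$ is generated as a right ideal by the $\alpha_i-1$, $i\ge n$, via the telescoping identity $\sigma-1=\sum_{j=1}^s(\alpha_{m_j}-1)\alpha_{m_{j+1}}\cdots\alpha_{m_s}$ — is correct and is a clean variant of the paper's induction on length. But the two-sided step has a genuine gap, and as sketched it is circular. You write $\gamma(\alpha_i-1)=(\alpha_j-1)\gamma'+(\gamma'-\gamma)$ where $\gamma\alpha_i=\alpha_j\gamma'$, and then assert that $\gamma'-\gamma\in\fa_n$. But from that very identity, $\gamma'-\gamma=\gamma(\alpha_i-1)-(\alpha_j-1)\gamma'$, and since $(\alpha_j-1)\gamma'\in\fa_n$ this shows $\gamma'-\gamma\in\fa_n$ \emph{if and only if} $\gamma(\alpha_i-1)\in\fa_n$ — which is exactly what you are trying to prove. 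The sketch ``$\gamma'=\gamma\cdot(\text{element of }\cI_{\ge n})$ up to lower-order corrections'' would, even if made precise, express $\gamma'-\gamma$ as $\gamma\cdot(\text{something in }\fa_n)$, and again this is the left-stability of $\fa_n$ under $\gamma$ that is the goal.

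The missing idea is a reduction that the paper makes and that sidesteps the difficulty entirely: since you have already shown $\fa_n$ is generated as a right ideal by the $\alpha_i-1$, and since any $\gamma\in\cI$ is a product of $\alpha_j$'s, to prove $\bk[\cI]\,\fa_n\subset\fa_n$ it suffices (by induction on the length of $\gamma$) to check $\alpha_j(\alpha_i-1)\in\fa_n$ for a \emph{single generator} $\alpha_j$. This is a two-line computation using the fundamental relation: if $j\le i$ then $\alpha_j(\alpha_i-1)=(\alpha_{i+1}-1)\alpha_j$ with $i+1>n$; if $j>i$ then $\alpha_j(\alpha_i-1)=(\alpha_i-1)\alpha_{j-1}+(\alpha_{j-1}-1)-(\alpha_j-1)$ with $i,j-1,j$ all $\ge n$. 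No canonical forms or commutation up to large index are needed. Finally, your claim that the left-ideal statement ``follows from the two-sided claim together with the telescoping identity'' is also incomplete: the telescoping identity shows the left ideal $\fa_n''$ contains the $\sigma-1$, but to get $\fa_n\subset\fa_n''$ you need $\fa_n''$ to be a right ideal as well, which requires the symmetric version of the two-sided argument (as the paper does) rather than following formally.
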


\begin{proof}
We first show that $\fa_n$ is generated as a right ideal by the $\alpha_i-1$ with $i \ge n$. Let $\fa'_n$ be the right ideal generated by the $\alpha_i-1$ with $i \ge n$. We obviously have $\fa'_n \subset \fa_n$; we prove the reverse inclusion. Let $\sigma \in \cI_{\ge n}$. It suffices to show that $\sigma-1 \in \fa_n'$. Since the $\alpha_i$ with $i \ge n$ generated $\cI_{\ge n}$, we can write $\sigma=\tau \alpha_i$ where $i \ge n$ and $\tau \in \cI_{\ge n}$ has shorter length than $\sigma$. We have the identity
\begin{displaymath}
\sigma-1=(\tau-1) \alpha_i+(\alpha_i-1).
\end{displaymath}
We can assume, by induction on length, that $\tau-1$ belongs to $\fa'_n$. Since $\alpha_i-1$ belongs to $\fa'_n$ by definition, we thus find that $\sigma-1 \in \fa'_n$, as required.

We now show that $\fa_n$ is a two-sided ideal. It suffices to show that $\alpha_j (\alpha_i-1) \in \fa_n$ for all $i \ge n$ and all $j \ge 1$. First suppose that $j \le i$. Then
\begin{displaymath}
\alpha_j (\alpha_i-1) = (\alpha_{i+1}-1) \alpha_j,
\end{displaymath}
and thus belongs to $\fa_n$, as $i+1 \ge n$. Now suppose $j>i$. Then
\begin{displaymath}
\alpha_j (\alpha_i-1) = \alpha_i \alpha_{j-1} - \alpha_j
=(\alpha_i-1) \alpha_{j-1}+(\alpha_{j-1}-1)-(\alpha_j-1),
\end{displaymath}
and thus belongs to $\fa_n$, as $i$, $j$, and $j-1$ are all $\ge n$.

Finally, let $\fa_n''$ be the left ideal generated by the $\alpha_i-1$ for $i \ge n$. Since $\fa_n$ is a left ideal containing these elements, we have $\fa_n'' \subset \fa_n$. An argument as in the previous paragraph shows that $\fa_n''$ is a two-sided ideal. Since $\fa_n$ is the smallest right ideal containing the elements $\alpha_i-1$ for $i \ge n$, it is also the smallest two-sided ideal containing these elements, and so $\fa_n''=\fa_n$.
\end{proof}

\begin{corollary}
Let $M$ be an $\cI$-module. Then $\fa_n M$ is an $\cI$-submodule of $M$. Thus $M_{\cI_{\ge n}}$ is canonically an $\cI$-module and the canonical map $M \to M_{\cI_{\ge n}}$ is $\cI$-linear.
\end{corollary}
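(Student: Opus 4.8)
The plan is to deduce this immediately from Proposition~\ref{prop:ideal}, which tells us that $\fa_n$ is a two-sided ideal of $\bk[\cI]$; the corollary is then just the standard fact that a two-sided ideal of a ring $R$ annihilates $R/I$ on both sides and hence $IM$ is an $R$-submodule of any $R$-module $M$. So the real content is already in the proposition, and the proof of the corollary is a one-line formal argument.

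Concretely, I would argue as follows. An $\cI$-module is the same thing as a $\bk[\cI]$-module, so it suffices to show $\fa_n M$ is a $\bk[\cI]$-submodule of $M$. It is clearly a $\bk$-subspace, so I only need stability under the $\cI$-action. A general element of $\fa_n M$ has the form $\sum_i a_i x_i$ with $a_i \in \fa_n$ and $x_i \in M$; for $\sigma \in \cI$ we have $\sigma \cdot \sum_i a_i x_i = \sum_i (\sigma a_i) x_i$, and since $\fa_n$ is a two-sided ideal (Proposition~\ref{prop:ideal}), each $\sigma a_i$ again lies in $\fa_n$, so the result lies in $\fa_n M$. Hence $\fa_n M$ is an $\cI$-submodule.

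For the remaining assertions: once $\fa_n M$ is an $\cI$-submodule, the quotient vector space $M_{\cI_{\ge n}} = M/\fa_n M$ inherits a canonical $\cI$-module structure (as the quotient of an $\cI$-module by a submodule), and the projection $M \to M/\fa_n M$ is by construction $\cI$-linear. I expect there to be no genuine obstacle here — the only subtlety, namely that $\fa_n$ is two-sided and not merely a right ideal, was already handled in Proposition~\ref{prop:ideal}.
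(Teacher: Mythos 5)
Your argument is correct and is exactly the intended one: the paper states this as a corollary of Proposition~\ref{prop:ideal} without further proof, and the content is precisely the standard fact that if $I$ is a two-sided ideal of $R$ then $IM$ is an $R$-submodule of any left $R$-module $M$, which you spell out cleanly.
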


\subsection{$\OI$-modules} \label{ss:oiequiv}

Recall that $\OI$ is the category whose objects are finite totally ordered sets and whose morphisms are order-preserving injections. For $n \in \bN$, let $[n]$ be the set $\{1,\ldots,n\}$ equipped with its usual total order. Every object of $\OI$ is isomorphic to a unique $[n]$. For $1 \le i \le n+1$, let $\beta_{n,i} \colon [n] \to [n+1]$ be the map defined by
\begin{displaymath}
\beta_{n,i}(j) = \begin{cases}
j & \text{if $j<i$} \\
j+1 & \text{if $j \ge i$}
\end{cases}
\end{displaymath}
These are all the morphisms from $[n]$ to $[n+1]$ in the category $\OI$. Moreoever, every morphism in $\OI$ can be realized as a composition of these morphisms. We have the relations
\begin{equation} \label{eq:betarel}
\beta_{n+1,i} \circ \beta_{n,j} = \beta_{n+1,j} \circ \beta_{n,i-1}
\end{equation}
for $1 \le j < i \le n+2$, and these relations generate all relations between the $\beta$'s. The above claims can be proved similarly to Corollary~\ref{cor:alphagen}; they are also similar to standard facts about the simplex category, see \cite[\S 8.1]{weibel}.

\begin{proposition} \label{prop:oiequiv}
We have an equivalence of categories $\uRep(\cI)_+ \cong \Mod_{\OI}$.
\end{proposition}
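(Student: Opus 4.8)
The plan is to exhibit a functor in each direction and check they are mutually quasi-inverse. First I would recall that an $\OI$-module $V$ is, by definition, a covariant functor $\OI \to \Vec$, which (since every object of $\OI$ is uniquely isomorphic to some $[n]$, and the $\beta_{n,i}$ generate all morphisms subject to the relations \eqref{eq:betarel}) is the same data as a sequence of vector spaces $\{V([n])\}_{n \ge 0}$ together with maps $\beta_{n,i}^* \colon V([n]) \to V([n+1])$ for $1 \le i \le n+1$ satisfying $\beta_{n+1,i}^* \circ \beta_{n,j}^* = \beta_{n+1,j}^* \circ \beta_{n,i-1}^*$ for $1 \le j < i \le n+2$. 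So I need to match this combinatorial presentation against the presentation of a pure graded $\cI$-module.

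The key observation is that a pure graded $\cI$-module $M = \bigoplus_{n \ge 1} M_n$ is also completely determined by the $M_n$ together with certain "raising" operators. Indeed, by Proposition~\ref{prop:alphagen} every $\sigma \in \cI$ of length one is some $\alpha_k$, and $\alpha_k$ sends $M_n \to M_{\alpha_k(n)}$, which is $M_{n+1}$ if $k \le n$ and $M_n$ (acting as the identity, by the grading axiom (b), since $\alpha_k(n) = n$) if $k > n$. Thus the only nontrivial data carried by the degree-one elements of $\cI$ are the maps $\alpha_k|_{M_n} \colon M_n \to M_{n+1}$ for $1 \le k \le n$, and these generate the whole action since the $\alpha$'s generate $\cI$ (Corollary~\ref{cor:alphagen}). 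Setting $V([n]) := M_n$ and matching $\beta_{n,i}$ with $\alpha_i$ acting on $M_n$ (note both index sets run $1 \le i \le n+1$ for maps out of degree/stage $n$, once one folds in the observation that $\alpha_{n+1}$ acts as the identity on $M_n$ — so really $1 \le i \le n$ of them are "new", matching the $\beta_{n,i}$ with $i \le n$, and one checks the $i = n+1$ case is automatically consistent), the fundamental relation of Proposition~\ref{prop:alpharel}, $\alpha_n \alpha_m = \alpha_m \alpha_{n-1}$ for $n > m \ge 1$, becomes exactly \eqref{eq:betarel}. I would define $F \colon \uRep(\cI)_+ \to \Mod_{\OI}$ on objects by $F(M)([n]) = M_n$ with $\beta_{n,i}$ acting as $\alpha_i$, and on morphisms in the evident way (a graded $\cI$-map restricts to maps $M_n \to N_n$ commuting with all $\alpha_k$, hence with all $\beta$'s); conversely $G \colon \Mod_{\OI} \to \uRep(\cI)_+$ sends $V$ to $\bigoplus_{n \ge 1} V([n])$ and uses Corollary~\ref{cor:alphagen} — the $\alpha$'s generate $\cI$ with only the fundamental relations — to see that specifying the $\beta$-action (equivalently the $\alpha_i$-action for $i \le n$) together with declaring $\alpha_k$ to act as the identity on $M_n$ for $k > n$ does extend uniquely and consistently to a genuine, and automatically smooth, graded $\cI$-action.

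The routine-but-necessary verifications are: (i) that $G(V)$ really satisfies grading axioms (a) and (b) — axiom (b) holds by our stipulation that $\alpha_k = \mathrm{id}$ on $M_n$ when $k > n$ (and these generate $\cI_{\ge n+1}$, which is exactly the stabilizer of $n$ by Proposition~\ref{prop:fixed}), and axiom (a) holds by construction; (ii) that the fundamental relations are respected, which as noted is a direct translation of \eqref{eq:betarel} after checking the handful of boundary cases where one of the indices equals $n+1$ and the corresponding $\alpha$ acts trivially; and (iii) that $F$ and $G$ are mutually inverse, which is immediate on the level of underlying data once the dictionary is set up. The main obstacle — and it is more bookkeeping than genuine difficulty — is item (ii), specifically making sure that the index conventions line up: the $\beta_{n,i}$ are indexed by $1 \le i \le n+1$ whereas the "new" content of the $\alpha$-action on $M_n$ lives in $1 \le i \le n$, so one must carefully verify that the would-be extra generator $\beta_{n,n+1}$ corresponds to $\alpha_{n+1}$ acting as the identity and that every instance of relation \eqref{eq:betarel} involving the index $n+1$ or $n+2$ is satisfied for free. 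Once that compatibility check is done, functoriality and the fact that the two functors compose to the identity are formal, so the equivalence follows.
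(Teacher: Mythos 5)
Your proposal has the right general strategy --- translate the generators-and-relations presentation of $\cI$ (Corollary~\ref{cor:alphagen}, Proposition~\ref{prop:alpharel}) into the presentation of $\OI$ by the $\beta_{n,i}$ and relations \eqref{eq:betarel} --- but there is an off-by-one error in the dictionary that you notice, try to explain away, and in fact cannot; it is a genuine gap.

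You set $V([n]) := M_n$. This fails for two related reasons. First, since $M$ is pure we have $M_0 = 0$, so your $V$ would always have $V([0]) = 0$; but an arbitrary $\OI$-module can have $V([0]) \neq 0$, so the functor you describe is not essentially surjective. Second, the generator count does not line up. In $\OI$ there are exactly $n+1$ morphisms $[n] \to [n+1]$, namely $\beta_{n,1}, \ldots, \beta_{n,n+1}$, and $\beta_{n,n+1}$ is a genuine \emph{non-identity} morphism (the inclusion missing $n+1$), so $V(\beta_{n,n+1}) \colon V([n]) \to V([n+1])$ is free data carrying real content. On the other side, the nontrivial raising maps out of $M_n$ are $\alpha_i \colon M_n \to M_{n+1}$ for $1 \le i \le n$ only: for $i > n$ one has $\alpha_i(n) = n$, so $\alpha_i$ is a map $M_n \to M_n$ (forced to be the identity by grading axiom (b)), not a map to $M_{n+1}$. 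Thus you have $n$ maps where you need $n+1$; the ``$\beta_{n,n+1} \leftrightarrow \alpha_{n+1}$ is the identity'' pairing you propose in the parenthetical is incorrect on two counts: $\alpha_{n+1}$ does not land in degree $n+1$, and $\beta_{n,n+1}$ is not forced to be trivial.

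The fix is a degree shift: the correct dictionary is $M_n \leftrightarrow V([n-1])$ for $n \ge 1$, i.e., the graded $\cI$-module associated to $V$ has $N_0 = 0$ and $N_n = V([n-1])$, with $\alpha_i \colon N_n \to N_{n+1}$ given by $V(\beta_{n-1,i}) \colon V([n-1]) \to V([n])$ for $1 \le i \le n$. Then there are $n$ morphisms $[n-1] \to [n]$ in $\OI$, matching the $n$ nontrivial raising operators $\alpha_1, \dots, \alpha_n$ out of degree $n$; the relation $\alpha_n \alpha_m = \alpha_m \alpha_{n-1}$ matches \eqref{eq:betarel}; and $V([0])$ is recovered as $N_1$, so essential surjectivity holds. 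With this reindexing the rest of your verification plan (checking grading axiom (b) via Proposition~\ref{prop:fixed}, checking the fundamental relations, checking mutual inverseness on the underlying data) goes through.
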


\begin{proof}
Let $M$ be an $\OI$-module. We define a graded $\cI$-module $N$ as follows. As a graded vector space, $N_0=0$ and $N_n=M([n-1])$ for $n \ge 1$. For $1 \le i \le n$, we define $\alpha_i \colon N_n \to N_{n+1}$ to be the map $\beta_{n-1,i} \colon M([n-1]) \to M([n])$. The fundamental relations correspond exactly to the identities \eqref{eq:betarel}. This construction is reversible: starting with $N$ we can define $M$. It thus furnishes the claimed equivalence.
\end{proof}

\subsection{Semi-simplicial vector spaces} \label{ss:ssequiv}

Let $\Delta$ be the simplex category: its objects are finite non-empty totally ordered sets and its morphisms are order-preserving functions. Let $\Delta_{\rm inj}$ be the subcategory of $\Delta$ with the same objects but where the morphisms are injective. Recall that a {\bf simplicial object} in a category $\cC$ is a functor $\Delta^{\op} \to \cC$, and that a {\bf semi-simplicial object} in $\cC$ is a functor $\Delta_{\rm inj}^{\op} \to \cC$. One also defines {\bf co-simplicial} and {\bf co-semi-simplicial} objects in $\cC$ as functors $\Delta \to \cC$ and $\Delta_{\rm inj} \to \cC$. For general background on simplicial objects, see \cite[\S 8]{weibel}.

The Dold--Kan theorem \cite[Theorem~8.4.1]{weibel} states that the category of simplicial objects in an abelian category $\cA$ is equivalent to the category of chain complexes in $\cA$ supported in non-negative homological degrees. Since chain complexes of vector spaces are the same as graded modules over the ring $\bk[\epsilon]/(\epsilon^2)$, one can give very precise results on their structure. Thus, by the Dold--Kan theorem, simplicial vector spaces have a fairly simple structure. Semi-simplicial vector spaces, however, are much more complicated. The following proposition shows that the results of this paper can be used to understand them:

\begin{proposition}
The category of co-semi-simplicial vector spaces is equivalent to the full subcategory of $\uRep(\cI)$ spanned by graded $\cI$-modules $M$ with $M_0=M_1=0$.
\end{proposition}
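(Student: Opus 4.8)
The plan is to reduce to the equivalence $\uRep(\cI)_+ \cong \Mod_{\OI}$ already established in Proposition~\ref{prop:oiequiv}, and then match up $\OI$-modules concentrated in degrees $\ge 2$ with co-semi-simplicial vector spaces. Recall that the objects of $\Delta_{\rm inj}$ are the finite non-empty totally ordered sets, and every such object is isomorphic to a unique $[n]$ with $n \ge 1$; a co-semi-simplicial vector space is a functor $\Delta_{\rm inj} \to \Vec$. The category $\Delta_{\rm inj}$ is generated by the coface maps $\delta^i \colon [n] \to [n+1]$ (the order-preserving injection skipping $i$), subject to the cosimplicial identities $\delta^j \delta^i = \delta^i \delta^{j-1}$ for $i < j$. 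So the category $\Delta_{\rm inj}$ is literally isomorphic to the category $\OI$ with the single object $[0]$ removed: under $[n] \mapsto [n+1]$ (as objects of $\OI$), the coface map $\delta^i$ corresponds to $\beta_{n,i+1}$, and the cosimplicial identities match the relations~\eqref{eq:betarel} verbatim.

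First I would make this isomorphism $\Delta_{\rm inj} \cong \OI_{\ge 1}$ precise, where $\OI_{\ge 1}$ denotes the full subcategory of $\OI$ on the objects $[n]$ with $n \ge 1$ (equivalently, objects of cardinality $\ge 1$); this is just a bookkeeping matter once the generators and relations are lined up as above. Passing to functor categories, co-semi-simplicial vector spaces are the same thing as $\OI_{\ge 1}$-modules. Next I would observe that restriction along the inclusion $\OI_{\ge 1} \hookrightarrow \OI$ identifies $\OI_{\ge 1}$-modules with the full subcategory of $\OI$-modules $M$ with $M([0]) = 0$: indeed, since $[0]$ is an initial object of $\OI$ receiving no maps from any other object except via the unique map out of it, and there are no morphisms $[n] \to [0]$ for $n \ge 1$, the value $M([0])$ of an $\OI$-module decouples entirely (this is the exact analogue of the degree-zero decoupling noted in the remark after Proposition~\ref{prop:oiequiv}). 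Hence $\Mod_{\OI_{\ge 1}}$ is the full subcategory of $\Mod_{\OI}$ spanned by modules vanishing on $[0]$.

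Finally I would combine these identifications with Proposition~\ref{prop:oiequiv}. Under that equivalence $\uRep(\cI)_+ \cong \Mod_{\OI}$, an $\OI$-module $M$ corresponds to the graded $\cI$-module $N$ with $N_n = M([n-1])$ for $n \ge 1$ (and $N_0 = 0$ by definition of $\uRep(\cI)_+$). Thus $M([0]) = 0$ translates exactly to $N_1 = 0$. Combining: co-semi-simplicial vector spaces $\simeq \Mod_{\OI_{\ge 1}} \simeq \{M \in \Mod_{\OI} : M([0]) = 0\} \simeq \{N \in \uRep(\cI)_+ : N_1 = 0\} = \{N \in \uRep(\cI) : N_0 = N_1 = 0\}$, the last equality because a graded $\cI$-module with $N_0 = 0$ is automatically in $\uRep(\cI)_+$. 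This gives the claimed equivalence.

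I do not expect any serious obstacle here; the content is entirely formal, and the only step requiring a bit of care is spelling out the isomorphism $\Delta_{\rm inj} \cong \OI_{\ge 1}$ — in particular getting the index shift right between the coface maps $\delta^i$ and the maps $\beta_{n,i}$, and checking that the cosimplicial identities and the relations~\eqref{eq:betarel} correspond under this shift. Everything else is a direct appeal to Proposition~\ref{prop:oiequiv} together with the elementary observation about initial objects in $\OI$ decoupling the value on $[0]$.
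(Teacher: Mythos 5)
Your proof is correct and follows essentially the same route as the paper: identify $\Delta_{\rm inj}$ with the full subcategory of $\OI$ on non-empty sets, identify modules over that subcategory with $\OI$-modules vanishing on $[0]$, and transport through Proposition~\ref{prop:oiequiv}. One small caveat: the value $M([0])$ of an $\OI$-module does not ``decouple entirely'' in the sense of the degree-zero remark for $\uRep(\cI)$ (there are nonzero morphisms $[0]\to[n]$ inducing maps $M([0])\to M([n])$, which is precisely why Proposition~\ref{prop:oiequiv} identifies $\Mod_{\OI}$ with $\uRep(\cI)_+$ rather than with all of $\uRep(\cI)$); what you actually need is just that extension-by-zero and restriction are mutually inverse between $\Mod_{\OI_{\ge 1}}$ and $\{M \in \Mod_{\OI} : M([0])=0\}$, which is true and suffices.
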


\begin{proof}
The category $\Delta_{\rm inj}$ is simply the full subcategory $\OI'$ of $\OI$ spanned by non-empty sets. An $\OI'$-module $M$ can be extended to an $\OI$-module by simply putting $M(\emptyset)=0$, and this construction is an equivalence between the category of $\OI'$-modules and the category of $\OI$-modules $M$ with $M(\emptyset)=0$. The result now follows from Proposition~\ref{prop:oiequiv}.
\end{proof}

\begin{remark}
Semi-simplicial and co-semi-simplicial vector spaces are not so different: if $M$ is a co-semi-simplicial vector space then $S \mapsto M(S)^*$ is a semi-simplicial vector space, and this construction is an equivalence on the categories of (co-)semi-simplicial objects taking values in finite dimensional vector spaces.
\end{remark}

\subsection{Shuffle algebras} \label{ss:shuffle}

Given $n,m \in \bN$, an {\bf $(n,m)$-shuffle} is a bijection $[n] \amalg [m] \to [n+m]$ that is order-preserving on each summand. Here, $[n]$ denotes the set $\{1,\ldots,n\}$. Let $\cS(n,m)$ denote the set of $(n,m)$ shuffles. Given $\bN$-graded vector spaces $V$ and $W$, define a new graded vector space $V \medshuffle W$ by
\begin{displaymath}
(V \medshuffle W)_n = \bigoplus_{i+j=n} \bk[\cS(i,j)] \otimes V_i \otimes W_j.
\end{displaymath}
The construction $\medshuffle$ endows the category $\GrVec$ of graded vector spaces with a symmetric monoidal structure. The associativity constraint comes from the natural bijection
\begin{displaymath}
\cS(n,m) \times \cS(n+m, \ell) \cong \cS(n,m+\ell) \times \cS(m,\ell),
\end{displaymath}
while the symmetry comes from the natural bijection
\begin{displaymath}
\cS(n,m) \cong \cS(m,n).
\end{displaymath}
A {\bf (commutative) shuffle algebra} is a (commutative) associative unital algbera in the symmetric monoidal category $(\GrVec, \medshuffle)$. More explicitly, a shuffle algebra is a graded vector space $A=\bigoplus_{n \ge 0} A_n$ equipped with, for each shuffle $\sigma \in \cS(n,m)$, a multiplication $\ast_{\sigma} \colon A_n \otimes A_m \to A_{n+m}$, satisfying a number of axioms.

The monoidal operation $\medshuffle$ on $\GrVec$ and shuffle algebras are discussed in \cite[\S 2]{dotsenko}, and also \cite[\S 2]{ronco}, to which we refer for further details. Shuffle algebras also appear in the works of Sam \cite{sam} and Laudone \cite{laudone} that study syzygies of certain families of varieties. Our interest in shuffle algberas comes from the following observation:

\begin{proposition} \label{prop:shuffle}
Let $\bk\langle 1 \rangle$ denote the graded vector space that is $\bk$ in degree~1, and~0 in other degrees. Let $A$ be the shuffle algebra $\Sym_{\medshuffle}(\bk\langle 1 \rangle)$. Then the category $\Mod_A$ of left $A$-modules is equivalent to the category $\Mod_{\OI}$.
\end{proposition}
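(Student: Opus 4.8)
The plan is to unwind the definition of the free commutative shuffle algebra $A = \Sym_{\medshuffle}(\bk\langle 1 \rangle)$ and identify an $A$-module structure with an $\OI$-module structure by matching the structure maps. First I would compute $A$ explicitly. Since $\bk\langle 1 \rangle$ is concentrated in degree $1$, the $n$th graded piece $(\bk\langle 1\rangle^{\medshuffle n})_n$ is $\bk[\cS(1,1,\ldots,1)] \otimes \bk^{\otimes n}$, where $\cS(1,\ldots,1)$ is the set of ways to shuffle $n$ singletons; after symmetrizing to form $\Sym_{\medshuffle}$, one finds $A_n \cong \bk$ for every $n \ge 0$, with the multiplication maps $\ast_\sigma \colon A_n \otimes A_m \to A_{n+m}$ all being isomorphisms $\bk \otimes \bk \to \bk$. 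Thus $A$ is the ``simplest'' shuffle algebra: one-dimensional in each degree, with a single generator $u$ in degree $1$, and $u^{\ast_\sigma}$ generating $A_n$ for any choice of shuffles.

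Next, given a left $A$-module $M = \bigoplus_{n \ge 0} M_n$, the action is a collection of maps $A_i \otimes M_j \to M_{i+j}$ compatible with the shuffle structure. Taking $i = 1$ and using the chosen generator $u \in A_1$, for each $(1,j)$-shuffle $\sigma$ we get a map $m_\sigma \colon M_j \to M_{j+1}$. There are exactly $j+1$ shuffles in $\cS(1,j)$ — indexed by the position $i \in \{1,\ldots,j+1\}$ into which the singleton from $[1]$ is inserted — so we obtain $j+1$ maps $M_j \to M_{j+1}$, which I will match with the $\OI$ face-type maps $\beta_{j,i} \colon [j] \to [j+1]$ (equivalently, via Proposition~\ref{prop:oiequiv}, with the generators $\alpha_i$ of $\cI$ acting $N_n \to N_{n+1}$). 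The associativity axiom for the $A$-action, applied to triples involving two copies of $u$, should translate precisely into the relations \eqref{eq:betarel} among the $\beta$'s; this is the heart of the argument. Conversely, an $\OI$-module gives such a compatible system of maps, and since $A$ is generated in degree $1$, these maps determine the full $A$-action, so the construction is reversible. Functoriality in both directions is immediate since everything is defined by the same structure maps.

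The main obstacle is the bookkeeping in matching the shuffle-algebra axioms to the $\OI$-relations: one must check that the associativity constraint in $(\GrVec, \medshuffle)$, expressed through the bijection $\cS(n,m) \times \cS(n+m,\ell) \cong \cS(n,m+\ell) \times \cS(m,\ell)$, specializes (when two of the three factors are the degree-$1$ generator) to exactly the relations $\beta_{n+1,i} \circ \beta_{n,j} = \beta_{n+1,j} \circ \beta_{n,i-1}$ for $j < i$, and that the commutativity axiom imposes no further constraint (it only identifies the two orders of inserting two singletons, which is consistent with the relation already obtained). Since Corollary~\ref{cor:alphagen} and the discussion preceding \eqref{eq:betarel} tell us these relations generate all relations among the $\beta$'s, once the matching is verified we conclude that $A$-modules and $\OI$-modules are the same data, and the equivalence $\Mod_A \cong \Mod_{\OI}$ follows. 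A clean way to organize this is to first show abstractly that $A$-modules are equivalent to functors out of the free monoidal-algebra-module category on one degree-$1$ generator, then identify that category's generators-and-relations presentation with that of $\OI$.
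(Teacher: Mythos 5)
Your proposal is correct and takes a genuinely different route from the paper. The paper's proof works with all degrees at once: it exhibits a bijection $f\colon \cS(n,m) \to \Hom_{\OI}([m],[n+m])$ given by restricting a shuffle $\sigma\colon [n]\amalg[m]\to[n+m]$ to the $[m]$ factor, observes that $A_n$ is one-dimensional so the module data $\{\ast_\sigma\colon A_n\otimes M_m\to M_{n+m}\}$ is the same as a map $M_m\to M_{n+m}$ per element of $\cS(n,m)$, transports this across $f$, and then leaves the compatibility of the conditions to the reader. You instead use only the degree-$1$ slice: you match the $j+1$ elements of $\cS(1,j)$ with the $\beta_{j,i}$'s, invoke the fact that $A$ is generated in degree $1$ to conclude these maps determine the rest of the action, and propose to verify that the shuffle-algebra associativity and commutativity translate into the relation \eqref{eq:betarel} and nothing more. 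The two approaches buy different things: the paper's direct bijection sidesteps any appeal to a presentation of $\OI$ and is conceptually cleaner, while your generators-and-relations formulation makes the deferred verification concrete — you pinpoint exactly which axioms of the shuffle module correspond to the $\beta$-relations. One small caution with your route: since the degree-$1$ slice only gives the $\cS(1,j)$ maps, you must actually argue that all $\ast_\sigma$ for $\sigma\in\cS(n,m)$ with $n\ge 2$ are recovered from these via associativity, and that the relations you impose suffice; the paper's bijection at all $(n,m)$ at once renders this step unnecessary. You flag this ("since $A$ is generated in degree $1$, these maps determine the full $A$-action"), so the gap is acknowledged rather than hidden — it is at the same level of informality as the paper's "left to the reader."
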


\begin{proof}
Since this result is only used as motivation, we simply sketch the proof. Given a shuffle $\sigma \in \cS(n,m)$, let $f(\sigma) \colon [m] \to [n+m]$ be the restriction of $\sigma$ to $[m]$, regarded as a morphism in $\OI$. Observe that
\begin{displaymath}
f \colon \cS(n,m) \to \Hom_{\OI}([m], [n+m])
\end{displaymath}
is a bijeciton.

Now, let $M$ be a graded vector space. To give an $A$-module structure on $M$ requires giving a multiplication map $\ast_{\sigma} \colon A_n \otimes M_m \to M_{n+m}$ for each $\sigma \in \cS(n,m)$. However, $A_n$ is one-dimensional, so this is the same as giving a map $M_m \to M_{n+m}$ for each element of $\cS(n,m)$. By the above bijection, this is the same as giving a map $M_m \to M_{n+m}$ for each element of $\Hom_{\OI}([m], [n+m])$, which is exactly the data required to give $M$ the structure of an $\OI$-module. One must now verify that the conditions on the data on either side match, which we leave to the reader.
\end{proof}

\begin{corollary} \label{cor:modRrepI}
We have an equivalence of categories $\Mod_A \cong \uRep(\cI)_+$.
\end{corollary}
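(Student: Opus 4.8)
The plan is simply to compose the two equivalences just established. Proposition~\ref{prop:shuffle} provides an equivalence $\Mod_A \cong \Mod_{\OI}$, while Proposition~\ref{prop:oiequiv} provides an equivalence $\uRep(\cI)_+ \cong \Mod_{\OI}$. Composing the first with a quasi-inverse of the second immediately yields the asserted equivalence $\Mod_A \cong \uRep(\cI)_+$. Since both of these propositions have already been established (the first only sketched, as it serves purely as motivation), nothing further is required, and there is no genuine obstacle to overcome here.

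For concreteness one may unwind the composite. Starting from a left $A$-module $M = \bigoplus_{n \ge 0} M_n$, the associated graded $\cI$-module $N$ has $N_0 = 0$ and $N_n = M_{n-1}$ for $n \ge 1$ (the degree shift coming from Proposition~\ref{prop:oiequiv}, which indexes $\OI$ by the sets $[n]$ with $n \ge 0$); the generator $\alpha_i \colon N_n \to N_{n+1}$ acts as the multiplication $\ast_{\sigma} \colon A_1 \otimes M_{n-1} \to M_n$ against the unique shuffle $\sigma \in \cS(1, n-1)$ corresponding, under the bijection $f$ of Proposition~\ref{prop:shuffle}, to the morphism $\beta_{n-1,i} \colon [n-1] \to [n]$ of $\OI$. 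The only point needing verification is that the shuffle-algebra axioms for $M$ translate precisely into the relations \eqref{eq:betarel}, equivalently into the fundamental relations of Proposition~\ref{prop:alpharel} among the $\alpha_i$; this is exactly the bookkeeping already deferred to the reader in the proofs of Propositions~\ref{prop:shuffle} and~\ref{prop:oiequiv}, so it need not be repeated. Accordingly, the statement admits a one-line proof: chain the two displayed equivalences.
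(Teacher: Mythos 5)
Your proposal is correct and is exactly the paper's argument: the paper proves this corollary by chaining Proposition~\ref{prop:shuffle} ($\Mod_A \cong \Mod_{\OI}$) with Proposition~\ref{prop:oiequiv} ($\uRep(\cI)_+ \cong \Mod_{\OI}$). The explicit unwinding you add (including the degree shift $N_n = M_{n-1}$) is accurate but, as you note, not logically required.
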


\begin{proof}
Simply combine the proposition with the equivalence $\uRep(\cI)_+ \cong \Mod_{\OI}$ (Proposition~\ref{prop:oiequiv}).
\end{proof}

\begin{remark} \label{rem:roid}
One can show that the category of left modules over the shuffle algebra $\Sym_{\medshuffle}(\bk\langle 1 \rangle^{\oplus d})$ is equivalent to the category $\Mod_{\OI_d}$ of $\OI_d$-modules, where $\OI_d$ is the generalization of $\OI$ defined in \cite[\S 7.1]{catgb}.
\end{remark}

\begin{remark}
The shuffle algebra perspective on $\cI$-modules brings great clarity to the work in \S \ref{s:koszul} on Koszul duality. See \S \ref{ss:conceptual} for details.
\end{remark}

\section{Monomial modules} \label{s:monomial}

\subsection{Monomial modules}

A {\bf pointed $\cI$-set} is a set $\cU$ equipped with an action of $\cI$ and a distinguished point $\ast$ (called zero) that is fixed by $\cI$. Let $\cU$ be a pointed set. We let $\cU_+=\cU \setminus \{\ast\}$. Wet $\bk[\cU]$ be the vector space with basis $\cU_+$, and for $i \in \cU_+$, we let $e_i \in \bk[\cU]$ be the corresponding basis vector. We also put $e_{\ast}=0$. The space $\bk[\cU]$ carries an action of $\cI$ via $\sigma e_i=e_{\sigma(i)}$. If the action of $\cI$ on $\cU$ is smooth then the action of $\cI$ on $\bk[\cU]$ is also smooth. We call modules of this form {\bf monomial modules}, and we refer to the elements $e_i$ as {\bf monomials}.

Let $\cU$ be a pointed $\cI$-set. A {\bf grading} on $\cU$ is a function $\delta \colon \cU \to \bN$ that is $\cI$-equivariant and satisfies the following condition: if $i \in \cU$ and $\sigma \in \cI$ is such that $\sigma$ stabilizes $\delta(i)$ then $\sigma$ stabilizes $i$. Given a grading on $\cU$, we give $\bk[\cU]$ the structure of a graded $\cI$-module by declaring $e_i$ to have degree $\delta(i)$.

\subsection{Principal modules} \label{ss:principal}

Let $\cA^r_+$ be the set consisting all strictly increasing tuples in $\bN_+^r$, and let $\cA^r=\cA^r_+ \cup \{\ast\}$. This set is naturally a smooth pointed $\cI$-set. We let $\bA^r$ be the associated monomial module. We call these {\bf principal modules}. For $1 \le i_1<\cdots<i_r$, we let $e_{i_1,\ldots,i_r}$ denote the corresponding basis vector of $\bA^r$. The set $\cA^r$ admits a grading, defined by $\delta(i_1,\ldots,i_r)=i_r$. We let $\ul{\bA}^r$ denote the corresponding graded $\cI$-module. The basis vector $e_{i_1,\ldots,i_r}$ of $\ul{\bA}^r$ has degree $i_r$.

\begin{proposition}
Let $M$ be an arbitrary $\cI$-module. Then $\Hom_{\cI}(\bA^r, M)$ is canonically isomorphic to $M^{\cI_{>r}}$. Precisely, if $x \in M^{\cI_{>r}}$ then there is a unique $\cI$-linear map $f \colon \bA^r \to M$ such that $f(e_{1,\ldots,r})=x$.
\end{proposition}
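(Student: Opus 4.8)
The plan is to verify this directly, treating $\bA^r$ as the cyclic $\cI$-module generated by $e_{1,\ldots,r}$ with exactly the ``right'' relations. The easy direction is the candidate map $\Hom_{\cI}(\bA^r,M)\to M^{\cI_{>r}}$, $f\mapsto f(e_{1,\ldots,r})$: any $\sigma\in\cI_{>r}$ fixes each of $1,\ldots,r$, hence fixes the tuple $(1,\ldots,r)$ and therefore the vector $e_{1,\ldots,r}$, so $\cI$-linearity of $f$ forces $f(e_{1,\ldots,r})=\sigma f(e_{1,\ldots,r})$; thus $f(e_{1,\ldots,r})\in M^{\cI_{>r}}$.

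Injectivity of this map (the uniqueness clause) comes from the fact that $\bA^r$ is generated over $\cI$ by $e_{1,\ldots,r}$, i.e.\ that $\cI$ acts transitively on $\cA^r_+$. Concretely, given $1\le i_1<\cdots<i_r$, the element $\epsilon\in\cI$ defined by $\epsilon(k)=i_k$ for $k\le r$ and $\epsilon(k)=k+(i_r-r)$ for $k\ge r$ (these agree at $k=r$) is tame of length $i_r-r$ and sends $(1,\ldots,r)$ to $(i_1,\ldots,i_r)$. Hence any $\cI$-linear $f$ is determined by $f(e_{1,\ldots,r})$, since $f(\sigma e_{1,\ldots,r})=\sigma f(e_{1,\ldots,r})$ and every basis vector has this form.

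For existence (surjectivity), given $x\in M^{\cI_{>r}}$ I want to define $f$ by $f(\sigma e_{1,\ldots,r})=\sigma x$ and extend linearly; the content is well-definedness, i.e.\ that $\sigma x=\tau x$ whenever $\sigma,\tau\in\cI$ agree on $\{1,\ldots,r\}$. The key lemma, and the main obstacle, is a factorization statement: if $\sigma$ and $\tau$ agree on $\{1,\ldots,r\}$, then they admit a common ``core'' $\epsilon\in\cI$ with $\sigma=\epsilon\mu$ and $\tau=\epsilon\nu$ for some $\mu,\nu\in\cI_{>r}$. One takes $\epsilon$ to be the minimal-length element of $\cI$ agreeing with $\sigma$ on $\{1,\ldots,r\}$ (the element displayed above, with $i_k=\sigma(k)$), and solves $\epsilon\mu=\sigma$: the resulting $\mu$ is given by $\mu(k)=k$ for $k\le r$ and $\mu(k)=\sigma(k)-(\sigma(r)-r)$ for $k>r$, and one checks it is order-preserving, injective, and tame. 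The only slightly delicate point is that its length $\ell(\sigma)-(\sigma(r)-r)$ is nonnegative; this holds because $k\mapsto\sigma(k)-k$ is nondecreasing with limit $\ell(\sigma)$, so $\ell(\sigma)\ge\sigma(r)-r$. Granting the lemma, $\sigma x=\epsilon(\mu x)=\epsilon x=\epsilon(\nu x)=\tau x$ since $x$ is $\cI_{>r}$-invariant, so $f$ is well-defined; $\cI$-linearity is then immediate from $f(\rho\sigma e_{1,\ldots,r})=(\rho\sigma)x=\rho(\sigma x)$, and $f(e_{1,\ldots,r})=x$ by construction.

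Alternatively, one can phrase everything structurally: the surjection $\bk[\cI]\twoheadrightarrow\bA^r$ sending $\sigma$ to $e_{\sigma(1),\ldots,\sigma(r)}$ has kernel spanned by the differences $\sigma-\tau$ with $\sigma,\tau$ agreeing on $\{1,\ldots,r\}$ (using transitivity), so $\Hom_{\cI}(\bA^r,M)$ is identified with the set of $x\in M=\Hom_{\cI}(\bk[\cI],M)$ that annihilate this kernel; the factorization lemma is exactly what shows this condition is equivalent to $x\in M^{\cI_{>r}}$, the reverse implication being trivial via $\sigma\in\cI_{>r}$, $\tau=1$. Either way, the crux is the elementary but not wholly automatic factorization lemma for $\cI$, and everything else is bookkeeping.
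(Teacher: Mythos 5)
Your proof is correct and takes essentially the same approach as the paper: both reduce existence to showing $\sigma x = \tau x$ whenever $\sigma,\tau\in\cI$ agree on $\{1,\ldots,r\}$, by extracting a common ``prefix'' of $\sigma$ and $\tau$ modulo $\cI_{>r}$. The paper obtains this from the weakly increasing normal form of Proposition~\ref{prop:alphagen}(b) (in which the first $r$ exponents of $\sigma$ are determined by $\sigma\vert_{\{1,\ldots,r\}}$, so $\sigma x = \alpha_1^{n_1}\cdots\alpha_r^{n_r}x$), whereas you construct the core $\epsilon$ and the tails $\mu,\nu\in\cI_{>r}$ explicitly by hand --- a more self-contained route to the same factorization.
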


\begin{proof}
Let $x \in M^{\cI_{>r}}$ be given. Suppose that $\sigma,\tau \in \cI$ satisfy $\sigma(e_{1,\ldots,r}) =\tau(e_{1,\ldots,r})$. Thus $\sigma(i)=\tau(i)$ for $1 \le i \le r$. Thus if $\sigma=\alpha_1^{n_1} \alpha_2^{n_2} \cdots$ and $\tau=\alpha_1^{m_1} \alpha_2^{m_2} \cdots$ then $n_i=m_i$ for $1 \le i \le r$; indeed, $\sigma(1)=n_1$, $\sigma(2)=n_1+n_2$, and so on. It follows that $\sigma x = \tau x$. Indeed, since $x$ is fixed by the $\alpha_k$ with $k>r$, we have $\sigma x = \alpha_1^{n_1} \cdots \alpha_r^{n_r} x = \tau x$. It follows that we have a well-defined $\cI$-equivariant map $\cA^r \setminus \{\ast\} \to M$ by $\sigma (1,\ldots,r) \mapsto \sigma x$ for $\sigma \in \cI$. This extends linearly to an $\cI$-linear map $f \colon \bA^r \to M$ satisfying $f(e_{1,\ldots,r})=x$. Since $e_{1,\ldots,r}$ generates $\bA^r$, it follows that $f$ is the unique map taking $e_{1,\ldots,r}$ to $x$.
\end{proof}

\begin{corollary} \label{cor:pringen}
Let $M$ be a smooth $\cI$-module. Then $M$ is a quotient of a direct sum of principal modules. If $M$ is finitely generated, then it is a quotient of a finite direct sum of principle modules.
\end{corollary}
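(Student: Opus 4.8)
The plan is to deduce this immediately from the preceding proposition, which identifies $\Hom_{\cI}(\bA^r, M) \cong M^{\cI_{>r}}$ via the rule $f \mapsto f(e_{1,\ldots,r})$. The point is that smoothness of $M$ exactly says that $M = \bigcup_{r \ge 1} M^{\cI_{>r}}$, so every element of $M$ can be hit by a map out of some principal module.

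First, for the general statement, I would argue as follows. For each $x \in M$, smoothness gives an integer $r = r(x)$ with $x \in M^{\cI_{>r}}$ (one may use Proposition~\ref{prop:fixed}/its corollary to find such an $r$, or just the definition of smoothness). By the proposition, there is an $\cI$-linear map $f_x \colon \bA^{r(x)} \to M$ with $f_x(e_{1,\ldots,r(x)}) = x$. Assembling these over all $x \in M$ yields an $\cI$-linear map
\begin{displaymath}
\bigoplus_{x \in M} \bA^{r(x)} \longrightarrow M,
\end{displaymath}
whose image is an $\cI$-submodule containing every element $x$, hence is all of $M$. Thus $M$ is a quotient of a direct sum of principal modules.

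Second, for the finitely generated case, suppose $M$ is generated as an $\cI$-module by $x_1, \ldots, x_k$. Each $x_i$ lies in $M^{\cI_{>r_i}}$ for some $r_i$, so as above we get maps $f_i \colon \bA^{r_i} \to M$ with $f_i(e_{1,\ldots,r_i}) = x_i$, and hence a map $\bigoplus_{i=1}^k \bA^{r_i} \to M$. Its image is an $\cI$-submodule of $M$ containing the generators $x_1, \ldots, x_k$, hence equals $M$; so $M$ is a quotient of a finite direct sum of principal modules. (If one prefers a uniform $r$, one can take $r = \max_i r_i$ and note $\cI_{>r} \subseteq \cI_{>r_i}$, so $x_i \in M^{\cI_{>r}}$ for all $i$, giving $M$ as a quotient of $(\bA^r)^{\oplus k}$.)

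There is no real obstacle here: the content is entirely in the preceding proposition together with the definition of smoothness, and the only thing to be slightly careful about is that the image of a map of $\cI$-modules is an $\cI$-submodule, so once it contains a generating set it is everything.
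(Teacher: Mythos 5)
Your proof is correct and follows essentially the same route as the paper: pick a generating set (all of $M$, or a finite one), use smoothness to place each generator in some $M^{\cI_{>r}}$, invoke the preceding proposition to build a map $\bA^r \to M$ hitting that generator, and take the direct sum.
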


\begin{proof}
Let $M$ be a smooth $\cI$-module and let $\{x_i\}_{i \in I}$ be a set of generators for $M$. Since $M$ is smooth, each $x_i$ is fixed by some $\cI_{>r_i}$. We thus have a unique map $f_i \colon \bA^{r_i} \to M$ that sends $e_{1,\ldots,r_i}$ to $x_i$. Since the $x_i$ generate $M$, the map $\bigoplus_{i \in I} f_i \colon \bigoplus_{i \in I} \bA^{r_i} \to M$ is surjective.
\end{proof}

\begin{proposition}
Let $M$ be a graded $\cI$-module. Then $\uHom_{\cI}(\ul{\bA}^r, M)$ is canonically isomorphic to $M_r$. Precisely, if $x \in M_r$ then there is a unique homogeneous $\cI$-linear map $f \colon \ul{\bA}^r \to M$ such that $f(e_{1,\ldots,r})=x$.
\end{proposition}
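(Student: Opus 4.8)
The plan is to reduce everything to the ungraded statement just proved (the isomorphism $\Hom_{\cI}(\bA^r,M) \cong M^{\cI_{>r}}$) together with the two defining axioms of a graded module. First I would note that a homogeneous $\cI$-linear map $f \colon \ul{\bA}^r \to M$ is completely determined by the single value $f(e_{1,\ldots,r})$, since $e_{1,\ldots,r}$ generates $\ul{\bA}^r$ as an $\cI$-module; and since $e_{1,\ldots,r}$ lies in degree $r$ and $f$ is homogeneous, this value must lie in $M_r$. This produces a well-defined, visibly injective, and visibly natural map $\uHom_{\cI}(\ul{\bA}^r, M) \to M_r$, $f \mapsto f(e_{1,\ldots,r})$.

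For surjectivity, fix $x \in M_r$. The one thing worth pointing out is that $x$ is automatically $\cI_{>r}$-invariant: every $\sigma \in \cI_{>r}$ fixes $1,\ldots,r$, in particular fixes $r$, so axiom (b) of the definition of a graded module forces $\sigma x = x$. Thus $x \in M^{\cI_{>r}}$, and the previous proposition supplies a unique $\cI$-linear map $f \colon \bA^r \to M$, of the underlying ungraded modules, with $f(e_{1,\ldots,r}) = x$. It then remains to check that $f$ is homogeneous. A basis vector $e_{i_1,\ldots,i_r}$ of $\ul{\bA}^r$ has degree $i_r$, and can be written as $\sigma e_{1,\ldots,r}$ for some $\sigma \in \cI$ with $\sigma(k)=i_k$ for $1 \le k \le r$; hence $f(e_{i_1,\ldots,i_r}) = \sigma x$, which lies in $M_{\sigma(r)} = M_{i_r}$ by axiom (a). So $f$ carries each graded piece of $\ul{\bA}^r$ into the corresponding graded piece of $M$, i.e.\ $f$ is homogeneous, and it maps to $x$ under the map of the previous paragraph.

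Uniqueness of the homogeneous lift is immediate from the uniqueness already established in the ungraded case, since two homogeneous $\cI$-linear maps agreeing on $e_{1,\ldots,r}$ are in particular $\cI$-linear maps agreeing on that generator. Combined with the naturality noted above, this upgrades $f \mapsto f(e_{1,\ldots,r})$ to a natural isomorphism of functors $\uHom_{\cI}(\ul{\bA}^r,-) \cong (-)_r$ on $\uRep(\cI)$. I do not expect any genuine obstacle here: the whole argument rests on the observation $M_r \subseteq M^{\cI_{>r}}$, which is a one-line consequence of axiom (b), after which the ungraded proposition does all the work; the only routine verification is that the resulting map respects the grading.
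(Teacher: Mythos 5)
Your proof is correct and follows essentially the same route as the paper's: observe that a degree-$r$ element is automatically $\cI_{>r}$-invariant, invoke the ungraded proposition to get the unique $\cI$-linear $f$, and then check homogeneity by tracking the degree of $f(\sigma e_{1,\ldots,r}) = \sigma x$. The extra framing as an injective-then-surjective map $\uHom_{\cI}(\ul{\bA}^r,M) \to M_r$ is a harmless repackaging of the same argument.
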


\begin{proof}
Let $x \in M_r$ be given. Since $x$ is $\cI_{>r}$ invariant, the previous proposition yields a unique $\cI$-linear map $f \colon \bA^r \to M$ satisfying $f(e_{1,\ldots,r})=x$. Thus $f(\sigma e_{1,\ldots,r})=\sigma x$. Since both $\sigma e_{1,\ldots,r}$ and $\sigma x$ have degree $\sigma(r)$, and the $\sigma e_{1,\ldots,r}$ span $\bA^r$, it follows that $f$ is homogeneous. This proves the proposition.
\end{proof}

\begin{corollary}
The graded principal modules are projective. Furthermore, every (finitely generated) graded $\cI$-module is a quotient of a (finite) direct sum of graded principal modules.
\end{corollary}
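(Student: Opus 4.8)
The plan is to read off both assertions from the two propositions just proved, which identify the functor $\uHom_{\cI}(\ul{\bA}^r,-)$ with the degree-$r$ part functor $M \mapsto M_r$. For projectivity, I would first note that the functor $\uRep(\cI) \to \Vec$, $M \mapsto M_r$, is exact: a sequence of graded $\cI$-modules is exact if and only if it is exact in each degree, since kernels and cokernels in $\uRep(\cI)$ are formed degreewise (the grading is a direct sum decomposition respected by all morphisms). By the second-to-last proposition this functor is naturally isomorphic to $\uHom_{\cI}(\ul{\bA}^r,-)$, so the latter is exact, i.e.\ $\ul{\bA}^r$ is projective. (If one wants it, any direct sum $\bigoplus_i \ul{\bA}^{r_i}$ is projective too, since $\uHom_{\cI}(\bigoplus_i \ul{\bA}^{r_i},-) \cong \prod_i \uHom_{\cI}(\ul{\bA}^{r_i},-)$ and a product of exact functors valued in $\Vec$ is exact, products being exact in $\Vec$.)

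For the generation statement, I would argue exactly as in the proof of Corollary~\ref{cor:pringen}. Given a graded $\cI$-module $M$, choose for each $n \ge 0$ a spanning set $\{x_{n,j}\}_{j \in J_n}$ of the vector space $M_n$. By the last proposition, each $x_{n,j}$ determines a unique homogeneous $\cI$-linear map $f_{n,j} \colon \ul{\bA}^n \to M$ with $f_{n,j}(e_{1,\ldots,n}) = x_{n,j}$, and the combined map $f = \bigoplus_{n,j} f_{n,j} \colon \bigoplus_{n,j} \ul{\bA}^n \to M$ has image a graded $\cI$-submodule of $M$ containing every $x_{n,j}$. Since the $x_{n,j}$ span $M_n$ for each $n$, this image is all of $M$, so $f$ is surjective. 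For the parenthetical refinement, if $M$ is finitely generated then it is generated by finitely many homogeneous elements: take a finite generating set, decompose each element into its homogeneous components (only finitely many nonzero), and observe that these components generate the graded submodule in which they lie, which must be $M$. Applying the same construction to this finite homogeneous generating set produces a surjection onto $M$ from a finite direct sum of graded principal modules.

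The argument is entirely formal and I do not anticipate a genuine obstacle; the only points that merit a moment's attention are the reduction to a finite \emph{homogeneous} generating set in the finitely generated case, and the observation that the image of $f$ — which a priori manifestly contains only the distinguished generators $e_{1,\ldots,n}$ of each summand — in fact contains all of $M$ because it is a submodule and those generators span each graded piece.
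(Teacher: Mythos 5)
Your proof is correct and follows the same route as the paper: projectivity comes from identifying $\uHom_{\cI}(\ul{\bA}^r,-)$ with the exact degree-$r$ functor via the preceding proposition, and the generation statement is the graded analogue of Corollary~\ref{cor:pringen}, using the same universal property. The paper simply states "the second statement is clear" where you fill in the (routine but worth noting) reduction to a finite homogeneous generating set in the finitely generated case.
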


\begin{proof}
The proposition shows that the functor $\uHom_{\cI}(\ul{\bA}^r, -)$ is isomorphic to the functor $M \mapsto M_r$, and is thus exact; therefore $\ul{\bA}^r$ is projective. The second statement is clear.
\end{proof}

A {\bf monomial submodule} of a monomial $\cI$-module is one that is spanned (over $\bk$) by the monomials it contains. The following proposition classifies the monomial submodules of $\bA^r$, and establishes an important link to commutative algebra.

\begin{proposition} \label{prop:prinpoly}
Let $r \in \bN$. Consider the linear map $f \colon \bA^r \to \bk[x_1, \ldots, x_r]$ defined by
\begin{displaymath}
f(e_{i_1,\ldots,i_r}) = x_1^{i_1-1} x_2^{i_2-i_1-1} \cdots x_r^{i_r-i_{r-1}-1}.
\end{displaymath}
Then $f$ is an isomorphism of vector spaces. Moreover, $f$ induces a bijection
\begin{displaymath}
\{ \text{monomial submodules of $\bA^r$} \} \to
\{ \text{monomial ideals of $\bk[x_1,\ldots,x_r]$} \}.
\end{displaymath}
\end{proposition}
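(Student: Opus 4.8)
The plan is to prove the statement in two stages: first the linear isomorphism, which is a change of coordinates on the monomial bases, and then the bijection of monomial submodules and monomial ideals, which I will reduce to a purely combinatorial assertion about how the generators $\alpha_k$ act on monomials.

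For the linear isomorphism, I would observe that the assignment $(i_1,\ldots,i_r) \mapsto (a_1,\ldots,a_r)$ with $a_1 = i_1-1$ and $a_j = i_j - i_{j-1} - 1$ for $j \ge 2$ is a bijection from the set of strictly increasing $r$-tuples of positive integers onto $\bN^r$: each $a_j$ is $\ge 0$ precisely because the $i_j$ are strictly increasing and $i_1 \ge 1$, and the inverse is given by $i_j = j + \sum_{k=1}^{j} a_k$, which is manifestly strictly increasing. Since $f$ sends $e_{i_1,\ldots,i_r}$ to $x_1^{a_1}\cdots x_r^{a_r}$, this bijection says exactly that $f$ carries the monomial basis of $\bA^r$ bijectively onto the monomial basis of $\bk[x_1,\ldots,x_r]$, whence $f$ is a linear isomorphism.

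For the second assertion, I would first record the standard reformulation: since $f$ is a linear isomorphism carrying monomials to monomials, and since $\sigma e_{i_1,\ldots,i_r} = e_{\sigma(i_1),\ldots,\sigma(i_r)}$ is always again a monomial of $\bA^r$, monomial submodules of $\bA^r$ correspond bijectively to $\cI$-stable subsets of $\cA^r_+$, and monomial ideals of $\bk[x_1,\ldots,x_r]$ correspond bijectively to subsets of monomials closed under multiplication by each $x_\ell$. Thus it suffices to show that a subset $S \subseteq \cA^r_+$ is $\cI$-stable if and only if $f(S)$ is closed under multiplication by each $x_\ell$. The crux — and the one computation that requires care — is the translation of the $\alpha_k$-action into the new coordinates: for a monomial $e_{i_1,\ldots,i_r}$, letting $m$ be the number of indices $j$ with $i_j < k$, a direct computation with the exponent formula shows that $\alpha_k e_{i_1,\ldots,i_r} = e_{i_1,\ldots,i_r}$ when $m = r$, and $f(\alpha_k e_{i_1,\ldots,i_r}) = x_{m+1}\cdot f(e_{i_1,\ldots,i_r})$ when $m < r$. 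Conversely, for each $\ell \in \{1,\ldots,r\}$ the choice $k = i_{\ell-1}+1$ (with the convention $i_0 = 0$) gives $m = \ell-1 < r$, so every product $x_\ell \cdot f(e_{i_1,\ldots,i_r})$ is realized as $f(\alpha_k e_{i_1,\ldots,i_r})$.

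Given this dictionary both implications are immediate. If $S$ is $\cI$-stable and $x^a = f(e_i) \in f(S)$, then for each $\ell$ we get $x_\ell x^a = f(\alpha_{i_{\ell-1}+1} e_i) \in f(S)$, so $f(S)$ has the required closure property; conversely, if $f(S)$ is closed under multiplication by each $x_\ell$, then for any $e_i \in S$ and any generator $\alpha_k$ either $\alpha_k e_i = e_i \in S$ or $f(\alpha_k e_i) = x_{m+1} f(e_i) \in f(S)$, so $\alpha_k e_i \in S$, and since the $\alpha_k$ generate $\cI$ (Corollary~\ref{cor:alphagen}) it follows that $S$ is $\cI$-stable. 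I do not expect a genuine obstacle here: the only step that demands real attention is getting the exponent bookkeeping exactly right, i.e.\ verifying that applying $\alpha_k$ corresponds to multiplication by the \emph{single} variable $x_{m+1}$ and identifying which index $m+1$ occurs.
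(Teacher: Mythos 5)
Your proof is correct and follows essentially the same route as the paper's: both rest on the observation that $f$ matches up the two monomial bases, and both reduce the submodule/ideal correspondence to a single computation identifying multiplication by a variable $x_\ell$ with the action of an appropriate generator $\alpha_k$. The paper states the key identity as $x_k f(e_{i_1,\ldots,i_r}) = f(e_{i_1,\ldots,i_{k-1},i_k+1,\ldots,i_r+1})$, while you parametrize by the monoid generator and record $f(\alpha_k e_i) = x_{m+1} f(e_i)$ when $m<r$ and $\alpha_k e_i = e_i$ when $m=r$; these are the same computation read in opposite directions, and your version makes the dictionary between the $\alpha_k$ and the $x_\ell$ a little more explicit.
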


\begin{proof}
It is clear that $f$ induces a bijection between the set of basis vectors in $\bA^r$ and the set of monomials in $\bk[x_1,\ldots,x_r]$, and is therefore an isomorphism of vector spaces. To prove the second statement, it suffices to show that if $V \subset \bA^r$ is a subspace spanned by some collection of basis vectors then $V$ is an $\cI$-submodule of $\bA^r$ if and only if $f(V)$ is an ideal of $\bk[x_1, \ldots, x_r]$. Observe that we have the identity:
\begin{displaymath}
x_k f(e_{i_1,\ldots,i_r}) = f(e_{i_1,\ldots,i_{k-1},i_k+1,\ldots,i_r+1}).
\end{displaymath}
From this, we see that the following two conditions are equivalent:
\begin{enumerate}
\item $f(V)$ is an ideal in $\bk[x_1,\ldots,x_r]$.
\item If $e_{i_1,\ldots,i_r}$ belongs to $V$ then so does $e_{i_1,\ldots,i_{k-1},i_k+1,\ldots,i_r+1}$, for any $1 \le k \le r$.
\end{enumerate}
It is clear that (b) is equivalent to $\alpha_j V \subset V$ for all $j$, and thus to $V$ being an $\cI$-submodule. This completes the proof.
\end{proof}

\subsection{Simple modules} \label{ss:simple}

For $n \in \bN$, let $\cB^n$ be the set $\{\ast, n\}$. We give $\cB^n$ the structure of a pointed $\cI$-set by:
\begin{displaymath}
\alpha_i \cdot n = \begin{cases}
\ast & \text{if $i \le n$} \\
n & \text{if $i>n$}
\end{cases}
\end{displaymath}
As such, it is smooth. The set $\cB^n$ admits a grading, in which $n$ is given degree $n$.

We let $\bB^n$ be the monomial module associated to $\cB^n$, and $\ul{\bB}^n$ the graded version. These are one-dimensional, and thus simple. We note that $\bB^0$ is the 1-dimensional trivial representation, and thus isomorphic to $\bA^0$. The module $\ul{\bB}^n$ is pure for $n>0$.

\subsection{Standard modules} \label{ss:gap}

A {\bf constraint word} a finite word $\lambda=\lambda_1 \cdots \lambda_k$ in the 2-letter alphabet $\{\whitetri, \blacktri \}$. Fix such $\lambda$. Let $\cE^\lambda_+$ be the set of strictly-increasing tuples $(n_1, \ldots, n_k)$ with $n_i \in \bN_+$ such that $n_i-n_{i-1}=1$ if $\lambda_i=\blacktri$; here we use the convention $n_0=0$. If $\lambda$ is the empty word (i.e., $k=0$) then $\cE^\lambda_+$ is the singleton set consisting of the empty tuple. We let $\cE^\lambda=\cE^\lambda_+ \cup \{\ast\}$. We let $\cI$-act on this set by:
\begin{displaymath}
\sigma (n_1, \ldots, n_k) = \begin{cases}
(\sigma n_1, \ldots, \sigma n_k) & \text{if this belongs to $\cE^\lambda_+$} \\
\ast & \text{otherwise.}
\end{cases}
\end{displaymath}
Of course, we also put $\sigma \ast=\ast$ for all $\sigma$. In this way, $\cE^\lambda$ has the structure of a smooth pointed $\cI$-set. It is also admits a natural grading, by declaring $(n_1, \ldots, n_k)$ to have degree $n_k$. (For $k=0$, the empty singleton is given degree~0.)

We let $\bE^\lambda$ be the monomial module associated to $\cE^\lambda$, and let $\ul{\bE}^\lambda$ be the graded version. We call these {\bf standard modules}. A few remarks:
\begin{itemize}
\item If $\lambda=\whitetri^k$ is the length $k$ word with $\lambda_i=\whitetri$ for all $i$ then $\bE^{\lambda} \cong \bA^k$ and $\ul{\bE}^{\lambda} \cong \ul{\bA}^k$.
\item If $\lambda=\blacktri^k$ is the length $k$ word with $\lambda_i=\blacktri$ for all $i$ then $\bE^{\lambda} \cong \bB^k$ and $\ul{\bE}^{\lambda} \cong \ul{\bB}^k$.
\item If $\lambda$ is the empty word then $\bE^\lambda \cong \bB^0 \cong \bA^0$ is the trivial representation. Otherwise, $\ul{\bE}^{\lambda}$ is pure.
\end{itemize}
We define the {\bf rank} of $\lambda$ to be the number of times $\whitetri$ occurs in it. We define the {\bf rank} of $\bE^{\lambda}$ and $\ul{\bE}^{\lambda}$ to be that of $\lambda$. Thus, for example, $\bA^r$ has rank~$r$, while $\bB^n$ has rank~0.

\section{Gr\"obner theory} \label{s:grobner}

\subsection{Principal modules}

Recall that the principal module $\bA^r$ has a basis indexed by the set $\cA^r_+$ of increasing sequences $(i_1,\cdots,i_r)$ of positive integers. Let $<$ be the lexicographic order on $\cA^r_+$, where the larger numbers are compared first; to be precise, $(i_1,\ldots,i_r)<(j_1,\ldots,j_r)$ if $i_k<j_k$, where $k$ is the maximal index such that $i_k \ne j_k$. The order $<$ is a well-order on $\cA^r_+$, and is compatible with the $\cI$-action in the sense that $(i_1,\ldots,i_r)<(j_1,\ldots,j_r)$ implies $\sigma (i_1,\ldots,i_r) < \sigma (j_1, \ldots, j_r)$ for any $\sigma \in \cI$.

Given a non-zero element $x=\sum_{(i_1,\ldots,i_r) \in \cA^r_+} c_{i_1,\ldots,i_r} e_{i_1,\ldots,i_r}$ of $\bA^r$, we define the {\bf initial term} of $x$, denoted $\ini(x)$, to be $e_{i_1,\ldots,i_r}$, where $(i_1,\ldots,i_r)$ is the maximal element of $\cA^r_+$ with $c_{i_1,\ldots,i_r}$ non-zero. Since $\cI$ is compatible with the order, we see that $\ini(\sigma x)=\sigma \ini(x)$ for any $\sigma \in \cI$.

Now let $M$ be an $\cI$-submodule of $\bA^r$. We define the {\bf initial submodule}, denoted $\ini(M)$, to be the the subspace of $\bA^r$ spanned by the elements $\ini(x)$ for $x \in M$. This is a monomial submodule of $\bA^r$: indeed, it is a submodule since $\sigma \ini(x)=\ini(\sigma x)$, and it is monomial since it is spanned by a collection of basis vectors. We have the usual Gr\"obner lemma:

\begin{proposition} \label{prop:groblem}
Suppose that $M \subset N$ are $\cI$-submodules of $\bA^r$ such that $\ini(M)=\ini(N)$. Then $M=N$.
\end{proposition}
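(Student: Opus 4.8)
The plan is to run the standard Gr\"obner argument using the well-order on $\cA^r_+$. Suppose $M \subsetneq N$ are $\cI$-submodules of $\bA^r$ with $\ini(M) = \ini(N)$. Pick an element $y \in N \setminus M$ whose initial term $\ini(y) = e_{(j_1,\ldots,j_r)}$ is as small as possible with respect to the well-order $<$; this is possible since $<$ is a well-order on $\cA^r_+$ and every nonzero element of $\bA^r$ has a well-defined initial term. The strategy is to produce from $y$ an element of $N \setminus M$ with strictly smaller initial term, contradicting minimality.

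Here are the steps. First, scaling $y$, we may assume the coefficient of $e_{(j_1,\ldots,j_r)}$ in $y$ is $1$. Since $\ini(y) = e_{(j_1,\ldots,j_r)} \in \ini(N) = \ini(M)$, there is some $x \in M$ with $\ini(x) = e_{(j_1,\ldots,j_r)}$, and again scaling we may take its leading coefficient to be $1$ as well. Now consider $y - x$. This lies in $N$ (as both $x, y \in N$), and it does not lie in $M$ (since $y \notin M$ but $x \in M$). Moreover the leading terms cancel, so either $y - x = 0$ or $\ini(y-x) < \ini(y)$. The first case is impossible because $y - x \notin M$ while $0 \in M$; hence $\ini(y-x) < \ini(y)$, and $y - x \in N \setminus M$, contradicting the minimality of $\ini(y)$.

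The only subtlety worth flagging is the use of the order-theoretic facts already recorded just before the proposition: that $<$ is a well-order on $\cA^r_+$, so a minimal initial term exists, and that $\ini$ is additive-friendly in the sense that subtracting two elements with the same leading term strictly decreases (or kills) the initial term. Both are immediate from the definition of the lexicographic order. The compatibility of $<$ with the $\cI$-action, though emphasized in the setup, is not actually needed for this particular lemma---it will matter for later Gr\"obner-theoretic statements, but here the argument is purely about the vector space filtration by initial terms. So in fact there is no real obstacle: this is the textbook Gr\"obner lemma, and the proof is a two-line ``leading term cancellation'' argument once the well-ordering is in hand.
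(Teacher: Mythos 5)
Your proof is correct and takes essentially the same approach as the paper's: choose a counterexample with minimal initial term and cancel leading terms to get a contradiction. The only difference is cosmetic (you conclude directly that $y-x \in N \setminus M$ has smaller initial term, whereas the paper first deduces $x - cy \in M$ and then derives $x \in M$), and your observation that $\cI$-compatibility of the order is not needed here is accurate.
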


\begin{proof}
Suppose that $M \ne N$, and choose $x \in N \setminus M$ with $\ini(x)$ minimal; this is possible since $<$ is a well-order on $\cA^r_+$. Since $\ini(N)=\ini(M)$, we have $\ini(x)=\ini(y)$ for some $y \in M$. There is thus a non-zero scalar $c$ such that $\ini(x-cy)<\ini(x)$. By the minimality of $x$, we see that $x-cy \in M$. Since $cy \in M$, this implies $x \in M$, a contradiction. Thus $M=N$.
\end{proof}

The main application of the above material is the following theorem:

\begin{theorem} \label{thm:noeth}
The categories $\Rep(\cI)$ and $\uRep(\cI)$ are locally noetherian.
\end{theorem}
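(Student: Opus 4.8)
The plan is to reduce the assertion to the statement that the principal modules are noetherian objects, and then extract that from the Gr\"obner apparatus of this section. Recall that a Grothendieck category is locally noetherian precisely when it admits a generating family consisting of noetherian objects. Both $\Rep(\cI)$ and $\uRep(\cI)$ are Grothendieck (\S\ref{ss:rep} and \S\ref{ss:urep}), and by Corollary~\ref{cor:pringen} and its graded analogue the principal modules $\bA^r$, resp.\ $\ul{\bA}^r$, form such a generating family. So it suffices to prove that $\bA^r$ is a noetherian object of $\Rep(\cI)$ and $\ul{\bA}^r$ a noetherian object of $\uRep(\cI)$---equivalently, that every ascending chain of $\cI$-submodules of $\bA^r$, resp.\ of graded submodules of $\ul{\bA}^r$, stabilizes. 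As a consequence one also gets that every finitely generated smooth or graded $\cI$-module, being a quotient of a finite direct sum of principal modules, is noetherian.

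Next I would reduce the graded case to the smooth one. A graded submodule of $\ul{\bA}^r$ is in particular an $\cI$-submodule of $\bA^r$, and its grading is forced by that of $\ul{\bA}^r$; hence the poset of graded submodules of $\ul{\bA}^r$ embeds, injectively and order-preservingly, into the poset of $\cI$-submodules of $\bA^r$. Thus the ascending chain condition for $\ul{\bA}^r$ follows from that for $\bA^r$, and I may concentrate on the latter.

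For $\bA^r$, let $M_1 \subseteq M_2 \subseteq \cdots$ be an ascending chain of $\cI$-submodules. Passing to initial submodules yields an ascending chain $\ini(M_1) \subseteq \ini(M_2) \subseteq \cdots$ of monomial submodules of $\bA^r$. Under the inclusion-preserving bijection of Proposition~\ref{prop:prinpoly}, this corresponds to an ascending chain of monomial ideals of $\bk[x_1,\ldots,x_r]$, which stabilizes by the Hilbert basis theorem (or directly by Dickson's lemma). Hence $\ini(M_n) = \ini(M_{n+1}) = \cdots$ for $n \gg 0$, and the Gr\"obner lemma, Proposition~\ref{prop:groblem}, then forces $M_n = M_{n+1} = \cdots$. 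This proves $\bA^r$ is noetherian and completes the argument.

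I do not anticipate a serious obstacle here: the genuine content is already carried by Propositions~\ref{prop:groblem} and~\ref{prop:prinpoly} together with the classical noetherianity of the polynomial ring. The only steps needing a little care are the reduction of the theorem to the noetherianity of the principal modules (which uses that these generate and that the ambient categories are Grothendieck) and the passage between the graded and smooth settings, both of which are routine.
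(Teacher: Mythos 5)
Your proof is correct and follows essentially the same route as the paper's: reduce to noetherianity of $\bA^r$ via the principal modules being generators, deduce the graded case by observing that homogeneous submodule chains are submodule chains, and kill ascending chains in $\bA^r$ by passing to initial submodules and invoking Proposition~\ref{prop:prinpoly} together with Proposition~\ref{prop:groblem}. The only superficial difference is phrasing: the paper concludes by noting every finitely generated object is a quotient of a finite sum of principal modules and hence noetherian, while you invoke the equivalent characterization of local noetherianity via a generating family of noetherian objects.
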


\begin{proof}
We first claim that $\bA^r$ is a noetherian object of $\Rep(\cI)$, for any $r$. Indeed, suppose that $M_1 \subset M_2 \subset \cdots \subset \bA^r$ is an ascending chain of $\cI$-submodules. We then have an ascending chain $\ini(M_1) \subset \ini(M_2) \subset \cdots$ of monomial submodules of $\bA^r$. By Proposition~\ref{prop:prinpoly}, this corresponds to an ascending chain of monomial ideals in the polynomial ring $\bk[x_1,\ldots,x_r]$, and thus stabilizes since this ring is noetherian. (It is also easy to directly prove that this chain stabilizes.) By Proposition~\ref{prop:groblem}, it follows that the original chain $M_{\bullet}$ stabilizes, and so $\bA^r$ is noetherian.

It now follows that $\ul{\bA}^r$ is a noetherian object of $\uRep(\cI)$: indeed, an ascending chain of homogeneous submodules of $\ul{\bA}^r$ is an ascending chain of submodules of $\bA^r$ that happens to also be homogeneous, and so stabilizes.

Any subquotient of a noetherian object is noetherian; also, any finite direct sum of noetherian objects is noetherian. Since every finitely generated object in $\Rep(\cI)$ is a quotient of a finite sum of principal modules (Corollary~\ref{cor:pringen}), it follows that all finitely generated objects of $\Rep(\cI)$ are noetherian. Thus $\Rep(\cI)$ is locally noetherian. The same reasoning applies to $\uRep(\cI)$.
\end{proof}

\begin{remark}
The local noetherianity of $\uRep(\cI)$ was previously known: in \cite[Theorem~7.1.1]{catgb}, it is shown that the category $\Mod_{\OI}$ of $\OI$-modules is locally noetherian (also using a Gr\"obner-theoretic argument), which implies local noetherianity of $\uRep(\cI)$ via the equivalence in Proposition~\ref{prop:oiequiv}. The noetherian result for $\Rep(\cI)$ has certainly been known to experts in the field for some time, but we do not know if it previously appeared in the literature.
\end{remark}

\subsection{Graded modules} \label{ss:grgrob}

We now develop a version of Gr\"obner theory that applies to inhomogeneous submodules of graded modules. We use this theory here to deduce a few properties of the forgetful functor $\Phi$. We will also use it later in our study of saturation (see Proposition~\ref{prop:bdsat}).

Let $M$ be a graded $\cI$-module. We will regard $M$ as an object of $\Rep(\cI)$ in what follows. Given a non-zero element $x \in M$, let $x=\sum_{i \in \bN} x_i$ be its decomposition into homogeneous pieces. We define the {\bf initial term} of $x$, denoted $\ini(x)$, to be $x_i$ where $i$ is maximal so that $x_i$ is non-zero; we also define the {\bf degree} of $x$, denoted $\deg(x)$, to be this value of $i$. We define the {\bf initial submodule} of a (possibly inhomogeneous) $\cI$-submodule $N$ of $M$, denoted $\ini(N)$, to be the subspace of $M$ spanned by the elements $\ini(x)$ as $x$ varies over $N$; it is a homogeneous $\cI$-submodule of $M$. We say that $x_1, \ldots, x_n \in N$ are a {\bf Gr\"obner basis} if $\ini(x_1), \ldots, \ini(x_n)$ generate $\ini(N)$.

We shall require the following results on Gr\"obner bases. They are entirely standard, at least in the usual context of polynomial rings, but we include proofs to be complete.

\begin{proposition}
Let $M$ be a finitely generated graded $\cI$-module, and let $N$ be a (possibly inhomogeneous) $\cI$-submodule. Then $N$ admits a Gr\"obner basis.
\end{proposition}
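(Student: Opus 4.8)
The plan is to deduce this directly from local noetherianity (Theorem~\ref{thm:noeth}). Although $N$ itself need not be homogeneous, its initial submodule $\ini(N)$ \emph{is} a homogeneous $\cI$-submodule of $M$, and since $M$ is finitely generated it is a noetherian object of $\uRep(\cI)$; hence $\ini(N)$ is a noetherian (in particular finitely generated) graded $\cI$-module. The whole proof will be extracting a finite generating set of $\ini(N)$ that consists of honest initial terms.

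First I would observe that $\ini(N)$ is generated, as an $\cI$-module, by the set $S = \{\ini(x) : x \in N,\ x \neq 0\}$. Indeed, by definition $\ini(N)$ is the $\bk$-span of $S$, so the $\cI$-submodule generated by $S$ certainly contains $\ini(N)$; conversely each element of $S$ is a homogeneous element of $\ini(N)$ and $\ini(N)$ is an $\cI$-submodule, so the submodule generated by $S$ is contained in $\ini(N)$. Thus the two coincide. Next, since $M$ is a noetherian object of $\uRep(\cI)$, so is its homogeneous submodule $\ini(N)$; and a standard ACC argument shows that a submodule of a noetherian object which is generated by a set $S$ is already generated by a finite subset of $S$ (the homogeneous submodules generated by finite subsets of $S$ form a directed family whose union is $\ini(N)$, so the family must stabilize). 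Applying this, there exist finitely many nonzero $x_1, \ldots, x_n \in N$ such that $\ini(x_1), \ldots, \ini(x_n)$ generate $\ini(N)$; by definition $x_1, \ldots, x_n$ is then a Gr\"obner basis of $N$.

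There is essentially no genuine obstacle here: all the content sits in Theorem~\ref{thm:noeth}, and the argument is the usual one showing that Gr\"obner bases exist in a noetherian setting. The only point needing a little care is that one wants the chosen finite generating set of $\ini(N)$ to consist of actual initial terms of elements of $N$, rather than some arbitrary homogeneous generating set; this is exactly what the ``finite subset of a generating set'' observation provides.
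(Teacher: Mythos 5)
Your proof is correct and rests on the same pillar as the paper's: noetherianity of $M$ from Theorem~\ref{thm:noeth}. The paper argues by contradiction, building an infinite strictly ascending chain of submodules generated by initial terms, whereas you run the contrapositive directly, extracting a finite generating set of $\ini(N)$ from among the initial terms; this is the same idea stated positively.
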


\begin{proof}
Suppose not. Inductively define a sequence $x_1, x_2, \ldots$ in $N$ by taking $x_{n+1}$ to be any element of $N$ such that $\ini(x_{n+1})$ does not belong to the submodule of $M$ generated by $\ini(x_1), \ldots, \ini(x_n)$; note that such $x_{n+1}$ exists since $x_1, \ldots, x_n$ are not a Gr\"obner basis. We thus see that the submodule of $M$ generated by $\{\ini(x_n)\}_{n \ge 1}$ is not finitely generated, contradicting the noetherianity of $M$ (Theorem~\ref{thm:noeth}).
\end{proof}

\begin{proposition} \label{prop:grobexp}
Let $M$ be a graded $\cI$-module, let $N$ be a (possibly inhomogeneous) $\cI$-submodule, and let $x_1, \ldots, x_n$ be a Gr\"obner basis of $N$. Let $y \in N$. Then we have an expression
\begin{displaymath}
y = \sum_{i=1}^m c_i \sigma_i x_{a_i}
\end{displaymath}
where $c_i \in \bk$, $\sigma_i \in \cI$, and $\deg(\sigma_i x_{a_i}) \le \deg(y)$ for all $j$.
\end{proposition}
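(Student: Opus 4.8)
This is the standard "division algorithm" argument from Gröbner theory, carried out in the setting of graded $\cI$-modules. The point is that a Gröbner basis lets us rewrite any element of $N$ as a $\bk[\cI]$-combination of the $x_{a_i}$'s without ever increasing degree. I would induct on $\deg(y)$, which is legitimate because degrees are non-negative integers (and for a fixed $y$ the induction is finite, descending through the finitely many degrees below $\deg(y)$).

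\emph{Base case and inductive step.} If $y = 0$ there is nothing to prove (take the empty sum). Otherwise let $d = \deg(y)$, so $\ini(y) \in M_d$ is the top homogeneous piece of $y$. Since $x_1, \ldots, x_n$ is a Gröbner basis, $\ini(y)$ lies in the submodule of $M$ generated by $\ini(x_1), \ldots, \ini(x_n)$, so we may write $\ini(y) = \sum_{i} c_i \sigma_i \ini(x_{a_i})$ with $c_i \in \bk$ and $\sigma_i \in \cI$. Because $\ini(y)$ is homogeneous of degree $d$, and $\sigma \ini(x_a)$ is homogeneous of degree $\sigma(\deg(x_a))$ (recall $\ini(\sigma z) = \sigma \ini(z)$, as $\ini$ picks out the top graded piece and $\sigma$ permutes graded pieces via $M_n \to M_{\sigma(n)}$), we may discard all terms for which $\sigma_i(\deg(x_{a_i})) \neq d$; the surviving terms all satisfy $\deg(\sigma_i x_{a_i}) = \sigma_i(\deg x_{a_i}) = d = \deg(y)$. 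Now set $y' = y - \sum_i c_i \sigma_i x_{a_i}$. By construction the degree-$d$ part of $y'$ vanishes — the degree-$d$ part of $\sum_i c_i \sigma_i x_{a_i}$ is exactly $\sum_i c_i \sigma_i \ini(x_{a_i}) = \ini(y)$ — so either $y' = 0$ or $\deg(y') < d$. Moreover $y' \in N$ since $y \in N$ and $N$ is an $\cI$-submodule. By the inductive hypothesis applied to $y'$, we get an expression $y' = \sum_j c'_j \tau_j x_{b_j}$ with $\deg(\tau_j x_{b_j}) \le \deg(y') < d \le \deg(y)$. Adding back the terms $\sum_i c_i \sigma_i x_{a_i}$ yields the desired expression for $y$, all of whose terms have degree $\le d = \deg(y)$.

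\emph{Main obstacle.} There is no serious obstacle here; the only thing to be careful about is the bookkeeping of degrees, namely the interplay between $\deg$ and $\ini$: one must use that $\ini$ commutes with the $\cI$-action (so that writing $\ini(y)$ in terms of the $\ini(x_{a_i})$ really does produce terms of the correct single degree $d$) and that $\cI$ preserves the grading degree-wise only up to the reindexing $n \mapsto \sigma(n)$, which is what makes $\deg(\sigma_i x_{a_i}) = d$ (not merely $\le d$) for the retained terms. Once that is set up, the cancellation of the top-degree part is immediate and the induction closes.
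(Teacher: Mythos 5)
Your proof follows exactly the same strategy as the paper's: induct on $\deg(y)$, express $\ini(y)$ in terms of the $\ini(x_{a_i})$, discard terms of degree $\ne \deg(y)$ by homogeneity, subtract to drop the degree, and apply the inductive hypothesis. The argument is correct and matches the paper's proof step for step.
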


\begin{proof}
If $y=0$ then the statement is obvious. Thus suppose $y \ne 0$, and let us proceed by induction on $d=\deg(y)$. Since $\ini(y)$ belongs to $\ini(N)$, it can be generated by the homogeneous elements $\ini(x_1), \ldots, \ini(x_n)$. We therefore have an expression $\ini(y) = \sum_{i=1}^m c_i \sigma_i \ini(x_{a_i})$ for some scalars $c_i$ and indices $a_i$. As the terms in this sum are homogeneous and the result has degree $d$, all terms not of degree $d$ cancel, and we can therefore exclude them. We may thus assume $\deg(\sigma_i \ini(x_{a_i}))=d$ for all $i$. Note that $\deg(\sigma_i x_{a_i})=d$ as well. Let $y'=y-\sum_{i=1}^m c_i \sigma_i x_{a_i}$. Then $y'$ has degree $<d$. By the inductive hypothesis, we have an expression $y'=\sum_{j=1}^{\ell} c'_j \sigma'_j x_{a'_j}$ where $\deg(\sigma'_j x_{a'_j})<d$ for all $j$. We thus find $y=\sum_{j=1}^{\ell} c'_j \sigma'_j x_{a'_j}-\sum_{i=1}^m c_i \sigma_j x_{a_i}$, as required.
\end{proof}

\begin{proposition} \label{prop:inhom-decomp}
Let $M$ be a graded $\cI$-module. Suppose that $A$ and $B$ are possibly inhomogeneous submodules of $M$ such that $M=A \oplus B$. Then $M=\ini(A) \oplus \ini(B)$.
\end{proposition}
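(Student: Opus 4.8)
The plan is to analyse the two projections attached to the decomposition $M=A\oplus B$ and to show that, although they are typically not homogeneous, they cannot raise degree; this single fact then propagates to the associated graded and yields the assertion.

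Let $p\colon M\to A$ and $q\colon M\to B$ be the $\cI$-linear projections, and for $d\ge 0$ write $G_d=\bigoplus_{i\le d}M_i$ for the subspace of elements of degree $\le d$, with $G_{-1}=0$. By the definition of the initial submodule, $\ini(A)_d$ is the image of $A\cap G_d$ under the projection $M\to M_d$ onto the degree-$d$ part, and similarly $\ini(B)_d$ is the image of $B\cap G_d$; so everything will follow once we understand how $p$ and $q$ move elements through the filtration $G_\bullet$.

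The crux---and the only step I expect to require any real idea---is the claim that $\deg p(x)\le\deg x$ for all $x\in M$, and likewise for $q$. By linearity it suffices to prove this for a homogeneous $m\in M_d$. Since $\alpha_{d+1}$ fixes $d$, the grading axiom forces $\alpha_{d+1}$ to act as the identity on $M_d$, so $\alpha_{d+1}m=m$, and then $\alpha_{d+1}p(m)=p(\alpha_{d+1}m)=p(m)$ because $p$ is $\cI$-linear. It then remains to observe that any $a\in M$ fixed by $\alpha_{d+1}$ lies in $G_d$: writing $a=\sum_j a_j$ and comparing homogeneous components in $\alpha_{d+1}a=a$, one uses that $\alpha_{d+1}$ fixes $M_j$ for $j\le d$ and carries $M_j$ into $M_{j+1}$ for $j\ge d+1$; hence $\alpha_{d+1}a$ has vanishing degree-$(d+1)$ part, which gives $a_{d+1}=0$, and then inductively $a_{d+2}=a_{d+3}=\cdots=0$. (Alternatively, Proposition~\ref{prop:fixed} shows such an $a$ is $\cI_{\ge d+1}$-invariant, from which the same conclusion is immediate.)

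Granting this, the remainder is bookkeeping. From $p(G_d)\subseteq A\cap G_d$, $q(G_d)\subseteq B\cap G_d$, and $p+q=\mathrm{id}_M$ one gets $G_d=(A\cap G_d)\oplus(B\cap G_d)$ for all $d$. For the disjointness $\ini(A)\cap\ini(B)=0$: any nonzero homogeneous $c\in\ini(A)_d\cap\ini(B)_d$ is the initial term of a single element of $A$ and of a single element of $B$ (if $c=\sum_j\lambda_j\ini(a_j)$ with each $a_j\in A$ of degree $d$, then $a:=\sum_j\lambda_j a_j\in A$ has $\ini(a)=c$ provided $c\ne 0$), say $c=\ini(a)=\ini(b)$ with $a\in A$, $b\in B$, $\deg a=\deg b=d$; then $a\ne b$ (else $a\in A\cap B=0$), so $a-b$ is nonzero of degree $<d$ while $p(a-b)=a$ has degree $d$, contradicting the crux. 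For $\ini(A)+\ini(B)=M$: given homogeneous $m\in M_d$, both $p(m)$ and $q(m)$ lie in $G_d$, so their degree-$d$ components lie in $\ini(A)_d$ and $\ini(B)_d$ respectively and sum to $m$. Hence $M=\ini(A)\oplus\ini(B)$ as graded $\cI$-modules. Note that no finiteness hypothesis on $M$ and no Gr\"obner basis are needed.
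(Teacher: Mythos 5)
Your proof is correct. The crux of your argument --- that the $\cI$-linear projections $p,q$ attached to $M=A\oplus B$ cannot raise degree, because $\alpha_{d+1}$-invariance of $m\in M_d$ is inherited by $p(m)$ and forces $p(m)\in\bigoplus_{i\le d}M_i$ --- is precisely the observation the paper uses, phrased there in terms of the decomposition $x=a+b$ rather than the projections: if $x\in M_k$ then $a,b$ are $\cI_{>k}$-invariant, hence lie in $M^{\cI_{>k}}=\sum_{i\le k}M_i$. Both proofs then get $M=\ini(A)+\ini(B)$ by taking degree-$k$ components. Where you genuinely diverge is in proving $\ini(A)\cap\ini(B)=0$: the paper reduces inductively, repeatedly invoking the sum statement to strip lower-degree corrections off $a-b$ until it lands in $A\cap B=0$; you instead finish in one stroke by observing that $\ini(a)=\ini(b)\ne0$ of degree $d$ forces $\deg(a-b)<d$ while $p(a-b)=a$ has degree exactly $d$, contradicting degree-preservation. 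Your disjointness argument is cleaner and avoids the iteration; packaging the core fact as a property of the projections is a small but real improvement in organization. One minor remark on your parenthetical: invoking Proposition~\ref{prop:fixed} tells you $p(m)$ is $\cI_{\ge d+1}$-invariant, but you still need the componentwise computation you already gave to conclude $M^{\cI_{\ge d+1}}=\bigoplus_{i\le d}M_i$ --- the paper likewise asserts this identity without proof, so this is consistent with its level of detail, but ``immediate'' slightly oversells it.
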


\begin{proof}
We first claim that $M=\ini(A)+\ini(B)$. Thus let $x \in M_k$ be given. Write $x=a+b$ with $a \in A$ and $b \in B$. Since $x$ belongs to $M_k$, it is invariant under $\cI_{>k}$, and so $a$ and $b$ are each invariant under $\cI_{>k}$. Thus $a,b \in M^{\cI_{>k}}=\sum_{i=0}^k M_i$. Let $a=\sum_{i=0}^k a_i$ and $b=\sum_{i=0}^k b_i$ be the decompositions of $a$ and $b$ into homogeneous pieces. Thus $x=a_k+b_k$. As $a_k$ is either~0 or $\ini(a)$, and similarly for $b_k$, the claim is established.

We now show that $\ini(A) \cap \ini(B)=0$. Suppose not, and say $\ini(a)=\ini(b)$ is non-zero of degree $k$, where $a\in A$ and $b \in B$ have degree $k$. Then $a-b$ has degree $<k$, and so, by the previous paragraph, can be written in the form $\sum_{i=0}^{k-1} \left[ \ini(a_i) - \ini(b_i) \right]$, where the $a_i$ belong to $A$ and the $b_i$ belong to $B$, and $a_i$ and $b_i$ have degree $\le i$. Let $a'=a-\sum_{i=0}^{k-1} a_i$ and $b'=b-\sum_{i=0}^{k-1} b_i$. Then $a'=b'$. Since this belongs to $A \cap B=0$, we find $a'=0$ and $b'=0$. But this is a contradiction, as $\ini(a')=\ini(a)$ is non-zero. We conclude that $\ini(A) \cap \ini(B)=0$.
\end{proof}

\begin{proposition} \label{prop:phi-indecomp}
Let $M$ be a graded $\cI$-module. Then $M$ is indecomposable if and only if $\Phi(M)$ is indecomposable.
\end{proposition}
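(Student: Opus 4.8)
The plan is to prove the two implications separately, with essentially all the substance residing in Proposition~\ref{prop:inhom-decomp}.

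First I would dispatch the easy direction: if $\Phi(M)$ is indecomposable, then so is $M$. Since $\Phi$ is additive, a decomposition $M = A \oplus B$ by graded $\cI$-submodules yields $\Phi(M) = \Phi(A) \oplus \Phi(B)$; as the underlying space of $\Phi(A)$ is that of $A$, we have $\Phi(A) = 0$ iff $A = 0$, so indecomposability of $\Phi(M)$ forces $A = 0$ or $B = 0$. Also $M \neq 0$ because $\Phi(M) \neq 0$. Hence $M$ is indecomposable.

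For the converse — if $M$ is indecomposable, then so is $\Phi(M)$ — I would argue contrapositively. Suppose $\Phi(M) = A \oplus B$ with $A$ and $B$ nonzero $\cI$-submodules; these are automatically smooth, but need not be homogeneous, so one cannot read off a decomposition of the graded module $M$ directly. This is exactly the situation handled by Proposition~\ref{prop:inhom-decomp}, which gives $M = \ini(A) \oplus \ini(B)$. The initial submodules $\ini(A)$ and $\ini(B)$ are homogeneous, hence honest graded $\cI$-submodules of $M$, and each is nonzero: if $0 \neq x \in A$ then $\ini(x)$ is a nonzero element of $\ini(A)$, and similarly for $B$. Thus $M$ admits a nontrivial decomposition as a graded $\cI$-module, contradicting its indecomposability.

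Since Proposition~\ref{prop:inhom-decomp} is already available, there is no genuine obstacle; the only point requiring care is that a decomposition of $\Phi(M)$ may involve inhomogeneous summands, which is precisely the reason one passes to initial submodules rather than attempting to use the summands $A$ and $B$ themselves.
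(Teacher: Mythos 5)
Your proof is correct and follows essentially the same route as the paper: the easy direction is immediate from additivity of $\Phi$, and the hard direction invokes Proposition~\ref{prop:inhom-decomp} to convert a possibly inhomogeneous decomposition of $\Phi(M)$ into a homogeneous one via initial submodules. Your write-up is slightly more explicit about the nonvanishing of $\ini(A)$ and $\ini(B)$, but the substance is identical.
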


\begin{proof}
It is clear that if $\Phi(M)$ is indecomposable then $M$ is indecomposable. Suppose now that $\Phi(M)$ is decomposable, so that $M=A \oplus B$ for (possibly inhomogeneous) non-zero submodules $A$ and $B$ of $M$. By the proposition, we have $M=\ini(A) \oplus \ini(B)$, and so $M$ is decomposable.
\end{proof}

\begin{proposition} \label{prop:phi-split}
An exact sequence of graded $\cI$-modules is split if and only if it is split after applying $\Phi$.
\end{proposition}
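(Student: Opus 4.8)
The forward implication is immediate: any additive functor, and in particular $\Phi$, carries a split exact sequence to a split exact sequence. The content is the converse, and it clearly suffices to treat a short exact sequence $0 \to A \stackrel{i}{\to} B \stackrel{p}{\to} C \to 0$ in $\uRep(\cI)$ (the case of a longer exact sequence reduces to this). So assume that $0 \to \Phi(A) \to \Phi(B) \to \Phi(C) \to 0$ splits in $\Rep(\cI)$; the plan is to manufacture a \emph{homogeneous} $\cI$-linear section of $p$ out of the given smooth one.

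A splitting after applying $\Phi$ amounts to a (possibly inhomogeneous) $\cI$-submodule $B' \subseteq B$ with $B = A \oplus B'$ as $\cI$-modules: take $B' = \operatorname{im}(s)$ for an $\cI$-linear section $s \colon \Phi(C) \to \Phi(B)$ of $p$, where I identify $A$ with the homogeneous submodule $i(A) \subseteq B$ (legitimate since $\Phi$ is exact). Now apply Proposition~\ref{prop:inhom-decomp} to this decomposition: it yields $B = \ini(A) \oplus \ini(B')$, with both summands \emph{homogeneous} submodules of $B$. Since $A$ is already a graded submodule, every element of $A$ has all of its homogeneous components in $A$, so $\ini(A) = A$; hence $B = A \oplus \ini(B')$ is a direct-sum decomposition of $B$ into homogeneous submodules.

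It remains to check that $p$ restricts to an isomorphism $\ini(B') \to C$ in $\uRep(\cI)$, which is a short diagram chase: the restriction $p|_{\ini(B')}$ is homogeneous and $\cI$-linear; it is injective because $\ker(p) = A$ meets $\ini(B')$ in $0$; and it is surjective because, given $c \in C$, choosing $b \in B$ with $p(b) = c$ and writing $b = a + x$ with $a \in A$ and $x \in \ini(B')$ gives $p(x) = c$. A bijective homogeneous $\cI$-linear map is an isomorphism in $\uRep(\cI)$, so composing its inverse with the inclusion $\ini(B') \hookrightarrow B$ produces the desired homogeneous section of $p$, and the sequence splits in $\uRep(\cI)$.

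The only mildly delicate point is the first reduction---recognizing that a $\Phi$-level splitting is precisely an inhomogeneous direct-sum decomposition $B = A \oplus B'$, and that Proposition~\ref{prop:inhom-decomp} is exactly the tool that replaces it by a homogeneous one. After that, everything is formal. I do not anticipate a genuine obstacle here: the substantive work (the graded Gr\"obner argument behind Proposition~\ref{prop:inhom-decomp}) has already been done, and the present statement is essentially just a repackaging of it, in the same spirit as Proposition~\ref{prop:phi-indecomp}.
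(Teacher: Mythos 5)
Your proof is correct and follows the paper's argument almost verbatim: both recognize that a $\Phi$-level splitting gives an inhomogeneous direct-sum decomposition $B = A \oplus B'$, and both invoke Proposition~\ref{prop:inhom-decomp} to replace $B'$ by the homogeneous submodule $\ini(B')$. The extra diagram chase verifying that $p$ restricts to an isomorphism $\ini(B') \to C$ is fine but not needed beyond the observation that $B = A \oplus \ini(B')$ as graded modules already yields a homogeneous splitting.
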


\begin{proof}
Consider an exact sequence of graded $\cI$-modules:
\begin{displaymath}
0 \to M_1 \to M_2 \to M_3 \to 0.
\end{displaymath}
Obviously, if it is split then it remains split after applying $\Phi$. Suppose that it splits after applying $\Phi$. We can thus find a decomposition $M_2=M_1 \oplus K$ where $K$ is a possibly inhomogeneous submodule of $M_2$. By Proposition~\ref{prop:inhom-decomp}, we have $M_2=M_1 \oplus \ini(K)$; note that since $M_1$ is homogeneous, we have $\ini(M_1)=M_1$. Since $\ini(K)$ is a homogenous submodule of $M_2$, we see that the sequence splits in the graded category.
\end{proof}

\section{Concatenation, shift, and transpose} \label{s:cat}

\subsection{The concatenation product} \label{ss:concat}

Let $M$ be a graded $\cI$-module and let $N$ be an arbitrary $\cI$-module. We define a new $\cI$-module, called the {\bf concatenation} of $M$ and $N$, and denoted $M \odot N$, as follows. As a vector space, $M \odot N$ is just the usual tensor product $M \otimes N$, where here $M$ denotes the underlying ungraded vector space of $M$. For $x \in M$ and $y \in N$, we write $x \odot y$ in place of $x \otimes y$ when we regard it as an element of $M \odot N$; this will help us keep track of the $\cI$-actions better. The action of $\cI$ on $M \odot N$ is defined as follows: for $x \in M_n$ and $y \in N$, we put
\begin{displaymath}
\alpha_k \cdot (x \odot y) = \begin{cases}
(\alpha_k x) \odot y  & \text{if $1 \le k \le n$}   \\
x \odot (\alpha_{k-n} y) & \text{if $n<k$}
\end{cases}
\end{displaymath}
We leave to the reader the simple verification that this action is well-defined (i.e., the fundamental relations are respected). It is clear that $M \odot N$ is smooth if $N$ is. If $N$ is graded then $M \odot N$ is in fact a graded $\cI$-module, under the usual grading (i.e., if $x \in M_i$ and $y \in N_j$ then $x \odot y \in (M \odot N)_{i+j}$). We note that, essentially by definition, we have $\Phi(M \odot N) = M \odot \Phi(N)$. One readily verifies that $\odot$ defines a (non-symmetric) monoidal structure on $\uRep(\cI)$, with unit object $\ul{\bB}^0=\ul{\bA}^0$, and also defines on $\Rep(\cI)$ the structure of a module category over the monoidal category $\uRep(\cI)$.

For constraint words $\lambda=\lambda_1 \cdots \lambda_r$ and and $\mu=\mu_1 \cdots \mu_s$, let $\lambda \odot \mu$ be the concatenated word $\lambda_1 \cdots \lambda_r \cdot \mu_1 \cdots \mu_s$.

\begin{proposition} \label{prop:stdcat}
We have a natural isomorphism $\ul{\bE}^{\lambda} \odot \ul{\bE}^{\mu} \cong \ul{\bE}^{\lambda \odot \mu}$ of graded $\cI$-modules. Similarly, we have an isomorphism $\ul{\bE}^{\lambda} \odot \bE^{\mu} \cong \bE^{\lambda \odot \mu}$.
\end{proposition}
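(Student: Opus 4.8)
The plan is to exhibit one explicit isomorphism and verify it is $\cI$-equivariant and degree-preserving. Write $r$ and $s$ for the lengths of $\lambda$ and $\mu$. By definition $\ul{\bE}^{\lambda} \odot \ul{\bE}^{\mu}$ has for a basis the vectors $e_{n_1,\ldots,n_r} \odot e_{m_1,\ldots,m_s}$ with $(n_1,\ldots,n_r) \in \cE^{\lambda}_+$ and $(m_1,\ldots,m_s) \in \cE^{\mu}_+$, and such a vector lies in degree $n_r+m_s$, the first tensor factor having degree $n_r$. I would take the canonical linear map
\[
\Theta \colon \ul{\bE}^{\lambda} \odot \ul{\bE}^{\mu} \to \ul{\bE}^{\lambda \odot \mu}, \qquad
e_{n_1,\ldots,n_r} \odot e_{m_1,\ldots,m_s} \mapsto e_{n_1,\ldots,n_r,\, n_r+m_1,\ldots,n_r+m_s}.
\]
First I would check the target is a legitimate basis vector of $\ul{\bE}^{\lambda\odot\mu}$ and that $\Theta$ is a bijection of bases. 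Strict monotonicity of the concatenated tuple is clear since $m_1 \ge 1$. For a position $i \le r$ the relevant letter of $\lambda\odot\mu$ is $\lambda_i$ and the constraint is precisely the $\cE^{\lambda}$-constraint on $(n_1,\ldots,n_r)$ (using $n_0=0$); for a position $r+j$ with $j \ge 2$ the letter is $\mu_j$ and $(n_r+m_j)-(n_r+m_{j-1}) = m_j-m_{j-1}$, matching the $\cE^{\mu}$-constraint; the one point to notice is position $r+1$, where the constraint (when $\mu_1 = \blacktri$) reads $(n_r+m_1)-n_r = 1$, i.e.\ $m_1=1$, which is exactly the $\cE^{\mu}$-constraint $m_1-m_0=1$ since $m_0=0$. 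Conversely any $(k_1,\ldots,k_{r+s}) \in \cE^{\lambda\odot\mu}_+$ splits uniquely by $n_i = k_i$ for $i \le r$ and $m_j = k_{r+j}-k_r$, and the same check shows $(n_\bullet) \in \cE^{\lambda}_+$ and $(m_\bullet) \in \cE^{\mu}_+$; hence $\Theta$ is bijective, and it is degree-preserving since the last entry of the target tuple is $n_r+m_s$.

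The substance of the proof is $\cI$-equivariance, which I would verify on the generators $\alpha_k$ (legitimate since they generate $\cI$ by Corollary~\ref{cor:alphagen}), splitting on where $\alpha_k$ acts in the concatenation. Fix $w = e_{n_1,\ldots,n_r} \odot e_{m_1,\ldots,m_s}$; its first tensor factor has degree $n_r$. If $1 \le k \le n_r$, then $\alpha_k w = (\alpha_k e_{n_\bullet}) \odot e_{m_\bullet}$, while on the target $\alpha_k$ acts on the first $r$ entries exactly as it does inside $\ul{\bE}^{\lambda}$ and raises each $n_r+m_j$ by one (as $k \le n_r \le n_r+m_j$); comparing gives $\Theta(\alpha_k w) = \alpha_k \Theta(w)$, and the left side is $0$ iff $\alpha_k e_{n_\bullet}=0$ iff some $\lambda_i=\blacktri$ has $n_i = k$, which is exactly when the right side is $0$. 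If $k > n_r$, put $k' = k-n_r$; then $\alpha_k w = e_{n_\bullet} \odot (\alpha_{k'} e_{m_\bullet})$, while on the target the first $r$ entries (all $\le n_r < k$) are fixed and $\alpha_k(n_r+m_j) = n_r + \alpha_{k'}(m_j)$, so again $\Theta(\alpha_k w) = \alpha_k \Theta(w)$, both sides vanishing iff some $\mu_j=\blacktri$ has $m_j = k'$. The main obstacle here is purely bookkeeping: matching the vanishing conditions on the two sides in each case, together with the shift identity $\alpha_k(n_r+m_j) = n_r + \alpha_{k-n_r}(m_j)$ for $k > n_r$.

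For the smooth statement it suffices to apply the forgetful functor $\Phi$: since $\Phi(M \odot N) = M \odot \Phi(N)$ and $\Phi(\ul{\bE}^{\nu}) = \bE^{\nu}$ for every constraint word $\nu$, the map $\Theta$ with its grading forgotten is the desired isomorphism $\ul{\bE}^{\lambda} \odot \bE^{\mu} \cong \bE^{\lambda \odot \mu}$. Alternatively, one can organize the whole argument at the level of pointed $\cI$-sets: the construction $\odot$ restricts to monomial modules as an operation on (graded) pointed $\cI$-sets with $\bk[\cU \odot \cV] = \bk[\cU] \odot \bk[\cV]$, and $\Theta$ is induced by a bijection $\cE^{\lambda} \odot \cE^{\mu} \cong \cE^{\lambda \odot \mu}$ of such sets, which makes the graded and smooth cases a single statement.
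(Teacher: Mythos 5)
Your proposal is correct and uses exactly the same isomorphism as the paper's proof (the map $e_{i_1,\ldots,i_r}\odot e_{j_1,\ldots,j_s}\mapsto e_{i_1,\ldots,i_r,i_r+j_1,\ldots,i_r+j_s}$), just carrying out in detail the bijection-of-bases and $\cI$-equivariance checks that the paper declares clear and leaves to the reader.
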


\begin{proof}
Let $\lambda=\lambda_1 \cdots \lambda_r$ and $\mu=\mu_1 \cdots \mu_s$. Define a map $\ul{\bE}^{\lambda} \odot \ul{\bE}^{\mu} \to \ul{\bE}^{\lambda \odot \mu}$ on basis vectors by
\begin{displaymath}
e_{i_1,\ldots,i_r} \odot e_{j_1,\ldots,j_s} \mapsto e_{i_1,\ldots,i_r,i_r+j_1,\ldots,i_r+j_s}.
\end{displaymath}
It is clear that this is a bijection on basis vectors and compatible with the action of $\cI$. It thus defines the requisite isomorphism.
\end{proof}

\begin{corollary} \label{cor:gapprod}
Let $\lambda=\lambda_1 \cdots \lambda_r$ be a constraint word. Then we have
\begin{displaymath}
\ul{\bE}^{\lambda} \cong \ul{\bE}^{\lambda_1} \odot \cdots \odot \ul{\bE}^{\lambda_r}.
\end{displaymath}
The $i$th factor on the right is $\ul{\bB}^1$ if $\lambda_i=\blacktri$, or $\ul{\bA}^1$ if $\lambda_i=\whitetri$.
\end{corollary}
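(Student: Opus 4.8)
The plan is to deduce the corollary from Proposition~\ref{prop:stdcat} by a routine induction on the length $r$ of $\lambda$, using the fact (noted just after the definition of $\odot$ in \S\ref{ss:concat}) that $\odot$ makes $\uRep(\cI)$ into a monoidal category, so that an iterated concatenation product is well-defined up to canonical isomorphism.

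First I would dispose of the single-letter and empty cases. For a length-one word we have $\ul{\bE}^{\whitetri}=\ul{\bA}^1$ and $\ul{\bE}^{\blacktri}=\ul{\bB}^1$; these are the $k=1$ instances of the remarks in \S\ref{ss:gap} (namely $\ul{\bE}^{\blacktri^k}=\ul{\bB}^k$ and $\ul{\bE}^{\whitetri^k}=\ul{\bA}^k$). For $r=0$ the empty word $\lambda$ gives $\ul{\bE}^{\lambda}=\ul{\bA}^0=\ul{\bB}^0$, which is precisely the unit object of $\odot$, so the empty concatenation product is the correct value.

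For the inductive step, write $\lambda=\lambda_1\mu$ where $\mu=\lambda_2\cdots\lambda_r$. Proposition~\ref{prop:stdcat} supplies a natural isomorphism $\ul{\bE}^{\lambda}\cong\ul{\bE}^{\lambda_1}\odot\ul{\bE}^{\mu}$, and the inductive hypothesis identifies $\ul{\bE}^{\mu}$ with $\ul{\bE}^{\lambda_2}\odot\cdots\odot\ul{\bE}^{\lambda_r}$; combining these and using associativity of $\odot$ yields $\ul{\bE}^{\lambda}\cong\ul{\bE}^{\lambda_1}\odot\cdots\odot\ul{\bE}^{\lambda_r}$. Finally one replaces each factor $\ul{\bE}^{\lambda_i}$ by $\ul{\bB}^1$ or $\ul{\bA}^1$ according to whether $\lambda_i=\blacktri$ or $\lambda_i=\whitetri$, as in the first step.

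There is essentially no obstacle here: the only point requiring any care is the bookkeeping of the iterated non-symmetric product, which the monoidal associativity constraint handles. If one prefers to avoid the induction, one can instead write the isomorphism down directly on basis vectors, sending $e_{i_1}\odot\cdots\odot e_{i_r}$ to $e_{i_1,\,i_1+i_2,\,\ldots,\,i_1+\cdots+i_r}$, where $e_{i_k}$ ranges over the basis of the $k$th factor (so $i_k=1$ is forced when $\lambda_k=\blacktri$), and check $\cI$-equivariance exactly as in the proof of Proposition~\ref{prop:stdcat}; the inductive route is the shorter one.
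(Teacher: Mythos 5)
Your proof is correct and is exactly the intended argument: the paper states this as a corollary of Proposition~\ref{prop:stdcat} with no separate proof, relying precisely on the induction (peeling off one letter at a time, together with the monoidal associativity of $\odot$) that you spell out. Your explicit description of the resulting isomorphism on basis vectors is also consistent with the map used in the proof of Proposition~\ref{prop:stdcat}.
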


\begin{remark} \label{rmk:catzero}
The nature of the concatenation product is one piece of evidence that having graded $\cI$-modules start in degree~0 is the best choice: the unit object for $\odot$ is concentrated in degree~0. Thus, for example, $\uRep(\cI)_+$ is not a monoidal category under $\odot$.
\end{remark}

\subsection{The shift functor} \label{ss:shift}

Let $M$ be an $\cI$-module. We define $\Sigma(M)$ to be the pullback of $M$ under the homomorphism $\cI \to \cI$ defined by $\alpha_i \mapsto \alpha_{i+1}$. The vector space underlying $\Sigma(M)$ is just $M$; however, for clarity, for $x \in M$ we denote the corresponding element of $\Sigma(M)$ by $x^{\flat}$. The action on $\Sigma(M)$ can thus be described by the formula
\begin{displaymath}
\alpha_i \cdot x^{\flat} = (\alpha_{i+1} \cdot x)^{\flat}.
\end{displaymath}
It is clear that if $M$ is smooth then so is $\Sigma(M)$, and so $\Sigma$ defines an endofunctor of $\Rep(\cI)$. It is obvious that $\Sigma$ is exact and cocontinuous. It is also continuous: indeed, an element $x$ of an $\cI$-module $M$ is smooth if and only if $x^{\flat} \in \Sigma(M)$ is smooth, and so $\Sigma$ commutes with the operation $(-)^{\sm}$, and thus with smooth products since it obviously commutes with products.

We also define a version of the shift functor in the graded case. Let $M$ be a graded $\cI$-module. We define $\ul{\Sigma}(M)$ to be the following graded $\cI$-module. As a graded vector space, $\ul{\Sigma}(M)_n=M_{n+1}$, and as before, for $x \in M_{n+1}$ we denote by $x^{\flat}$ the corresponding element of $\Sigma(M)_n$. The $\cI$-module structure is defined just as before, i.e., $\alpha_i \cdot x^{\flat}=(\alpha_{i+1} \cdot x)^{\flat}$. The one thing to verify is that $\cI_{>n}$ acts trivially on $\ul{\Sigma}(M)_n$. Thus suppose $i>n$, and let $x^{\flat}$ be an element of $\ul{\Sigma}(M)_n$, with $x \in M_{n+1}$. Then $\alpha_i \cdot x^{\flat}=(\alpha_{i+1} \cdot x)^{\flat}=x^{\flat}$, since $\alpha_{i+1}$ acts trivially on $M_{n+1}$. We thus see that $\ul{\Sigma}$ is a well-defined endofunctor of $\uRep(\cI)$. It is clearly exact, continuous, and cocontinuous. The graded and ungraded shifts are compatible on pure modules, that is, if $M$ is a pure graded module then $\Phi(\ul{\Sigma}(M))=\Sigma(\Phi(M))$. If $M$ is a graded module concentrated in degree~0 then $\ul{\Sigma}(M)=0$, while $\Sigma(\Phi(M))=\Phi(M)$.

\begin{proposition} \label{prop:shiftinv}
Suppose $M$ is an $\cI$-module, and let $n \ge 1$. Then $M^{\cI_{\ge n}}=(\Sigma^{n-1} M)^{\cI}$ and $M_{\cI_{\ge n}}=(\Sigma^{n-1} M)_{\cI}$.
\end{proposition}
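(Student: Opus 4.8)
The plan is to reduce both identities to the level of generators and then match the resulting conditions. Recall that $\Sigma$ is pullback along the monoid endomorphism $\cI \to \cI$, $\alpha_i \mapsto \alpha_{i+1}$. Iterating, $\Sigma^{n-1}(M)$ has the same underlying vector space as $M$; writing $x^{\flat}$ for the element of $\Sigma^{n-1}(M)$ corresponding to $x \in M$, its $\cI$-action is given on generators by $\alpha_i \cdot x^{\flat} = (\alpha_{i+n-1} x)^{\flat}$ for $i \ge 1$. Since $\cI_{\ge n}$ is the submonoid generated by the $\alpha_j$ with $j \ge n$, and $\cI = \cI_{\ge 1}$ (Corollary~\ref{cor:alphagen}), everything in sight is computed from the actions of these generators alone, so it suffices to compare them.

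For the invariants: $x \in M$ lies in $M^{\cI_{\ge n}}$ iff $\alpha_j x = x$ for all $j \ge n$, whereas $x^{\flat}$ lies in $(\Sigma^{n-1}M)^{\cI}$ iff $\alpha_i x^{\flat} = x^{\flat}$, i.e. $\alpha_{i+n-1} x = x$, for all $i \ge 1$. As $i$ runs over $\bN_+$ the index $i+n-1$ runs over exactly the integers $\ge n$, so the two conditions coincide, giving $M^{\cI_{\ge n}} = (\Sigma^{n-1} M)^{\cI}$.

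For the coinvariants we first record a sub-claim: for any $\cI$-module $P$ and any $m \ge 1$, the subspace of $P$ spanned by the elements $\sigma y - y$ with $\sigma \in \cI_{\ge m}$ and $y \in P$ coincides with the subspace spanned by the $\alpha_j y - y$ with $j \ge m$ and $y \in P$. (This is the content of the "right ideal" part of Proposition~\ref{prop:ideal} read on $P$, and is proved by the same telescoping: for $\sigma = \alpha_{j_1}\cdots\alpha_{j_k}$ with all $j_l \ge m$ one uses
\[
\sigma x - x = \bigl(\alpha_{j_1}(\alpha_{j_2}\cdots\alpha_{j_k}x) - \alpha_{j_2}\cdots\alpha_{j_k}x\bigr) + \bigl(\alpha_{j_2}\cdots\alpha_{j_k}x - x\bigr),
\]
whose first term has the form $\alpha_{j_1}y - y$ and whose second term lies in the span by induction on $k$.) Applying this with $(P,m)=(M,n)$ shows that $M_{\cI_{\ge n}} = M/W$ where $W = \mathrm{span}\{\alpha_j x - x : j \ge n,\ x \in M\}$; applying it with $(P,m)=(\Sigma^{n-1}M,1)$ shows that $(\Sigma^{n-1}M)_{\cI} = \Sigma^{n-1}M/W'$ where $W'$ is spanned by the $\alpha_i x^{\flat} - x^{\flat} = (\alpha_{i+n-1}x - x)^{\flat}$, $i \ge 1$, which under the identification of underlying spaces is exactly $W$ (the indices $i+n-1$ again ranging over all $j \ge n$). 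Hence the two quotients agree, proving $M_{\cI_{\ge n}} = (\Sigma^{n-1}M)_{\cI}$.

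There is no real obstacle here: once the defining formula for $\Sigma^{n-1}$ is unwound the whole argument is definitional, the single non-trivial step being the telescoping reduction of the span of the $\sigma x - x$ to the span over the generators $\alpha_j$ — and that bookkeeping has already been carried out in the proof of Proposition~\ref{prop:ideal}, so it can be cited rather than repeated in full.
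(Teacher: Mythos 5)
Your argument is correct and is exactly the unwinding the paper has in mind when it writes "this follows immediately from the definitions": iterating the defining formula for $\Sigma$ identifies the $\alpha_i$-action on $\Sigma^{n-1}M$ with the $\alpha_{i+n-1}$-action on $M$, and reducing (co)invariants to generators — for coinvariants via the telescoping already packaged in Proposition~\ref{prop:ideal} — turns both identities into a relabeling of indices. No issues.
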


\begin{proof}
This follows immediately from the definitions.
\end{proof}

\begin{proposition} \label{prop:shiftmap}
Let $M$ be an $\cI$-module. Then the function $f \colon M \to \Sigma(M)$ defined by $f(x)=(\alpha_1 \cdot x)^{\flat}$ is $\cI$-linear. Moreover, if $M$ is graded then $f$ is homogeneous.
\end{proposition}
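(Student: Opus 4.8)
The plan is to verify first that $f$ is $\bk$-linear, which is immediate since both $x \mapsto \alpha_1 x$ and $x \mapsto x^{\flat}$ are linear, and then to establish $\cI$-equivariance. For the latter, since by Corollary~\ref{cor:alphagen} the elements $\alpha_k$ with $k \ge 1$ generate $\cI$, it suffices to show $f(\alpha_k x) = \alpha_k f(x)$ for every $k \ge 1$ and every $x \in M$.

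The core of the argument is a one-line computation resting on the fundamental relation. Unwinding the definitions gives $f(\alpha_k x) = (\alpha_1 \alpha_k x)^{\flat}$, while the definition of the $\cI$-action on $\Sigma(M)$ gives
\[
\alpha_k \cdot f(x) = \alpha_k \cdot (\alpha_1 x)^{\flat} = (\alpha_{k+1} \alpha_1 x)^{\flat}.
\]
These two expressions coincide precisely because $\alpha_1 \alpha_k = \alpha_{k+1} \alpha_1$ holds in $\cI$, which is exactly Proposition~\ref{prop:alpharel} applied with $n = k+1$ and $m = 1$ (legitimate since $k+1 > 1$). Hence $f$ is $\cI$-linear.

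For the graded statement I would argue as follows: if $x \in M_n$ with $n \ge 1$, then $\alpha_1 x$ lies in $M_{\alpha_1(n)} = M_{n+1}$, and by construction the flat operation identifies $M_{n+1}$ with $\ul{\Sigma}(M)_n$; therefore $f$ maps $M_n$ into $\ul{\Sigma}(M)_n$ and so is homogeneous of degree $0$. The degree-$0$ piece needs no separate treatment, since $\ul{\Sigma}$ discards it (equivalently, one may restrict attention to $\uRep(\cI)_+$).

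I do not anticipate any genuine obstacle here: the whole proof is bookkeeping, and the only point that calls for care is selecting the right instance of Proposition~\ref{prop:alpharel} and keeping straight which $\alpha$ acts on $\Sigma(M)$ and which acts on $M$.
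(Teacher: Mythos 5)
Your proof is correct and essentially identical to the paper's: both verify equivariance on the generators $\alpha_k$ via the single instance $\alpha_1\alpha_k=\alpha_{k+1}\alpha_1$ of the fundamental relation, and then observe degree-preservation for the graded statement. The only difference is presentational — you compute both sides and compare, while the paper chains the equalities $f(\alpha_k x)=(\alpha_1\alpha_k x)^\flat=(\alpha_{k+1}\alpha_1 x)^\flat=\alpha_k f(x)$ in one line.
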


\begin{proof}
We have
\begin{displaymath}
f(\alpha_i x)=(\alpha_1\alpha_ix)^{\flat}=(\alpha_{i+1} \alpha_1 x)^{\flat}=\alpha_i(\alpha_1 x)^{\flat}=\alpha_i f(x),
\end{displaymath}
which proves the result.
\end{proof}

\begin{proposition} \label{prop:shiftcat}
Let $M$ be a graded $\cI$-module and let $N$ be an arbitrary $\cI$-module. Then we have a natural isomorphism
\begin{displaymath}
\Sigma(M \odot N) \cong \left[ \ul{\Sigma}(M_+) \odot N \right] \oplus \left[ M_0 \odot \Sigma(N) \right].
\end{displaymath}
If $N$ is graded, then this isomorphism is homogeneous.
\end{proposition}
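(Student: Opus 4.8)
The plan is to reduce the statement to two special cases by splitting off the degree-zero part of $M$. Since $M = M_0 \oplus M_+$ in $\uRep(\cI)$ (the degree-zero part does not interact with the rest), and since $\odot$ is additive in its first argument and $\Sigma$ is exact, we get
\[
\Sigma(M \odot N) \;\cong\; \Sigma(M_0 \odot N) \;\oplus\; \Sigma(M_+ \odot N).
\]
It therefore suffices to produce natural isomorphisms $\Sigma(M_0 \odot N) \cong M_0 \odot \Sigma(N)$ and $\Sigma(M_+ \odot N) \cong \ul{\Sigma}(M_+) \odot N$, and then reassemble.

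The first is easy: because $M_0$ is concentrated in degree~$0$, the defining formula for the $\odot$-action shows that $\cI$ acts on $M_0 \odot N$ only through the factor $N$, so $M_0 \odot (-)$ is naturally the functor ``tensor with the vector space $M_0$'', and $\Sigma$ commutes with this. For the second, recall $M_+$ is pure, so $\ul{\Sigma}(M_+)_k = M_{k+1}$; define $\psi\colon \Sigma(M_+ \odot N) \to \ul{\Sigma}(M_+) \odot N$ by $\psi((x \odot y)^{\flat}) = x^{\flat} \odot y$ for $x \in M_n$ with $n \ge 1$ and $y \in N$. This is plainly a linear bijection, and $\cI$-equivariance is checked on the generators $\alpha_i$: on the source, $\alpha_i(x\odot y)^{\flat} = (\alpha_{i+1}(x \odot y))^{\flat}$ equals $((\alpha_{i+1}x)\odot y)^{\flat}$ if $i \le n-1$ and $(x \odot (\alpha_{i+1-n}y))^{\flat}$ if $i \ge n$; on the target, since $x^{\flat}$ lies in degree $n-1$, $\alpha_i(x^{\flat} \odot y)$ equals $(\alpha_i x^{\flat})\odot y = (\alpha_{i+1}x)^{\flat}\odot y$ if $i \le n-1$ and $x^{\flat}\odot(\alpha_{i-(n-1)}y)$ if $i \ge n$. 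These match in both ranges, so $\psi$ is an isomorphism of $\cI$-modules, and the direct sum of $\psi$ with the isomorphism from the first case is the required map.

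For the graded assertion, when $N$ is graded one reads $\ul{\Sigma}$ everywhere ($\ul{\Sigma}(M \odot N)$ on the left, $M_0 \odot \ul{\Sigma}(N)$ on the right); the same maps work, and homogeneity is immediate from bookkeeping: if $x \in M_n$ and $y \in N_m$ then $x \odot y$ has degree $n+m$, so $(x \odot y)^{\flat}$ has degree $n+m-1 = (n-1)+m$, which is exactly the degree of $x^{\flat}\odot y$ (and similarly for the degree-zero summand). I expect no genuine obstacle here; the only point requiring care is the index shift at the boundary case $k=n$ in the definition of the $\odot$-action, and it is precisely the fact that this boundary behaves differently when $\deg x = 0$ that forces the answer to be a direct sum of two terms rather than a single one.
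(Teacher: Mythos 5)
Your proof is correct and takes essentially the same approach as the paper's: the paper writes down the map $f \oplus g$ (from the right side to $\Sigma(M\odot N)$) and verifies $\cI$-linearity by exactly the same case split on $\alpha_k$, while you decompose $M=M_0\oplus M_+$ up front and match the two summands; the computational content is identical. The degree bookkeeping and the boundary case $k=n$ in the $\odot$-action are handled the same way in both arguments.
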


\begin{proof}
Define
\begin{displaymath}
f \colon \Sigma(M_+) \odot N \to \Sigma(M \odot N), \qquad
f(x^{\flat} \odot y)=(x \odot y)^{\flat}
\end{displaymath}
and
\begin{displaymath}
g \colon M_0 \odot \Sigma(N) \to \Sigma(M \odot N), \qquad
g(x \odot y^{\flat})=(x \odot y)^{\flat}.
\end{displaymath}
Then $f \oplus g$ is an isomorphism of vector spaces: the source are target are both identified with $M \otimes N$, and under this identification $f \oplus g$ is simply the identity map. Moreover, if $N$ is graded then $f$ and $g$ are homogeneous. We verify that $f$ and $g$ are $\cI$-linear.

Suppose $x \in M_{i+1}$ and $y \in N$. Then
\begin{displaymath}
\alpha_k \cdot (x^{\flat} \odot y) = \begin{cases}
(\alpha_{k+1} x)^{\flat} \odot y & \text{if $1 \le k \le i$} \\
x^{\flat} \odot (\alpha_{k-i} y) & \text{if $i+1 \le k$} \end{cases}
\end{displaymath}
On the other hand,
\begin{displaymath}
\alpha_k \cdot (x \odot y)^{\flat}
=(\alpha_{k+1} \cdot (x \odot y))^{\flat}
=\begin{cases}
(\alpha_{k+1} x \odot y)^{\flat} & \text{if $1 \le k+1 \le i+1$} \\
(x \odot \alpha_{k-i} y)^{\flat} & \text{if $i+2 \le k+1$}
\end{cases}
\end{displaymath}
This proves that $f$ is $\cI$-linear. The proof for $g$ is easier, and left to the reader.
\end{proof}

\begin{proposition} \label{prop:shiftstd}
Let $\lambda=\lambda_1 \cdots \lambda_k$ be a constraint word with $k \ge 1$, and let $\mu=\lambda_2 \cdots \lambda_k$. Then
\begin{displaymath}
\ul{\Sigma}(\ul{\bE}^{\lambda}) = \begin{cases}
\ul{\bE}^{\lambda} \oplus \ul{\bE}^{\mu} & \text{if $\lambda_1=\whitetri$} \\
\ul{\bE}^{\mu} & \text{if $\lambda_1=\blacktri$}
\end{cases}
\end{displaymath}
The analogous result holds in the ungraded case as well.
\end{proposition}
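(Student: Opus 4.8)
The plan is to bootstrap from the two single-letter cases $\lambda=\whitetri$ and $\lambda=\blacktri$ using the concatenation product. First I would invoke Proposition~\ref{prop:stdcat} to write $\ul{\bE}^{\lambda}\cong\ul{\bE}^{\lambda_1}\odot\ul{\bE}^{\mu}$, where $\ul{\bE}^{\lambda_1}=\ul{\bA}^1$ if $\lambda_1=\whitetri$ and $\ul{\bE}^{\lambda_1}=\ul{\bB}^1$ if $\lambda_1=\blacktri$. In either case $\ul{\bE}^{\lambda_1}$ is pure, so $(\ul{\bE}^{\lambda_1})_0=0$ and the second summand in Proposition~\ref{prop:shiftcat} vanishes, leaving a homogeneous isomorphism $\ul{\Sigma}(\ul{\bE}^{\lambda})\cong\ul{\Sigma}(\ul{\bE}^{\lambda_1})\odot\ul{\bE}^{\mu}$ (note also $\ul{\bE}^{\lambda_1}\odot\ul{\bE}^{\mu}$ is pure, so the left-hand side can be taken in $\uRep(\cI)$). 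Every shift appearing here is applied to a pure graded module, so graded and ungraded shifts agree under $\Phi$ and one can work throughout in $\uRep(\cI)$.

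Next I would compute the two single-letter shifts by hand. Since $\ul{\bB}^1$ is one-dimensional and concentrated in degree~$1$, $\ul{\Sigma}(\ul{\bB}^1)$ is one-dimensional and concentrated in degree~$0$, and as $\cI$ acts trivially on any degree-$0$ piece we get $\ul{\Sigma}(\ul{\bB}^1)\cong\ul{\bA}^0$. For $\ul{\bA}^1$, using the basis $\{e_i\}_{i\ge 1}$ with $\deg e_i=i$, $\alpha_j e_i=e_{i+1}$ for $j\le i$ and $\alpha_j e_i=e_i$ for $j>i$, one finds that $\ul{\Sigma}(\ul{\bA}^1)$ has basis $\{e_i^{\flat}\}_{i\ge 1}$ with $\deg e_i^{\flat}=i-1$, $\alpha_j e_i^{\flat}=e_{i+1}^{\flat}$ for $j\le i-1$ and $\alpha_j e_i^{\flat}=e_i^{\flat}$ for $j\ge i$; here $e_1^{\flat}$ spans a trivial submodule $\cong\ul{\bA}^0$ and $\{e_i^{\flat}\}_{i\ge 2}$ spans a complementary homogeneous submodule on which $e_i^{\flat}\mapsto e_{i-1}$ is an isomorphism onto $\ul{\bA}^1$, so $\ul{\Sigma}(\ul{\bA}^1)\cong\ul{\bA}^0\oplus\ul{\bA}^1$.

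Finally I would combine these, using that $\odot$ distributes over direct sums in its first argument (immediate, since on underlying spaces $\odot$ is $\otimes$ and the action formula only depends on the homogeneous component), that $\ul{\bA}^0$ is the unit for $\odot$, and Proposition~\ref{prop:stdcat} again: one obtains $\ul{\Sigma}(\ul{\bE}^{\lambda})\cong\ul{\bA}^0\odot\ul{\bE}^{\mu}\cong\ul{\bE}^{\mu}$ when $\lambda_1=\blacktri$, and $\ul{\Sigma}(\ul{\bE}^{\lambda})\cong(\ul{\bA}^0\oplus\ul{\bA}^1)\odot\ul{\bE}^{\mu}\cong\ul{\bE}^{\mu}\oplus(\ul{\bA}^1\odot\ul{\bE}^{\mu})\cong\ul{\bE}^{\mu}\oplus\ul{\bE}^{\lambda}$ when $\lambda_1=\whitetri$, as desired. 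For the ungraded statement I would apply the forgetful functor $\Phi$, which is exact, commutes with $\odot$, sends $\ul{\bE}^{\nu}$ to $\bE^{\nu}$, and satisfies $\Phi\circ\ul{\Sigma}=\Sigma\circ\Phi$ on pure modules; alternatively one can rerun the whole argument with $\ul{\bE}^{\mu}$ replaced by $\bE^{\mu}$, using the mixed forms of Propositions~\ref{prop:stdcat} and~\ref{prop:shiftcat}. I do not expect a genuine obstacle here: the only step that is not purely mechanical is the reindexing $e_i^{\flat}\mapsto e_{i-1}$ identifying the pure part of $\ul{\Sigma}(\ul{\bA}^1)$ with $\ul{\bA}^1$, where the off-by-one in degree precisely matches the recursive structure of the principal module, together with the observation that $\ul{\bE}^{\lambda_1}$ is pure so that the second term of Proposition~\ref{prop:shiftcat} drops out.
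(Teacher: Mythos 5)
Your proposal is correct and follows essentially the same route as the paper: decompose $\ul{\bE}^{\lambda}\cong\ul{\bE}^{\lambda_1}\odot\ul{\bE}^{\mu}$ via Proposition~\ref{prop:stdcat}, apply Proposition~\ref{prop:shiftcat} (with the degree-$0$ summand vanishing since $\ul{\bE}^{\lambda_1}$ is pure), and reduce to the single-letter computations $\ul{\Sigma}(\ul{\bB}^1)\cong\ul{\bB}^0$ and $\ul{\Sigma}(\ul{\bA}^1)\cong\ul{\bB}^0\oplus\ul{\bA}^1$, passing to the ungraded case through $\Phi$. The only difference is that you spell out the base cases explicitly, which the paper simply declares ``clear''; your verification of them is correct.
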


\begin{proof}
It is clear that $\ul{\Sigma}(\bB^1)=\bB^0$ and that $\ul{\Sigma}(\bA^1)=\bB^0 \oplus \bA^1$. By Proposition~\ref{prop:stdcat}, we have
\begin{displaymath}
\ul{\bE}^{\lambda} = \begin{cases}
\ul{\bA}^1 \odot \ul{\bE}^{\mu} & \text{if $\lambda_1=\whitetri$} \\
\ul{\bB}^1 \odot \ul{\bE}^{\mu} & \text{if $\lambda_1=\blacktri$}
\end{cases}
\end{displaymath}
The computation of $\ul{\Sigma}(\ul{\bE}^{\lambda})$ thus follows from the previous proposition. For the ungraded case, simply note that $\Phi(\ul{\Sigma}(\ul{\bE}^{\lambda}))=\Sigma(\Phi(\ul{\bE}^{\lambda}))$ as $\ul{\bE}^{\lambda}$ is pure.
\end{proof}

\begin{example} \label{ex:shift}
The following examples follow immediately from the above propositions:
\begin{enumerate}
\item We have $\ul{\Sigma}(\ul{\bA}^n) = \ul{\bA}^n \oplus \ul{\bA}^{n-1}$ for $n \ge 1$.
\item We have $\ul{\Sigma}(\ul{\bB}^n)=\ul{\bB}^{n-1}$ for $n \ge 1$.
\item We have $\ul{\Sigma}(\bE^{ab})=\bE^{ab} \oplus \bE^b$. \qedhere.
\end{enumerate}
\end{example}

\subsection{The transpose functor} \label{ss:transpose}

Suppose $M$ is a graded $\cI$-module. We define a new graded $\cI$-module, called the {\bf transpose} of $M$, and denoted $M^{\dag}$, as follows. The underlying graded vector space of $M^{\dag}$ is simply that of $M$; for $x \in M$, we write $x^{\dag}$ when we regard it as an element of $M^{\dag}$. For $x \in M_n$, we define
\begin{displaymath}
\alpha_i \cdot x^{\dag} = \begin{cases}
(\alpha_{n+1-i} x)^{\dag} & \text{if $1 \le i \le n$} \\
x^{\dag} & \text{if $i>n$} \end{cases}
\end{displaymath}
We note that if $1 \le i \le n$ then $\alpha_i x^{\dag}$ has degree $n+1$, as required. We now verify that the above definition respects the fundamental relations, and thus defines an action of $\cI$. To this end, suppose that $x \in M_n$ and $i>j$. We must verify
\begin{displaymath}
\alpha_i \alpha_j x^{\dag} = \alpha_j \alpha_{i-1} x^{\dag}.
\end{displaymath}
If $j>n$ then $\alpha_i \alpha_j x^{\dag} = x^{\dag}$ and $\alpha_j \alpha_{i-1} x^{\dag} = x^{\dag}$; note that $i-1 \ge j > n$. Similarly, if $i>n+1$ then $\alpha_{i-1} x^{\dag} = x^{\dag}$ and $\alpha_i \alpha_j x^{\dag} = \alpha_j x^{\dag}$, since $\alpha_j x^{\dag}$ belongs to $M_n$ or $M_{n+1}$, and so the identity holds. Thus suppose that $1 \le j \le n$ and $j < i \le n+1$. Then $\alpha_i \alpha_j x^{\dag} = (\alpha_{n+2-i} \alpha_{n+1-j} x)^{\dag}$. On the other hand, $\alpha_j \alpha_{i-1} x^{\dag} = (\alpha_{n+2-j} \alpha_{n+2-i} x)^{\dag}$. Since $n+2-j>n+2-i$, we have $\alpha_{n+2-j} \alpha_{n+2-i} = \alpha_{n+2-i} \alpha_{n+1-j}$, and so the identity holds.

It is clear that transpose defines a covariant involution of the category $\uRep(\cI)$. We now examine how it interacts with the concatenation product.

\begin{proposition} \label{prop:revcat}
Let $M$ and $N$ be graded $\cI$-modules. Then we have a natural isomorphism
\begin{displaymath}
(M \odot N)^{\dag} \cong N^{\dag} \odot M^{\dag}.
\end{displaymath}
\end{proposition}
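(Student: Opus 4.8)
The plan is to write down the isomorphism explicitly; it is the transpose-analogue of the familiar fact that passing to opposite algebras (or reversing tensor factors) turns a product around. On underlying graded vector spaces, $(M \odot N)^{\dag}$ is $\bigoplus_{i,j} M_i \otimes N_j$ and $N^{\dag} \odot M^{\dag}$ is $\bigoplus_{i,j} N_j \otimes M_i$, so I define
\[
\varphi \colon (M \odot N)^{\dag} \to N^{\dag} \odot M^{\dag}, \qquad \varphi\left((x \odot y)^{\dag}\right) = y^{\dag} \odot x^{\dag}
\]
for $x \in M_i$, $y \in N_j$, extended linearly. This is visibly a homogeneous bijection of underlying graded vector spaces (it is the ``swap'' $M_i \otimes N_j \to N_j \otimes M_i$ in each bidegree) and is natural in $M$ and $N$, so the entire content of the proposition is that $\varphi$ is $\cI$-linear.

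To verify $\cI$-linearity I would fix $x \in M_n$, $y \in N_m$, set $z = x \odot y$ (degree $n+m$), and compare $\alpha_k \cdot \varphi(z^{\dag}) = \alpha_k \cdot (y^{\dag} \odot x^{\dag})$ with $\varphi(\alpha_k \cdot z^{\dag})$ for each $k$. For $k > n+m$ both sides equal $y^{\dag} \odot x^{\dag}$ immediately, using that $\alpha_k$ acts trivially in degree $n+m$ on the left and, after unwinding $\odot$ and $(-)^{\dag}$, in degrees $m$ and $n$ on the right. For $1 \le k \le n+m$ one has $\alpha_k \cdot z^{\dag} = (\alpha_j z)^{\dag}$ with $j = n+m+1-k$, and the definition of $\odot$ splits this into $\alpha_j z = (\alpha_j x) \odot y$ when $1 \le j \le n$ and $\alpha_j z = x \odot (\alpha_{j-n} y)$ when $n < j \le n+m$; applying $\varphi$ yields $y^{\dag} \odot (\alpha_j x)^{\dag}$ or $(\alpha_{j-n} y)^{\dag} \odot x^{\dag}$ respectively. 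On the other side, the definition of $\odot$ applied to $y^{\dag} \odot x^{\dag}$ (with $y^{\dag}$ in degree $m$) splits according to whether $k \le m$ or $k > m$, and the definition of $(-)^{\dag}$ on $N^{\dag}$ and $M^{\dag}$ inserts the reflections $\alpha_{m+1-k}$ on $y$ and $\alpha_{n+1-(k-m)} = \alpha_{n+m+1-k}$ on $x$. Matching the cases --- $1 \le j \le n$ against $k > m$, and $n < j \le n+m$ against $k \le m$ --- and checking that $j - n = m+1-k$ in the second case, one sees that the two expressions coincide.

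The only real work is this index bookkeeping across the two regimes of the $\odot$-action and the reflection in the $(-)^{\dag}$-action, so that is where I would take the most care; there is no conceptual obstruction. Once $\cI$-linearity is in hand, $\varphi$ is the desired natural isomorphism of graded $\cI$-modules, since it is already a bijection on underlying graded vector spaces and its naturality is evident from the formula.
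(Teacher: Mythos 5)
Your proof is correct and is essentially the same argument as the paper's: you write down the obvious swap map and verify $\cI$-linearity by the same two-regime case analysis, just parametrizing the index by $j = n+m+1-k$ where the paper works directly with $n+1-k$ (in its own notation with $i+j=n$). The index identifications you note ($j=n+m+1-k$ and $j-n = m+1-k$) are exactly the ones that make the two sides match, as in the paper.
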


\begin{proof}
Let $\iota \colon (M \odot N)^{\dag} \to N^{\dag} \odot M^{\dag}$ be the canonical isomorphism of graded vector spaces: explicitly, $\iota((x \odot y)^{\dag})=y^{\dag} \otimes x^{\dag}$. We verify that $\iota$ is $\cI$-equivariant. Thus let $x \in M_i$ and $y \in N_j$ with $i+j=n$ be given, and let $k \in \bN$ be given. We must show that
\begin{displaymath}
\iota(\alpha_k \cdot (x \odot y)^{\dag}) = \alpha_k \cdot \iota((x \odot y)^{\dag})
\end{displaymath}
This clearly holds if $k>n$, as then $\alpha_k$ acts by the identity on each side. Thus suppose $k \le n$.

First suppose that $k \le j$. Since $j=n-i$, we have $n-k \ge i$. Thus
\begin{displaymath}
\alpha_k \cdot (x \odot y)^{\dag}
=(\alpha_{n+1-k} \cdot (x \odot y))^{\dag}
=(x \odot (\alpha_{n+1-k-i} y))^{\dag}
=(x \odot (\alpha_{j+1-k} y))^{\dag},
\end{displaymath}
and so
\begin{displaymath}
\iota(\alpha_k \cdot (x \odot y)^{\dag})=(\alpha_{j+1-k} y)^{\dag} \odot x^{\dag}.
\end{displaymath}
On the other hand,
\begin{displaymath}
\alpha_k \cdot \iota((x \odot y)^{\dag})
= \alpha_k \cdot (y^{\dag} \odot x^{\dag})
= (\alpha_k \cdot y^{\dag}) \odot x^{\dag}
=(\alpha_{j+1-k} y)^{\dag} \odot x^{\dag}.
\end{displaymath}
Thus the identity holds in this case.

Now suppose $k>j$, so that $n-k<i$. We thus have
\begin{displaymath}
\alpha_k \cdot (x \odot y)^{\dag}
= (\alpha_{n+1-k} \cdot (x \odot y))^{\dag}
= ((\alpha_{n+1-k} x) \odot y)^{\dag},
\end{displaymath}
and so
\begin{displaymath}
\iota(\alpha_k \cdot (x \odot y)^{\dag}) = y^{\dag} \odot (\alpha_{n+1-k} x)^{\dag}
\end{displaymath}
On the other hand,
\begin{align*}
\alpha_k \cdot \iota((x \odot y)^{\dag})
&=\alpha_k \cdot (y^{\dag} \odot x^{\dag})
=y^{\dag} \odot (\alpha_{k-j} \cdot x^{\dag}) \\
&=y^{\dag} \odot (\alpha_{i+1-k+j} x)^{\dag}
=y^{\dag} \odot (\alpha_{n+1-k} x)^{\dag}.
\end{align*}
Thus the identity holds in this case as well.
\end{proof}

For a constraint word $\lambda=\lambda_1 \cdots \lambda_r$, let $\lambda^{\dag}$ be the reversed word $\lambda_r \cdots \lambda_1$.

\begin{proposition} \label{prop:transstd}
For a constraint word $\lambda$, we have $(\ul{\bE}^{\lambda})^{\dag} \cong \ul{\bE}^{\lambda^{\dag}}$.
\end{proposition}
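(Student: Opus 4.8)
The plan is to reduce to the two single-letter words and then exploit the fact that the transpose functor reverses concatenation products. Recall from Corollary~\ref{cor:gapprod} that $\ul{\bE}^{\lambda} \cong \ul{\bE}^{\lambda_1} \odot \cdots \odot \ul{\bE}^{\lambda_r}$, where each factor is $\ul{\bA}^1$ or $\ul{\bB}^1$. Iterating Proposition~\ref{prop:revcat} (and using associativity of $\odot$) then gives $(\ul{\bE}^{\lambda})^{\dag} \cong (\ul{\bE}^{\lambda_r})^{\dag} \odot \cdots \odot (\ul{\bE}^{\lambda_1})^{\dag}$, so the reversal of the word is automatic, and it remains only to identify $(\ul{\bA}^1)^{\dag}$ and $(\ul{\bB}^1)^{\dag}$.

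First I would check $(\ul{\bB}^1)^{\dag} \cong \ul{\bB}^1$: the module $\ul{\bB}^1$ is one-dimensional, concentrated in degree $1$, with $\alpha_1$ acting by $0$ and $\alpha_i$ by the identity for $i > 1$; unwinding the transposed action (here $n=1$, so $\alpha_1 \cdot e_1^{\dag} = (\alpha_1 e_1)^{\dag} = 0$ and $\alpha_i \cdot e_1^{\dag} = e_1^{\dag}$ for $i > 1$) recovers exactly this action. Next, $(\ul{\bA}^1)^{\dag} \cong \ul{\bA}^1$: the basis vector $e_n$ of $\ul{\bA}^1$ has degree $n$, and for $1 \le i \le n$ the transposed action sends $\alpha_i \cdot e_n^{\dag}$ to $(\alpha_{n+1-i} e_n)^{\dag}$; since $n+1-i \le n$, the generator $\alpha_{n+1-i}$ raises the index, so this equals $e_{n+1}^{\dag}$, while $\alpha_i$ fixes $e_n^{\dag}$ for $i > n$ --- precisely the original action, so $e_n^{\dag} \mapsto e_n$ is a homogeneous isomorphism. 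Since moreover $\whitetri^{\dag} = \whitetri$ and $\blacktri^{\dag} = \blacktri$, in each case we have $(\ul{\bE}^{\lambda_i})^{\dag} \cong \ul{\bE}^{\lambda_i} = \ul{\bE}^{\lambda_i^{\dag}}$.

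Combining, $(\ul{\bE}^{\lambda})^{\dag} \cong \ul{\bE}^{\lambda_r} \odot \cdots \odot \ul{\bE}^{\lambda_1}$, which by Corollary~\ref{cor:gapprod} applied to the word $\lambda_r \cdots \lambda_1 = \lambda^{\dag}$ is $\ul{\bE}^{\lambda^{\dag}}$, as desired. There is no serious obstacle here; the only points needing care are the index bookkeeping in the base-case computation --- verifying that the shift $i \mapsto n+1-i$ in the definition of $M^{\dag}$ keeps us in the range $\le n$ where $\alpha_{n+1-i}$ acts on $\ul{\bA}^1$ by raising the index --- and the observation that the order-reversal built into Proposition~\ref{prop:revcat} is exactly what produces the reversed constraint word. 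One could instead write down a direct isomorphism by reversing the gap sequence $(n_1, n_2 - n_1, \ldots, n_r - n_{r-1})$ attached to each basis vector of $\ul{\bE}^{\lambda}$, but checking $\cI$-equivariance of that map against the transposed action would be at least as involved as the route above.
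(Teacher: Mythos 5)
Your proof is correct and follows exactly the paper's route: decompose $\ul{\bE}^{\lambda}$ via Corollary~\ref{cor:gapprod}, reverse the order with Proposition~\ref{prop:revcat}, and check $(\ul{\bA}^1)^{\dag}\cong\ul{\bA}^1$ and $(\ul{\bB}^1)^{\dag}\cong\ul{\bB}^1$. The only difference is that you spell out the two base-case computations that the paper dismisses as ``easily verified,'' and your index bookkeeping there is right.
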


\begin{proof}
This follows from Proposition~\ref{prop:revcat} and Corollary~\ref{cor:gapprod}, together with the easily verified isomorphisms $(\ul{\bA}^1)^{\dag} \cong \ul{\bA}^1$ and $(\ul{\bB}^1)^{\dag} \cong \ul{\bB}^1$.
\end{proof}

\section{Finite length modules} \label{s:finlen}

\subsection{Classification of simples}

In \S \ref{ss:simple}, we introduced the simple modules $\bB^n$. We now prove that these exhaust the simple smooth $\cI$-modules.

\begin{proposition} \label{prop:simple}
Any simple smooth $\cI$-module is isomorphic to $\bB^n$ for some $n \in \bN$.
\end{proposition}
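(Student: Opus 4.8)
The plan is to determine, for a simple smooth $\cI$-module $M$, the smallest ``level'' at which $M$ has a fixed vector, and then to show that this forces $M$ to be one-dimensional of the expected type.

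First I would pick a nonzero $x \in M$; by smoothness it is fixed by $\cI_{>r}$ for some $r$, and I take $r$ to be minimal with the property $M^{\cI_{>r}} \ne 0$. The crucial observation is that $M^{\cI_{>r}}$ is in fact an $\cI$-submodule of $M$. To see this, note that $M^{\cI_{>r}} = \{v \in M : \fa_{r+1} v = 0\}$: invariance under $\cI_{>r} = \cI_{\ge r+1}$ is, by Proposition~\ref{prop:fixed}, the same as being killed by $\alpha_i - 1$ for all $i \ge r+1$, which is exactly annihilation by $\fa_{r+1}$. Since $\fa_{r+1}$ is a two-sided ideal (Proposition~\ref{prop:ideal}), its annihilator in $M$ is a submodule (if $\fa_{r+1}v = 0$ and $a \in \bk[\cI]$ then $\fa_{r+1}(av) = (\fa_{r+1}a)v \subseteq \fa_{r+1}v = 0$). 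This submodule is nonzero, so by simplicity $M^{\cI_{>r}} = M$; in other words $\alpha_i$ acts as the identity on all of $M$ whenever $i > r$.

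If $r = 0$ this says $\cI$ acts trivially on $M$, so $M$ is one-dimensional (being simple) and $M \cong \bB^0$. Assume now $r \ge 1$. Minimality of $r$ gives $M^{\cI_{\ge r}} = M^{\cI_{>r-1}} = 0$, and by Proposition~\ref{prop:fixed} we have $M^{\cI_{\ge r}} = \ker(\alpha_r - 1)$; thus $\alpha_r - 1$ acts injectively on $M$. Now I would feed the fundamental relation $\alpha_{r+1}\alpha_m = \alpha_m\alpha_r$ (Proposition~\ref{prop:alpharel}), valid for $1 \le m \le r$, into the fact that $\alpha_{r+1}$ acts as the identity: this yields $\alpha_m(\alpha_r - 1) = 0$ on $M$ for every $m \le r$. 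Hence the image $K := (\alpha_r - 1)M$ is killed by each $\alpha_m$ with $m \le r$ and is preserved by each $\alpha_m$ with $m > r$, so $K$ is an $\cI$-submodule; it is nonzero since $\alpha_r - 1$ is injective and $M \ne 0$, so $K = M$. Plugging $K = M$ back into $\alpha_m(\alpha_r - 1) = 0$ gives $\alpha_m = 0$ on $M$ for all $m \le r$. Therefore every generator acts on $M$ as $0$ (for $m \le r$) or as the identity (for $m > r$), every subspace of $M$ is a submodule, and simplicity forces $\dim_{\bk} M = 1$; the resulting one-dimensional module is precisely $\bB^r$.

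The one step that is not a formal manipulation is the submodule observation at the start: a priori $M$ need not be finite-dimensional, so one cannot simply fix a single $r$ that works for all of $M$ by a dimension count, and it is the identification of $M^{\cI_{>r}}$ with the annihilator of the two-sided ideal $\fa_{r+1}$ that makes the rest of the argument cheap. I therefore expect the main obstacle --- if it can be called that --- to be spotting this, after which the proof is short.
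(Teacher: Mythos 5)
Your argument is correct. It does, however, route through different intermediate steps than the paper, and the comparison is worth recording.

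The paper's proof starts from the minimal $n$ with $\rho(\alpha_n) \ne 0$, observes that $\alpha_n \cI_{\ge n} = \cI_{\ge n}\alpha_n$ forces $\ker\rho(\alpha_n)$ and $\im\rho(\alpha_n)$ to be $\cI$-submodules (so $\rho(\alpha_n)$ is bijective), then uses $\alpha_{r+1}\alpha_r = \alpha_r^2$ together with cancellation of the invertible $\rho(\alpha_r)$ to show $\rho(\alpha_r)=\rho(\alpha_n)$ for all $r \ge n$, and finally invokes smoothness a second time to conclude $\rho(\alpha_n)=\id$. You instead start from the minimal $r$ with $M^{\cI_{>r}}\ne 0$, use the two-sidedness of $\fa_{r+1}$ (Proposition~\ref{prop:ideal}) to show that fixed subspace is an $\cI$-submodule, hence all of $M$, so the high $\alpha_i$ are already the identity; you then use the fundamental relation $\alpha_{r+1}\alpha_m = \alpha_m\alpha_r$ together with the injectivity of $\alpha_r - 1$ (from minimality of $r$) to kill the low $\alpha_m$. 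In effect your proof attacks from the ``high-index'' side and the paper's from the ``low-index'' side; both hinge on the same commutation phenomenon (your two-sidedness of $\fa_n$, the paper's $\alpha_n\cI_{\ge n}=\cI_{\ge n}\alpha_n$) but deploy the fundamental relation in opposite ways. Your version has the modest advantage that, once $M^{\cI_{>r}}=M$ is established, the remainder is a short direct computation with no second appeal to smoothness; the paper's has the advantage that it never needs to know $\fa_n$ is two-sided, only the elementary commutation identity. One small remark: in justifying $\{v: \fa_{r+1}v=0\} = M^{\cI_{>r}}$, the inclusion you really need is that every element of $\fa_{r+1}$ can be written with the $\alpha_i-1$ on the \emph{right}, which is exactly the statement in Proposition~\ref{prop:ideal} that $\fa_{r+1}$ is generated by these elements as a \emph{left} ideal; your citation of Proposition~\ref{prop:fixed} is not wrong but is doing less work here than the left-ideal generation.
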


\begin{proof}
Let $M$ be a simple $\cI$-module. Let $\rho \colon \cI \to \End_{\bk}(M)$ be the homomorphism giving the action of $\cI$ on $M$. Since $M$ is smooth and non-zero, we cannot have $\rho(\alpha_r)=0$ for all $r$. Let $n \ge 1$ be minimal such that $\rho(\alpha_n) \ne 0$. Since $\alpha_n \cI_{\ge n} = \cI_{\ge n} \alpha_n$, we see that $\ker(\rho(\alpha_n))$ and $\im(\rho(\alpha_n))$ are $\cI_{\ge n}$-stable, and thus $\cI$-stable since the $\alpha_r$ with $r<n$ act by~0. Since $\rho(\alpha_n)$ is non-zero, we must have $\ker(\rho(\alpha_n))=0$ and $\im(\rho(\alpha_n))=M$ by simplicity. Thus $\alpha_n$ is a linear isomorphism.

We now claim that $\alpha_r=\alpha_n$ for all $r \ge n$. Suppose that we have proved this for $r$, and let us prove it for $r+1$. We have the relation $\alpha_{r+1} \alpha_r=\alpha_r^2$, and so $\rho(\alpha_{r+1}) \rho(\alpha_r) = \rho(\alpha_r)^2$. Since $\rho(\alpha_r)=\rho(\alpha_n)$ is invertible on $M$, we can cancel it, and so we find $\rho(\alpha_{r+1})=\rho(\alpha_r)=\rho(\alpha_n)$, as claimed.

It now follows that $\rho(\alpha_n)=\id$. Indeed, suppose $x \in M$ is non-zero. By smoothness, there exists $r$ such that $\rho(\alpha_r) x = x$. Since $\rho(\alpha_r)=0$ for $r<n$, we must have $r \ge n$. But then $\rho(\alpha_r) = \rho(\alpha_n)$, and so $\rho(\alpha_n) x = x$. Thus $\rho(\alpha_n)$ is the identity.

We have thus shown that
\begin{displaymath}
\rho(\alpha_r) = \begin{cases}
0 & \text{if $r<n$} \\
\id & \text{if $r \ge n$} \end{cases}
\end{displaymath}
Thus every linear subspace of $M$ is a submodule, and so by simplicity, $M$ must be one-dimensional. It is evidently isomorphic to $\bB^{n-1}$.
\end{proof}

We have an analogous result in the graded case:

\begin{proposition}
Any simple object of $\uRep(\cI)$ is isomorphic to $\ul{\bB}^n$ for a unique $n$.
\end{proposition}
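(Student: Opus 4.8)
The plan is to first show that every simple object of $\uRep(\cI)$ is concentrated in a single degree, and then to reduce the remaining analysis to the computation already carried out in the proof of Proposition~\ref{prop:simple}.

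First I would let $M$ be a simple object of $\uRep(\cI)$. Recall from \S\ref{ss:urep} that for each integer $k$ the subspace $M_{\ge k}$ (equal to $M_j$ in degrees $j \ge k$ and $0$ below) is a \emph{homogeneous submodule} of $M$. Since $M$ is simple, $M_{\ge k}$ is either $0$ or all of $M$ for every $k$. Let $n$ be minimal with $M_n \ne 0$; such $n$ exists because $M \ne 0$. Then $M_{\ge n} \ne 0$, so $M_{\ge n} = M$, whereas $M_{\ge n+1}$ is a proper submodule (it does not contain $M_n$), hence $M_{\ge n+1} = 0$. Therefore $M = M_n$ is concentrated in degree $n$.

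Next I would read off the $\cI$-action on $M = M_n$ from the two axioms defining a graded $\cI$-module: if $\sigma \in \cI$ satisfies $\sigma(n) = n$ then $\sigma$ acts by the identity, while if $\sigma(n) \ne n$ then $\sigma x \in M_{\sigma(n)} = 0$ for all $x$. Applying this to the generators $\alpha_i$, and using that $\alpha_i(n) = n$ for $i > n$ while $\alpha_i(n) = n+1$ for $1 \le i \le n$ (and that every $\sigma$ fixes $0$, so the case $n = 0$ is the trivial action), we find that $\alpha_i$ acts by $0$ for $i \le n$ and by the identity for $i > n$. This is exactly the situation reached at the end of the proof of Proposition~\ref{prop:simple}: every $\bk$-linear subspace of $M$ is then an $\cI$-submodule, so simplicity forces $\dim_{\bk} M = 1$, and inspection identifies $M$ with $\ul{\bB}^n$. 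Uniqueness of $n$ is immediate, since $\ul{\bB}^n$ is concentrated in degree $n$ and isomorphisms in $\uRep(\cI)$ are degree-preserving.

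I do not expect any serious obstacle here. The only step that requires a moment's thought is the reduction to a single degree, and that rests entirely on the elementary fact---recorded in \S\ref{ss:urep}---that the truncations $M_{\ge k}$ are genuine submodules; once this is in hand, the rest of the argument is the same bookkeeping as in the ungraded case.
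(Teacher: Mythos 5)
Your argument is correct and is essentially the same as the paper's: observe that the degree-truncations $M_{\ge k}$ are submodules, deduce that a simple $M$ must be concentrated in a single degree $n$, read off the $\alpha_i$-action from the grading axioms, and conclude $\dim M = 1$ by simplicity. The only cosmetic difference is that you take $n$ minimal while the paper takes any $n$ with $M_n \ne 0$ (both work), and you explicitly note uniqueness, which the paper leaves implicit.
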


\begin{proof}
Let $M$ be a simple graded $\cI$-module. Let $n$ be such that $M_n \ne 0$. By simplicity, we must have $M_{\ge n}=M$ and $M_{\ge n+1}=0$, since $M_{\ge n+1}$ is a proper submodule. Thus $M$ is concentrated in degree~$n$. Thus $\alpha_k$ with $1 \le k \le n$ act by~0 on $M$, while $\alpha_k$ with $k>n$ acts by the identity. It follows that every subspace of $M_n$ is an $\cI$-submodule of $M$, and so $M_n$ must be one-dimensional. Thus $M_n \cong \ul{\bB}^n$.
\end{proof}

\begin{remark} \label{rmk:simple}
Suppose that $\bk$ is algebraically closed. For $n \in \bN \cup \{\infty\}$ and $a \in \bk^{\times}$, let $L_{n,a}$ be the one-dimensional $\cI$-module where $\alpha_r$ acts by~0 for $1 \le r \le n$ and by $a$ for $r>n$. Arguing as in the proof of Propsoition~\ref{prop:simple}, one can show that every simple object of $\REP(\cI)$ is isomorphic to some $L_{n,a}$.
\end{remark}

\subsection{Some Ext computations}

We now compute the $\Ext^1$ groups between simple objects in our various representation categories. We use the notations $\EXT_{\cI}$, $\Ext_{\cI}$, and $\uExt_{\cI}$ for the $\Ext$ functor on the categories $\REP(\cI)$, $\Rep(\cI)$, and $\uRep(\cI)$ respectively.

\begin{proposition} \label{prop:ext}
For $n,m \in \bN$, we have
\begin{displaymath}
\dim \EXT^1_{\cI}(\bB^n, \bB^m) = 
\begin{cases}
1 & \text{if $m=n$} \\
n & \text{if $m=n+1$} \\
0 & \text{otherwise}
\end{cases}
\end{displaymath}
\end{proposition}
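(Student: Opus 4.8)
The plan is to compute $\EXT^1_{\cI}(\bB^n,\bB^m)$ by classifying extensions directly, following the two-by-two matrix approach. Fix a short exact sequence $0 \to \bB^m \to E \to \bB^n \to 0$ and choose a vector space basis $(f,e)$ of $E$ with $f$ spanning the copy of $\bB^m$ and $e$ lifting a generator of $\bB^n$. Then each generator $\alpha_i$ acts on $E$ by an upper triangular matrix
\begin{displaymath}
A_i = \begin{pmatrix} \epsilon^m_i & c_i \\ 0 & \epsilon^n_i \end{pmatrix},
\end{displaymath}
where $\epsilon^m_i = 1$ if $i > m$ and $0$ otherwise (similarly $\epsilon^n_i$), the diagonal being forced by the actions on $\bB^m$ and $\bB^n$, and $c_i \in \bk$ the only free data. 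By Corollary~\ref{cor:alphagen}, a family $(A_i)_{i\ge 1}$ of this shape defines an $\cI$-module --- necessarily an extension of $\bB^n$ by $\bB^m$ --- if and only if the fundamental relations $A_iA_j = A_jA_{i-1}$ hold for all $i>j$. The diagonal parts satisfy these automatically, so each relation is a single linear equation in the $c_i$'s; let $Z \subseteq \bk^{\bN_+}$ be the solution space. Two such extensions are Yoneda-equivalent iff they differ by a change of lift $e \mapsto e + \beta f$, which replaces $c_i$ by $c_i + \beta(\epsilon^m_i - \epsilon^n_i)$; let $B$ be the image of the resulting map $\bk \to \bk^{\bN_+}$. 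Then $\EXT^1_{\cI}(\bB^n,\bB^m) = Z/B$, and $B \subseteq Z$ automatically, since a coboundary sequence comes from the (valid) split module $\bB^m \oplus \bB^n$ in a skew basis.

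Writing the relation out, for $i>j$ it reads
\begin{displaymath}
\epsilon^m_i\, c_j + \epsilon^n_j\, c_i = \epsilon^m_j\, c_{i-1} + \epsilon^n_{i-1}\, c_j .
\end{displaymath}
Since $i \mapsto \epsilon^m_i$ and $i \mapsto \epsilon^n_i$ are step functions, this is trivially satisfied unless one of $i$, $i-1$, $j$ crosses the threshold $m$ or $n$, so the content of the system is concentrated at the indices near $n$, $n+1$, $m$, $m+1$. I would then dispatch the cases:
\begin{itemize}
\item $m=n$: the relations force $c_i=0$ for $i \le n$ and $c_i$ constant for $i>n$, so $\dim Z = 1$; and $B=0$ because $\epsilon^m_i = \epsilon^n_i$. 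Hence $\dim\EXT^1 = 1$.
\item $m = n+1$: using $\epsilon^m_i = \epsilon^n_{i-1}$ the relation collapses to $\epsilon^n_j\, c_i = \epsilon^m_j\, c_{i-1}$, forcing $c_i = 0$ for $i \ge n+2$ and leaving $c_1,\ldots,c_{n+1}$ free, so $\dim Z = n+1$; the coboundary vector is supported in coordinate $n+1$ alone, so $B$ is a line inside $Z$ and $\dim\EXT^1 = n$.
\item $m = n-1$ or $\lvert m-n\rvert \ge 2$: one checks $Z$ is exactly the line spanned by the coboundary vector, which is the vector constant on the coordinates $i$ with $\min(m,n) < i \le \max(m,n)$ and zero elsewhere; thus $Z = B$ and $\EXT^1 = 0$.
\end{itemize}

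The main obstacle is the linear algebra in the last two items, specifically the vanishing cases: it is easy to see that $Z$ can be nonzero, and the real point is to verify that every solution is a coboundary. This requires tracking the boundary relations carefully --- for example, the relation obtained from $(i,j) = (n+2,n)$ is what kills the would-be extra free parameter $c_{n+1}$ in the cases $m < n$ and $m > n+1$, and analogous ``off by one'' relations are what force $c_i = 0$ just outside the interval $[\min(m,n)+1,\max(m,n)]$. Once these are in hand, comparing $\dim Z$ with $\dim B$ (which is $0$ when $m=n$ and $1$ otherwise) gives the stated dimensions. Everything else is routine bookkeeping with the step functions $\epsilon^m$, $\epsilon^n$.
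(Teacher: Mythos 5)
Your proposal is correct and follows essentially the same route as the paper: set up the $2\times 2$ upper-triangular matrices $A_i$, extract the single nontrivial linear relation from the upper-right entry of $A_iA_j = A_jA_{i-1}$, and analyze the solution space modulo the one-dimensional (or zero) coboundary space by cases on $m$ versus $n$. The explicit $Z/B$ framing is a harmless repackaging of the paper's ``make a change of basis to kill $p$'' step, and the case breakdown matches the paper's exactly.
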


\begin{proof}
Consider an extension
\begin{displaymath}
0 \to \bB^m \to E \to \bB^n \to 0.
\end{displaymath}
Let $v, w$ be a basis for $E$ with $v \in \bB^m$, and let $A_r$ be the matrix for $\alpha_r$ on $E$ with respect to this basis. Let $\delta_{r>s}$ be~1 if $r>s$ and 0 otherwise. We have
\begin{displaymath}
A_r=\begin{pmatrix} \delta_{r>m} & c_r \\ 0 & \delta_{r>n} \end{pmatrix},
\end{displaymath}
where $c_r$ is some scalar. For any $r,s \in \bN$, we have
\begin{displaymath}
A_s A_r = \begin{pmatrix}
\delta_{r>m} \delta_{s>m} &
c_r \delta_{s>m} + c_s \delta_{r>n} \\
0 & \delta_{s>n} \delta_{r>n}
\end{pmatrix}.
\end{displaymath}
Suppose now that $r<s$, so that we have the identity
\begin{displaymath}
A_s A_r=A_rA_{s-1}.
\end{displaymath}
Equating the upper right coefficients gives the equation
\begin{equation} \label{eq:cr}
c_r \delta_{s>m}+c_s \delta_{r>n} = c_{s-1} \delta_{r>m} + c_r \delta_{s-1>n}.
\end{equation}
This holds for all $1 \le r <s$. The other entries provide no information. We now proceed by cases.

\vskip.5\baselineskip
\textit{\textbf{Case 1:} $m<n$.}
Suppose $r \le m$. Taking $s=m+1$ in the identity \eqref{eq:cr} gives $c_r=0$. Now suppose $m<r\le n$. Then \eqref{eq:cr} gives $c_r=c_{s-1}+c_r \delta_{s-1>n}$ for $r<s$; equivalently, we have $c_r=c_s+c_r \delta_{s>n}$ for $r \le s$. For $s \le n$ this yields $c_r=c_s$, while for $s>n$, we obtain $c_s=0$. We have thus shown:
\begin{displaymath}
c_r = \begin{cases}
p & \text{if $m < r \le n$} \\
0 & \text{if $r\le m$ or $r>n$}
\end{cases}
\end{displaymath}
for some $p \in \bk$. Making a change of basis (replace $w$ with $w-p v$), we can assume $p=0$, and thus $c_r=0$ for all $r$. Thus the extension is split.

\vskip.5\baselineskip
\textit{\textbf{Case 2:} $m>n+1$.}
Suppose $r \le n$. Taking $s=n+2$ in \eqref{eq:cr} gives $c_r=0$. Now suppose $n < r \le m$. Then \eqref{eq:cr} gives $c_r \delta_{s>m}+c_s=c_r$ for $r<s$. For $s \le m$ we find $c_r=c_s$, while for $s>m$, we find $c_s=0$. We have thus shown:
\begin{displaymath}
c_r = \begin{cases}
p & \text{if $n < r \le m$} \\
0 & \text{if $r \le n$ or $r>m$}
\end{cases}
\end{displaymath}
for some $p \in \bk$. Once again, making a change of basis (replace $w$ with $w-p v$) we can assume $p=0$, and thus $c_r=0$ for all $r$. Thus the extension is split.

\vskip.5\baselineskip
\textit{\textbf{Case 3:} $m=n$.}
Suppose $r\le n$. Taking $s=n+1$ in \eqref{eq:cr} gives $c_r=0$. Now suppose $n<r$. Then \eqref{eq:cr} gives $c_r+c_s = c_{s-1}+ c_r$ for $r<s$; in other words, $c_s=c_{s-1}$ holds for all $s>n+1$. We thus have
\begin{displaymath}
c_r = \begin{cases}
p & \text{if $r>n$} \\
0 & \text{if $r \le n$}
\end{cases}
\end{displaymath}
for some $p \in \bk$. In this case, no change of basis allows us to modify $p$. Furthermore, for any $p$ taking $c_r$ as above gives a well-defined extension. We thus see that the $\EXT$ group is 1-dimensional, as claimed.

\vskip.5\baselineskip
\textit{\textbf{Case 4:} $m=n+1$.}
Suppose $r=n+1$. Then \eqref{eq:cr} gives $c_n+c_s=c_n$, and thus $c_s=0$, for all $n+1<s$. It turns out this is all the information \eqref{eq:cr} yields. Thus $c_1, \ldots, c_{n+1}$ are arbitrary, and $c_r=0$ for $r \ge n+2$. We can make a unique change of basis to ensure $c_{n+1}=0$. The $n$ remaining parameters show that the $\EXT$ group has dimension $n$.
\end{proof}

\begin{corollary} \label{cor:ext1}
For $n,m \in \bN$, we have
\begin{displaymath}
\dim \Ext^1_{\cI}(\bB^n, \bB^m) = 
\begin{cases}
n & \text{if $m=n+1$} \\
0 & \text{otherwise}
\end{cases}
\end{displaymath}
\end{corollary}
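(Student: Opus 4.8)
The plan is to deduce this from Proposition~\ref{prop:ext} by determining which of the extensions appearing in its proof are smooth. The key observation is a general one: since $\Rep(\cI)$ is a full abelian subcategory of $\REP(\cI)$ closed under subobjects and quotients, a short exact sequence in $\Rep(\cI)$ is precisely a short exact sequence in $\REP(\cI)$ all of whose terms happen to be smooth, and both Yoneda equivalence of extensions and splittings are witnessed by morphisms, which are the same in the two categories. Consequently the natural map
\[
\Ext^1_{\cI}(\bB^n, \bB^m) \longrightarrow \EXT^1_{\cI}(\bB^n, \bB^m)
\]
is injective, and its image consists exactly of those classes $[E]$ for which the middle term $E$ is a smooth $\cI$-module.

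By Proposition~\ref{prop:ext}, the target of this map vanishes unless $m=n$ or $m=n+1$, so it suffices to treat those two cases using the explicit form of $E$ recorded in the proof of that proposition: $E$ has a basis $v,w$ with $v$ spanning the submodule $\bB^m$, and $\alpha_r$ sends $v \mapsto \delta_{r>m} v$ and $w \mapsto c_r v + \delta_{r>n} w$ for suitable scalars $c_r$. Since $v$ is fixed by $\alpha_r$ for all $r>m$, Proposition~\ref{prop:fixed} shows that $E$ is smooth if and only if $w$ is fixed by some $\alpha_r$.

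For $m=n$ the proof of Proposition~\ref{prop:ext} gives $c_r=0$ for $r\le n$ and $c_r=p$ for $r>n$, where $p\in\bk$ is the parameter of the extension; when $p\ne 0$ we get $\alpha_r w = 0 \ne w$ for $r\le n$ and $\alpha_r w = w+pv \ne w$ for $r>n$, so $w$ is not fixed by any $\alpha_r$ and $E$ is not smooth. Hence the only smooth extension of $\bB^n$ by $\bB^n$ is the split one, so $\Ext^1_{\cI}(\bB^n,\bB^n)=0$. For $m=n+1$, that same proof gives $c_r=0$ for $r\ge n+2$, so $\alpha_r w = w$ for all $r\ge n+2$; thus $w$ is smooth, hence so is $E$, and every class in $\EXT^1_{\cI}(\bB^n,\bB^{n+1})$ lies in the image of the comparison map. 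Therefore $\dim\Ext^1_{\cI}(\bB^n,\bB^{n+1})=n$ by Proposition~\ref{prop:ext}.

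I expect no genuine obstacle here: all of the computational content is already present in the proof of Proposition~\ref{prop:ext}, and the only point requiring a moment's thought is the identification of the image of the comparison map with the smooth extension classes, which is the general categorical fact recorded in the first paragraph. One should be slightly careful that $\Ext^1$ in the Grothendieck categories $\Rep(\cI)$ and $\REP(\cI)$ agrees with the Yoneda description used above, but this is standard.
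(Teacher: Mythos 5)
Your proposal is correct and follows essentially the same route as the paper: identify $\Ext^1_{\cI}$ with the subspace of $\EXT^1_{\cI}$ consisting of classes with smooth middle term, then read off smoothness from the coefficients $c_r$ computed in Cases~3 and~4 of the proof of Proposition~\ref{prop:ext} (the paper phrases the smoothness criterion as ``$c_r = 0$ for $r \gg 0$,'' which by Proposition~\ref{prop:fixed} is the same as your condition that $w$ be fixed by some $\alpha_r$). The only small point to be fully explicit about, which you handle implicitly, is that smoothness of the two-dimensional $E$ reduces to smoothness of $w$ because $v$ is always smooth and a linear combination of vectors each fixed by $\cI_{\ge r}$ is again fixed by $\cI_{\ge r}$.
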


\begin{proof}
This follows from the proposition and two simple observations: first, the unique non-trivial self-extension of $\bB^n$ (constructed in Case 3 ) is not smooth; and second, every extension of $\bB^{n+1}$ by $\bB^n$ (as described in Case 4) is smooth. (Note that, the extension $E$ considered in the proof of the proposition is smooth if and only if $c_r=0$ for $r \gg 0$.)
\end{proof}

\begin{remark}
In Corollary~\ref{cor:Lext}, we compute all $\Ext$ groups between simple objects.
\end{remark}

\begin{remark}
It is not difficult to directly compute the $\uExt^1$ groups between simple objects of $\uRep(\cI)$. However, we will not do this, since these groups can also be obtained by combining the Corollary~\ref{cor:ext1} with Theorem~\ref{thm:canongr} below.
\end{remark}

\begin{remark} \label{rmk:trivext}
The unique non-trivial self-extension of $\bB^0$ constructed in Case~3 of Proposition~\ref{prop:ext} corresponds to the homomorphism $\rho \colon \cI \to \rM_2(\bk)$ given by
\begin{displaymath}
\rho(\sigma) = \begin{pmatrix} 1 & \ell(\sigma) \\ 0 & 1 \end{pmatrix},
\end{displaymath}
where $\ell(\sigma)$ is the length of $\sigma$ (\S \ref{ss:inc}).
\end{remark}

\begin{remark}
Using the same analysis as in the proof of Proposition~\ref{prop:ext}, one can compute $\EXT^1_{\cI}(L_{n,a}, L_{m,b})$ for all $n$, $m$, $a$, and $b$. (See Remark~\ref{rmk:simple} for the definition of $L_{n,a}$.)
\end{remark}

\subsection{Invariants and coinvariants}

For an abelian category $\cA$, we let $\cA^{\rf}$ (resp.\ $\cA^{\lf}$) denote the full subcategory on finite length (resp.\ locally finite length) objects of $\cA$.

\begin{proposition}
The trivial representation $\bB^0$ is both projective and injective in $\Rep(\cI)^{\lf}$.
\end{proposition}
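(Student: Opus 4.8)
The two assertions are dual in flavour and both rest on the $\Ext$ computation in Corollary~\ref{cor:ext1} together with the local noetherianity of $\Rep(\cI)$ (Theorem~\ref{thm:noeth}). The first point to record is that $\Rep(\cI)^{\lf}$ is closed under extensions inside $\Rep(\cI)$: if $0 \to M' \to M \to M'' \to 0$ is exact with $M',M''$ locally of finite length and $x \in M$, then the cyclic submodule $\langle x \rangle$ is finitely generated, hence noetherian by Theorem~\ref{thm:noeth}, so $\langle x \rangle \cap M'$ is finitely generated and therefore of finite length, while $\langle x \rangle/(\langle x\rangle \cap M')$ embeds in $M''$ and is likewise of finite length; thus $\langle x \rangle$ has finite length and $M \in \Rep(\cI)^{\lf}$. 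Consequently $\Ext^1$ between objects of $\Rep(\cI)^{\lf}$ is computed by $\Ext^1_{\cI}$, and projectivity (resp.\ injectivity) of $\bB^0$ in $\Rep(\cI)^{\lf}$ is equivalent to the vanishing of $\Ext^1_{\cI}(\bB^0,-)$ (resp.\ $\Ext^1_{\cI}(-,\bB^0)$) on all of $\Rep(\cI)^{\lf}$.

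The finite length case comes essentially for free. By Proposition~\ref{prop:simple} the simple objects are the $\bB^m$, and Corollary~\ref{cor:ext1} gives $\Ext^1_{\cI}(\bB^0,\bB^m)=0$ (the case $m=n+1$ contributes dimension $n=0$) and $\Ext^1_{\cI}(\bB^m,\bB^0)=0$ (the equation $0=m+1$ has no solution), for every $m$. A dévissage on the length of a finite length module $L$ — feeding short exact sequences into the long exact sequences of $\Ext^1_{\cI}(\bB^0,-)$ and of $\Ext^1_{\cI}(-,\bB^0)$ — then gives $\Ext^1_{\cI}(\bB^0,L)=0$ and $\Ext^1_{\cI}(L,\bB^0)=0$ for every finite length $L$; in particular $\bB^0$ is both projective and injective in $\Rep(\cI)^{\rf}$.

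It remains to bootstrap to the locally finite length category. For projectivity, given an epimorphism $\pi\colon M \twoheadrightarrow N$ in $\Rep(\cI)^{\lf}$ and a map $\bB^0 \to N$, i.e.\ a nonzero invariant vector $\bar x \in N^{\cI}$, choose any $x_0 \in M$ with $\pi(x_0)=\bar x$; the cyclic submodule $\langle x_0\rangle$ has finite length, and $\pi$ restricts to an epimorphism $\langle x_0 \rangle \twoheadrightarrow \bk\bar x \cong \bB^0$ whose kernel (a submodule of $\langle x_0\rangle$) has finite length, so this epimorphism splits by the finite length case, and the splitting is an invariant lift of $\bar x$. Thus $\Hom_{\cI}(\bB^0,-)$ is exact on $\Rep(\cI)^{\lf}$. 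For injectivity, run Baer's argument: given $K \subseteq M$ with $M \in \Rep(\cI)^{\lf}$ and $g\colon K \to \bB^0$, a Zorn's-lemma maximal extension $g'\colon K' \to \bB^0$ must have $K'=M$, since otherwise, for $x \in M\setminus K'$, the module $\langle x\rangle$ has finite length, $\langle x\rangle\cap K'$ is a submodule of it, injectivity of $\bB^0$ in $\Rep(\cI)^{\rf}$ extends $g'|_{\langle x\rangle\cap K'}$ over $\langle x\rangle$, and this extension glues with $g'$ to a map on $K'+\langle x\rangle$, contradicting maximality.

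The real content is the single computation Corollary~\ref{cor:ext1}; everything else is bookkeeping, and the only thing one must be careful about is exactly that bookkeeping — that finitely generated subobjects of locally finite length modules have finite length (Theorem~\ref{thm:noeth}) and that $\Rep(\cI)^{\lf}$ is closed under extensions, so that the $\Ext$ groups are unambiguous and both the splitting argument and Baer's argument go through. I would add one orienting remark: the injectivity in fact persists in all of $\Rep(\cI)$ (Corollary~\ref{cor:trivinj}), but that needs the truncation functors and is genuinely harder, whereas the projectivity is special to $\Rep(\cI)^{\lf}$.
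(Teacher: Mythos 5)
Your proof is correct and follows essentially the same route as the paper: the vanishing of $\Ext^1_{\cI}(\bB^0,\bB^m)$ and $\Ext^1_{\cI}(\bB^m,\bB^0)$ from Corollary~\ref{cor:ext1}, dévissage to $\Rep(\cI)^{\rf}$, then a cyclic-submodule splitting argument for projectivity and a Baer/Zorn argument for injectivity (the paper cites Proposition~\ref{prop:baer} for the latter rather than writing it out). Your preliminary remark that $\Rep(\cI)^{\lf}$ is closed under extensions, so that the relevant $\Ext^1$ groups are unambiguous, is a useful point that the paper leaves implicit.
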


\begin{proof}
The groups $\Ext^1_{\cI}(\bB^n, \bB^0)$ and $\Ext^1_{\cI}(\bB^0, \bB^n)$ vanish for all $n \ge 0$ by Corollary~\ref{cor:ext1}. Since the $\bB^n$ account for all simple object of $\Rep(\cI)$ (Proposition~\ref{prop:simple}), a standard d\'evissage shows that $\Ext^1_{\cI}(M, \bB^0)$ and $\Ext^1_{\cI}(\bB^0, M)$ vanish for all finite length objects $M$ of $\Rep(\cI)$. Thus $\bB^0$ is projective and injective in $\Rep(\cI)^{\rf}$. The proposition now follows from general considerations. Injectivity in $\Rep(\cI)^{\lf}$ follows from a version of Baer's criterion (see Proposition~\ref{prop:baer}). For projectivity, if $M \to \bB^0$ is a surjection with $M$ locally finite then we can choose a finite length submodule $N$ of $M$ that surjects onto $\bB^0$, and a splitting of $N \to \bB^0$ gives a splitting for $M \to \bB^0$.
\end{proof}

\begin{remark}
We will show (Corollary~\ref{cor:trivinj}) that the trivial representation is in fact injective in the larger category $\Rep(\cI)$. However, it is clearly not projective in $\Rep(\cI)$, as the augmentation map $\bA^1 \to \bB^0$ is a non-split surjection.
\end{remark}

\begin{proposition} \label{prop:inv}
The functors $M \mapsto M^{\cI_{\ge n}}$ and $M \mapsto M_{\cI_{\ge n}}$ are exact on $\Rep(\cI)^{\lf}$. Moreover, the composition $M^{\cI_{\ge n}} \to M \to M_{\cI_{\ge n}}$ is an isomorphism for $M \in \Rep(\cI)^{\lf}$.
\end{proposition}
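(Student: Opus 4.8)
The plan is to reduce to the case $n=1$ with the shift functor, settle that case by exploiting that $\bB^0$ is both injective and projective in $\Rep(\cI)^{\lf}$ (the previous proposition), and then transport the conclusion back. By Proposition~\ref{prop:shiftinv} we have $M^{\cI_{\ge n}}=(\Sigma^{n-1}M)^{\cI}$ and $M_{\cI_{\ge n}}=(\Sigma^{n-1}M)_{\cI}$, compatibly with the natural maps through $M$. Since $\Sigma$ is exact and, by Proposition~\ref{prop:shiftstd}, carries each simple $\bB^m$ to $\bB^{m-1}$ or to $\bB^0$, it preserves finite length, and being cocontinuous it preserves $\Rep(\cI)^{\lf}$. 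So it suffices to treat $n=1$: to show that $M\mapsto M^{\cI}$ and $M\mapsto M_{\cI}$ are exact on $\Rep(\cI)^{\lf}$, and that the natural map $\eta_M\colon M^{\cI}\to M_{\cI}$ is an isomorphism there.

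For the isomorphism I would first reduce to finite-length $M$. By Proposition~\ref{prop:fixed}, an element fixed by $\alpha_1$ is fixed by all of $\cI$, so $M^{\cI}=\ker(\alpha_1-1\colon M\to M)$; hence $(-)^{\cI}$ commutes with filtered colimits, these being exact in $\Rep(\cI)$. Likewise $(-)_{\cI}$ is left adjoint to the functor $\Vec\to\Rep(\cI)$ sending a vector space to the corresponding trivial module, so it is cocontinuous and in particular commutes with filtered colimits. As every object of $\Rep(\cI)^{\lf}$ is the filtered colimit of its finite-length subobjects and $\eta$ is natural, it is enough to treat finite-length $M$. Such an $M$ is finite-dimensional, so $M^{\cI}$ is a finite direct sum of copies of $\bB^0$ and hence injective in $\Rep(\cI)^{\lf}$; thus $M=M^{\cI}\oplus M'$ for some submodule $M'$ with $(M')^{\cI}=0$. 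Because $\fa_1$ annihilates $M^{\cI}$ we get $\fa_1 M=\fa_1 M'\subseteq M'$, so $M^{\cI}\cap\fa_1 M=0$ and $\eta_M$ is injective; and $M_{\cI}=M^{\cI}\oplus(M')_{\cI}$, so surjectivity of $\eta_M$ amounts to $(M')_{\cI}=0$. If $(M')_{\cI}\neq0$ then $M'$ has $\bB^0$ as a quotient, which splits by projectivity of $\bB^0$ in $\Rep(\cI)^{\lf}$, yielding an embedding $\bB^0\hookrightarrow M'$ and contradicting $(M')^{\cI}=0$. So $(M')_{\cI}=0$ and $\eta_M$ is an isomorphism.

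For exactness, $M\mapsto M^{\cI}=\Hom_{\cI}(\bB^0,M)$ is left exact in general and right exact on $\Rep(\cI)^{\lf}$ by projectivity of $\bB^0$ there, hence exact; then $M\mapsto M_{\cI}$ is exact on $\Rep(\cI)^{\lf}$ through the natural isomorphism $\eta$. The general $n$ follows by composing with the exact, $\Rep(\cI)^{\lf}$-preserving functor $\Sigma^{n-1}$. I expect the main obstacle to be the finite-length isomorphism — in particular the step producing the $\cI$-stable complement $M'$ and ruling out a trivial quotient of it — since that is exactly where the injectivity and projectivity of $\bB^0$ in $\Rep(\cI)^{\lf}$, and ultimately the $\Ext^1$ computations of Proposition~\ref{prop:ext}, are used essentially.
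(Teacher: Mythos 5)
Your proof is correct and genuinely differs from the paper's in the way it obtains the isomorphism $M^{\cI}\to M_{\cI}$. The paper proves exactness of both functors first (invariants via projectivity of $\bB^0$, coinvariants via the $\bk$-dual identification $(M_{\cI})^*=\Hom_{\cI}(M,\bB^0)$ and injectivity of $\bB^0$), and then gets the isomorphism purely functorially: applying the exact functor $(-)_{\cI}$ to the injection $M^{\cI}\hookrightarrow M$ and noting $(M^{\cI})_{\cI}=M^{\cI}$ gives injectivity of $\eta_M$, and dually for surjectivity. You instead reduce to finite length by a filtered-colimit argument, split off $M^{\cI}$ as a direct summand (using injectivity of $\bB^0$), compute $\ker\eta_M=0$ and $\coker\eta_M\cong (M')_{\cI}$ by hand, and then rule out a trivial quotient of $M'$ using projectivity of $\bB^0$; exactness of $(-)_{\cI}$ is then read off as a corollary of the natural isomorphism $\eta$. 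Both rest on the same essential inputs (the $\Ext^1$ vanishing that makes $\bB^0$ projective and injective in $\Rep(\cI)^{\lf}$, and Proposition~\ref{prop:shiftinv}). The paper's route is shorter and avoids any explicit splitting; your route is more concrete and has the pleasant feature of not needing to handle exactness of $(-)_{\cI}$ independently, since it falls out for free once $\eta$ is known to be a natural isomorphism.
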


\begin{proof}
We have $M^{\cI}=\Hom_{\cI}(\bB^0, M)$, which is an exact functor of $M$ since $\bB^0$ is projective. Similarly, $(M_{\cI})^* = \Hom_{\cI}(M, \bB^0)$ is an exact functor of $M$ since $\bB^0$ is injective, which implies that $M_{\cI}$ is an exact functor of $M$. Since $M^{\cI} \to M$ is injective, so is the map on coinvariants. As $(M^{\cI})_{\cI}=M^{\cI}$, we thus see that $M^{\cI} \to M_{\cI}$ is injective. We similarly see that this map is surjective. We have thus proved the proposition for $n=1$. The statements for general $n$ follow from the identities  $M^{\cI_{\ge n}}=\Sigma^{n-1}(M)^{\cI}$ and $M_{\cI_{\ge n}}=\Sigma^{n-1}(M)_{\cI}$ (Proposition~\ref{prop:shiftinv}).
\end{proof}

\subsection{The canonical grading}

The most important consequence of our $\Ext$ calculations is perhaps the following result:

\begin{theorem} \label{thm:canongr}
Every locally finite smooth $\cI$-module admits a canonical grading. That is, the forgetful functor $\Phi \colon \uRep(\cI)^{\lf} \to \Rep(\cI)^{\lf}$ is an isomorphism of categories.
\end{theorem}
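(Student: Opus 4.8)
The plan is to produce an explicit inverse functor $G\colon\Rep(\cI)^{\lf}\to\uRep(\cI)^{\lf}$ to $\Phi$. Given a locally finite smooth module $M$, equip its underlying space with the grading
\[
\ul{M}_0=M^{\cI_{\ge 1}},\qquad \ul{M}_n=M^{\cI_{\ge n+1}}\cap\fa_nM\quad(n\ge 1),
\]
and put $G(M)=(M,\{\ul{M}_n\}_n)$. Both $M^{\cI_{\ge n+1}}=\ker(\alpha_{n+1}-1)$ and the submodule $\fa_nM$ are manifestly functorial in $M$, so once $G(M)$ is known to be a well-defined graded module, functoriality of $G$ and the identity $\Phi\circ G=\id$ (on objects and morphisms) hold for free; the content of the theorem is that $\{\ul{M}_n\}$ really is a grading and that $G\circ\Phi=\id$.

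The crucial input is Proposition~\ref{prop:inv}: for locally finite $M$ the composite $M^{\cI_{\ge n}}\hookrightarrow M\twoheadrightarrow M_{\cI_{\ge n}}=M/\fa_nM$ is an isomorphism, i.e.
\[
M=M^{\cI_{\ge n}}\oplus\fa_nM\qquad(M\in\Rep(\cI)^{\lf}).
\]
Applying the modular law to the chain $M^{\cI_{\ge n}}\subseteq M^{\cI_{\ge n+1}}\subseteq M$ gives $M^{\cI_{\ge n+1}}=M^{\cI_{\ge n}}\oplus\ul{M}_n$; iterating, and using smoothness in the form $M=\bigcup_n M^{\cI_{\ge n+1}}$, yields $M=\bigoplus_{n\ge 0}\ul{M}_n$ as vector spaces, with $M^{\cI_{\ge n+1}}=\bigoplus_{k\le n}\ul{M}_k$.

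To see this decomposition is a grading, note first that if $\sigma\in\cI$ satisfies $\sigma(n)=n$ then, $\sigma$ being an order-preserving injection, $\sigma$ fixes $1,\dots,n$; hence $\sigma\in\cI_{\ge n+1}$ and acts trivially on $\ul{M}_n\subseteq M^{\cI_{\ge n+1}}$. This is condition~(b), and it also handles $\alpha_k\ul{M}_n\subseteq\ul{M}_{\alpha_k(n)}$ when $k>n$. For $k\le n$ one must show $\alpha_k\ul{M}_n\subseteq\ul{M}_{n+1}$, and here I would combine two consequences of the fundamental relation (Proposition~\ref{prop:alpharel}): that $\alpha_kM^{\cI_{\ge n+1}}\subseteq M^{\cI_{\ge n+2}}$, and that $\alpha_k\fa_nM\subseteq\fa_{n+1}M$. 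The latter follows from the identity $\fa_nM=\sum_{j\ge n}(\alpha_j-1)M$ (the right-hand side contains the generators of $\fa_n$ and is $\cI$-stable, again by Proposition~\ref{prop:alpharel}) together with $\alpha_k(\alpha_j-1)M=(\alpha_{j+1}-1)(\alpha_kM)$ for $k\le j$. Then $\alpha_k\ul{M}_n\subseteq M^{\cI_{\ge n+2}}\cap\fa_{n+1}M=\ul{M}_{n+1}$, as required.

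It remains to check $G\circ\Phi=\id$, that is, that for a graded locally finite $\ul{M}$ the recovered grading is the original one. Examining the lowest-degree homogeneous component of $x$ lying in degree $>n$ shows that an element of $\Phi(\ul{M})$ fixed by $\alpha_{n+1}$ has homogeneous components only in degrees $\le n$, so $(\Phi\ul{M})^{\cI_{\ge n+1}}=\bigoplus_{k\le n}\ul{M}_k$; and $\fa_n(\Phi\ul{M})\subseteq\bigoplus_{k\ge n}\ul{M}_k$ because $\cI_{\ge n}$ acts trivially on the graded quotient $\ul{M}/\bigoplus_{k\ge n}\ul{M}_k$, while the decomposition above makes $\fa_n(\Phi\ul{M})$ a complement in $\Phi(\ul{M})$ to $\bigoplus_{k<n}\ul{M}_k$, forcing $\fa_n(\Phi\ul{M})=\bigoplus_{k\ge n}\ul{M}_k$. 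Intersecting these two identities recovers $\ul{M}_n$, so $\Phi$ and $G$ are mutually inverse and $\Phi$ is an isomorphism of categories. The one genuinely non-formal step is the decomposition $M=M^{\cI_{\ge n}}\oplus\fa_nM$ (Proposition~\ref{prop:inv}), which rests on the $\Ext^1$-vanishing (Corollary~\ref{cor:ext1}): this is where local finiteness is used in an essential way---the decomposition already fails for $\bA^1$---and it is exactly what forces the grading to be canonical rather than merely to exist; everything else is either tautological or a short manipulation of the fundamental relation.
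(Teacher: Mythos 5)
Your proof is correct and takes essentially the same route as the paper: the grading you define, $\ul{M}_n = M^{\cI_{\ge n+1}} \cap \fa_n M$, is precisely the paper's $G_n(M) = I_n(M) \cap K_{n-1}(M)$, and both proofs hinge on the splitting $M = M^{\cI_{\ge n}} \oplus \fa_n M$ supplied by Proposition~\ref{prop:inv}. Your invocation of the modular law and your check that $\alpha_k \fa_n M \subseteq \fa_{n+1} M$ via the fundamental relation are just compact repackagings of the direct arguments in the paper's proof.
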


\begin{proof}
Let $M$ be a locally finite smooth $\cI$-module. For $n \ge 0$, let $I_n(M)=M^{\cI_{>n}}$ and let $K_n(M)$ be the kernel of the map $M \to M_{\cI_{>n}}$, i.e., $\fa_{n+1} M$. By Proposition~\ref{prop:inv}, we have $M=I_n(M) \oplus K_n(M)$. Put $G_n(M)=I_n(M) \cap K_{n-1}(M)$, using the convention $K_{-1}(M)=M$.

We claim that the canonical map $I_{n-1}(M) \oplus G_n(M) \to I_n(M)$ is an isomorphism, for all $n \ge 1$. Since $G_n(M) \subset K_{n-1}(M)$, we clearly have $G_n(M) \cap I_{n-1}(M)=0$, and so the map is injective. To prove surjectivity, suppose that $x \in I_n(M)$ is given. Since the map $I_{n-1}(M) \to M/K_{n-1}(M)$ is an isomorphism, there exists a unique element $y \in I_{n-1}(M)$ having the same image in $M/K_{n-1}(M)$ as $x$. Thus $z=x-y$ belongs to $K_{n-1}(M) \cap I_n(M)=G_n(M)$. As $x=y+z$, this proves surjectivity.

It now follows by induction that the map $f_n \colon \bigoplus_{k=0}^n G_k(M) \to I_n(M)$ is an isomorphism for any $n$. We claim that the map $f \colon \bigoplus_{k \ge 0} G_k(M) \to M$ is an isomorphism. Since the restriction of $f$ to $\bigoplus_{k=0}^n G_k(M)$ is injective for all $n$, it follows that $f$ is injective. To show surjectivity, suppose $x \in M$. Then, by smoothness, $x \in I_n(M)$ for some $n$. Thus $x \in \im(f_n) \subset \im(f)$, as required.

The isomorphism $\bigoplus_{k \ge 0} G_k(M) \to M$ endows the vector space $M$ with a grading. We claim that this grading turns $M$ into a graded $\cI$-module. Thus suppose $x \in G_n(M)$. Then $x \in I_n(M)$, so if $i>n$ then $\alpha_i x = x$, as required. Now suppose $1 \le i \le n$. We must show that $\alpha_i x \in G_{n+1}(M)=I_{n+1}(M) \cap K_n(M)$. If $j>n+1$ then, applying the fundamental relation, we find $\alpha_j \alpha_i x = \alpha_i \alpha_{j-1} x=\alpha_i x$, as $\alpha_{j-1} x = x$; thus $\alpha_i x \in I_{n+1}(M)$. Since $x \in K_{n-1}(M)$, we can write $x=\sum_{j \ge n} (\alpha_j-1) y_j$ for elements $y_j \in M$, all but finitely many of which vanish; this follows directly from the definition of coinvariants. Applying the fundamental relation, we find $\alpha_i x=\sum_{j \ge n} (\alpha_{j+1}-1) \alpha_i y_j$, which belongs to $K_n(M)$.

We thus see that $M$, equipped with the grading provided by the $G_n$'s, is a graded $\cI$-module. Denote this graded $\cI$-module by $G(M)$. If $f \colon M \to N$ is a morphism in $\Rep(\cI)^{\lf}$ then $f$ clearly carries $G_k(M)$ into $G_k(N)$, and thus induces a map $G(M) \to G(N)$ in $\uRep(\cI)^{\lf}$. We thus see that $G$ defines a functor
\begin{displaymath}
G \colon \Rep(\cI)^{\lf} \to \uRep(\cI)^{\lf}.
\end{displaymath}
Moreover, it is clear that $\Phi \circ G=\id$ (actual equality), since $G(M)$ has $M$ as its underlying vector space.

Suppose now that $N$ is a locally finite graded $\cI$-module, and put $M=\Phi(N)$. It follows from the definition of graded $\cI$-module that $I_n(M)=\sum_{0 \le k \le n} N_k$. We claim that $K_n(M)=\sum_{k>n} N_k$. We check each containment separately.

To prove $K_n(M) \subset \sum_{k>n} N_k$, it suffices to show that $(1-\alpha_i) x \in \sum_{k>n} N_k$ for any $x \in M$ and $i>n$, since $K_n(M)$ is spanned by such elements. Thus let $x$ and $i$ be given. Let $x=\sum_{j \ge 0} x_j$ be the decomposition of $x$ into its homogeneous pieces. Since $x_j \in N_j$, we have $\alpha_i x_j=x_j$ for $j \le n$. Thus $(1-\alpha_i) x = \sum_{j>n} (1-\alpha_i) x_j$, which belongs to $\sum_{k>n} N_k$.

We now prove $K_n(M) \supset \sum_{k>n} N_k$. Since $N$ is locally finite, it suffices to show that for every finite length submodule $N'$ of $N$ we have $N'_k \subset K_n(M)$ for $k>n$. We proceed by descending induction on $k$. For $k \gg 0$, the statement is clear since $N'_k=0$. Suppose now that $k>n$ and $N'_{k+1} \subset K_n(M)$, and let us show that $N'_k \subset K_n(M)$. Thus let $x \in N'_k$ be given. Then $x=(1-\alpha_k)x + \alpha_k x$. We have $(1-\alpha_k) x \in K_n(M)$ by definition, while $\alpha_k x \in N'_{k+1} \subset K_n(M)$ by the inductive hypothesis. We have thus verified that $K_n(M)=\sum_{k>n} N_k$.

Combining our descriptions of $I_n(M)$ and $K_n(M)$, we see that $G_n(M)=N_n$. We thus see that $G \circ \Phi=\id$ (again, actual equality). Thus $\Phi$ and $G$ provide mutually inverse isomorphisms between the categories $\Rep(\cI)^{\lf}$ and $\uRep(\cI)^{\lf}$.
\end{proof}

\begin{remark}
The forgetful functor does \emph{not} induce an equivalence between $\uRep(\cI)$ and $\Rep(\cI)$. Indeed, it is not even fully faithful, as there are no non-zero maps $\ul{\bA}^n \to \ul{\bA}^0$, but there is a non-zero map $\bA^n \to \bA^0$, namely the augmentation map.
\end{remark}

\section{Multiplicities} \label{s:mult}

\subsection{Definitions and simple results} \label{ss:mult}

For a smooth $\cI$-module $M$ and $n \in \bN$, we let $\mu_n(M) \in \bN \cup \{\infty\}$ be the multiplicity of the simple module $\bB^n$ in $M$, as defined in \S \ref{ss:catmult}. The main properties of this are summarized in the following proposition:

\begin{proposition} \label{prop:multprop}
We have the following (for smooth $\cI$-modules):
\begin{enumerate}
\item Given a short exact sequence
\begin{displaymath}
0 \to M_1 \to M_2 \to M_3 \to 0,
\end{displaymath}
we have $\mu_n(M_2)=\mu_n(M_1)+\mu_n(M_3)$.
\item If $N$ is a subquotient of $M$ then $\mu_n(N) \le \mu_n(M)$.
\item If $M=\bigcup_{i \in I} M_i$ (directed union) then $\mu_n(M)=\sup_{i \in I} \mu_n(M_i)$.
\item If $\mu_n(M)=0$ for all $n \in \bN$ then $M=0$.
\end{enumerate}
\end{proposition}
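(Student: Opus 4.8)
The plan is to prove the contrapositive: if $M \ne 0$, then $\mu_n(M) \ge 1$ for some $n \in \bN$. The only fact about the categorical multiplicity of \S\ref{ss:catmult} that I will take for granted is that a simple object has multiplicity one in itself, i.e.\ $\mu_n(\bB^n) = 1$ (and $\mu_n(\bB^m) = 0$ for $m \ne n$); this is immediate from the definition there. Granting this, and using parts (a) and (b) already proved in the proposition, it suffices to exhibit a subquotient of $M$ isomorphic to some $\bB^n$.

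To produce such a subquotient, I would first pass to a nonzero cyclic submodule: choose $0 \ne x \in M$ and set $N = \bk[\cI] x$, a nonzero finitely generated submodule of $M$, which is smooth since $M$ is. Next I would observe that $N$, being a nonzero finitely generated module over the ring $\bk[\cI]$ (no noetherian hypothesis is needed for this), has a maximal proper submodule $N'$: the poset of proper submodules of $N$ is nonempty (it contains $0$), and the union of any chain of proper submodules is again proper, since $N$ is cyclic---if that union were all of $N$ it would contain $x$, hence equal some member of the chain---so Zorn's lemma supplies a maximal element $N'$. Then $N/N'$ is simple, and it is smooth as a quotient of a smooth module, so by Proposition~\ref{prop:simple} we have $N/N' \cong \bB^n$ for some $n$. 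Applying (a) to $0 \to N' \to N \to \bB^n \to 0$ gives $\mu_n(N) = \mu_n(N') + \mu_n(\bB^n) \ge 1$, and then (b), applied to the inclusion $N \subseteq M$, gives $\mu_n(M) \ge \mu_n(N) \ge 1$, contradicting the hypothesis. Hence $M = 0$.

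I do not anticipate a real obstacle: this is the easiest of the four assertions, and nearly everything needed is already in place. The one point requiring a moment's attention is the interface with \S\ref{ss:catmult}---one must know that $\mu_n$ actually detects $\bB^n$ rather than vanishing on it---but that is part of the basic package of properties of categorical multiplicity. If one wished to avoid even the mild Zorn's lemma step, an alternative would be to first invoke (c) to write $M$ as the directed union of its finitely generated submodules, reducing to the case where $M$ is finitely generated; but one still needs a simple quotient of a nonzero finitely generated module at that point, so this does not genuinely simplify matters, and I would present the cyclic-submodule argument above.
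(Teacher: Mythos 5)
Your argument for part~(d) is correct, and in fact slightly more elementary than what the paper does. The paper's proof of this proposition is simply a pointer to \S\ref{ss:catmult}, where (d) is proved for an arbitrary locally noetherian Grothendieck abelian category by passing to a nonzero noetherian subobject and using the ascending chain condition to find a maximal proper subobject. Your cyclic-module-plus-Zorn argument avoids any noetherian hypothesis, and the rest of the chain (the simple quotient is smooth, hence some $\bB^n$ by Proposition~\ref{prop:simple}, and $\mu_n(\bB^n)=1$) is right. One small simplification: since $\bB^n$ is a subquotient of $M$, part~(b) alone already gives $\mu_n(M)\ge 1$, so the appeal to~(a) is not needed.

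The gap is that you have proved only one of the four assertions. You write that you will rely on ``parts (a) and (b) already proved in the proposition,'' but (a)--(c) are part of what this proposition claims, not prior results, so the write-up is circular as a proof of the whole statement. In the paper all four parts are established in \S\ref{ss:catmult}: (b) falls out of~(a), (c) is a short (AB5) argument, and (a) is the one requiring real care---one intersects and projects a chain witnessing $\mu_L(M_2)\ge n$ to $M_1$ and $M_3$, checking at each step that the copy of $L$ lands on one side or the other. To give a complete proof you would need to either reproduce those arguments or explicitly defer to the appendix, as the paper does.
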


\begin{proof}
See \S \ref{ss:catmult}.
\end{proof}

For a graded $\cI$-module $M$, we let $\umu_n(M)$ be the multiplicity of $\ul{\bB}^n$ in $M$. We note that this is simply the dimension of $M_n$. We write $\mu_n(M)$ in place of $\mu_n(\Phi(M))$.

\begin{proposition} \label{prop:mu-umu}
Suppose that $M$ is a locally finite graded $\cI$-module. Then $\umu_n(M)=\mu_n(M)$ for all $n \ge 0$.
\end{proposition}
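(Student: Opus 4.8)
The plan is to derive this directly from the canonical grading theorem (Theorem~\ref{thm:canongr}), which asserts that the forgetful functor $\Phi \colon \uRep(\cI)^{\lf} \to \Rep(\cI)^{\lf}$ is an \emph{isomorphism} of categories. An isomorphism of categories preserves all purely categorical data — simple objects, subobjects, quotients, short exact sequences, directed unions, and hence composition series — so it must preserve multiplicities of simple objects. Since $\Phi(\ul{\bB}^n) = \bB^n$, this will immediately give $\umu_n(M) = \mu_n(\Phi(M)) = \mu_n(M)$. So the real work is only bookkeeping.

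Concretely, I would first reduce to the finite length case. Write $\Phi(M)$ as the directed union of its finite length $\cI$-submodules $N_i$; under the isomorphism $\Phi$ each $N_i$ corresponds to a (graded, finite length) submodule $M_i$ of $M$, and $M = \bigcup_i M_i$. By Proposition~\ref{prop:multprop}(c) — together with the obvious analogue for $\umu_n$, since $\umu_n(M)=\dim M_n = \sup_i \dim (M_i)_n$ — it suffices to prove the equality for each $M_i$; that is, we may assume $M$ has finite length. For such $M$, choose a composition series in $\uRep(\cI)^{\lf}$; its factors are among the $\ul{\bB}^k$ by the classification of simple graded modules. Applying the exact functor $\Phi$, which carries simples to simples and sends $\ul{\bB}^k$ to $\bB^k$, produces a composition series of $\Phi(M)$ with the corresponding factors $\bB^k$. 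Then additivity of $\mu_n$ along short exact sequences (Proposition~\ref{prop:multprop}(a)) shows $\mu_n(\Phi(M))$ is the number of factors equal to $\bB^n$, and the analogous additivity for $\umu_n$ shows $\umu_n(M)$ is the number of factors equal to $\ul{\bB}^n$; these counts agree, so $\umu_n(M) = \mu_n(M)$.

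I do not expect any genuine obstacle: the entire substance of the statement is absorbed into Theorem~\ref{thm:canongr}, and what remains is the formal fact that an isomorphism of categories respects composition series and directed unions. If one prefers to avoid composition series altogether, the same conclusion follows by observing that $\umu_n$ and $\mu_n \circ \Phi$ are two functions on $\uRep(\cI)^{\lf}$ that agree on the simple objects $\ul{\bB}^k$ and satisfy the same additivity (Proposition~\ref{prop:multprop}(a)) and directed-union (Proposition~\ref{prop:multprop}(c)) properties, and are therefore equal.
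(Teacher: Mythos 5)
Your proof is correct, and its core is the same as the paper's: reduce to the finite length case by a directed union argument, then to the simple case where the identity is obvious. The paper's proof is a one-sentence version of exactly this.

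The one thing worth flagging is that your appeal to Theorem~\ref{thm:canongr} is an unnecessary detour. You invoke it so you can write $\Phi(M) = \bigcup_i N_i$ with $N_i$ finite length and then pull the $N_i$ back through the isomorphism of categories to get graded submodules $M_i$ of $M$. But there is no need to go through $\Phi(M)$ at all: since $M$ is locally finite in $\uRep(\cI)$, it is already the directed union of its finite length \emph{graded} submodules $M_i$, and then $\Phi(M) = \bigcup_i \Phi(M_i)$ is a directed union of finite length smooth modules. Proposition~\ref{prop:multprop}(c) (and the trivial analogue for $\umu_n$, which is just $\dim M_n = \sup_i \dim (M_i)_n$) then reduces to the finite length case with no use of the canonical grading. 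Your closing paragraph — that $\umu_n$ and $\mu_n \circ \Phi$ agree on simples and both satisfy the same additivity and directed-union laws — is in fact the cleanest formulation and matches the paper; I would lead with that and drop the route through Theorem~\ref{thm:canongr}.
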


\begin{proof}
Using the properties of multiplicities, we can first reduce to the case where $M$ is finite length, and then to the case where it is simple, where the statement is obvious.
\end{proof}

\begin{remark}
The hypothesis in Proposition~\ref{prop:mu-umu} that $M$ be locally finite is necessary. For example, if $M=\ul{\bA}^1$ then $\mu_0(M)=1$ (Theorem~\ref{thm:multone} below) while $\umu_0(M)=\dim(M_0)=0$.
\end{remark}

The following simple bound will be useful in developing the theory of truncation functors in \S \ref{ss:trunc}. It will be greatly improved in Proposition~\ref{prop:multshift}.

\begin{proposition} \label{prop:mu-ineq}
Let $M$ be a smooth $\cI$-module and let $s \le r$. Then $\mu_s(M) \le \mu_0(\Sigma^r M)$.
\end{proposition}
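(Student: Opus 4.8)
The statement to prove is that for a smooth $\cI$-module $M$ and integers $s \le r$, we have $\mu_s(M) \le \mu_0(\Sigma^r M)$. The basic idea is to connect the multiplicity of $\bB^s$ in $M$ to the multiplicity of $\bB^0$ in a shift of $M$, using the formula $\Sigma^{n} M$ restricts the action down the generators, together with the interaction between the shift functor and the simple modules.

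\begin{proof}
First observe that it suffices to treat the case $r = s$: indeed, once we know $\mu_s(M) \le \mu_0(\Sigma^s M)$, we may apply Example~\ref{ex:shift}(b) (or rather the computation $\Sigma(\bB^n) = \bB^{n-1}$ for $n \ge 1$ and $\Sigma(\bB^0) = \bB^0$) together with part~(a) of Proposition~\ref{prop:multprop} and exactness of $\Sigma$ to see that $\mu_0(\Sigma^s M) \le \mu_0(\Sigma^r M)$ whenever $r \ge s$; concretely, applying $\Sigma^{r-s}$ to a filtration of $\Sigma^s M$ realizing the multiplicity of $\bB^0$ produces, after passing to subquotients, a filtration of $\Sigma^r M$ in which each copy of $\bB^0$ survives as $\bB^0$ (since $\Sigma$ fixes $\bB^0$), giving $\mu_0(\Sigma^s M) \le \mu_0(\Sigma^r M)$. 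So we reduce to the inequality $\mu_s(M) \le \mu_0(\Sigma^s M)$.

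For this, the key point is the behaviour of $\Sigma^s$ on simple modules: by the computation just cited, $\Sigma^s(\bB^s) \cong \bB^0 \ne 0$, while $\Sigma^s(\bB^t) \cong \bB^{t-s}$ (taking $\bB^{t-s}$ to mean $\bB^0$ once $t-s \le 0$) for every $t$. Now take any finite-length filtration of a finitely generated submodule, or more precisely use part~(c) of Proposition~\ref{prop:multprop} to reduce to the case where $M$ has finite length and $\mu_s(M)$ is finite: write $M$ as a directed union of finitely generated (hence, by Theorem~\ref{thm:noeth}, finite length) submodules $M_i$, and note $\mu_s(M) = \sup_i \mu_s(M_i)$ while $\mu_0(\Sigma^s M) = \sup_i \mu_0(\Sigma^s M_i)$ since $\Sigma$ is cocontinuous and $\Sigma^s M = \bigcup_i \Sigma^s M_i$. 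So assume $M$ has a composition series of finite length; the number of factors isomorphic to $\bB^s$ is $\mu_s(M)$. Applying the exact functor $\Sigma^s$ to this composition series yields a filtration of $\Sigma^s M$ whose associated graded pieces are $\Sigma^s(\bB^{t})$ for the various composition factors $\bB^t$ of $M$. Each factor $\bB^s$ of $M$ contributes a piece $\Sigma^s(\bB^s) \cong \bB^0$, so by additivity of $\mu_0$ along this filtration (Proposition~\ref{prop:multprop}(a), applied inductively) we get $\mu_0(\Sigma^s M) \ge \#\{\text{factors } \bB^s\} = \mu_s(M)$, since all contributions of the other factors $\Sigma^s(\bB^t)$ to $\mu_0$ are non-negative. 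This is the desired inequality.

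The only mild subtlety — and the step I expect to require the most care — is the reduction to the finite-length case when $\mu_s(M)$ could a priori be infinite: one must make sure $\mu_0(\Sigma^s M)$ is then also infinite, which follows because $\Sigma^s$ commutes with directed unions (it is cocontinuous, as noted in \S\ref{ss:shift}) and preserves the property of containing a given number of copies of $\bB^s$-subquotients-turned-$\bB^0$. Everything else is a direct application of the elementary properties of $\mu_n$ collected in Proposition~\ref{prop:multprop} and the trivial computation of $\Sigma$ on the $\bB^n$'s; no new input is needed.
\end{proof}
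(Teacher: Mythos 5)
The core idea — apply the exact functor $\Sigma$ to a filtration and use $\Sigma^r(\bB^s)\cong\bB^0$ for $s\le r$ — is the right one, and you actually deploy it correctly in your first paragraph when reducing from general $r\ge s$ to $r=s$. But the second paragraph, where you handle the main case, contains a genuine error: you assert that finitely generated smooth $\cI$-modules are finite length, citing Theorem~\ref{thm:noeth}. This is false. Local noetherianity means finitely generated objects are noetherian, not that they have finite length; for example, $\bA^1$ is generated by a single element but is infinite-dimensional, hence of infinite length. Consequently there is no composition series to apply $\Sigma^s$ to, and the reduction collapses. This is not a repairable detail within your framework: a general smooth $\cI$-module is not a directed union of finite-length submodules (again, $\bA^1$ is a counterexample), so one cannot reach the finite-length case by Proposition~\ref{prop:multprop}(c) either.

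The fix is to abandon the reduction and argue directly from the definition of multiplicity, which is exactly what the paper does. Recall $\mu_s(M)\ge n$ means there is a chain $F^1\subset G^1\subset\cdots\subset F^n\subset G^n$ of submodules of $M$ with $G^i/F^i\cong\bB^s$. No finiteness is needed. Applying the exact functor $\Sigma^r$ to this chain gives a chain $\Sigma^r(F^1)\subset\cdots\subset\Sigma^r(G^n)$ in $\Sigma^r M$ with $\Sigma^r(G^i)/\Sigma^r(F^i)\cong\Sigma^r(\bB^s)\cong\bB^0$ (since $s\le r$), showing $\mu_0(\Sigma^r M)\ge n$. This is a one-step argument; the detour through $r=s$, directed unions, and composition series is both unnecessary and, at the composition-series step, unsound.
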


\begin{proof}
Suppose that $\mu_s(M) \ge n$. Then, by definition, we can find a chain
\begin{displaymath}
F^1 \subset G^1 \subset \cdots \subset F^n \subset G^n
\end{displaymath}
of $\cI$-submodules of $M$ such that $G^i/F^i \cong \bB^s$ for each $1 \le i \le n$. Since $\Sigma$ is exact and $\Sigma^r(\bB^s)=\bB^0$ (Proposition~\ref{prop:shiftstd}, or direct observation), the chain
\begin{displaymath}
\Sigma^r(F^1) \subset \Sigma^r(G^1) \subset \cdots \subset \Sigma^r(F^n) \subset \Sigma^r(G^n)
\end{displaymath}
shows that $\mu_0(\Sigma^r M) \ge n$, and so the result follows.
\end{proof}

\subsection{The multiplicity one theorem} \label{ss:multone}

The purpose of this section is to prove the following important theorem:

\begin{theorem} \label{thm:multone}
The trivial representation has multiplicity one in any principal module; that is, we have $\mu_0(\bA^r)=1$ for all $r \in \bN$.
\end{theorem}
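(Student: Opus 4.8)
The plan is to turn the statement into an elementary assertion about coinvariants and then verify it by an explicit computation, inducting on $r$.

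\emph{Reduction.} Since $\mu_0$ is additive on short exact sequences (Proposition~\ref{prop:multprop}) and the augmentation $\epsilon\colon\bA^r\to\bB^0$ is surjective, $\mu_0(\bA^r)=1+\mu_0(K)$ where $K=\ker(\epsilon)$, so it is enough to show $\mu_0(K)=0$. Because $\bB^0$ is simple, $\mu_0(K)>0$ would force $\bB^0$ to be a quotient of some submodule $G\subseteq K$, i.e.\ $\Hom_{\cI}(G,\bB^0)\ne 0$; but $\Hom_{\cI}(G,\bB^0)=(G/\fa_1 G)^{\ast}$, where $\fa_1$ is the augmentation ideal of $\bk[\cI]$ (generated by the $\alpha_i-1$). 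Thus $\mu_0(K)=0$ is equivalent to $G=\fa_1 G$ for every submodule $G\subseteq K$. Since $\fa_1$ is two-sided (Proposition~\ref{prop:ideal}), the subspace $\fa_1 y=\{\eta y:\eta\in\fa_1\}$ coincides with $\fa_1\langle y\rangle$ and is itself an $\cI$-submodule; generating $G$ by its elements then reduces the whole theorem to showing:
\[
(\star)\qquad y\in\bA^r,\ \epsilon(y)=0\ \Longrightarrow\ y\in\fa_1 y.
\]

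\emph{The case $r=1$.} Write $y=\sum_k c_k e_k$ with $\sum_k c_k=0$, and let $s$ be least with $c_s\ne 0$. A one-line check gives $(\alpha_{s+1}-\alpha_s)y=c_s(e_s-e_{s+1})$, and $\alpha_{s+1}-\alpha_s\in\fa_1$, so $e_s-e_{s+1}\in\fa_1 y$. As $\fa_1 y$ is a submodule it contains $\langle e_s-e_{s+1}\rangle$, which is precisely the submodule of $\bA^1$ spanned by $\{e_k-e_{k+1}:k\ge s\}$; since $y$ lies in this submodule, $(\star)$ follows.

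\emph{General $r$, and the obstacle.} For $r>1$ one proves $(\star)$ by induction, peeling off the last coordinate via the isomorphism $\bA^r\cong\ul{\bA}^{r-1}\odot\bA^1$ (Proposition~\ref{prop:stdcat}), so that the inductive hypothesis on $\bA^{r-1}$ becomes available. Given $y$ with $\epsilon(y)=0$, one singles out the lexicographically least monomial $e_{\underline m}$ in its support and applies a suitable operator $\alpha_{j+1}-\alpha_j\in\fa_1$ designed to move only that corner; this places in $\fa_1 y$ an element simpler than $y$ from which --- again using that $\fa_1 y$ is a submodule --- one can reconstruct $y$. The substantive difficulty, and the main obstacle of the proof, is that $\alpha_{j+1}-\alpha_j$ does not act on a single basis vector but simultaneously on every monomial of $y$ sharing the relevant coordinate, so the corner-reduction step requires genuine combinatorial bookkeeping rather than the clean one-line manipulation available when $r=1$. (It would be tempting to bypass this by a formal argument --- e.g.\ decomposing $\ker(\pi)\cong\ul{\bA}^{r-1}\odot\ker(\epsilon\colon\bA^1\to\bB^0)$ for the map $\pi\colon\bA^r\to\bA^{r-1}$ forgetting the last coordinate, and then running along the degree filtration of $\ul{\bA}^{r-1}$ --- but $\mu_0$ is not continuous along infinite decreasing filtrations, so some honest input specific to $\bA^r$ is unavoidable.)
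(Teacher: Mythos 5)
Your reduction to $(\star)$ is exactly the paper's anti-triviality criterion (Lemma~\ref{lem:antitriv}): an element $y$ generates no trivial subquotient iff $y\in\fa_1 y$, equivalently $ay=0$ for some $a$ with $\epsilon(a)\ne 0$. Your $r=1$ computation is also correct, though the paper's Lemma~\ref{lem:multone-2} achieves it a different way (degree-decreasing operator $c+\sigma-\tau$ rather than identifying the submodule generated by $e_s-e_{s+1}$); both work.

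The genuine gap is the case $r>1$, which you acknowledge you have not carried out, and --- more importantly --- you dismiss the very route that closes it. You worry that the decomposition $\ker(\pi)\cong\ul{\bA}^{r-1}\odot K$ with $K=\ker(\epsilon\colon\bA^1\to\bB^0)$ leads only to an infinite decreasing filtration where $\mu_0$ misbehaves. But the anti-triviality reformulation you already introduced is precisely what makes this a \emph{finite} argument: a single element $x\in\ul{\bA}^{r-1}\odot K$ is a finite sum $\sum_i y_i\odot z_i$ with $y_i$ homogeneous of degree $i$, hence supported in degrees $i\le n$ for some $n$. Given $b$ with $\epsilon(b)\ne 0$ killing the degree-$n$ component $z_n$ (this $b$ exists since $\mu_0(K)=0$, and a single $b$ can be taken for all degree-$n$ terms by passing to $K^m$), the shifted element $a=f_n(b)$ with $f_n\colon\alpha_j\mapsto\alpha_{j+n}$ still has $\epsilon(a)\ne 0$ and satisfies $ax=\sum_{i<n}y_i\odot f_{n-i}(b)z_i$, strictly decreasing the top degree. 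Iterating finitely many times annihilates $x$; no passage along an infinite filtration occurs. This is the content of the paper's Lemma~\ref{lem:mu0cat}, and with it the induction $\mu_0(\bA^r)=\mu_0(\bA^{r-1})$ is immediate. So your skeleton is right but the key lemma bridging $r=1$ to general $r$ is missing, and your stated reason for thinking no such formal bridge exists is the precise place where the argument would have succeeded.
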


\begin{remark}
The augmentation map $\epsilon \colon \bA^r \to \bA^0=\bk$ shows that $\mu_0(\bA^r) \ge 1$. The content of the theorem is that $\ker(\epsilon)$ has no trivial subquotient.
\end{remark}

The rest of \S \ref{ss:multone} is devoted to the proof. We begin by giving a criterion for vanishing of $\mu_0$. Let $\epsilon \colon \bk[\cI] \to \bk$ be the augmentation map. We say that an element $x$ of an $\cI$-module is {\bf anti-trivial} if there exists $a \in \bk[\cI]$ with $\epsilon(a) \ne 0$ such that $ax=0$.

\begin{lemma} \label{lem:antitriv}
Let $M$ be an $\cI$-module. Then $\mu_0(M)=0$ if and only if every element of $M$ is anti-trivial.
\end{lemma}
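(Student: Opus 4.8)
The plan is to reduce everything to the observation that the augmentation ideal $\ker(\epsilon \colon \bk[\cI] \to \bk)$ coincides with the two-sided ideal $\fa_1$. This is immediate: by definition (\S\ref{ss:inv}) $\fa_1$ is generated by the elements $\sigma - 1$ for $\sigma \in \cI_{\ge 1} = \cI$, and these span $\ker\epsilon$ over $\bk$. With this in hand I would first rephrase anti-triviality: an element $x$ of an $\cI$-module is anti-trivial if and only if $x \in \fa_1 x$. Indeed, if $ax = 0$ with $\epsilon(a) = c \ne 0$, then $b := 1 - c^{-1}a$ lies in $\fa_1$ and satisfies $bx = x$; conversely, if $bx = x$ with $b \in \fa_1$ then $a := 1 - b$ has $\epsilon(a) = 1$ and $ax = 0$.

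For the implication ``$\mu_0(M) = 0 \implies$ every element of $M$ is anti-trivial'', I would fix $x \in M$ and pass to the cyclic submodule $N = \bk[\cI]x$. Since $\fa_1$ is a two-sided ideal, $\fa_1 N = \fa_1 \bk[\cI] x = \fa_1 x$, so by the reformulation above $x$ is anti-trivial precisely when $N/\fa_1 N = 0$. Now $\cI$ acts trivially on the coinvariant module $N/\fa_1 N$, so if it were nonzero then any one-dimensional subspace would be a submodule isomorphic to $\bB^0$; this would exhibit $\bB^0$ as a subquotient of $M$, forcing $\mu_0(M) \ge 1$ (by Proposition~\ref{prop:multprop}(b), or directly from the description of $\mu_0$ in \S\ref{ss:catmult}), a contradiction. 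Hence $N/\fa_1 N = 0$ and $x$ is anti-trivial.

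For the converse, I would assume every element of $M$ is anti-trivial and suppose, for contradiction, that $\mu_0(M) \ge 1$, so that there are submodules $F \subseteq G \subseteq M$ with $G/F \cong \bB^0$. Pick $x \in G \setminus F$; anti-triviality supplies $b \in \fa_1$ with $bx = x$. Writing $b = \sum_\sigma c_\sigma \sigma$ and using that $\cI$ acts trivially on $G/F$, the element $b$ acts on $G/F$ by the scalar $\sum_\sigma c_\sigma = \epsilon(b) = 0$; reducing $x = bx$ modulo $F$ then gives $\bar x = 0$ in $G/F$, contradicting $x \notin F$. Therefore $\mu_0(M) = 0$.

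I do not anticipate a genuine obstacle here: once the identification $\ker\epsilon = \fa_1$ is in place, the statement is essentially formal. The only points needing a little care are the bookkeeping $\fa_1 N = \fa_1 x$ for a cyclic module $N$, the (immediate) fact that a nonzero module with trivial $\cI$-action contains a copy of $\bB^0$, and invoking the standard characterization of $\mu_0(M) \ge 1$ in terms of the existence of a $\bB^0$-subquotient.
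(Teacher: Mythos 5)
Your proof is correct and takes essentially the same approach as the paper's: both hinge on the identification $\ker\epsilon = \fa_1$, pass to the cyclic submodule generated by $x$, and observe that $N/\fa_1 N$ is a trivial module that must vanish. The only cosmetic difference is in the forward direction, where the paper phrases the contradiction via a surjection $\lambda\colon N\to\bA^0$ and the computation $0=\lambda(ax)=\epsilon(a)\lambda(x)$, while you phrase it via the chain $F\subset G$ and the observation that $b\in\fa_1$ acts by zero on $G/F$; these are the same argument.
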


\begin{proof}
Suppose every element of $M$ is anti-trivial. Also suppose, by way of contradiction, that $M$ has a trivial subquotient, that is, there is an $\cI$-submodule $N \subset M$ and a surjection $\lambda \colon N \to \bA^0$ of $\cI$-modules. Let $x \in N$ be such that $\lambda(x) \ne 0$. Since $x$ is anti-trivial, there exists $a \in \bk[\cI]$ with $\epsilon(a) \ne 0$ such that $ax=0$. But then $0=\lambda(ax)=\epsilon(a) \lambda(x) \ne 0$, a contradiction. Thus $M$ has no trivial subquotient, and so $\mu_0(M)=0$.

Conversely, suppose that $\mu_0(M)=0$ and let $x \in M$. Let $I \subset \bk[\cI]$ be the kernel of the augmentation map $\epsilon$ and let $N$ be the submodule of $M$ generated by $x$. Then $\cI$ acts trivially on $N/I N$, and so $N/IN=0$ since $M$ has no trivial subquotient. We thus find $N \subset IN$, and so $x \in Ix$. Writing $x=bx$ for some $b \in I$, we have $ax=0$ with $a=1-b$. As $\epsilon(a)=1$, this shows that $x$ is anti-trivial.
\end{proof}

\begin{lemma} \label{lem:mu0cat}
Let $K$ be a smooth $\cI$-module with $\mu_0(K)=0$, let $N$ be a pure graded $\cI$-module, and let $M=N \odot K$. Then $\mu_0(M)=0$.
\end{lemma}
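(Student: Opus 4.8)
The plan is to invoke the criterion of Lemma~\ref{lem:antitriv}: it suffices to prove that every element $\xi$ of $M = N \odot K$ is anti-trivial, i.e., that $a\xi = 0$ for some $a \in \bk[\cI]$ with $\epsilon(a) \ne 0$. Since $N$ is pure, we have $M = \bigoplus_{d \ge 1}(N_d \otimes K)$ as a vector space, so each $\xi$ decomposes as $\xi = \sum_{d \ge 1}\eta_d$ with $\eta_d \in N_d \otimes K$ and only finitely many $\eta_d$ nonzero; let $D$ be the largest $d$ with $\eta_d \ne 0$ (with $D = -\infty$, i.e.\ $\xi = 0$, as the base case). I will induct on $D$.

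The main tool is the monoid endomorphism $s_D \colon \cI \to \cI$, $\alpha_i \mapsto \alpha_{i+D}$ (the $D$-fold iterate of the homomorphism used to define $\Sigma$ in \S\ref{ss:shift}), extended $\bk$-linearly to $\bk[\cI]$. Its image lies in $\bk[\cI_{>D}]$ and $\epsilon \circ s_D = \epsilon$. The key computation, proved by induction on word length from the definition of $\odot$, is that for $u \in N_d$ with $d \le D$, any $v \in K$, and any $b \in \bk[\cI]$,
\begin{displaymath}
s_D(b)\cdot(u \odot v) = u \odot \bigl(s_{D-d}(b)\cdot v\bigr);
\end{displaymath}
indeed every generator $\alpha_k$ occurring in $s_D(b)$ has $k > D \ge d$, so it acts only on the $K$-factor (leaving $u$, and hence the ``degree'' $d$, unchanged), with its index shifted down by $d$. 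In particular, when $d = D$ the right-hand side is $u \odot (bv)$.

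To run the induction, write $\eta_D = \sum_{l=1}^p u_l \odot v_l$ with $u_l \in N_D$ (finitely many terms). Since $\mu_0$ is additive on finite direct sums (Proposition~\ref{prop:multprop}(a)), $\mu_0(K^{\oplus p}) = p\,\mu_0(K) = 0$, so by Lemma~\ref{lem:antitriv} the tuple $(v_1,\ldots,v_p)$ is anti-trivial: there is $b \in \bk[\cI]$ with $\epsilon(b) \ne 0$ and $b v_l = 0$ for all $l$. By the displayed identity, $s_D(b)\eta_D = 0$, while $s_D(b)\eta_d \in N_d \otimes K$ for each $d < D$; hence $\xi' := s_D(b)\cdot\xi$ has top degree $< D$. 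By the inductive hypothesis $\xi'$ is anti-trivial, say $a'\xi' = 0$ with $\epsilon(a') \ne 0$, and then $a := a' s_D(b)$ satisfies $a\xi = 0$ and $\epsilon(a) = \epsilon(a')\epsilon(b) \ne 0$. This completes the induction, and with it the proof via Lemma~\ref{lem:antitriv}.

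I do not anticipate a serious obstacle; the only delicate point is the bookkeeping behind the displayed identity for $s_D(b)\cdot(u\odot v)$ --- one must keep track of the fact that, since $s_D(b)$ uses only generators of index exceeding the degree of the left tensor factor, the action passes entirely to $K$ with a uniform downward index shift, and in particular raises no degrees. Purity of $N$ is used only to give the clean base case $\xi = 0$; a routine variant (replacing the base case by $\xi \in N_0 \otimes K \cong K^{(\dim N_0)}$) would handle arbitrary graded $N$.
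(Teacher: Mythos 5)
Your proof is correct and is essentially the paper's argument: both induct on the top degree of an element of $N \odot K$, and both use the index-shifting ring map ($f_D$ in the paper, your $s_D$) to transport an anti-trivializer of the top-degree $K$-components into an element of $\bk[\cI]$ that kills the top slice without disturbing the lower degrees, citing Lemma~\ref{lem:antitriv} at the start and end. The only difference is cosmetic: you handle a top-degree piece $\eta_D = \sum_l u_l \odot v_l$ with several terms by passing to $K^{\oplus p}$ and using additivity of $\mu_0$, which is a slightly more careful version of the paper's implicit one-term-per-degree bookkeeping.
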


\begin{proof}
Let $x$ be an element of $M$. We can write $x=\sum_{i=1}^n y_i \odot z_i$ where $y_i \in N$ is homogeneous of degree $i$ and $z_i \in K$. We define the {\bf degree} of $x$ to be the maximal $i$ with $z_i \ne 0$, or $-\infty$ if $x=0$. We claim that, if $x \ne 0$, there exists $a \in \bk[\cI]$ with $\epsilon(a) \ne 0$ and $\deg(ax)<\deg(x)$. Granted this, it follows that every element of $M$ is anti-trivial (repeatedly apply the claim), and so $\mu_0(M)=0$.

Let $x \ne 0$ be given of degree $n$, expressed as above. Since $\mu_0(K)=0$, every element of $K$ is anti-trivial, and so there exists $b \in \bk[\cI]$ with $\epsilon(b) \ne 0$ such that $bz_n=0$. Let $f_r \colon \bk[\cI] \to \bk[\cI_{>r}]$ be the ring isomorphism mapping $\alpha_i$ to $\alpha_{i+r}$, and put $a=f_n(b)$. Note that $\epsilon(a) \ne 0$. By definition of the $\cI$ action on the concatenation product, we have
\begin{displaymath}
ax = \sum_{i=1}^n y_i \odot f_{n-i}(b) z_i
\end{displaymath}
The $i=n$ term vanishses, as $f_0(b)=b$, and so $\deg(ax)<\deg(x)$. The claim is verified.
\end{proof}

\begin{lemma} \label{lem:multone-2}
Let $K$ be the kernel of the augmentation map $\epsilon \colon \bA^1 \to \bk$. Then $\mu_0(K)=0$.
\end{lemma}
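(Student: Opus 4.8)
The plan is to verify the criterion of Lemma~\ref{lem:antitriv}: since $\mu_0(K)=0$ is equivalent to every element of $K$ being anti-trivial, it suffices to produce, for each $x \in K$, an element $a \in \bk[\cI]$ with $\epsilon(a)\neq 0$ and $ax=0$.

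First I would fix coordinates. The augmentation $\epsilon \colon \bA^1 \to \bk$ sends every basis vector $e_i$ to $1$, so $K$ has $\bk$-basis $f_i := e_i - e_{i+1}$ for $i\geq 1$. Using $\alpha_k e_i = e_i$ for $i<k$ and $\alpha_k e_i = e_{i+1}$ for $i\geq k$, one computes directly that
\[
\alpha_k f_i = \begin{cases}
f_i & \text{if } i\leq k-2,\\
f_{k-1}+f_k & \text{if } i=k-1,\\
f_{i+1} & \text{if } i\geq k.
\end{cases}
\]
The key observation is that the element $b_j := \alpha_{j+1}-\alpha_j-1 \in \bk[\cI]$ has augmentation $\epsilon(b_j) = 1-1-1 = -1$, and that its action on the basis vectors $f_i$ with $i\geq j$ is especially simple: the display above gives $b_j f_j = (f_j+f_{j+1})-f_{j+1}-f_j = 0$ and $b_j f_i = f_{i+1}-f_{i+1}-f_i = -f_i$ for $i>j$. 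Thus $b_j$ annihilates $f_j$ and negates each $f_i$ with $i>j$.

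Now let $x=\sum_{i=1}^{N} c_i f_i$ be an arbitrary element of $K$. I would prove by induction on $j$ (for $0\leq j\leq N$) that $b_j b_{j-1}\cdots b_1\cdot x = (-1)^j \sum_{i=j+1}^{N} c_i f_i$; the inductive step is immediate from the two identities above, applied to the vector $(-1)^{j-1}\sum_{i=j}^{N} c_i f_i$, whose support lies in $\{f_j,f_{j+1},\ldots\}$. Taking $j=N$ yields $b_N b_{N-1}\cdots b_1\cdot x = 0$, while $\epsilon(b_N\cdots b_1) = (-1)^N \neq 0$ since $\epsilon$ is an algebra homomorphism. Hence every element of $K$ is anti-trivial, and Lemma~\ref{lem:antitriv} gives $\mu_0(K)=0$.

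There is no serious obstacle here once the operator $b_j$ is identified: everything else is bookkeeping. The only point requiring insight is noticing that $\alpha_{j+1}-\alpha_j$ acts as the projection onto $\bk f_j$ along $\mathrm{span}\{f_i : i>j\}$ when restricted to $\mathrm{span}\{f_i : i\geq j\}$, so that $\alpha_{j+1}-\alpha_j-1$ kills the ``newest'' basis vector while carrying a nonzero augmentation; iterating then peels off the basis vectors occurring in $x$ one at a time.
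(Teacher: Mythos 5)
Your proof is correct. You and the paper both reduce to Lemma~\ref{lem:antitriv} and then exhibit, for each $x \in K$, an explicit element of $\bk[\cI]$ with nonzero augmentation that annihilates $x$. Where you differ is in the explicit construction: the paper works in the monomial basis $\{e_i\}$ of $\bA^1$, normalizes $x$ to have leading coefficient $1$, and uses a single operator $c + \sigma - \tau$ (with $\sigma,\tau$ products of several $\alpha_k$'s) to strictly decrease the top degree, then iterates. You instead pass to the natural basis $f_i = e_i - e_{i+1}$ of $K$ itself, observe that the degree-one element $b_j = \alpha_{j+1}-\alpha_j-1$ kills $f_j$ and scales the higher $f_i$'s, and then apply the fixed product $b_N\cdots b_1$ to annihilate $x$ in one pass. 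Your version buys a cleaner, non-adaptive annihilator built from elements of uniformly bounded length in the $\alpha$'s, and avoids the normalization and case-tracking of the paper's descent; the paper's version is perhaps more directly suggestive of the Gr\"obner-type ``kill the leading term'' heuristic used elsewhere in the paper. Both are sound; the computations in your argument (the formula for $\alpha_k f_i$, the action of $b_j$ on $f_i$ for $i \geq j$, the induction, and $\epsilon(b_N\cdots b_1)=(-1)^N$) all check out.
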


\begin{proof}
For $x=\sum_{n \ge 1} c_n e_n$ in $\bA^1$, define the {\bf degree} of $x$ to be the maximal $n$ such that $c_n \ne 0$, or $-\infty$ if $x=0$. We claim that if $x \in K$ is non-zero then there exists $a \in \bk[\cI]$ with $\epsilon(a) \ne 0$ such that $\deg(ax)<\deg(x)$. Applying the claim repeatedly, we see that any element of $K$ is anti-trivial, and so $\mu_0(K)=0$ by Lemma~\ref{lem:antitriv}.

Let $x \in K$ non-zero be given of degree $n$. We may as well assume the coefficient of $e_n$ in $x$ is~1. Since $\epsilon(x-e_n)=-1$, it follows that $x-e_n$ is non-zero; let its degree be $n-k$, with $k>0$. Thus $x=e_n+ce_{n-k}+y$ for some non-zero scalar $c$ and some element $y \in \bA^1$ of degree $<n-k$. Let $\sigma=\alpha_{n+k-1} \cdots \alpha_n$ and $\tau=\alpha_{n-1} \cdots \alpha_{n-k}$. Then
\begin{displaymath}
\sigma x = e_{n+k}+ce_{n-k}+y, \qquad
\tau x = e_{n+k}+ce_n+y
\end{displaymath}
and so
\begin{displaymath}
(c+\sigma-\tau)x = c(c+1)e_{n-k}+cy.
\end{displaymath}
Thus, taking $a=c+\sigma-\tau$, we have $\epsilon(a)=c \ne 0$ and $\deg(ax) \le n-k < \deg(x)$. This verifies the claim, and completes the proof.
\end{proof}

\begin{proof}[Proof of Theorem~\ref{thm:multone}]
Taking the exact sequence
\begin{displaymath}
0 \to K \to \bA^1 \stackrel{\epsilon}{\to} \bA^0 \to 0
\end{displaymath}
and applying $\ul{\bA}^{r-1} \odot -$ yields the exact sequence
\begin{displaymath}
0 \to \ul{\bA}^{r-1} \odot K \to \bA^r \to \bA^{r-1} \to 0.
\end{displaymath}
By Lemmas~\ref{lem:mu0cat} and~\ref{lem:multone-2}, we have $\mu_0(\ul{\bA}^{r-1} \odot K)=0$. Thus $\mu_0(\bA^r)=\mu_0(\bA^{r-1})$. Since $\mu_0(\bA^0)=1$ (obvious), we find $\mu_0(\bA^r)=1$ for all $r$.
\end{proof}

\subsection{Truncations} \label{ss:trunc}

Let $M$ be a smooth $\cI$-module. For $r \in \bZ$, we define $\tau_{\ge r}(M)$ to be $\fa_r M$ if $r \ge 1$ and $M$ if $r \le 0$. Here $\fa_r$ is the ideal of $\bk[\cI]$ introduced in \S \ref{ss:inv}. By Proposition~\ref{prop:ideal}, we see that $\tau_{\ge r}(M)$ is an $\cI$-submodule of $M$. We also define $\tau^{<r}(M)$ to be $M/\tau_{\ge r}(M)$. Thus we have a short exact sequence
\begin{displaymath}
0 \to \tau_{\ge r}(M) \to M \to \tau^{<r}(M) \to 0
\end{displaymath}
of smooth $\cI$-modules. For $r \ge 1$, we have $\tau^{<r}(M)=M_{\cI_{\ge r}}$, essentially by definition of $\fa_r$, while for $r \le 0$ we have $\tau^{<r}(M)=0$. We call $\tau_{\ge r}$ and $\tau^{<r}$ the {\bf truncation functors}. The following proposition explains the name and notation of the truncation functors:

\begin{proposition} \label{prop:mutau}
Let $M$ be a smooth $\cI$-module. Then
\begin{displaymath}
\mu_s(\tau_{\ge r}(M)) = \begin{cases}
\mu_s(M) & \text{if $s \ge r$} \\
0 & \text{if $s<r$} \end{cases}
\qquad
\mu_s(\tau^{<r}(M)) = \begin{cases}
0 & \text{if $s \ge r$} \\
\mu_s(M) & \text{if $s<r$} \end{cases}
\end{displaymath}
\end{proposition}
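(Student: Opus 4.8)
The plan is to deduce the whole statement from the multiplicity one theorem (Theorem~\ref{thm:multone}). The defining exact sequence $0 \to \tau_{\ge r}(M) \to M \to \tau^{<r}(M) \to 0$ together with additivity of multiplicities (Proposition~\ref{prop:multprop}(a)) gives $\mu_s(M) = \mu_s(\tau_{\ge r}(M)) + \mu_s(\tau^{<r}(M))$ for every $s$, so the four-case assertion is equivalent to the two vanishing claims: (A) $\mu_s(\tau^{<r}(M)) = 0$ for $s \ge r$, and (B) $\mu_s(\tau_{\ge r}(M)) = 0$ for $s < r$. For $r \le 0$ both are trivial ($\tau^{<r}(M) = 0$, and no $s \in \bN$ satisfies $s < r$), so assume $r \ge 1$. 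Claim (A) is immediate: $\tau^{<r}(M) = M/\fa_r M = M_{\cI_{\ge r}}$, so every $\alpha_i$ with $i \ge r$ acts as the identity on it, hence on every subquotient of it; but $\alpha_s$ acts by $0$ on $\bB^s$, so for $s \ge r$ a subquotient isomorphic to the nonzero module $\bB^s$ would force $\mathrm{id} = 0$ on a one-dimensional space, which is absurd.

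The substance is claim (B), and the strategy there is to use the shift functor to trade $\tau_{\ge r}$ for $\tau_{\ge 1} = \fa_1$ and then invoke the multiplicity one theorem. First I would record the identity $\Sigma^s(\fa_t N) = \fa_{t-s}(\Sigma^s N)$ for any $\cI$-module $N$ and any $t > s \ge 0$: by the description of $\fa_\bullet$ in Proposition~\ref{prop:ideal}, both sides are the same subspace of $N$ (namely the span of the $(\alpha_j - 1)x$ with $j \ge t$), and the two $\cI$-actions on it — the one induced from $\Sigma^s$ of $N$ and the submodule action — agree. Applying this with $t = r$ (so $r - s \ge 1$, whence $\fa_{r-s}(\Sigma^s M) \subseteq \fa_1(\Sigma^s M)$) gives $\Sigma^s(\tau_{\ge r}(M)) = \fa_{r-s}(\Sigma^s M) \subseteq \fa_1(\Sigma^s M)$, so $\mu_0\bigl(\Sigma^s(\tau_{\ge r}(M))\bigr) \le \mu_0\bigl(\fa_1(\Sigma^s M)\bigr)$ by Proposition~\ref{prop:multprop}(b). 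Combining this with Proposition~\ref{prop:mu-ineq} (specialized to $\mu_s(X) \le \mu_0(\Sigma^s X)$) reduces (B) to the key lemma: $\mu_0(\fa_1 N) = 0$ for every smooth $\cI$-module $N$.

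This lemma is the only real obstacle, and it is where Theorem~\ref{thm:multone} is used. To prove it, pick a surjection $P \to N$ with $P = \bigoplus_i \bA^{d_i}$ a direct sum of principal modules (Corollary~\ref{cor:pringen}); then $\fa_1 N$ is a quotient of $\fa_1 P = \bigoplus_i \fa_1 \bA^{d_i}$, so by Proposition~\ref{prop:multprop}(b),(c) it suffices to show $\mu_0(\fa_1 \bA^d) = 0$ for each $d$. But $(\bA^d)_{\cI}$ is one-dimensional — every $e_{i_1,\ldots,i_d}$ is an $\cI$-translate of $e_{1,\ldots,d}$ — so $\fa_1 \bA^d$ is precisely the kernel of the augmentation $\bA^d \to \bA^0$, whose $\mu_0$ vanishes by Theorem~\ref{thm:multone}. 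I should note there is no circularity here: the argument uses only Theorem~\ref{thm:multone} and the formal properties of $\mu$, not the later exactness result for truncation functors (Proposition~\ref{prop:tau-exact}). The only delicate points are the index bookkeeping in the shift identity and remembering to apply the reduction to principals to $\Sigma^s M$ rather than to $M$.
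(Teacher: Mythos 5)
Your proof is correct and follows essentially the same route as the paper's: the paper's Step 1 is exactly your key lemma $\mu_0(\fa_1 N)=0$ (proved by reducing to principal modules and invoking Theorem~\ref{thm:multone}), Step 2 is your shift reduction (the paper applies $\Sigma^{r-1}$ where you apply $\Sigma^s$ and then use $\fa_{r-s}\subseteq\fa_1$, but the content is the same), Step 3 is your claim (A), and Step 4 is your additivity observation. The only difference is cosmetic bookkeeping in the choice of shift exponent.
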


\begin{proof}
We proceed in four steps.

\textit{\textbf{Step 1:} $\mu_0(\tau_{\ge 1}(M))=0$.}
We have an exact sequence
\begin{displaymath}
0 \to \tau_{\ge 1}(\bA^r) \to \bA^r \to \tau^{<1}(\bA^r) \to 0,
\end{displaymath}
and $\tau^{<1}(\bA^r) \cong \bB^0$ via the augmentation map. By Theorem~\ref{thm:multone}, we have $\mu_0(\bA^r)=1$. Since $\mu_0(\bB^0)=1$ as well, it follows from the additivity of $\mu_0$ that $\mu_0(\tau_{\ge 1}(\bA^r))=0$. Now, let $F \to M$ be a surjection with $F$ a sum of principal modules. It follows directly from the definition of $\tau_{\ge 1}$ that the map $\tau_{\ge 1}(F) \to \tau_{\ge 1}(M)$ is surjective. Since $\mu_0(\tau_{\ge 1}(F))=0$, it follows that $\mu_0(\tau_{\ge 1}(M))=0$ as well.

\textit{\textbf{Step 2:} $\mu_s(\tau_{\ge r}(M))=0$ for $s<r$.}
It follows directly from the definitions that $\tau_{\ge 1}(\Sigma^{r-1}(M))=\Sigma^{r-1}(\tau_{\ge r}(M))$. Thus
\begin{displaymath}
\mu_s(\tau_{\ge r}(M)) \le \mu_0(\Sigma^{r-1}(\tau_{\ge r}(M)) = \mu_0(\tau_{\ge 1}(\Sigma^{r-1}(M)) = 0,
\end{displaymath}
where the inequality comes from Proposition~\ref{prop:mu-ineq}, and the vanishing comes from Step~1.

\textit{\textbf{Step 3:} $\mu_s(\tau^{<r}(M))=0$ for $s \ge r$.}
This is clear: $\tau^{<r}(M)=M_{\cI_{\ge r-1}}$, and so each $\alpha_i$ with $i \ge r-1$ acts trivially on $\tau^{<r}(M)$. Thus $\tau^{<r}(M)$ cannot contain and $\bB^s$ with $s \ge r$ as a subquotient, since $\alpha_{r-1}$ acts by zero on $\bB^s$.

\textit{\textbf{Step 4:} the remaining identities.}
Since $\mu_s(M)=\mu_s(\tau_{\le r}(M))+\mu_s(\tau^{>r}(M))$, the remaining two identities follow from the two already established.
\end{proof}

\begin{proposition} \label{prop:taufin}
Let $M$ be a smooth $\cI$-module. If $M$ is finitely generated, then $\tau^{<r}(M)$ has finite length. In general, $\tau^{<r}(M)$ is locally of finite length.
\end{proposition}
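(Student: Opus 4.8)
The plan is to prove the slightly stronger assertion that $\tau^{<r}(M)$ is \emph{finite-dimensional} over $\bk$ when $M$ is finitely generated; the two statements of the proposition then follow immediately. We may assume $r\ge 1$, since $\tau^{<r}(M)=0$ for $r\le 0$. The starting observation is that, by definition, $\tau^{<r}(M)=M/\fa_r M$, and $\fa_r$ acts as zero on this quotient: $\fa_r M$ is a $\bk[\cI]$-submodule of $M$ (Proposition~\ref{prop:ideal} and the corollary following it), so $\fa_r(\fa_r M)\subseteq \fa_r M$. Hence $\tau^{<r}(M)$ is naturally a module over the algebra $R_r:=\bk[\cI]/\fa_r$.

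The crux is to show that $R_r$ is finite-dimensional over $\bk$; in fact $\dim_\bk R_r\le 2^{r-1}$. Since $R_r$ is spanned over $\bk$ by the images of the elements $\sigma\in\cI$, it suffices to bound the number of distinct such images. By Proposition~\ref{prop:alphagen}(a), every $\sigma\in\cI$ can be written as $\sigma=\alpha_{n_k}\cdots\alpha_{n_1}$ with $1\le n_1<\cdots<n_k$. Because $\alpha_i\equiv 1\pmod{\fa_r}$ for all $i\ge r$, the image of $\sigma$ in $R_r$ equals the product, taken in decreasing order of index, of those $\alpha_{n_j}$ with $n_j<r$; this product depends only on the subset $\{n_j : n_j<r\}$ of $\{1,\dots,r-1\}$. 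There are $2^{r-1}$ such subsets, so $R_r$ has dimension at most $2^{r-1}$.

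Granting this, the proposition is immediate. If $M$ is finitely generated over $\bk[\cI]$, then $\tau^{<r}(M)$ is a finitely generated $\bk[\cI]$-module (being a quotient of $M$), hence a finitely generated $R_r$-module; as $R_r$ is finite-dimensional, $\tau^{<r}(M)$ is finite-dimensional over $\bk$, and in particular of finite length. For arbitrary smooth $M$, any finitely generated $\cI$-submodule $N$ of $\tau^{<r}(M)$ is an $R_r$-module generated by finitely many elements, so $\dim_\bk N\le (\dim_\bk R_r)\cdot(\text{number of generators})<\infty$; thus $N$ has finite length, and $\tau^{<r}(M)$ is locally of finite length.

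The only point that might look delicate is the reduction of an arbitrary word in the $\alpha$'s to a ``small'' one modulo $\fa_r$, but this is handed to us for free by the normal form of Proposition~\ref{prop:alphagen}(a), so I do not anticipate a genuine obstacle. (Alternatively one could avoid $R_r$ entirely: $\tau^{<r}$ is right exact and commutes with arbitrary direct sums, being a left adjoint, so by Corollary~\ref{cor:pringen} it is enough to check that each $\tau^{<r}(\bA^s)=(\bA^s)_{\cI_{\ge r}}$ is finite-dimensional, which can be verified directly on the monomial basis; but the argument via $R_r$ is cleaner and more uniform.)
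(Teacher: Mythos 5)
Your proof is correct, and it takes a genuinely different route from the paper's. You prove that the quotient algebra $R_r=\bk[\cI]/\fa_r$ is finite-dimensional (indeed $\dim_\bk R_r\le 2^{r-1}$, using the normal form of Proposition~\ref{prop:alphagen}(a) together with the two-sidedness of $\fa_r$ from Proposition~\ref{prop:ideal}), and then observe that $\tau^{<r}(M)=M/\fa_r M$ is a finitely generated $R_r$-module, hence finite-dimensional. The paper instead argues by d\'evissage: it reduces to the case of principal modules via right-exactness of $\tau^{<r}$ and Corollary~\ref{cor:pringen}, computes $\tau^{<1}(\bA^s)=(\bA^s)_{\cI}=\bB^0$ directly, and then handles general $r$ by the identity $\Sigma^{r-1}(\tau^{<r}(M))=\tau^{<1}(\Sigma^{r-1}(M))$, invoking the shift functor from \S\ref{ss:shift}; the passage to arbitrary smooth $M$ then goes via cocontinuity of $\tau^{<r}$. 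Your argument is more elementary and self-contained — it sidesteps the shift functor entirely, treats all $r$ uniformly, and produces an explicit numerical bound — while the paper's proof slots into the surrounding machinery (principal modules, shift) that is used repeatedly elsewhere. The alternative you sketch at the end is essentially the paper's reduction, minus the shift trick, so you have also identified the paper's strategy. Both arguments are sound.
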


\begin{proof}
Suppose $M$ is finitely generated. Write $M$ as a quotient of a finite sum $F$ of principal modules. We have $\tau^{<1}(\bA^r)=(\bA^r)_{\cI}=\bB^0$ for all $r$, which has finite length. Thus $\tau^{<1}(F)$ has finite length. Since $\tau^{<1}$ is right-exact, we see that $\tau^{<1}(M)$ is a quotient of $\tau^{<1}(F)$, and thus of finite length. Thus the result holds for $r=1$. In general, note that $\Sigma^{r-1}(\tau^{<r}(M))=\tau^{<1}(\Sigma^{r-1}(M))$, and so $\Sigma^{r-1}(\tau^{<r}(M))$ has finite length. This implies that $\tau^{<r}(M)$ has finite length, since finite length is equivalent to the underlying vector space being finite dimensional, and $\Sigma$ does not change the underlying vector space.

Now suppose $M$ is a general smooth $\cI$-module. Then $M$ is the filtered colimit of its finitely generated submodules $\{M_i\}$. Since $\tau^{<r}$ is cocontinuous, we see that $\tau^{<r}(M)$ is the colimit of the $\tau^{<r}(M_i)$, each of which is finite length by the previous paragraph. Thus $\tau^{<r}(M)$ is locally of finite length.
\end{proof}

\begin{proposition} \label{prop:mufin}
Let $M$ be a finitely generated smooth $\cI$-module. Then $\mu_r(M)$ is finite for all $r \in \bN$.
\end{proposition}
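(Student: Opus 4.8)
The plan is to deduce this immediately from the two preceding propositions on truncation functors. The key observation is that $\mu_r$ is "detected" by the truncation $\tau^{<r+1}$. First I would invoke Proposition~\ref{prop:mutau} with truncation index $r+1$: since $r < r+1$, the formula for $\mu_s(\tau^{<r+1}(M))$ with $s = r$ gives $\mu_r(\tau^{<r+1}(M)) = \mu_r(M)$. So it suffices to bound $\mu_r(\tau^{<r+1}(M))$.

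Next I would invoke Proposition~\ref{prop:taufin}: since $M$ is finitely generated, $\tau^{<r+1}(M)$ has finite length. In a finite length module the multiplicity of any simple object is at most the length of the module, hence finite; so $\mu_r(\tau^{<r+1}(M)) \le \mathrm{length}(\tau^{<r+1}(M)) < \infty$. Combining, $\mu_r(M) = \mu_r(\tau^{<r+1}(M))$ is finite, for every $r \in \bN$.

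There is no real obstacle here — the statement is a direct corollary of the machinery already set up. The only points worth stating explicitly in the write-up are (i) the identification $\mu_r(M) = \mu_r(\tau^{<r+1}(M))$ via Proposition~\ref{prop:mutau}, and (ii) that finite length implies finite multiplicity of each simple constituent, which follows at once from the additivity of $\mu_r$ along a composition series (Proposition~\ref{prop:multprop}(a)).
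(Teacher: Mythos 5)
Your proof is correct and follows exactly the paper's own argument: invoke Proposition~\ref{prop:mutau} to reduce $\mu_r(M)$ to $\mu_r(\tau^{<r+1}(M))$, then apply Proposition~\ref{prop:taufin} to see the latter is computed on a finite length module. (The paper writes $\tau^{\le r}$, meaning $\tau^{<r+1}$, but the content is identical.)
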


\begin{proof}
By Proposition~\ref{prop:mutau}, we have $\mu_r(M)=\mu_r(\tau^{\le r}(M))$. Since $\tau^{\le r}(M)$ has finite length by Proposition~\ref{prop:taufin}, the result follows.
\end{proof}

\begin{proposition} \label{prop:tau-exact}
The functors $\tau_{\ge r}$ and $\tau^{<r}$ are exact and cocontinuous, for all $r \in \bZ$.
\end{proposition}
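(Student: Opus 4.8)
The case $r \le 0$ is immediate, since then $\tau_{\ge r} = \mathrm{id}$ and $\tau^{<r} = 0$. So assume $r \ge 1$. The plan is to reduce everything to one claim:
\begin{center}
$(\star)$\quad for smooth $\cI$-modules $M_1 \subseteq M_2$ one has $\fa_r M_2 \cap M_1 = \fa_r M_1$.
\end{center}
Granting $(\star)$, here is how the proposition follows. First, $\tau^{<r}(M) = M/\fa_r M$ is the functor $(\bk[\cI]/\fa_r) \otimes_{\bk[\cI]} -$, hence is right exact and preserves all colimits (colimits in $\Rep(\cI)$ being computed in $\REP(\cI)$); and $(\star)$ says precisely that $\tau^{<r}(M_1) \to \tau^{<r}(M_2)$ is injective whenever $M_1 \hookrightarrow M_2$, so $\tau^{<r}$ is also left exact. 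Second, for a short exact sequence $0 \to M_1 \to M_2 \to M_3 \to 0$ the map $\tau_{\ge r}(M_1) \to \tau_{\ge r}(M_2)$ is the inclusion $\fa_r M_1 \subseteq \fa_r M_2$ (injective), the map $\tau_{\ge r}(M_2) \to \tau_{\ge r}(M_3)$ is surjective because $\fa_r M_3 = \fa_r \cdot \mathrm{im}(M_2 \to M_3)$, and exactness in the middle amounts to $\ker(\fa_r M_2 \to \fa_r M_3) = \fa_r M_2 \cap M_1 = \fa_r M_1$, which is $(\star)$ again. (Equivalently, once $\tau^{<r}$ is exact one applies the nine lemma to the $3\times 3$ diagram whose columns are the defining sequences $0 \to \tau_{\ge r}(M_i) \to M_i \to \tau^{<r}(M_i) \to 0$.) Finally, for cocontinuity of $\tau_{\ge r}$: from the functorial sequence $\tau_{\ge r} = \ker(\mathrm{id} \Rightarrow \tau^{<r})$, and since $\Rep(\cI)$ is a Grothendieck abelian category (so coproducts and filtered colimits are exact), $\tau_{\ge r}$ commutes with coproducts and filtered colimits; being also right exact (just shown), it preserves all colimits.

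\textbf{Proof of $(\star)$, reduction to the finitely generated case.} The inclusion $\supseteq$ is trivial. For $\subseteq$, take $x \in \fa_r M_2 \cap M_1$ and write $x = \sum_j c_j y_j$ with $c_j \in \fa_r$ and $y_j \in M_2$ (a finite sum). Let $M_2'$ be the $\cI$-submodule of $M_2$ generated by $x$ together with the $y_j$; it is finitely generated, hence noetherian by Theorem~\ref{thm:noeth}, so $M_1' := M_1 \cap M_2'$ is finitely generated as well. Since $x \in \fa_r M_2'$, it suffices to prove $x \in \fa_r M_1'$. Thus we may assume $M_1$ and $M_2$ are finitely generated.

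\textbf{The finitely generated case via multiplicities.} Put $M_3 = M_2/M_1$ and let $K = \ker\big(\tau^{<r}(M_1) \to \tau^{<r}(M_2)\big)$. Right exactness of $\tau^{<r}$ gives an exact sequence $0 \to K \to \tau^{<r}(M_1) \to \tau^{<r}(M_2) \to \tau^{<r}(M_3) \to 0$. As $M_1,M_2,M_3$ are finitely generated, each $\tau^{<r}(M_i)$ has finite length (Proposition~\ref{prop:taufin}) and all modules in sight have finite $\bB^s$-multiplicities (Proposition~\ref{prop:mufin}). Applying $\mu_s$ and using additivity (Proposition~\ref{prop:multprop}(a)) yields $\mu_s(K) = \mu_s(\tau^{<r}(M_1)) - \mu_s(\tau^{<r}(M_2)) + \mu_s(\tau^{<r}(M_3))$. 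By Proposition~\ref{prop:mutau}, for $s \ge r$ every term on the right vanishes, and for $s < r$ the right-hand side equals $\mu_s(M_1) - \mu_s(M_2) + \mu_s(M_3)$, which is $0$ by additivity applied to $0 \to M_1 \to M_2 \to M_3 \to 0$. Hence $\mu_s(K) = 0$ for all $s$, so $K = 0$ by Proposition~\ref{prop:multprop}(d); this is exactly $\fa_r M_2 \cap M_1 = \fa_r M_1$, proving $(\star)$ and with it the proposition.

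\textbf{Where the difficulty lies.} Everything except $(\star)$ is formal. The content of $(\star)$ is carried entirely by Proposition~\ref{prop:mutau}, which is where the multiplicity one theorem (Theorem~\ref{thm:multone}) enters. The one point requiring care is the reduction to finitely generated modules: without it the multiplicities $\mu_s(M_i)$ could be infinite and the cancellation in the displayed identity would be illegitimate, so the noetherianity of finitely generated modules is used in an essential (if mundramatic) way.
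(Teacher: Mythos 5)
Your proof is correct and takes essentially the same route as the paper: the heart of the argument in both is the multiplicity computation (via Propositions~\ref{prop:mutau}, \ref{prop:mufin}, and~\ref{prop:multprop}) showing the kernel of $\tau^{<r}(M_1)\to\tau^{<r}(M_2)$ has all $\mu_s$ equal to zero, plus the observation that exactness/cocontinuity of $\tau_{\ge r}$ follows formally from that of $\tau^{<r}$ (the paper invokes Proposition~\ref{prop:funseq}, you invoke the nine lemma — same thing). The one cosmetic difference is the reduction to the finitely generated case: you do it element-by-element via noetherianity and the formulation $(\star)$, while the paper does it by writing an arbitrary injection as a filtered colimit of injections of finitely generated modules and using cocontinuity; these are two dressings of the same idea.
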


\begin{proof}
The description of $\tau^{<r}$ in terms of coinvariants shows that it is cocontinuous. We now show it is exact. Consider an exact sequence
\begin{displaymath}
0 \to M_1 \to M_2 \to M_3 \to 0
\end{displaymath}
of finitely generated smooth $\cI$-modules. Applying $\tau^{<r}$, we obtain an exact sequence
\begin{displaymath}
0 \to K \to \tau^{<r}(M_1) \to \tau^{<r}(M_2) \to \tau^{<r}(M_3) \to 0
\end{displaymath}
for some finitely generated smooth $\cI$-module $K$. Since $\mu_s(\tau^{<r}(M_1))=0$ for $s \ge r$ (Proposition~\ref{prop:mutau}), we have $\mu_s(K)=0$ for $s \ge r$ as well. Suppose now that $s<r$. Then $\mu_s(\tau^{<r}(M_i))=\mu_s(M_i)$ (Proposition~\ref{prop:mutau}). Combining this with the first exact sequence, we find
\begin{displaymath}
\mu_s(\tau^{<r}(M_2))=\mu_s(\tau^{<r}(M_1))+\mu_s(\tau^{<r}(M_3)).
\end{displaymath}
From the second exact sequence, we find
\begin{displaymath}
\mu_s(\tau^{<r}(M_2))+\mu_s(K)=\mu_s(\tau^{<r}(M_1))+\mu_s(\tau^{<r}(M_3)).
\end{displaymath}
Since all the quantities appearing in these equations are finite by Proposition~\ref{prop:mufin}, it follows that $\mu_s(K)=0$. Thus $K=0$ by Proposition~\ref{prop:multprop}(d), and so $\tau^{<r}(M_1) \to \tau^{<r}(M_2)$ is injective.

Suppose now that $M \to N$ is an injection of smooth $\cI$-modules. We must show that $\tau^{<r}(M) \to \tau^{<r}(N)$ is injective. By the previous paragraph, this holds if $M$ and $N$ are finitely generated. Since $\tau^{<r}$ is cocontinuous, we conclude that it holds in general. Precisely, we can realize $M \to N$ as a filtered colimits of injections $M_i \to N_i$ of finitely generated smooth $\cI$-modules. Since each $\tau^{<r}(M_i) \to \tau^{<r}(N_i)$ is injective, so is the colimit, as filtered colimits are exact.

We have thus shown that $\tau^{<r}$ is exact and cocontinuous. It now follows from Proposition~\ref{prop:funseq} that $\tau_{\ge r}$ is exact and cocontinuous.
\end{proof}

\begin{proposition} \label{prop:sep}
Let $M$ be a smooth $\cI$-module. Then $\bigcap_{r \ge 0} \tau_{\ge r}(M)=0$. In other words, the filtration $\{\fa_r M\}_{r \ge 1}$ of $M$ is separated.
\end{proposition}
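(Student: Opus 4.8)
The plan is to deduce this from the behavior of the multiplicity functions $\mu_s$ under truncation, which has already been recorded in Proposition~\ref{prop:mutau}. Concretely, I would set $N=\bigcap_{r\ge 0}\tau_{\ge r}(M)$. Since each $\tau_{\ge r}(M)$ is an $\cI$-submodule of $M$ (it is $\fa_r M$ for $r\ge 1$, and $M$ for $r\le 0$), and an intersection of submodules is a submodule, $N$ is an $\cI$-submodule of $M$; as $M$ is smooth, so is $N$.

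The heart of the argument is then a one-line estimate: fix $s\in\bN$ and take $r=s+1$. Because $N\subseteq\tau_{\ge s+1}(M)$, monotonicity of multiplicities under passing to submodules (Proposition~\ref{prop:multprop}(b)) gives $\mu_s(N)\le\mu_s(\tau_{\ge s+1}(M))$. But $s<s+1$, so Proposition~\ref{prop:mutau} yields $\mu_s(\tau_{\ge s+1}(M))=0$, whence $\mu_s(N)=0$.

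Since $s$ was arbitrary, $\mu_s(N)=0$ for all $s\in\bN$, and Proposition~\ref{prop:multprop}(d) forces $N=0$, which is exactly the assertion $\bigcap_{r\ge 0}\tau_{\ge r}(M)=0$.

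I do not expect any real obstacle here: all the substance is already contained in Proposition~\ref{prop:mutau} (which itself relies on the multiplicity one theorem, Theorem~\ref{thm:multone}). The only point requiring a moment's care is to observe that $N$, being an intersection of submodules, is itself a submodule of each $\tau_{\ge r}(M)$, so that the monotonicity of $\mu_s$ applies.
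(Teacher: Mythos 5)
Your proof is correct and matches the paper's argument exactly: set $N=\bigcap_r \tau_{\ge r}(M)$, use Proposition~\ref{prop:mutau} together with the monotonicity of multiplicities to conclude $\mu_s(N)=0$ for all $s$, then invoke Proposition~\ref{prop:multprop}(d). Your write-up is in fact a bit more careful than the paper's (which contains small slips of notation in this passage), but there is no difference in substance.
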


\begin{proof}
Let $N=\bigcap_{r \ge 0} \tau_{\ge r}(M)$. Let $r \in \bN$ be given. As $N \subset \tau_{>r}(M)$, we have $\mu_r(M)=0$ by Proposition~\ref{prop:mutau}. As this holds for all $r$, we have $N=0$ by Proposition~\ref{prop:multprop}(d).
\end{proof}

\begin{proposition}
Suppose $M$ is a smooth $\cI$-module. Then $\tau_{\ge r}(\tau_{\ge s}(M))=\tau_{\ge \max(r,s)}(M)$. Similarly, $\tau^{<r}(\tau^{<s}(M))=\tau^{<\min(r,s)}(M)$.
\end{proposition}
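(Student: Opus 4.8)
The plan is to express both identities as statements about which submodule of $M$ a composite of truncation functors cuts out, and to pin that submodule down by its multiplicity invariants $\mu_j$. The cases $r\le 0$ or $s\le 0$ are immediate from the definitions, so assume $r,s\ge 1$. The key preliminary I would record is a characterization of the truncation submodule: a submodule $N\subseteq M$ equals $\tau_{\ge r}(M)$ if and only if $\mu_j(N)=0$ for all $j<r$ and $\mu_j(M/N)=0$ for all $j\ge r$. The ``only if'' direction is exactly Proposition~\ref{prop:mutau}. For ``if'', compare $N$ with $T:=\tau_{\ge r}(M)$: the image of $N$ in $M/T=\tau^{<r}(M)$ is a quotient of $N$, hence has $\mu_j=0$ for $j<r$, and is a subquotient of $\tau^{<r}(M)$, hence has $\mu_j=0$ for $j\ge r$, so it vanishes by Proposition~\ref{prop:multprop}(d); this gives $N\subseteq T$. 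Symmetrically $T/N$ is a quotient of $T$ and a subquotient of $M/N$, so all its multiplicities vanish and $T/N=0$. (This is just the uniqueness of the truncation exact sequence.) Both identities are then proved by checking these two multiplicity conditions.

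For the first identity, put $N=\tau_{\ge s}(M)$ and $P=\tau_{\ge r}(N)$, which is a submodule of $M$. From $P\subseteq N$ we get $\mu_j(P)=0$ for $j<s$ (Propositions~\ref{prop:multprop}(b) and~\ref{prop:mutau}), and from $P=\tau_{\ge r}(N)$ we get $\mu_j(P)=0$ for $j<r$; together $\mu_j(P)=0$ for $j<\max(r,s)$. For the quotient, apply additivity of $\mu_j$ (Proposition~\ref{prop:multprop}(a)) to $0\to N/P\to M/P\to M/N\to 0$, obtaining $\mu_j(M/P)=\mu_j(\tau^{<r}(N))+\mu_j(\tau^{<s}(M))$, which vanishes for $j\ge\max(r,s)$ by Proposition~\ref{prop:mutau}. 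Hence $P=\tau_{\ge\max(r,s)}(M)$ by the characterization.

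For the second identity, write $Q=\tau^{<s}(M)$ with quotient map $\pi\colon M\to Q$, and set $L=\pi^{-1}(\tau_{\ge r}(Q))$; then $M/L\cong Q/\tau_{\ge r}(Q)=\tau^{<r}(\tau^{<s}(M))$, so it suffices to show $L=\tau_{\ge\min(r,s)}(M)$. Since $M/L\cong\tau^{<r}(Q)$, Proposition~\ref{prop:mutau} gives $\mu_j(M/L)=0$ for $j\ge r$ and $\mu_j(M/L)=\mu_j(Q)$ for $j<r$; and since $\mu_j(Q)=\mu_j(\tau^{<s}(M))=0$ for $j\ge s$, we conclude $\mu_j(M/L)=0$ for all $j\ge\min(r,s)$. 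On the other hand, applying additivity to $0\to\tau_{\ge s}(M)\to L\to\tau_{\ge r}(Q)\to 0$ gives $\mu_j(L)=\mu_j(\tau_{\ge s}(M))+\mu_j(\tau_{\ge r}(Q))$, which is $0$ for $j<\min(r,s)$ by Proposition~\ref{prop:mutau}. The characterization then finishes the proof.

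I do not expect a genuine obstacle: the only nontrivial input is Proposition~\ref{prop:mutau} (which itself rests on the multiplicity one theorem), and the single point requiring care is that the $\mu_j$ can be infinite, so one works with additivity of $\mu_j$ in $\bN\cup\{\infty\}$ rather than subtracting multiplicities.
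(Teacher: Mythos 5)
Your proof is correct. The paper's own argument is shorter: it first observes the containment $\tau_{\ge r}(\tau_{\ge s}(M))\subset\tau_{\ge\max(r,s)}(M)$ (both are obtained by applying ideals of $\bk[\cI]$ inside $M$), then notes that Proposition~\ref{prop:mutau} gives both sides the same $\mu_i$ for all $i$, and concludes the quotient vanishes; the second identity is declared ``similar.'' Your route is different in strategy: instead of exhibiting an inclusion and computing the quotient's multiplicities, you first isolate a uniqueness lemma — $N\subseteq M$ equals $\tau_{\ge r}(M)$ iff $\mu_j(N)=0$ for $j<r$ and $\mu_j(M/N)=0$ for $j\ge r$ — and then verify those two conditions for each composite. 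This is a genuinely cleaner packaging, and, as you note at the end, it has a real advantage: the paper's phrase ``the two modules have the same $\mu_i$, thus the quotient has $\mu_i=0$'' implicitly subtracts multiplicities, which is not legitimate when $\mu_i(M)=\infty$ (i.e., for $M$ not finitely generated). The paper's proof can be repaired (e.g., by reducing to finitely generated $M$ via cocontinuity of $\tau$, or by the kind of quotient-and-submodule argument you use inside the characterization), but your lemma sidesteps the issue entirely by only ever bounding $\mu_j$ from above by quantities already known to be zero. Your treatment of the second identity via $L=\pi^{-1}(\tau_{\ge r}(Q))$ is also a nice explicit way to make ``similar'' precise. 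The only thing worth flagging is that the vanishing step in your characterization (``all $\mu_j=0$ implies the object is zero'') uses Proposition~\ref{prop:multprop}(d), which rests on $\Rep(\cI)$ being locally noetherian (Theorem~\ref{thm:noeth}); you are using this implicitly, and it is available, but it would be worth citing.
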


\begin{proof}
We have $\tau_{\ge r}(\tau_{\ge s}(M)) \subset \tau_{\ge \max(r,s)}(M)$. By Proposition~\ref{prop:mutau}, the two modules have the same $\mu_i$ for all $i \ge 0$. Thus they are equal (the quotient has $\mu_i=0$ for all $i$, and thus vanishes). The proof of the second statement is similar.
\end{proof}

\begin{remark}
As $\tau_{\ge r}(M)=\fa_r M$ for $r \ge 1$, the above proposition might suggest that $\fa_r \fa_s=\fa_{\max(r,s)}$. In fact, this is not the case: indeed, if $N$ is the non-trivial self-extension of the trivial representation (see Remark~\ref{rmk:trivext}) then $\fa_1^2 N=0$ but $\fa_1 N \ne 0$. Thus $\fa_1^2 \ne \fa_1$. The above proposition implies, however, that $\fa_1^2 M=\fa_1 M$ holds for all \emph{smooth} $\cI$-modules $M$.
\end{remark}

\subsection{Some applications}

We now give some applications of the truncation functors.

\begin{proposition} \label{prop:lfinj}
Any injective object of $\Rep(\cI)^{\lf}$ remains injective in $\Rep(\cI)$.
\end{proposition}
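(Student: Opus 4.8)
The plan is to test injectivity of $J$ in $\Rep(\cI)$ via Baer's criterion (Proposition~\ref{prop:baer}): it suffices to show that every short exact sequence
\begin{displaymath}
0 \to J \xrightarrow{\iota} E \xrightarrow{\pi} M \to 0
\end{displaymath}
in $\Rep(\cI)$ with $M$ finitely generated splits. The naive idea would be to apply a truncation functor and land inside $\Rep(\cI)^{\lf}$, where $J$ is injective by hypothesis; but this cannot be done to $M$ (or $E$) directly, since $\tau_{\ge r}(M)$ need not vanish for any $r$ — already $\tau_{\ge r}(\bA^1) \ne 0$ for all $r$ — so $M$ and $E$ need not become finite length. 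The remedy is to first replace $E$ by a finitely generated piece and isolate a finite length ``obstruction'' submodule inside $J$.

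Concretely, I would first choose a finitely generated submodule $E' \subseteq E$ with $\pi(E') = M$, by lifting a finite generating set of $M$. Since $E'$ is noetherian (Theorem~\ref{thm:noeth}), the submodule $E' \cap \iota(J) = \ker(\pi|_{E'})$ is finitely generated, hence of finite length as it lies in the locally finite module $\iota(J) \cong J$; set $F = \iota^{-1}(E' \cap \iota(J)) \subseteq J$. A direct check (using $E' + \iota(J) = E$ and $E' \cap \iota(J) = \iota(F)$) shows that the original sequence is the pushout of $0 \to F \to E' \to M \to 0$ along the inclusion $F \hookrightarrow J$. By the standard description of pushouts of extensions, the original sequence therefore splits as soon as the inclusion $F \hookrightarrow J$ extends to a morphism $E' \to J$, so it is enough to produce such an extension.

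This last step is where the truncation functors — and hence, ultimately, the multiplicity one theorem (Theorem~\ref{thm:multone}) — do the real work, and it is the only nonformal point. Since $F$ has finite length, there is an $R$ with $\mu_s(F) = 0$ for all $s \ge R$; fix $r \ge R$. As $E'$ is finitely generated, $\tau^{<r}(E')$ has finite length (Proposition~\ref{prop:taufin}). Moreover $F \cap \tau_{\ge r}(E') = 0$: it is a submodule of $\tau_{\ge r}(E')$, so $\mu_s = 0$ for $s < r$ (Proposition~\ref{prop:mutau}), and a submodule of $F$, so $\mu_s = 0$ for $s \ge R$ (Proposition~\ref{prop:multprop}(b)); as $r \ge R$ this forces $\mu_s = 0$ for all $s$, whence the module vanishes (Proposition~\ref{prop:multprop}(d)). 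Therefore the composite $F \hookrightarrow E' \to \tau^{<r}(E')$ is injective, realizing $F$ as a submodule of the finite length module $\tau^{<r}(E')$. Since $J$ is injective in $\Rep(\cI)^{\lf}$, the inclusion $F \hookrightarrow J$ extends over $\tau^{<r}(E')$, and composing with the quotient map $E' \to \tau^{<r}(E')$ gives the desired extension $E' \to J$. I expect everything outside this paragraph to be routine diagram chasing; the substantive ingredient, and the potential obstacle if the bookkeeping is mishandled, is the multiplicity control supplied by the truncation machinery.
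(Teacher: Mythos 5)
Your proof is correct and rests on the same machinery as the paper's — truncation functors, multiplicity control via Proposition~\ref{prop:mutau} (and hence, ultimately, Theorem~\ref{thm:multone}), finite length of $\tau^{<r}$ of finitely generated modules, and injectivity of $J$ in $\Rep(\cI)^{\lf}$. The bookkeeping differs, though. The paper applies the lifting form of Baer's criterion directly: given $f\colon M \to J$ with $M \subset N$ finitely generated, the finite length image forces $f$ to kill $\tau_{\ge r}(M)$ for a suitable $r$, so $f$ factors through the finite length module $\tau^{<r}(M)$, and the exactness of $\tau^{<r}$ (Proposition~\ref{prop:tau-exact}) supplies the injection $\tau^{<r}(M) \hookrightarrow \tau^{<r}(N)$ over which to extend. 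You instead reformulate the problem via splitting of extensions, pass to a finitely generated $E'$ and a finite length kernel $F$ by a pushout reduction, and then verify the injectivity of $F \hookrightarrow \tau^{<r}(E')$ by a direct multiplicity computation. Your route is somewhat longer, but has the modest merit of not invoking Proposition~\ref{prop:tau-exact}, replacing it with the same kind of ``$\mu_s$ vanishes on two complementary ranges, hence everywhere'' argument the paper uses at a different step. One small imprecision: the criterion ``every short exact sequence $0 \to J \to E \to M \to 0$ with $M$ finitely generated splits'' is equivalent to, but not literally the statement of, Proposition~\ref{prop:baer}, which is phrased as a lifting property over finitely generated inclusions; the equivalence itself requires a pushout argument of exactly the sort you use a few lines later, so it costs nothing, but it should be acknowledged.
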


\begin{proof}
Let $I$ be an injective object of $\Rep(\cI)^{\lf}$. Let $M \subset N$ be finitely generated smooth $\cI$-modules, and let $f \colon M \to I$ be an $\cI$-linear map. The image of $f$ has finite length. Thus we can find $r \in \bN$ such that $\mu_s(\im(f))=0$ for all $s \ge r$. Since $\mu_s(\tau_{\ge r}(M))=0$ for all $s<r$ (Proposition~\ref{prop:mutau}), we see that $\tau_{\ge r}(M) \subset \ker(f)$. Thus $f$ factors through $\tau^{<r}(M)$. Since $\tau^{<r}$ is exact (Proposition~\ref{prop:tau-exact}), the map $\tau^{<r}(M) \to \tau^{<r}(N)$ is injective. As both have finite length (Proposition~\ref{prop:taufin}), we see that the map $\tau^{<r}(M) \to I$ induced by $f$ extends to a map $\tau^{<r}(N) \to I$. The composition $N \to \tau^{<r}(N) \to I$ thus extends $f$ to $N$. Baer's criterion (Proposition~\ref{prop:baer}) now shows that $I$ is injective.
\end{proof}

\begin{corollary} \label{cor:trivinj}
The trivial representation $\bB^0$ is injective in $\Rep(\cI)$.
\end{corollary}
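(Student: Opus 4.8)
The plan is to deduce Corollary~\ref{cor:trivinj} directly from the two preceding results. We already know from the proposition just above (that $\bB^0$ is projective and injective in $\Rep(\cI)^{\lf}$) that $\bB^0$ is injective in the category $\Rep(\cI)^{\lf}$ of locally finite length smooth $\cI$-modules. We also have Proposition~\ref{prop:lfinj}, which says that any injective object of $\Rep(\cI)^{\lf}$ remains injective in the larger category $\Rep(\cI)$. So the entire argument is: apply Proposition~\ref{prop:lfinj} to the object $I = \bB^0$.

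Concretely, I would write: since $\bB^0$ is injective in $\Rep(\cI)^{\lf}$ (by the proposition on invariants and coinvariants, or rather the earlier proposition showing $\bB^0$ is projective and injective there), Proposition~\ref{prop:lfinj} immediately implies it is injective in $\Rep(\cI)$. That is the whole proof — one line.

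There is essentially no obstacle here, since all the work has been front-loaded into Proposition~\ref{prop:lfinj} (whose proof uses the multiplicity one theorem via the exactness of the truncation functors) and into the $\Ext^1$ computations of Corollary~\ref{cor:ext1}. The only thing to be mildly careful about is making sure the hypothesis of Proposition~\ref{prop:lfinj} is literally satisfied, i.e., that $\bB^0$ genuinely lies in $\Rep(\cI)^{\lf}$ (it is one-dimensional, hence of finite length) and is injective there (established via Baer's criterion, Proposition~\ref{prop:baer}). With those boxes checked, the corollary follows formally.

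\begin{proof}
The trivial representation $\bB^0$ is one-dimensional, hence an object of $\Rep(\cI)^{\lf}$, and we have shown above that it is injective in $\Rep(\cI)^{\lf}$. By Proposition~\ref{prop:lfinj}, it is therefore injective in $\Rep(\cI)$.
\end{proof}
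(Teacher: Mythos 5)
Your proof is correct and is precisely the argument the paper intends: the corollary is stated immediately after Proposition~\ref{prop:lfinj} with no separate proof, and it follows by applying that proposition to $\bB^0$, which was shown to be injective in $\Rep(\cI)^{\lf}$ in the earlier proposition on invariants and coinvariants. Nothing to add.
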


We now establish some results that will be needed later.

\begin{proposition} \label{prop:multshift}
Let $M$ be a smooth $\cI$-module. Then
\begin{displaymath}
\mu_r(\Sigma^n M) = \begin{cases}
\mu_0(M) + \cdots + \mu_n(M) & \text{if $r=0$} \\
\mu_{r+n}(M) & \text{if $r>0$} \end{cases}
\end{displaymath}
\end{proposition}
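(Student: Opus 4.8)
The plan is to establish the formula in three stages: first for finite length modules, then for locally finite modules, and finally for arbitrary smooth modules via the truncation functors. For the first stage I would record the effect of $\Sigma$ on simple modules: $\Sigma(\bB^0)=\bB^0$ (immediate, as $\bB^0$ carries the trivial action) and $\Sigma(\bB^s)=\bB^{s-1}$ for $s\ge 1$ (Proposition~\ref{prop:shiftstd}), so that $\Sigma^n(\bB^s)\cong\bB^{\max(s-n,0)}$ for every $n$. Since $\Sigma$ is exact, applying it to a composition series of a finite length module $M$ produces a composition series of $\Sigma^n M$ whose factors are the $\Sigma^n$ of the original factors, whence $\mu_r(\Sigma^n M)=\sum_{s\,:\,\max(s-n,0)=r}\mu_s(M)$. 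For $r>0$ the condition $\max(s-n,0)=r$ forces $s=r+n$, giving $\mu_r(\Sigma^n M)=\mu_{r+n}(M)$; for $r=0$ it is equivalent to $0\le s\le n$, giving $\mu_0(\Sigma^n M)=\mu_0(M)+\cdots+\mu_n(M)$.

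Next I would pass to locally finite $M$. Writing $M=\bigcup_i M_i$ as the directed union of its finite length submodules, exactness and cocontinuity of $\Sigma$ give $\Sigma^n M=\bigcup_i\Sigma^n M_i$ as a directed union, so Proposition~\ref{prop:multprop}(c) lets me take suprema: $\mu_r(\Sigma^n M)=\sup_i\mu_r(\Sigma^n M_i)$ and $\mu_s(M)=\sup_i\mu_s(M_i)$. The case $r>0$ follows at once from the finite length case. For $r=0$ one needs $\sup_i\bigl(\mu_0(M_i)+\cdots+\mu_n(M_i)\bigr)=\mu_0(M)+\cdots+\mu_n(M)$, which holds because the tuples $(\mu_0(M_i),\ldots,\mu_n(M_i))$ form a directed system under the coordinatewise order, and for such a system the supremum of the coordinatewise sum is the sum of the suprema.

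Finally, for an arbitrary smooth module $M$ I would use truncation to reduce to the locally finite case. For any $r\ge 0$, Proposition~\ref{prop:mutau} gives $\mu_r(\Sigma^n M)=\mu_r\bigl(\tau^{<r+1}(\Sigma^n M)\bigr)$. The point is that $\Sigma$ commutes with truncation: for $s\ge 1$ one has $\tau_{\ge s}(\Sigma M)=\Sigma(\tau_{\ge s+1}(M))$ as $\cI$-modules — this is the identity already used in the proofs of Propositions~\ref{prop:mutau} and~\ref{prop:taufin}, and it follows from $\fa_s$ being a two-sided ideal together with the fundamental relation (which re-sorts the $\alpha$'s) — hence $\tau^{<s}(\Sigma^n M)=\Sigma^n(\tau^{<s+n}(M))$ for $s\ge 1$. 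Applying this with $s=r+1$ gives $\mu_r(\Sigma^n M)=\mu_r\bigl(\Sigma^n(\tau^{<r+1+n}(M))\bigr)$, and $\tau^{<r+1+n}(M)$ is locally of finite length by Proposition~\ref{prop:taufin}. Invoking the locally finite case together with Proposition~\ref{prop:mutau} (which identifies $\mu_s(\tau^{<r+1+n}(M))$ with $\mu_s(M)$ for $s\le r+n$) then yields $\mu_r(\Sigma^n M)=\mu_{r+n}(M)$ for $r>0$ and $\mu_0(\Sigma^n M)=\mu_0(M)+\cdots+\mu_n(M)$ for $r=0$, as claimed.

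I expect the step needing the most care to be the commutation $\tau^{<s}(\Sigma M)\cong\Sigma(\tau^{<s+1}(M))$ at the level of $\cI$-modules rather than merely underlying vector spaces, since the shift alters which generators $\alpha_i$ act "visibly". This comes down to checking that $\fa_{s+1}M$, regarded as a subspace of $M$, is generated over $\bk[\cI_{\ge 2}]$ by the same elements $(\alpha_j-1)m$ ($j\ge s+1$) that generate it over $\bk[\cI]$, which in turn follows from the relation $\alpha_1\alpha_j=\alpha_{j+1}\alpha_1$; everything else is routine bookkeeping with the properties of $\mu_\bullet$ already recorded in Propositions~\ref{prop:multprop}, \ref{prop:mutau}, and~\ref{prop:taufin}.
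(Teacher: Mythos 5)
Your proof is correct and follows essentially the same route as the paper: direct computation on finite-length modules (via composition series and the effect of $\Sigma$ on simples), passage to directed unions for the locally finite case, and reduction of the general case to the locally finite one via the truncation functors together with the commutation $\tau^{<s}(\Sigma^n M)\cong\Sigma^n(\tau^{<s+n}(M))$. The paper packages the final reduction as a short exact sequence $0 \to \Sigma^n(\tau_{\ge r+n+1}M) \to \Sigma^n M \to \Sigma^n(\tau^{<r+n+1}M)\to 0$ with vanishing $\mu_r$ on the left term, but this amounts to the same calculation, and your explicit verification of the $\tau$--$\Sigma$ commutation (which the paper invokes without comment) is a welcome addition.
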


\begin{proof}
The result is clear if $M$ is simple. It thus follows for $M$ locally finite by the formal properties of multiplicities. We now treat the general case. Consider the short exact sequence
\begin{displaymath}
0 \to \Sigma^n(\tau_{>r+n}(M)) \to \Sigma^n(M) \to \Sigma^n(\tau^{\le r+n}(M)) \to 0.
\end{displaymath}
We have
\begin{displaymath}
\tau^{\le r}(\Sigma^n(\tau_{>r+n}(M)))=\Sigma^n(\tau^{\le r+n}(\tau_{>r+n}(M)))=0,
\end{displaymath}
and so
\begin{displaymath}
\mu_r(\Sigma^n(M)) = \mu_r(\Sigma^n(\tau^{\le r+n}(M))).
\end{displaymath}
Since $\tau^{\le r+n}(M)$ is locally finite, and $\mu_s(\tau^{\le r+n}(M))=\mu_s(M)$ for $s \le r+n$, the result follows.
\end{proof}

\begin{proposition} \label{prop:mu0cat}
Let $M$ be a smooth $\cI$-module and let $N$ be a graded $\cI$-module. Then we have a natural isomorphism $(N \odot M)_{\cI} \cong N_{\cI} \otimes M_{\cI}$. In particular, we have $\mu_0(N \odot M)=\mu_0(N) \mu_0(M)$.
\end{proposition}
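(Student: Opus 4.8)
The plan is to produce the isomorphism by an explicit pair of mutually inverse maps and then to deduce the $\mu_0$ identity from it. Since $N \odot M$ has $N \otimes M$ as its underlying vector space, there are obvious $\bk$-linear maps $\pi \colon N \odot M \to N_{\cI} \otimes M_{\cI}$, $x \odot y \mapsto \bar{x} \otimes \bar{y}$, and $\psi \colon N \otimes M \to (N \odot M)_{\cI}$, $x \otimes y \mapsto \overline{x \odot y}$, where in each case $\bar{(-)}$ denotes the image in the coinvariant space dictated by context. First I would check that $\pi$ kills $\fa_1(N \odot M)$, hence descends to $\bar{\pi} \colon (N \odot M)_{\cI} \to N_{\cI} \otimes M_{\cI}$, and that $\psi$ kills $\fa_1 N \otimes M + N \otimes \fa_1 M$, hence descends to $\bar{\psi} \colon N_{\cI} \otimes M_{\cI} \to (N \odot M)_{\cI}$ (using that this quotient of $N \otimes M$ is exactly $N_{\cI} \otimes M_{\cI}$). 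Then $\bar{\pi}$ and $\bar{\psi}$ are visibly mutually inverse on the spanning classes $\bar{x} \otimes \bar{y}$ and $\overline{x \odot y}$, which gives the isomorphism; it is manifestly natural in $M$ and $N$.

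For the two factorizations I would begin by using Proposition~\ref{prop:ideal} to write $\fa_1 P = \mathrm{span}\{(\alpha_k - 1)p : k \ge 1,\ p \in P\}$ for any $\cI$-module $P$, which lets me take relators in homogeneous form. Thus $\fa_1(N \odot M)$ is spanned by the elements $(\alpha_k - 1)(x \odot y)$ with $x \in N_n$ homogeneous, and the definition of the concatenation action gives
\begin{displaymath}
(\alpha_k - 1)(x \odot y) = \begin{cases} \bigl((\alpha_k - 1)x\bigr) \odot y & \text{if } 1 \le k \le n, \\ x \odot \bigl((\alpha_{k-n} - 1)y\bigr) & \text{if } k > n. \end{cases}
\end{displaymath}
Applying $\pi$ annihilates the first case since $(\alpha_k - 1)x \in \fa_1 N$, and the second since $(\alpha_{k-n} - 1)y \in \fa_1 M$. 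Symmetrically, for homogeneous $\xi \in N_m$ and $k \le m$ one has $\bigl((\alpha_k - 1)\xi\bigr) \odot y = (\alpha_k - 1)(\xi \odot y) \in \fa_1(N \odot M)$ (and $(\alpha_k - 1)\xi = 0$ when $k > m$), while for $x \in N_n$ one has $x \odot \bigl((\alpha_j - 1)y\bigr) = (\alpha_{n+j} - 1)(x \odot y) \in \fa_1(N \odot M)$; these are exactly the computations showing that $\psi$ vanishes on $\fa_1 N \otimes M$ and on $N \otimes \fa_1 M$.

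For the final clause I would record the general fact that $\mu_0(P) = \dim_{\bk} P_{\cI}$ for every smooth $\cI$-module $P$: indeed $P_{\cI} = \tau^{<1}(P)$ by the definition of the truncation functor, and $\tau^{<1}(P)$ carries the trivial $\cI$-action, hence is a direct sum of copies of $\bB^0$, so its $\bB^0$-multiplicity equals its $\bk$-dimension; moreover $\mu_0(\tau^{<1}(P)) = \mu_0(P)$ by Proposition~\ref{prop:mutau}. Applying this with $P = N \odot M$, with $P = \Phi(N)$ (recalling that $\mu_0(N)$ means $\mu_0(\Phi(N))$ and that $\Phi(N)_{\cI} = N_{\cI}$), and with $P = M$, and using $\dim(V \otimes W) = \dim V \cdot \dim W$ for $\bk$-vector spaces (valid for arbitrary cardinalities and compatible with the arithmetic of $\bN \cup \{\infty\}$), the isomorphism $(N \odot M)_{\cI} \cong N_{\cI} \otimes M_{\cI}$ gives $\mu_0(N \odot M) = \mu_0(N)\,\mu_0(M)$. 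There is no real obstacle here; the two points that deserve a little care are the reduction to homogeneous relators via Proposition~\ref{prop:ideal} — which is what makes the displayed case split, and hence the $\cI$-equivariance underlying $\bar\pi$ and $\bar\psi$, transparent — and the bookkeeping identifying $\mu_0$ with the dimension of the $\cI$-coinvariants.
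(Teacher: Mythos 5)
Your proposal is correct and takes the same route as the paper: both reduce to the single homogeneous computation $(\alpha_k-1)(x\odot y) = ((\alpha_k-1)x)\odot y$ or $x\odot((\alpha_{k-n}-1)y)$ depending on whether $k\le n$ or $k>n$, and deduce $\fa_1(N\odot M)=\fa_1 N\otimes M + N\otimes \fa_1 M$. You merely spell out more of the bookkeeping that the paper leaves implicit: the reverse inclusion via $(\alpha_k-1)\xi=0$ for $k>m$ and $x\odot((\alpha_j-1)y)=(\alpha_{n+j}-1)(x\odot y)$, and the identification $\mu_0(P)=\dim P_{\cI}$ via $\tau^{<1}$ and Proposition~\ref{prop:mutau}.
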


\begin{proof}
Let $x \in N_n$ and let $y \in M$. We have
\begin{displaymath}
(\alpha_i-1) (x \odot y) = \begin{cases}
((\alpha_i-1)x) \odot y & \text{if $i \le n$} \\
x \odot ((\alpha_{i-n}-1) y) & \text{if $i>n$} \end{cases}
\end{displaymath}
We thus find
\begin{displaymath}
\fa_1 (N \odot M) = (\fa_1 N) \odot M + N \odot (\fa_1 M),
\end{displaymath}
and so the result follows.
\end{proof}

\begin{proposition} \label{prop:multA1cat}
Let $M$ be a smooth $\cI$-module. Then
\begin{displaymath}
\mu_r(\ul{\bA}^1 \odot M) = \begin{cases}
\mu_0(M) & \text{if $r=0$} \\
\mu_0(M) + \cdots + \mu_{r-1}(M) & \text{if $r>0$} \end{cases}
\end{displaymath}
\end{proposition}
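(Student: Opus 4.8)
The plan is to bootstrap from the known effect of the shift functor on $\ul{\bA}^1$ together with the multiplicity one theorem. First I would reduce to the case that $M$ is finitely generated: writing $M$ as the directed union of its finitely generated submodules $M_i$ and using that $\odot$ is a tensor product and hence commutes with filtered colimits, $\ul{\bA}^1 \odot M$ is the directed union of the $\ul{\bA}^1 \odot M_i$. By Proposition~\ref{prop:multprop}(c), both sides of the claimed identity pass to the supremum over $i$ (the right-hand side because a finite sum of monotone $\bN \cup \{\infty\}$-valued nets commutes with suprema). For finitely generated $M$ every $\mu_r(M)$ is finite by Proposition~\ref{prop:mufin}, which will let me cancel freely in $\bN \cup \{\infty\}$.

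The key structural input is the isomorphism $\Sigma(\ul{\bA}^1 \odot M) \cong (\ul{\bA}^1 \odot M) \oplus M$. Since $\ul{\bA}^1$ is pure, Proposition~\ref{prop:shiftcat} applied with graded factor $\ul{\bA}^1$ makes the $M_0$-summand vanish and yields $\Sigma(\ul{\bA}^1 \odot M) \cong \ul{\Sigma}(\ul{\bA}^1) \odot M$; then $\ul{\Sigma}(\ul{\bA}^1) \cong \ul{\bA}^1 \oplus \ul{\bA}^0$ by Example~\ref{ex:shift}(a), and $\ul{\bA}^0 = \ul{\bB}^0$ is the unit for $\odot$, giving the decomposition. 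Separately, Proposition~\ref{prop:mu0cat} together with Theorem~\ref{thm:multone} gives $\mu_0(\ul{\bA}^1 \odot M) = \mu_0(\ul{\bA}^1)\,\mu_0(M) = \mu_0(M)$, which settles the $r = 0$ case.

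For $r \ge 1$ I would write $P = \ul{\bA}^1 \odot M$ and compute $\mu_r(\Sigma P)$ in two ways. From the direct-sum decomposition and additivity of multiplicities (Proposition~\ref{prop:multprop}(a)), $\mu_r(\Sigma P) = \mu_r(P) + \mu_r(M)$ for all $r$. From Proposition~\ref{prop:multshift} with $n = 1$, $\mu_0(\Sigma P) = \mu_0(P) + \mu_1(P)$ and $\mu_r(\Sigma P) = \mu_{r+1}(P)$ for $r \ge 1$. Comparing, the $r = 0$ identity together with $\mu_0(P) = \mu_0(M) < \infty$ gives $\mu_1(P) = \mu_0(M)$, and for $r \ge 1$ we obtain the recursion $\mu_{r+1}(P) = \mu_r(P) + \mu_r(M)$. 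A one-line induction starting from $\mu_1(P) = \mu_0(M)$ then yields $\mu_r(P) = \mu_0(M) + \cdots + \mu_{r-1}(M)$ for all $r \ge 1$.

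I do not expect a serious obstacle. The only points needing care are the bookkeeping with possibly infinite multiplicities (dealt with by reducing to finitely generated $M$ at the outset) and invoking Proposition~\ref{prop:shiftcat} correctly for the pure module $\ul{\bA}^1$, so that the ungraded shift on the left matches the graded shift $\ul{\Sigma}(\ul{\bA}^1)$ appearing on the right.
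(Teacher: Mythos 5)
Your proposal is correct and follows essentially the same route as the paper: both rest on the decomposition $\Sigma(\ul{\bA}^1 \odot M) \cong (\ul{\bA}^1 \odot M) \oplus M$ obtained from Proposition~\ref{prop:shiftcat} and Example~\ref{ex:shift}(a), the shift--multiplicity formula of Proposition~\ref{prop:multshift}, and Proposition~\ref{prop:mu0cat} for the base case $r=0$. The paper iterates the decomposition to $\Sigma^r$ and applies the shift formula once, whereas you apply it with $n=1$ to obtain a one-step recursion and unwind it by induction, and you additionally reduce to finitely generated $M$ at the outset to justify the cancellations in $\bN \cup \{\infty\}$; these are minor reorganizations of the same computation.
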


\begin{proof}
For $r=0$ this follows from Propsosition~\ref{prop:mu0cat}. Now suppose $r>0$. We have $\ul{\Sigma}(\ul{\bA}^1)=\ul{\bA}^1 \oplus \ul{\bB}^0$. Thus, by Proposition~\ref{prop:shiftcat}, we have
\begin{displaymath}
\Sigma(\ul{\bA}^1 \odot M)=(\ul{\bA}^1 \odot M) \oplus M
\end{displaymath}
It follows that we have
\begin{displaymath}
\Sigma^r(\ul{\bA}^1 \odot M)=(\ul{\bA}^1 \odot M) \oplus M \oplus \cdots \oplus \Sigma^{r-1}(M).
\end{displaymath}
We thus have
\begin{align*}
\mu_r(\ul{\bA}^1 \odot M)
&= \mu_0(\Sigma^r(\ul{\bA}^1 \odot M))-\mu_0(\Sigma^{r-1}(\ul{\bA}^1 \odot M)) \\
&= \mu_0(\Sigma^{r-1}(M)) \\
&= \mu_0(M) + \cdots + \mu_{r-1}(M),
\end{align*}
where in the first and last steps we used Proposition~\ref{prop:multshift}. This completes the proof.
\end{proof}

\section{$\Rep(\cI)$ as a Serre quotient of $\uRep(\cI)$} \label{s:psigamma}

\subsection{The functors $\Psi$ and $\Gamma$} \label{ss:psigamma}

For a graded $\cI$-module $M$, define $\Psi(M)$ to be the direct limit of the directed system
\begin{displaymath}
\xymatrix{
M_1 \ar[r]^{\alpha_1} & M_2 \ar[r]^{\alpha_2} & M_3 \ar[r]^{\alpha_3} \ar[r] & \cdots }
\end{displaymath}
Let $i \ge 1$ be given. Then we have a commutative diagram
\begin{displaymath}
\xymatrix{
M_i \ar[r]^{\alpha_i} \ar[d]^{\alpha_i} & M_{i+1} \ar[r]^{\alpha_{i+1}} \ar[d]^{\alpha_i} & M_{i+2} \ar[r]^{\alpha_{i+2}} \ar[d]^{\alpha_i} & \cdots \\
M_{i+1} \ar[r]^{\alpha_{i+1}} & M_{i+2} \ar[r]^{\alpha_{i+2}} \ar[r] & M_{i+3} \ar[r]^{\alpha_{i+3}} & \cdots }
\end{displaymath}
Commutativity of the squares follows from the fundamental relation (Proposition~\ref{prop:alpharel}). Both rows have direct limit equal to $\Psi(M)$. We thus get an endomorphism of $\Psi(M)$ that we denote by $\alpha_i$. Concretely, an element $x$ of $\Psi(M)$ is represented by an element $\wt{x} \in M_n$ for some $n \ge i$, and $\alpha_i x$ is represented by $\alpha_i \wt{x} \in M_{n+1}$. From this description of $\alpha_i$, it is clear that $\alpha_j \alpha_i = \alpha_i \alpha_{j-1}$ holds on $\Psi(M)$ for $j>i$. Thus $\Psi(M)$ is naturally an $\cI$-module. Moreover, it is obviously smooth: indeed, if $x \in \Psi(M)$ is represented by $\wt{x} \in M_n$ then $\alpha_n x$ is represented by $\alpha_n \wt{x} \in M_{n+1}$, but this also represents $x$, and so $\alpha_n x = x$. We have thus defined a functor
\begin{displaymath}
\Psi \colon \uRep(\cI) \to \Rep(\cI).
\end{displaymath}
Basic properties of direct limits imply that $\Psi$ is exact and cocontinuous.

We now define a functor $\Gamma$ in the opposite direction. Let $M$ be a smooth $\cI$-module. Define a graded vector space $\Gamma(M)$ by $\Gamma(M)_0=0$ and $\Gamma(M)_n = M^{\cI_{\ge n}}$ for $n \ge 1$. For $1 \le i \le n$, we define $\alpha_i \colon \Gamma(M)_n \to \Gamma(M)_{n+1}$ to be function induced by the restriction of $\alpha_i$ to $\Gamma(M)_n$; we note that it does indeed take values in $\Gamma(M)_{n+1}$, and that $\alpha_n$ is the canonical inclusion. For $i>n$, we define $\alpha_i$ on $\Gamma(M)_n$ to be the identity. We leave to the reader the easy verification that these maps satisfy the appropriate relations, and thus give $\Gamma(M)$ the structure of a graded $\cI$-module. The definition of $\Gamma$ on morphisms is clear, and so we do indeed have a functor
\begin{displaymath}
\Gamma \colon \Rep(\cI) \to \uRep(\cI).
\end{displaymath}
It is clear that $\Gamma$ is left exact; in fact, it is not difficult to verify that it is continuous.

We now explain the relation between the functors $\Psi$ and $\Gamma$. For this, we introduce a piece of terminology. Let $M$ be a graded $\cI$-module. We say that $x \in M_n$, with $n>0$, is {\bf torsion} if $\alpha_n^k x = \alpha_{n+k} \cdots \alpha_n x$ vanishes for some $k \ge 0$; we also declare all degree~0 elements to be torsion. We say that $M$ is {\bf torsion} if all of its homogeneous elements are. We let $\uRep(\cI)^{\tors}$ be the category of torsion modules; it is a localizing subcategory of $\uRep(\cI)$.

\begin{proposition} \label{prop:phigamma}
We have the following:
\begin{enumerate}
\item The functors $(\Psi, \Gamma)$ are naturally an adjoint pair.
\item The co-unit $\Psi \Gamma \to \id$ is an isomorphism.
\item The kernel of $\Psi$ is the category $\uRep(\cI)^{\tors}$ of torsion modules.
\item $\Psi$ induces an equivalence $\uRep(\cI)/\uRep(\cI)^{\tors} \to \Rep(\cI)$.
\end{enumerate}
\end{proposition}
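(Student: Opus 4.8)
The plan is to verify (a)--(c) by a direct computation with the definitions of $\Psi$ and $\Gamma$, and then obtain (d) formally from the standard theory of Serre quotients of Grothendieck categories. For (a) I would exhibit mutually inverse bijections between $\Hom_{\cI}(\Psi(M),N)$ and $\uHom_{\cI}(M,\Gamma(N))$ for $M \in \uRep(\cI)$ and $N \in \Rep(\cI)$. A morphism $f \colon \Psi(M) \to N$ in $\Rep(\cI)$ is the same as a family of linear maps $f_n \colon M_n \to N$, compatible with the structure maps $\alpha_n \colon M_n \to M_{n+1}$ of the directed system defining $\Psi(M)$, satisfying $f_{n+1}(\alpha_i x) = \alpha_i f_n(x)$ for $1 \le i \le n$. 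The key point is that the image of $x \in M_n$ in $\Psi(M)$ is fixed by $\alpha_n$, hence by all of $\cI_{\ge n}$ (Proposition~\ref{prop:fixed}), so $\cI$-linearity of $f$ forces $f_n(x) \in N^{\cI_{\ge n}} = \Gamma(N)_n$; conversely a homogeneous $\cI$-linear map $M \to \Gamma(N)$ restricts in degree $n$ to maps $M_n \to N^{\cI_{\ge n}} \subseteq N$, which are automatically compatible with the directed system because $\alpha_n$ acts as the inclusion $N^{\cI_{\ge n}} \hookrightarrow N^{\cI_{\ge n+1}}$ on $\Gamma(N)$. Matching these two descriptions (using $[\sigma x] = \sigma[x]$ in $\Psi(M)$ to see $\cI$-linearity is respected) gives the adjunction, and naturality is immediate.

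For (b), observe that $\Gamma(N)$ is the directed system $N^{\cI_{\ge 1}} \hookrightarrow N^{\cI_{\ge 2}} \hookrightarrow \cdots$ with all maps the inclusions, so $\Psi(\Gamma(N)) = \bigcup_{n \ge 1} N^{\cI_{\ge n}} = N$ since $N$ is smooth; unwinding the adjunction of (a) identifies this isomorphism with the co-unit. For (c), the kernel of the canonical map $M_n \to \Psi(M)$ is exactly the set of $x \in M_n$ killed by some power of $\alpha_n$ — i.e. the torsion elements of $M_n$ — where one uses the fundamental relation $\alpha_{n+1}\alpha_n = \alpha_n^2$ to see that the composite structure map $M_n \to M_{n+k}$ is $x \mapsto \alpha_n^k x$. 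Hence $\Psi(M) = 0$ if and only if every homogeneous element of $M$ of positive degree is torsion, which, since degree-zero elements are torsion by convention and $M_0$ does not enter $\Psi$, is precisely the condition $M \in \uRep(\cI)^{\tors}$.

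Part (d) is then a formal consequence: $\Psi$ is exact and cocontinuous with right adjoint $\Gamma$, so its kernel $\uRep(\cI)^{\tors}$ is a localizing subcategory of the (locally noetherian, hence Grothendieck, by Theorem~\ref{thm:noeth}) category $\uRep(\cI)$, and by (b) the co-unit $\Psi\Gamma \to \id$ is an isomorphism, equivalently $\Gamma$ is fully faithful. The general theory of localization then asserts that an exact cocontinuous functor between Grothendieck categories whose right adjoint is fully faithful induces an equivalence of the source modulo its kernel onto the target; applied to $\Psi$ this gives $\uRep(\cI)/\uRep(\cI)^{\tors} \xrightarrow{\ \sim\ } \Rep(\cI)$. (This general statement is part of the categorical background collected in Appendix~\ref{s:catbg}, or one can cite Gabriel's localization theory.) The only genuinely category-specific work is in (a) and (c), and both are routine once one keeps careful track of which generators $\alpha_i$ act as the identity on $\Psi(M)$ and on $\Gamma(N)$; the one imported ingredient is the localization theorem used for (d), which is the single place where outside machinery rather than direct computation is needed, so I would regard that — and the bookkeeping in (a) — as the main things to get right.
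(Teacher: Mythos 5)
Your proof is correct and essentially mirrors the paper's argument: the key observations (the image of $M_n$ in $\Psi(M)$ lands in $\cI_{\ge n}$-invariants; $\alpha_n$ acts as an inclusion on $\Gamma(N)$; $\Psi(M)=0$ precisely when every element dies under iterated structure maps; and (d) is Gabriel's localization theory) are exactly those in the paper. The only difference is organizational — you exhibit the adjunction as a Hom-set bijection while the paper constructs unit and co-unit explicitly — and this is a cosmetic rather than substantive change.
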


\begin{proof}
(a) Let $M$ be a graded $\cI$-module. There is a natural linear map $M_n \to \Psi(M)$ which lands in the $\cI_{\ge n}$-invariants of $\Psi(M)$: indeed, if $x$ is the image of $\wt{x} \in M_n$ then $\alpha_n x$ is represented by $\alpha_n \wt{x}$, which represents $x$. Thus we have a map $M_n \to \Psi(M)^{\cI_{\ge n}}=\Gamma(\Psi(M))_n$, and so a map of graded vector spaces $M \to \Gamma(\Psi(M))$. One easily verifies that it is $\cI$-equivariant. This is the unit of the adjunction.

Now suppose that $M$ is a smooth $\cI$-module. Then we have an inclusion $\Gamma(M)_n=M^{\cI_{\ge n}} \to M$. Moreover, the diagram
\begin{displaymath}
\xymatrix{ \Gamma(M)_n \ar[r]^{\alpha_n} \ar[rd] & \Gamma(M)_{n+1} \ar[d] \\ & M }
\end{displaymath}
commutes, as $\alpha_n \colon \Gamma(M)_n \to \Gamma(M)_{n+1}$ is the canonical inclusion. Taking direct limits, we get a map $\Psi(\Gamma(M)) \to M$, which is the co-unit of the adjunction. We leave to the reader the verification of the compaitiblities of the unit and co-unit.

(b) Let $M$ be a smooth $\cI$-module. Since each map $\Gamma(M)_n \to M$ is injective and every element of $M$ belongs to the image of one of these maps, the map on the direct limit $\Psi(\Gamma(M)) \to M$ is an isomorphism.

(c) Suppose that $M$ is a graded $\cI$-module and $\Psi(M)=0$. This means that every element $x \in M_n$, for any $n$, represents~0 in the direct limit; by definition, this means that $x$ is killed after applying some number of the transition maps, which exactly says that $x$ is torsion.

(d) This follows from (a)--(c) and \cite[Prop.~III.5]{gabriel}.
\end{proof}

\begin{corollary} \label{cor:gaminj}
The functor $\Gamma$ takes injective objects of $\Rep(\cI)$ to injective objects of $\uRep(\cI)$.
\end{corollary}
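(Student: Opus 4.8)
The plan is to deduce this formally from the adjunction established in Proposition~\ref{prop:phigamma}, using the fact that a right adjoint of an exact functor preserves injective objects. Let $I$ be an injective object of $\Rep(\cI)$. To show that $\Gamma(I)$ is injective in $\uRep(\cI)$, it suffices to show that the contravariant functor $\uHom_{\cI}(-, \Gamma(I))$ on $\uRep(\cI)$ is exact. By part~(a) of Proposition~\ref{prop:phigamma}, the functors $(\Psi, \Gamma)$ are an adjoint pair, so for every graded $\cI$-module $M$ there is a natural isomorphism
\begin{displaymath}
\uHom_{\cI}(M, \Gamma(I)) \cong \Hom_{\cI}(\Psi(M), I).
\end{displaymath}
Thus $\uHom_{\cI}(-, \Gamma(I))$ is naturally isomorphic to the composite of $\Psi \colon \uRep(\cI) \to \Rep(\cI)$ with $\Hom_{\cI}(-, I) \colon \Rep(\cI)^{\op} \to \Vec$.

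Now $\Psi$ is exact (this was recorded when $\Psi$ was defined in \S\ref{ss:psigamma}, as a basic property of direct limits), and $\Hom_{\cI}(-, I)$ is exact because $I$ is injective in $\Rep(\cI)$. The composite of these two exact functors is exact, so $\uHom_{\cI}(-, \Gamma(I))$ is exact, which is exactly the statement that $\Gamma(I)$ is an injective object of $\uRep(\cI)$.

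There is essentially no obstacle here: the argument is entirely formal once one has the adjunction and the exactness of $\Psi$, both of which are in hand. The only point worth double-checking is that $\Psi$ is indeed exact (not merely right or left exact), which follows since filtered colimits of vector spaces are exact and $\Psi$ is defined as such a colimit.
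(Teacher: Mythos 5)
Your proof is correct and is exactly the argument the paper intends: Corollary~\ref{cor:gaminj} is stated without proof immediately after Proposition~\ref{prop:phigamma} precisely because it follows formally from the adjunction $(\Psi,\Gamma)$ together with the exactness of $\Psi$, which is what you spell out.
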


\subsection{Computing $\Psi$ and $\Gamma$}

We now prove some general results about $\Psi$ and $\Gamma$, and use them to compute what these functors do to standard modules.

\begin{proposition} \label{prop:psicat}
Let $M$ and $N$ be graded $\cI$-modules. We then have a canonical isomorphism
\begin{displaymath}
\Psi(M \odot N) \cong \left[ M \odot \Psi(N_+) \right] \oplus \left[ \Psi(M) \otimes N_0 \right].
\end{displaymath}
\end{proposition}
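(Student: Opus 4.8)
The plan is to compute $\Psi(M \odot N)$ directly from the definition of $\Psi$ as a direct limit, using the explicit description of the $\cI$-action on the concatenation product. Recall that $(M \odot N)_k = \bigoplus_{i+j=k} M_i \otimes N_j$, and that $\alpha_k$ acts on the summand $M_i \otimes N_j$ (with $i+j=k$) by $\alpha_k$ on the $M$-factor if $1 \le k \le i$, and by $\alpha_{k-i}$ on the $N$-factor if $k > i$. So $\Psi(M \odot N)$ is the direct limit of the system $(M\odot N)_1 \to (M \odot N)_2 \to \cdots$ where the $k$th transition map is $\alpha_k$.

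The key observation is that this directed system splits as a union of two kinds of behavior, according to whether we have ``used up'' the $M$-part or not. Fix a homogeneous $x \in M_i$. For $j \ge 0$ and $k = i+j+1 > i$, the transition map $(M\odot N)_{i+j} \to (M \odot N)_{i+j+1}$ acts on $x \odot N_j$ by $1 \otimes \alpha_{j+1}$, i.e.\ exactly the transition map in the system computing $\Psi(N)$ (shifted so that $N_1$ sits in position $i+1$). Hence the sub-ind-object of $(M \odot N)_\bullet$ consisting of the ``tails'' $\bigoplus_{i} M_i \otimes N_{\ge 1}$-parts has colimit $M \odot \Psi(N_+)$ (the $M$-factor is untouched by the $\alpha_k$ with $k > i$, so it survives as is). Meanwhile, the part coming from $N_0$: for $x \in M_i$ and $z \in N_0$, the element $x \odot z$ at level $i$ first gets hit by $\alpha_1, \ldots, \alpha_i$ acting on the $M$-factor, producing (in the colimit) $\Psi(M) \otimes N_0$, and then by $\alpha_{i+1}, \alpha_{i+2}, \ldots$ acting on $N_0$, which act as the identity since $N_0$ is $\cI$-invariant. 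So the $N_0$-contribution stabilizes to $\Psi(M) \otimes N_0$.

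Concretely, I would make this rigorous by exhibiting the two maps and checking they assemble to an isomorphism on the colimit. Define $f \colon M \odot \Psi(N_+) \to \Psi(M \odot N)$ as follows: an element of $\Psi(N_+)$ is represented by some $\wt{z} \in N_j$ with $j \ge 1$, and $x \odot \wt z \in M_i \otimes N_j \subset (M\odot N)_{i+j}$ represents an element of $\Psi(M \odot N)$; one checks this is well-defined (independent of the representative $\wt z$, using the transition maps) and $\cI$-equivariant (using the concatenation action formula: $\alpha_k$ with $k \le i$ moves the $M$-part, $k>i$ moves the $\Psi(N_+)$-part, matching the action on $M \odot \Psi(N_+)$). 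Define $g \colon \Psi(M) \otimes N_0 \to \Psi(M \odot N)$ by sending $\wt x \otimes z$, with $\wt x \in M_i$, to the class of $\wt x \odot z \in M_i \otimes N_0 \subset (M \odot N)_i$; again check well-definedness and equivariance. Then I claim $f \oplus g$ is an isomorphism. Injectivity and surjectivity can be verified at the level of the underlying vector spaces by the colimit description above: every homogeneous generator $x \odot y$ of $(M \odot N)_k$ with $y \in N_j$ lies in the image of $f$ if $j \ge 1$ and of $g$ if $j = 0$ (tracking it forward along the transition maps), and the colimit of a direct sum of sub-systems is the direct sum of the colimits once one checks the two sub-systems are disjoint and exhaustive — which follows since each $(M \odot N)_k = \big(\bigoplus_{i+j=k, j\ge 1} M_i\otimes N_j\big) \oplus \big(\bigoplus_{i=k} M_i \otimes N_0\big)$ and the transition maps respect this decomposition eventually (the $j\ge 1$ part maps into the $j \ge 1$ part, and the $j=0$ part maps into... the $j \ge 0$ part, but after one more step the $M$-index is exhausted and it becomes stable).

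The main obstacle I anticipate is bookkeeping around the $N_0$ summand and the interaction between the two pieces: the transition map $\alpha_{i+1}$ applied to $M_i \otimes N_0$ acts trivially (as $N_0$ is $\cI$-fixed), so the $N_0$-part of the system becomes eventually constant rather than continuing to flow, and one must make sure that in passing to the colimit the $N_0$-contribution is genuinely $\Psi(M) \otimes N_0$ and not something that gets mixed with the $N_+$-part. This is why $\Psi(M) \otimes N_0$ appears as a plain tensor product (with $\Psi(M)$ already a smooth module) rather than a concatenation. Once the decomposition of each $(M \odot N)_k$ is written out and one observes that the transition maps are ``block upper triangular'' with respect to $j = 0$ versus $j \ge 1$ and become block diagonal for $k$ large, the colimit computation is routine. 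I would also remark that taking $N$ pure recovers $\Psi(M \odot N) = M \odot \Psi(N)$ and taking $N = \ul{\bB}^0$ recovers $\Psi(M \odot \ul{\bB}^0) = \Psi(M)$, as sanity checks, and note that this proposition, combined with Proposition~\ref{prop:stdcat} and the obvious computations $\Psi(\ul{\bA}^1) = \bA^0$, $\Psi(\ul{\bB}^1) = 0$ together with $\Psi(\ul{\bA}^1 \odot M) = M \odot \Psi(M_+) \oplus \Psi(\ul{\bA}^1)\otimes M_0$, yields the effect of $\Psi$ on standard modules stated in the introduction.
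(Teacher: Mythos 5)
Your overall plan — compute the direct limit by decomposing each $(M \odot N)_k$ into the $j = 0$ and $j \ge 1$ summands and noting that the transition maps respect the decomposition — is sound and essentially parallels the paper's argument (which instead reduces to the two extreme cases $N$ pure and $N$ concentrated in degree~0, and handles only the first one in detail). The $j \ge 1$ half of your argument is correct and matches the paper's treatment.

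However, your analysis of the $N_0$-part is wrong in a way you should notice. Take $x \in M_i$ and $z \in N_0$; then $x \odot z$ sits at level $i$, and the transition map out of level $k$ is multiplication by $\alpha_k$. Starting from level $i$, the successive images are $\alpha_i x \odot z \in M_{i+1} \otimes N_0$, then $\alpha_{i+1}\alpha_i x \odot z \in M_{i+2} \otimes N_0$, and so on: at level $k$ the element lies in $M_k \otimes N_0$ (total degree $k$, all concentrated in the $M$-factor since $z$ has degree~0), and the transition $\alpha_k$ satisfies $k \le k$, so it \emph{always} acts on the $M$-factor. Contrary to your description, $\alpha_{i+1}, \alpha_{i+2}, \ldots$ never act on $N_0$, the system never ``stabilizes,'' and nothing is ``block upper triangular'' — the decomposition into $j=0$ and $j\ge 1$ blocks is honestly block diagonal from the very first step. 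The colimit of the $j=0$ block is $\Psi(M) \otimes N_0$ not because the system becomes eventually constant, but because this block is literally the directed system $M_1 \to M_2 \to \cdots$ defining $\Psi(M)$, tensored with the fixed space $N_0$. Your intended conclusion is correct, but the mechanism you describe for reaching it would, if taken at face value, compute something different (a stabilizing subsystem would give you $M \otimes N_0$ rather than $\Psi(M) \otimes N_0$). Fix the description of where the $\alpha_k$'s act, and the rest of your argument goes through.
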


\begin{proof}
It suffices to treat separately the case where $N$ is pure and that where it is concentrated in degree~0. The latter case is trivial, so we now treat the first. Thus suppose $N$ is pure. By definition, $\Psi(M \odot N)$ is the direct limit of the system
\begin{displaymath}
\xymatrix{
(M \odot N)_1 \ar[r]^{\alpha_1} & (M \odot N)_2 \ar[r]^{\alpha_2} \ar[r] & \cdots }
\end{displaymath}
Consider a pure tensor $x \odot y$ in $(M \odot N)_n$, where $x \in M_i$ and $y \in N_j$. Since $N_0=0$, this element vanishes if $j=0$. Thus suppose $j>0$. Then $i<n$, and so $\alpha_n \cdot (x \odot y) = x \odot (\alpha_{n-i} y)$. Thus the transition maps in the above system are the identity on the $M$ piece. More precisely, if $M_i \odot N$ denotes the evident subspace of $M \odot N$, then $(M_i \odot N)_n$ is mapped into $(M_i \odot N)_{n+1}$ under the transition map. Furthermore, the $M_i \odot N$ piece of the system is
\begin{displaymath}
\xymatrix@C=2cm{
(M_i \odot N)_{n+1} \ar[r]^{\alpha_{n+1}} \ar@{=}[d] & (M_i \odot N)_{n+2} \ar[r]^-{\alpha_{n+2}} \ar[r] \ar@{=}[d] & \cdots \\
M_i \otimes N_{n+1-i} \ar[r]^{\id \otimes \alpha_{n+1-i}} & M_i \otimes N_{n+2-i} \ar[r]^-{\id \otimes \alpha_n+2+i} \ar[r] & \cdots }
\end{displaymath}
The direct limit of the bottom system is simply $M_i \otimes \Psi(N)$. Since $M$ is the sum of the $M_i$'s, we see that $\Psi(M \odot N)$ is the sum of the spaces $M_i \otimes \Psi(N)$, which is the space underlying $M \odot \Psi(N)$. We thus have a natural linear isomorphism $\Psi(M \odot N) \cong M \odot \Psi(N)$. It is easily seen to be compatible with the $\cI$-actions.
\end{proof}

\begin{proposition} \label{prop:psistd}
Let $\lambda=\lambda_1 \cdots \lambda_r$ be a constraint word with $r \ge 1$, and let $\mu=\lambda_1 \cdots \lambda_{r-1}$. Then
\begin{displaymath}
\Psi(\ul{\bE}^{\lambda}) \cong \begin{cases}
\bE^{\mu} & \text{if $\lambda_r=\whitetri$} \\
0 & \text{if $\lambda_r=\blacktri$}
\end{cases}
\end{displaymath}
\end{proposition}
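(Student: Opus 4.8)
The plan is to reduce the statement to the two length-one words and then carry out two elementary direct-limit computations. By Proposition~\ref{prop:stdcat} (equivalently Corollary~\ref{cor:gapprod}) we have $\ul{\bE}^{\lambda} \cong \ul{\bE}^{\mu} \odot \ul{\bE}^{\lambda_r}$, where the last factor is $\ul{\bA}^1$ if $\lambda_r=\whitetri$ and $\ul{\bB}^1$ if $\lambda_r=\blacktri$. Both $\ul{\bA}^1$ and $\ul{\bB}^1$ are pure, so their degree-$0$ part vanishes, and Proposition~\ref{prop:psicat} collapses to $\Psi(\ul{\bE}^{\mu} \odot \ul{\bE}^{\lambda_r}) \cong \ul{\bE}^{\mu} \odot \Psi(\ul{\bE}^{\lambda_r})$. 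Thus it suffices to compute $\Psi(\ul{\bA}^1)$ and $\Psi(\ul{\bB}^1)$.

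First I would handle $\ul{\bB}^1$: since it is concentrated in degree $1$ with $\alpha_1$ acting by zero, the defining directed system for $\Psi(\ul{\bB}^1)$ is $\bk \xrightarrow{0} 0 \to 0 \to \cdots$, so $\Psi(\ul{\bB}^1)=0$, and hence $\Psi(\ul{\bE}^{\lambda}) \cong \ul{\bE}^{\mu} \odot 0 = 0$ when $\lambda_r=\blacktri$. Next I would handle $\ul{\bA}^1$: here the system is $\bk e_1 \xrightarrow{\alpha_1} \bk e_2 \xrightarrow{\alpha_2} \bk e_3 \to \cdots$, and since $\alpha_n e_n = e_{n+1}$ each transition map is an isomorphism, so $\Psi(\ul{\bA}^1)$ is one-dimensional. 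To see the $\cI$-action is trivial, note that if $x \in \Psi(\ul{\bA}^1)$ is represented by $e_n$, then for $i \le n$ the element $\alpha_i x$ is represented by $\alpha_i e_n = e_{n+1}$, which again represents $x$; hence $\alpha_i x = x$ for every $i$, i.e.\ $\Psi(\ul{\bA}^1) \cong \bB^0$. (Alternatively, Proposition~\ref{prop:simple}: a nonzero one-dimensional smooth module on which every $\alpha_i$ acts invertibly must be $\bB^0$.) Therefore, using $\Phi(M \odot N) = M \odot \Phi(N)$ and that $\ul{\bB}^0$ is the unit for $\odot$,
\begin{displaymath}
\Psi(\ul{\bE}^{\lambda}) \cong \ul{\bE}^{\mu} \odot \bB^0 = \ul{\bE}^{\mu} \odot \Phi(\ul{\bB}^0) = \Phi(\ul{\bE}^{\mu} \odot \ul{\bB}^0) = \Phi(\ul{\bE}^{\mu}) = \bE^{\mu}
\end{displaymath}
when $\lambda_r=\whitetri$, as claimed.

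The argument has no real obstacle; the only point deserving a moment's care is the identification $\Psi(\ul{\bA}^1) \cong \bB^0$ as an $\cI$-module (rather than merely as a one-dimensional vector space), which the explicit description of the transition maps settles at once. One should also check the degenerate case $r=1$, where $\mu$ is the empty word and $\bE^{\mu}$ is the trivial representation $\bB^0=\bA^0$; this is consistent with $\Psi(\ul{\bA}^1)=\bB^0$.
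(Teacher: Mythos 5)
Your proof is correct and follows essentially the same route as the paper's: factor $\ul{\bE}^{\lambda} \cong \ul{\bE}^{\mu} \odot \ul{\bE}^{\lambda_r}$ via Proposition~\ref{prop:stdcat}, pull $\Psi$ inside via Proposition~\ref{prop:psicat}, and reduce to the base computations $\Psi(\ul{\bB}^1)=0$ and $\Psi(\ul{\bA}^1)\cong\bB^0$. The only difference is that the paper dismisses these two base cases as clear, whereas you spell them out.
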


\begin{proof}
By Proposition~\ref{prop:stdcat}, we have $\ul{\bE}^{\lambda}=\ul{\bE}^{\mu} \odot \ul{\bE}^{\lambda_r}$. Proposition~\ref{prop:psicat} thus yields $\Psi(\ul{\bE}^{\lambda}) \cong \ul{\bE}^{\mu} \odot \Psi(\ul{\bE}^{\lambda_r})$. If $\lambda_r=\blacktri$ then $\ul{\bE}^{\lambda_r}=\ul{\bB}^1$, and it is clear that $\Psi(\ul{\bB}^1)=0$; thus $\Psi(\ul{\bE}^{\lambda})=0$. If $\lambda_r=\whitetri$ then $\ul{\bE}^{\lambda_r}=\ul{\bA}^1$, and it is clear that $\Psi(\ul{\bA}^1)=\bA^0$; thus $\Psi(\ul{\bE}^{\lambda}) \cong \bE^{\mu}$.
\end{proof}

\begin{remark} \label{rmk:psistd}
In the case $\lambda_r=\whitetri$ above, the isomorphism $\Psi(\ul{\bE}^{\lambda}) \cong \bE^{\mu}$ admits a simple description: it maps the element of $\Psi(\ul{\bE}^{\lambda})$ represented by $e_{i_1,\ldots,i_r}$ to $e_{i_1,\ldots,i_{r-1}}$.
\end{remark}

\begin{remark}
Recall (Proposition~\ref{prop:shiftmap}) that for a graded $\cI$-module $M$ there is a canonical morphism $M \to \ul{\Sigma}(M)$. Define the {\bf reduced shift} of $M$, denoted $\ul{\Sigma}^{\red}(M)$, to be the cokernel of this map. Using Proposition~\ref{prop:shiftstd}, one can show that $\Sigma^{\red}(\ul{\bA}^r) \cong \ul{\bA}^{r-1}$. We thus have an isomorphism $\Psi(M) \cong \Phi(\ul{\Sigma}^{\red}(M))$ whenever $M$ is a principal module. In fact, the functors $\Psi$ and $\Phi \circ \ul{\Sigma}^{\red}$ are \emph{not} isomorphic: indeed, $\Psi$ kills $\ul{\bB}^n$, while $\Phi \circ \ul{\Sigma}^{\red}$ does not (for $n \ge 1$). There is a precise relationship between $\Psi$, $\Phi$, and the shift functor (or, more precisely, its left adjoint), however: see \S \ref{ss:phiright}.
\end{remark}

\begin{proposition} \label{prop:gammagraded}
Let $M$ be a graded $\cI$-module. Then we have a natural isomorphism $\Gamma(\Phi(M))=M \odot \ul{\bA}^1$.
\end{proposition}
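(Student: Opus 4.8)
The plan is to compute both sides explicitly as graded $\cI$-modules and then exhibit the evident comparison map. First I would identify $\Gamma(\Phi(M))$. For $n\ge 1$ we have $\Gamma(\Phi(M))_n = \Phi(M)^{\cI_{\ge n}}$ (and it is $0$ for $n=0$). Using the two defining conditions of a graded $\cI$-module---that $\sigma$ carries $M_k$ into $M_{\sigma(k)}$, and that $\sigma$ acts as the identity on $M_k$ whenever $\sigma(k)=k$---one checks that $\alpha_i$ fixes $M_k$ pointwise when $i>k$ and maps $M_k$ into $M_{k+1}$ when $i\le k$; comparing homogeneous components then gives $\Phi(M)^{\cI_{\ge n}}=\bigoplus_{k=0}^{n-1}M_k$, with structure map $\alpha_i\colon\Gamma(\Phi(M))_n\to\Gamma(\Phi(M))_{n+1}$ equal to the summand-wise action of $\alpha_i$ on the $M_k$'s for $i\le n$, and equal to the identity for $i>n$.

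Next I would unwind $M\odot\ul{\bA}^1$. Since $\ul{\bA}^1$ has basis $e_i$ in degree $i$ ($i\ge 1$) with $\alpha_j e_i=e_{i+1}$ for $j\le i$ and $\alpha_j e_i=e_i$ for $j>i$, the degree-$n$ piece of $M\odot\ul{\bA}^1$ is $\bigoplus_{k=0}^{n-1}M_k\otimes\bk e_{n-k}$, and for $x\in M_k$ the concatenation action of $\alpha_i$ on $x\odot e_{n-k}$ applies $\alpha_i$ to $x$ when $i\le k$, sends $e_{n-k}$ to $e_{n-k+1}$ without touching $x$ when $k<i\le n$, and fixes $x\odot e_{n-k}$ when $i>n$. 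I would then define $\theta_M\colon M\odot\ul{\bA}^1\to\Gamma(\Phi(M))$ in degree $n$ as the identity map of the summand $M_k$ onto the $M_k$-summand of $\bigoplus_{k=0}^{n-1}M_k=\Gamma(\Phi(M))_n$, i.e.\ $x\odot e_{n-k}\mapsto x$. Comparing the two descriptions of the action above, one sees that in both modules $\alpha_i$ is ``$\alpha_i$ on the $M$-coordinate'' when $i$ is at most the $M$-degree, a pure degree shift not affecting the $M$-coordinate when $i$ lies strictly between the $M$-degree and $n$, and the identity when $i>n$; hence $\theta_M$ is $\cI$-equivariant. As it is the identity on underlying vector spaces up to relabelling of summands, it is a linear isomorphism in each degree, and naturality in $M$ is clear.

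The verification is essentially bookkeeping, and the one step needing care is matching the two different-looking descriptions of the action in the overlap range $k<i\le n$: on the $\Gamma$-side, $\alpha_i$ there is the restriction of the action on $\Phi(M)$, which acts as the identity because $i$ exceeds the degree $k$ of the element, whereas on the concatenation side $\alpha_i$ raises the $\ul{\bA}^1$-index but fixes the $M$-factor---both simply raise the total degree by one without altering $x$. I expect that coincidence to be the only genuine subtlety. (Conceptually, the degree-$n$ computation above also shows, via Proposition~\ref{prop:psicat}, that $\Psi(M\odot\ul{\bA}^1)\cong\Phi(M)$, and $\theta_M$ is exactly the adjunction unit $M\odot\ul{\bA}^1\to\Gamma\Psi(M\odot\ul{\bA}^1)$ from Proposition~\ref{prop:phigamma}; one could instead deduce the proposition by verifying that this unit is an isomorphism, which holds because $M\odot\ul{\bA}^1$ is torsion-free and ``saturated''.)
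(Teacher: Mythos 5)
Your argument is correct and matches the paper's proof essentially verbatim: both compute $\Gamma(\Phi(M))_n=\Phi(M)^{\cI_{\ge n}}=\bigoplus_{k=0}^{n-1}M_k$ directly from the definition of a graded $\cI$-module, identify $(M\odot\ul{\bA}^1)_n$ with the same direct sum via the basis $e_{n-k}$ of $\ul{\bA}^1$, and observe that the obvious degreewise bijection $x\leftrightarrow x\odot e_{n-k}$ is $\cI$-equivariant. The three-case check you spell out (with $i\le k$, $k<i\le n$, $i>n$) is precisely the ``easily verified'' compatibility the paper leaves to the reader.
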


\begin{proof}
We have
\begin{displaymath}
\Gamma(\Phi(M))_n = \Phi(M)^{\cI_{\ge n}} = \bigoplus_{k=0}^{n-1} M_k,
\end{displaymath}
where the second equality follows directly from the definition of graded $\cI$-module. On the other hand, we have
\begin{displaymath}
(M \odot \bA^1)_n = \bigoplus_{k=0}^{n-1} M_k \odot e_{n-k}.
\end{displaymath}
We thus have an isomorphism $\Gamma(\Phi(M))_n \cong (M \odot \bA^1)_n$ taking $x \in M_k$ to $x \odot e_{n-k}$. This isomorphism is easily verified to be compatible with the $\cI$ actions.
\end{proof}

\begin{proposition} \label{prop:gammastd}
Let $\lambda=\lambda_1 \cdots \lambda_r$ be a constraint word, and let $\mu=\lambda_1 \cdots \lambda_r \cdot \whitetri $. Then we have a natural isomorphism $\Gamma(\bE^{\lambda}) \cong \ul{\bE}^{\mu}$.
\end{proposition}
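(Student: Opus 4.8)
The plan is to deduce this immediately from two facts already in hand: Proposition~\ref{prop:gammagraded}, which computes $\Gamma \circ \Phi$, and Proposition~\ref{prop:stdcat}, which describes concatenation of standard modules. The key preliminary observation is that $\bE^{\lambda}$ is by definition $\Phi(\ul{\bE}^{\lambda})$, since $\bE^{\lambda}$ is obtained from $\ul{\bE}^{\lambda}$ by forgetting the grading (for $\lambda$ empty this reads $\Phi(\ul{\bB}^0)=\bB^0$, which is equally clear).

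Granting this, I would argue as follows. First write $\Gamma(\bE^{\lambda}) = \Gamma(\Phi(\ul{\bE}^{\lambda}))$ and apply Proposition~\ref{prop:gammagraded} to get a natural isomorphism $\Gamma(\bE^{\lambda}) \cong \ul{\bE}^{\lambda} \odot \ul{\bA}^1$; note the right-hand side is genuinely a graded $\cI$-module, as the concatenation of two graded modules is graded. Next, since $\ul{\bA}^1 = \ul{\bE}^{\whitetri}$, the graded form of Proposition~\ref{prop:stdcat} gives a natural isomorphism $\ul{\bE}^{\lambda} \odot \ul{\bE}^{\whitetri} \cong \ul{\bE}^{\lambda \odot \whitetri}$. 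As $\lambda \odot \whitetri = \lambda_1 \cdots \lambda_r \cdot \whitetri = \mu$, composing the two isomorphisms produces the desired natural isomorphism $\Gamma(\bE^{\lambda}) \cong \ul{\bE}^{\mu}$.

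There is essentially no obstacle: the argument is a two-step formal computation chaining together results proved earlier, and naturality is automatic since each link is a natural isomorphism (or an equality). If one preferred a self-contained check instead, one could verify directly that $\Gamma(\bE^{\lambda})_n = (\bE^{\lambda})^{\cI_{\ge n}}$ has for a basis the monomials $e_{i_1,\ldots,i_r} \in \cE^{\lambda}_+$ with $i_r < n$, which corresponds under $(i_1,\ldots,i_r) \mapsto (i_1,\ldots,i_r,n)$ to the degree-$n$ basis of $\ul{\bE}^{\mu}$ — the appended entry $n$ being unconstrained precisely because $\mu$ ends in $\whitetri$ — and then check this bijection is $\cI$-equivariant; but the short route via Propositions~\ref{prop:gammagraded} and~\ref{prop:stdcat} is cleaner, so that is the one I would write.
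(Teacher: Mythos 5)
Your proof is correct and is essentially identical to the paper's: it identifies $\bE^{\lambda} = \Phi(\ul{\bE}^{\lambda})$, applies Proposition~\ref{prop:gammagraded} to get $\ul{\bE}^{\lambda} \odot \ul{\bA}^1$, and then applies Proposition~\ref{prop:stdcat}. The direct basis-level check you sketch as an alternative also appears in the paper, but as the separate Remark~\ref{rmk:gammastd} giving the explicit form of the isomorphism.
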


\begin{proof}
We have
\begin{displaymath}
\Gamma(\bE^{\lambda}) \cong \Gamma(\Phi(\ul{\bE}^{\lambda})) \cong \ul{\bE}^{\lambda} \odot \ul{\bA}^1 \cong \ul{\bA}^{\mu},
\end{displaymath}
where in the second step we used Proposition~\ref{prop:gammagraded} and in the third Proposition~\ref{prop:stdcat}.
\end{proof}

\begin{corollary} \label{prop:gammaprin}
We have $\Gamma(\bA^r)=\ul{\bA}^{r+1}$ for all $r \in \bN$.
\end{corollary}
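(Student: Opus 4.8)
The plan is to deduce this as a special case of Proposition~\ref{prop:gammastd}. Recall from \S\ref{ss:gap} that when $\lambda=\whitetri^r$ is the length-$r$ constraint word all of whose letters are $\whitetri$, we have $\bE^{\lambda}\cong\bA^r$ and $\ul{\bE}^{\lambda}\cong\ul{\bA}^r$. So the first step is simply to rewrite $\bA^r$ as $\bE^{\whitetri^r}$.

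Next, I would apply Proposition~\ref{prop:gammastd} with $\lambda=\whitetri^r$. That proposition gives $\Gamma(\bE^{\lambda})\cong\ul{\bE}^{\mu}$ where $\mu=\lambda\cdot\whitetri$; in our case $\mu=\whitetri^r\cdot\whitetri=\whitetri^{r+1}$. Finally, invoking the same dictionary from \S\ref{ss:gap} once more, $\ul{\bE}^{\whitetri^{r+1}}\cong\ul{\bA}^{r+1}$, which completes the identification.

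(Alternatively, one could argue directly from Proposition~\ref{prop:gammagraded} together with Proposition~\ref{prop:stdcat}: since $\bA^r=\Phi(\ul{\bA}^r)$, we get $\Gamma(\bA^r)\cong\ul{\bA}^r\odot\ul{\bA}^1\cong\ul{\bA}^{r+1}$, using $\ul{\bA}^r=\ul{\bE}^{\whitetri^r}$ and $\ul{\bE}^{\whitetri^r}\odot\ul{\bE}^{\whitetri}=\ul{\bE}^{\whitetri^{r+1}}$. This is essentially the proof of Proposition~\ref{prop:gammastd} specialized to the all-$\whitetri$ word.) There is no real obstacle here: the statement is a direct unwinding of already-established results, and the only thing to keep track of is the bookkeeping translating between principal modules and standard modules for all-$\whitetri$ words.

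\begin{proof}
Recall that $\bA^r\cong\bE^{\lambda}$ for $\lambda=\whitetri^r$, and that $\ul{\bA}^s\cong\ul{\bE}^{\whitetri^s}$ for all $s$. Applying Proposition~\ref{prop:gammastd} with this $\lambda$, we obtain $\Gamma(\bA^r)\cong\Gamma(\bE^{\whitetri^r})\cong\ul{\bE}^{\mu}$ where $\mu=\whitetri^r\cdot\whitetri=\whitetri^{r+1}$. Since $\ul{\bE}^{\whitetri^{r+1}}\cong\ul{\bA}^{r+1}$, the result follows.
\end{proof}
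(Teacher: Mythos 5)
Your proof is correct and is exactly how the paper derives this corollary: it is the specialization of Proposition~\ref{prop:gammastd} to the all-$\whitetri$ word $\lambda=\whitetri^r$, using the identifications $\bE^{\whitetri^r}\cong\bA^r$ and $\ul{\bE}^{\whitetri^{r+1}}\cong\ul{\bA}^{r+1}$. Nothing further is needed.
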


\begin{remark} \label{rmk:gammastd}
The isomorphism in Proposition~\ref{prop:gammastd} admits a simple direct description: it takes $e_{i_1,\ldots,i_r} \in \Gamma(\bE^{\lambda})_n=(\bE^{\lambda}_n)^{\cI_{\ge n}}$ to $e_{i_1,\ldots,i_r,n} \in \ul{\bE}^{\mu}_n$.
\end{remark}

%

\subsection{Some $\Gamma$-acyclic modules}

The following result gives an important source of $\Gamma$-acyclic objects.

\begin{proposition} \label{prop:Rgamma}
Let $M$ be a smooth $\cI$-module that admits a grading (i.e., $M$ belongs to the essential image of $\Phi$). Then $M$ is $\Gamma$-acyclic, that is, $\rR^i \Gamma(M)=0$ for $i>0$.
\end{proposition}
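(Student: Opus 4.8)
The plan is to reduce the statement to one clean computation, making essential use of the concatenation product. Write $M=\Phi(N)$ with $N$ graded. I would first observe that $\rR^i\Gamma$ commutes with filtered colimits: $\Gamma$ itself does, since $\Gamma(M)_n=M^{\cI_{\ge n}}=\ker(\alpha_n-1\colon M\to M)$ and kernels and direct sums commute with filtered colimits in $\Rep(\cI)$ (where filtered colimits are exact), and then the statement for $\rR^i\Gamma$ follows from local noetherianity of $\Rep(\cI)$ (Theorem~\ref{thm:noeth}). Writing $N$ as the filtered union of its finitely generated graded submodules, we may therefore assume $N$ is finitely generated.

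Two identities drive the rest. With $\bB^0=\bA^0$ denoting the trivial \emph{smooth} module, unwinding the definition of $\odot$ gives $N\odot\bB^0=\Phi(N)$ (for $x\in N_n$, the generator $\alpha_k$ sends $x\odot v$ to $(\alpha_kx)\odot v$ if $k\le n$ and to $x\odot v$ if $k>n$, which is precisely the forgotten action on $N$). Second, there is a natural isomorphism $\Gamma(N\odot Y)\cong N\odot\Gamma(Y)$ for $N$ graded and $Y$ smooth, checked degreewise from $\Gamma(Y)_n=Y^{\cI_{\ge n}}$: one computes $(N\odot Y)^{\cI_{\ge n}}=\bigoplus_{d=0}^{n-1}N_d\odot Y^{\cI_{\ge n-d}}=(N\odot\Gamma(Y))_n$. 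The key lemma is then: $N\odot-\colon\Rep(\cI)\to\Rep(\cI)$ carries injective objects to $\Gamma$-acyclic objects. Granting this, the proof finishes: $N\odot-$ is exact (it is $N\otimes-$ on underlying vector spaces, and exactness in $\Rep(\cI)$ is tested there), $\Gamma$ is left exact, and $\Gamma\circ(N\odot-)\cong(N\odot-)\circ\Gamma$, so the Grothendieck spectral sequences for the two composites yield $\rR^i\Gamma(N\odot Y)\cong N\odot\rR^i\Gamma(Y)$ for all $i$ (the key lemma is what makes the first one degenerate, while the second degenerates automatically since $N\odot-$ is exact). Since $\bB^0$ is injective in $\Rep(\cI)$ by Corollary~\ref{cor:trivinj}, hence $\Gamma$-acyclic, we conclude $\rR^i\Gamma(\Phi N)=\rR^i\Gamma(N\odot\bB^0)\cong N\odot\rR^i\Gamma(\bB^0)=0$ for $i>0$.

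To prove the key lemma, fix an injective $I$. As $N$ is finitely generated and $\uRep(\cI)$ is noetherian with enough projectives (the graded principal modules), $N$ admits a resolution $\cdots\to\ul{P}_1\to\ul{P}_0\to N\to0$ by finite direct sums $\ul{P}_j$ of modules $\ul{\bA}^r$. Applying the exact functor $-\odot I$ resolves $N\odot I$ by the $\ul{P}_j\odot I$, and applying $\Gamma$ to this resolution gives $\ul{P}_\bullet\odot\Gamma(I)$ (by the identity above, applied termwise), which is exact in positive degrees since $-\odot\Gamma(I)$ is exact on $\uRep(\cI)$. So, provided each $\ul{\bA}^r\odot I$ is $\Gamma$-acyclic — so that $\ul{P}_\bullet\odot I$ is a $\Gamma$-acyclic resolution of $N\odot I$ — we get $\rR^i\Gamma(N\odot I)=0$ for $i>0$. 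Using $\ul{\bA}^r\cong\ul{\bA}^1\odot\cdots\odot\ul{\bA}^1$ (Corollary~\ref{cor:gapprod}), the base case $\ul{\bA}^0\odot I=I$, and the fact that $\ul{\bA}^1\odot Y$ is $\Gamma$-acyclic whenever $Y$ is (reduce to $Y$ injective by resolving $Y$ by injectives and invoking exactness of $\ul{\bA}^1\odot-$ together with the identity above), an induction reduces everything to the single statement: \emph{$\ul{\bA}^1\odot I$ is $\Gamma$-acyclic whenever $I$ is injective in $\Rep(\cI)$}.

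This last statement is where the real work lies, and I expect it to be the main obstacle. The approach I would take: from $\ul{\Sigma}(\ul{\bA}^1)\cong\ul{\bA}^1\oplus\ul{\bB}^0$ (Example~\ref{ex:shift}) and Proposition~\ref{prop:shiftcat} one gets a natural splitting $\Sigma(\ul{\bA}^1\odot I)\cong(\ul{\bA}^1\odot I)\oplus I$. The shift functor $\Sigma$ preserves injectives — its left adjoint, induction along the inclusion $\cI\hookrightarrow\cI_{\ge2}$ (after the relabelling $\alpha_i\mapsto\alpha_{i+1}$), is exact because $\bk[\cI]$ is free as a right $\bk[\cI_{\ge2}]$-module, $\cI$ being a free right $\cI_{\ge2}$-set by left-cancellativity — so $\rR\Gamma$ interacts transparently with $\Sigma$, namely $(\rR^i\Gamma(\Sigma X))_m\cong(\rR^i\Gamma(X))_{m+1}$ for $m\ge1$ and $=0$ for $m=0$. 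Finally $\Psi\circ\rR^i\Gamma=\rR^i(\Psi\Gamma)=\rR^i(\mathrm{id})=0$ for $i>0$ ($\Psi$ being exact with $\Psi\Gamma=\mathrm{id}$, Proposition~\ref{prop:phigamma}), so each $\rR^i\Gamma(\ul{\bA}^1\odot I)$ is a torsion graded module. Feeding the direct-sum splitting into the $\Sigma$-relation pins $\rR^i\Gamma(\ul{\bA}^1\odot I)$ down to a torsion module supported in positive degrees with constant graded dimension there; the concluding step — showing that a torsion module produced in exactly this way must vanish — is the genuinely delicate point on which the whole argument turns.
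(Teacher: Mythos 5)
You have correctly diagnosed the weak point yourself: the argument does not close, and the gap is genuine. The preparatory steps are sound---the reduction to $N$ finitely generated via commutation of $\rR^i\Gamma$ with filtered colimits (which does follow from local noetherianity, $\Gamma(M)_n=\ker(\alpha_n-1)$, and Proposition~\ref{prop:dercolimit}), the identities $\Phi(N)\cong N\odot\bB^0$ and $\Gamma(N\odot Y)\cong N\odot\Gamma(Y)$, the replacement of $N$ by a projective resolution by finite sums of $\ul{\bA}^r$'s, and the reduction via $\ul{\bA}^r\cong(\ul{\bA}^1)^{\odot r}$---but they funnel everything into the single claim that $\ul{\bA}^1\odot I$ is $\Gamma$-acyclic for every injective $I$, and the shift-functor argument you sketch does not force this. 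What you actually obtain is: set $T=\rR^i\Gamma(\ul{\bA}^1\odot I)$ for $i>0$; then $T$ is torsion (since $\Psi\circ\rR^i\Gamma=0$), $T_0=0$ (since $\Gamma(-)_0\equiv 0$), and $T_m\cong T_{m+1}$ for $m\ge 1$ (from $\Sigma(\ul{\bA}^1\odot I)\cong(\ul{\bA}^1\odot I)\oplus I$ and $\rR^i\Gamma(\Sigma X)_m\cong\rR^i\Gamma(X)_{m+1}$). Nothing constrains $T_1$, and there are non-zero pure torsion modules meeting exactly these constraints---the simplest is $T=\bigoplus_{n\ge 1}\ul{\bB}^n$, which is pure, torsion, and satisfies $(\ul{\Sigma}T)_+\cong T$. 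Since $I$ need not be finitely generated, you also cannot fall back on bounded support or finiteness of graded dimensions. None of the heavier tools that would dispose of this (the functor $\ul{\Xi}$, the cohomological bound $\rR^n\Gamma(M)=0$ for $n>g(M)$, the injectivity of the principal modules) are available at this stage: all appear later and all rest, directly or indirectly, on the very proposition you are trying to prove.

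For context, the paper takes an entirely different route. It first handles $M$ locally finite, where the statement is immediate from the canonical grading (Theorem~\ref{thm:canongr}), the exactness of $\Gamma$ on $\Rep(\cI)^{\lf}$ (Proposition~\ref{prop:inv}), and the fact that injectives of $\Rep(\cI)^{\lf}$ remain injective in $\Rep(\cI)$ (Proposition~\ref{prop:lfinj}). It then writes general $M=\Phi(N)$ as $\smlim\Phi(N_{\le n})$ and reduces to the locally finite case via continuity of $\rR\Gamma$ for derived limits, a Mittag--Leffler argument, and $\Psi\circ\rR\Gamma=\id$---the last being the one ingredient your sketch shares. Your concatenation-product route would be an attractive alternative if the final lemma could be supplied, not least because it sidesteps the canonical grading; but as written it terminates in a claim you neither prove nor, as far as I can see, can prove with the machinery available at this point.
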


\begin{proof}
First suppose that $M$ is locally finite; any such module admits a grading, namely the canonical grading (Theorem~\ref{thm:canongr}). Let $M \to J^{\bullet}$ be an injective resolution of $M$ in $\Rep(\cI)^{\lf}$. Each $J^i$ is injective in $\Rep(\cI)$ (Proposition~\ref{prop:lfinj}), and so $\Gamma(J^{\bullet})$ computes $\rR \Gamma(M)$. But $\Gamma$ is exact on $\Rep(\cI)^{\lf}$, since formation of $\cI_{\ge n}$ invariants is exact here (Proposition~\ref{prop:inv}), and so $\Gamma(J^{\bullet})$ has no higher cohomology.

We now treat the general case. Thus suppose that $M=\Phi(N)$ for some $N \in \uRep(\cI)$. Recall that $N_{>n}$ is the homogeneous submodule $\bigoplus_{k>n} N_k$ of $N$, and $N_{\le n}=N/N_{>n}$ is the quotient. Since inverse limits in $\uRep(\cI)$ are computed degreewise, we have $N=\varprojlim N_{\le n}$. Since $\Phi$ is continuous (Proposition~\ref{prop:Phi-cont}), we have $M=\smlim M_n$, where $M_n=\Phi(N_{\le n})$. Clearly, $M_n$ is locally finite.

Now, by general theory (Proposition~\ref{prop:dercont}), $\rR \Gamma$ commutes with derived limits. We thus find
\begin{equation} \label{eq:Rgamma}
\rR \Gamma(\rR \smlim M_n) = \rR \varprojlim \rR \Gamma(M_n).
\end{equation}
By the first paragraph, $\rR \Gamma(M_n)=\Gamma(M_n)$, since each $M_n$ is locally finite. Moreover, since $M_{n+1} \to M_n$ is surjective and $\Gamma$ is exact on locally finite modules, we see that $\Gamma(M_{n+1}) \to \Gamma(M_n)$ is surjective. Thus for each degree $d$, the inverse system $\{\Gamma(M_n)_d\}_{n \ge 0}$ of vector spaces satisfies the Mittag--Leffler condition, and so its higher derived limits vanish. We therefore see that the right side of \eqref{eq:Rgamma} is simply $\varprojlim \Gamma(M_n)$.

Now, the higher derived functors of $\rR \Gamma$ are torsion, and thus killed by $\Psi$. Therefore, $\Psi \circ \rR \Gamma$ is simply the identity functor. Applying $\Psi$ to \eqref{eq:Rgamma}, we thus find
\begin{displaymath}
\rR \smlim M_n = \Psi(\varprojlim \Gamma(M_n)).
\end{displaymath}
Thus the higher derived smooth limits of the inverse system $\{M_n\}$ vanish, and so $M=\rR \smlim M_n$. We therefore have
\begin{displaymath}
\rR \Gamma(M)=\rR \Gamma(\rR \smlim M_n)=\varprojlim \Gamma(M_n),
\end{displaymath}
and so $\rR^i \Gamma(M)=0$ for $i>0$.
\end{proof}

\begin{corollary} \label{prop:Rgammastd}
We have $\rR^i \Gamma(\bE^{\lambda})=0$ for $i>0$, for any $\lambda$.
\end{corollary}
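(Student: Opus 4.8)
The plan is to recognize $\bE^{\lambda}$ as an object lying in the essential image of the forgetful functor $\Phi$, and then to invoke Proposition~\ref{prop:Rgamma} directly. Concretely, by the very construction of the standard modules in \S\ref{ss:gap}, the smooth module $\bE^{\lambda}$ is obtained from the graded module $\ul{\bE}^{\lambda}$ by forgetting the grading; that is, $\bE^{\lambda}=\Phi(\ul{\bE}^{\lambda})$. Thus $\bE^{\lambda}$ admits a grading, and Proposition~\ref{prop:Rgamma} immediately yields $\rR^i\Gamma(\bE^{\lambda})=0$ for all $i>0$.

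I do not anticipate any real obstacle: the entire content of the corollary is the observation that $\bE^{\lambda}$ is in the image of $\Phi$ (which is immediate from the definitions), combined with the already-proved Proposition~\ref{prop:Rgamma}. If one wished to avoid even that remark, one could instead use the concatenation decomposition $\ul{\bE}^{\lambda}=\ul{\bE}^{\lambda_1}\odot\cdots\odot\ul{\bE}^{\lambda_r}$ from Corollary~\ref{cor:gapprod} together with the identity $\Phi(M\odot N)=M\odot\Phi(N)$ to exhibit $\bE^{\lambda}$ as $\Phi$ of a graded module, but this is superfluous. The only thing worth double-checking is that Proposition~\ref{prop:Rgamma} is stated for an arbitrary smooth module admitting a grading (not merely a locally finite one), which it is; since $\bE^{\lambda}$ is typically not locally finite (e.g. $\bE^{\whitetri}=\bA^1$), this generality is exactly what is needed.
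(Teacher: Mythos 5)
Your argument matches the paper's exactly: the corollary is intended as an immediate application of Proposition~\ref{prop:Rgamma}, since $\bE^{\lambda}=\Phi(\ul{\bE}^{\lambda})$ lies in the essential image of the forgetful functor. Your concluding remark about the needed generality of Proposition~\ref{prop:Rgamma} (arbitrary graded, not just locally finite) is the right thing to double-check and is indeed satisfied.
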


\begin{remark}
Let $\{M_i\}_{i \in \cU}$ be a cofiltered inverse system of smooth $\cI$-modules. The argument in the proof of Proposition~\ref{prop:Rgamma} yields an isomorphism
\begin{displaymath}
\rR \smlim M_i = \Psi(\rR \varprojlim \rR \Gamma(M_i)).
\end{displaymath}
One can use this isomorphism to study $\rR \smlim$. For example, if each $M_i$ has level $\le r$ (as defined in \S \ref{ss:level}) then $\rR^n \Gamma(M_i)=0$ for $n \ge r+2$ (Proposition~\ref{prop:gammalev}), and so $\rR^n \smlim M_i=0$ for $n \ge r+3$.
\end{remark}

\section{Completions} \label{s:xi}

\subsection{The completion functor} \label{ss:xi}

Let $M$ be a smooth $\cI$-module. The truncations $\tau^{<n}(M)$ form an inverse system of $\cI$-modules. We define the {\bf ungraded completion} of $M$, denoted $\Xi(M)$, to be the smooth inverse limit of this system. The $\cI$-module $\tau^{<n}(M)$ is locally finite (Proposition~\ref{prop:taufin}), and therefore admits a canonical grading (Theorem~\ref{thm:canongr}). We define the {\bf graded completion} of $M$, denoted $\ul{\Xi}(M)$, to be the inverse limit of this system in the category of graded $\cI$-modules. Since the forgetful functor $\Phi$ is continuous (Proposition~\ref{prop:Phi-cont}), we have $\Xi(M)=\Phi(\ul{\Xi}(M))$.

\begin{proposition} \label{prop:xi}
We have the following:
\begin{enumerate}
\item The functors $(\ul{\Xi}, \Phi)$ are naturally an adjoint pair.
\item The functors $\ul{\Xi}$ and $\Xi$ are exact.
\item The unit map $\epsilon_M \colon M \to \Xi(M)$ is injective, for any smooth $\cI$-module $M$. If $M$ admits a grading, then $\epsilon_M$ is split.
\item The counit map $\eta_N \colon \ul{\Xi}(\Phi(N)) \to N$ is a split surjection, for any graded $\cI$-module $N$.
\end{enumerate}
\end{proposition}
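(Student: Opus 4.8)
The plan is to establish part~(a) first, since~(c) and~(d) are then formal consequences of the adjunction together with Propositions~\ref{prop:sep} and~\ref{prop:phi-split}, while~(b) drops out of the same analysis of the defining inverse system. Throughout I would abbreviate $\ul{\Xi}_n = G \circ \tau^{<n}$, where $G$ is the canonical grading functor of Theorem~\ref{thm:canongr}, so that $\ul{\Xi}(M) = \varprojlim_n \ul{\Xi}_n(M)$ with the limit formed degreewise in $\uRep(\cI)$.

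For~(a), the idea is to build the adjunction one ``level'' at a time and then pass to the limit. Fix $n \ge 1$ and let $\cC_n \subseteq \Rep(\cI)$ be the full subcategory of smooth modules on which $\cI_{\ge n}$ acts trivially. First I would check that every object of $\cC_n$ is a quotient of a direct sum of copies of $\tau^{<n}(\bA^{n-1})$, which has finite length (Proposition~\ref{prop:taufin}); hence $\cC_n \subseteq \Rep(\cI)^{\lf}$, and $G$ restricts to an equivalence $\cC_n \xrightarrow{\sim} \uRep(\cI)^{<n}$ (graded modules concentrated in degrees $<n$), with inverse $\Phi$. Since $\tau^{<n}(M) = M/\fa_n M$ is visibly the left adjoint of the inclusion $\cC_n \hookrightarrow \Rep(\cI)$, composing with $G$ gives a level-$n$ adjunction $\ul{\Xi}_n \dashv \Phi|_{\uRep(\cI)^{<n}}$. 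Then, for an arbitrary graded $N$, I would write $N = \varprojlim_n N^{<n}$ (degreewise) and use the compatibility $(\ul{\Xi}M)^{<n} \cong \ul{\Xi}_n(M)$ — so that any graded map $\ul{\Xi}(M) \to N^{<n}$ factors uniquely through $\ul{\Xi}_n(M)$ — to compute
\[
\uHom_\cI(\ul{\Xi}(M), N) = \varprojlim_n \uHom_\cI(\ul{\Xi}_n(M), N^{<n}) = \varprojlim_n \Hom_\cI(M, \Phi(N^{<n})) = \Hom_\cI(M, \Phi(N)),
\]
the last step using that $\Phi$ is continuous (Proposition~\ref{prop:Phi-cont}); naturality is then clear from the construction.

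The technical heart — and what I expect to be the main obstacle — is the compatibility $(\ul{\Xi}M)^{<n} \cong \ul{\Xi}_n(M)$, which amounts to understanding the inverse system $\{\tau^{<n}(M)\}_n$ with its canonical gradings. The key observation is that the kernel of $\tau^{<n+1}(M) \to \tau^{<n}(M)$ is $\tau_{\ge n}(M)/\tau_{\ge n+1}(M)$, whose composition factors are all isomorphic to $\bB^n$ by Proposition~\ref{prop:mutau}; since $\Ext^1_\cI(\bB^n, \bB^n) = 0$ in the smooth category (Corollary~\ref{cor:ext1}), this kernel is a direct sum of copies of $\bB^n$ and so lives entirely in degree $n$ under the canonical grading. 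Consequently, in each fixed degree $k$ the tower $\{G(\tau^{<n}(M))_k\}_n$ has isomorphic transition maps for $n > k$ and vanishes for $n \le k$. This simultaneously yields the truncation statement above and, for~(b), the formula $\ul{\Xi}(M)_k = G(\tau^{<k+1}(M))_k$; as $\tau^{<k+1}$ is exact (Proposition~\ref{prop:tau-exact}) and $G$ is an equivalence, each graded piece of $\ul{\Xi}$ is an exact functor of $M$, so $\ul{\Xi}$, and hence $\Xi = \Phi \circ \ul{\Xi}$, is exact.

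Finally, for~(c) and~(d) I would argue formally. The unit $\epsilon_M \colon M \to \Xi(M)$ of the adjunction is the map induced by the quotient maps $M \to \tau^{<n}(M)$, so $\ker(\epsilon_M) = \bigcap_n \tau_{\ge n}(M) = 0$ by Proposition~\ref{prop:sep}, giving injectivity; the triangle identity $\Phi(\eta_N) \circ \epsilon_{\Phi(N)} = \mathrm{id}$ then exhibits $\epsilon_{\Phi(N)}$ as a split monomorphism (the splitting claimed in~(c)) and $\Phi(\eta_N)$ as a split epimorphism, hence surjective; since $\Phi$ is exact and faithful it reflects surjections, so $\eta_N$ is surjective, and $0 \to \ker(\eta_N) \to \ul{\Xi}(\Phi(N)) \to N \to 0$ is a short exact sequence in $\uRep(\cI)$ that splits after applying $\Phi$, hence splits by Proposition~\ref{prop:phi-split}. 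Apart from the tower analysis flagged above, everything here is either a direct-limit manipulation or general adjunction nonsense.
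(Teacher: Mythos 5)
Your proof is correct, but it reorganizes the argument in a way worth noting. The paper establishes the adjunction in~(a) by writing down the unit and counit explicitly (the unit from the quotient maps $M \to \tau^{<n}(M)$, the counit by observing that $\tau^{<n}(\Phi(N)) \to \Phi(N_{<n})$ is homogeneous and passing to limits), leaving the triangle identities to the reader; you instead construct a level-$n$ adjunction between $G\circ\tau^{<n}$ and the restriction of $\Phi$ to graded modules supported in degrees $<n$, identify $\uHom_{\cI}(\ul{\Xi}(M),N)$ as an inverse limit of the level-$n$ $\Hom$ groups, and use continuity of $\Phi$ to close the computation. For~(b), the paper invokes exactness of $\tau^{<n}$ and surjectivity of the transition maps (so the degreewise inverse limit stays exact), which is shorter; your argument first establishes the degreewise stabilization of the tower $\{G(\tau^{<n}(M))\}_n$ by analyzing the kernel $\tau_{\ge n}(M)/\tau_{\ge n+1}(M)$ --- this is essentially the content of Proposition~\ref{prop:Xistab}, which the paper proves separately immediately afterward by a slightly different multiplicity computation --- and then reads off exactness from the identification $\ul{\Xi}(M)_k \cong G(\tau^{<k+1}(M))_k$. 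Your route buys the stabilization lemma and the adjunction simultaneously at the cost of the extra tower analysis; the paper's route is more economical here but defers the stabilization. One small point: you do not actually need the vanishing $\Ext^1_{\cI}(\bB^n,\bB^n)=0$ to place $\tau_{\ge n}(M)/\tau_{\ge n+1}(M)$ in degree $n$; since it is locally finite with $\mu_s=0$ for $s\ne n$, Proposition~\ref{prop:mu-umu} already gives $\dim G(K)_s = \mu_s(K)=0$ for $s\ne n$, no semisimplicity required. Parts~(c) and~(d) coincide with the paper's argument.
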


\begin{proof}
(a) Let $M$ be a smooth $\cI$-module. Since $M$ naturally maps to each $\tau^{<n}(M)$, it naturally maps to the smooth inverse limit $\Xi(M)$. This is the unit of the adjunction $(\ul{\Xi}, \Phi)$.

Suppose now that $N$ is a graded $\cI$-module. Recall that $N_{\ge n}=\sum_{k \ge n} N_k$ and $N_{<n}=N/N_{\ge n}$. The Jordan--H\"older constituents of $N_{<n}$ are the $\ul{\bB}^k$ with $k<n$, and similarly for $\Phi(N_{<n})$. Thus $\tau^{<n}(\Phi(N_{<n}))=\Phi(N_{<n})$, and so the natural map $\Phi(N) \to \Phi(N_{<n})$ induces a map $\tau^{<n}(\Phi(N)) \to \Phi(N_{<n})$. These are locally finite modules, and so this map is homogeneous with respect to the canonical gradings; note that the canonical grading on $\Phi(N_{<n})$ is simply $N_{<n}$. Taking inverse limits in the graded category, and using that $N=\varprojlim N_{<n}$, we obtain a map $\ul{\Xi}(\Phi(N)) \to N$. This is the counit of the adjoint $(\ul{\Xi}, \Phi)$.

We leave to the reader the verification that the unit and co-unit thus defined satisfy the requisite relations to give an adjunction. The adjunction $(\ul{\Xi}, \Phi)$ is very similar, and also left to the reader.

(b) Consider an exact sequence of smooth $\cI$-modules:
\begin{displaymath}
0 \to M_1 \to M_2 \to M_3 \to 0.
\end{displaymath}
Since the truncation functor $\tau^{<n}$ is exact (Proposition~\ref{prop:tau-exact}), we get an exact sequence of inverse systems
\begin{displaymath}
0 \to \{\tau^{<n}(M_1)\} \to \{\tau^{<n}(M_2)\} \to \{\tau^{<n}(M_3)\} \to 0.
\end{displaymath}
Since the transition maps in each of these systems is surjective, the inverse limit (in the graded category) remains exact. Thus $\ul{\Xi}$ is exact. Since $\Xi=\Phi \circ \ul{\Xi}$ is a composition of exact functors, it too is exact.

(c) The kernel of the map $M \to \tau^{<n}(M)$ is $\tau_{\ge n}(M)$. Thus the kernel of the map $M \to \Xi(M)$ is $\bigcap_{n \ge 1} \tau_{\ge n}(M)$, which vanishes (Proposition~\ref{prop:sep}). The claim that $\epsilon_M$ is split when $M$ admits a grading is proven below.

(d) By the definition of adjunction, the composition
\begin{displaymath}
\xymatrix{
\Phi(N) \ar[r]^-{\epsilon_{\Phi(N)}} & \Phi(\ul{\Xi}(\Phi(N)) \ar[r]^-{\Phi(\eta_N)} & \Phi(N) }
\end{displaymath}
is the identity. We thus see that $\epsilon_{\Phi(N)}$ is a split injection, proving the remaining claim from (c). Furthermore, we see that $\Phi(\eta_N)$ is a split surjection, which implies that $\eta_N$ is a split surjection by Proposition~\ref{prop:phi-split}.
\end{proof}

\begin{corollary} \label{cor:phiinj}
The forgetful functor $\Phi$ takes injective objects of $\uRep(\cI)$ to injective objects of $\Rep(\cI)$.
\end{corollary}

We now prove a few simple facts about $\Xi$.

\begin{proposition} \label{prop:Xistab}
Let $M$ be a smooth $\cI$-module and let $n \ge r$. Then the natural map $\ul{\Xi}(M)_r \to \tau^{\le n}(M)_r$ on degree $r$ pieces is an isomorphism.
\end{proposition}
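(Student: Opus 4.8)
The plan is to reduce the statement to a pro-constancy property of the tower defining $\ul{\Xi}(M)$. Recall that $\tau^{\le n}(M)=\tau^{<n+1}(M)$, that $\ul{\Xi}(M)$ is by definition the inverse limit in $\uRep(\cI)$ of the tower
\begin{displaymath}
\cdots \longrightarrow \tau^{<m+1}(M) \longrightarrow \tau^{<m}(M) \longrightarrow \cdots \qquad (m \ge 1),
\end{displaymath}
where each term carries its canonical grading (which exists since $\tau^{<m}(M)$ is locally of finite length by Proposition~\ref{prop:taufin}, hence gradable by Theorem~\ref{thm:canongr}), and that the abelian structure on $\uRep(\cI)$ is computed in each degree, so $\ul{\Xi}(M)_r=\varprojlim_m \tau^{<m}(M)_r$ and the natural map to $\tau^{\le n}(M)_r=\tau^{<n+1}(M)_r$ is the projection to the $(n+1)$st term. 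Thus it suffices to show that the transition map $\tau^{<m+1}(M)_r \to \tau^{<m}(M)_r$ is an isomorphism whenever $m>r$: then the degree-$r$ part of the tower is pro-constant past index $r+1$, and the projection from its limit to the term of index $n+1$ is an isomorphism for every $n \ge r$.

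To analyze a single transition map, I would observe that, since $\tau_{\ge m+1}(M)\subseteq \tau_{\ge m}(M)$, the map $\tau^{<m+1}(M)\to\tau^{<m}(M)$ is surjective with kernel $K_m:=\tau_{\ge m}(M)/\tau_{\ge m+1}(M)$, and then use the composition law $\tau_{\ge m+1}(\tau_{\ge m}(M))=\tau_{\ge m+1}(M)$ from \S\ref{ss:trunc} to rewrite $K_m=\tau^{<m+1}(\tau_{\ge m}(M))$. Now Proposition~\ref{prop:mutau} gives $\mu_s(K_m)=0$ for $s\ge m+1$ (it is a $\tau^{<m+1}$) and $\mu_s(\tau_{\ge m}(M))=0$ for $s<m$, hence $\mu_s(K_m)=0$ for $s<m$ as well since $K_m$ is a subquotient of $\tau_{\ge m}(M)$ (Proposition~\ref{prop:multprop}(b)); and $K_m$ is locally of finite length by Proposition~\ref{prop:taufin}. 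Therefore $K_m$ is a locally finite module all of whose composition factors are $\bB^m$, and by Proposition~\ref{prop:mu-umu} its canonical grading is concentrated in degree $m$, i.e. $(K_m)_r=0$ for $r\ne m$. Finally, the short exact sequence $0\to K_m\to\tau^{<m+1}(M)\to\tau^{<m}(M)\to 0$ has all terms locally finite, so by Theorem~\ref{thm:canongr} it is exact in $\uRep(\cI)^{\lf}$ for the canonical gradings, hence exact in each degree; in degree $r\ne m$ the kernel vanishes and the transition map is an isomorphism there. This covers all $m>r$, which is what we needed.

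The argument is mostly bookkeeping with the results of \S\ref{ss:trunc} and \S\ref{ss:xi}, but there are two points where I would be careful. First, one must know that the maps comprising the tower really are the canonical quotient maps and that they are homogeneous for the canonical gradings; both follow from Theorem~\ref{thm:canongr}, which makes $\Phi\colon\uRep(\cI)^{\lf}\to\Rep(\cI)^{\lf}$ an isomorphism of categories, so every smooth $\cI$-linear map between locally finite modules is automatically homogeneous. Second, and this is the only genuinely subtle step, one should pin down the composition factors of $K_m$ \emph{without} subtracting multiplicities of $\tau_{\ge m}(M)$ and $\tau_{\ge m+1}(M)$, since these may be infinite when $M$ is not finitely generated; presenting $K_m$ as $\tau^{<m+1}(\tau_{\ge m}(M))$ and invoking Proposition~\ref{prop:mutau} directly avoids this.
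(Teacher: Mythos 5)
Your proof is correct and takes essentially the same approach as the paper: both reduce to showing the degree-$r$ part of the tower $\{\tau^{<m}(M)\}_m$ is eventually constant by combining Proposition~\ref{prop:mutau} with Proposition~\ref{prop:mu-umu}. The paper compares $\tau^{\le n}(M)$ directly to $\tau^{\le r}(M)$ via the single exact sequence $0 \to \tau_{>r}(\tau^{\le n}(M)) \to \tau^{\le n}(M) \to \tau^{\le r}(M) \to 0$, whereas you analyze consecutive transition maps and their kernels $K_m$; these are just the telescoped and un-telescoped forms of the same argument.
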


\begin{proof}
Since limits are computed degreewise for graded modules, $\ul{\Xi}(M)_r$ is the inverse limit of the system $\{ \tau^{\le n}(M)_r \}_{n \ge 0}$. Consider the exact sequence
\begin{displaymath}
0 \to \tau_{>r}(\tau^{\le n}(M)) \to \tau^{\le n}(M) \to \tau^{\le r}(M) \to 0.
\end{displaymath}
By Proposition~\ref{prop:mutau}, the left term has $\mu_r=0$, and so by Proposition~\ref{prop:mu-umu} it has $\umu_r=0$. Thus the right map is an isomorphism on degree $r$ pieces, and so the inverse system defining $\ul{\Xi}(M)_r$ is constant for $n \ge r$. This proves the proposition.
\end{proof}

\begin{proposition} \label{prop:Ximult}
Let $M$ be a smooth $\cI$-module. Then $\umu_r(\ul{\Xi}(M))=\mu_r(M)$ for all $r \ge 0$.
\end{proposition}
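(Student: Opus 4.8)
The plan is to chain together the four results that immediately precede the statement, each of which handles one link between $\umu_r(\ul{\Xi}(M))$ and $\mu_r(M)$. Since $\umu_r$ of a graded module is just the dimension of its degree $r$ piece, the quantity $\umu_r(\ul{\Xi}(M))$ is $\dim(\ul{\Xi}(M)_r)$, so the whole statement is really a computation of that dimension.

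First I would fix an integer $n \ge r$ and invoke Proposition~\ref{prop:Xistab}, which says the natural map $\ul{\Xi}(M)_r \to \tau^{\le n}(M)_r$ is an isomorphism; hence $\umu_r(\ul{\Xi}(M)) = \umu_r(\tau^{\le n}(M))$. Next, $\tau^{\le n}(M)$ is locally of finite length by Proposition~\ref{prop:taufin}, so Proposition~\ref{prop:mu-umu} applies and gives $\umu_r(\tau^{\le n}(M)) = \mu_r(\tau^{\le n}(M))$. Finally, reading $\tau^{\le n} = \tau^{<n+1}$ and using $r < n+1$, Proposition~\ref{prop:mutau} gives $\mu_r(\tau^{\le n}(M)) = \mu_r(M)$. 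Concatenating these three equalities yields $\umu_r(\ul{\Xi}(M)) = \mu_r(M)$, as desired.

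There is no real obstacle here: the proposition is a short bookkeeping argument once Propositions~\ref{prop:Xistab}, \ref{prop:taufin}, \ref{prop:mu-umu}, and \ref{prop:mutau} are available. The only points requiring the tiniest bit of care are (i) using a single $n \ge r$ (rather than passing to a limit), which is exactly what Proposition~\ref{prop:Xistab} is designed to permit, and (ii) translating between the $\tau^{<\bullet}$ and $\tau^{\le \bullet}$ notations so that the degree bound in Proposition~\ref{prop:mutau} is applied correctly. I would write the proof as essentially the one-line chain of equalities
\begin{displaymath}
\umu_r(\ul{\Xi}(M)) = \umu_r(\tau^{\le n}(M)) = \mu_r(\tau^{\le n}(M)) = \mu_r(M),
\end{displaymath}
valid for any $n \ge r$, with a parenthetical citation of the relevant result after each equality.
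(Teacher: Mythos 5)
Your proof is correct and is essentially identical to the paper's: the paper uses the same three-link chain of equalities, citing Propositions~\ref{prop:Xistab}, \ref{prop:mu-umu}, and \ref{prop:mutau} in the same order, merely taking $n = r$ rather than keeping $n \ge r$ general.
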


\begin{proof}
We have
\begin{displaymath}
\umu_r(\ul{\Xi}(M))=\umu_r(\tau^{\le r}(M))=\mu_r(\tau^{\le r}(M))=\mu_r(M),
\end{displaymath}
where in the first step we used Proposition~\ref{prop:Xistab}, in the second Proposition~\ref{prop:mu-umu}, and in the third Proposition~\ref{prop:mutau}.
\end{proof}

\subsection{Computing $\ul{\Xi}$} \label{ss:computexi}

We now compute $\ul{\Xi}$ on standard modules. This will take a fair amount of work.

\begin{proposition} \label{prop:A1xi}
Let $M$ be a smooth $\cI$-module. We have a natural isomorphism
\begin{displaymath}
\ul{\Xi}(\ul{\bA}^1 \odot M) \cong \left[ \ul{\bA}^1 \odot \ul{\Xi}(M) \right] \oplus \left[ \ul{\bB}^0 \otimes M_{\cI} \right]
\end{displaymath}
Moreover, if $\iota(M) \colon M \to \Xi(M)$ denotes the canonical morphism, then the diagram
\begin{displaymath}
\xymatrix{
& \ul{\bA}^1 \odot M \ar[rd]^{\iota(\ul{\bA}^1 \odot M)} \ar[ld]_{[\id \odot \iota(M)] \oplus a} \\
\left[ \ul{\bA}^1 \odot \ul{\Xi}(M) \right] \oplus \left[ \ul{\bB}^0 \otimes M_{\cI} \right] \ar@{=}[rr] &&
\Xi(\ul{\bA}^1 \odot M) }
\end{displaymath}
commutes, where the bottom equality is the natural isomorphism and $a$ is the map taking $e_i \odot x$ to the image of $x$ in $M_{\cI}$.
\end{proposition}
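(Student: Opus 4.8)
The plan is to produce the isomorphism $\theta$ by a universal-property argument that forces the diagram to commute, and then to verify that $\theta$ is an isomorphism by reducing to the finitely generated case and carrying out one explicit truncation computation. For the construction, note that $\Phi(\ul{\bA}^1 \odot \ul{\Xi}(M)) = \ul{\bA}^1 \odot \Phi(\ul{\Xi}(M)) = \ul{\bA}^1 \odot \Xi(M)$, so $\id_{\ul{\bA}^1} \odot \iota(M)$ is a morphism $\ul{\bA}^1 \odot M \to \Phi(\ul{\bA}^1 \odot \ul{\Xi}(M))$ in $\Rep(\cI)$. By the adjunction $(\ul{\Xi}, \Phi)$ of Proposition~\ref{prop:xi} it factors uniquely through the unit $\iota(\ul{\bA}^1 \odot M)$, giving a graded map $\phi \colon \ul{\Xi}(\ul{\bA}^1 \odot M) \to \ul{\bA}^1 \odot \ul{\Xi}(M)$ with $\Phi(\phi) \circ \iota(\ul{\bA}^1 \odot M) = \id \odot \iota(M)$; likewise the coinvariants map $a$ factors through a graded map $\psi \colon \ul{\Xi}(\ul{\bA}^1 \odot M) \to \ul{\bB}^0 \otimes M_{\cI}$. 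Set $\theta = \phi \oplus \psi$; the displayed triangle then commutes by construction, and $\theta$ is natural in $M$ because $\iota$ and $a$ are.

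It remains to show $\theta$ is an isomorphism. The functors $\ul{\Xi}$ (a left adjoint), $\ul{\bA}^1 \odot (-)$, and $(-)_{\cI}$ are all cocontinuous, hence so are both sides of the asserted isomorphism; since $\theta$ is natural it commutes with filtered colimits, and writing $M$ as the filtered union of its finitely generated submodules reduces us to $M$ finitely generated. In that case $\mu_r(M) < \infty$ for all $r$ (Proposition~\ref{prop:mufin}), so both graded objects have finite-dimensional pieces, and one can compare Hilbert functions: the left side has $\dim \ul{\Xi}(\ul{\bA}^1 \odot M)_r = \mu_r(\ul{\bA}^1 \odot M)$ (Proposition~\ref{prop:Ximult}), equal to $\mu_0(M)$ for $r = 0$ and $\mu_0(M) + \cdots + \mu_{r-1}(M)$ for $r \ge 1$ (Proposition~\ref{prop:multA1cat}), while on the right $\ul{\bB}^0 \otimes M_{\cI}$ contributes $\dim M_{\cI} = \mu_0(M)$ in degree $0$ and $[\ul{\bA}^1 \odot \ul{\Xi}(M)]_r = \bigoplus_{i=1}^{r} \ul{\Xi}(M)_{r-i}$ has dimension $\sum_{k=0}^{r-1} \mu_k(M)$, again by Proposition~\ref{prop:Ximult}. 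These agree in every degree, so it is enough to show $\theta$ is surjective (equivalently, injective) in each degree.

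For this last point, degree $0$ is immediate: by construction $\Phi(\psi)$ restricts to the natural isomorphism $(\ul{\bA}^1 \odot M)_{\cI} \cong M_{\cI}$ of Proposition~\ref{prop:mu0cat}, using the identification $\ul{\Xi}(N)_0 = \tau^{<1}(N) = N_{\cI}$ from Proposition~\ref{prop:Xistab}, so $\theta$ already surjects onto the $\ul{\bB}^0 \otimes M_{\cI}$ summand. In positive degrees one needs $\phi$ to be surjective, and here, after using Proposition~\ref{prop:Xistab} to replace $\ul{\Xi}(-)_r$ by $\tau^{\le r}(-)_r$, the content is an explicit description of the graded module $\tau^{<n}(\ul{\bA}^1 \odot M) = (\ul{\bA}^1 \odot M)_{\cI_{\ge n}}$: computing in the staircase basis $\{e_i \odot (-)\}_{i \ge 1}$ of $\ul{\bA}^1 \odot M$, the relation $\fa_n(\ul{\bA}^1 \odot M) = 0$ collapses the $i$th slab $e_i \odot M$ to $e_i \odot \tau^{<n-i}(M)$ for $i < n$ and identifies all the slabs with $i \ge n$, together with the factor $M$ inside them, down to a single copy of $M_{\cI}$ in degree $0$; passing to the inverse limit over $n$ (graded pieces stabilize by Proposition~\ref{prop:Xistab}) exhibits $\ul{\Xi}(\ul{\bA}^1 \odot M)$ as $[\ul{\bB}^0 \otimes M_{\cI}] \oplus [\ul{\bA}^1 \odot \ul{\Xi}(M)]$ compatibly with $\theta$. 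I expect this truncation computation — equivalently, establishing the degreewise surjectivity (or injectivity) of $\theta$ — to be the one genuinely laborious step; the construction of $\theta$, the reduction to finitely generated modules, and the Hilbert-function count are all routine given the results already in hand.
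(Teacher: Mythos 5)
Your construction of $\theta = \phi \oplus \psi$ via the adjunction $(\ul{\Xi}, \Phi)$ is a genuinely different and more elegant route than the paper's: the paper hand-builds maps $f_n \colon N/F^nN \to N_{\cI_{\ge n}}$ and $g_n \colon M_{\cI} \to N_{\cI_{\ge n}}$ (with $N = \ul{\bA}^1 \odot M$ and $F^nN$ an auxiliary filtration), spends two full steps verifying that the $f_n$ are well-defined and $\cI$-linear, passes to the inverse limit, and only at the very end checks that the resulting isomorphism makes the stated triangle commute. Your adjunction argument produces the maps, and the commutativity of the diagram, for free, and your reduction to finitely generated $M$ plus the Hilbert-function count line up with the paper's Step~5.

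The weak point is the last step, and while you have correctly located it, you have under-estimated how little explicit work it allows you to skip. The paper orients its isomorphism in the opposite direction---\emph{into} $\ul{\Xi}(\ul{\bA}^1 \odot M)$, not out of it---and with the explicit formulas for $f_n$ and $g_n$ in hand, surjectivity falls out of the one-line identity $[e_i \odot x]_n = [(e_i - e_n)\odot x]_n + [e_n \odot x]_n$. With your orientation, you need degreewise surjectivity of $\theta$ onto the factorized target, and $\theta$ is only known to you through an adjunction. Your slab-by-slab description of $\tau^{<n}(\ul{\bA}^1 \odot M)$ is correct, but the closing assertion that passing to the inverse limit "exhibits $\ul{\Xi}(\ul{\bA}^1 \odot M)$ as the stated direct sum compatibly with $\theta$" is exactly the unproved claim. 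To verify it you would have to unwind the adjunction far enough to describe $\Phi(\phi)$ and $\Phi(\psi)$ on actual cosets in $\tau^{\le r}(\ul{\bA}^1 \odot M)_r$---which is, in substance, the bookkeeping the paper performs in its Steps~1--3, merely moved to a different place in the argument. The degree-$0$ case you did handle correctly, since $\psi_0 \circ \pi = a$ and $\pi$ is a surjection, but for $r \ge 1$ no such shortcut is available. So: the outline is sound and the construction is cleaner than the paper's, but the genuinely laborious step you flag is not filled in, and filling it in would recover essentially the same explicit truncation computation the adjunction was meant to avoid.
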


\begin{proof}
The proof is rather lengthy, and divided into six steps:
\begin{itemize}
\item Step 1: study of a filtration on $\ul{\bA}^1 \odot M$.
\item Step 2: definition of the map $f \colon \ul{\bA}^1 \odot \ul{\Xi}(M) \to \ul{\Xi}(\ul{\bA}^1 \odot M)$.
\item Step 3: definition of the map $g \colon \ul{\bB}^0 \otimes M_{\cI} \to \ul{\Xi}(\ul{\bA}^1 \odot M)$.
\item Step 4: proof that $f \oplus g$ is surjective.
\item Step 5: proof that $f \oplus g$ is an isomorphism.
\item Step 6: proof that the diagram in the statement of the proposition commutes.
\end{itemize}
We use the following notation in this proof: for an element $x$ of an $\cI$-module $K$, we let $[x]_n$ denote the image of $x$ in the truncation $\tau^{<n}(M)$. Recall that if $n>0$ then $\tau^{<n}(M)$ is the coinvariant space $M_{\cI_{\ge n}}$, while if $n \le 0$ then $\tau^{<n}(M)=0$. We also put $N=\ul{\bA}^1 \odot M$. 

\textbf{\textit{Step 1.}} 
For $n \ge 1$, let $F^nN$ be the subspace of $N$ given by $\bigoplus_{i \ge 1} \left( e_i \odot \tau_{\ge n-i}(M) \right)$. We claim that $F^n N$ is an $\cI$-submodule of $N$. Indeed, suppose $x \in \tau_{\ge n-i} (M)$. If $j \le i$ then $\alpha_j (e_i \odot x)=e_{i+1} \odot x$, and this belongs to $e_{i+1} \odot \fa_{n-i-1} M$ since $\tau_{\ge n-i}(M) \subset \tau_{\ge n-i-1}(M)$. If $j>i$ then $\alpha_j (e_i \odot x)=e_i \odot (\alpha_{j-i} x)$, and this belongs to $e_i \odot \tau_{\ge n-i}(M)$ since $\tau_{\ge n-i}(M)$ is an $\cI$-submodule of $M$. This establishes the claim.

We now examine the inverse limit of the system $N/F^n N$. We have
\begin{displaymath}
N/F^n N = \bigoplus_{i=1}^{n-1} e_i \odot \tau^{<n-i}(M).
\end{displaymath}
This module is locally finite, and thus admits a canonical grading. We have a natural map
\begin{equation} \label{eq:A1xi-1}
\ul{\bA}^1 \odot \ul{\Xi}(M) \to \big( \ul{\bA}^1/\sum_{i \ge n} \bk e_i \big) \odot M_{\cI_{\ge n}} \to N/F^n N
\end{equation}
by using the natural maps $M_{\cI_{\ge n}} \to M_{\cI_{\ge n-i}}$. One readily verifies that this is $\cI$-linear. Moreover, it respects the gradings, as the middle group is locally finite and thus canonically graded. It is clear that the above maps are compatible with the maps $N/F^{n+1} N \to N/F^n N$. We thus get a natural map
\begin{displaymath}
\ul{\bA}^1 \odot \ul{\Xi}(M) \to \varprojlim N/F^n N
\end{displaymath}
of graded $\cI$-modules. We claim this map is an isomorphism. It suffices to show that it is an isomorphism in each degree; thus fix $d \ge 0$, and let $n \ge d$. One easily verifies that the degree $d$ piece of $N/F^n N$ is $\bigoplus_{i=1}^d e_i \odot (M_{\cI_{\ge n-i}})_{d-i}$. Similarly, the degree $d$ piece of $\ul{\bA}^1 \odot \Xi(M)$ is $\bigoplus_{i=1}^d e_i \odot \Xi(M)_{d-i}$. The map $\Xi(M)_{d-i} \to (M_{\cI_{\ge n-i}})_{d-i}$ is an isomorphism by Proposition~\ref{prop:Xistab}, as $n-i \ge d-i$. We thus see that the degree $d$ piece of the map \eqref{eq:A1xi-1} is an isomorphism for $n \ge d$, which yields the claim.

\textbf{\textit{Step 2.}} 
Consider the map
\begin{displaymath}
\wt{f}_n \colon N \to N_{\cI_{\ge n}}, \qquad
\wt{f}_n(e_i \odot x) = [(e_i-e_n) \odot x]_n.
\end{displaymath}
We note that $[e_n \odot x]_n$ is $\cI$-invariant. Indeed it is invariant under $\alpha_i$ for $i \ge n$ by definition, while if $i<n$ then
\begin{displaymath}
\alpha_i [e_n \odot x]_n=[e_{n+1} \odot x]_n=\alpha_n [e_n \odot x]_n=[e_n \odot x]_n
\end{displaymath}
In particular, we see that $[e_m \odot x]_n=[e_n \odot x]_n$ for $m \ge n$, as the left side is obtained from the right by applying $\alpha_n^{m-n}$.

We claim that $\wt{f}_n$ is $\cI$-linear. It is clear that $e_i \odot x \mapsto [e_i \odot x]_n$ is $\cI$-linear, so it suffices to show that $h \colon e_i \odot x \mapsto [e_n \odot x]_n$ is $\cI$-linear. Thus fix $i$ and $x$, and let us show that $h(\alpha_j (e_i \odot x))=h(e_i \odot x)$ for all $j \in \bN$. First suppose that $j \le i$. Then
\begin{displaymath}
h(\alpha_j (e_i \odot x))=h(e_{i+1} \odot x)=[e_n \odot x]_n=h(e_i \odot x).
\end{displaymath}
Now suppose that $j>i$. Then
\begin{displaymath}
h(\alpha_j (e_i \odot x))=h(e_i \odot \alpha_{j-i}x)=[e_n \odot \alpha_{j-i}x]_n=\alpha_{n+j-i} [e_n \odot x]_n=h(e_i \odot x).
\end{displaymath}
This verifies the claim that $\wt{f}_n$ is $\cI$-linear.

We now claim that $\wt{f}_n$ kills $F^n N$. Thus we must show that $\wt{f}_n(e_i \odot x)=0$ for all $i \ge 1$ and $x \in \tau_{\ge n-i}(M)$. If $i \ge n$ then
\begin{displaymath}
\wt{f}_n(e_i \odot x)=[(e_i-e_n) \odot x]_n=[e_i \odot x]_n - [e_n \odot x]_n=0,
\end{displaymath}
as explained above. Suppose $i<n$. It suffices to treat the case where $x=(\alpha_j-1) y$ for some $j \ge n-i$ and $y \in M$. We have
\begin{displaymath}
[e_i \odot (\alpha_j-1)y]_n=[e_i \odot \alpha_j y]_n-[e_i \odot y]_n=\alpha_{i+j} [e_i \odot y]_n - [e_i \odot y]_n=0,
\end{displaymath}
as $i+j \ge n$, and so $\alpha_{i+j}$ acts trivially on $M_{\cI_{\ge n}}$. The same reasoning shows that $[e_n \odot (\alpha_j-1)y]_n=0$, and so $\wt{f}_n(e_i \odot (\alpha_j-1)y)=0$. The claim follows.

We thus see that $\wt{f}_n$ induces an $\cI$-linear map
\begin{displaymath}
f_n \colon N/F^n N \to N_{\cI_{\ge n}}.
\end{displaymath}
We note that the domain and target are both locally finite, and thus carry a canonical grading, which is repsected by $f_n$. We claim that the diagram
\begin{displaymath}
\xymatrix{
N/F^{n+1} N \ar[d] \ar[r]^{f_{n+1}} & N_{\cI_{\ge n+1}} \ar[d] \\
N/F^n N \ar[r]^{f_n} & N_{\cI_{\ge n}} }
\end{displaymath}
commutes. Indeed, let $e_i \odot x \in N$ be given. Taking its class in the top left group and mapping to the left gives $[(e_i-e_{n+1}) \odot x]_n$; mapping down gives $[(e_i-e_{n+1}) \odot x]_n=[(e_i-e_n) \odot x]_n$. This is exactly what we get by first mapping down and then to the right. We thus see that the $f_n$'s form a map of inverse systems. Taking inverse limits, we thus get a map
\begin{displaymath}
f \colon \ul{\bA}^1 \odot \ul{\Xi}(M) \to \ul{\Xi}(\ul{\bA}^1 \odot M)
\end{displaymath}
of graded $\cI$-modules.

\textbf{\textit{Step 3.}}
Define a map
\begin{displaymath}
\wt{g}_n \colon M \to N_{\cI_{\ge n}}, \qquad \wt{g}_n(x)=[e_n \odot x]_n.
\end{displaymath}
For $i \ge 1$, we have
\begin{displaymath}
\wt{g}_n((\alpha_i-1)x)=[e_n \odot ((\alpha_i-1)x]_n=(\alpha_{n+i-1}-1) [e_n \odot x]_n=0.
\end{displaymath}
We thus see that $\wt{g}_n$ induces a map
\begin{displaymath}
g_n \colon M_{\cI} \to N_{\cI_{\ge n}}.
\end{displaymath}
As $\cI$ acts trivially on the source and the image lands in the $\cI$-invariants of the target (by the computations from Step~1), we see that $g_n$ is $\cI$-linear. As both the source and target of $g_n$ are locally finite, they carry the canonical grading, which $g_n$ respects; in fact, the source is concentrated in degree~0 since $\cI$ acts trivially on it. As $[e_{n+1} \odot x]_n=[e_n \odot x]_n$, we see that the diagram
\begin{displaymath}
\xymatrix{ && N_{\cI_{\ge n+1}} \ar[d] \\
M_{\cI} \ar[urr]^{g_{n+1}} \ar[rr]^{g_n} && N_{\cI_{\ge n}} }
\end{displaymath}
commutes. Thus the $g_n$ induce a map
\begin{displaymath}
g \colon M_{\cI} \to \Xi(\ul{\bA}^1 \odot M)
\end{displaymath}
of graded $\cI$-modules, where $M_{\cI}$ is concentrated in degree~0. We could denote the domain of $g$ by $\ul{\bB}^0 \otimes M_{\cI}$ to more clearly express its structure.

\textbf{\textit{Step 4.}}
Combining Steps~1 and~2, we have a map
\begin{displaymath}
f \oplus g \colon (\ul{\bA}^1 \odot \ul{\Xi}(M)) \oplus (\ul{\bB}^0 \otimes M_{\cI}) \to \ul{\Xi}(N).
\end{displaymath}
We claim that this map is surjective. Let $d \ge 0$ be given, and let us show that it is surjective in degree $d$. Let $n \ge d$. Since the map $\ul{\Xi}(N)_d \to (N_{\cI_{\ge n}})_d$ is an isomorphism (Proposition~\ref{prop:Xistab}), it suffices to show that the composition of the above map with this one is surjective in degree $d$. Thus, it suffices to show that the map
\begin{displaymath}
f_n \oplus g_n \colon N/F^n N \oplus M_{\cI} \to N_{\cI_{\ge n}}
\end{displaymath}
is surjective. For $x \in M$, we have
\begin{displaymath}
[e_i \odot x]_n = [(e_i-e_n) \odot x]_n + [e_n \odot x]_n.
\end{displaymath}
As the first term on the right belongs to the image of $f_n$, and the second to the image of $g_n$, the claim follows.

\textbf{\textit{Step 5.}}
We now aim to show that $f \oplus g$ is an isomorphism. Since the completion and coinvariant functors commute with filtered colimits, it suffices to treat the case where $M$ is finitely generated. As we have already shown that $f \oplus g$ is surjective, it thus suffices to show that the graded pieces of the domain and target have equal dimensions. For $r \ge 0$, we have
\begin{displaymath}
\umu_r(\ul{\bA}^1 \odot \ul{\Xi}(M))=\sum_{i=0}^{r-1} \umu_i(\ul{\Xi}(M))=\sum_{i=0}^{r-1} \mu_i(M);
\end{displaymath}
the first equality follows from the definition of the concatenation product, while the second follows from Proposition~\ref{prop:Ximult}. Obviously, $\umu_r(\ul{\bB}^0 \otimes M_{\cI})$ is $\dim{M_{\cI}}=\mu_0(M)$ for $r=0$, and vanishes otherwise. On the other hand, we have
\begin{displaymath}
\umu_r(\ul{\Xi}(\ul{\bA}^1 \odot M))=\mu_r(\ul{\bA}^1 \odot M)=
\begin{cases}
\mu_0(M) & \text{if $r=0$} \\
\mu_0(M) + \cdots + \mu_{r-1}(M) & \text{if $r>0$} \end{cases}
\end{displaymath}
by Propositions~\ref{prop:Ximult} and~\ref{prop:multA1cat}. We thus see that the value of $\umu_r$ agrees on the domain and target, as required.

\textbf{\textit{Step 6.}}
Finally, we show that the diagram in the statement of the proposition commutes. For this, it suffices to show that the diagram
\begin{displaymath}
\xymatrix{ & \ul{\bA}^1 \odot M \ar[ld]_{p \oplus a} \ar[rd]^q \\
N/F^n N \oplus M_{\cI} \ar[rr]^{f_n \oplus g_n} && N_{\cI_{\ge n}} }
\end{displaymath}
commutes, where $p$ and $q$ denote the canonical maps, and $a$ is as in the statement of the proposition. Thus let $e_i \odot x$ be a given element of $\ul{\bA}^1 \odot M$. As $f_n \circ p$ is simply $\wt{f}_n$, we have $f_n(p(e_i \odot x))=[(e_i-e_n) \odot x]_n$. Similarly, as $g_n \circ a$ is simply $\wt{g}_n$, we have $g_n(a(e_i \odot x))=[e_n \odot x]_n$. We thus see that by mapping $e_i \odot x$ under $p \oplus a$ follows by $f_n \oplus g_n$, we get $[(e_i-e_n) \odot x]_n + [e_n \odot x]_n = [e_i \odot x]_n$, which is exactly the image of $e_i \odot x$ under $q$. This completes the proof.
\end{proof}

\begin{proposition} \label{prop:L1xi}
Let $M$ be a smooth $\cI$-module. Then we have a natural isomorphism
\begin{displaymath}
\ul{\Xi}(\ul{\bB}^1 \odot M) \cong \ul{\bB}^1 \odot \ul{\Xi}(M).
\end{displaymath}
Moreover, the diagram
\begin{displaymath}
\xymatrix{
& \ul{\bB}^1 \odot M \ar[ld]_{i(\ul{\bB}^1 \odot M)} \ar[rd]^{\id \odot i(M)} \\
\Xi(\ul{\bB}^1 \odot M) \ar@{=}[rr] && \ul{\bB}^1 \odot M }
\end{displaymath}
commutes, with notation as in Proposition~\ref{prop:A1xi}.
\end{proposition}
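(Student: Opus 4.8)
The plan is to follow the skeleton of the proof of Proposition~\ref{prop:A1xi}, but the argument is much shorter: since $\ul{\bB}^1$ is one-dimensional and $\alpha_1$ acts by zero on $\ul{\bB}^1 \odot M$, none of the six-step machinery is needed and there is no analogue of the stray $\ul{\bB}^0 \otimes M_{\cI}$ summand. Write $N=\ul{\bB}^1 \odot M$, with the vector-space identification $m \mapsto e_1 \odot m$, where $e_1$ is the basis vector of $\ul{\bB}^1$; by the definition of the concatenation product, $\alpha_1 \cdot (e_1 \odot m)=0$ and $\alpha_k \cdot (e_1 \odot m)=e_1 \odot (\alpha_{k-1} m)$ for $k \ge 2$. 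Note also that $\ul{\bB}^1 \odot -$ is exact (on underlying vector spaces it is $e_1 \odot M \cong M$) and that $N$ is smooth, so $\tau^{<n}(N)$ is locally finite (Proposition~\ref{prop:taufin}).

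The key step is the identity
\begin{displaymath}
\fa_n N = \ul{\bB}^1 \odot (\fa_{n-1} M) \qquad (n \ge 2),
\end{displaymath}
together with the trivial equality $\tau^{<1}(N)=0=\ul{\bB}^1 \odot \tau^{<0}(M)$ (here $\fa_1 N = N$ since $\alpha_1-1$ acts as $-\id$). For the inclusion $\supseteq$ one checks that $e_1 \odot (\alpha_j-1)m=(\alpha_{j+1}-1)(e_1 \odot m)$ for $j \ge 1$ (using the displayed action and $j+1 \ge 2$); since $\ul{\bB}^1 \odot (\fa_{n-1}M)$ is an $\cI$-submodule of $N$ (Proposition~\ref{prop:ideal} and its corollary) generated over $\cI$ by such elements with $j \ge n-1$, and $j+1 \ge n$, this gives $\supseteq$. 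For $\subseteq$, the generators $(\alpha_i-1)(e_1 \odot m)$ of $\fa_n N$, with $i \ge n \ge 2$, equal $e_1 \odot (\alpha_{i-1}-1)m \in \ul{\bB}^1 \odot (\fa_{n-1}M)$. Passing to quotients and using exactness of $\ul{\bB}^1 \odot -$, this yields, for every $n \ge 1$, a natural isomorphism
\begin{displaymath}
\tau^{<n}(N) \stackrel{\sim}{\longrightarrow} \ul{\bB}^1 \odot \tau^{<n-1}(M), \qquad [e_1 \odot m]_n \longmapsto e_1 \odot [m]_{n-1},
\end{displaymath}
which is manifestly compatible with the transition maps $\tau^{<n+1} \to \tau^{<n}$ on both sides. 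Both sides are locally finite, and a locally finite graded $\cI$-module carries a unique grading (Theorem~\ref{thm:canongr}), so this isomorphism is automatically homogeneous.

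Now take the inverse limit over $n$ in $\uRep(\cI)$, which is computed degreewise. Since $(\ul{\bB}^1 \odot K)_d=K_{d-1}$ for $d \ge 1$ and $=0$ for $d=0$, and since $n \mapsto n-1$ is cofinal, the functor $\ul{\bB}^1 \odot -$ commutes with this limit; hence
\begin{displaymath}
\ul{\Xi}(N)=\varprojlim_n \tau^{<n}(N) \cong \varprojlim_n \ul{\bB}^1 \odot \tau^{<n-1}(M) \cong \ul{\bB}^1 \odot \varprojlim_m \tau^{<m}(M) = \ul{\bB}^1 \odot \ul{\Xi}(M),
\end{displaymath}
which is the asserted isomorphism. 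Finally, the commutativity of the diagram in the statement is immediate from the explicit form of the isomorphism: chasing $e_1 \odot m$ through $i(\ul{\bB}^1 \odot M)$ (the family of maps $N \to \tau^{<n}(N)$) and then the isomorphism above gives the tuple $\big(e_1 \odot [m]_{n-1}\big)_n = e_1 \odot i(M)(m)$, which is precisely $(\id_{\ul{\bB}^1} \odot\, i(M))(e_1 \odot m)$.

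The only mildly delicate points are the bookkeeping in the displayed $\fa_n$-identity — a direct computation with the fundamental relation (Proposition~\ref{prop:alpharel}) — and the index shift $n \mapsto n-1$ when passing to the limit; I do not expect any genuine obstacle, and in particular the present statement is far easier than its $\ul{\bA}^1$ counterpart, Proposition~\ref{prop:A1xi}.
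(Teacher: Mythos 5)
Your proof is correct and follows essentially the same route as the paper's: both establish $\tau_{\ge n}(\ul{\bB}^1 \odot M) = \ul{\bB}^1 \odot \tau_{\ge n-1}(M)$ (equivalently your $\fa_n N = \ul{\bB}^1 \odot \fa_{n-1}M$) from the identity $(\alpha_i-1)(\xi \odot x) = \xi \odot (\alpha_{i-1}-1)x$ for $i \ge 2$, pass to quotients, and take the inverse limit, which commutes with $\ul{\bB}^1 \odot -$ because $\ul{\bB}^1$ is one-dimensional in degree~1 (your degreewise/cofinality remark and the paper's ``$\ul{\bB}^1$ is finite dimensional'' are the same observation). The extra details you supply — the $n=1$ base case, the appeal to Theorem~\ref{thm:canongr} for homogeneity, the generator bookkeeping for both inclusions — are all correct and merely make explicit what the paper leaves implicit.
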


\begin{proof}
Let $\xi$ be denote a basis vector for $\ul{\bB}^1$. For $i \ge 2$ and $x \in M$, we have
\begin{displaymath}
(\alpha_i-1) (\xi \odot x) = \xi \odot (\alpha_{i-1}-1) x.
\end{displaymath}
We thus see that, for $n \ge 2$, we have
\begin{displaymath}
\tau_{\ge n}(\ul{\bB}^1 \odot M) = \ul{\bB}^1 \odot \tau_{\ge n-1}(M)
\end{displaymath}
as subspaces of $\ul{\bB}^1 \odot M$. We thus have a natural isomorphism
\begin{displaymath}
\tau^{<n}(\ul{\bB}^1 \odot M) \cong \ul{\bB}^1 \odot \tau^{<n-1}(M)
\end{displaymath}
of graded $\cI$-modules. Taking inverse limits yields the claimed isomorphism. (We note that the inverse limit commutes with the operation $\ul{\bB}^1 \odot -$ since $\ul{\bB}^1$ is finite dimensional.)

To prove that the diagram in the statement of the proposition commutes, it suffices to show that the diagram
\begin{displaymath}
\xymatrix{ & \ul{\bB}^1 \odot M \ar[rd] \ar[ld] \\
\tau^{<n}(\ul{\bB}^1 \odot M) \ar@{=}[rr] && \ul{\bB}^1 \odot \tau^{<n-1}(M) }
\end{displaymath}
commutes for all $n \ge 2$. This is clear.
\end{proof}

\begin{proposition} \label{prop:xistd}
Let $\lambda=\mu \cdot a^n$ be a constraint word, where either $\mu$ is the empty word or ends in $b$. Then
\begin{displaymath}
\ul{\Xi}(\bE^{\lambda}) \cong \bigoplus_{i=0}^n \ul{\bE}^{\mu \cdot a^i}.
\end{displaymath}
Moreoever, the canonical injection $\bE^{\lambda} \to \Xi(\bE^{\lambda})$ is split, as is the canonical surjection $\ul{\Xi}(\bE^{\lambda}) \to \ul{\bE}^{\lambda}$.
\end{proposition}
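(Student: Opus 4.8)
The plan is to compute $\ul{\Xi}(\bE^{\lambda})$ by induction on the length $r$ of $\lambda$, peeling off the \emph{first} letter and invoking Proposition~\ref{prop:L1xi} or Proposition~\ref{prop:A1xi} according to whether that letter is $b$ or $a$. Throughout we use that $\bE^{\lambda}=\Phi(\ul{\bE}^{\lambda})$ is a smooth module admitting a grading, that $\ul{\bE}^{\lambda}\cong\ul{\bE}^{\lambda_1}\odot\ul{\bE}^{\lambda_2\cdots\lambda_r}$ (Proposition~\ref{prop:stdcat}), and that $\Phi$ commutes with $\odot$ on the right. The one auxiliary input needed is a formula for the coinvariants of a standard module: applying Proposition~\ref{prop:mu0cat} repeatedly, together with the elementary computations $(\ul{\bA}^1)_{\cI}\cong\bk$ and $(\ul{\bB}^1)_{\cI}=0$, gives $(\bE^{\nu})_{\cI}\cong\bk$ when $\nu$ contains no $b$ (i.e.\ $\nu=a^s$ or $\nu$ is empty) and $(\bE^{\nu})_{\cI}=0$ otherwise.

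In the base case $r=0$ we have $\bE^{\lambda}=\bB^0$, which has finite length, so $\tau^{<n}(\bB^0)=\bB^0$ for all $n\ge 1$; the canonical grading places $\bB^0$ in degree~$0$, so $\ul{\Xi}(\bB^0)=\ul{\bB}^0$, which is $\ul{\bE}^{\lambda}$ for the empty word (here $\mu$ is empty and $n=0$). For the inductive step write $\lambda=c\lambda'$ with $c=\lambda_1$, so $\bE^{\lambda}\cong\ul{\bE}^{c}\odot\bE^{\lambda'}$. If $c=b$ then $\mu=b\mu'$ is nonempty, $\lambda'=\mu' a^n$ is its own canonical decomposition, Proposition~\ref{prop:L1xi} gives $\ul{\Xi}(\bE^{\lambda})\cong\ul{\bB}^1\odot\ul{\Xi}(\bE^{\lambda'})$, and combining the inductive hypothesis with Proposition~\ref{prop:stdcat} yields $\bigoplus_{i=0}^n\ul{\bB}^1\odot\ul{\bE}^{\mu' a^i}=\bigoplus_{i=0}^n\ul{\bE}^{\mu a^i}$.

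The case $c=a$ is the delicate one, and the place to be careful. Proposition~\ref{prop:A1xi} gives $\ul{\Xi}(\bE^{\lambda})\cong\big[\ul{\bA}^1\odot\ul{\Xi}(\bE^{\lambda'})\big]\oplus\big[\ul{\bB}^0\otimes(\bE^{\lambda'})_{\cI}\big]$, and the extra summand $\ul{\bB}^0\otimes(\bE^{\lambda'})_{\cI}$ is nonzero exactly when $\lambda'$ contains no $b$, i.e.\ when $\mu$ is empty and $\lambda=a^n$. If instead $\mu=a\mu'$ with $\mu'$ nonempty (hence ending in $b$), then $\lambda'=\mu' a^n$, the extra summand vanishes, and the inductive hypothesis and Proposition~\ref{prop:stdcat} give $\ul{\Xi}(\bE^{\lambda})\cong\bigoplus_{i=0}^n\ul{\bA}^1\odot\ul{\bE}^{\mu' a^i}=\bigoplus_{i=0}^n\ul{\bE}^{\mu a^i}$. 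If $\lambda=a^n$ with $n\ge 1$, so $\bE^{\lambda}=\bA^n$ and $\lambda'=a^{n-1}$, then $(\bE^{\lambda'})_{\cI}\cong\bk$ and the inductive hypothesis gives $\ul{\Xi}(\bA^{n-1})\cong\bigoplus_{i=0}^{n-1}\ul{\bA}^i$; since $\ul{\bA}^1\odot\ul{\bA}^i\cong\ul{\bA}^{i+1}$ and $\ul{\bB}^0\cong\ul{\bA}^0$, these assemble to $\bigoplus_{i=0}^n\ul{\bA}^i=\bigoplus_{i=0}^n\ul{\bE}^{a^i}$, completing the induction.

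It remains to address the splitness statements, but these are immediate from Proposition~\ref{prop:xi}: since $\bE^{\lambda}=\Phi(\ul{\bE}^{\lambda})$ admits a grading, part~(c) shows the unit $\bE^{\lambda}\to\Xi(\bE^{\lambda})$ is a split injection, and part~(d), applied with $N=\ul{\bE}^{\lambda}$, shows the counit $\ul{\Xi}(\bE^{\lambda})\to\ul{\bE}^{\lambda}$ is a split surjection. The substantive work has already been carried out in Propositions~\ref{prop:A1xi} and~\ref{prop:L1xi}; within the present argument the only genuine subtlety is the bookkeeping of the decomposition $\lambda=\mu a^n$ under the induction and the correct accounting of the $\ul{\bB}^0$-summand produced by each $a$.
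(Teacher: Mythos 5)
Your proof is correct and follows essentially the same route as the paper's own (which simply says "repeatedly applying Propositions~\ref{prop:A1xi} and~\ref{prop:L1xi}, together with the fact that $(\bE^{\nu})_{\cI}=0$ when $\nu$ contains a $b$, and Proposition~\ref{prop:xi}(c,d) for the splittings"); you have merely made the induction on the first letter explicit and supplied the short derivation of the coinvariant formula from Proposition~\ref{prop:mu0cat}.
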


\begin{proof}
The computation of $\ul{\Xi}(\bE^{\lambda})$ follows from repeatedly applying Propositions~\ref{prop:A1xi} and~\ref{prop:L1xi}, together with the fact that if $\nu$ is a constraint word containing any $b$ then $(\bE^{\nu})_{\cI}=0$. The claims about splittings follow from Proposition~\ref{prop:xi}(c,d).
\end{proof}

\begin{corollary} \label{cor:xiprin}
We have $\ul{\Xi}(\bA^n) \cong \bigoplus_{0 \le k \le n} \ul{\bA}^k$.
\end{corollary}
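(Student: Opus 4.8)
The plan is simply to specialize Proposition~\ref{prop:xistd}. First I would recall the identification from \S\ref{ss:gap}: the principal module $\bA^n$ is precisely the standard module $\bE^{\lambda}$ attached to the constraint word $\lambda=\whitetri^n$, i.e.\ $\lambda=a^n$ in the notation of Proposition~\ref{prop:xistd}. Thus $\bA^n=\bE^{\lambda}$ with $\lambda=\mu\cdot a^n$ where $\mu$ is the empty word, so the hypothesis of Proposition~\ref{prop:xistd} (``$\mu$ is the empty word or ends in $\blacktri$'') is satisfied in the trivial way.

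Next I would invoke Proposition~\ref{prop:xistd} directly: it gives $\ul{\Xi}(\bE^{\lambda})\cong\bigoplus_{i=0}^{n}\ul{\bE}^{\mu\cdot a^i}=\bigoplus_{i=0}^{n}\ul{\bE}^{a^i}$. Finally, again by the identifications in \S\ref{ss:gap} (the word $\whitetri^i$ gives the principal module), $\ul{\bE}^{a^i}\cong\ul{\bA}^i$, and combining these isomorphisms yields $\ul{\Xi}(\bA^n)\cong\bigoplus_{0\le k\le n}\ul{\bA}^k$, as claimed. There is no real obstacle here — the only thing to check is the bookkeeping that $\whitetri$ corresponds to the letter $a$ and that the empty-$\mu$ case of Proposition~\ref{prop:xistd} is legitimate, both of which are immediate from the definitions. (One could alternatively give a direct inductive proof using Proposition~\ref{prop:A1xi} and $\bA^n\cong\ul{\bA}^1\odot\bA^{n-1}$ together with $(\bA^{n-1})_{\cI}=\bk$ from the multiplicity-one Theorem~\ref{thm:multone}, but passing through Proposition~\ref{prop:xistd} is cleaner since that proposition has already done exactly this induction.)
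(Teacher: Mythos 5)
Your proof is correct and matches the paper's intended argument exactly: Corollary~\ref{cor:xiprin} is stated in the paper immediately after Proposition~\ref{prop:xistd} with no separate proof, precisely because it is the special case $\lambda=a^n$, $\mu=\emptyset$, with the identifications $\bE^{a^n}=\bA^n$ and $\ul{\bE}^{a^i}=\ul{\bA}^i$. The bookkeeping you flag is the whole content, and you have it right.
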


\begin{corollary}
Let $M$ be a finitely generated smooth $\cI$-module. Then $\ul{\Xi}(M)$ and $\Xi(M)$ are finitely generated.
\end{corollary}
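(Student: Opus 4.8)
The plan is to reduce everything to the explicit computation of $\ul{\Xi}$ on principal modules that has already been carried out. Since $M$ is finitely generated and smooth, Corollary~\ref{cor:pringen} lets us fix a surjection $\bigoplus_{i=1}^{\ell} \bA^{r_i} \twoheadrightarrow M$ for some finite list of integers $r_1,\ldots,r_\ell$. The functor $\ul{\Xi}$ is left adjoint to $\Phi$ (Proposition~\ref{prop:xi}(a)), hence cocontinuous; in particular it commutes with finite direct sums and with cokernels. Applying $\ul{\Xi}$ to the displayed surjection therefore yields a surjection $\bigoplus_{i=1}^{\ell} \ul{\Xi}(\bA^{r_i}) \twoheadrightarrow \ul{\Xi}(M)$ of graded $\cI$-modules.

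Next I would invoke Corollary~\ref{cor:xiprin}, which gives $\ul{\Xi}(\bA^{r_i}) \cong \bigoplus_{0 \le k \le r_i} \ul{\bA}^k$, a finite direct sum of graded principal modules and hence finitely generated. A finite direct sum of finitely generated objects is finitely generated, so $\bigoplus_i \ul{\Xi}(\bA^{r_i})$ is finitely generated, and a quotient of a finitely generated object is finitely generated; thus $\ul{\Xi}(M)$ is finitely generated. (One could instead appeal to the local noetherianity of $\uRep(\cI)$ from Theorem~\ref{thm:noeth}, but this is unnecessary.) For the ungraded statement, recall $\Xi(M) = \Phi(\ul{\Xi}(M))$: choosing homogeneous generators $x_1,\ldots,x_m$ of the graded module $\ul{\Xi}(M)$, the same elements generate $\Phi(\ul{\Xi}(M))$ as a smooth $\cI$-module, so $\Xi(M)$ is finitely generated as well.

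There is essentially no serious obstacle in this argument; the substantive content lies entirely in the computation of $\ul{\Xi}$ on principal modules (Corollary~\ref{cor:xiprin}, itself a consequence of the lengthy Proposition~\ref{prop:A1xi}). The only points requiring a moment's care are that $\ul{\Xi}$ preserves surjections and finite direct sums---both immediate from its being an exact left adjoint---and that $\Phi$ preserves finite generation, which is clear since it leaves the underlying module and $\cI$-action untouched.
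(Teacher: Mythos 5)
Your argument is correct and is essentially the same as the paper's: choose a finite presentation by principal modules, push a surjection through the (exact, cocontinuous) functor $\ul{\Xi}$, and invoke Corollary~\ref{cor:xiprin} to see the source is finitely generated. The only cosmetic difference is that you derive $\Xi(M)$ from $\ul{\Xi}(M)$ via $\Phi$, while the paper simply repeats the same argument with $\Xi$ in place of $\ul{\Xi}$; both are fine.
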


\begin{proof}
Choose a surjection $F \to M$ with $F$ a finite sum of principal modules. Since $\ul{\Xi}$ is exact, the map $\ul{\Xi}(F) \to \ul{\Xi}(M)$ is surjective. Since $\ul{\Xi}(F)$ is finitely generated, it follows that $\ul{\Xi}(M)$ is as well. The argument for $\Xi$ is identical.
\end{proof}

\subsection{Two variants of completion}

For a smooth $\cI$-module $M$, we define the {\bf residual completion} of $M$, denoted $\Xi^{\res}(M)$ to be the cokernel of the natural map $M \to \Xi(M)$. For a graded $\cI$-module $M$, we define the {\bf coresidual completion} of $M$, denoted $\ul{\Xi}^{\cor}(M)$, to be the kernel of the natural map $\ul{\Xi}(\Phi(M)) \to M$. We thus have functors
\begin{displaymath}
\Xi^{\res} \colon \Rep(\cI) \to \Rep(\cI), \qquad
\ul{\Xi}^{\cor} \colon \uRep(\cI) \to \uRep(\cI).
\end{displaymath}
The main properties of these functors are summarized in the following proposition.

\begin{proposition} \label{prop:xibar}
We have the following:
\begin{enumerate}
\item The functors $\Xi^{\res}$ and $\ul{\Xi}^{\cor}$ are exact and co-continuous.
\item Let $\lambda=\mu\cdot a^n$ be a constraint word where either $\mu$ is the empty word or ends in $b$. Then
\begin{displaymath}
\Xi^{\res}(\bE^{\lambda}) \cong \bigoplus_{i=0}^{n-1} \bE^{\mu \cdot a^i}, \qquad
\ul{\Xi}^{\cor}(\ul{\bE}^{\lambda}) \cong \bigoplus_{i=0}^{n-1} \ul{\bE}^{\mu \cdot a^i}.
\end{displaymath}
\item We have
\begin{displaymath}
\Xi^{\res}(\bA^n) \cong \bigoplus_{i=0}^{n-1} \bA^i, \qquad
\ul{\Xi}^{\cor}(\ul{\bA}^n) \cong \bigoplus_{i=0}^{n-1} \ul{\bA}^i.
\end{displaymath}
\end{enumerate}
\end{proposition}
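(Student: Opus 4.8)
The plan is to derive the whole proposition from Propositions~\ref{prop:xi} and~\ref{prop:xistd} (and Corollary~\ref{cor:xiprin}), with essentially no new computation. For part~(a), I would observe that, by definition, $\Xi^{\res}$ sits in a short exact sequence of functors $0\to\mathrm{id}\to\Xi\to\Xi^{\res}\to0$ whose left map is objectwise a monomorphism (Proposition~\ref{prop:xi}(c)); here $\mathrm{id}$ and $\Xi=\Phi\circ\ul\Xi$ are exact (Proposition~\ref{prop:xi}(b)) and cocontinuous ($\ul\Xi$ is a left adjoint by Proposition~\ref{prop:xi}(a), and $\Phi$ is cocontinuous by Proposition~\ref{prop:Phi-cont}). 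A $3\times3$-lemma diagram chase then shows $\Xi^{\res}$ is exact, and as a cokernel of a natural transformation of cocontinuous functors it is cocontinuous. Dually, $\ul\Xi^{\cor}$ sits in $0\to\ul\Xi^{\cor}\to\ul\Xi\circ\Phi\to\mathrm{id}\to0$, which is objectwise \emph{split} (Proposition~\ref{prop:xi}(d)); since $\ul\Xi\circ\Phi$ is exact and cocontinuous, the same chasing, now exploiting the objectwise splitting, yields exactness and cocontinuity of $\ul\Xi^{\cor}$.

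For parts~(b) and~(c), Proposition~\ref{prop:xistd} provides an isomorphism $\ul\Xi(\bE^\lambda)\cong\bigoplus_{i=0}^n\ul\bE^{\mu a^i}$, together with the facts that the unit $\bE^\lambda\to\Xi(\bE^\lambda)$ splits and the counit $\ul\Xi(\bE^\lambda)\to\ul\bE^\lambda$ splits; applying $\Phi$ and using $\Phi(\ul\bE^\nu)=\bE^\nu$ also gives $\Xi(\bE^\lambda)\cong\bigoplus_{i=0}^n\bE^{\mu a^i}$. From the split unit, $\Xi^{\res}(\bE^\lambda)$ is a direct summand of $\bigoplus_{i=0}^n\bE^{\mu a^i}$ complementary to a copy of $\bE^{\mu a^n}=\bE^\lambda$; from the split counit, $\ul\Xi^{\cor}(\ul\bE^\lambda)=\ker\bigl(\ul\Xi(\bE^\lambda)\to\ul\bE^\lambda\bigr)$ is a direct summand of $\bigoplus_{i=0}^n\ul\bE^{\mu a^i}$ complementary to a copy of $\ul\bE^{\mu a^n}$. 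To identify these complements, I would invoke the Krull--Schmidt theorem: each standard module is indecomposable with endomorphism ring $\bk$ — for $\ul\bE^\nu$ this is immediate since $\ul\bE^\nu$ is graded-cyclic on $e_{1,\dots,r}$ and $(\ul\bE^\nu)_r=\bk\,e_{1,\dots,r}$ (where $r$ is the length of $\nu$), and for $\bE^\nu$ one computes $\mathrm{End}_\cI(\bE^\nu)=(\bE^\nu)^{\cI_{>r}}=\bk\,e_{1,\dots,r}$ using that $\bE^\nu$ is generated by $e_{1,\dots,r}$ (one may also cite Proposition~\ref{prop:phi-indecomp} for indecomposability). Krull--Schmidt then forces the complement of a copy of $\bE^{\mu a^n}$ (resp.\ $\ul\bE^{\mu a^n}$) inside $\bigoplus_{i=0}^n\bE^{\mu a^i}$ (resp.\ $\bigoplus_{i=0}^n\ul\bE^{\mu a^i}$) to be $\bigoplus_{i=0}^{n-1}\bE^{\mu a^i}$ (resp.\ $\bigoplus_{i=0}^{n-1}\ul\bE^{\mu a^i}$), which is~(b). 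Part~(c) is the case $\mu=\varnothing$, so that $\bE^{\mu a^i}=\bA^i$ and $\ul\bE^{\mu a^i}=\ul\bA^i$; alternatively it follows directly from Corollary~\ref{cor:xiprin} by the same argument.

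There is no substantial obstacle remaining, since the genuine work — computing $\ul\Xi$ on standard modules and producing the two splittings — has already been carried out in Proposition~\ref{prop:xistd}. The only points that take a moment are the objectwise-split bookkeeping needed to see that $\ul\Xi^{\cor}$ is exact and cocontinuous in part~(a), and the verification that the standard modules are indecomposable with endomorphism ring $\bk$, which is what licenses the Krull--Schmidt cancellation in part~(b).
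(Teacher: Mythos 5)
Your proposal is correct and follows the paper's route: part~(a) is exactly the generic exact-sequence-of-functors argument packaged as Proposition~\ref{prop:funseq} (the objectwise splitting you mention for $\ul{\Xi}^{\cor}$ is not actually needed — and since those splittings need not be natural, it is better not to lean on them — the $3\times3$ lemma alone suffices), while parts~(b)--(c) unfold the cancellation that the paper leaves implicit when it says these follow from Proposition~\ref{prop:xistd}. The Krull--Schmidt cancellation you supply, licensed by $\uHom_{\cI}(\ul\bE^\nu,\ul\bE^\nu)=\bk$ and $\Hom_{\cI}(\bE^\nu,\bE^\nu)=\bk$, is precisely the missing step (note that citing Proposition~\ref{prop:phi-indecomp} alone would give indecomposability but not the local endomorphism ring Krull--Schmidt requires, so your direct computation is the one to keep).
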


\begin{proof}
(a) This follows from Proposition~\ref{prop:funseq}.

(b) This follows form Proposition~\ref{prop:xistd}.

(c) This is a special case of part (b).
\end{proof}

\subsection{Some applications of completion}

We now give some applications of the completion functor. Perhaps the most striking is:

\begin{theorem}
The principal modules $\bA^r$ and $\ul{\bA}^r$ are injective, for all $r \in \bN$.
\end{theorem}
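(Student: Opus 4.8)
The plan is to prove both assertions simultaneously by a short induction on $r$, ``ping-ponging'' between $\Rep(\cI)$ and $\uRep(\cI)$ via the functors $\Gamma$ and $\Phi$. The two structural facts that make this work are already available: $\Gamma$ carries injective objects of $\Rep(\cI)$ to injective objects of $\uRep(\cI)$ (Corollary~\ref{cor:gaminj}), while $\Phi$ carries injective objects of $\uRep(\cI)$ to injective objects of $\Rep(\cI)$ (Corollary~\ref{cor:phiinj}). On principal modules these functors simply shift the index: $\Gamma(\bA^r)=\ul{\bA}^{r+1}$ by Corollary~\ref{prop:gammaprin}, and $\Phi(\ul{\bA}^r)=\bA^r$ by definition of the forgetful functor. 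So each application of $\Gamma$ followed by $\Phi$ moves us from injectivity of $\bA^r$ to injectivity of $\bA^{r+1}$, producing injectivity of $\ul{\bA}^{r+1}$ as a byproduct.

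Concretely, I would argue as follows. \emph{Base case.} The module $\bA^0=\bB^0$ is the trivial representation, which is injective in $\Rep(\cI)$ by Corollary~\ref{cor:trivinj}. The module $\ul{\bA}^0=\ul{\bB}^0$ is concentrated in degree~$0$; since $\uRep(\cI)$ decomposes as $\Vec\oplus\uRep(\cI)_+$ (as noted in \S\ref{ss:urep}) and every vector space is injective in $\Vec$, it follows that $\ul{\bB}^0$ is injective in $\uRep(\cI)$. \emph{Inductive step.} Assume $\bA^r$ is injective in $\Rep(\cI)$. By Corollary~\ref{cor:gaminj}, $\Gamma(\bA^r)$ is injective in $\uRep(\cI)$; by Corollary~\ref{prop:gammaprin}, $\Gamma(\bA^r)=\ul{\bA}^{r+1}$, so $\ul{\bA}^{r+1}$ is injective in $\uRep(\cI)$. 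Applying Corollary~\ref{cor:phiinj} and using $\Phi(\ul{\bA}^{r+1})=\bA^{r+1}$, we conclude that $\bA^{r+1}$ is injective in $\Rep(\cI)$, which closes the induction. This shows $\bA^r$ is injective for all $r\in\bN$ and $\ul{\bA}^r$ is injective for all $r\ge 1$; combined with the base case, both statements hold for all $r\in\bN$.

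There is no serious obstacle remaining at this stage: the argument above is a purely formal bootstrap, and all the substance has been invested earlier---in the injectivity of $\bB^0$ (which rests on the multiplicity one theorem and exactness of the truncation functors), in Corollary~\ref{cor:phiinj} (which in turn rests on exactness of $\ul{\Xi}$ and the adjunction $(\ul{\Xi},\Phi)$ from Proposition~\ref{prop:xi}), and in the identification $\Gamma(\bA^r)=\ul{\bA}^{r+1}$. The only point that requires a moment's care is that we need $\bB^0$ and $\ul{\bB}^0$ to be injective in the full categories $\Rep(\cI)$ and $\uRep(\cI)$, not merely in their locally finite subcategories---this is precisely what Corollary~\ref{cor:trivinj} and the direct-sum decomposition of $\uRep(\cI)$ provide, and it is also why $\ul{\bA}^0$ must be handled separately rather than as part of the inductive chain (note $\Gamma$ always lands in modules vanishing in degree~$0$, so $\ul{\bA}^0$ is not in its image).
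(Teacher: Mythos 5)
Your proof is correct and follows essentially the same bootstrap as the paper's own two-line argument: start from injectivity of $\bA^0$ (Corollary~\ref{cor:trivinj}) and alternate $\Gamma$ and $\Phi$ using Corollaries~\ref{cor:gaminj} and~\ref{cor:phiinj} together with the computations $\Gamma(\bA^r)=\ul{\bA}^{r+1}$ and $\Phi(\ul{\bA}^r)=\bA^r$. Your observation that $\ul{\bA}^0$ is not reached by the inductive chain (since $\Gamma$ lands in pure modules) and must be checked directly via the splitting $\uRep(\cI)=\Vec\oplus\uRep(\cI)_+$ is a small detail the paper leaves implicit, and you handled it correctly.
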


\begin{proof}
If $\bA^r$ is injective, then so is $\ul{\bA}^{r+1}=\Gamma(\bA^r)$ by Corollary~\ref{cor:gaminj}; similarly, if $\ul{\bA}^r$ is injective then so is $\bA^r=\Phi(\ul{\bA}^r)$ by Corollary~\ref{cor:phiinj}. Since $\bA^0$ is injective (Corollary~\ref{cor:trivinj}), the theorem follows.
\end{proof}

We next use the completion functor to establish some fundamental properties of the functor $\Gamma$. For a smooth $\cI$-module $M$, we define the {\bf generation degree} of $M$, denoted $g(M)$, to be the infimum of integers $g$ such that $M$ is generated by $M^{\cI_{\ge g}}$. Note that if $M=0$ then $g(M)=-\infty$, while if $M$ is not generated by $M^{\cI_{\ge g}}$ for any $g$ then $g(M)=\infty$. Also note that $g(M) \le g$ if and only if $M$ is a quotient of a sum of principal modules $\bA^r$ with $r \le g$.

\begin{lemma} \label{lem:gxi}
Let $M$ be a smooth $\cI$-module with generation degree $g<\infty$. Then $\Xi^{\res}(M)$ has generation degree $<g$.
\end{lemma}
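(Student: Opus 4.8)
The statement $g\bigl(\Xi^{\res}(M)\bigr)<g$ should follow by reducing to the case $M=\bA^g$ (or, more precisely, to a finite sum of principal modules of generation degree $\le g$) via the right-exactness properties established for $\ul{\Xi}$ and $\Xi^{\res}$, and then reading off the answer from the explicit computation of $\Xi^{\res}$ on principal modules. Concretely, since $g(M)=g<\infty$, there is a surjection $F\twoheadrightarrow M$ where $F=\bigoplus_{j} \bA^{r_j}$ is a finite (or arbitrary) sum of principal modules with all $r_j\le g$. Because $\Xi^{\res}$ is exact (Proposition~\ref{prop:xibar}(a)), in particular right-exact, we get a surjection $\Xi^{\res}(F)\twoheadrightarrow \Xi^{\res}(M)$. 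Generation degree only increases under passing to quotients, so it suffices to prove $g\bigl(\Xi^{\res}(F)\bigr)<g$. And $\Xi^{\res}$ commutes with direct sums (it is cocontinuous, Proposition~\ref{prop:xibar}(a)), so it suffices to show $g\bigl(\Xi^{\res}(\bA^{r})\bigr)<g$ whenever $r\le g$.

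The second main ingredient is the explicit formula $\Xi^{\res}(\bA^{r})\cong\bigoplus_{i=0}^{r-1}\bA^{i}$ from Proposition~\ref{prop:xibar}(c). Each summand $\bA^{i}$ with $i\le r-1$ is a principal module, hence has generation degree exactly $i\le r-1$. Therefore $\Xi^{\res}(\bA^{r})$, being a finite direct sum of such, has generation degree $\max(0,r-1)$ (with the convention that the empty sum, i.e.\ $\Xi^{\res}(\bA^0)=0$, has generation degree $-\infty$). In all cases this is $\le r-1 < r \le g$. (One small point to note along the way: generation degree is additive in the sense that $g(M_1\oplus M_2)=\max(g(M_1),g(M_2))$, which is immediate from the definition since $(M_1\oplus M_2)^{\cI_{\ge h}}=M_1^{\cI_{\ge h}}\oplus M_2^{\cI_{\ge h}}$ and a direct sum is generated by the images of the generating sets of the summands.) Combining, $g\bigl(\Xi^{\res}(F)\bigr)=\sup_j g\bigl(\Xi^{\res}(\bA^{r_j})\bigr)\le\sup_j (r_j-1)\le g-1<g$, and hence $g\bigl(\Xi^{\res}(M)\bigr)\le g-1<g$ as well.

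One subtlety worth spelling out is why $g(M)\le g$ actually gives a presentation $F\twoheadrightarrow M$ with all $r_j\le g$: this is exactly the remark preceding the lemma, namely that $g(M)\le g$ iff $M$ is a quotient of a sum of principal modules $\bA^r$ with $r\le g$. If $g(M)=g$ precisely (as opposed to $<g$) one still has such a presentation, since $M$ is generated by $M^{\cI_{\ge g}}$ and each generator, being $\cI_{\ge g}$-invariant, gives a map $\bA^g\to M$; the images cover $M$. So the reduction is valid. I do not anticipate a genuine obstacle here — the only thing that requires any care is handling the degenerate case $r=0$ (where $\Xi^{\res}(\bA^0)=\Xi^{\res}(\bB^0)=0$, consistent with the claim since $g(0)=-\infty<0$) and making sure the cocontinuity of $\Xi^{\res}$ is used correctly to pass from a possibly-infinite direct sum $F$ to its summands. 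The heart of the argument is simply: exactness and cocontinuity of $\Xi^{\res}$ reduce everything to principal modules, where Proposition~\ref{prop:xibar}(c) makes the generation-degree bound transparent.
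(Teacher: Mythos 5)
Your argument is exactly the paper's: choose a surjection $F\twoheadrightarrow M$ with $F$ a sum of $\bA^r$'s, $r\le g$, apply right-exactness of $\Xi^{\res}$ to get a surjection $\Xi^{\res}(F)\twoheadrightarrow\Xi^{\res}(M)$, and read off the generation degree of $\Xi^{\res}(F)$ from Proposition~\ref{prop:xibar}(c). The extra care you take with cocontinuity, direct sums, and the degenerate $r=0$ case is correct and harmless, just more explicit than the paper's one-line version.
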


\begin{proof}
Choose a surjection $F \to M$, where $F$ is a sum of modules of the form $\bA^r$ with $r \le g$. Then $\Xi^{\res}(F) \to \Xi^{\res}(M)$ is surjective, and the source is a sum of modules of the form $\bA^r$ with $r<g$ (Proposition~\ref{prop:xibar}). The result follows.
\end{proof}

\begin{proposition} \label{prop:gammafinite}
If $M$ is a finitely generated smooth $\cI$-module then $\rR^n \Gamma(M)$ is finitely generated for all $n$, and vanishes for $n>g(M)$.
\end{proposition}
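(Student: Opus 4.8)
The plan is to induct on the generation degree $g(M)$, exploiting the exact sequence
\begin{displaymath}
0 \to M \to \Xi(M) \to \Xi^{\res}(M) \to 0
\end{displaymath}
together with three facts already in hand. First, $\Xi(M) = \Phi(\ul{\Xi}(M))$ admits a grading, hence is $\Gamma$-acyclic by Proposition~\ref{prop:Rgamma}, and $\Gamma(\Xi(M)) = \Gamma(\Phi(\ul{\Xi}(M))) = \ul{\Xi}(M) \odot \ul{\bA}^1$ by Proposition~\ref{prop:gammagraded}. Second, $\ul{\Xi}(M)$ is finitely generated when $M$ is, hence so are $\Xi(M)$ and its quotient $\Xi^{\res}(M)$, and also $\ul{\Xi}(M) \odot \ul{\bA}^1$, since the functor $- \odot \ul{\bA}^1$ is exact and carries $\ul{\bA}^r$ to $\ul{\bA}^{r+1}$ (Proposition~\ref{prop:stdcat}) and thus preserves finite generation. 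Third, $g(\Xi^{\res}(M)) < g(M)$ by Lemma~\ref{lem:gxi}. Since $M$ is finitely generated we have $g(M) < \infty$, so the induction is well-founded.

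For the base case $g(M) \le 0$, the inequality $g(\Xi^{\res}(M)) < 0$ forces $\Xi^{\res}(M) = 0$, so $M \cong \Xi(M)$ admits a grading; then $\rR^n\Gamma(M) = 0$ for $n > 0 \ge g(M)$ by Proposition~\ref{prop:Rgamma}, and $\Gamma(M) = \ul{\Xi}(M) \odot \ul{\bA}^1$ is finitely generated. For the inductive step, suppose $g(M) = g \ge 1$ and the statement holds for finitely generated modules of generation degree $< g$; it then applies to $\Xi^{\res}(M)$, which is finitely generated with $g(\Xi^{\res}(M)) \le g-1$. Applying $\rR\Gamma$ to the exact sequence and using $\rR^n\Gamma(\Xi(M)) = 0$ for $n \ge 1$, the long exact sequence degenerates to an inclusion $\Gamma(M) \hookrightarrow \ul{\Xi}(M) \odot \ul{\bA}^1$, an exact sequence $\Gamma(\Xi(M)) \to \Gamma(\Xi^{\res}(M)) \to \rR^1\Gamma(M) \to 0$, and isomorphisms $\rR^n\Gamma(M) \cong \rR^{n-1}\Gamma(\Xi^{\res}(M))$ for $n \ge 2$. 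Hence $\rR^n\Gamma(M)$ is finitely generated for $n \ge 1$ by the inductive hypothesis (for $n = 1$ using that a quotient of a finitely generated module is finitely generated), while $\Gamma(M)$ is finitely generated by noetherianity of $\uRep(\cI)$ (Theorem~\ref{thm:noeth}), being a submodule of the finitely generated module $\ul{\Xi}(M) \odot \ul{\bA}^1$. Finally, for $n > g$ we have $n-1 > g-1 \ge g(\Xi^{\res}(M))$, so $\rR^n\Gamma(M) \cong \rR^{n-1}\Gamma(\Xi^{\res}(M)) = 0$.

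There is no single hard step here: the real content has been front-loaded into Proposition~\ref{prop:Rgamma}, the finiteness of $\ul{\Xi}$, and Lemma~\ref{lem:gxi}, after which the argument is a standard dévissage along the completion sequence. The two points that require a little care are the well-foundedness of the induction --- which is precisely where finite generation of $M$ (forcing $g(M) < \infty$) enters --- and the appeal to noetherianity to conclude that $\Gamma(M)$ itself is finitely generated, which is needed because $\Gamma$ is only left exact, so $\Gamma(M)$ appears merely as a submodule, not a quotient, of a module already known to be finitely generated.
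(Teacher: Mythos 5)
Your proof runs the same induction on $g(M)$ as the paper, driven by the completion exact sequence and the $\Gamma$-acyclicity of $\Xi(M)$, and the long exact sequence yielding $\rR^{n}\Gamma(M)\cong\rR^{n-1}\Gamma(\Xi^{\res}(M))$ for $n\ge 2$ is identical. You diverge in two sub-arguments, both correctly. For the base case the paper notes that $g(M)=0$ means $\cI$ acts trivially, so $\Gamma(M)$ is a finite sum of copies of $\ul{\bA}^1$ and acyclicity follows from injectivity of $\bB^0$ (Corollary~\ref{cor:trivinj}); you instead observe that $\Xi^{\res}(M)=0$ forces $M\cong\Xi(M)$, which is gradable and hence $\Gamma$-acyclic by Proposition~\ref{prop:Rgamma} — a marginally cheaper route since it recycles the same acyclicity input already used in the inductive step. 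For finite generation of $\Gamma(M)$, the paper chooses a finite surjection $F\to M$ by principal modules, uses exactness of $\Gamma\circ\Xi$ to get $\Gamma(\Xi(F))\twoheadrightarrow\Gamma(\Xi(M))$, and computes $\Gamma(\Xi(F))$ explicitly; you instead note that $\ul{\Xi}(M)$ is finitely generated (the corollary to Proposition~\ref{prop:xistd}) and that $-\odot\ul{\bA}^1$ preserves finite generation, so $\Gamma(\Xi(M))=\ul{\Xi}(M)\odot\ul{\bA}^1$ is finitely generated. This is a cleaner path: it avoids invoking exactness of $\Gamma\circ\Xi$ and instead leans on the already-established exactness of $\ul{\Xi}$, packaged in the finiteness corollary. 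Both proofs conclude with the same appeal to noetherianity of $\uRep(\cI)$ (Theorem~\ref{thm:noeth}) to pass from $\Gamma(\Xi(M))$ to its submodule $\Gamma(M)$, a step you are right to flag explicitly since $\Gamma$ is only left exact.
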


\begin{proof}
We proceed by induction on $g(M)$. If $g(M)=0$ then $\cI$ acts trivially on $M$, and so $\Gamma(M)$ is just $\dim(M)$ copies of $\ul{\bA}^1$ and $\rR^n \Gamma(M)=0$ by Corollary~\ref{cor:trivinj}. Thus the result holds. Suppose now $g(M)>0$. Consider the exact sequence
\begin{displaymath}
0 \to M \to \Xi(M) \to \Xi^{\res}(M) \to 0.
\end{displaymath}
Note that $g(\Xi^{\res}(M))<g(M)$ by Lemma~\ref{lem:gxi}. Applying $\Gamma$, and appealing to Proposition~\ref{prop:Rgamma}, we obtain an exact sequence
\begin{displaymath}
0 \to \Gamma(M) \to \Gamma(\Xi(M)) \to \Gamma(\Xi^{\res}(M)) \to \rR^1 \Gamma(M) \to 0
\end{displaymath}
and isomorphisms
\begin{displaymath}
\rR^{n+1} \Gamma(M) \cong \rR^n \Gamma(\Xi^{\res}(M))
\end{displaymath}
for $n \ge 1$. Since $\Gamma(\Xi^{\res}(M))$ is finitely generated by the inductive hypothesis, so is $\rR^1 \Gamma(M)$. Similarly, since $\rR^n \Gamma(\Xi^{\res}(M))$ is finitely generated for $n \ge 1$ and vanishes for $n>g(\Xi^{\res}(M))$, we see that $\rR^n \Gamma(M)$ is finitely generated for $n \ge 2$ and vanishes for $n>g(M)$.

It remains to show that $\Gamma(M)$ itself is finitely generated. For this, choose a surjection $F \to M$, where $F$ is a finite sum of principal modules. Since $\Gamma \circ \Xi$ is exact (as $\rR^n \Gamma \circ \Xi=0$ for $n \ge 1$ by Proposition~\ref{prop:Rgamma}), the map $\Gamma(\Xi(F)) \to \Gamma(\Xi(M))$ is surjective. By explicit computations with principal modules (Corollaries~ \ref{prop:gammaprin} and~\ref{cor:xiprin}), $\Gamma(\Xi(F))$ is finitely generated. Thus $\Gamma(\Xi(M))$ is finitely generated as well. As $\Gamma(M)$ is a submodule of this, it too is finitely generated.
\end{proof}

\begin{proposition} \label{prop:rgammacolim}
The functor $\rR^n \Gamma$ commutes with filtered colimits for all $n \ge 0$.
\end{proposition}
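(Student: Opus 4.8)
The plan is to compute $\rR^\bullet\Gamma$ via a \emph{functorial} resolution by $\Gamma$-acyclic modules that manifestly commutes with filtered colimits, so that the desired compatibility drops out.

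First I would check that $\Gamma$ itself commutes with filtered colimits; this is essentially formal. One has $\Gamma(M)_0=0$, and for $n\ge 1$ Proposition~\ref{prop:fixed} identifies $\Gamma(M)_n=M^{\cI_{\ge n}}$ with the kernel of the linear endomorphism $x\mapsto\alpha_nx-x$ of the underlying vector space of $M$, the transition maps $\alpha_i\colon\Gamma(M)_n\to\Gamma(M)_{n+1}$ being restrictions of natural linear maps. Since filtered colimits in $\Rep(\cI)$ are exact, each functor $\Gamma(-)_n$ commutes with them, and hence so does $\Gamma$, the direct sum $\bigoplus_{n\ge1}\Gamma(-)_n$ being computed degreewise in $\uRep(\cI)$; thus $\Gamma(\varinjlim_iM_i)\cong\varinjlim_i\Gamma(M_i)$ naturally.

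Next I would manufacture the resolution. For any smooth $M$ there is the exact sequence $0\to M\to\Xi(M)\to\Xi^{\res}(M)\to 0$, with $M\to\Xi(M)$ injective by Proposition~\ref{prop:xi}(c) and $\Xi(M)=\Phi(\ul{\Xi}(M))$ in the essential image of $\Phi$, hence $\Gamma$-acyclic by Proposition~\ref{prop:Rgamma}. Since $\Xi^{\res}(M)$ is again smooth (a cokernel of smooth modules), I can iterate: setting $\Xi^p(M)=\Xi\big((\Xi^{\res})^p(M)\big)$ and splicing the short exact sequences $0\to(\Xi^{\res})^p(M)\to\Xi^p(M)\to(\Xi^{\res})^{p+1}(M)\to0$ produces a functorial exact complex $0\to M\to\Xi^0(M)\to\Xi^1(M)\to\cdots$ all of whose terms lie in the image of $\Phi$ and are therefore $\Gamma$-acyclic. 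Hence $\rR^n\Gamma(M)\cong H^n\big(\Gamma(\Xi^\bullet(M))\big)$. Now $\Xi=\Phi\circ\ul{\Xi}$ is cocontinuous because $\ul{\Xi}$ is a left adjoint (Proposition~\ref{prop:xi}(a)) and $\Phi$ is cocontinuous (Proposition~\ref{prop:Phi-cont}), and $\Xi^{\res}$ is cocontinuous by Proposition~\ref{prop:xibar}(a); so each $\Xi^p$, and with it the complex-valued functor $M\mapsto\Xi^\bullet(M)$, commutes with filtered colimits. Combining this with the first step and with exactness of filtered colimits (so that they commute with the cohomology of a complex), I conclude $\rR^n\Gamma(\varinjlim_iM_i)\cong H^n\big(\varinjlim_i\Gamma(\Xi^\bullet(M_i))\big)\cong\varinjlim_i\rR^n\Gamma(M_i)$.

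I do not expect a serious obstacle: the whole point is that $\Xi$ supplies a functorial $\Gamma$-acyclic coresolution which, unlike a choice of injective resolutions, visibly commutes with colimits, thereby upgrading the pointwise acyclicity of Proposition~\ref{prop:Rgamma} to the asserted colimit compatibility. The only points needing a line of care are verifying that $0\to M\to\Xi^\bullet(M)$ is genuinely exact — immediate from splicing the short exact sequences above — and that $\Xi$ may legitimately be reapplied, i.e.\ that $(\Xi^{\res})^p(M)$ remains smooth.
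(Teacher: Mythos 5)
Your proof is correct and rests on the same core mechanism as the paper's, namely the exact sequence $0 \to M \to \Xi(M) \to \Xi^{\res}(M) \to 0$ together with the $\Gamma$-acyclicity of objects in the image of $\Phi$ (Proposition~\ref{prop:Rgamma}) and the cocontinuity of $\Xi$ and $\Xi^{\res}$. The only difference is in packaging: the paper first proves that a filtered colimit of $\Gamma$-acyclic objects is $\Gamma$-acyclic (by a bootstrap argument on the colimit of the sequences above) and then invokes the general criterion Proposition~\ref{prop:dercolimit}, whereas you splice the sequences into an explicit functorial $\Gamma$-acyclic coresolution $0\to M\to\Xi^{\bullet}(M)$ and compute $\rR^n\Gamma$ directly from it; this avoids the appeal to the auxiliary proposition at the modest cost of a little bookkeeping about the spliced complex.
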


\begin{proof}
The functor $(-)^{\cI_{\ge r}}$ obviously commutes with filtered colimits, for any $r$, and so $\Gamma$ commutes with filtered colimits. To prove the proposition, it thus suffices to show that that a filtered colimit of $\Gamma$-acyclic objects is $\Gamma$-acyclic (Proposition~\ref{prop:dercolimit}). Thus let $\{M_i\}_{i \in I}$ be a filtered system of $\Gamma$-acyclic objects. Let $N_i=\Xi(M_i)$, and let $C_i=\Xi^{\res}(M_i)$. Since $N_i=\Phi(\ul{\Xi}(M_i))$ is $\Gamma$-acyclic (Proposition~\ref{prop:Rgamma}), it follows that $C_i$ is as well. Let $M$, $N$, and $C$ be the colimits of the systems $\{M_i\}$, $\{N_i\}$, and $\{C_i\}$. Since filtered colimits are exact, we have an exact sequence
\begin{displaymath}
0 \to M \to N \to C \to 0
\end{displaymath}
Since $\Xi$ is cocontinuous, we have $N=\Xi(M)$; in particular, $N$ is $\Gamma$-acyclic. We thus have an exact sequence
\begin{displaymath}
0 \to \Gamma(M) \to \Gamma(N) \to \Gamma(C) \to \rR^1 \Gamma(M) \to 0
\end{displaymath}
and isomorphisms
\begin{displaymath}
\rR^n \Gamma(C)=\rR^{n+1} \Gamma(M)
\end{displaymath}
for $n \ge 1$. Now, the sequence
\begin{displaymath}
0 \to \Gamma(M_i) \to \Gamma(N_i) \to \Gamma(C_i) \to 0
\end{displaymath}
is exact for all $i$, since each $M_i$ is $\Gamma$-acyclic. Taking the direct limit, and use the fact that $\Gamma$ commutes with this operation, we find that the sequence
\begin{displaymath}
0 \to \Gamma(M) \to \Gamma(N) \to \Gamma(C) \to 0
\end{displaymath}
is exact. Thus $\rR^1 \Gamma(M)=0$. Since our choice of $\{M_i\}$ was arbitrary, it follows that $\rR^1 \Gamma$ vanishes on all filtered colimits of acyclic objects. In particular, $0=\rR \Gamma^1(C) \cong \rR \Gamma^2(M)$. Continuing in this way, we find $\rR^n \Gamma(M)=0$ for all $n$, as required.
\end{proof}

We now prove some finiteness properties of $\Ext$. In the graded case, we deduce the results using projective resolutions. In the smooth case, these are not available; instead, we deduce the results from the graded case by making use of properties of $\Gamma$.

\begin{proposition} \label{prop:uextcolim}
Let $M$ be a finitely generated graded $\cI$-module and let $i \in \bN$.
\begin{enumerate}
\item If $N$ is a finitely generated graded $\cI$-module then $\uExt^i_{\cI}(M, N)$ is finite dimensional.
\item The functor $\uExt^i_{\cI}(M, -)$ commutes with filtered colimits.
\end{enumerate}
\end{proposition}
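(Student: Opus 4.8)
The plan is to compute $\uExt^i_{\cI}(M,-)$ via a projective resolution of $M$ built from principal modules. By Theorem~\ref{thm:noeth} the category $\uRep(\cI)$ is locally noetherian, so any submodule of a finitely generated graded $\cI$-module is again finitely generated; since every finitely generated graded $\cI$-module is a quotient of a finite direct sum of graded principal modules $\ul{\bA}^r$, one constructs inductively a projective resolution $P_\bullet \to M$ in which each $P_j$ is a \emph{finite} direct sum of modules of the form $\ul{\bA}^r$. Because the $\ul{\bA}^r$ are projective, for any graded $\cI$-module $N$ we have $\uExt^i_{\cI}(M,N) = H^i(\uHom_{\cI}(P_\bullet, N))$, and using the natural identification $\uHom_{\cI}(\ul{\bA}^r, N) \cong N_r$ we see that each term $\uHom_{\cI}(P_j, N)$ is, functorially in $N$, a finite direct sum of graded pieces $N_r$.

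For part~(a), it then remains to observe that each graded piece of a finitely generated graded $\cI$-module is finite-dimensional: reducing to the case $N = \ul{\bA}^r$, the space $(\ul{\bA}^r)_n$ has as a basis the tuples $1 \le i_1 < \cdots < i_r = n$, hence has dimension $\binom{n-1}{r-1}$. Thus when $N$ is finitely generated, $\uHom_{\cI}(P_\bullet, N)$ is a complex of finite-dimensional vector spaces, and so all of its cohomology groups, in particular $\uExt^i_{\cI}(M,N)$, are finite-dimensional.

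For part~(b), the functor $N \mapsto N_r$ on $\uRep(\cI)$ commutes with filtered colimits (these are computed degreewise), finite direct sums commute with filtered colimits, and passing to the cohomology of a complex of vector spaces commutes with filtered colimits since filtered colimits of vector spaces are exact. Hence the composite $N \mapsto \uExt^i_{\cI}(M,N) = H^i(\uHom_{\cI}(P_\bullet, N))$ commutes with filtered colimits. The one point requiring care—and really the crux of the argument—is the very first step: one genuinely needs the \emph{finiteness} of the resolution (finitely many principal summands in each homological degree), which is exactly where local noetherianity enters; without it the finite-dimensionality used in~(a) would fail. Everything else is routine homological bookkeeping.
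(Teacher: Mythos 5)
Your proof is correct and follows essentially the same route as the paper: take a projective resolution of $M$ by finitely generated projectives (which exists by local noetherianity), use $\uHom_{\cI}(\ul{\bA}^r,N)\cong N_r$ to see that each term of $\uHom_{\cI}(P_\bullet,N)$ is a finite sum of graded pieces, and conclude finite-dimensionality from the finite generation of $N$ and commutation with filtered colimits from the exactness of filtered colimits of vector spaces. The only superficial difference is that you spell out that the $P_j$ are finite sums of principal modules, whereas the paper just says "finitely generated projective."
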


\begin{proof}
Let $P_{\bullet} \to M$ be a projective resolution of $M$ with each $P_i$ finitely generated. Such a resolution exists since $\uRep(\cI)$ is locally noetherian and has enough finitely generated projectives. To prove (a), simply note that $\uExt^{\bullet}_{\cI}(M, N)$ is computed by the complex $\uHom_{\cI}(P_{\bullet}, N)$, each term of which is finite dimensional. To prove (b), suppose that $\{N_i\}_{i \in \cU}$ is a filtered direct system in $\uRep(\cI)$. Then the natural map
\begin{displaymath}
\varinjlim \uHom_{\cI}(P_{\bullet}, N_i) \to \uHom_{\cI}(P_{\bullet}, \varinjlim N_i)
\end{displaymath}
is an isomorphism of complexes, since each term of $P_{\bullet}$ is a finitely generated projective. As the left complex computes $\varinjlim \uExt^{\bullet}_{\cI}(M, N_i)$ and the right computes $\uExt^{\bullet}_{\cI}(M, \varinjlim N_i)$, the result follows.
\end{proof}

\begin{proposition} \label{prop:extcolim}
Let $M$ be a finitely generated smooth $\cI$-module and let $i \in \bN$.
\begin{enumerate}
\item If $N$ is a finitely generated smooth $\cI$-module then $\Ext^i_{\cI}(M, N)$ is finite dimensional.
\item The functor $\Ext^i_{\cI}(M, -)$ commutes with filtered colimits.
\end{enumerate}
\end{proposition}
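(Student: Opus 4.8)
The plan is to transfer both statements from the graded category, where they are already known (Proposition~\ref{prop:uextcolim}), along the functor $\Gamma$. The key observation is an $\Ext$-theoretic enhancement of the isomorphism $\Psi \Gamma \cong \id$: the adjunction $(\Psi, \Gamma)$ (Proposition~\ref{prop:phigamma}(a)) together with the isomorphism $\Psi \Gamma M \cong M$ (Proposition~\ref{prop:phigamma}(b)) gives, for smooth $\cI$-modules $M$ and $N$, a natural isomorphism $\Hom_{\cI}(M, N) \cong \uHom_{\cI}(\Gamma M, \Gamma N)$. Fix a finitely generated smooth $M$ and choose an injective resolution $N \to J^{\bullet}$ in $\Rep(\cI)$. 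Applying $\Gamma$ produces a bounded-below complex $\Gamma J^{\bullet}$ of injective objects of $\uRep(\cI)$ (Corollary~\ref{cor:gaminj}) whose cohomology in degree $q$ is $\rR^q \Gamma(N)$. The termwise isomorphism above identifies the complex $\Hom_{\cI}(M, J^{\bullet})$, which computes $\Ext^{\bullet}_{\cI}(M, N)$, with the complex $\uHom_{\cI}(\Gamma M, \Gamma J^{\bullet})$. Since $\Gamma J^{\bullet}$ is a bounded-below complex of injectives, the standard hypercohomology spectral sequence for $\uHom_{\cI}(\Gamma M, -)$ applied to it takes the form
\begin{displaymath}
E_2^{p,q} = \uExt^p_{\cI}\bigl(\Gamma M, \rR^q \Gamma(N)\bigr) \Longrightarrow \Ext^{p+q}_{\cI}(M, N),
\end{displaymath}
a convergent first-quadrant spectral sequence, natural in $N$.

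With the spectral sequence in hand, part (a) is immediate. Since $M$ is finitely generated, $\Gamma M$ is a finitely generated graded $\cI$-module (the $n=0$ case of Proposition~\ref{prop:gammafinite}). If $N$ is finitely generated, then each $\rR^q \Gamma(N)$ is finitely generated and $\rR^q \Gamma(N) = 0$ for $q > g(N)$ (Proposition~\ref{prop:gammafinite}), and $g(N) < \infty$ because each of the finitely many generators of $N$ is fixed by some $\cI_{>r}$. Hence every $E_2^{p,q}$ is finite dimensional by Proposition~\ref{prop:uextcolim}(a), and for each $n$ only the finitely many terms with $p+q = n$ and $q \le g(N)$ can be nonzero, so the abutment $\Ext^n_{\cI}(M, N)$ is finite dimensional.

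For part (b), write $N = \varinjlim_{\alpha} N_{\alpha}$ as a filtered colimit in $\Rep(\cI)$. Each $\rR^q \Gamma$ commutes with filtered colimits (Proposition~\ref{prop:rgammacolim}), and $\uExt^p_{\cI}(\Gamma M, -)$ commutes with filtered colimits since $\Gamma M$ is finitely generated (Proposition~\ref{prop:uextcolim}(b)); combining these identifies the $E_2$ page of the spectral sequence for $N$ with the filtered colimit of the $E_2$ pages for the $N_{\alpha}$. Because filtered colimits of vector spaces are exact, this identification propagates to every later page, and (using that the spectral sequence lies in the first quadrant, so that each $\Ext^n_{\cI}(M, -)$ carries a finite filtration whose subquotients are the $E_{\infty}$ terms) it also propagates to the abutment. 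Thus $\Ext^n_{\cI}(M, \varinjlim_{\alpha} N_{\alpha}) \cong \varinjlim_{\alpha} \Ext^n_{\cI}(M, N_{\alpha})$.

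I expect the main obstacle to be bookkeeping rather than any deep difficulty: one must verify that the isomorphism of the first paragraph, the choice of injective resolution, and the hyperext spectral sequence can be made functorial in $N$ simultaneously, so that the colimit interchange in part (b) is legitimate. Using functorial injective resolutions, available since $\Rep(\cI)$ is a Grothendieck category, makes this routine.
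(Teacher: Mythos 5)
Your proof is correct and takes essentially the same approach as the paper: both factor $\Hom_{\cI}(M,-)$ through the adjunction as $\uHom_{\cI}(\Gamma M,-)\circ\Gamma$, use that $\Gamma$ preserves injectives, and feed the finiteness and colimit-commutation properties of $\rR\Gamma$ and $\uExt^p_{\cI}(\Gamma M,-)$ (Propositions~\ref{prop:gammafinite}, \ref{prop:rgammacolim}, \ref{prop:uextcolim}) into the resulting Grothendieck spectral sequence. The paper states this more compactly as $\rR\Hom_{\cI}(M,-)=\rR\uHom_{\cI}(\Gamma M,\rR\Gamma(-))$ without spelling out the spectral sequence, but the underlying argument is identical.
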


\begin{proof}
Let $M'=\Gamma(M)$, which is a finitely generated graded $\cI$-module with $\Psi(M') \cong M$. We have
\begin{displaymath}
\rR \Hom_{\cI}(M, -)=\rR \uHom_{\cI}(M', \rR \Gamma(-))
\end{displaymath}
by adjunction. Since both $\rR \uHom_{\cI}(M', -)$ and $\rR \Gamma$ preserve finiteness and commute with filtered colimits (by Propositions~\ref{prop:gammafinite}, \ref{prop:rgammacolim} and~\ref{prop:uextcolim}), so does $\rR \Hom_{\cI}(M, -)$.
\end{proof}

\subsection{The right adjoint of $\Phi$} \label{ss:phiright}

The forgetful functor $\Phi$ is both continuous and cocontinuous (Proposition~\ref{prop:Phi-cont}), and therefore has both a left and a right adjoint. As we have seen (Proposition~\ref{prop:xi}), the left adjoint is the graded completion functor $\ul{\Xi}$. The right adjoint, it turns out, can be described in terms of functors we have already studied. Denote the right adjoint by $F$. By definition, we have an isomorphism
\begin{displaymath}
\uHom_{\cI}(M, F(N)) \cong \Hom_{\cI}(\Phi(M), N).
\end{displaymath}
Taking $M=\ul{\bA}^r$, we see that $F(N)_r = N^{\cI_{>r}}$. It is not difficult to see that the action of $\alpha_i$ on $F(N)_r$ is simply induced by the action of $\alpha_i$ on $N$. It is now straightforward to construct an isomorphism $F(N) \cong \ul{\Sigma}(\Gamma(N)^{\dag})^{\dag}$ of graded $\cI$-modules: indeed, it follows immediately from the definitions that the two are identified as graded vector spaces, and then one simply has to work through the definitions to see that this identification is compatible with the $\cI$-actions.

\section{The theorem on level} \label{s:level}

\subsection{Level} \label{ss:level}

Let $\Rep(\cI)_{\le r}$ be the localizing subcategory of $\Rep(\cI)$ generated by the standard modules $\bE^{\lambda}$ of rank $\le r$. (See \S \ref{ss:loc} for background on localizing subcategories.) We define the {\bf level} of a smooth $\cI$-module $M$, denoted $\lev(M)$, to be the infimum of the integers $r$ for which $M$ belongs to $\Rep(\cI)_{\le r}$. By convention, the zero module has level $-\infty$. We put $\Rep(\cI)_r=\Rep(\cI)_{\le r}/\Rep(\cI)_{\le r-1}$, and $\Rep(\cI)_{>r}=\Rep(\cI)/\Rep(\cI)_{\le r}$. We let $T_{>r} \colon \Rep(\cI) \to \Rep(\cI)_{>r}$ be the localization functor, and let $S_{>r} \colon \Rep(\cI)_{>r} \to \Rep(\cI)$ be its right adjoint (the section functor).

We make similar definitions in the graded case. We let $\uRep(\cI)_{\le r}$ be the localizing subcategory of $\uRep(\cI)$ generated by the $\ul{\bE}^{\lambda}$ of rank $\le r$, and define the {\bf homogeneous level} of a graded $\cI$-module $M$, denoted $\ulev(M)$, to be the infimum of the integers $r$ for which $M$ belongs to $\uRep(\cI)_{\le r}$. We put $\uRep(\cI)_r=\uRep(\cI)_{\le r}/\uRep(\cI)_{\le r-1}$ and $\uRep(\cI)_{>r}=\uRep(\cI)/\uRep(\cI)_{\le r}$ and define $\ul{T}_{>r}$ and $\ul{S}_{>r}$ as before.

The following proposition offers a concrete characterization of modules of level $\le r$:

\begin{proposition} \label{prop:levfilt}
A finitely generated smooth $\cI$-module $M$ has level $\le r$ if and only if there is a finite length filtration $0=F_0 \subset \cdots \subset F^n=M$ and constraint words $\lambda_1, \ldots, \lambda_n$ of rank $\le r$ such that $F^i/F^{i-1}$ is isomorphic to a subquotient of $\bE^{\lambda_i}$ for all $1 \le i \le n$. The analogous statement holds in the graded case.
\end{proposition}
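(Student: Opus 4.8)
The plan is to prove the two inclusions separately. For the ``if'' direction, suppose $M$ admits a finite filtration $0=F^0 \subset \cdots \subset F^n=M$ with $F^i/F^{i-1}$ a subquotient of $\bE^{\lambda_i}$ where $\mathrm{rk}(\lambda_i) \le r$. Since $\Rep(\cI)_{\le r}$ is a Serre subcategory containing every $\bE^{\lambda_i}$, it contains each $F^i/F^{i-1}$ (closure under subquotients), and then, applying closure under extensions $n$ times, it contains $M$. Hence $\lev(M) \le r$. Note this direction needs no finiteness hypothesis on $M$.

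For the converse, I would introduce the full subcategory $\cD \subseteq \Rep(\cI)$ whose objects are those $N$ (not assumed finitely generated) such that \emph{every} finitely generated submodule of $N$ admits a filtration of the stated form. The goal is to show $\cD$ is a localizing subcategory containing $\bE^{\lambda}$ for every constraint word $\lambda$ of rank $\le r$. Granting this, since $\Rep(\cI)_{\le r}$ is by definition the smallest localizing subcategory with that property, we get $\Rep(\cI)_{\le r} \subseteq \cD$; in particular a finitely generated $M$ with $\lev(M) \le r$ lies in $\cD$, and being a finitely generated submodule of itself, it admits the desired filtration.

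Checking that $\cD$ is localizing breaks into four routine verifications. Closure under subobjects is immediate, as a finitely generated submodule of a submodule of $N$ is a finitely generated submodule of $N$. Closure under quotients: given $N \twoheadrightarrow Q$ and a finitely generated $Q' \subseteq Q$, lift generators to get a finitely generated $N' \subseteq N$ surjecting onto $Q'$, push the filtration of $N'$ forward, and use that a quotient of a subquotient of $\bE^{\lambda}$ is again a subquotient of $\bE^{\lambda}$. Closure under extensions: given $0 \to N_1 \to N_2 \to N_3 \to 0$ with $N_1,N_3 \in \cD$ and a finitely generated $K \subseteq N_2$, the module $K$ is noetherian by Theorem~\ref{thm:noeth}, so $K \cap N_1$ is finitely generated, hence inherits a filtration from $N_1 \in \cD$, while the image $K/(K \cap N_1) \hookrightarrow N_3$ is finitely generated, hence inherits one from $N_3 \in \cD$; concatenating (after pulling the second filtration back above $K \cap N_1$) gives a filtration of $K$. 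Closure under arbitrary coproducts: a finitely generated submodule of $\bigoplus_{i \in I} N_i$ lies in a finite sub-coproduct, and a finite direct sum of objects of $\cD$ is in $\cD$ by iterating the extension step. Finally, for $\mathrm{rk}(\lambda) \le r$ the module $\bE^{\lambda}$ lies in $\cD$: any finitely generated submodule $N \subseteq \bE^{\lambda}$ is itself a subquotient of $\bE^{\lambda}$, so the one-step filtration $0 \subset N$ already works.

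The graded case is proved by the identical argument, using that $\uRep(\cI)$ is locally noetherian as well (Theorem~\ref{thm:noeth}) and that the modules $\ul{\bE}^{\lambda}$ of rank $\le r$ generate $\uRep(\cI)_{\le r}$ by definition. The only place anything beyond pure bookkeeping with subquotients enters is the extension step, where one must invoke noetherianity of finitely generated modules to know that the intersection of a finitely generated submodule with a subobject is again finitely generated; this is the mild obstacle, and it is exactly what forces the hypothesis that $M$ be finitely generated (and is why the characterization is phrased via finitely generated submodules when defining $\cD$).
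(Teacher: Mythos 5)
Your argument is correct and is essentially the paper's own: the paper proves this by a one-line citation to Proposition~\ref{prop:locgen}, a general lemma in the appendix whose proof (define the class of objects all of whose finitely generated subobjects have a filtration by subquotients, show it is localizing, compare with the generated localizing subcategory) is exactly what you have re-derived inline. The only cosmetic difference is that you did not first isolate the Serre-subcategory property of the "finitely generated objects admitting a filtration" class before passing to the localizing closure, but your direct checks accomplish the same thing.
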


\begin{proof}
This follows immediately from Proposition~\ref{prop:locgen}.
\end{proof}

We now examine how various functors interact with level.

\begin{proposition} \label{prop:levelfunc}
We have the following:
\begin{enumerate}
\item $\Phi(\uRep(\cI)_{\le r}) \subset \Rep(\cI)_{\le r}$.
\item $\Psi(\uRep(\cI)_{\le r}) \subset \Rep(\cI)_{\le r-1}$.
\item $\ul{\Xi}(\Rep(\cI)_{\le r}) \subset \uRep(\cI)_{\le r}$.
\item $\Xi^{\res}(\Rep(\cI)_{\le r}) \subset \Rep(\cI)_{\le r-1}$.
\item $\ul{\Xi}^{\cor}(\uRep(\cI)_{\le r}) \subset \uRep(\cI)_{\le r-1}$.
\end{enumerate}
\end{proposition}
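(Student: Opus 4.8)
The plan is to reduce all five inclusions to the corresponding statement on the generating standard modules, exploiting the fact that every functor involved is exact and cocontinuous. First I would record the following elementary categorical principle: if $F \colon \cA \to \cA'$ is an exact, cocontinuous functor of Grothendieck abelian categories and $\cB \subset \cA$ is the localizing subcategory generated by a class $S$ of objects, then $F(\cB)$ is contained in the localizing subcategory of $\cA'$ generated by $\{F(X) : X \in S\}$. This is immediate from the definitions: exactness of $F$ guarantees that it sends monomorphisms to monomorphisms, epimorphisms to epimorphisms, and short exact sequences to short exact sequences, while cocontinuity guarantees that it preserves arbitrary coproducts; hence $F$ carries each of the operations used to generate $\cB$ from $S$ (passage to subobjects, quotients, extensions, coproducts) to the analogous operation in $\cA'$. (The relevant background on localizing subcategories is in \S\ref{ss:loc}; alternatively, for finitely generated modules one could argue directly via the filtration description of Proposition~\ref{prop:levfilt}, but the cocontinuity argument dispatches the general case uniformly.)

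Next I would check the hypotheses for each functor. The forgetful functor $\Phi$ is exact and cocontinuous by Proposition~\ref{prop:Phi-cont}; $\Psi$ is exact and cocontinuous by its construction in \S\ref{ss:psigamma}; $\ul{\Xi}$ is exact by Proposition~\ref{prop:xi}(b) and cocontinuous because it is a left adjoint by Proposition~\ref{prop:xi}(a); and $\Xi^{\res}$ and $\ul{\Xi}^{\cor}$ are exact and cocontinuous by Proposition~\ref{prop:xibar}(a). By the principle above, it therefore suffices to show in each case that the functor sends the generating set $\{\bE^{\lambda} : \operatorname{rank}(\lambda) \le r\}$ (resp.\ $\{\ul{\bE}^{\lambda} : \operatorname{rank}(\lambda) \le r\}$) into the asserted target subcategory.

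The rest is read off from the known action on standard modules. For (a), $\Phi(\ul{\bE}^{\lambda}) = \bE^{\lambda}$, which has the same rank as $\lambda$. For (b), Proposition~\ref{prop:psistd} gives $\Psi(\ul{\bE}^{\lambda}) = \bE^{\mu}$ with $\mu$ obtained from $\lambda$ by deleting a trailing $a$, so $\operatorname{rank}(\mu) = \operatorname{rank}(\lambda) - 1 \le r-1$, when $\lambda$ ends in $a$, and $\Psi(\ul{\bE}^{\lambda}) = 0$ otherwise. For (c), Proposition~\ref{prop:xistd} gives, writing $\lambda = \mu a^n$ with $\mu$ empty or ending in $b$, that $\ul{\Xi}(\bE^{\lambda}) \cong \bigoplus_{i=0}^{n} \ul{\bE}^{\mu a^i}$, and each summand has rank $\operatorname{rank}(\mu) + i \le \operatorname{rank}(\mu) + n = \operatorname{rank}(\lambda) \le r$. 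For (d) and (e), Proposition~\ref{prop:xibar}(b) gives $\Xi^{\res}(\bE^{\lambda}) \cong \bigoplus_{i=0}^{n-1} \bE^{\mu a^i}$ and $\ul{\Xi}^{\cor}(\ul{\bE}^{\lambda}) \cong \bigoplus_{i=0}^{n-1} \ul{\bE}^{\mu a^i}$, and now every summand has rank $\le \operatorname{rank}(\mu) + n - 1 = \operatorname{rank}(\lambda) - 1 \le r-1$.

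I do not expect any genuine difficulty here; the one point demanding care is the categorical reduction in the first paragraph, where one must be attentive to the fact that the localizing subcategory generated by $S$ is built by a (possibly transfinite) iteration of subobjects, quotients, extensions, and coproducts, and that an exact cocontinuous functor commutes with all of these operations. Granting that, the proposition follows by a direct appeal to the standard-module computations already established.
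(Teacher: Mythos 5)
Your proof is correct and follows the same route as the paper's. The paper simply cites Proposition~\ref{prop:locmap} for the categorical reduction you prove in your first paragraph (and that proposition is proved there via the finite-filtration description of Proposition~\ref{prop:locgen}, using local noetherianity, whereas you argue by preimage/transfinite closure — either works), and then appeals to the same computations on standard modules via Propositions~\ref{prop:psistd}, \ref{prop:xistd}, and~\ref{prop:xibar}.
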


\begin{proof}
Since the functors in question are exact and cocontinuous, it suffices by Proposition~\ref{prop:locmap} to check the statement on the appropriate standard modules, and this follows from our explicit computations.
\end{proof}

\begin{proposition} \label{prop:gammalev}
Suppose that $M$ is a smooth $\cI$-module of level $\le r$. Then $\rR^i \Gamma(M)$ has homogeneous level $\le r+1-i$.
\end{proposition}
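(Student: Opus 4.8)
The plan is to induct on $r$, the engine being the completion sequence
\[ 0 \to M \to \Xi(M) \to \Xi^{\res}(M) \to 0 \]
of Proposition~\ref{prop:xi}(c), in which $\Xi(M) = \Phi(\ul{\Xi}(M))$ admits a grading and is therefore $\Gamma$-acyclic by Proposition~\ref{prop:Rgamma}, while $\Xi^{\res}(M)$ has level $\le r-1$ by Proposition~\ref{prop:levelfunc}(d). First I would pin down two auxiliary level estimates. On the one hand, $\ul{\Xi}(M)$ has homogeneous level $\le r$ by Proposition~\ref{prop:levelfunc}(c); the functor $\Gamma \circ \Phi$ is identified with $- \odot \ul{\bA}^1$ (Proposition~\ref{prop:gammagraded}), hence is exact and cocontinuous, and it sends $\ul{\bE}^{\lambda}$ to $\ul{\bE}^{\lambda} \odot \ul{\bA}^1 \cong \ul{\bE}^{\lambda a}$ (Proposition~\ref{prop:stdcat}), whose rank is one more than that of $\ul{\bE}^{\lambda}$; so by Proposition~\ref{prop:locmap} it carries $\uRep(\cI)_{\le s}$ into $\uRep(\cI)_{\le s+1}$ for every $s$, and in particular $\Gamma(\Xi(M))$ has homogeneous level $\le r+1$. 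On the other hand, when $r = 0$ the module $\Xi^{\res}(M)$ has level $\le -1$ and hence vanishes, which will give the base of the induction directly.

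Next I would apply $\rR\Gamma$ to the completion sequence. Since $\rR^i\Gamma(\Xi(M)) = 0$ for $i>0$, this produces, exactly as in the proof of Proposition~\ref{prop:gammafinite}, a four-term exact sequence
\[ 0 \to \Gamma(M) \to \Gamma(\Xi(M)) \to \Gamma(\Xi^{\res}(M)) \to \rR^1\Gamma(M) \to 0 \]
together with isomorphisms $\rR^i\Gamma(\Xi^{\res}(M)) \cong \rR^{i+1}\Gamma(M)$ for $i \ge 1$. The bounds then fall out term by term, using only that $\uRep(\cI)_{\le s}$ is closed under submodules and quotients. The module $\Gamma(M) = \rR^0\Gamma(M)$ is a submodule of $\Gamma(\Xi(M))$, of homogeneous level $\le r+1 = r+1-0$. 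The module $\rR^1\Gamma(M)$ is a quotient of $\Gamma(\Xi^{\res}(M)) = \rR^0\Gamma(\Xi^{\res}(M))$, which by the inductive hypothesis applied to $\Xi^{\res}(M)$ has homogeneous level $\le (r-1)+1-0 = r+1-1$. Finally, for $i \ge 1$ we have $\rR^{i+1}\Gamma(M) \cong \rR^i\Gamma(\Xi^{\res}(M))$, of homogeneous level $\le (r-1)+1-i = r+1-(i+1)$ by the inductive hypothesis. This completes the induction; the base case $r=0$ is the same computation with $\Xi^{\res}(M) = 0$, so that $M \cong \Xi(M)$ is $\Gamma$-acyclic and $\Gamma(M)$ has homogeneous level $\le 1$.

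I do not anticipate a genuine obstacle here: every structural fact needed --- exactness of $\Xi$ and $\Xi^{\res}$, the $\Gamma$-acyclicity of modules in the image of $\Phi$, and the behaviour of $\ul{\Xi}$, $\Xi^{\res}$, and $\Gamma\circ\Phi$ on the level filtration --- is already available. The only thing that requires care is the bookkeeping: getting the off-by-one in ``$\Gamma\circ\Phi$ raises level by exactly one'' to interact correctly with the degree shift in $\rR^i\Gamma(\Xi^{\res}(M)) \cong \rR^{i+1}\Gamma(M)$, so that the three cases (the submodule $\Gamma(M)$, the quotient $\rR^1\Gamma(M)$, and the shifted higher terms) all land at homogeneous level $\le r+1-i$. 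It is also worth remarking that the statement is asserted for an arbitrary smooth module $M$ of level $\le r$, not merely a finitely generated one, and that none of the tools invoked requires finite generation.
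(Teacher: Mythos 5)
Your proposal is correct and takes essentially the same approach as the paper: both establish the $i=0$ bound via a level estimate for $\Gamma\circ\Xi$, and then induct on level using the completion sequence $0 \to M \to \Xi(M) \to \Xi^{\res}(M) \to 0$, the $\Gamma$-acyclicity of $\Xi(M)$, and the resulting four-term exact sequence and degree-shift isomorphisms. The only cosmetic differences are that you factor $\Gamma\circ\Xi = (\Gamma\circ\Phi)\circ\ul{\Xi}$ and use the identification $\Gamma\circ\Phi \cong -\odot\ul{\bA}^1$, whereas the paper checks $\Gamma\circ\Xi$ directly on standard modules via Proposition~\ref{prop:xistd}, and that you start the induction at $r=0$ (noting $\Xi^{\res}(M)=0$ there, since $\Rep(\cI)_{\le -1}=0$) while the paper starts tautologically at $r<0$.
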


\begin{proof}
We first prove the result for $i=0$. We claim that $\Gamma \circ \Xi$ maps $\Rep(\cI)_{\le r}$ into $\uRep(\cI)_{\le r+1}$. Since $\Gamma \circ \Xi$ is exact and commutes with filtered colimits, it suffices by Proposition~\ref{prop:locmap} to verify this on standard modules of level $\le r$, as these generate $\Rep(\cI)_{\le r}$ as a localizing subcategory. This follows from explicit computations (Proposition~\ref{prop:xistd}). Now suppose that $M$ has level $\le r$. Since natural map $\Gamma(M) \to \Gamma(\Xi(M))$ is injective and the target has level $\le r+1$, we see that $\Gamma(M)$ has level $\le r+1$ as well. This establishes the $i=0$ case.

We now prove the result in general by induction on level. The result is tautologically true in level $<0$. Suppose now that it has been proved in level $<r$ and let us verify it in level $\le r$. Thus let $M$ of level $\le r$ be given. Consider the exact sequence
\begin{displaymath}
0 \to M \to \Xi(M) \to \Xi^{\res}(M) \to 0.
\end{displaymath}
We have $\lev(\Xi^{\res}(M))<r$ by Proposition~\ref{prop:levelfunc}. Applying $\Gamma$, and using the $\Gamma$-acyclicty of $\Xi(M)$ (Proposition~\ref{prop:Rgamma}), we obtain an exact sequence
\begin{displaymath}
0 \to \Gamma(M) \to \Gamma(\Xi(M)) \to \Gamma(\Xi^{\res}(M)) \to \rR^1 \Gamma(M) \to 0
\end{displaymath}
and isomorphisms
\begin{displaymath}
\rR^{i+1} \Gamma(M) \cong \rR^i \Gamma(\Xi^{\res}(M))
\end{displaymath}
for $i \ge 1$. By the inductive hypothesis, $\Gamma(\Xi^{\res}(M)$ has level $\le r$, and so $\rR^1 \Gamma(M)$ has level $\le r$ as well. Similarly, the inductive hypothesis shows that $\rR^i \Gamma(\Xi^{\res}(M))$ has level $\le r+i$, and so the same is true for $\rR^{i+1} \Gamma(M)$ for $i \ge 1$. As we have already proved the result for $i=0$, we are done.
\end{proof}

An $\cI$-module (either smooth or graded) has level~0 if and only if it is locally finite. We have seen that the forgetful functor $\Phi \colon \uRep(\cI)^{\lf} \to \Rep(\cI)^{\lf}$ is an equivalence. The following result generalizes this to higher level:

\begin{proposition} \label{prop:levequiv}
For any $r \in \bN$, the forgetful functor $\Phi$ induces an equivalence $\uRep(\cI)_r \to \Rep(\cI)_r$. The quasi-inverse functor is induced by $\ul{\Xi}$.
\end{proposition}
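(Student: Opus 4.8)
The plan is to descend the adjunction $(\ul{\Xi},\Phi)$ of Proposition~\ref{prop:xi} to the graded pieces of the level filtration and then to check that its unit and counit become isomorphisms after passing to these quotients. First I would record the two ingredients about how $\Phi$ and $\ul{\Xi}$ interact with level. Both are exact ($\Phi$ is a forgetful functor; $\ul{\Xi}$ is exact by Proposition~\ref{prop:xi}(b)), and by Proposition~\ref{prop:levelfunc}(a,c) they respect the level filtration: $\Phi(\uRep(\cI)_{\le s})\subseteq\Rep(\cI)_{\le s}$ and $\ul{\Xi}(\Rep(\cI)_{\le s})\subseteq\uRep(\cI)_{\le s}$ for every $s$. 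Applying this with $s=r$ and $s=r-1$, each of $\Phi$ and $\ul{\Xi}$ is an exact functor between $\uRep(\cI)_{\le r}$ and $\Rep(\cI)_{\le r}$ (in one direction or the other) carrying the smaller localizing subcategory into the smaller one; by the standard formalism of Serre quotients (Appendix~\ref{s:catbg}) it therefore induces an exact functor on the quotient categories $\uRep(\cI)_r=\uRep(\cI)_{\le r}/\uRep(\cI)_{\le r-1}$ and $\Rep(\cI)_r$, which I will continue to denote $\Phi$ and $\ul{\Xi}$. Since passing to the induced functor is compatible with composition and $\Xi=\Phi\circ\ul{\Xi}$, the composite $\Phi\circ\ul{\Xi}$ on $\Rep(\cI)_r$ is induced by $\Xi$, while $\ul{\Xi}\circ\Phi$ on $\uRep(\cI)_r$ is induced by $\ul{\Xi}\circ\Phi$.

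Next I would analyze the unit and counit. The unit of $(\ul{\Xi},\Phi)$ at a smooth module $M$ is the canonical map $\epsilon_M\colon M\to\Xi(M)=\Phi(\ul{\Xi}(M))$; it has zero kernel by Proposition~\ref{prop:xi}(c), and its cokernel is $\Xi^{\res}(M)$ by the definition of the residual completion. If $M$ has level $\le r$ then $\Xi^{\res}(M)$ has level $\le r-1$ by Proposition~\ref{prop:levelfunc}(d), so both the kernel and cokernel of $\epsilon_M$ lie in $\Rep(\cI)_{\le r-1}$, and hence $\epsilon_M$ becomes an isomorphism in $\Rep(\cI)_r$. By naturality of $\epsilon$ this yields a natural isomorphism $\id_{\Rep(\cI)_r}\cong\Phi\circ\ul{\Xi}$. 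Dually, the counit $\eta_N\colon\ul{\Xi}(\Phi(N))\to N$ at a graded module $N$ is a split surjection by Proposition~\ref{prop:xi}(d), so it has zero cokernel, and its kernel is $\ul{\Xi}^{\cor}(N)$ by the definition of the coresidual completion; if $N$ has level $\le r$ then $\ul{\Xi}^{\cor}(N)$ has level $\le r-1$ by Proposition~\ref{prop:levelfunc}(e), so $\eta_N$ becomes an isomorphism in $\uRep(\cI)_r$, giving a natural isomorphism $\ul{\Xi}\circ\Phi\cong\id_{\uRep(\cI)_r}$. Together these exhibit the induced functors $\Phi$ and $\ul{\Xi}$ as mutually quasi-inverse, which proves the proposition, including the identification of the quasi-inverse.

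The argument is essentially formal once the inputs are in place; the real substance is buried in Proposition~\ref{prop:levelfunc}(d,e) — that residual and coresidual completion drop the level by one — which in turn rests on the explicit computation of $\ul{\Xi}$ on standard modules (Proposition~\ref{prop:xistd}). The only point needing a little independent care is the Serre-quotient bookkeeping: that an exact functor respecting a nested pair of localizing subcategories descends to the quotients, that a natural transformation descends along with it, and that a morphism descends to an isomorphism precisely when both its kernel and its cokernel land in the smaller subcategory. I expect this to be the main (and only, and minor) obstacle, and I would dispatch it by citing the categorical appendix rather than reproving it.
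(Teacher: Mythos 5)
Your argument is essentially the paper's own: it establishes that the unit (resp.\ counit) of the $(\ul{\Xi},\Phi)$ adjunction has kernel and cokernel of level $<r$, using Proposition~\ref{prop:xi}(c,d) and Proposition~\ref{prop:levelfunc}(d,e), so both become isomorphisms in the quotient category. You are only more explicit than the paper about the Serre-quotient bookkeeping (that exact functors respecting nested localizing subcategories descend, along with natural transformations), which the paper leaves implicit.
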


\begin{proof}
Let $M \in \Rep(\cI)_{\le r}$. Then the unit $M \to \Phi(\ul{\Xi}(M))=\Xi(M)$ is injective (Proposition~\ref{prop:xi}(c)) with cokernel $\Xi^{\res}(M)$ of level $<r$ (Proposition~\ref{prop:levelfunc}(d)). Thus $M \to \Phi(\ul{\Xi}(M))$ is an isomorphism in $\Rep(\cI)_r$.

Similarly, let $N \in \uRep(\cI)_{\le r}$. Then the co-unit $\ul{\Xi}(\Phi(N)) \to N$ is surjective (Proposition~\ref{prop:xi}(d)) with kernel $\ul{\Xi}^{\cor}(N)$ of level $<r$ (Proposition~\ref{prop:levelfunc}(e)). Thus $\ul{\Xi}(\Phi(N)) \to N$ is an isomorphism in $\uRep(\cI)_r$.
\end{proof}

\subsection{Saturation}
Fix a constraint word $\lambda=\lambda_1 \cdots \lambda_t$ of rank $r$ with $\lambda_t=\whitetri$, and put $\mu=\lambda_1\cdots \lambda_{t-1}$. We have $\Psi(\ul{\bE}^{\lambda}) \cong \bE^{\mu}$ (Proposition~\ref{prop:psistd}) and $\Gamma(\bE^{\mu}) \cong \ul{\bE}^{\lambda}$ (Proposition~\ref{prop:gammastd}). For a submodule $K$ of $\ul{\bE}^{\lambda}$, we define the {\bf saturation} of $K$, denoted $\Sat(K)$, to be $\Gamma(\Psi(K))$, which is naturally a submodule of $\ul{\bE}^{\lambda}$.

We now make this construction somewhat more explicit. Put $\bE^{\mu}_{\le n}=\sum_{k=0}^n \ul{\bE}^{\mu}_k$, regarded as a subspace of $\bE^{\mu}$. We define the {\bf degree} of a non-zero element $x \in \bE^{\mu}$, denoted $\deg(x)$, to be the minimal $n$ such that $x$ belongs to $\bE^{\mu}_{\le n}$. Define a linear map $\bE^{\mu}_{<n} \to \ul{\bE}^{\lambda}_n$, denoted $x \mapsto x \wedge e_n$, as follows:
\begin{displaymath}
e_{i_1,\ldots,i_{t-1}} \wedge e_n = e_{i_1,\ldots,i_{t-1},n}
\end{displaymath}
This map is clearly an isomorphism of vector spaces.

\begin{proposition}
Let $K$ be a homogeneous submodule of $\ul{\bE}^{\lambda}$.
\begin{enumerate}
\item $\Psi(K)$ consists of those elements $x \in \bE^{\mu}$ for which $x \wedge e_n \in K$ for some $n>\deg(x)$.
\item $\Sat(K)$ consists of all elements of the form $x \wedge e_n$ with $x \in \Psi(K)$ and $n>\deg(x)$.
\end{enumerate}
\end{proposition}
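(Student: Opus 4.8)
The plan is to reduce both parts to the explicit forms of the isomorphisms $\Psi(\ul{\bE}^{\lambda}) \cong \bE^{\mu}$ and $\Gamma(\bE^{\mu}) \cong \ul{\bE}^{\lambda}$ recorded in Remarks~\ref{rmk:psistd} and~\ref{rmk:gammastd}, together with exactness of $\Psi$ and left-exactness of $\Gamma$. The central bookkeeping point is that the linear map $x \mapsto x \wedge e_n$ is, under these identifications, the inverse of the natural map $\rho_n \colon \ul{\bE}^{\lambda}_n \to \bE^{\mu}$ obtained by composing the structure map $\ul{\bE}^{\lambda}_n \to \Psi(\ul{\bE}^{\lambda})$ of the defining colimit with the isomorphism $\Psi(\ul{\bE}^{\lambda}) \cong \bE^{\mu}$: by Remark~\ref{rmk:psistd} one has $\rho_n(e_{i_1,\dots,i_{t-1},n}) = e_{i_1,\dots,i_{t-1}}$, so $\rho_n$ is a bijection of monomial bases and hence an isomorphism $\ul{\bE}^{\lambda}_n \cong \bE^{\mu}_{<n}$ with inverse $x \mapsto x \wedge e_n$ (here we use $\lambda_t = \whitetri$, so that no gap constraint couples the last coordinate).

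For part (a): since $K$ is homogeneous, $\Psi(K) = \varinjlim_n (K_n, \alpha_n)$ with $K_n = K \cap \ul{\bE}^{\lambda}_n$, and exactness of $\Psi$ identifies $\Psi(K)$ with the subspace $\bigcup_n \rho_n(K_n)$ of $\bE^{\mu}$. An element $x \in \bE^{\mu}$ lies in $\rho_n(K_n)$ only if $\deg(x) < n$, and then precisely when $\rho_n^{-1}(x) = x \wedge e_n$ lies in $K_n$, equivalently in $K$. Taking the union over $n$ gives the stated description of $\Psi(K)$. It is also worth recording, though not strictly needed, that $\alpha_n(x \wedge e_n) = x \wedge e_{n+1}$, so that if $x \wedge e_n \in K$ for one admissible $n$ then the same holds for all larger $n$.

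For part (b): by definition $\Sat(K) = \Gamma(\Psi(K))$, and since $\Gamma$ is left exact and $\Psi(K)$ is a smooth submodule of $\bE^{\mu}$, $\Gamma(\Psi(K))$ is naturally a submodule of $\Gamma(\bE^{\mu}) = \ul{\bE}^{\lambda}$ (Remark~\ref{rmk:gammastd}). Computing in degree $n$, $\Gamma(\Psi(K))_n = \Psi(K)^{\cI_{\ge n}} = \Psi(K) \cap (\bE^{\mu})^{\cI_{\ge n}} = \Psi(K) \cap \bE^{\mu}_{<n}$, using the elementary fact that $(\bE^{\mu})^{\cI_{\ge n}}$ is exactly the degree-$<n$ part of $\bE^{\mu}$. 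Under the degree-$n$ component of the isomorphism $\Gamma(\bE^{\mu}) \cong \ul{\bE}^{\lambda}$, which by Remark~\ref{rmk:gammastd} is again $x \mapsto x \wedge e_n$, this subspace is carried onto the set of all $x \wedge e_n$ with $x \in \Psi(K)$ and $n > \deg(x)$. Summing over $n$ yields (b).

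I do not expect a genuine obstacle here: there is no hard analysis, and once the two isomorphisms from the remarks are pinned down the whole argument is linear algebra on monomial bases. The one point that needs care is the compatibility asserted above — that the single operation $x \mapsto x \wedge e_n$ simultaneously realizes $\rho_n^{-1}$ (needed for (a)) and the degree-$n$ component of the $\Gamma$-isomorphism (needed for (b)) — together with the observation that although $\Psi(K)$ is a priori only a smooth submodule of $\bE^{\mu}$, it meets the grading well enough for the identity $\Gamma(\Psi(K))_n = \Psi(K) \cap \bE^{\mu}_{<n}$ to hold.
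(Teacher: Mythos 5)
Your proof is correct and follows essentially the same route as the paper's: both parts reduce to the explicit monomial descriptions of the isomorphisms in Remarks~\ref{rmk:psistd} and~\ref{rmk:gammastd}, together with exactness of $\Psi$ and left-exactness of $\Gamma$. The extra bookkeeping you carry out (introducing $\rho_n$, checking that $x \mapsto x \wedge e_n$ is simultaneously $\rho_n^{-1}$ and the degree-$n$ component of the $\Gamma$-isomorphism, and verifying $\Gamma(\Psi(K))_n = \Psi(K) \cap \bE^{\mu}_{<n}$) is accurate and just makes explicit what the paper leaves to the reader.
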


\begin{proof}
(a) Recall (Remark~\ref{rmk:psistd}) the explicit form of the isomorphism $\Psi(\ul{\bE}^{\lambda}) \to \bE^{\mu}$: the element of $\Psi(\ul{\bE}^{\lambda})$ represented by $e_{i_1,\ldots,i_t} \in \ul{\bE}^{\lambda}$ is mapped to $e_{i_1,\ldots,i_{t-1}} \in \bE^{\mu}$. A homogeneous element of $K$ has the form $x \wedge e_n$ for some $x \in \bE^{\mu}_{<n}$ and some $n$. The element of $\Psi(K)$ represented by $x \wedge e_n$ maps to $x$ under the isomorphism $\Psi(\ul{\bE}^{\lambda}) \to \bE^{\mu}$. This proves the claim.

(b) Again, recall (Remark~\ref{rmk:gammastd}) the isomorphism $\Gamma(\bE^{\mu}) = \ul{\bE}^{\lambda}$: the element $e_{i_1,\ldots,i_{t-1}}$ of $\Gamma(\bE^{\mu})_n=\bE^{\mu}_{<n}$ corresponds to the element $e_{i_1,\ldots,i_{t-1},n}=e_{i_1,\ldots,i_{t-1}} \wedge e_n$ of $\ul{\bE}^{\lambda}$. We thus see that $\Gamma(\Psi(K))_n$ consists of elements of those elements of the form $x \wedge e_n$ with $x \in \Psi(K) \cap \bE^{\mu}_{<n}$, which proves the claim.
\end{proof}

Let $N_k \subset \ul{\bE}^{\lambda}$ be the homogeneous submodule spanned by those basis vectors $e_{i_1,\ldots,i_t}$ with $i_t-i_{t-1} \ge k$.

\begin{proposition}
Let $\nu(j)=\lambda_1 \cdots \lambda_{t-1} \cdot \blacktri^j$. Then $\ul{\bE}^{\lambda}/N_k$ has a finite length filtration with graded pieces $\ul{\bE}^{\nu(j)}$ for $1 \le j \le k-1$. In particular, $\ul{\bE}^{\lambda}/N_k$ has homogeneous level $\le r-1$.
\end{proposition}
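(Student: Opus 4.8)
The plan is to construct the filtration directly from the modules $N_j$. For $1\le j\le k$, let $N_j\subset\ul{\bE}^{\lambda}$ be the homogeneous submodule spanned by the basis vectors $e_{i_1,\ldots,i_t}$ with $i_t-i_{t-1}\ge j$; the same argument used for $N_k$ (an order-preserving injection can only increase the gap $i_t-i_{t-1}$) shows each $N_j$ is an $\cI$-submodule, and we get a chain $N_1\supseteq N_2\supseteq\cdots\supseteq N_k$. Since $i_t>i_{t-1}$ always, $N_1=\ul{\bE}^{\lambda}$, and $N_k$ is the module we quotient by. Writing $Q=\ul{\bE}^{\lambda}/N_k$ and $\bar N_j$ for the image of $N_j$ in $Q$, we obtain a filtration $0=\bar N_k\subseteq\bar N_{k-1}\subseteq\cdots\subseteq\bar N_1=Q$ by homogeneous submodules, with $\bar N_j/\bar N_{j+1}\cong N_j/N_{j+1}$ for $1\le j\le k-1$. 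So it suffices to identify $N_j/N_{j+1}$ with $\ul{\bE}^{\nu(j)}$.

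By construction $N_j/N_{j+1}$ has a basis given by the images of the $e_{i_1,\ldots,i_t}$ with $i_t-i_{t-1}=j$ exactly, i.e.\ by the symbols $e_{i_1,\ldots,i_{t-1},\,i_{t-1}+j}$ where $(i_1,\ldots,i_{t-1})$ ranges over the index set of $\ul{\bE}^{\mu}$. I would define a linear map $N_j/N_{j+1}\to\ul{\bE}^{\mu}\odot\ul{\bB}^j$ by $e_{i_1,\ldots,i_{t-1},\,i_{t-1}+j}\mapsto e_{i_1,\ldots,i_{t-1}}\odot\xi$, where $\xi$ spans $\ul{\bB}^j$; this is visibly a degree-preserving linear isomorphism. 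To check $\cI$-linearity one compares the action of a generator $\alpha_k$ on both sides using the definition of $\odot$ (\S\ref{ss:concat}): on $\ul{\bE}^{\mu}_{i_{t-1}}\odot\ul{\bB}^j$, $\alpha_k$ acts on the $\ul{\bE}^{\mu}$-factor when $k\le i_{t-1}$, kills the element when $i_{t-1}<k\le i_{t-1}+j$, and fixes it when $k>i_{t-1}+j$; on $N_j/N_{j+1}$ the same trichotomy holds, since $\alpha_k$ shifts the last coordinate past $i_{t-1}+j$ (landing in $N_{j+1}$, hence dying in the quotient) precisely when $i_{t-1}<k\le i_{t-1}+j$, and otherwise acts by shifting the first $t-1$ coordinates according to the $\mu$-constraints. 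As $\ul{\bE}^{\mu}\odot\ul{\bB}^j=\ul{\bE}^{\nu(j)}$ by Proposition~\ref{prop:stdcat}, this yields $N_j/N_{j+1}\cong\ul{\bE}^{\nu(j)}$, completing the filtration.

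For the final clause, note that $\nu(j)$ contains exactly $r-1$ occurrences of $\whitetri$: every $\whitetri$ of $\lambda$ other than the terminal $\lambda_t=\whitetri$ lies in $\lambda_1\cdots\lambda_{t-1}$, and the appended $\blacktri^j$ contributes none. Hence each $\ul{\bE}^{\nu(j)}$ has rank $r-1$, so $Q$ is a finitely generated graded module admitting a finite filtration whose graded pieces are standard modules of rank $\le r-1$; by Proposition~\ref{prop:levfilt} this means $\ulev(Q)\le r-1$. (When $k=1$ the filtration is empty and $Q=0$, consistent with $\ulev(0)=-\infty$.)

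The only step requiring genuine care is the $\cI$-equivariance of the isomorphism $N_j/N_{j+1}\cong\ul{\bE}^{\nu(j)}$, namely matching, case by case in $k$, when $\alpha_k$ pushes a basis vector of $N_j$ into $N_{j+1}$ with the vanishing built into the concatenation product $\ul{\bE}^{\mu}\odot\ul{\bB}^j$. Once this bookkeeping is done the rest is formal, so I anticipate no real obstacle.
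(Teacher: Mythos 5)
Your proof is correct and follows essentially the same route as the paper's: filter $\ul{\bE}^{\lambda}/N_k$ by the images of the $N_j$, identify each graded piece $N_j/N_{j+1}$ with $\ul{\bE}^{\nu(j)}$ by matching basis vectors and tracking where $\alpha_k$ sends them, and then invoke the filtration characterization of level. The only cosmetic difference is that you describe the target as $\ul{\bE}^{\mu}\odot\ul{\bB}^{j}$ and appeal to Proposition~\ref{prop:stdcat}, whereas the paper identifies the underlying pointed $\cI$-set of $N_j/N_{j+1}$ directly with $\cE^{\nu(j)}$; these are the same isomorphism.
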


\begin{proof}
Note that $N_1=\ul{\bE}^{\lambda}$. The quotient $N_k/N_{k+1}$ has for a basis the set $\cB_+$ of those basis vectors $e_{i_1,\ldots,i_t}$ of $\ul{\bE}^{\lambda}$ with $i_t-i_{t-1}=k$. Moreover, we see that the set $\cB=\cB_+ \cup \{0\}$ is stable under the action of $\cI$. Thus $\cB$ is a pointed $\cI$-set and $N_k/N_{k+1} \cong \bk[\cB]$ is the associated monomial module. The map $\cE^{\nu(k)}_+ \to \cB_+$ given by $e_{i_1,\ldots,i_{t-1},i_t,\ldots,i_{t-1+k}} \to e_{i_1,\ldots,i_{t-1},i_{t-1+k}}$ is clearly a bijection. One easily sees that the induced bijection $\cE^{\nu(k)} \to \cB$ is an isomorphism of pointed $\cI$-sets, and so $N_k/N_{k+1}$ is isomorphic to $\ul{\bE}^{\nu(k)}$. The result now follows.
\end{proof}

\begin{proposition} \label{prop:bdsat}
Let $K \subset \ul{\bE}^{\lambda}$ be a homogeneous submodule. Then there exists $k$ such that $N_k \cap \Sat(K) \subset K$.
\end{proposition}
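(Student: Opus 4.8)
The plan is to pass to initial submodules with respect to the lexicographic order on $\cE^\lambda_+$ in which larger coordinates are compared first (exactly as for $\bA^r$ in \S\ref{s:grobner}), so that the assertion becomes a combinatorial statement about monomial submodules. Write $\ini$ for this initial-term operation on $\ul{\bE}^\lambda$. As for $\bA^r$, the lex order is a well-order compatible with the $\cI$-action and (using that $\ul{\bE}^\lambda$ is cyclic, hence noetherian by Theorem~\ref{thm:noeth}) $\ini(K)$ is a finitely generated monomial submodule; fix monomials $e_{u_1},\dots,e_{u_q}$ generating it, with $u_j\in\cE^\lambda_+$. I will then take $k=1+\max_{j}\bigl((u_j)_t-(u_j)_{t-1}\bigr)$, one more than the largest gap between the last two coordinates occurring among these generators (take $k=1$ if $K=0$).

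Next I record the interaction of $\ini$ with the other structures. The grading on $\ul{\bE}^\lambda$ is by the last coordinate, so the lex-leading monomial of any element lies in its top-degree part, and a homogeneous degree-$n$ element has the form $x\wedge e_n$ with $x\in\bE^\mu$, $\deg x<n$, and $\ini(x\wedge e_n)=\ini(x)\wedge e_n$ for the analogous order on $\cE^\mu_+$. Using the explicit descriptions of $\Psi(K)$ and $\Sat(K)$ from the preceding proposition, together with the identity $\alpha_m(x\wedge e_m)=x\wedge e_{m+1}$ (valid precisely because $\lambda_t=\whitetri$), one checks that for $x\in\Psi(K)$ one has $x\wedge e_m\in K$ for all $m$ past some threshold, and consequently that $e_v\in\ini(\Psi(K))$ if and only if $e_v\wedge e_m\in\ini(K)$ for some $m>\deg v$. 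The combinatorial heart is the bound: if $e_v\in\ini(\Psi(K))$ and $n-\deg v\ge k$, then $e_v\wedge e_n\in\ini(K)$. To prove it, choose a generator $e_{u_j}$ and $\rho\in\cI$ with $\rho(u_j)=v\wedge m$ in $\cE^\lambda_+$; since $n-\deg v\ge k>(u_j)_t-(u_j)_{t-1}$ there is enough room to alter $\rho$ only in its values above $(u_j)_{t-1}$ to obtain $\rho'\in\cI$ with $\rho'(u_j)=v\wedge n$, whence $e_v\wedge e_n\in\ini(K)$. I expect this to be the main obstacle: the delicate point is that the uniform $k$ must be read off from a finite monomial generating set of $\ini(K)$ rather than from writing elements of $\Psi(K)$ as $\cI$-combinations of generators of $\Psi(K)$ — the latter would let $\cI$-elements stretch the relevant intervals without bound, so no uniform $k$ would emerge that way.

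With $k$ fixed I will prove $N_k\cap\Sat(K)\subseteq K$ by a Gröbner-style reduction, as in the proof of Proposition~\ref{prop:groblem}, working within each graded piece. Assuming the contrary, pick a homogeneous $z\in(N_k\cap\Sat(K))\setminus K$, say of degree $n$, with $\ini(z)$ lex-minimal. Writing $\ini(z)=e_w=e_v\wedge e_n$ and using that the leading monomial of an element of $N_k$ again lies in $N_k$, the last-two-coordinate gap of $w$ is $\ge k$, so $\deg v\le n-k$; the bound of the previous paragraph then gives $e_w\in\ini(K)$, so there is a homogeneous $y\in K$ of degree $n$ with $\ini(y)=e_w$. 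The observation that closes the argument is that $y$ automatically lies in $N_k$: within the degree-$n$ piece the monomials of $N_k$ form a lex-lower set (a degree-$n$ monomial with last-two gap $\ge k$ dominates, in lex order, only degree-$n$ monomials with last-two gap $\ge k$), so every monomial of $y$, being $\le_{\mathrm{lex}}\ini(y)=e_w\in N_k$, lies in $N_k$. Hence for a suitable scalar $c$ the element $z-cy$ is homogeneous of degree $n$, lies in $N_k\cap\Sat(K)$, is not in $K$, and has strictly smaller lex-leading term, contradicting minimality. Therefore no such $z$ exists and $N_k\cap\Sat(K)\subseteq K$. The reductions surrounding the combinatorial bound — the passage to $\ini$, noetherianity, and the lex-minimal-counterexample step — are routine; the crux, as noted, is extracting the uniform $k$ and checking that it is compatible with keeping the residual $z-cy$ inside $N_k$.
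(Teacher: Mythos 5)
Your argument is correct, but it takes a genuinely different route from the paper's. The paper works ``downstairs'': it takes a Gr\"obner basis $x_1,\ldots,x_s$ of $\Psi(K)\subset\bE^\mu$ in the sense of \S\ref{ss:grgrob} (the top-degree-homogeneous-part initial-term theory for inhomogeneous submodules of graded modules), extracts $k$ from integers $n_i>d_i=\deg(x_i)$ with $x_i\wedge e_{n_i}\in K$, and then for $z=y\wedge e_m\in N_k\cap\Sat(K)$ uses the degree-controlled expansion of Proposition~\ref{prop:grobexp} to write $y=\sum c_{i,j}\sigma_{i,j}(x_i)$ and modifies each $\sigma_{i,j}$ above degree $d_i$ so that $\sigma_{i,j}(x_i\wedge e_{n_i})=\sigma_{i,j}(x_i)\wedge e_m$, giving $z\in K$ directly. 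You instead work ``upstairs,'' transporting the lex initial-term theory of \S\ref{s:grobner} from $\bA^r$ to $\ul{\bE}^\lambda$, extracting $k$ from monomial generators $e_{u_j}$ of $\ini(K)$, establishing the combinatorial bound by a single $\cI$-stretching applied to a monomial generator, and closing with a lex-minimal-counterexample reduction that uses the nice order-theoretic observation that the degree-$n$ monomials of $N_k$ form a lex-lower set. Both are Gr\"obner-theoretic, but the paper avoids ever having to set up an initial-term theory on $\ul{\bE}^\lambda$ itself, while your route is more self-contained and avoids Proposition~\ref{prop:grobexp}.

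One imprecision you should repair in the write-up: the lex order on $\cE^\lambda_+$ is \emph{not} compatible with the $\cI$-action in the strong sense $\ini(\sigma x)=\sigma\ini(x)$ that holds for $\bA^r$. For example with $\lambda=\whitetri\,\blacktri\,\whitetri$, $I=(1,2,5)<J=(3,4,5)$, and $\sigma=\alpha_4$, one has $\sigma J=\ast$ but $\sigma I=(1,2,6)\neq\ast$, so for $x=e_I+e_J$ one gets $\sigma\ini(x)=0$ while $\ini(\sigma x)=e_{1,2,6}$. What does hold, and is all you need, is the one-sided statement: if $\sigma\ini(x)\neq 0$ then $\ini(\sigma x)=\sigma\ini(x)$ (since $\sigma$ is lex-order-preserving on $\cA^t_+$ and can only delete, never promote, lower terms). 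This still shows $\ini(K)$ is an $\cI$-submodule, and the Gr\"obner lemma (Proposition~\ref{prop:groblem}) uses only the well-ordering, so your reduction step is unaffected. With that caveat recorded, the proof is complete.
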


\begin{proof}
Let $x_1, \ldots, x_s$ be a Gr\"obner basis for $\Psi(K)$, as defined in \S \ref{ss:grgrob}. For each $1 \le i \le s$, let $d_i=\deg(x_i)$ and let $n_i>d_i$ be an integer such that $x_i \wedge e_{n_i}$ belongs to $K$. Let $k$ be the maximum value of $n_i-d_i$ for $1 \le i \le s$. We claim that $N_k \cap \Sat(K) \subset K$, which will prove the proposition.

To verify the claim, let $z$ be a given element of $N_k \cap \Sat(K)$. Write $z=y \wedge e_m$ where $y \in \Psi(K)$ and $m>\deg(y)$. Put $\delta=\deg(y)$. Since $z \in N_k$, we have $m-\delta \ge k$. Write $y = \sum_{i,j} c_{i,j} \sigma_{i,j}(x_i)$ where the $c_{i,j}$ are scalars and the $\sigma_{i,j}$ are elements of $\cI$ such that $\deg(\sigma_{i,j} x_i) \le \deg(y)$ for all $i$, $j$, which is possible by Proposition~\ref{prop:grobexp}. Now, if $\sigma'_{i,j}$ is an element of $\cI$ such that $\sigma'_{i,j}(n)=\sigma_{i,j}(n)$ for $1 \le n \le d_i$ then $\sigma'_{i,j} x_i=\sigma_{i,j} x$. Since $\sigma_{i,j}(d_i) \le \delta$ and $n_i-d_i \le k \le m-\delta$, we can find such $\sigma'_{i,j}$ with $\sigma'_{i,j}(n_i)=m$. Relabeling, we simply assume $\sigma_{i,j}(n_i)=m$. We thus have $\sigma_{i,j}(x_i \wedge e_{n_i})=\sigma_{i,j}(x_i) \wedge e_m$ for all $i$ and $j$, and so $\sum_{i,j} c_{i,j} \sigma_{i,j}(x_i \wedge e_{n_i})=y \wedge e_m=z$. Thus $z \in K$, as required.
\end{proof}

\begin{corollary} \label{cor:satlev}
Let $K \subset \ul{\bE}^{\lambda}$ be a homogeneous submodule. Then $\Sat(K)/K$ has homogeneous level $\le r-1$.
\end{corollary}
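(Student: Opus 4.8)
The plan is to assemble the statement from the two immediately preceding propositions, which together do all the work. Recall that $\lambda=\lambda_1\cdots\lambda_t$ has rank $r$ with $\lambda_t=\whitetri$, so $\mu=\lambda_1\cdots\lambda_{t-1}$ has rank $r-1$, and each word $\nu(j)=\mu\cdot\blacktri^j$ from the preceding proposition again has rank $r-1$. (In particular $r\ge 1$, so $r-1\ge 0$.)

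First I would record the containment $K\subseteq\Sat(K)$: this is immediate from the explicit descriptions of $\Psi(K)$ and $\Sat(K)$ given just above, since a homogeneous element of $K$ has the form $x\wedge e_n$ with $n>\deg(x)$, and such an $x$ represents an element of $\Psi(K)$, whence $x\wedge e_n\in\Sat(K)$. Next, apply Proposition~\ref{prop:bdsat} to fix an integer $k$ with $N_k\cap\Sat(K)\subseteq K$. We then have the chain of homogeneous submodules $N_k\cap\Sat(K)\subseteq K\subseteq\Sat(K)$ inside $\ul{\bE}^{\lambda}$, so $\Sat(K)/K$ is a quotient of $\Sat(K)/(N_k\cap\Sat(K))$. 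The latter is canonically identified with the image of $\Sat(K)$ in $\ul{\bE}^{\lambda}/N_k$, hence is a homogeneous submodule of $\ul{\bE}^{\lambda}/N_k$. Thus $\Sat(K)/K$ is a subquotient of $\ul{\bE}^{\lambda}/N_k$.

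Finally, invoke the previous proposition: $\ul{\bE}^{\lambda}/N_k$ admits a finite length filtration whose graded pieces are the modules $\ul{\bE}^{\nu(j)}$ for $1\le j\le k-1$, each of rank $r-1$, so $\ul{\bE}^{\lambda}/N_k$ has homogeneous level $\le r-1$. Since $\uRep(\cI)_{\le r-1}$ is a localizing subcategory, it is a Serre subcategory and hence closed under passage to subobjects and quotients; therefore $\Sat(K)/K$, being a subquotient of $\ul{\bE}^{\lambda}/N_k$, lies in $\uRep(\cI)_{\le r-1}$, i.e., has homogeneous level $\le r-1$.

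I do not expect any genuine obstacle: all the content has already been extracted into Proposition~\ref{prop:bdsat} (which in turn rests on the Gr\"obner machinery of \S\ref{ss:grgrob}) and into the analysis of the quotients $\ul{\bE}^{\lambda}/N_k$. The only points requiring a moment's attention are verifying $K\subseteq\Sat(K)$ and organizing the chain of submodules so that $\Sat(K)/K$ is visibly a subquotient of $\ul{\bE}^{\lambda}/N_k$, both of which are routine.
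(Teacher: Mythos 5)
Your proof is correct and follows the paper's approach exactly: fix $k$ via Proposition~\ref{prop:bdsat} and exhibit $\Sat(K)/K$ inside $\ul{\bE}^{\lambda}/N_k$, whose level was just computed. Your careful phrasing via the chain $N_k\cap\Sat(K)\subseteq K\subseteq\Sat(K)$ (realizing $\Sat(K)/K$ as a subquotient of $\ul{\bE}^{\lambda}/N_k$) actually cleans up a small imprecision in the paper, which asserts an injection $\Sat(K)/K\to\ul{\bE}^{\lambda}/N_k$ that is not literally well-defined unless $K\subseteq N_k$.
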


\begin{proof}
Let $k$ be such that $N_k \cap \Sat(K) \subset K$. Then the map $\Sat(K)/K \to \ul{\bE}^{\lambda}/N_k$ is injective. Since the target has homogeneous level $\le r-1$, so does the source, and the result follows.
\end{proof}

\subsection{Concatenation with $\ul{\bB}^1$} \label{ss:pi}

In this section, we study the operation of concatenating with the simple module $\ul{\bB}^1$. For brevity, for a smooth $\cI$-module $M$, we put $\Pi(M)=\ul{\bB}^1 \odot M$. For a graded $\cI$-module $M$, we put $\ul{\Pi}(M)=\ul{\bB}^1 \odot M$. We also let $\ul{\Pi}^{\dag}$ be the conjugation of $\ul{\Pi}$ by the transpose functor, i.e., $\ul{\Pi}^{\dag}=\ul{\Pi}(M^{\dag})^{\dag}$. Explicitly, we have $\ul{\Pi}^{\dag}(M)=M \odot \ul{\bB}^1$.

\begin{proposition} \label{prop:pisubmod}
Let $M$ be a smooth $\cI$-module. Then the map
\begin{displaymath}
\{ \text{$\cI$-submodules of $M$} \} \to \{ \text{$\cI$-submodules of $\Pi(M)$} \}, \qquad
N \mapsto \Pi(N)
\end{displaymath}
is a bijection. The analogous statements hold in the graded case for both $\ul{\Pi}$ and $\ul{\Pi}^{\dag}$.
\end{proposition}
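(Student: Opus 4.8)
The plan is to compute the $\cI$-action on $\Pi(M)=\ul{\bB}^1\odot M$ explicitly and then observe that, after a shift of indices, the conditions defining an $\cI$-submodule are literally the same on the two sides of the map. Writing $\xi$ for a basis vector of $\ul{\bB}^1$, the definition of the concatenation product gives that the underlying vector space of $\Pi(M)$ is $\bk\xi\otimes M$, that every element is uniquely of the form $\xi\odot x$, and that $\alpha_1\cdot(\xi\odot x)=0$ while $\alpha_k\cdot(\xi\odot x)=\xi\odot(\alpha_{k-1}x)$ for $k\ge 2$ (this is the computation already made at the start of the proof of Proposition~\ref{prop:L1xi}). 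Thus $x\mapsto\xi\odot x$ identifies $M$ with $\Pi(M)$ as vector spaces, carrying the operator $\alpha_{k-1}$ on $M$ to the operator $\alpha_k$ on $\Pi(M)$ for $k\ge 2$, and carrying nothing to $\alpha_1$ (which acts by zero).

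Next I would run the following bookkeeping argument. Any subspace $W\subseteq\Pi(M)$ is of the form $W=\{\xi\odot x:x\in W'\}$ for a unique subspace $W'\subseteq M$, and $W\mapsto W'$ is an inclusion-preserving bijection between all subspaces of $\Pi(M)$ and all subspaces of $M$, under which $\Pi(N)$ corresponds to $N$. The point is that $W$ is $\cI$-stable if and only if $W'$ is: stability under $\alpha_1$ is automatic since it acts by $0$, and for each $k\ge 2$ stability of $W$ under $\alpha_k$ is equivalent to stability of $W'$ under $\alpha_{k-1}$; as $k$ ranges over $\{2,3,\dots\}$ this gives exactly the conditions $\alpha_jW'\subseteq W'$ for all $j\ge 1$. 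Since a submodule of a smooth module is smooth, and since $\Pi$ carries inclusions of submodules to inclusions (it is just $\otimes\,\bk$ on underlying spaces), this shows $N\mapsto\Pi(N)$ is a bijection from the $\cI$-submodules of $M$ onto the $\cI$-submodules of $\Pi(M)$, with inverse $W\mapsto W'$.

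For $\ul{\Pi}$ the same argument applies verbatim once degrees are tracked: $\xi\odot x$ has degree $j+1$ when $x\in M_j$, so a homogeneous subspace $W=\bigoplus_n W_n$ of $\ul{\Pi}(M)$ corresponds to the homogeneous subspace $W'=\bigoplus_j W'_j$ of $M$ with $W'_j=\{x\in M_j:\xi\odot x\in W_{j+1}\}$, and the stability computation above is already degree-by-degree; condition (b) in the definition of a graded module passes automatically to submodules. The case of $\ul{\Pi}^{\dag}$ I would then deduce formally: the transpose functor is a covariant involution of $\uRep(\cI)$ (\S\ref{ss:transpose}), so $N\mapsto N^{\dag}$ is an inclusion-preserving bijection between the submodules of $M$ and those of $M^{\dag}$, and by definition $\ul{\Pi}^{\dag}(N)=\ul{\Pi}(N^{\dag})^{\dag}$; composing the three bijections yields the claim.

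I do not expect a genuine obstacle here: the proposition is a bookkeeping lemma, and the only care needed is in the re-indexing $\alpha_k\leftrightarrow\alpha_{k-1}$ and the degree shift in the graded case. The conceptual content worth stressing is that concatenating with $\ul{\bB}^1$ simply reindexes the generators via $\alpha_i\mapsto\alpha_{i+1}$ and adjoins a new generator $\alpha_1$ acting by zero, which changes nothing about which subspaces are $\cI$-stable.
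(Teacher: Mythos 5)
Your proof is correct and follows the paper's own argument: write every subspace of $\Pi(M)$ as $\xi\odot W'$, note $\alpha_1$ kills everything and $\alpha_k(\xi\odot x)=\xi\odot\alpha_{k-1}x$ for $k\ge 2$, and read off that $\cI$-stability on the two sides is equivalent. You spell out the two-sided correspondence and the degree bookkeeping a bit more explicitly than the paper does, and you handle $\ul{\Pi}^{\dag}$ by conjugating with $\dag$ exactly as the paper does.
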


\begin{proof}
Let $\xi$ be a generator for $\ul{\bB}^1$, so that $\Pi(M)=\xi \otimes M$. An $\cI$-submodule of $\Pi(M)$ has the form $\xi \otimes N$ for some subspace $N$ of $M$. To prove the proposition (in the smooth case), it suffices to show that $N$ is an $\cI$-submodule of $M$. But this is clear: for $x \in N$ and $i \ge 1$, we have $\alpha_{i+1}(\xi \otimes x)=\xi \otimes \alpha_i x$, which belongs to $\xi \otimes N$ by assumption; thus $\alpha_i x$ belongs to $N$, as required.

It is clear that if $M$ is a graded module then the above correspondence takes homogeneous submodules to homogeneous submodules, in each direction. Thus the analogous statement for $\ul{\Pi}$ holds. The one for $\ul{\Pi}^{\dag}$ follows simply by taking transposes.
\end{proof}

\begin{proposition} \label{prop:pilev}
For a smooth $\cI$-module $M$ we have $\lev{\Pi(M)} \le \lev{M}$. The analogous results hold in the graded case for both $\ul{\Pi}$ and $\ul{\Pi}^{\dag}$.
\end{proposition}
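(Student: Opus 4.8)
The plan is to mimic the proof of Proposition~\ref{prop:levelfunc}: reduce the statement to standard modules via the localizing-subcategory criterion, and then quote the computation of $\odot$ on standard modules. First I would observe that $\Pi = \ul{\bB}^1 \odot (-)$ is exact and cocontinuous; this is immediate, since on underlying vector spaces $\Pi$ is simply $\bk \otimes (-)$ (as $\ul{\bB}^1$ is one-dimensional), and both exactness and colimits in $\Rep(\cI)$ are computed on underlying vector spaces. Since $\Rep(\cI)_{\le r}$ is by definition the localizing subcategory generated by the standard modules $\bE^{\lambda}$ of rank $\le r$, Proposition~\ref{prop:locmap} then reduces the inclusion $\Pi(\Rep(\cI)_{\le r}) \subseteq \Rep(\cI)_{\le r}$ to checking that $\Pi(\bE^{\lambda}) \in \Rep(\cI)_{\le r}$ whenever $\lambda$ has rank $\le r$.

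That check is a one-line calculation. Because $\ul{\bB}^1 = \ul{\bE}^{\blacktri}$, Proposition~\ref{prop:stdcat} furnishes a natural isomorphism $\Pi(\bE^{\lambda}) = \ul{\bE}^{\blacktri} \odot \bE^{\lambda} \cong \bE^{\blacktri \odot \lambda}$, and the constraint word $\blacktri \odot \lambda$ has the same rank as $\lambda$, since the letter $\blacktri$ is not counted in the rank. Hence $\bE^{\blacktri \odot \lambda}$ has rank $\le r$ and belongs to $\Rep(\cI)_{\le r}$. This gives $\lev(\Pi(M)) \le \lev(M)$ for every smooth $M$ (the cases $\lev(M) = -\infty$ and $\lev(M) = \infty$ being trivial).

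The graded assertions follow by the same argument. The reduction applies verbatim to $\ul{\Pi} = \ul{\bB}^1 \odot (-)$ on $\uRep(\cI)$, and the graded half of Proposition~\ref{prop:stdcat} gives $\ul{\Pi}(\ul{\bE}^{\lambda}) \cong \ul{\bE}^{\blacktri \odot \lambda}$, again of unchanged rank. For $\ul{\Pi}^{\dag}$ one may either run the identical computation --- $\ul{\Pi}^{\dag}(\ul{\bE}^{\lambda}) = \ul{\bE}^{\lambda} \odot \ul{\bB}^1 \cong \ul{\bE}^{\lambda \odot \blacktri}$ --- or, more slickly, note that $\ul{\Pi}^{\dag}$ is the conjugate of $\ul{\Pi}$ by the transpose functor, which is an exact, cocontinuous auto-equivalence of $\uRep(\cI)$ that preserves homogeneous level: by Proposition~\ref{prop:transstd} it sends $\ul{\bE}^{\lambda}$ to $\ul{\bE}^{\lambda^{\dag}}$, and $\lambda^{\dag}$ has the same rank as $\lambda$.

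I do not expect a real obstacle here. The single conceptual point is that concatenating with $\ul{\bB}^1$ merely inserts a $\blacktri$ into the constraint word and therefore leaves the rank unchanged; everything else is an appeal to facts already established --- the formula for $\odot$ on standard modules, the manifest exactness and cocontinuity of $\odot$, and the criterion of Proposition~\ref{prop:locmap}. The mildest care required is to confirm that the hypotheses of Proposition~\ref{prop:locmap} hold in full, in particular that $\Pi$ (and $\ul{\Pi}$) is genuinely cocontinuous rather than merely right exact, which is clear from the description on underlying vector spaces.
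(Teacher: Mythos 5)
Your proof is correct and follows essentially the same route as the paper's: reduce to standard modules via Proposition~\ref{prop:locmap}, using that $\Pi$ is exact and cocontinuous, and then observe via Proposition~\ref{prop:stdcat} that concatenation with $\ul{\bB}^1$ merely inserts a $\blacktri$ and so preserves rank. The extra care you take to justify cocontinuity and to handle $\ul{\Pi}^{\dag}$ (by either direct computation or conjugation by transpose) is sound but not a departure in strategy.
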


\begin{proof}
We must show that $\Pi$ maps the subcategory $\Rep(\cI)_{\le r}$ of $\Rep(\cI)$ into itself. Since $\Pi$ is exact and cocontinuous, it suffices by Proposition~\ref{prop:locmap} to check that $\Pi(\bE^{\lambda})$ has level $\le r$ for all standard modules $\bE^{\lambda}$ of level $\le r$. This follows from the explicit computation of the concatenation product on standard modules (Proposition~\ref{prop:stdcat}). The proof for $\ul{\Pi}$ and $\ul{\Pi}^{\dag}$ is similar.
\end{proof}

\subsection{The main theorem}

We now come to the main theorem on level:

\begin{theorem} \label{thm:level}
Let $\lambda$ be a constraint word of rank $r$. Then any proper quotient of $\bE^{\lambda}$ has level $<r$. Similarly, any proper homogeneous quotient of $\ul{\bE}^{\lambda}$ has homogeneous level $<r$.
\end{theorem}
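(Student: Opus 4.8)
The plan is to reduce everything to a statement about the combinatorics of constraint words, using the concatenation product and the functors already built. I would proceed by induction on the length $t$ of $\lambda$, the base case $t=0$ (where $\bE^{\lambda}$ is the trivial module, which is simple, so has no proper quotients) being vacuous. Write $\lambda = \nu \cdot \lambda_t$ where $\lambda_t$ is the last letter, so that $\ul{\bE}^{\lambda} = \ul{\bE}^{\nu} \odot \ul{\bE}^{\lambda_t}$ by Proposition~\ref{prop:stdcat}; the goal is to analyze proper quotients by ``peeling off'' the last letter. It is cleaner, though, to peel off the \emph{first} letter: write $\lambda = \lambda_1 \cdot \mu$, so $\ul{\bE}^{\lambda} = \ul{\bE}^{\lambda_1} \odot \ul{\bE}^{\mu}$, which is $\ul{\bA}^1 \odot \ul{\bE}^{\mu}$ if $\lambda_1 = \whitetri$ and $\ul{\bB}^1 \odot \ul{\bE}^{\mu}$ if $\lambda_1 = \blacktri$.

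For the case $\lambda_1 = \blacktri$: here $\ul{\bE}^{\lambda} = \ul{\Pi}(\ul{\bE}^{\mu})$, and by Proposition~\ref{prop:pisubmod} every homogeneous submodule of $\ul{\bE}^{\lambda}$ is of the form $\ul{\Pi}(K)$ for a homogeneous submodule $K \subset \ul{\bE}^{\mu}$; hence every proper quotient of $\ul{\bE}^{\lambda}$ is $\ul{\Pi}(Q)$ for a proper quotient $Q$ of $\ul{\bE}^{\mu}$. Since $\mu$ also has rank $r$ (it just dropped a $\blacktri$), the inductive hypothesis gives $\ulev(Q) < r$, and then Proposition~\ref{prop:pilev} gives $\ulev(\ul{\Pi}(Q)) \le \ulev(Q) < r$. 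So this case is immediate from the inductive hypothesis.

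The case $\lambda_1 = \whitetri$ is the heart of the matter, and I expect it to be the main obstacle. Here $\mu$ has rank $r-1$ and $\ul{\bE}^{\lambda} = \ul{\bA}^1 \odot \ul{\bE}^{\mu}$. The natural approach is to use the saturation machinery of \S\ref{s:level} together with $\Psi$ and $\Gamma$. Given a proper homogeneous submodule $K \subset \ul{\bE}^{\lambda}$, form $\Psi(K) \subset \Psi(\ul{\bE}^{\lambda})$. By Proposition~\ref{prop:psistd}, $\Psi(\ul{\bE}^{\lambda}) = \bE^{\mu'}$ where $\mu'$ is $\lambda$ with its final $a$ removed. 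One wants a dichotomy: either $\Psi(K) = \Psi(\ul{\bE}^{\lambda})$, in which case $\Sat(K) = \Gamma(\Psi(K)) = \ul{\bE}^{\lambda}$ and so $\ul{\bE}^{\lambda}/K$ is a quotient of $\Sat(K)/K$, which has homogeneous level $\le r-1$ by Corollary~\ref{cor:satlev}, giving the result; or $\Psi(K) \subsetneq \Psi(\ul{\bE}^{\lambda})$ is a proper submodule, which after applying the inductive hypothesis (note $\Psi(\ul{\bE}^{\lambda})$ is a standard module of rank $r$, since we removed a \emph{final} $a$ — one must be careful here that $\lambda$ ends in $a$; if it does not, one should instead peel from the correct end or use the transpose functor $\dag$ of \S\ref{ss:transpose} to arrange that it does) will combine with exactness of $\Psi$ and $\Gamma$-type arguments to bound the level. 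The delicate point is controlling $\ul{\bE}^{\lambda}/K$ when $K$ is ``thin'' — this is exactly what Proposition~\ref{prop:bdsat} (the bounded-saturation result) is designed for: it guarantees $N_k \cap \Sat(K) \subset K$ for some $k$, so $\Sat(K)/K$ embeds in $\ul{\bE}^{\lambda}/N_k$, a module of level $\le r-1$.

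So the overall structure is: set up the induction on word length; dispose of the $\blacktri$-first case via $\ul{\Pi}$; for the $\whitetri$-first (or rather $a$-last, after a possible transpose) case, use the exact sequence $0 \to K \to \ul{\bE}^{\lambda} \to \ul{\bE}^{\lambda}/K \to 0$, split into subcases according to whether $\Psi(K)$ is all of $\Psi(\ul{\bE}^{\lambda})$ or proper, and in each subcase feed the inductive hypothesis through Corollary~\ref{cor:satlev}, Proposition~\ref{prop:bdsat}, Proposition~\ref{prop:levelfunc}, and exactness of $\Psi$/$\Gamma$-acyclicity of standard modules (Corollary~\ref{prop:Rgammastd}). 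The main obstacle is organizing the $\whitetri$-first case so that the reduction genuinely lands on a \emph{smaller} standard module of controllable rank and the level bookkeeping closes up; getting the ranks and the direction of peeling exactly right (and invoking $\dag$ to normalize when $\lambda$ does not end in $a$) is where the care is needed. Finally, the smooth statement follows from the graded one: a proper quotient of $\bE^{\lambda} = \Phi(\ul{\bE}^{\lambda})$ corresponds, via Proposition~\ref{prop:phi-indecomp}-style reasoning on submodules (using the Gröbner theory of \S\ref{ss:grgrob} to pass between homogeneous and inhomogeneous submodules and Proposition~\ref{prop:levelfunc}(a) for $\Phi$), to a quotient whose level is governed by the graded computation.
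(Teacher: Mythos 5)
Your building blocks are the right ones, and your treatment of the $\blacktri$-case matches the paper's (the paper uses $\ul{\Pi}^\dag$ to peel the last letter rather than $\ul{\Pi}$ for the first, but both work there). However, there are two structural gaps.

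First, your induction runs only over the \emph{graded} statement, indexed by word length, with the smooth statement deferred to a final corollary. But the saturation argument in the $\whitetri$-case (after normalizing with $\dag$ to peel the last letter) hands you a non-zero submodule $\Psi(K')$ of the \emph{smooth} module $\bE^{\mu'}$ of rank $r-1$, and you must bound the level of the smooth cokernel $\bE^{\mu'}/\Psi(K')$ before you can apply $\Gamma$ and Proposition~\ref{prop:gammalev}. That is, the smooth statement at a lower rank is needed \emph{inside} the inductive step, not afterward. The paper handles this with a mutual induction on rank over two families of statements, $(\sA_r)$ (smooth) and $(\sB_r)$ (graded), proving $(\sB_r)\Rightarrow(\sA_r)$ and $(\sA_r)\Rightarrow(\sB_{r+1})$, with an inner induction on word length inside the $\blacktri$-branch of the latter. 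Postponing the smooth case to the end is not an option.

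Second, the reduction from the graded statement to the smooth one via Gr\"obner initial modules does not supply a level bound. Passing to $\ini(K)$ preserves degree-by-degree dimensions, but the level of a smooth module is defined via membership in localizing subcategories, governed by the multiplicities $\mu_n$ of \S\ref{ss:mult}, which are \emph{not} degree dimensions when the module carries no grading (compare $\mu_0(\bA^1)=1$ versus $\dim\ul{\bA}^1_0=0$). The Hilbert-series characterization of level (Theorem~\ref{thm:hilb-gr}) that might let you compare the two is itself proved \emph{using} Theorem~\ref{thm:level}, so invoking it here would be circular. What the paper actually uses for $(\sB_r)\Rightarrow(\sA_r)$ is the completion functor: given $0\ne K\subset\bE^{\lambda}$, the counit $\eta\colon\ul{\Xi}(\bE^{\lambda})\to\ul{\bE}^{\lambda}$ produces a non-zero homogeneous submodule $\eta(\ul{\Xi}(K))$, there is an exact sequence
\begin{displaymath}
\ul{\Xi}^{\cor}(\bE^{\lambda})\to\ul{\Xi}(\bE^{\lambda}/K)\to\ul{\bE}^{\lambda}/\eta(\ul{\Xi}(K))\to 0,
\end{displaymath}
and Proposition~\ref{prop:levelfunc} together with the graded bound closes the argument. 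This is the ingredient your sketch is missing.

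Minor points: removing a final $a$ from a rank-$r$ word yields a word of rank $r-1$, not $r$ as written; the dichotomy on whether $\Psi(K)$ equals $\Psi(\ul{\bE}^{\lambda})$ is unnecessary, since $\ul{\bE}^{\lambda}$ is torsion-free when $\lambda$ ends in $a$, so $K\ne 0$ forces $\Psi(K)\ne 0$ and the two-step filtration by $K'/K$ and $\ul{\bE}^{\lambda}/K'$ always works; and it is cleaner to fix an orientation from the start, peeling the last letter throughout, so that $\Psi$, $\Gamma$, and $\Sat$ act at the end you need rather than switching ends mid-argument.
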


Consider the following two statements:
\begin{itemize}
\item[$(\sA_r)$] If $\lambda$ is a constraint word of rank $\le r$ then any proper quotient of $\bE^{\lambda}$ has level $<r$.
\item[$(\sB_r)$] If $\lambda$ is a constraint word of rank $\le r$ then any proper homogeneous quotient of $\ul{\bE}^{\lambda}$ has homogeneous level $<r$.
\end{itemize}
Clearly, to prove the theorem it is enough to prove these statements for all $r$. We do this by induction on $r$. The statements $(\sA_0)$ and $(\sB_0)$ are clear, since standard modules of rank~0 are simple, in both the graded and ungraded case.

\begin{lemma}
We have $(\sB_r) \implies (\sA_r)$.
\end{lemma}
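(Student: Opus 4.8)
The plan is to deduce the smooth statement $(\sA_r)$ from the graded statement $(\sB_r)$ by relating a smooth quotient of $\bE^\lambda$ to a graded quotient of a graded standard module, using the functors $\Gamma$, $\Psi$, and the interplay between level and homogeneous level established in Proposition~\ref{prop:levelfunc} and Proposition~\ref{prop:gammalev}. The key observation is that $\bE^\lambda = \Phi(\ul{\bE}^\lambda)$ and that $\Gamma$ is the natural bridge carrying a smooth module back into the graded world: recall $\Gamma(\bE^\mu) \cong \ul{\bE}^{\mu \cdot \whitetri}$ (Proposition~\ref{prop:gammastd}), which raises rank by one, and $\Psi(\ul{\bE}^{\mu \cdot \whitetri}) \cong \bE^\mu$ (Proposition~\ref{prop:psistd}), which lowers it back.

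First I would set up the situation: suppose $q \colon \bE^\lambda \twoheadrightarrow Q$ is a proper surjection of smooth $\cI$-modules, with $\lambda$ of rank $\le r$, and let $K = \ker(q) \ne 0$. If $\lambda$ ends in $\blacktri$ (or is the empty word), then by Corollary~\ref{cor:gapprod} we have $\bE^\lambda = \ul{\bE}^\mu \odot \ul{\bB}^1 = \ul{\Pi}^\dag(\ul{\bE}^\mu)$ for the word $\mu$ obtained by deleting the last letter; here Proposition~\ref{prop:pisubmod} identifies submodules of $\bE^\lambda$ with submodules of $\bE^\mu$ and Proposition~\ref{prop:pilev} shows $\lev$ is unchanged, so this case reduces to a shorter word (note deleting a trailing $\blacktri$ does not change the rank), and one finishes by induction on word length. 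So the essential case is $\lambda = \mu \cdot \whitetri$ with $\mathrm{rank}(\mu) = r-1$, i.e.\ $\lambda$ ends in $\whitetri$. Here I apply $\Gamma$ to the exact sequence $0 \to K \to \bE^\lambda \to Q \to 0$. Since $\bE^\lambda$ admits a grading, it is $\Gamma$-acyclic (Proposition~\ref{prop:Rgamma}), so we get an exact sequence
\begin{displaymath}
0 \to \Gamma(K) \to \Gamma(\bE^\lambda) \to \Gamma(Q) \to \rR^1\Gamma(K) \to 0,
\end{displaymath}
and $\Gamma(\bE^\lambda) \cong \ul{\bE}^{\lambda \cdot \whitetri}$ is a standard module of rank $r+1$. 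The image $\ul{\bE}^{\lambda\cdot\whitetri}/\Gamma(K)$ is a proper homogeneous quotient of $\ul{\bE}^{\lambda\cdot\whitetri}$ (proper because $K \ne 0$ forces $\Gamma(K) \ne 0$, as $\Gamma$ is left exact and faithful on nonzero smooth modules — concretely $\Gamma(K)_n = K^{\cI_{\ge n}} \ne 0$ for $n$ large by smoothness). By $(\sB_{r+1})$, hmm — but we only have $(\sB_r)$ available, not $(\sB_{r+1})$, so I need to be more careful about the rank bookkeeping.

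The fix is to work with $\Psi$ rather than pushing up in rank. Instead: apply $\Psi$ to $0 \to K \to \bE^\lambda \to Q \to 0$. Since $\Psi$ is exact and $\bE^\lambda$ is not killed by $\Psi$... actually the cleanest route: observe $\bE^\lambda = \Phi(\ul{\bE}^\lambda)$ and the quotient map $q$ corresponds, via the completion adjunction $(\ul{\Xi}, \Phi)$, to a map out of $\ul{\Xi}(\bE^\lambda)$; but more directly, I want to produce a proper homogeneous quotient of $\ul{\bE}^\lambda$ itself. Take $\ul{K} = \ul{\Xi}^{\cor}$-type construction, or simply: let $\ul{Q}$ be the pushout / image of $\ul{\bE}^\lambda \to \ul{\Xi}(Q)$. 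Since $\ul{\Xi}$ is exact (Proposition~\ref{prop:xi}(b)) and the unit $\bE^\lambda \to \Xi(\bE^\lambda) = \Phi(\ul{\Xi}(\bE^\lambda))$ is split with complement $\bigoplus_{i<n}\ul{\bE}^{\mu a^i}$ by Proposition~\ref{prop:xistd}, applying $\ul{\Xi}$ to $q$ gives a surjection $\ul{\Xi}(\bE^\lambda) \twoheadrightarrow \ul{\Xi}(Q)$ whose restriction to the summand $\ul{\bE}^\lambda$ has image a homogeneous quotient of $\ul{\bE}^\lambda$, and one checks this quotient is proper using that $K\ne 0$ and the cokernel $\Xi^{\res}(K)$ has level $< \lev(K) \le r$ by Proposition~\ref{prop:levelfunc}(d). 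Then $(\sB_r)$ gives that this homogeneous quotient has homogeneous level $< r$, hence (applying $\Phi$ and Proposition~\ref{prop:levelfunc}(a)) $Q$, which is a subquotient of $\Phi$ of it together with a level-$<r$ correction term coming from the $\ul{\bE}^{\mu a^i}$ summands with $i < n$ — each of which has rank $\le r-1$, hence level $\le r-1$ — has level $< r$ as well, using additivity of level under extensions (Proposition~\ref{prop:locgen}).

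The main obstacle I expect is the rank/level bookkeeping in stitching $Q$ together: one must show that the relevant correction terms (the extra summands in $\ul{\Xi}(\bE^\lambda)$, namely $\ul{\bE}^{\mu a^i}$ for $i < n$ where $\lambda = \mu a^n$, $\mu$ not ending in $a$) genuinely have rank strictly less than $r$, and that the map from $\ul{\bE}^\lambda$ (the top summand, rank exactly $r$) really does surject onto a proper quotient — the properness is the delicate point, since a priori $\ul{\Xi}$ applied to a proper surjection could conceivably become an isomorphism on the top summand if $K$ were "completion-invisible." Ruling this out requires the fact that $\ul{\Xi}^{\cor}(K)$ and $\Xi^{\res}(K)$ detect $K$ up to level $<r$, combined with $\mu_\bullet$-multiplicity additivity; the cleanest formulation may be to run the whole argument at the level of the localizing subcategory $\Rep(\cI)_{\le r}$ and its graded counterpart, using that $\Phi$ and $\ul{\Xi}$ induce mutually inverse equivalences $\uRep(\cI)_r \simeq \Rep(\cI)_r$ on the level-$r$ quotient categories (Proposition~\ref{prop:levequiv}) — then $(\sB_r)$ says $\ul{\bE}^\lambda$ maps to zero in $\uRep(\cI)_r$ after any proper quotient, i.e.\ its image in $\uRep(\cI)_r$ is simple or zero with a controlled structure, and transporting across the equivalence immediately yields the same for $\bE^\lambda$ in $\Rep(\cI)_r$, which is exactly $(\sA_r)$.
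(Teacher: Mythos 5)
Your instinct to use $\ul{\Xi}$ as the bridge from smooth to graded is correct and is exactly what the paper does, and you rightly discard the $\Gamma$ route (it raises rank from $r$ to $r+1$, so only $(\sB_{r+1})$ would apply). But none of your three lines of attack is carried to completion, and two of them have genuine gaps.

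On the case split when $\lambda$ ends in $\blacktri$: this reduction is not needed at all in the paper's argument, and as written it has a problem. The transpose $\dag$ is only defined for graded modules, so $\ul{\Pi}^\dag$ is a graded construction; Proposition~\ref{prop:pisubmod} gives a smooth submodule bijection only for $\Pi = \ul{\bB}^1 \odot (-)$, not for right-concatenation by $\ul{\bB}^1$. So the claimed correspondence between submodules of $\bE^\lambda$ and $\bE^\mu$ does not follow from the cited results.

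On the $\ul{\Xi}$ approach, which is the paper's actual route: the paper does not pick a splitting or restrict to the summand $\ul{\bE}^\lambda$ of $\ul{\Xi}(\bE^\lambda)$. It works directly with the counit $\eta \colon \ul{\Xi}(\bE^\lambda) \to \ul{\bE}^\lambda$. The adjunction identity $\Phi(\eta)\circ\epsilon = \id$ shows $K \subset \eta(\ul{\Xi}(K))$, hence $\eta(\ul{\Xi}(K)) \ne 0$ and $\ul{\bE}^\lambda/\eta(\ul{\Xi}(K))$ is a proper homogeneous quotient. This is precisely the ``completion-invisible'' worry you raise, and the counit identity dispatches it without any multiplicity or level bookkeeping. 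Then the snake lemma gives an exact sequence $\ul{\Xi}^{\cor}(\bE^\lambda) \to \ul{\Xi}(\bE^\lambda/K) \to \ul{\bE}^\lambda/\eta(\ul{\Xi}(K)) \to 0$ whose outer terms have level $<r$ (left by Proposition~\ref{prop:levelfunc}(e), right by $(\sB_r)$), so $\ul{\Xi}(Q)$ has level $<r$. The crucial step you are missing is the transfer back to $Q$: by Proposition~\ref{prop:xi}(c) the smooth module $Q$ injects into $\Xi(Q)=\Phi(\ul{\Xi}(Q))$, and $\Phi$ preserves level (Proposition~\ref{prop:levelfunc}(a)), so $Q$ has level $<r$. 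Your sketch instead tries to express $Q$ as a subquotient of the proper quotient of $\ul{\bE}^\lambda$ together with correction terms, which is not how $Q$ sits in the picture — $Q$ is a smooth submodule of the forgetful image, not a pieced-together graded object.

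On the level-$r$ equivalence shortcut at the end: this has a real gap. $(\sB_r)$ does imply $\ul{T}_{\ge r}(\ul{\bE}^\lambda)$ is simple or zero, but $(\sB_r)$ is strictly stronger, because it also forces every nonzero homogeneous submodule $K$ of $\ul{\bE}^\lambda$ to satisfy $T_{\ge r}(K) \ne 0$. Transporting only ``simple or zero'' across $\uRep(\cI)_r \simeq \Rep(\cI)_r$ does not give $(\sA_r)$: if some nonzero $K \subset \bE^\lambda$ had $T_{\ge r}(K)=0$, then $T_{\ge r}(\bE^\lambda/K) = T_{\ge r}(\bE^\lambda) \ne 0$ and the quotient would have level $r$. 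Ruling this out is exactly what the counit argument above accomplishes, so once you need it the detour through the equivalence buys nothing.
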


\begin{proof}
Let $\lambda$ be a constraint word of rank $r$ and let $K$ be a non-zero submodule of $\bE^{\lambda}$. Let $\epsilon \colon \bE^{\lambda} \to \Xi(\bE^{\lambda})$ and $\eta \colon \ul{\Xi}(\bE^{\lambda}) \to \ul{\bE}^{\lambda}$ be the unit and counit of the adjunction between $\ul{\Xi}$ and $\Phi$. As $\Phi(\eta) \circ \epsilon$ is the identity, we see that $\eta(\ul{\Xi}(K))$ contains $K$: indeed, for $x \in K$, the element $\eta(\epsilon(x))=x$ belongs to $\eta(\ul{\Xi}(K))$. In particular, we see that $\eta(\ul{\Xi}(K))$ is non-zero. We have an exact sequence
\begin{displaymath}
\ul{\Xi}^{\cor}(\bE^{\lambda}) \to \ul{\Xi}(\bE^{\lambda}/K) \to \ul{\bE}^{\lambda}/\eta(\ul{\Xi}(K)) \to 0
\end{displaymath}
By $(\sB_r)$, the right module has level $<r$; by Proposition~\ref{prop:levelfunc}(e), the left module has level $<r$. Thus $\ul{\Xi}(\bE^{\lambda}/K)$ has level $<r$. By Proposition~\ref{prop:levelfunc}(a), we see that $\Xi(\bE^{\lambda}/K)$ has level $<r$. Since $\bE^{\lambda}/K$ injects into this module, it too has level $<r$, which proves $(\sA_r)$.
\end{proof}

\begin{lemma}
We have $(\sA_r) \implies (\sB_{r+1})$.
\end{lemma}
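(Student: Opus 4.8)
The plan is to reduce $(\sB_{r+1})$ to $(\sA_r)$ by transporting the problem to the ungraded category through the functors $\Psi$ and $\Gamma$ and invoking the saturation bound. First I would dispose of the trivial case: if $\lambda$ has rank $\le r$ then $\ul{\bE}^{\lambda} \in \uRep(\cI)_{\le r}$, so every homogeneous quotient of it already has homogeneous level $\le r < r+1$. Thus assume $\lambda$ has rank exactly $r+1$. Next I would reduce to the case where $\lambda$ ends in $\whitetri$. Write $\lambda = \kappa \cdot \blacktri^m$ with $\kappa$ not ending in $\blacktri$; since $\lambda$ has positive rank, $\kappa$ is nonempty, ends in $\whitetri$, and has the same rank $r+1$. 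By Proposition~\ref{prop:stdcat} (applied repeatedly, using $\ul{\bE}^{\blacktri} = \ul{\bB}^1$) we have $\ul{\bE}^{\lambda} \cong (\ul{\Pi}^{\dag})^m(\ul{\bE}^{\kappa})$. Since $\ul{\Pi}^{\dag}$ is exact, Proposition~\ref{prop:pisubmod} shows every proper homogeneous quotient of $\ul{\bE}^{\lambda}$ is $(\ul{\Pi}^{\dag})^m$ applied to a proper homogeneous quotient of $\ul{\bE}^{\kappa}$, and Proposition~\ref{prop:pilev} shows $\ul{\Pi}^{\dag}$ does not raise homogeneous level; so it suffices to treat $\kappa$. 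We may therefore assume $\lambda = \nu \cdot \whitetri$ with $\nu$ of rank $r$.

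In this situation the relevant identifications are $\ul{\bE}^{\lambda} \cong \Gamma(\bE^{\nu})$ (Proposition~\ref{prop:gammastd}) and $\Psi(\ul{\bE}^{\lambda}) \cong \bE^{\nu}$ (Proposition~\ref{prop:psistd}). Let $K$ be a nonzero homogeneous submodule of $\ul{\bE}^{\lambda}$ and form its saturation $\Sat(K) = \Gamma(\Psi(K))$; recall from its construction that $K \subseteq \Sat(K) \subseteq \ul{\bE}^{\lambda}$. Since $\Gamma$ sends nonzero smooth modules to nonzero modules, $K \ne 0$ forces $\Psi(K) \ne 0$. I would then analyze the short exact sequence
\[
0 \to \Sat(K)/K \to \ul{\bE}^{\lambda}/K \to \ul{\bE}^{\lambda}/\Sat(K) \to 0
\]
and bound the homogeneous level of each outer term. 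The left term has homogeneous level $\le r$ by Corollary~\ref{cor:satlev} (whose bound is ``rank of $\lambda$ minus one'', here $r$). For the right term, using exactness of $\Psi$ we have $\Psi(K) \subseteq \bE^{\nu}$, and left-exactness of $\Gamma$ applied to $0 \to \Psi(K) \to \bE^{\nu} \to \bE^{\nu}/\Psi(K) \to 0$ embeds $\ul{\bE}^{\lambda}/\Sat(K) = \Gamma(\bE^{\nu})/\Gamma(\Psi(K))$ into $\Gamma(\bE^{\nu}/\Psi(K))$. Since $\Psi(K) \ne 0$, the module $\bE^{\nu}/\Psi(K)$ is a proper quotient of the rank-$r$ standard module $\bE^{\nu}$, hence has level $\le r-1$ by $(\sA_r)$; therefore $\Gamma(\bE^{\nu}/\Psi(K))$ has homogeneous level $\le r$ by Proposition~\ref{prop:gammalev} (the case $i = 0$), and so does its submodule $\ul{\bE}^{\lambda}/\Sat(K)$. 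As $\uRep(\cI)_{\le r}$ is closed under extensions, the displayed sequence shows $\ul{\bE}^{\lambda}/K$ has homogeneous level $\le r$, which is exactly $(\sB_{r+1})$.

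The main obstacle is conceptual rather than computational: one must notice that $(\sA_r)$ governs precisely the ungraded standard $\bE^{\nu} = \Psi(\ul{\bE}^{\lambda})$ of rank $r$, and that the ``torsion discrepancy'' $\Sat(K)/K$ between $K$ and its saturation is exactly what the Gr\"obner-theoretic bound of Corollary~\ref{cor:satlev} controls; the two bounds ($\Sat$ loses one level, $\Gamma$ gains at most one) conspire to give the level $\le r$ we need. A secondary point requiring care is the reduction to $\lambda$ ending in $\whitetri$, where one needs that concatenation on the right with $\ul{\bB}^1$ gives a level-nonincreasing bijection on homogeneous submodules and commutes with passage to quotients; this is supplied by Propositions~\ref{prop:pisubmod} and~\ref{prop:pilev} together with exactness of $\ul{\Pi}^{\dag}$.
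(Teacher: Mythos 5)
Your proof is correct and follows essentially the same route as the paper's: saturation together with Corollary~\ref{cor:satlev}, $(\sA_r)$, and Proposition~\ref{prop:gammalev} handle the case $\lambda = \nu\cdot\whitetri$, while $\ul{\Pi}^{\dag}$ via Propositions~\ref{prop:pisubmod} and~\ref{prop:pilev} disposes of trailing $\blacktri$'s. The only cosmetic difference is that you strip all trailing $\blacktri$'s in one step to reduce to the $\whitetri$-ending case, whereas the paper peels them off one at a time by an induction on word length.
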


\begin{proof}
Let $\lambda=\lambda_1 \cdots \lambda_t$ be a constraint word of rank $r+1$, and let $\mu=\lambda_1 \cdots \lambda_{t-1}$. We must show that any proper homogeneous quotient of $\ul{\bE}^{\lambda}$ has homogeneous level $\le r$.

First suppose that $\lambda_t=\whitetri$. Let $K$ be a non-zero homogeneous submodule of $\ul{\bE}^{\lambda}$. We show that $\ul{\bE}^{\lambda}/K$ has homogeneous level $\le r$. Let $K'=\Sat(K)$ be the saturation of $K$. We have an exact sequence
\begin{displaymath}
0 \to K'/K \to \ul{\bE}^{\lambda}/K \to \ul{\bE}^{\lambda}/K' \to 0.
\end{displaymath}
Since $K'/K$ has level $\le r$ (Corollary~\ref{cor:satlev}), it thus suffices to prove that $\ul{\bE}^{\lambda}/K'$ has level $\le r$. Consider the exact sequence
\begin{displaymath}
0 \to \Psi(K') \to \bE^{\mu} \to C \to 0
\end{displaymath}
By $(\sA_r)$, the level of $C$ is $<r$; note that $\Psi(K')$ is non-zero since $K'=\Gamma(\Psi(K'))$. Applying $\Gamma$ to the above sequence, we find
\begin{displaymath}
0 \to K' \to \ul{\bE}^{\lambda} \to \Gamma(C).
\end{displaymath}
Since $C$ has level $<r$, it follows that $\Gamma(C)$ has homogeneous level $\le r$ (Proposition~\ref{prop:gammalev}). Since $\ul{\bE}^{\lambda}/K'$ is a homogeneous submodule of $\Gamma(C)$, it too has homogeneous level $\le r$, and so the result follows.

Now suppose $\lambda_t=\blacktri$. We have $\ul{\bE}^{\lambda}=\ul{\Pi}^{\dag}(\ul{\bE}^{\mu})$. Suppose that $K$ is a non-zero submodule of $\ul{\bE}^{\lambda}$. Then by Proposition~\ref{prop:pisubmod}, we have $K=\ul{\Pi}^{\dag}(K')$ for some submodule $K'$ of $\ul{\bE}^{\mu}$, necessarily non-zero. Since the functor $\ul{\Pi}^{\dag}$ is exact, we see that $\ul{\bE}^{\lambda}/K=\ul{\Pi}^{\dag}(\ul{\bE}^{\mu}/K')$. Since $\mu$ is shorter than $\lambda$, we can assume by induction that $\ul{\bE}^{\mu}/K'$ has homogeneous level $\le r$. Since $\ul{\Pi}^{\dag}$ does not increase level (Proposition~\ref{prop:pilev}), we see that $\ul{\bE}^{\lambda}/K$ has homogeneous level $\le r$ as well, and the result follows.
\end{proof}

The above two lemmas establish $(\sA_r)$ and $(\sB_r)$ for all $r$, and so the theorem follows.

\subsection{Some consequences of the main theorem}

Theorem~\ref{thm:level} yields the following improvement of Proposition~\ref{prop:levfilt}, in which the word ``subquotient'' is changed to ``submodule.''

\begin{proposition} \label{prop:levfilt2}
A finitely generated smooth $\cI$-module $M$ has level $\le r$ if and only if there is a finite length filtration $0=F_0 \subset \cdots \subset F^n=M$ and constraint words $\lambda_1, \ldots, \lambda_n$ of rank $\le r$ such that $F^i/F^{i-1}$ is isomorphic to a \emph{submodule} of $\bE^{\lambda_i}$ for all $1 \le i \le n$. A similar statement holds in the graded case.
\end{proposition}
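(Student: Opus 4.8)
The plan is to deduce this from Proposition~\ref{prop:levfilt} together with the theorem on level (Theorem~\ref{thm:level}), by induction on $r$. The ``if'' direction is immediate, since a submodule of $\bE^{\lambda}$ is in particular a subquotient of $\bE^{\lambda}$; it is therefore already contained in Proposition~\ref{prop:levfilt}. So all the work is in the ``only if'' direction.

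\textbf{Setup.} Let $\cF_r$ denote the class of finitely generated smooth $\cI$-modules admitting a finite filtration whose graded pieces are submodules of rank $\le r$ standard modules; I must show that every finitely generated module of level $\le r$ lies in $\cF_r$. Two stability properties of $\cF_r$ are elementary and will be used repeatedly: $\cF_r$ is closed under extensions (concatenate the filtrations), and $\cF_r$ is closed under submodules --- if $M\in\cF_r$ has filtration $0=F_0\subset\cdots\subset F^n=M$ with $F^i/F^{i-1}\hookrightarrow\bE^{\lambda_i}$ and $M'\subseteq M$, then $\{F^i\cap M'\}$ is a filtration of $M'$ whose $i$th graded piece embeds into $F^i/F^{i-1}$, hence into $\bE^{\lambda_i}$. (Here I use Theorem~\ref{thm:noeth}, so that all the relevant submodules and subquotients are again finitely generated.)

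\textbf{The induction.} For $r\le 0$, a finitely generated module of level $\le 0$ is locally finite, hence of finite length, and therefore has a composition series whose graded pieces are the simple modules $\bB^n=\bE^{\blacktri^n}$, each of which is trivially a submodule of a rank~$0$ standard module; thus it lies in $\cF_0$. For the inductive step, assume the claim for $r-1$. By Proposition~\ref{prop:levfilt} and closure of $\cF_r$ under extensions, it suffices to show that every subquotient $N=A/B$ of a standard module $\bE^{\lambda}$ with $\rank(\lambda)\le r$ lies in $\cF_r$. If $\rank(\lambda)<r$, then $\bE^\lambda$, and hence $N$, has level $\le r-1$, so $N\in\cF_{r-1}\subseteq\cF_r$ by the inductive hypothesis. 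If $\rank(\lambda)=r$ and $B=0$, then $N=A$ is a submodule of $\bE^\lambda$, and the one-step filtration exhibits $N\in\cF_r$. Finally, if $\rank(\lambda)=r$ and $B\ne 0$, then $\bE^\lambda/B$ is a \emph{proper} quotient of a rank~$r$ standard module, so has level $\le r-1$ by Theorem~\ref{thm:level}; by the inductive hypothesis $\bE^\lambda/B\in\cF_{r-1}$, and since $N=A/B$ is a submodule of $\bE^\lambda/B$ and $\cF_{r-1}$ is closed under submodules, $N\in\cF_{r-1}\subseteq\cF_r$. This completes the induction. The graded case runs identically, using the graded forms of Proposition~\ref{prop:levfilt} and Theorem~\ref{thm:level}.

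\textbf{The main point.} The only place the argument does anything nontrivial is the passage from a subquotient $A/B$ to a genuine \emph{submodule}: rather than attempting to filter $A/B$ directly, we observe that $A/B$ embeds into $\bE^\lambda/B$, which --- when $B\ne 0$ --- has strictly smaller level by the theorem on level and is therefore covered by the inductive hypothesis; closure of $\cF_{r-1}$ under submodules then transports the conclusion back to $A/B$. No new computation beyond the results already established is required.
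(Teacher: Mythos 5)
Your proof is correct and follows essentially the same strategy as the paper's: induction on $r$, reducing via Proposition~\ref{prop:levfilt} to subquotients $A/B$ of standard modules, handling $B=0$ trivially, and using Theorem~\ref{thm:level} to drop the level by one when $B\neq 0$. The only cosmetic difference is in the last case: the paper observes that $A/B$, as a submodule of $\bE^{\lambda}/B$, itself has level $<r$ and then applies the inductive hypothesis to $A/B$ directly, whereas you apply the inductive hypothesis to $\bE^{\lambda}/B$ and then invoke closure of your class $\cF_{r-1}$ under submodules (which you rightly verify by intersecting the filtration); these two routes are interchangeable.
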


\begin{proof}
Obviously if $M$ admits such a filtration then it has level $\le r$, so let us prove the converse. Say that $M$ is {\bf $r$-good} if it admits a filtration $0=F_0 \subset \cdots \subset F^n=M$ with each $F^i/F^{i-1}$ isomorphic to a submodule of a rank $\le r$ standard module. We note that an extension of $r$-good modules is $r$-good. We must show that if $M$ has level $\le r$ then it is $r$-good. We proceed by induction on $r$. The case $r=0$ is clear. Suppose now that $r \ge 1$ and the satement has been proved for $r-1$.

First, suppose that $\lambda$ has rank $\le r$ and $M$ is a subquotient of $\bE^{\lambda}$, say $M=A/B$ with $B \subset A \subset \bE^{\lambda}$. If $B=0$ then $M=A$ is a submodule of $\bE^{\lambda}$, and obviously $r$-good. If $B \ne 0$ then $\bE^{\lambda}/B$ has level $<r$ by Theorem~\ref{thm:level}, and thus the submodule $M$ has level $<r$ as well. Thus, by the inductive hypothesis, $M$ is $(r-1)$-good, and therefore $r$-good as well.

Now let $M$ be an arbitrary finitely generated module of level $\le r$. By Proposition~\ref{prop:levfilt}, we have a filtration $0=F_0 \subset \cdots \subset F^n=M$ and constraint words $\lambda_1, \ldots, \lambda_n$ of rank $\le r$ such that $F^i/F^{i-1}$ is isomorphic to a subquotient of $\bE^{\lambda_i}$. By the previous paragraph, each $F^i/F^{i-1}$ is $r$-good. Therefore $M$ is $r$-good as well. This completes the proof.
\end{proof}

Recall that if $\cT$ is a triangulated category and $S$ is a collection of objects in $\cT$ then the triangulated subcategory of $\cT$ {\bf generated} by $S$ is the smallest triangulated subcategory of $\cT$ containing $S$. The following is the most important immediate consequence of Theorem~\ref{thm:level}:

\begin{theorem} \label{thm:dergen}
The standard modules generate $\rD^b_{\fgen}(\Rep(\cI))$ as a traingulated category. More precisely, the standard modules of rank $\le r$ generate $\rD^b_{\fgen}(\Rep(\cI)_{\le r})$ as a triangulated category, for all $r$. The analogous statements hold in the graded case.
\end{theorem}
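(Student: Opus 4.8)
The plan is to deduce Theorem~\ref{thm:dergen} from the theorem on level (Theorem~\ref{thm:level}) together with the local noetherianity of $\Rep(\cI)$ and $\uRep(\cI)$ (Theorem~\ref{thm:noeth}). I will treat the graded case; the smooth case is identical in structure, using $(\sA_r)$ in place of $(\sB_r)$. The core is the more precise statement: the rank $\le r$ standard modules generate $\rD^b_{\fgen}(\uRep(\cI)_{\le r})$ as a triangulated category. Granting this for all $r$, the statement for $\rD^b_{\fgen}(\uRep(\cI))$ follows because every finitely generated graded $\cI$-module has some finite level (it is a quotient of a finite sum of principal modules $\ul{\bA}^{r_i}$, each of which is a standard module, hence lies in $\uRep(\cI)_{\le \max r_i}$), so $\rD^b_{\fgen}(\uRep(\cI)) = \bigcup_r \rD^b_{\fgen}(\uRep(\cI)_{\le r})$, and a triangulated subcategory containing all standard modules contains each of these pieces.

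First I would reduce to showing that every object $M \in \uRep(\cI)_{\le r}^{\fgen}$ lies in the triangulated subcategory $\cT_r$ generated by the rank $\le r$ standard modules; since any object of $\rD^b_{\fgen}(\uRep(\cI)_{\le r})$ is built from finitely many of its (shifted) cohomology modules via finitely many distinguished triangles, it suffices to handle a single module $M$ placed in degree $0$. Next I would set up the induction on $r$: the case $r=0$ says every finite length graded module lies in the triangulated category generated by the simples $\ul{\bB}^n = \ul{\bE}^{\blacktri^n}$, which is immediate by dévissage along a composition series. For the inductive step, assume the result for $r-1$. By Proposition~\ref{prop:levfilt} (or better, Proposition~\ref{prop:levfilt2}), $M$ has a finite filtration whose graded pieces are submodules of rank $\le r$ standard modules, and since a triangulated subcategory is closed under extensions (cones of maps between objects already in it), it suffices to treat the case where $M$ is a submodule $K \subset \ul{\bE}^{\lambda}$ with $\mathrm{rank}(\lambda) = s \le r$. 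If $s < r$ we are done by induction, so assume $s = r$.

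Now the key move: consider the short exact sequence $0 \to K \to \ul{\bE}^{\lambda} \to \ul{\bE}^{\lambda}/K \to 0$, giving a distinguished triangle $K \to \ul{\bE}^{\lambda} \to \ul{\bE}^{\lambda}/K \to K[1]$ in the derived category. Here $\ul{\bE}^{\lambda} \in \cT_r$ tautologically. If $K = \ul{\bE}^{\lambda}$ there is nothing to prove; otherwise $K$ is a proper submodule, so $\ul{\bE}^{\lambda}/K$ is a proper (nonzero or zero) homogeneous quotient of $\ul{\bE}^{\lambda}$, hence by Theorem~\ref{thm:level} has homogeneous level $\le r-1$. By the inductive hypothesis $\ul{\bE}^{\lambda}/K \in \cT_{r-1} \subset \cT_r$. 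Since $\cT_r$ is triangulated and two of the three vertices of the triangle lie in $\cT_r$, the third, namely $K$, lies in $\cT_r$ as well. This completes the induction. I expect the main (and only real) obstacle to be purely bookkeeping: making sure the reduction from a bounded complex to a single module, and from a single module to a submodule of a standard module, is carried out cleanly using closure of triangulated subcategories under shifts, cones, and extensions — all of the actual mathematical content is already packaged in Theorem~\ref{thm:level}, Theorem~\ref{thm:noeth}, and Proposition~\ref{prop:levfilt2}. The smooth case runs verbatim with $\Xi$, $\bE^\lambda$, $\Rep(\cI)_{\le r}$, and $(\sA_r)$ replacing their graded counterparts.
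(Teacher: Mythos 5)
Your proof is correct and matches the paper's argument: both proceed by induction on $r$, invoke Proposition~\ref{prop:levfilt2} to filter a finitely generated module by subobjects of rank $\le r$ standard modules, and then use the short exact sequence $0 \to A_i \to \ul{\bE}^{\lambda_i} \to \ul{\bE}^{\lambda_i}/A_i \to 0$ together with Theorem~\ref{thm:level} (the quotient has level $<r$) and the inductive hypothesis to conclude that each graded piece, and hence $M$, lies in $\cT_r$. The minor extras in your write-up (the explicit reduction from bounded complexes to modules in a single degree, the separate handling of $s<r$) are harmless restatements of what the paper treats implicitly.
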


\begin{proof}
Let $\cT_r$ be the triangulated subcategory of $\rD^b_{\fgen}(\Rep(\cI)_{\le r})$ generated by the standard modules. We must show $\cT_r=\rD^b_{\fgen}(\Rep(\cI)_{\le r})$. We proceed by induction on $r$. For $r<0$ the statement is vacuously true. Suppose now it has been proved for $r-1$, and let us prove it for $r$.

It suffices to show that $M \in \cT_r$ for all $M \in \Rep(\cI)_{\le r}^{\fgen}$. Thus let such $M$ be given. By Proposition~\ref{prop:levfilt2}, we have a filtration $0=F^0 \subset \cdots \subset F^n=M$ and constraint words $\lambda_1, \ldots, \lambda_n$ of rank $\le r$ such that $F^i/F^{i-1}$ is isomorphic to a subobject, say $A_i$, of $\bE^{\lambda_i}$. Of course, we may assume $A_i$ is non-zero, as otherwise we could shorten the filtration. Consider the exact sequence
\begin{displaymath}
0 \to A_i \to \bE^{\lambda_i} \to \bE^{\lambda_i}/A_i \to 0.
\end{displaymath}
By Theorem~\ref{thm:level}, the quotient $\bE^{\lambda_i}/A_i$ has rank $<r$. Thus, by the inductive hypothesis, it belongs to $\cT_{r-1}$, and therefore to $\cT_r$. Of course, $\bE^{\lambda_i}$ belongs to $\cT_r$ by definition. Thus $A_i$ belongs to $\cT_r$ as well. Since each $F^i/F^{i-1} \cong A_i$ belongs to $\cT_r$, we conclude that $M$ belongs to $\cT_r$.
\end{proof}

\begin{proposition} \label{prop:levcat1}
Every object of $\Rep(\cI)_r$ is locally of finite length. Moreover, if $\lambda$ has rank $r$ then $T_{\ge r}(\bE^{\lambda})$ is simple in this category (assuming it is non-zero), and all simple objects are isomorphic to one of this form. The analogous statements hold in the graded case.
\end{proposition}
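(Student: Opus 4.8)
The plan is to derive the proposition from the theorem on level (Theorem~\ref{thm:level}) together with the standard dictionary between subobjects of $T(X)$ in a Serre quotient and subobjects of $X$ upstairs (\S\ref{ss:loc}). Write $T\colon\Rep(\cI)_{\le r}\to\Rep(\cI)_r$ for the localization functor, i.e.\ the functor $T_{\ge r}$ of the statement; it is exact and essentially surjective. First I would record the soft reductions: since $\Rep(\cI)$ is locally noetherian (Theorem~\ref{thm:noeth}), so is the localizing subcategory $\Rep(\cI)_{\le r}$, hence so is the quotient $\Rep(\cI)_r$; consequently every object of $\Rep(\cI)_r$ is the directed union of its finitely generated subobjects, every finitely generated object of $\Rep(\cI)_r$ is isomorphic to $T(M)$ for some finitely generated $M\in\Rep(\cI)_{\le r}$, and every simple object (having length one) is finitely generated, hence of this last form. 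So it suffices to show that $T(M)$ has finite length for finitely generated $M$, and to identify which $T(\bE^{\lambda})$ occur among its composition factors.

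Next I would prove the key point: if $\lambda$ has rank $r$ and $T(\bE^{\lambda})\neq 0$, then $T(\bE^{\lambda})$ is simple. A nonzero subobject of $T(\bE^{\lambda})$ is $T(N)$ for some submodule $N\subseteq\bE^{\lambda}$; as $\bE^{\lambda}$ is noetherian $N$ is finitely generated, and as $T(N)\neq 0$ we have $N\notin\Rep(\cI)_{\le r-1}$, so in particular $N\neq 0$. By Theorem~\ref{thm:level}, $\bE^{\lambda}/N$ then has level $<r$, so $T(\bE^{\lambda}/N)=0$ and $T(N)\to T(\bE^{\lambda})$ is an isomorphism; this is exactly simplicity.

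Finally I would assemble the statement using Proposition~\ref{prop:levfilt2}: a finitely generated $M\in\Rep(\cI)_{\le r}$ has a finite filtration whose graded pieces $A_i$ are submodules of standard modules $\bE^{\lambda_i}$ of rank $\le r$. Applying the exact functor $T$: if $\lambda_i$ has rank $<r$ then $\bE^{\lambda_i}\in\Rep(\cI)_{\le r-1}$ and $T(A_i)=0$; if $\lambda_i$ has rank $r$ and $A_i\neq 0$, then (Theorem~\ref{thm:level} again) $\bE^{\lambda_i}/A_i$ has level $<r$, so $T(A_i)=T(\bE^{\lambda_i})$, which is simple by the previous paragraph. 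Hence $T(M)$ has a finite filtration by simple-or-zero pieces, so it has finite length; this gives the first assertion, and since any simple object is such a $T(M)$, its Jordan--H\"older filtration forces it to be isomorphic to $T(\bE^{\lambda})$ for some rank-$r$ word $\lambda$. The graded case runs verbatim, replacing $\Rep$ by $\uRep$, ``level'' by ``homogeneous level,'' and Theorem~\ref{thm:level} by its graded half.

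The main obstacle --- really the only non-formal input beyond Theorem~\ref{thm:level} --- is making the localization bookkeeping precise: that subobjects (and quotients) of $T(X)$ lift to $\Rep(\cI)_{\le r}$, and that $T$ converts the conclusion ``a proper quotient has level $<r$'' of Theorem~\ref{thm:level} into the vanishing statements used above. Once those are in place the argument is a short d\'evissage.
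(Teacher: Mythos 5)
Your proof is correct and takes essentially the same approach as the paper: reduce to finitely generated objects, establish simplicity of nonzero $T_{\ge r}(\bE^{\lambda})$ via Theorem~\ref{thm:level}, and apply Proposition~\ref{prop:levfilt2} to filter. The only cosmetic difference is that you argue simplicity through subobjects $T(N)\subset T(\bE^\lambda)$ lifting to $N\subset\bE^\lambda$, while the paper argues through quotients; these are dual and equally short.
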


\begin{proof}
By Theorem~\ref{thm:level}, any non-trivial quotient of $T_{\ge r}(\bE^{\lambda})$ is zero, and so $T_{\ge r}(\bE^{\lambda})$ is either zero or simple. By Propostion~\ref{prop:levfilt2}, we see that every finitely generated object of $\Rep(\cI)_r$ admits a finite filtration with graded pieces of the form $T_{\ge r}(\bE^{\lambda})$. This shows that every finitely generated object is of finite length, and that every simple object is isomorphic to one of the form $T_{\ge r}(\bE^{\lambda})$. The proposition follows.
\end{proof}

\begin{remark}
Proposition~\ref{prop:levcat2} offers some improvements on Proposition~\ref{prop:levcat1}.
\end{remark}

\section{Grothendieck groups, Hilbert series, and Krull dimension} \label{s:groth}

\subsection{Grothendieck groups} \label{ss:groth}

Let $\rK(\cI)$ denote the Grothendieck group of $\Rep(\cI)^{\fgen}$. Similarly, let $\ul{\rK}(\cI)$ denote the Grothendieck group of $\uRep(\cI)^{\fgen}$. We let $[M]$ denote the class of the module $M$ in the relevant Grothendieck group. We have several homomorphisms between these groups:
\begin{align*}
\phi, \psi &\colon \ul{\rK}(\cI) \to \rK(\cI) &
\sigma &\colon \rK(\cI) \to \rK(\cI) \\
\ul{\xi}, \ul{\gamma} &\colon \rK(\cI) \to \ul{\rK}(\cI) &
(-)^{\dag}, \ul{\sigma} &\colon \ul{\rK}(\cI) \to \ul{\rK}(\cI).
\end{align*}
With the exception of $\ul{\gamma}$, these maps are simply induced by the corresponding functors: for example, $\phi([M])=[\Phi(M)]$. For $\ul{\gamma}$ we also have to use the derived functors:
\begin{displaymath}
\ul{\gamma}([M])=\sum_{i \ge 0} (-1)^i [\rR^i \Gamma(M)].
\end{displaymath}
This is well-defined by Proposition~\ref{prop:gammafinite}. The following is our main theorem on Grothendieck groups:

\begin{theorem} \label{thm:groth}
The classes of the standard modules $\bE^{\lambda}$ form an integral basis for $\rK(\cI)$. Similarly, the classes of the standard modules $\ul{\bE}^{\lambda}$ form an integral basis for $\ul{\rK}(\cI)$.
\end{theorem}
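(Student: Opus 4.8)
The plan is to prove spanning and linear independence separately, and to reduce the smooth statement to the graded one by means of the functor $\Gamma$. Spanning is essentially immediate: by Theorem~\ref{thm:dergen} every object of $\rD^b_{\fgen}(\uRep(\cI))$ (resp.\ $\rD^b_{\fgen}(\Rep(\cI))$) is obtained from standard modules by finitely many shifts and cones, so its class in $\ul{\rK}(\cI)$ (resp.\ $\rK(\cI)$) is a $\bZ$-combination of the $[\ul{\bE}^{\lambda}]$ (resp.\ $[\bE^{\lambda}]$); alternatively one may run an induction on level using Proposition~\ref{prop:levfilt2} and Theorem~\ref{thm:level}, via $[A]=[\bE^{\lambda}]-[\bE^{\lambda}/A]$ for $0\ne A\subset\bE^{\lambda}$ of rank $r$, with $\bE^{\lambda}/A$ of level $<r$. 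The real work is independence, and here the strategy is to manufacture, out of the functors already at hand, a family of $\bZ$-linear functionals $\ul{f}_{\lambda}\colon\ul{\rK}(\cI)\to\bZ$ with $\ul{f}_{\lambda}([\ul{\bE}^{\mu}])=\delta_{\lambda\mu}$.

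To set this up, note that $\Phi$, $\Psi$, $\ul{\Sigma}$ and $(-)^{\dag}$ are exact and preserve finite generation, and $\rR^{\bullet}\Gamma$ preserves finite generation by Proposition~\ref{prop:gammafinite}; hence they induce homomorphisms on Grothendieck groups. Write $\psi\colon\ul{\rK}(\cI)\to\rK(\cI)$ for the map induced by $\Psi$, $\ul{\sigma}^{\dag}\colon\ul{\rK}(\cI)\to\ul{\rK}(\cI)$ for the one induced by $M\mapsto(\ul{\Sigma}(M^{\dag}))^{\dag}$, $\ul{\gamma}([M])=\sum_{i\ge0}(-1)^{i}[\rR^{i}\Gamma(M)]$, and $\umu_{0}=\dim(-)_{0}\colon\ul{\rK}(\cI)\to\bZ$ (finite and additive on finitely generated modules). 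Using Propositions~\ref{prop:psistd} and~\ref{prop:gammastd}, Corollary~\ref{prop:Rgammastd}, and Propositions~\ref{prop:shiftstd} and~\ref{prop:transstd}, one reads off the effect on standard classes: $\psi[\ul{\bE}^{\mu a}]=[\bE^{\mu}]$ while $\psi[\ul{\bE}^{\lambda}]=0$ if $\lambda$ ends in $b$ or is empty; $\ul{\gamma}[\bE^{\mu}]=[\ul{\bE}^{\mu a}]$; and, writing $\lambda'$ for $\lambda$ with its last letter deleted, $\ul{\sigma}^{\dag}[\ul{\bE}^{\lambda}]=[\ul{\bE}^{\lambda}]+[\ul{\bE}^{\lambda'}]$ if $\lambda$ ends in $a$, $=[\ul{\bE}^{\lambda'}]$ if $\lambda$ ends in $b$, and $=0$ if $\lambda$ is empty. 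In particular $P:=\ul{\gamma}\circ\psi$ is the projection of $\ul{\rK}(\cI)$ onto the subgroup spanned by the $[\ul{\bE}^{\lambda}]$ with $\lambda$ ending in $a$.

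Now define $S_{a}:=(\ul{\sigma}^{\dag}-\mathrm{id})\circ P$ and $S_{b}:=\ul{\sigma}^{\dag}\circ(\mathrm{id}-P)$, both genuine endomorphisms of $\ul{\rK}(\cI)$. The formulas above give, for $x\in\{a,b\}$, that $S_{x}[\ul{\bE}^{\lambda}]=[\ul{\bE}^{\lambda'}]$ when the last letter of $\lambda$ is $x$ and $S_{x}[\ul{\bE}^{\lambda}]=0$ otherwise (in particular on the empty word). For a constraint word $\lambda=\lambda_{1}\cdots\lambda_{k}$ set $\ul{f}_{\lambda}:=\umu_{0}\circ S_{\lambda_{1}}\circ\cdots\circ S_{\lambda_{k}}$, and $\ul{f}_{\varnothing}:=\umu_{0}$. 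Iterating the stripping formula shows that $S_{\lambda_{1}}\circ\cdots\circ S_{\lambda_{k}}[\ul{\bE}^{\mu}]$ equals $[\ul{\bB}^{0}]$ if $\mu=\lambda$, equals $[\ul{\bE}^{\nu}]$ with $\nu$ nonempty if $\mu=\nu\lambda$, and vanishes in all other cases; since $\ul{\bE}^{\nu}$ is pure for $\nu\ne\varnothing$ while $\ul{\bE}^{\varnothing}=\ul{\bB}^{0}$ is one-dimensional in degree $0$, applying $\umu_{0}$ yields $\ul{f}_{\lambda}([\ul{\bE}^{\mu}])=\delta_{\lambda\mu}$. Hence any finite relation $\sum c_{\mu}[\ul{\bE}^{\mu}]=0$ forces all $c_{\mu}=0$, so the $[\ul{\bE}^{\lambda}]$ are linearly independent; together with spanning, they form a $\bZ$-basis of $\ul{\rK}(\cI)$.

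For the smooth case, observe that $\psi\circ\ul{\gamma}=\mathrm{id}_{\rK(\cI)}$: indeed $\psi\ul{\gamma}[\bE^{\lambda}]=\psi[\ul{\bE}^{\lambda a}]=[\bE^{\lambda}]$ for every $\lambda$ (Corollary~\ref{prop:Rgammastd}, Propositions~\ref{prop:gammastd} and~\ref{prop:psistd}), and the $[\bE^{\lambda}]$ span $\rK(\cI)$; thus $\ul{\gamma}\colon\rK(\cI)\to\ul{\rK}(\cI)$ is injective. Since $\ul{\gamma}[\bE^{\mu}]=[\ul{\bE}^{\mu a}]$ and these lie among the $\bZ$-basis of $\ul{\rK}(\cI)$ just established, a relation $\sum c_{\mu}[\bE^{\mu}]=0$ maps under $\ul{\gamma}$ to $\sum c_{\mu}[\ul{\bE}^{\mu a}]=0$, whence all $c_{\mu}=0$; combined with spanning this gives the basis statement for $\rK(\cI)$. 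I expect the main obstacle to be the bookkeeping in the second paragraph—pinning down the exact effect of $\psi$, $\ul{\gamma}$, $\ul{\sigma}^{\dag}$ (and hence $S_{a}$, $S_{b}$) on standard classes, including the degenerate cases of the empty word and of words ending in $b$ versus $a$—since this is where all the earlier explicit computations of $\Psi$, $\Gamma$, $\ul{\Sigma}$ and $(-)^{\dag}$ on standard modules must be assembled correctly; the remaining points (well-definedness of the Grothendieck-group maps, the final linear-algebra step, and the transfer to the smooth setting) are routine.
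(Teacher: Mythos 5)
Your proposal is correct and takes essentially the same approach as the paper: prove spanning via Theorem~\ref{thm:dergen}, then manufacture $\bZ$-linear functionals on $\ul{\rK}(\cI)$ that pick out the coefficient of each $[\ul{\bE}^{\lambda}]$ by composing a projection built from $\ul{\gamma}\circ\psi$ with letter-stripping operators coming from the shift. The only differences are cosmetic or minor: you strip from the end of the word using $\ul{\sigma}^{\dag}$ and the unconjugated $P=\ul{\gamma}\circ\psi$, whereas the paper conjugates $\ul{\gamma}\circ\psi$ by transpose and strips from the front with $\ul{\sigma}$; and for transferring independence to $\rK(\cI)$ you use that $\ul{\gamma}$ carries $[\bE^{\mu}]$ to the basis element $[\ul{\bE}^{\mu a}]$ (a clean, diagonal observation), where the paper instead uses the unipotent upper-triangularity of $\ul{\xi}$ from Proposition~\ref{prop:xistd}---both routes are equally valid, and yours is arguably a hair more direct for this last step.
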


\begin{proof}
The classes of the modules $\bE^{\lambda}$ span $\rK(\cI)$ by Theorem~\ref{thm:dergen}, and similarly in the graded case. We now prove linear independence. We begin by treating the graded case. We establish linear independence by constructing sufficiently many functionals on $\ul{\rK}(\cI)$ to distinguish the classes in question.

Let $\pi \colon \ul{\rK}(\cI) \to \ul{\rK}(\cI)$ be the endomorphism given by $\pi(x) = \ul{\gamma}(\psi(x^{\dag}))^{\dag}$. Appealing to previous computations (Propositions~\ref{prop:transstd}, \ref{prop:psistd}, and~\ref{prop:gammastd}, and Corollary~\ref{prop:Rgammastd}), we have
\begin{displaymath}
\pi([\ul{\bE}^{\lambda}]) = \begin{cases}
[\ul{\bE}^{\lambda}] & \text{if $\lambda_1=\whitetri$} \\
0 & \text{if $\lambda=\emptyset$ or $\lambda_1=\blacktri$} \end{cases}
\end{displaymath}
Define endomorphisms $\beta^{\whitetri}, \beta^{\blacktri} \colon \ul{\rK}(\cI) \to \ul{\rK}(\cI)$ by
\begin{displaymath}
\beta^{\whitetri} = (\ul{\sigma}-1) \circ \pi, \qquad
\beta^{\blacktri} = \ul{\sigma} \circ (1-\pi).
\end{displaymath}
For a non-empty constraint word $\lambda=\lambda_1 \cdots \lambda_t$, let $\tau(\lambda)=\lambda_2 \cdots \lambda_t$. Appealing to Proposition~\ref{prop:shiftstd}, we find
\begin{displaymath}
\beta^{\whitetri}([\ul{\bE}^{\lambda}]) = \begin{cases}
[\ul{\bE}^{\tau(\lambda)}] & \text{if $\lambda_1=\whitetri$} \\
0 & \text{if $\lambda=\emptyset$ or $\lambda_1=\blacktri$} \end{cases}
\qquad
\beta^{\blacktri}([\ul{\bE}^{\lambda}]) = \begin{cases}
[\ul{\bE}^{\tau(\lambda)}] & \text{if $\lambda_1=\blacktri$} \\
0 & \text{if $\lambda=\emptyset$ or $\lambda_1=\whitetri$} \end{cases}
\end{displaymath}
Define $\beta^{\lambda} \colon \ul{\rK}(\cI) \to \ul{\rK}(\cI)$ to be the endomorphism $\beta^{\lambda_t} \circ \cdots \circ \beta^{\lambda_1}$; if $\lambda=\emptyset$ then $\beta^{\lambda}=\id$. For a constraint word $\mu=\mu_1 \cdots \mu_s$, we have
\begin{displaymath}
\beta^{\lambda}([\ul{\bE}^{\mu}]) = \begin{cases}
[\ul{\bE}^{\tau^t(\mu)}] & \text{if $s \ge t$ and $\mu_1 \cdots \mu_t = \lambda$} \\
0 & \text{otherwise} \end{cases}
\end{displaymath}
Let $\rho_{\emptyset} \colon \ul{\rK}(\cI) \to \bZ$ be the functional given by $\rho_{\emptyset}([M])=\dim M_0$. Note that $\rho_{\emptyset}([\ul{\bE}^{\lambda}])=\delta_{\lambda,\emptyset}$. For a constraint word $\lambda$, define a functional $\rho_{\lambda} \colon \ul{\rK}(\cI) \to \bZ$ by $\rho_{\lambda}=\rho_{\emptyset} \circ \beta^{\lambda}$. Appealing to the above computation of $\beta^{\lambda}$, we find
\begin{displaymath}
\rho_{\lambda}([\ul{\bE}^{\mu}]) = \delta_{\lambda,\mu}.
\end{displaymath}
From the existence of these functionals, we see that the classes $[\ul{\bE}^{\lambda}]$ are linearly independent. Indeed, suppose that $\sum_{\lambda} a_{\lambda} [\ul{\bE}^{\lambda}]=0$ is a linear relation then. Applying $\rho_{\lambda}$, we find $a_{\lambda}=0$. As this holds for all $\lambda$, the relation is trivial.

Linear independence of the classes $[\bE^{\lambda}]$ in $\rK(\cI)$ now follows from properties of the completion functor. Precisely, by Proposition~\ref{prop:xistd}, the map $\ul{\xi} \colon \rK(\cI) \to \ul{\rK}(\cI)$ is upper-triangular, in the sense that $\ul{\xi}([\bE^{\lambda}])$ is equal to $[\ul{\bE}^{\lambda}]$ plus lower order terms (i.e., classes of the form $[\ul{\bE}^{\mu}]$ where $\mu$ has smaller rank than $\lambda$). Since the $[\ul{\bE}^{\lambda}]$ are linearly independent, it follows that the $[\bE^{\lambda}]$ are as well.
\end{proof}

\begin{corollary} \label{cor:groth}
The forgetful homomorphism $\phi \colon \ul{\rK}(\cI) \to \rK(\cI)$ is an isomorphism.
\end{corollary}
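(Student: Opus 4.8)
The plan is to deduce this immediately from Theorem~\ref{thm:groth} together with the elementary identity $\Phi(\ul{\bE}^{\lambda}) = \bE^{\lambda}$. The homomorphism $\phi$ is by definition induced by the forgetful functor $\Phi$, so on classes of standard modules it acts by $\phi([\ul{\bE}^{\lambda}]) = [\Phi(\ul{\bE}^{\lambda})] = [\bE^{\lambda}]$, where the last equality holds essentially by construction of the standard modules (the pointed $\cI$-set $\cE^{\lambda}$ is the same whether or not one remembers its grading).

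First I would recall, from Theorem~\ref{thm:groth}, that the classes $\{[\ul{\bE}^{\lambda}]\}_{\lambda}$ form an integral basis of $\ul{\rK}(\cI)$ and that the classes $\{[\bE^{\lambda}]\}_{\lambda}$ form an integral basis of $\rK(\cI)$, where in both cases $\lambda$ ranges over all constraint words. Then the computation of the previous paragraph shows that $\phi$ carries the first basis bijectively onto the second (the indexing set of constraint words being literally the same). A homomorphism of abelian groups that maps one $\bZ$-basis bijectively onto a $\bZ$-basis of the target is an isomorphism, so $\phi$ is an isomorphism of abelian groups, as claimed.

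There is essentially no obstacle here: the entire content has been front-loaded into Theorem~\ref{thm:groth}, whose own proof already used the compatibility of $\Phi$ with standard modules and the upper-triangularity of $\ul{\xi}$. The only thing to be mildly careful about is to note explicitly that $\Phi(\ul{\bE}^{\lambda})=\bE^{\lambda}$ (which is recorded earlier) so that $\phi$ really does send the named basis to the named basis rather than to some other spanning set; once that is observed the corollary is immediate.
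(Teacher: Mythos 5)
Your proof is correct and is exactly the argument the paper gives: $\phi([\ul{\bE}^{\lambda}])=[\bE^{\lambda}]$, so by Theorem~\ref{thm:groth} the map $\phi$ carries a $\bZ$-basis bijectively onto a $\bZ$-basis and is therefore an isomorphism.
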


\begin{proof}
As $\phi([\ul{\bE}^{\lambda}])=[\bE^{\lambda}]$, we see that $\phi$ maps a basis to a basis.
\end{proof}

Let $\rK(\cI)_{\le r}$ be the Grothendieck group of the category $\Rep(\cI)^{\fgen}_{\le r}$, and similarly define $\ul{\rK}(\cI)_{\le r}$.

\begin{proposition} \label{prop:groth-bd}
We have the following:
\begin{enumerate}
\item The classes $[\bE^{\lambda}]$ with $\lambda$ of rank $\le r$ form a basis of $\rK(\cI)_{\le r}$.
\item The natural map $\rK(\cI)_{\le r} \to \rK(\cI)$ is injective.
\end{enumerate}
The analogous results hold in the graded case.
\end{proposition}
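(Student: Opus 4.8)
The plan is to reduce both assertions to results already in hand. Fix $r$ and let $f \colon \rK(\cI)_{\le r} \to \rK(\cI)$ be the natural homomorphism, which sends the class of $\bE^{\lambda}$ (for $\lambda$ of rank $\le r$, an object of $\Rep(\cI)^{\fgen}_{\le r}$) to the class of $\bE^{\lambda}$ in $\rK(\cI)$. It suffices to prove two things: (i) the classes $[\bE^{\lambda}]$ with $\lambda$ of rank $\le r$ generate $\rK(\cI)_{\le r}$ as an abelian group; and (ii) their images in $\rK(\cI)$ are linearly independent. Granting these, both parts of the proposition follow formally. Indeed, any relation $\sum_{\lambda} a_{\lambda} [\bE^{\lambda}] = 0$ in $\rK(\cI)_{\le r}$ maps under $f$ to a relation among linearly independent elements, hence all $a_{\lambda}$ vanish; combined with (i) this shows the classes form a basis, which is (a). For (b), if $x \in \ker(f)$ then, using (i), write $x = \sum_{\lambda} a_{\lambda} [\bE^{\lambda}]$; applying $f$ and using (ii) gives $a_{\lambda} = 0$ for all $\lambda$, so $x = 0$.

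Assertion (ii) is immediate: by Theorem~\ref{thm:groth} the classes $[\bE^{\lambda}]$, as $\lambda$ ranges over \emph{all} constraint words, form a basis of $\rK(\cI)$, so any subcollection is linearly independent. For (i), the quickest route is Theorem~\ref{thm:dergen}: the standard modules of rank $\le r$ generate $\rD^b_{\fgen}(\Rep(\cI)_{\le r})$ as a triangulated category, hence their classes generate its Grothendieck group, which is canonically identified with $\rK(\cI)_{\le r}$. One can also argue directly and avoid the derived category: given $M \in \Rep(\cI)^{\fgen}_{\le r}$, Proposition~\ref{prop:levfilt2} furnishes a finite filtration of $M$ whose successive quotients are nonzero submodules $A_i \subset \bE^{\lambda_i}$ with $\lambda_i$ of rank $\le r$, so $[M] = \sum_i \big( [\bE^{\lambda_i}] - [\bE^{\lambda_i}/A_i] \big)$; by Theorem~\ref{thm:level} each $\bE^{\lambda_i}/A_i$ has level $< r$, so induction on $r$ (trivial for $r < 0$) places $[\bE^{\lambda_i}/A_i]$ in the span of the lower-rank standard classes, and hence $[M]$ in the span of the rank $\le r$ standard classes.

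The graded case is proved in exactly the same way, using the graded forms of Theorems~\ref{thm:level}, \ref{thm:dergen}, and~\ref{thm:groth} and of Proposition~\ref{prop:levfilt2}, together with the graded map $\ul{\rK}(\cI)_{\le r} \to \ul{\rK}(\cI)$. There is no serious obstacle here: essentially everything is bookkeeping on top of the theorem on level and the two basis theorems. The one point that deserves a sentence of care is the inductive step in (i) --- one must check that the class of $\bE^{\lambda_i}/A_i$, computed a priori in $\rK(\cI)_{\le r-1}$, pushes forward to the expected $\bZ$-combination of standard classes in $\rK(\cI)_{\le r}$; this holds because $\bE^{\lambda_i}/A_i$ is a genuine object of $\Rep(\cI)^{\fgen}_{\le r-1}$ and the inclusion of categories induces the map $\rK(\cI)_{\le r-1} \to \rK(\cI)_{\le r}$ through which the reduction is carried out.
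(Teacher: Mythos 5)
Your proof is correct and takes essentially the same route as the paper's: spanning via Theorem~\ref{thm:dergen} (in the paper's phrasing, "the classes $[\bE^{\lambda}]$ with $\lambda$ of rank $\le r$ span $\rK(\cI)_{\le r}$"), then linear independence pulled back from $\rK(\cI)$ via Theorem~\ref{thm:groth}, which gives both the basis claim and injectivity at once. Your alternative direct argument for spanning (via Proposition~\ref{prop:levfilt2} and Theorem~\ref{thm:level}, avoiding the derived category) is a valid unpacking of how Theorem~\ref{thm:dergen} is itself proved, so it adds nothing essentially new but does show you understand the underlying mechanism.
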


\begin{proof}
Theorem~\ref{thm:dergen} shows that the classes $[\bE^{\lambda}]$ with $\lambda$ of rank $\le r$ span $\rK(\cI)_{\le r}$. Since these classes map to linear independent classes in $\rK(\cI)$ by Theorem~\ref{thm:groth}, we see that they must be linearly independent in $\rK(\cI)_{\le r}$ and that the map is an injection.
\end{proof}

\begin{proposition} \label{prop:groth-lev}
Let $M$ be a non-zero smooth $\cI$-module of level $r$, and let $[M]=\sum_{\lambda} c_{\lambda} [\bE^{\lambda}]$ be the expression for $[M]$ in terms of the basis $[\bE^{\lambda}]$. Then:
\begin{enumerate}
\item If $\lambda$ has rank $>r$ then $c_{\lambda}=0$.
\item If $\lambda$ has rank $r$ then $c_{\lambda} \ge 0$.
\item There exists some $\lambda$ of rank $r$ such that $c_{\lambda}>0$.
\end{enumerate}
The analogous results hold in the graded case.
\end{proposition}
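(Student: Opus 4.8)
The plan is to deduce all three parts from the basis theorem (Theorem~\ref{thm:groth}), the submodule‑filtration description of modules of bounded level (Proposition~\ref{prop:levfilt2}), and the level‑drop property of standard modules (Theorem~\ref{thm:level}). For $s \in \bN$, I would write $L_s \subseteq \rK(\cI)$ for the subgroup spanned by the classes $[\bE^{\mu}]$ with $\mu$ of rank $\le s$; since these classes form part of the basis $\{[\bE^{\mu}]\}$, the group $L_s/L_{s-1}$ is free abelian on the images of the classes $[\bE^{\lambda}]$ with $\lambda$ of rank exactly $s$. By Proposition~\ref{prop:groth-bd}, $L_s$ is precisely the image of $\rK(\cI)_{\le s} \to \rK(\cI)$, so the class of any finitely generated module of level $\le s$ lies in $L_s$. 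Part~(a) is then immediate: $[M] \in L_r$, and the expansion of $[M]$ in the basis is unique, so $c_{\mu}=0$ whenever $\operatorname{rank}(\mu)>r$.

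For part~(b) I would use Proposition~\ref{prop:levfilt2} to choose a finite filtration $0=F^0 \subset \cdots \subset F^n = M$ with $F^i/F^{i-1}$ isomorphic to a submodule $A_i$ of a standard module $\bE^{\lambda_i}$ of rank $\le r$, discarding the steps where $A_i = 0$, so that $[M]=\sum_i [A_i]$. The key observation is that, modulo $L_{r-1}$, each $[A_i]$ with $\operatorname{rank}(\lambda_i)=r$ is congruent to $[\bE^{\lambda_i}]$: one writes $[A_i]=[\bE^{\lambda_i}]-[\bE^{\lambda_i}/A_i]$, and $\bE^{\lambda_i}/A_i$ has level $<r$ — it is zero if $A_i=\bE^{\lambda_i}$, and by Theorem~\ref{thm:level} it has level $<\operatorname{rank}(\lambda_i)=r$ if $A_i$ is proper — hence $[\bE^{\lambda_i}/A_i]\in L_{r-1}$; meanwhile each $[A_i]$ with $\operatorname{rank}(\lambda_i)<r$ lies in $L_{r-1}$ already, since $A_i$ is then a submodule of a standard module in $\Rep(\cI)_{\le r-1}$. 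Thus $[M]\equiv \sum_{i:\,\operatorname{rank}(\lambda_i)=r}[\bE^{\lambda_i}] \pmod{L_{r-1}}$, and comparing with the basis expansion inside the free abelian group $L_r/L_{r-1}$ gives $c_{\lambda}=\#\{\,i:\lambda_i=\lambda\,\}\ge 0$ for each $\lambda$ of rank $r$.

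For part~(c), summing this formula over all $\lambda$ of rank $r$ shows that $\sum_{\operatorname{rank}(\lambda)=r} c_{\lambda}$ equals the number of indices $i$ with $\operatorname{rank}(\lambda_i)=r$. If this were zero, every (nonzero) $A_i$ would be a submodule of a standard module of rank $\le r-1$ and hence lie in the localizing subcategory $\Rep(\cI)_{\le r-1}$; as that subcategory is closed under extensions, $M$ would lie in it, contradicting $\lev(M)=r$. So the sum is strictly positive, and since the individual $c_{\lambda}$ with $\operatorname{rank}(\lambda)=r$ are all $\ge 0$, at least one of them is positive. The graded case runs verbatim with $\uRep(\cI)$, $\ulev$, and the graded forms of Theorem~\ref{thm:level} and Propositions~\ref{prop:levfilt2} and~\ref{prop:groth-bd}. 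I do not expect a genuine obstacle; the only points requiring care are keeping the two appeals to the filtration distinct (the signed computation for~(b) versus the extension‑closedness argument for~(c)) and checking that every quotient module appearing is finitely generated, so that its class is defined.
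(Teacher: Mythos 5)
Your proof is correct and follows essentially the same route as the paper: it applies Proposition~\ref{prop:levfilt2} to get a filtration by submodules of standard modules, invokes Theorem~\ref{thm:level} to control the cokernels, and reads off the coefficients from the congruence $[M] \equiv \sum_{\operatorname{rank}(\lambda_i)=r}[\bE^{\lambda_i}] \pmod{\rK(\cI)_{<r}}$. The only difference is expository — you make explicit the free abelian structure of $L_r/L_{r-1}$ and the closure of $\Rep(\cI)_{\le r-1}$ under extensions, both of which the paper leaves implicit.
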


\begin{proof}
By Proposition~\ref{prop:levfilt2}, we can find a filtration $0=F^0 \subset \cdots \subset F^n=M$ and constraint words $\lambda_1, \ldots, \lambda_n$ of rank $\le r$ such that $F^i/F^{i-1}$ is isomorphic to a submodule of $\bE^{\lambda_i}$ for each $i$. Obviously, we can assume that each $F_i/F_{i-1}$ is non-zero. Suppose $\lambda_i$ has rank $r$. Choose an injection $F^i/F^{i-1} \to \bE^{\lambda_i}$, and let $C$ be the cokernel. By Theorem~\ref{thm:level}, $C$ has level $<r$, and so $[C]$ belongs to $\rK(\cI)_{<r}$. We see that $[F^i/F^{i-1}]=[\ul{\bE}^{\lambda}]$ modulo $\rK(\cI)_{<r}$. Thus
\begin{displaymath}
[M]=\sum_{\rank(\lambda_i)=r} [\bE^{\lambda_i}] \mod{\rK(\cI)_{<r}}
\end{displaymath}
There must be some $\lambda_i$ of rank $r$, as otherwise $M$ would have level $<r$, and so the sum above is non-empty. The statements of the proposition thus follow.
\end{proof}

\begin{corollary} \label{cor:groth-lev}
Let $M$ be a finitely generated smooth $\cI$-module. Then $M$ has level $\le r$ if and only if $[M]$ belongs to $\rK(\cI)_{\le r}$. The analogous statement holds in the graded case.
\end{corollary}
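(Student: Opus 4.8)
The plan is to read the result off directly from Theorem~\ref{thm:groth}, Proposition~\ref{prop:groth-bd}, and Proposition~\ref{prop:groth-lev}. First dispose of the harmless case $M=0$: then $\lev(M)=-\infty\le r$ and $[M]=0\in\rK(\cI)_{\le r}$, so assume $M\ne 0$. By Proposition~\ref{prop:groth-bd}(b) the natural map $\rK(\cI)_{\le r}\to\rK(\cI)$ is injective, and by Proposition~\ref{prop:groth-bd}(a) together with Theorem~\ref{thm:groth} its image is exactly the subgroup $\bigoplus_{\rank\lambda\le r}\bZ[\bE^{\lambda}]\subset\rK(\cI)$; so throughout I identify $\rK(\cI)_{\le r}$ with this subgroup. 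I would also record at the outset that $M$ has \emph{finite} level: by Corollary~\ref{cor:pringen}, $M$ is a quotient of some finite sum $\bigoplus_i\bA^{r_i}$, each $\bA^{r_i}=\bE^{\whitetri^{r_i}}$ lies in $\Rep(\cI)_{\le r_i}$ (it is one of the generators of that localizing subcategory), and a localizing subcategory is closed under finite sums and quotients, so $s:=\lev(M)$ is a well-defined non-negative integer.

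For the implication $\lev(M)\le r\Rightarrow[M]\in\rK(\cI)_{\le r}$: write $[M]=\sum_{\lambda}c_{\lambda}[\bE^{\lambda}]$ in the basis of Theorem~\ref{thm:groth} and apply Proposition~\ref{prop:groth-lev}(a) with the actual level $s=\lev(M)$. Part (a) gives $c_{\lambda}=0$ for $\rank\lambda>s$, hence a fortiori for $\rank\lambda>r$, so $[M]\in\bigoplus_{\rank\lambda\le r}\bZ[\bE^{\lambda}]=\rK(\cI)_{\le r}$.

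For the converse, suppose $[M]=\sum_{\lambda}c_{\lambda}[\bE^{\lambda}]$ lies in $\bigoplus_{\rank\lambda\le r}\bZ[\bE^{\lambda}]$. Since the $[\bE^{\lambda}]$ form a $\bZ$-basis of $\rK(\cI)$ (Theorem~\ref{thm:groth}), the coefficients $c_{\lambda}$ are uniquely determined, and the hypothesis forces $c_{\lambda}=0$ whenever $\rank\lambda>r$. On the other hand, Proposition~\ref{prop:groth-lev}(c) (applicable since $M\ne 0$ has finite level $s$) produces a constraint word $\lambda_0$ of rank $s$ with $c_{\lambda_0}>0$; in particular $c_{\lambda_0}\ne 0$, so $s=\rank\lambda_0\le r$, i.e.\ $\lev(M)\le r$. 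The graded statement is proved verbatim, replacing $\bE^{\lambda}$, $\rK(\cI)$, $\lev$, $\rank$ and the cited results by their graded counterparts.

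I do not anticipate a real obstacle here: every ingredient is already in hand, and the only step needing a sentence of its own is the finiteness of $\lev(M)$ for finitely generated $M$ (required before invoking Proposition~\ref{prop:groth-lev}); the rest is bookkeeping with the basis $\{[\bE^{\lambda}]\}$ of the Grothendieck group.
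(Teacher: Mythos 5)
Your proof is correct and is essentially the argument the paper is implicitly relying on: the corollary is stated without proof, and the intended deduction is exactly the one you carry out by reading off coefficients via Theorem~\ref{thm:groth}, Proposition~\ref{prop:groth-bd}, and Proposition~\ref{prop:groth-lev}. The one place you are slightly more elaborate than necessary is the forward implication: if $\lev(M)\le r$ then $M$ is already an object of $\Rep(\cI)_{\le r}^{\fgen}$, so $[M]$ lives in $\rK(\cI)_{\le r}$ by the very definition of that Grothendieck group, without needing to invoke the coefficient-vanishing of Proposition~\ref{prop:groth-lev}(a). Your version is still valid, and your preliminary observation that finitely generated modules have finite level (via Corollary~\ref{cor:pringen}) is a sensible sanity check before invoking Proposition~\ref{prop:groth-lev}, which is phrased for a module ``of level $r$.''
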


\begin{corollary}
Let $M$ be a graded $\cI$-module. Then $M$ has homogeneous level $\le r$ if and only if $\Phi(M)$ has level $\le r$.
\end{corollary}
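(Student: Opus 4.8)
The plan is to deduce both implications immediately from the functorial properties of $\Phi$ and the completion functor $\ul{\Xi}$ that are already in hand; no new computation is required. Recall that by definition $M$ has homogeneous level $\le r$ precisely when $M \in \uRep(\cI)_{\le r}$, and $\Phi(M)$ has level $\le r$ precisely when $\Phi(M) \in \Rep(\cI)_{\le r}$.

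For the forward implication, I would simply invoke Proposition~\ref{prop:levelfunc}(a): if $M \in \uRep(\cI)_{\le r}$ then $\Phi(M) \in \Rep(\cI)_{\le r}$, so $\Phi(M)$ has level $\le r$. This direction is essentially a restatement of the fact that $\Phi$ preserves level, which was proved by checking standard modules.

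For the converse, suppose $\Phi(M)$ has level $\le r$, i.e.\ $\Phi(M) \in \Rep(\cI)_{\le r}$. Then Proposition~\ref{prop:levelfunc}(c) gives $\ul{\Xi}(\Phi(M)) \in \uRep(\cI)_{\le r}$. Now use the counit: by Proposition~\ref{prop:xi}(d), the map $\ul{\Xi}(\Phi(M)) \to M$ is a (split) surjection of graded $\cI$-modules. Since $\uRep(\cI)_{\le r}$ is a localizing subcategory of $\uRep(\cI)$, it is closed under quotients, and therefore $M \in \uRep(\cI)_{\le r}$, i.e.\ $M$ has homogeneous level $\le r$.

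There is no real obstacle here; the only point to be careful about is that the corollary is stated for an arbitrary graded $\cI$-module, with no finite-generation hypothesis. Consequently one cannot route the argument through the Grothendieck group (as in Corollary~\ref{cor:groth-lev}, which applies to finitely generated modules), and must instead use the surjectivity of the counit $\ul{\Xi}(\Phi(M)) \to M$, which holds for all graded $\cI$-modules.
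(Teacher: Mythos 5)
Your proof is correct, and it takes a genuinely different route from the paper's. The paper deduces the corollary from Corollary~\ref{cor:groth-lev} and the fact that the isomorphism $\phi\colon \ul{\rK}(\cI) \to \rK(\cI)$ carries $\ul{\rK}(\cI)_{\le r}$ onto $\rK(\cI)_{\le r}$; that is, it reads off the level from the class in the Grothendieck group. As you note, Corollary~\ref{cor:groth-lev} is stated only for finitely generated modules, so the paper's argument implicitly relies on the standard reduction that a module lies in the localizing subcategory $\uRep(\cI)_{\le r}$ (resp.\ $\Rep(\cI)_{\le r}$) if and only if all of its finitely generated subobjects do. Your argument sidesteps that reduction entirely: the forward direction is Proposition~\ref{prop:levelfunc}(a), and for the converse you combine Proposition~\ref{prop:levelfunc}(c) with the surjectivity of the counit $\ul{\Xi}(\Phi(M)) \to M$ from Proposition~\ref{prop:xi}(d) and the fact that $\uRep(\cI)_{\le r}$, being localizing (hence Serre), is closed under quotients. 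Both proofs are sound; yours is more self-contained and applies verbatim to arbitrary graded modules, while the paper's is a quick consequence of its more refined Grothendieck-group bookkeeping. One small caveat: saying the Grothendieck-group route ``cannot'' be used is a little strong -- it can, after first reducing to finitely generated subobjects via the localizing property -- but your observation that your own argument avoids this step is apt.
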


\begin{proof}
This follows from Corollary~\ref{cor:groth-lev} and the fact that the isomorphism $\phi \colon \ul{\rK}(\cI) \to \rK(\cI)$ maps $\ul{\rK}(\cI)_{\le r}$ onto $\rK(\cI)_{\le r}$.
\end{proof}

\begin{proposition} \label{prop:levcat2}
We have the following:
\begin{enumerate}
\item If $\lambda$ is a constraint word of rank $r$ then $T_{\ge r}(\bE^{\lambda})$ is a simple object of $\Rep(\cI)_r$.
\item If $\mu$ is a second constraint word of rank $r$ such that $T_{\ge r}(\bE^{\lambda})$ is isomorphic to $T_{\ge r}(\bE^{\mu})$ then $\lambda=\mu$.
\item The simple objects of $\Rep(\cI)_r$ are exactly the objects $T_{\ge r}(\bE^{\lambda})$ with $\lambda$ of rank $r$.
\end{enumerate}
Moreover, the analogous results hold in the graded case.
\end{proposition}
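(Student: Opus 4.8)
The plan is to deduce everything from the description of the Grothendieck group in Theorem~\ref{thm:groth} and Proposition~\ref{prop:groth-bd}, together with the consequences of the theorem on level already recorded in Proposition~\ref{prop:levcat1}. Recall that Proposition~\ref{prop:levcat1} already tells us that $\Rep(\cI)_r$ is locally of finite length, that $T_{\ge r}(\bE^{\lambda})$ is either zero or simple whenever $\lambda$ has rank $r$, and that every simple object of $\Rep(\cI)_r$ arises this way. So the only genuinely new content is that $T_{\ge r}(\bE^{\lambda})$ is \emph{never} zero (part (a), whence (c) is immediate) and that distinct rank $r$ words yield non-isomorphic simples (part (b)). I would prove the smooth case first and then transfer to the graded case via the equivalence of Proposition~\ref{prop:levequiv}.

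For the non-vanishing, note that $\bE^{\lambda}$ lies in $\Rep(\cI)_{\le r}$ by definition, so $\lev(\bE^{\lambda}) \le r$; on the other hand, by Corollary~\ref{cor:groth-lev}, $\lev(\bE^{\lambda}) \le r-1$ would force $[\bE^{\lambda}] \in \rK(\cI)_{\le r-1}$. But $\rK(\cI)_{\le r-1}$ injects into $\rK(\cI)$ (Proposition~\ref{prop:groth-bd}(b)) with image the span of the basis classes $[\bE^{\mu}]$ of rank $\le r-1$, and $[\bE^{\lambda}]$ is a distinct basis class of rank $r$ (Theorem~\ref{thm:groth}); hence $[\bE^{\lambda}] \notin \rK(\cI)_{\le r-1}$. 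Therefore $\lev(\bE^{\lambda}) = r$ exactly, so $\bE^{\lambda} \notin \Rep(\cI)_{\le r-1}$ and $T_{\ge r}(\bE^{\lambda}) \ne 0$; combined with Proposition~\ref{prop:levcat1} it is simple, proving (a). Part (c) then follows, since by Proposition~\ref{prop:levcat1} every simple object of $\Rep(\cI)_r$ is of the form $T_{\ge r}(\bE^{\lambda})$ with $\lambda$ of rank $r$.

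For part (b) I would compute the Grothendieck group of $\Rep(\cI)_r^{\fgen}$. Since $\Rep(\cI)_r^{\fgen}$ is the Serre quotient $\Rep(\cI)^{\fgen}_{\le r}/\Rep(\cI)^{\fgen}_{\le r-1}$ (finite generation in a Serre quotient of a locally noetherian category is detected by lifting), the standard right-exact localization sequence for $\rK_0$ shows that $\rK(\cI)_{\le r-1} \to \rK(\cI)_{\le r} \to \rK(\Rep(\cI)_r^{\fgen}) \to 0$ is exact; using that the first map is injective with free cokernel (Proposition~\ref{prop:groth-bd}, as it sends basis classes to a subset of a basis) I get a canonical isomorphism $\rK(\Rep(\cI)_r^{\fgen}) \cong \rK(\cI)_{\le r}/\rK(\cI)_{\le r-1}$, and this quotient is free on the images of the classes $[\bE^{\lambda}]$ with $\lambda$ of rank exactly $r$. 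Under this isomorphism the image of $[\bE^{\lambda}]$ is $[T_{\ge r}(\bE^{\lambda})]$. Hence the classes $[T_{\ge r}(\bE^{\lambda})]$, $\lambda$ of rank $r$, form a $\bZ$-basis of $\rK(\Rep(\cI)_r^{\fgen})$; in a finite-length category the simple objects are linearly independent in the Grothendieck group, so $T_{\ge r}(\bE^{\lambda}) \cong T_{\ge r}(\bE^{\mu})$ forces $[\bE^{\lambda}] = [\bE^{\mu}]$ in $\rK(\cI)$ and therefore $\lambda = \mu$.

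Finally, for the graded statements: by Proposition~\ref{prop:levelfunc}(a) the forgetful functor carries $\uRep(\cI)_{\le s}$ into $\Rep(\cI)_{\le s}$ for every $s$, so $\Phi$ descends to a functor $\uRep(\cI)_r \to \Rep(\cI)_r$, which is the equivalence of Proposition~\ref{prop:levequiv}; it intertwines $\ul{T}_{\ge r}$ with $T_{\ge r}$ and sends $\ul{\bE}^{\lambda}$ to $\bE^{\lambda}$, so each of the three assertions in the graded case is equivalent to its smooth counterpart. The hard part is essentially bookkeeping: the one point requiring care is the identification of $\rK(\Rep(\cI)_r^{\fgen})$ with $\rK(\cI)_{\le r}/\rK(\cI)_{\le r-1}$, i.e.\ checking that the $\rK_0$ localization sequence applies and is exact in the middle in this setting; everything else is a direct appeal to results already in hand.
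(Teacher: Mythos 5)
Your parts (a) and (c) coincide in substance with the paper's: the paper shows $T_{\ge r}(\bE^{\lambda}) \neq 0$ by observing that vanishing would put $[\bE^{\lambda}]$ in $\rK(\cI)_{<r}$, contradicting the basis property from Theorem~\ref{thm:groth}, and your argument via $\lev(\bE^{\lambda}) = r$ and Corollary~\ref{cor:groth-lev} is just a slightly repackaged version of the same observation. Your treatment of the graded transfer via Proposition~\ref{prop:levequiv} is more explicit than the paper's, which merely asserts the analogy, but this is fine.

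Part (b) is where you diverge genuinely. The paper unwinds the definition of an isomorphism in a Serre quotient by hand: it extracts subobjects $M \subset \bE^{\lambda}$, $N \subset \bE^{\mu}$, and a map $f\colon M \to \bE^{\mu}/N$ with $\bE^{\lambda}/M$, $N$, $\ker f$, $\coker f$ all of level $<r$, derives the explicit relation $[\bE^{\lambda}] = [\bE^{\mu}] + [\ker f] - [\coker f] + [\bE^{\lambda}/M] - [N]$ in $\rK(\cI)$, and concludes $\lambda = \mu$ by Theorem~\ref{thm:groth}. You instead outsource this unwinding to the $\rK_0$ localization sequence, computing $\rK(\Rep(\cI)_r^{\fgen}) \cong \rK(\cI)_{\le r}/\rK(\cI)_{\le r-1}$ and reading off that the $[T_{\ge r}(\bE^{\lambda})]$ for $\lambda$ of rank $r$ form a basis, whence isomorphic objects have equal classes and so $\lambda = \mu$. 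Both arguments are correct and ultimately rest on Theorem~\ref{thm:groth} and Proposition~\ref{prop:groth-bd}; yours is cleaner conceptually and also directly yields the useful computation of $\rK(\Rep(\cI)_r^{\fgen})$, while the paper's is self-contained and avoids invoking the localization exact sequence (and the accompanying claim that $\Rep(\cI)_r^{\fgen} = \Rep(\cI)^{\fgen}_{\le r}/\Rep(\cI)^{\fgen}_{\le r-1}$) as external facts. The point you flag as requiring care — verifying the localization sequence applies and the Serre-quotient identification of $\fgen$ subcategories — is indeed the only thing you are taking as a black box that the paper avoids; both are standard for noetherian abelian categories, so there is no gap.
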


\begin{proof}
(a) We have already shown (Proposition~\ref{prop:levcat1}) that $T_{\ge r}(\bE^{\lambda})$ is either zero or simple, so it suffices now to show that it is non-zero. Suppose that it is zero. It follows that $\bE^{\lambda}$ belongs to the kernel of the functor $T_{\ge r}$, i.e., the category $\Rep(\cI)_{<r}$. Thus $[\bE^{\lambda}]$ belongs to $\rK(\cI)_{<r} \subset \rK(\cI)$. However, the former is spanned by classes of the form $[\bE^{\mu}]$ with $\mu$ of rank $<r$, and so this would contradict Theorem~\ref{thm:groth}.

(b) Suppose that $T_{\ge r}(\bE^{\lambda})$ and $T_{\ge r}(\bE^{\mu})$ are isomorphic. We can thus find a subobjects $M \subset \bE^{\lambda}$ and $N \subset \bE^{\mu}$ and a map $f \colon M \to \bE^{\mu}/N$ such that $\bE^{\lambda}/M$, $N$, $\ker(f)$, and $\coker(f)$ belong to $\Rep(\cI)_{<r}$. We have the relation
\begin{displaymath}
[\bE^{\lambda}]=[\bE^{\mu}]+[\ker{f}]-[\coker{f}]+[\bE^{\lambda}/M]-[N]
\end{displaymath}
in $\rK(\cI)$. As the terms on the right, other than the first, belong to $\rK(\cI)_{<r}$, they can be expressed in terms of the classes $[\bE{\nu}]$ with $\nu$ of rank $<r$. By Theorem~\ref{thm:groth}, we conclude that $\lambda=\mu$.

(c) This follows from what we have already proved in Proposition~\ref{prop:levcat1}.
\end{proof}

The concatenation product $\odot$ on $\uRep(\cI)$ is a monoidal structure that is exact in each argument, and preserves finite generation. It thus endows $\ul{\rK}(\cI)$ with the structure of an associative and unital (but not necessarily commutative) ring. Moreover, the action of $\uRep(\cI)$ on $\Rep(\cI)$ via concatenation product endows $\rK(\cI)$ with the structure of a left module over the ring $\ul{\rK}(\cI)$. We now determine this additional structure on the Grothendieck groups:

\begin{proposition} \label{prop:grothring}
We have the following:
\begin{enumerate}
\item Letting $\bZ\{a,b\}$ denote the non-commutative polynomial ring in the variables $a$ and $b$, the unique ring homomorphism $f \colon \bZ\{a,b\} \to \ul{\rK}(\cI)$ satisfying $f(a)=[\ul{\bA}^1]$ and $f(b)=[\ul{\bB}^1]$ is an isomorphism.
\item As a $\ul{\rK}(\cI)$-module, $\rK(\cI)$ is free of rank one with basis $[\bB^0]$.
\end{enumerate}
\end{proposition}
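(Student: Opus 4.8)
The plan is to deduce both parts directly from Theorem~\ref{thm:groth} (the classes of standard modules are $\bZ$-bases) together with the multiplicativity of standard modules under the concatenation product (Proposition~\ref{prop:stdcat}, or equivalently Corollary~\ref{cor:gapprod}). No genuinely new input is needed; the content is bookkeeping about how words in $a,b$ match up with constraint words in $\whitetri,\blacktri$.

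For part (a), I would first invoke the universal property of the free associative unital ring $\bZ\{a,b\}$: since $\ul{\rK}(\cI)$ is an associative unital ring under the product induced by $\odot$ (the relevant biexactness and finite-generation statements were recorded just before the proposition), there is a unique ring homomorphism $f$ with $f(a)=[\ul{\bA}^1]=[\ul{\bE}^{\whitetri}]$ and $f(b)=[\ul{\bB}^1]=[\ul{\bE}^{\blacktri}]$. Then I would evaluate $f$ on the monomial basis of $\bZ\{a,b\}$: given a word $w=w_1\cdots w_r$ in $a,b$, let $\lambda(w)$ be the constraint word obtained by substituting $\whitetri$ for $a$ and $\blacktri$ for $b$. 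Iterating Proposition~\ref{prop:stdcat} gives $f(w)=[\ul{\bE}^{w_1}] \odot \cdots \odot [\ul{\bE}^{w_r}]=[\ul{\bE}^{\lambda(w)}]$ (for $w$ the empty word this reads $f(1)=[\ul{\bB}^0]$, the unit). Since $w\mapsto\lambda(w)$ is a bijection from the monomial $\bZ$-basis of $\bZ\{a,b\}$ onto the set of constraint words, and the latter indexes the $\bZ$-basis $\{[\ul{\bE}^\lambda]\}$ of $\ul{\rK}(\cI)$ by Theorem~\ref{thm:groth}, the homomorphism $f$ carries a $\bZ$-basis bijectively to a $\bZ$-basis. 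Hence $f$ is bijective, and being a ring homomorphism it is a ring isomorphism.

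For part (b), recall that $\rK(\cI)$ is a left $\ul{\rK}(\cI)$-module via $[\ul M]\cdot[N]=[\ul M\odot N]$. I would consider the map $\theta\colon\ul{\rK}(\cI)\to\rK(\cI)$, $\theta(x)=x\cdot[\bB^0]$; associativity of the module action makes $\theta$ left $\ul{\rK}(\cI)$-linear. Writing $\bB^0\cong\bE^{\emptyset}$ and applying the second isomorphism of Proposition~\ref{prop:stdcat} with $\mu=\emptyset$ gives $\ul{\bE}^\lambda\odot\bE^{\emptyset}\cong\bE^{\lambda}$, so $\theta([\ul{\bE}^\lambda])=[\bE^\lambda]$. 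As $\lambda$ ranges over all constraint words, $[\ul{\bE}^\lambda]$ ranges over a $\bZ$-basis of $\ul{\rK}(\cI)$ and $[\bE^\lambda]$ over a $\bZ$-basis of $\rK(\cI)$ (Theorem~\ref{thm:groth} again), so $\theta$ is an isomorphism of abelian groups; being $\ul{\rK}(\cI)$-linear, it exhibits $\rK(\cI)$ as the free left $\ul{\rK}(\cI)$-module on the single generator $[\bB^0]$.

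The argument is essentially routine given Theorem~\ref{thm:groth} and Proposition~\ref{prop:stdcat}. The only point needing any care is the one of pure bookkeeping: keeping straight the dictionary between words in $a,b$ and constraint words in $\whitetri,\blacktri$, and the order of concatenation, so that one verifies $f$ (resp.\ $\theta$) really hits each basis element $[\ul{\bE}^\lambda]$ (resp.\ $[\bE^\lambda]$) exactly once. I would regard this as the main — and quite mild — obstacle.
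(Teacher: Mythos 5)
Your proof is correct and follows essentially the same route as the paper: part (a) is word-for-word the paper's argument (compute $f$ on monomials via Proposition~\ref{prop:stdcat}, then invoke Theorem~\ref{thm:groth}), and for part (b) the paper observes directly that $g(x)=x\cdot[\bB^0]$ equals the forgetful map $\phi$ (since $M\odot\bB^0=\Phi(M)$) and cites Corollary~\ref{cor:groth}, whereas you re-derive the same fact by evaluating $\theta$ on the basis $\{[\ul{\bE}^\lambda]\}$ — a cosmetic unfolding of the corollary, not a genuinely different argument.
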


\begin{proof}
(a) By Proposition~\ref{prop:stdcat}, we see that $f$ takes a word $\lambda$ in $a$ and $b$ to $[\ul{\bE}^{\lambda}]$. Thus, by Theorem~\ref{thm:groth}, $f$ maps a basis of $\bZ\{a,b\}$ to a basis of $\ul{\rK}(\cI)$, and is therefore an isomorphism.

(b) It suffices to show that the map $g \colon \ul{\rK}(\cI) \to \rK(\cI)$ defined by $g(x)=x \cdot [\bB^0]$ is an isomorphism. We have $g([M])=[M] \cdot [\bB^0]=[M \odot \bB^0]=[\Phi(M)]$. Thus $g=\phi$, which we have already shown to be an isomorphism (Corollary~\ref{cor:groth}).
\end{proof}

\subsection{Hilbert series} \label{ss:hilb}

Let $M$ be a finitely generated graded $\cI$-module. We define the {\bf Hilbert series} of $M$ by
\begin{displaymath}
\ul{\rH}_M(t) = \sum_{n \ge 0} \umu_n(M) t^n = \sum_{n \ge 0} \dim(M_n) t^n.
\end{displaymath}
Now suppose that $M$ is a finitely generated smooth $\cI$-module. We define the {\bf Hilbert series} of $M$ by
\begin{displaymath}
\rH_M(t) = \sum_{n \ge 0} \mu_n(M) t^n.
\end{displaymath}

\begin{proposition} \label{prop:Xihilb}
Let $M$ be a finitely generated smooth $\cI$-module. Then $\rH_M(t)=\ul{\rH}_{\ul{\Xi}(M)}(t)$.
\end{proposition}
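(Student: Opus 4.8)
The statement should follow immediately from Proposition~\ref{prop:Ximult}, which asserts $\umu_r(\ul{\Xi}(M)) = \mu_r(M)$ for all $r \ge 0$. The plan is simply to unwind the definitions of the two Hilbert series and match coefficients term by term.

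Concretely, I would argue as follows. By definition, $\rH_M(t) = \sum_{n \ge 0} \mu_n(M) t^n$, while $\ul{\rH}_{\ul{\Xi}(M)}(t) = \sum_{n \ge 0} \umu_n(\ul{\Xi}(M)) t^n$ (using that $\umu_n$ of a graded module is the dimension of its $n$th graded piece). It thus suffices to check that $\mu_n(M) = \umu_n(\ul{\Xi}(M))$ for every $n \ge 0$, and this is exactly the content of Proposition~\ref{prop:Ximult}. One should note in passing that both series are well-defined as elements of $\bZ \lbb t \rbb$: the coefficients $\mu_n(M)$ are finite since $M$ is finitely generated (Proposition~\ref{prop:mufin}), and correspondingly $\ul{\Xi}(M)$ is finitely generated (Corollary after Proposition~\ref{prop:xistd}), so its graded pieces are finite dimensional.

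There is essentially no obstacle here: the content has already been extracted into Proposition~\ref{prop:Ximult} (whose proof in turn rests on Propositions~\ref{prop:Xistab}, \ref{prop:mu-umu}, and~\ref{prop:mutau}), and this proposition is just the repackaging of that equality of multiplicities as an equality of generating functions. The only thing to be slightly careful about is keeping the bookkeeping straight between $\mu_n$ (multiplicity of the simple $\bB^n$ in a smooth module) and $\umu_n$ (dimension of the $n$th graded piece), but Proposition~\ref{prop:mu-umu} already reconciles these in the locally finite setting and Proposition~\ref{prop:Ximult} delivers the statement we need directly.

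\begin{proof}
By definition, $\rH_M(t) = \sum_{n \ge 0} \mu_n(M) t^n$ and $\ul{\rH}_{\ul{\Xi}(M)}(t) = \sum_{n \ge 0} \umu_n(\ul{\Xi}(M)) t^n$. Since $M$ is finitely generated, $\mu_n(M)$ is finite for all $n$ (Proposition~\ref{prop:mufin}), so both expressions are well-defined power series. By Proposition~\ref{prop:Ximult}, we have $\umu_n(\ul{\Xi}(M)) = \mu_n(M)$ for all $n \ge 0$. Comparing coefficients of $t^n$ on both sides gives $\rH_M(t) = \ul{\rH}_{\ul{\Xi}(M)}(t)$.
\end{proof}
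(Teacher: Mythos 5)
Your proof is correct and follows exactly the same route as the paper, which simply cites Proposition~\ref{prop:Ximult} as immediately giving the result. You have merely spelled out the unwinding of definitions and the finiteness check that the paper leaves implicit.
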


\begin{proof}
This follows immediately from Proposition~\ref{prop:Ximult}.
\end{proof}

\begin{proposition} \label{prop:hilbcat}
Let $M$ and $N$ be finitely generated graded $\cI$-modules. Then
\begin{displaymath}
\ul{\rH}_{M \odot N}(t)=\ul{\rH}_M(t) \cdot \ul{\rH}_N(t).
\end{displaymath}
\end{proposition}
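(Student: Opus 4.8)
The plan is to reduce the claim to the elementary fact that the Hilbert series of a tensor product is the product of the Hilbert series, using that the concatenation product twists only the $\cI$-action and not the underlying graded vector space. First I would record that each graded piece of a finitely generated graded $\cI$-module is finite dimensional: such a module is a quotient of a finite direct sum of graded principal modules $\ul{\bA}^r$ (or one may invoke noetherianity, Theorem~\ref{thm:noeth}), and $(\ul{\bA}^r)_n$ has for a basis the finite set of tuples $e_{i_1,\ldots,i_r}$ with $i_r=n$; hence $\dim M_n<\infty$ for all $n$, and the series $\ul{\rH}_M(t)$ and $\ul{\rH}_N(t)$ are well-defined formal power series.

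Next I would unwind the definition of $\odot$ from \S\ref{ss:concat}: the underlying vector space of $M \odot N$ is $M \otimes N$, and when $N$ is graded the grading is the usual tensor-product grading, so
\begin{displaymath}
(M \odot N)_n = \bigoplus_{i+j=n} M_i \otimes N_j
\end{displaymath}
as vector spaces (the $\cI$-module structure is irrelevant here, since the Hilbert series only sees dimensions). Taking dimensions gives
\begin{displaymath}
\umu_n(M \odot N) = \dim (M \odot N)_n = \sum_{i+j=n} \dim(M_i)\dim(N_j) = \sum_{i+j=n} \umu_i(M)\,\umu_j(N),
\end{displaymath}
which is exactly the degree-$n$ coefficient of the Cauchy product $\ul{\rH}_M(t)\cdot\ul{\rH}_N(t)$. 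Summing over $n$ yields the stated identity.

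There is no real obstacle here: the only thing to be careful about is the finiteness of the graded pieces (so that the power series and their product make sense), and the observation that $\odot$ does not disturb the underlying graded vector space, both of which are immediate from the definitions already in place. If desired, one can alternatively phrase the argument functorially: $M\mapsto \ul{\rH}_M$ factors through $\ul{\rK}(\cI)$, and by Proposition~\ref{prop:grothring} the Hilbert series map $\ul{\rK}(\cI)\cong\bZ\{a,b\}\to\bZ\lbb t\rbb$ sending $a,b\mapsto t$ (using $\ul{\rH}_{\ul{\bA}^1}=t=\ul{\rH}_{\ul{\bB}^1}$, extended so that a word $\lambda$ maps to $t^{|\lambda|}$, cf.\ Proposition~\ref{prop:stdcat}) is a ring homomorphism into the commutative ring $\bZ\lbb t\rbb$; multiplicativity of $\ul{\rH}$ under $\odot$ is then the statement that this map respects multiplication, which holds on the basis of words and hence everywhere.
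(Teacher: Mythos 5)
Your main argument is correct and is exactly the paper's intended reasoning: by definition of $\odot$, the underlying graded vector space of $M\odot N$ is $M\otimes N$ with the usual tensor grading, so the identity is the Cauchy-product formula for Hilbert series. (The paper just says "follows immediately from the definition"; you've spelled out the same thing.)

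One caveat on your alternative "functorial" phrasing, which is a nice perspective but contains a small error: the Hilbert-series ring map $\ul{\rK}(\cI)\cong\bZ\{a,b\}\to\bZ\lbb t\rbb$ sends $a=[\ul{\bA}^1]\mapsto t/(1-t)$ (not $t$), since $\ul{\bA}^1$ has a one-dimensional piece in every degree $n\ge 1$; only $b=[\ul{\bB}^1]\mapsto t$. Correspondingly a word $\lambda$ with $n$ occurrences of $a$ and $m$ of $b$ is sent to $t^{n+m}(1-t)^{-n}$, not $t^{|\lambda|}$ (this is Proposition~\ref{prop:hilbstd}). Also note that this alternative route leans on Theorem~\ref{thm:groth} and Proposition~\ref{prop:grothring}, which the paper proves after the present proposition, so as written it would be circular in context; your primary elementary argument is the one to keep.
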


\begin{proof}
This follows immediately from the definition of the concatenation product.
\end{proof}

\begin{proposition} \label{prop:hilbstd}
Let $\lambda$ be a constraint word with $n$ (resp.\ $m$) $a$ (resp.\ $b$). Then $\ul{\rH}_{\ul{\bE}^{\lambda}}(t)=t^{n+m} (1-t)^{-n}$.
\end{proposition}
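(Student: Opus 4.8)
The plan is to reduce everything to the two length-one words via the concatenation product. By Corollary~\ref{cor:gapprod} we have $\ul{\bE}^{\lambda} \cong \ul{\bE}^{\lambda_1} \odot \cdots \odot \ul{\bE}^{\lambda_r}$, where the $i$th factor is $\ul{\bB}^1$ if $\lambda_i=\blacktri$ and $\ul{\bA}^1$ if $\lambda_i=\whitetri$. Each of these factors is a finitely generated graded $\cI$-module (it is $\ul{\bE}^{\blacktri}$ or $\ul{\bE}^{\whitetri}$), so Proposition~\ref{prop:hilbcat} applies iteratively and gives
\begin{displaymath}
\ul{\rH}_{\ul{\bE}^{\lambda}}(t) = \prod_{i=1}^{r} \ul{\rH}_{\ul{\bE}^{\lambda_i}}(t) = \ul{\rH}_{\ul{\bA}^1}(t)^{\,n} \cdot \ul{\rH}_{\ul{\bB}^1}(t)^{\,m},
\end{displaymath}
since exactly $n$ of the factors are $\ul{\bA}^1$ and $m$ are $\ul{\bB}^1$.

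It then remains to compute the two base cases directly from the definitions of the modules. The module $\ul{\bB}^1$ is one-dimensional and concentrated in degree~$1$, so $\ul{\rH}_{\ul{\bB}^1}(t)=t$. The module $\ul{\bA}^1$ has basis $\{e_i\}_{i\ge 1}$ with $e_i$ in degree~$i$ (by the definition of the grading on $\ul{\bA}^r$ in \S\ref{ss:principal}), so $\dim(\ul{\bA}^1)_n$ equals $1$ for $n\ge 1$ and $0$ for $n=0$; hence $\ul{\rH}_{\ul{\bA}^1}(t)=\sum_{n\ge 1} t^n = t/(1-t)$.

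Substituting these into the displayed product yields
\begin{displaymath}
\ul{\rH}_{\ul{\bE}^{\lambda}}(t) = \left( \frac{t}{1-t} \right)^{\!n} \cdot t^{m} = t^{n+m}(1-t)^{-n},
\end{displaymath}
which is the claimed formula. (The case $r=0$, where $n=m=0$ and $\ul{\bE}^{\lambda}$ is the trivial module concentrated in degree~$0$, is consistent, giving $\ul{\rH}=1$.) There is really no substantive obstacle here: the only points requiring any care are checking that $\ul{\bA}^1$ and $\ul{\bB}^1$ are finitely generated so that Proposition~\ref{prop:hilbcat} is applicable, and correctly reading off the grading on $\ul{\bA}^1$.
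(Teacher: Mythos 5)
Your proof is correct and follows exactly the paper's approach: decompose $\ul{\bE}^{\lambda}$ as a concatenation of $\ul{\bA}^1$'s and $\ul{\bB}^1$'s via Corollary~\ref{cor:gapprod}, apply the multiplicativity of Hilbert series under $\odot$ (Proposition~\ref{prop:hilbcat}), and compute the two base cases directly. The extra details you supply (the explicit grading on $\ul{\bA}^1$, the $r=0$ sanity check) are fine but not substantively different from what the paper does.
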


\begin{proof}
The result is clear if $\lambda=\blacktri$ or $\lambda=\whitetri$. The general case follows from these two cases and Proposition~\ref{prop:hilbcat} (as well as Corollary~\ref{cor:gapprod}).
\end{proof}

\begin{theorem} \label{thm:hilb-gr}
Let $M$ be a finitely generated graded $\cI$-module. Then $\ul{\rH}_M(t)$ is a rational function of $t$ whose denominator is a power of $1-t$. Moreover, the level of $M$ is exactly the order of the pole of $\ul{\rH}_M(t)$ at $t=1$. The analogous result holds in the smooth case.
\end{theorem}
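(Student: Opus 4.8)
The plan is to read off both claims from the structure of the Grothendieck group. Recall that $N\mapsto\ul{\rH}_N(t)$ is additive on short exact sequences of finitely generated graded $\cI$-modules (the pieces $N_n$ are finite-dimensional, being quotients of finitely many principal modules), so it factors through a homomorphism $\ul{\rK}(\cI)\to\bZ\lbb t\rbb$. By Theorem~\ref{thm:groth} I may write $[M]=\sum_\lambda c_\lambda[\ul{\bE}^\lambda]$ as a finite $\bZ$-linear combination, and then Proposition~\ref{prop:hilbstd} gives
\begin{displaymath}
\ul{\rH}_M(t)=\sum_\lambda c_\lambda\,t^{|\lambda|}(1-t)^{-\rank(\lambda)},
\end{displaymath}
where $|\lambda|$ denotes the length of $\lambda$. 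Setting $r=\ulev(M)$, Proposition~\ref{prop:groth-lev}(a) says $c_\lambda=0$ whenever $\rank(\lambda)>r$, so clearing denominators writes $\ul{\rH}_M(t)=P(t)/(1-t)^r$ with $P(t)=\sum_\lambda c_\lambda t^{|\lambda|}(1-t)^{r-\rank(\lambda)}\in\bZ[t]$. This already yields rationality with denominator a power of $1-t$.

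For the statement about the pole I would evaluate $P$ at $t=1$: every summand with $\rank(\lambda)<r$ contains a positive power of $1-t$ and vanishes, leaving $P(1)=\sum_{\rank(\lambda)=r}c_\lambda$. By Proposition~\ref{prop:groth-lev}(b) this is a sum of non-negative integers, and by Proposition~\ref{prop:groth-lev}(c) at least one of them is positive, so $P(1)>0$. Hence $P(t)/(1-t)^r$ is in lowest terms at $t=1$, and the order of the pole of $\ul{\rH}_M(t)$ there is exactly $r=\ulev(M)$ (the case $M=0$ is handled by the usual conventions, so one may assume $M\neq 0$). This non-cancellation at $t=1$ is the only real point in the argument, and it is exactly the positivity built into Proposition~\ref{prop:groth-lev}; everything else is bookkeeping.

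For the smooth case I would pass to the completion. If $M$ is a finitely generated smooth $\cI$-module, then $\ul{\Xi}(M)$ is a finitely generated graded $\cI$-module (see \S\ref{s:xi}) and $\rH_M(t)=\ul{\rH}_{\ul{\Xi}(M)}(t)$ by Proposition~\ref{prop:Xihilb}, so by the graded case it remains only to identify $\ulev(\ul{\Xi}(M))$ with $\lev(M)$. Here I would use that $\Phi$ preserves and reflects level (a consequence of Corollary~\ref{cor:groth-lev}), so that $\ulev(\ul{\Xi}(M))=\lev(\Phi(\ul{\Xi}(M)))=\lev(\Xi(M))$; then the exact sequence $0\to M\to\Xi(M)\to\Xi^{\res}(M)\to 0$, together with $M$ being a submodule of $\Xi(M)$ (Proposition~\ref{prop:xi}(c)) and $\lev(\Xi^{\res}(M))\le\lev(M)-1$ (Proposition~\ref{prop:levelfunc}(d)), plus the closure of the level subcategories under subobjects and extensions, sandwiches $\lev(\Xi(M))$ between $r$ and $r$. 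The graded conclusion then applies verbatim. I do not anticipate a genuine obstacle beyond these two points — the non-cancellation at $t=1$ and the level-invariance of $\ul{\Xi}$ — both of which are already in hand from the results cited above.
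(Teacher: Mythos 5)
Your proposal is correct and matches the paper's argument: the graded case is exactly the paper's route through Propositions~\ref{prop:hilbstd} and~\ref{prop:groth-lev} (you simply spell out the non-cancellation at $t=1$, which the paper leaves implicit), and the smooth case uses Proposition~\ref{prop:Xihilb} together with the same identity $\lev(M)=\ulev(\ul{\Xi}(M))$, which the paper proves by a nearly identical sandwich argument, just routed through Proposition~\ref{prop:levelfunc}(a,c) rather than through the exact sequence with $\Xi^{\res}$. The only minor slip is that the corollary you actually invoke — $\Phi$ preserves and reflects level — is the unnamed corollary immediately following Corollary~\ref{cor:groth-lev}, not Corollary~\ref{cor:groth-lev} itself, but since that corollary is derived from Corollary~\ref{cor:groth-lev} your phrasing (``a consequence of'') is still accurate.
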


\begin{proof}
In the graded case, this follows immediately from Propositions~\ref{prop:hilbstd} and~\ref{prop:groth-lev}. The smooth case follows from the graded case and Proposition~\ref{prop:Xihilb}, and the observation that $\lev(M)=\ulev(\ul{\Xi}(M))$. (To see this, observe that $\ulev(\ul{\Xi}(M)) \le \lev(M)$ by Proposition~\ref{prop:levelfunc}(c). On the other hand,
\begin{displaymath}
\lev(M) \le \lev(\Xi(M)) = \lev(\Phi(\ul{\Xi}(M))) \le \lev(\ul{\Xi}(M))
\end{displaymath}
where the first inequality is a consequence of the injection $M \to \Xi(M)$, and the second of Proposition~\ref{prop:levelfunc}(a).)
\end{proof}

\begin{corollary}
Let $M$ be a non-zero finitely generated graded $\cI$-module. Then there exists a polynomial $p \in \bQ[t]$ such that $\umu_n(M)=p(n)$ for all $n \gg 0$. Moreover, the degree of $p$ is $\ulev(M)-1$ (using the convention that the zero polynomial has degree $-1$). The analogous result holds in the smooth case.
\end{corollary}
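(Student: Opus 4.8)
The plan is to read off everything from Theorem~\ref{thm:hilb-gr} together with the elementary expansion of a rational function whose only pole is at $t=1$. I will do the graded case first and then reduce the smooth case to it via the completion functor.

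For the graded case, set $d=\ulev(M)$; since $M\neq 0$ we have $d\ge 0$. Theorem~\ref{thm:hilb-gr} lets me write $\ul{\rH}_M(t)=g(t)/(1-t)^{d}$ with $g$ a polynomial, and --- crucially, because $d$ is \emph{exactly} the order of the pole at $t=1$ and not merely a bound --- $g(1)\neq 0$ whenever $d\ge 1$. If $d=0$, then $\ul{\rH}_M$ is a polynomial, so $\umu_n(M)=0$ for $n\gg 0$: this is the zero polynomial, of degree $-1=d-1$ under the stated convention. If $d\ge 1$, I expand $(1-t)^{-d}=\sum_{n\ge 0}\binom{n+d-1}{d-1}t^n$, note that $n\mapsto\binom{n+d-1}{d-1}$ is a polynomial in $n$ of degree $d-1$ with leading coefficient $1/(d-1)!$, and write $g(t)=\sum_{i=0}^{e}c_it^i$ with $c_i\in\bZ$. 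Multiplying the two series shows that for $n\ge e$,
\begin{displaymath}
\umu_n(M)=\sum_{i=0}^{e}c_i\binom{n-i+d-1}{d-1}=:p(n),
\end{displaymath}
which is a polynomial in $n$ with rational coefficients, of degree $d-1$, whose leading coefficient equals $\bigl(\sum_i c_i\bigr)/(d-1)!=g(1)/(d-1)!\neq 0$. Hence $\deg p=d-1=\ulev(M)-1$.

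For the smooth case, let $M$ be a non-zero finitely generated smooth $\cI$-module. Then $\ul{\Xi}(M)$ is finitely generated (by the finiteness statement at the end of \S\ref{ss:computexi}) and non-zero, since the unit $M\to\Phi(\ul{\Xi}(M))=\Xi(M)$ is injective (Proposition~\ref{prop:xi}(c)). By Proposition~\ref{prop:Xihilb}, $\rH_M=\ul{\rH}_{\ul{\Xi}(M)}$, and by the computation in the proof of Theorem~\ref{thm:hilb-gr}, $\lev(M)=\ulev(\ul{\Xi}(M))$. Applying the graded case to $\ul{\Xi}(M)$ produces the desired polynomial, now of degree $\lev(M)-1$, with $\mu_n(M)=\umu_n(\ul{\Xi}(M))=p(n)$ for $n\gg 0$.

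The argument is essentially bookkeeping and I do not anticipate a genuine obstacle; the one point that really uses Theorem~\ref{thm:hilb-gr} in full (rather than just rationality of the Hilbert series) is the equality $d=\ulev(M)$, which is what guarantees $g(1)\neq 0$ and therefore pins down the degree of $p$ exactly rather than merely bounding it. A minor care point is the $d=0$ case, which is exactly what the convention $\deg 0=-1$ is designed to accommodate.
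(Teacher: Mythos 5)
Your proof is correct, and it is exactly the argument the paper has in mind: the corollary is stated without proof as an immediate consequence of Theorem~\ref{thm:hilb-gr}, and the intended derivation is precisely your bookkeeping with the expansion of $(1-t)^{-d}$ plus the reduction of the smooth case through $\ul{\Xi}$ via Propositions~\ref{prop:Xihilb} and~\ref{prop:xi}. You have also correctly isolated the one non-formal input, namely that the level equals the exact pole order (not just a bound), which is what forces $g(1)\neq 0$ and pins down $\deg p = d-1$.
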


\subsection{Krull dimension}

Let $\cA$ be a Grothendieck abelian category. Recall that there is a notion of Krull dimension for objects in $\cA$, defined as follows. Define $\cA^{\le -1}=0$. Having defined $\cA^{\le r-1}$, let $\cA^{\le r}$ be the full subcategory of $\cA$ spanned by objects that are locally of finite length in the quotient $\cA/\cA^{\le r-1}$. We say that an object $M$ of $\cA$ has {\bf Krull dimension} $r$ if $M$ belongs to $\cA^{\le r}$, and $r$ is minimal with this property.

The following proposition characterizes Krull dimension for $\cI$-modules:

\begin{proposition} \label{prop:krull}
Let $M$ be a smooth $\cI$-module. Then $M$ has Krull dimension $\le r$ if and only if it has level $\le r$. The analogous statement in the graded case holds as well.
\end{proposition}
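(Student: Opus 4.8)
The plan is to prove the proposition by induction on $r$, showing at each stage that the level filtration and the Krull filtration of $\Rep(\cI)$ (and of $\uRep(\cI)$) agree, i.e. $\Rep(\cI)^{\le r} = \Rep(\cI)_{\le r}$ as subcategories, where $\Rep(\cI)^{\le r}$ denotes the $r$th step of the Krull filtration defined just above. Since Krull dimension and level are both defined as the minimal $r$ for which $M$ lies in the corresponding subcategory, equality of the subcategories gives equality of the invariants. The base case $r=0$ is exactly the statement that an object has Krull dimension $0$ (is locally of finite length in $\Rep(\cI)$ itself) if and only if it has level $0$, and the ``level $0$ = locally finite length'' identification was already recorded in the excerpt (just before Proposition~\ref{prop:levequiv}).

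For the inductive step, assume $\Rep(\cI)^{\le r-1} = \Rep(\cI)_{\le r-1}$; then the two Serre quotients $\Rep(\cI)/\Rep(\cI)^{\le r-1}$ and $\Rep(\cI)_{>r-1} = \Rep(\cI)/\Rep(\cI)_{\le r-1}$ are literally the same category, so $\Rep(\cI)^{\le r}$ consists of those $M$ whose image in $\Rep(\cI)_{>r-1}$ is locally of finite length. I must show this coincides with $\Rep(\cI)_{\le r}$. One inclusion is the content of Proposition~\ref{prop:levcat1}: every object of $\Rep(\cI)_r = \Rep(\cI)_{\le r}/\Rep(\cI)_{\le r-1}$ is locally of finite length, so $T_{>r-1}(M)$ is locally finite whenever $M$ has level $\le r$ (using also that a finitely generated module of level $\le r$ maps into $\Rep(\cI)_r$, and passing to directed unions); hence $\Rep(\cI)_{\le r} \subseteq \Rep(\cI)^{\le r}$. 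For the reverse inclusion, suppose $M$ is finitely generated with $T_{>r-1}(M)$ of finite length in $\Rep(\cI)_{>r-1}$; I want to conclude $M$ has level $\le r$. By Proposition~\ref{prop:levcat1} the simple objects of $\Rep(\cI)_r$ are the $T_{\ge r}(\bE^\lambda)$ with $\lambda$ of rank $r$, so the finite-length object $T_{>r-1}(M)$ has a finite filtration with such graded pieces; lifting this filtration back along $S_{>r-1}$ and comparing with $M$ modulo $\Rep(\cI)_{\le r-1}$ shows $[M]$ lies in the subgroup of $\rK(\cI)$ spanned by standard modules of rank $\le r$, whence $M$ has level $\le r$ by Corollary~\ref{cor:groth-lev}. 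The graded case is identical, using the graded versions of all the cited results, or alternatively can be deduced from the smooth case using that $\Phi$ restricts to equivalences $\uRep(\cI)^{\lf} \cong \Rep(\cI)^{\lf}$ (Theorem~\ref{thm:canongr}) and $\uRep(\cI)_r \cong \Rep(\cI)_r$ (Proposition~\ref{prop:levequiv}), together with $\ulev(M) = \lev(\Phi(M))$ (Corollary following Corollary~\ref{cor:groth-lev}).

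The main obstacle I anticipate is the reverse inclusion, specifically making rigorous the passage from ``$T_{>r-1}(M)$ has finite length'' to a statement about the class $[M]$ in $\rK(\cI)$: one needs to know that a finitely generated $M$ with locally finite image in $\Rep(\cI)_{>r-1}$ is itself finitely generated over the Serre quotient (clear) and that one can choose a finitely generated preimage of a Jordan--H\"older filtration whose successive quotients are honest subquotients of standard modules of rank $r$ up to lower level. This is where Theorem~\ref{thm:level} (the theorem on level) and Proposition~\ref{prop:levfilt2} do the real work, since they control exactly how far a standard module of rank $r$ is from being simple in $\Rep(\cI)_r$. Once the bookkeeping with Grothendieck classes is set up, the argument is formal; the only subtlety is ensuring the directed-union/colimit arguments reduce everything cleanly to the finitely generated case, which is legitimate because both ``Krull dimension $\le r$'' and ``level $\le r$'' are closed under directed unions (the former by construction of $\cA^{\le r}$, the latter because $\Rep(\cI)_{\le r}$ is localizing).
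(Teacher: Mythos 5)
Your proof is correct and is essentially the same argument as the paper's: induction on $r$, identifying the two Serre quotients via the inductive hypothesis, using Proposition~\ref{prop:levcat1} for the forward inclusion, and converting the Jordan--H\"older data of $T_{>r-1}(N)$ into Grothendieck-group classes to apply Corollary~\ref{cor:groth-lev} for the reverse inclusion. The only stylistic difference is that you phrase the reverse step as ``lifting the filtration back along $S_{>r-1}$,'' whereas the paper keeps the bookkeeping entirely in $\rK(\cI)$, but this is the same idea.
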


\begin{proof}
Let $\cA=\Rep(\cI)$, let $\cA^{\le r}$ be defined as in the previous paragraph, and let $\cA_{\le r}=\Rep(\cI)_{\le r}$ be the category of objects of level $\le r$. We must show $\cA^{\le r}=\cA_{\le r}$ for all $r$. We proceed by induction on $r$. For $r=-1$ the statement is vacuously true. Thus, assuming the statement for $r-1$, let us prove it for $r$. We must show that a smooth $\cI$-module has level $\le r$ if and only if the image of $M$ in $\cA/\cA_{\le r-1}$ is locally of finite length. If $M$ has level $\le r$ then it is locally finite in $\cA/\cA_{\le r-1}$ by Proposition~\ref{prop:levcat1}. Suppose now that $M$ is locally finite in $\cA/\cA_{\le r-1}$. Let $N$ be a finitely generated submodule of $M$. Thus $N$ has finite length in $\cA/\cA_{\le r-1}$. Since the simple objects of $\cA/\cA_{\le r-1}$ are exactly the localizations of the standard modules of rank $r$ (Proposition~\ref{prop:levcat2}), it follows that $[N]$ is a sum of classes of such standard modules, modulo $\rK(\cI)_{\le r-1}$. Thus $N$ has level $\le r$ by Corollary~\ref{cor:groth-lev}. Since this holds for all finitely generated submodules $N$ of $M$, we see that $M$ has level $\le r$ as well.
\end{proof}

\section{Induction and coinduction} \label{s:coinduction}

\subsection{Coinduction} \label{ss:coinduction}

Let $M$ be a $\cI$-module. We define a new $\cI$-module $\sC(M)$, called the {\bf coinduction} of $M$, as follows. As a vector space, $\sC(M)$ is $[M]_0 \oplus [M]_1$, where $[M]_i$ is just a copy of $M$. For $x \in M$, we denote by $[x]_i$ the corresponding element of $[M]_i$. The $\cI$-action is defined as follows:
\begin{align*}
\alpha_1 [x]_0 &= [\alpha_1 x]_0 + [x]_1 &
\alpha_i [x]_0 &= [\alpha_i x]_0 \\
\alpha_1 [x]_1 &= 0 &
\alpha_i [x]_1 &= [\alpha_{i-1} x]_1
\end{align*}
where $i \ge 2$. We leave to the reader the simple verification that $\sC(M)$ is a well-defined $\cI$-module. It is clearly smooth if $M$ is. If $M$ is a graded $\cI$-module then we give $\sC(M)$ a grading as follows: for $x \in M_n$, we declare $[x]_0$ to have degree $n$ and $[x]_1$ to have degree $n+1$. We leave to the reader the simple verification that this indeed defines a graded $\cI$-module. We write $\ul{\sC}(M)$ for this graded $\cI$-module. We have thus defined functors
\begin{displaymath}
\sC \colon \Rep(\cI) \to \Rep(\cI), \qquad
\ul{\sC} \colon \uRep(\cI) \to \uRep(\cI).
\end{displaymath}
It is clear that both are exact and cocontinuous. The rationale behind the definition of $\sC$ is explained in \S \ref{ss:indexp}.

Recall from \S \ref{ss:pi} that, for a smooth $\cI$-module $M$ we have defined $\Pi(M)=\ul{\bB}^1 \odot M$, and for a graded $\cI$-module $M$ we have defined $\ul{\Pi}(M)=\ul{\bB}^1 \odot M$. We also write $\xi$ for a basis element of $\bB^1$.

\begin{proposition} \label{prop:coindseq}
Let $M$ be a smooth $\cI$-module. Then $[M]_1$ is an $\cI$-submodule of $\sC(M)$, and the map $\Pi(M) \to [M]_1$ given by $\xi \odot x \mapsto [x]_1$ is an isomorphism of $\cI$-modules. Similarly, $[M]_0$ is a quotient $\cI$-module of $\sC(M)$, and the map $M \to [M]_0$ given by $x \mapsto [x]_0$ is an isomorphism of $\cI$-modules. We thus have a canonical exact sequence
\begin{displaymath}
0 \to \Pi(M) \to \sC(M) \to M \to 0.
\end{displaymath}
The analogous statement holds in the graded case.
\end{proposition}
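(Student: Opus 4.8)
The plan is to verify the claims directly from the explicit formulas defining $\sC(M)$, since everything reduces to a short computation with the generators $\alpha_i$. First I would check that $[M]_1$ is a submodule: for $x \in M$ we have $\alpha_1[x]_1 = 0 \in [M]_1$ and $\alpha_i[x]_1 = [\alpha_{i-1}x]_1 \in [M]_1$ for $i \ge 2$, so $[M]_1$ is visibly $\cI$-stable. Comparing these formulas with the action on $\Pi(M) = \ul{\bB}^1 \odot M$: writing $\xi$ for the basis vector of $\ul{\bB}^1$ (which sits in degree $1$), the concatenation product gives $\alpha_1 \cdot (\xi \odot x) = (\alpha_1 \xi) \odot x = 0$ (since $\alpha_1$ kills $\ul{\bB}^1$) and $\alpha_i \cdot (\xi \odot x) = \xi \odot (\alpha_{i-1} x)$ for $i \ge 2$. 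These match the action on $[M]_1$ under $\xi \odot x \mapsto [x]_1$, so this map is an $\cI$-linear bijection, hence an isomorphism.

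Next I would treat the quotient. Since $[M]_1$ is a submodule and $\sC(M) = [M]_0 \oplus [M]_1$ as a vector space, the quotient $\sC(M)/[M]_1$ is identified with $[M]_0$ as a vector space, and the induced action sends $\alpha_1 [x]_0 \mapsto [\alpha_1 x]_0$ (the $[x]_1$ term dies in the quotient) and $\alpha_i [x]_0 \mapsto [\alpha_i x]_0$ for $i \ge 2$. Thus $[x]_0 \mapsto x$ is an $\cI$-linear isomorphism $\sC(M)/[M]_1 \xrightarrow{\sim} M$. Assembling these two facts gives the short exact sequence
\begin{displaymath}
0 \to \Pi(M) \to \sC(M) \to M \to 0,
\end{displaymath}
which is canonical since every map involved was defined by an explicit formula independent of choices.

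Finally, for the graded case I would simply observe that the grading on $\ul{\sC}(M)$ was defined so that $[x]_0$ has the degree of $x$ and $[x]_1$ has degree one higher; thus the inclusion $[M]_1 \hookrightarrow \ul{\sC}(M)$ identifies $[M]_1$ with $\ul{\bB}^1 \odot M$ \emph{with its standard grading} (recall $\ul{\bB}^1$ is concentrated in degree $1$, so $(\ul{\bB}^1 \odot M)_n = \ul{\bB}^1 \otimes M_{n-1}$), and the quotient map $\ul{\sC}(M) \to M$ is degree-preserving. Hence the same exact sequence holds in $\uRep(\cI)$, with all maps homogeneous. There is no real obstacle here: the entire proof is a bookkeeping check that the defining formulas of $\sC$ agree with those of the concatenation product and of the quotient; the only thing to be mildly careful about is getting the degree shifts right in the graded statement and confirming that $\alpha_1$ acts by zero on $\ul{\bB}^1$ so that the $i=1$ cases line up.
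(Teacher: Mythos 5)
Your proof is correct and is exactly the direct verification the paper has in mind when it says the result is "clear": you check that the defining formulas for $\sC(M)$ on $[M]_1$ agree with the concatenation-product formulas for $\ul{\bB}^1 \odot M$ (using that $\alpha_1$ kills $\ul{\bB}^1$ and $\alpha_i$ for $i\ge 2$ acts by identity there), that the quotient $\sC(M)/[M]_1$ recovers $M$, and that the degree conventions line up in the graded case. Nothing to add.
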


\begin{proof}
This is clear.
\end{proof}

\begin{corollary} \label{cor:coindfin}
If $M$ is a finitely generated (resp.\ finite length) smooth $\cI$-module then $\sC(M)$ is also finitely generated (resp.\ finite length). The analogous statement holds in the graded case.
\end{corollary}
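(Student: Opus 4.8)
The plan is to deduce everything from the canonical exact sequence
\begin{displaymath}
0 \to \Pi(M) \to \sC(M) \to M \to 0
\end{displaymath}
of Proposition~\ref{prop:coindseq} (and its graded analog for $\ul{\sC}$, with $\Pi$ replaced by $\ul{\Pi}$), together with the elementary fact that an extension of two finitely generated (resp.\ finite length) $\bk[\cI]$-modules is again finitely generated (resp.\ finite length). Since $M$ is finitely generated (resp.\ finite length) by hypothesis, the only point to check is that $\Pi(M)=\ul{\bB}^1 \odot M$ inherits the same property.

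For the finite length case this is immediate. A finite length smooth (or graded) $\cI$-module is precisely one whose underlying vector space is finite dimensional, since the simple objects are the one-dimensional modules $\bB^n$ (resp.\ $\ul{\bB}^n$) by Proposition~\ref{prop:simple}. As $\Pi(M)$ has the same underlying vector space as $M$, it is finite dimensional when $M$ is, hence of finite length, and therefore so is the extension $\sC(M)$. (Equivalently, one may simply observe that $\sC(M) \cong M^{\oplus 2}$ as a vector space, so $\dim \sC(M) = 2 \dim M$.)

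For the finitely generated case, suppose $M$ is generated by $x_1,\dots,x_n$ (homogeneous generators in the graded case), and let $\xi$ span $\ul{\bB}^1$. I claim $\Pi(M)$ is generated by $\xi \odot x_1,\dots,\xi \odot x_n$. Indeed, since $\ul{\bB}^1$ is concentrated in degree $1$, the definition of the concatenation product gives $\alpha_1(\xi \odot y)=0$ and $\alpha_k(\xi \odot y)=\xi \odot (\alpha_{k-1} y)$ for all $k \ge 2$ and $y \in M$. Iterating, the submodule generated by the $\xi \odot x_i$ contains $\xi \odot \sigma x_i$ for every $\sigma$ in the submonoid of $\cI$ generated by $\{\alpha_{k-1} : k \ge 2\}=\{\alpha_1,\alpha_2,\dots\}$, that is, for every $\sigma \in \cI$; as the $\sigma x_i$ span $M$, the $\xi \odot \sigma x_i$ span $\Pi(M)$, proving the claim. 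Hence $\Pi(M)$ is finitely generated, and so the extension $\sC(M)$ is finitely generated. The graded argument is identical. (Alternatively, once one knows $\sC(M)$ is finitely generated, one could instead invoke the local noetherianity of $\Rep(\cI)$ and $\uRep(\cI)$, Theorem~\ref{thm:noeth}, but the direct computation above is self-contained.)

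The only mildly delicate point — the ``main obstacle,'' such as it is — is tracking the $\cI$-action on $\ul{\bB}^1 \odot M$ correctly: because $\ul{\bB}^1$ sits in degree $1$, the generators $\alpha_k$ with $k\ge 2$ act on the second tensor factor through the shift $\alpha_k \mapsto \alpha_{k-1}$, and it is exactly this shift that lets the single family $\{\xi \odot x_i\}$ generate $\Pi(M)$. Everything else is formal.
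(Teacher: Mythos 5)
Your proof is correct and takes the route the paper itself intends: the corollary is meant to follow immediately from the exact sequence $0 \to \Pi(M) \to \sC(M) \to M \to 0$ of Proposition~\ref{prop:coindseq}, and you have simply spelled out the (genuinely easy) verification that $\Pi(M)=\ul{\bB}^1\odot M$ inherits finite generation and finite length from $M$. Your check of the $\cI$-action on $\ul{\bB}^1 \odot M$ and the observation that $\{\alpha_{k-1} : k\ge 2\}$ exhausts the generators of $\cI$ are both accurate.
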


The main interest in coinduction stems from the following proposition:

\begin{proposition} \label{prop:coind}
The functor $\sC$ is naturally the right adjoint to the shift functor $\Sigma$. The analogous statement holds in the graded case.
\end{proposition}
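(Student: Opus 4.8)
The plan is to prove the adjunction directly, by exhibiting a bijection $\Hom_{\cI}(\Sigma(M),N)\cong \Hom_{\cI}(M,\sC(N))$ that is natural in the smooth $\cI$-modules $M$ and $N$. The starting point is an unwinding of the two definitions. Since $\Sigma(M)$ has the same underlying vector space as $M$ with $\alpha_i$ acting by $x^{\flat}\mapsto (\alpha_{i+1}x)^{\flat}$, a $\cI$-linear map $\Sigma(M)\to N$ is the same thing as a linear map $\tilde f\colon M\to N$ satisfying the ``twisted linearity'' condition $\tilde f(\alpha_j x)=\alpha_{j-1}\tilde f(x)$ for all $j\ge 2$ and all $x\in M$ (there is no condition involving $\alpha_1$). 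Likewise, since $\sC(N)=[N]_0\oplus [N]_1$ as a vector space, a linear map $g\colon M\to \sC(N)$ is a pair of linear maps $g_0,g_1\colon M\to N$ via $g(x)=[g_0(x)]_0+[g_1(x)]_1$; the plan is to translate $\cI$-linearity of $g$ into conditions on $g_0$ and $g_1$.

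The bijection itself is then completely explicit. Given $\tilde f$ as above, define $g\colon M\to\sC(N)$ by $g(x)=[\tilde f(\alpha_1 x)]_0+[\tilde f(x)]_1$. To see $g$ is $\cI$-linear one compares $g(\alpha_i x)$ with $\alpha_i g(x)$ in the two cases dictated by the two-line definition of $\sC$: for $i\ge 2$ one uses the instance $\alpha_{i+1}\alpha_1=\alpha_1\alpha_i$ of the fundamental relation (Proposition~\ref{prop:alpharel}) together with the twisted linearity of $\tilde f$, and for $i=1$ one uses $\alpha_2\alpha_1=\alpha_1^2$ and again twisted linearity. Conversely, given a $\cI$-linear $g=[g_0]_0+[g_1]_1$, the same comparison forces three things: $g_0$ is itself $\cI$-linear, $g_1(\alpha_1 x)=g_0(x)$, and $g_1(\alpha_j x)=\alpha_{j-1}g_1(x)$ for all $j\ge 2$; the last of these is exactly twisted linearity, so we set $\tilde f=g_1$. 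The two constructions are mutually inverse (the identity $g_0(x)=g_1(\alpha_1 x)=\tilde f(\alpha_1 x)$ is what makes this work), and naturality in $M$ and $N$ is immediate from the formulas. This proves the smooth case.

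I do not expect a real obstacle here: the argument is a bookkeeping exercise whose only substantive ingredient is the fundamental relation. The one point requiring discipline is keeping the four ambient $\cI$-actions straight — on $M$, on $N$, on $\Sigma(M)$, and on $\sC(N)$ — and respecting the $\alpha_1$-versus-$\alpha_{\ge 2}$ dichotomy that pervades the definitions of both functors. As a consistency check I would read off the unit $\eta_M\colon M\to \sC(\Sigma(M))$ and counit $\epsilon_N\colon \Sigma(\sC(N))\to N$ from this bijection and verify the two triangle identities directly, noting that the counit should be compatible with the canonical sequence $0\to\Pi(N)\to\sC(N)\to N\to 0$ of Proposition~\ref{prop:coindseq}; this also provides an alternative unit/counit write-up of the proof.

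For the graded statement the same argument works verbatim, with the evident degree shifts tracked. A homogeneous $\cI$-linear map $\ul{\Sigma}(M)\to N$ corresponds (via $x^{\flat}$) to a linear map $\tilde f\colon M\to N$ sending $M_k$ into $N_{k-1}$ and satisfying the same twisted linearity; then for $x\in M_n$ we have $\tilde f(\alpha_1 x)\in N_n$ (since $\alpha_1 x\in M_{n+1}$) and $\tilde f(x)\in N_{n-1}$, so that $g(x)=[\tilde f(\alpha_1 x)]_0+[\tilde f(x)]_1$ is homogeneous of degree~$0$ for the grading on $\ul{\sC}(N)$ (both $[N_n]_0$ and $[N_{n-1}]_1$ sit in degree~$n$). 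The identical comparison of $g(\alpha_i x)$ with $\alpha_i g(x)$ then yields $\uHom_{\cI}(\ul{\Sigma}(M),N)\cong \uHom_{\cI}(M,\ul{\sC}(N))$. (One could alternatively recognize $\sC$ as coinduction along $\cI_{\ge 2}\subset\cI$ composed with the transfer isomorphism $i$ and deduce the adjunction from the standard $\mathrm{Res}\dashv\mathrm{Coind}$ adjunction, but the direct argument above sidesteps having to check that this coinduction preserves smoothness; the conceptual picture is developed in \S\ref{ss:indexp}.)
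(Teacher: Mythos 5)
Your proof is correct and is essentially the paper's argument repackaged: the paper exhibits the unit $\epsilon(x)=[(\alpha_1 x)^{\flat}]_0+[x^{\flat}]_1$ and counit $\eta([x]_1^{\flat})=x$, $\eta([x]_0^{\flat})=0$, while you unwind these into the explicit Hom-set bijection $\tilde f\mapsto([\tilde f(\alpha_1 x)]_0+[\tilde f(x)]_1)$ with inverse $g\mapsto g_1$, and both verifications hinge on the same instances of the fundamental relation. The treatment of the graded case via degree bookkeeping also matches the paper's.
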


\begin{proof}
Define $\epsilon \colon M \to \sC(\Sigma(M))$ by $\epsilon(x)=[(\alpha_1 x)^{\flat}]_0+[x^{\flat}]_1$. We verify that this is $\cI$-linear. We have
\begin{displaymath}
\epsilon(\alpha_1 x)
=[(\alpha_1^2 x)^{\flat}]_0+[(\alpha_1 x)^{\flat}]_1.
\end{displaymath}
and
\begin{displaymath}
\alpha_1 \epsilon(x) = [\alpha_1 (\alpha_1 x)^{\flat}]_0 + [(\alpha_1 x)^{\flat}]_1.
\end{displaymath}
These are equal, as $\alpha_1 (\alpha_1 x)^{\flat}=(\alpha_2 \alpha_1 x)^{\flat}=(\alpha_1^2 x)^{\flat}$. For $i \ge 2$, we have
\begin{displaymath}
\epsilon(\alpha_i x)
=[(\alpha_1 \alpha_i x)^{\flat}]_0+[(\alpha_i x)^{\flat}]_1
=[\alpha_i (\alpha_1 x)^{\flat}]_0+[\alpha_{i-1} x^{\flat}]_1
=\alpha_i \epsilon(x)
\end{displaymath}
Thus $\epsilon$ is $\cI$-linear.

Define $\eta \colon \Sigma(\sC(M)) \to M$ by $\eta([x]_0^{\flat})=0$ and $\eta([x]_1^{\flat})=x$. We have
\begin{displaymath}
\eta(\alpha_i [x]_0^{\flat})=\eta( (\alpha_{i+1} [x]_0)^{\flat}) = \eta([\alpha_{i+1} x]_0^{\flat})=0=\alpha_i \eta([x]_0^{\flat}).
\end{displaymath}
and
\begin{displaymath}
\eta(\alpha_i [x]_1^{\flat})=\eta((\alpha_{i+1} [x]_1)^{\flat})=\eta([\alpha_i x]_1^{\flat})=\alpha_i x = \alpha_i \eta([x]_1^{\flat}).
\end{displaymath}
Thus $\eta$ is $\cI$-linear.

We leave to the reader the simple verification that $\epsilon$ and $\eta$ satisfy the necessary conditions to be the unit and counit of an adjunction. We note that if $M$ is a graded $\cI$-module then $\epsilon$ and $\eta$ are homogeneous, and thus give an adjunction between the graded functors.
\end{proof}

\begin{corollary} \label{cor:coind}
The functor $\sC \colon \Rep(\cI) \to \Rep(\cI)$ takes injective objects to injective objects. The analogous statement holds in the graded case.
\end{corollary}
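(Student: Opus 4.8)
The plan is to deduce this formally from the adjunction established in Proposition~\ref{prop:coind} together with the exactness of the shift functor. Recall (\S\ref{ss:shift}) that $\Sigma$ is an exact functor $\Rep(\cI)\to\Rep(\cI)$; similarly $\ul{\Sigma}$ is exact on $\uRep(\cI)$. The key observation is the standard fact that the right adjoint of an exact functor between abelian categories preserves injective objects.

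Concretely, first I would fix an injective object $I$ of $\Rep(\cI)$ and consider the functor $\Hom_{\cI}(-,\sC(I))\colon \Rep(\cI)^{\op}\to\Vec$. By Proposition~\ref{prop:coind} there is a natural isomorphism
\begin{displaymath}
\Hom_{\cI}(-,\sC(I)) \cong \Hom_{\cI}(\Sigma(-),I).
\end{displaymath}
The right-hand side is the composite of $\Sigma$, which is exact, with $\Hom_{\cI}(-,I)$, which is exact because $I$ is injective; hence $\Hom_{\cI}(-,\sC(I))$ is exact, which is precisely the statement that $\sC(I)$ is injective. The graded case is identical: for an injective $I$ of $\uRep(\cI)$ one uses the natural isomorphism $\uHom_{\cI}(-,\ul{\sC}(I))\cong \uHom_{\cI}(\ul{\Sigma}(-),I)$ coming from the graded adjunction $(\ul{\Sigma},\ul{\sC})$, together with exactness of $\ul{\Sigma}$.

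There is essentially no obstacle here: the only input beyond pure category theory is the exactness of $\Sigma$ (and $\ul{\Sigma}$), which is already recorded in \S\ref{ss:shift}, and the adjunction $(\Sigma,\sC)$, which is Proposition~\ref{prop:coind}. So the write-up will be just a few lines invoking these two facts; I would not expect to need anything subtler.
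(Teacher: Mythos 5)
Correct, and this is precisely the paper's reasoning: Corollary~\ref{cor:coind} is stated without proof immediately after Proposition~\ref{prop:coind} because it is exactly the standard fact that a right adjoint of an exact functor preserves injectives, which your write-up spells out via the natural isomorphism $\Hom_{\cI}(-,\sC(I))\cong\Hom_{\cI}(\Sigma(-),I)$.
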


\begin{proposition} \label{prop:coindcat}
Let $M$ be a graded $\cI$-module and let $N$ be a smooth $\cI$-module. Then we have a natural isomorphism
\begin{displaymath}
\sC(M \odot N) \cong \left[ \ul{\sC}(M) \odot N \right] \oplus \left[ M_0 \otimes \sC(N) \right].
\end{displaymath}
If $N$ is graded then this isomorphism is homogeneous.
\end{proposition}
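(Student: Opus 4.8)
The plan is to mimic the proof of Proposition~\ref{prop:shiftcat}: exhibit the isomorphism explicitly on underlying vector spaces and then check $\cI$-linearity (and, when $N$ is graded, homogeneity) by a direct case analysis. First I would reduce to the case where $M$ is pure. Each of the three functors $M\mapsto\sC(M\odot N)$, $M\mapsto\ul{\sC}(M)\odot N$, $M\mapsto M_0\otimes\sC(N)$ is additive in $M$, and $M=M_0\oplus M_+$ with $M_+$ pure, so it suffices to treat the pure case and the case where $M$ is concentrated in degree~$0$; the latter is straightforward from the definitions, since there $M\odot N\cong M_0\otimes N$ with $\cI$ acting only on the $N$ factor, so that $\sC(M\odot N)$ reduces to the summand $M_0\otimes\sC(N)$. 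So from now on assume $M$ pure, so that the second summand on the right vanishes and the task is to produce a natural isomorphism $\sC(M\odot N)\cong\ul{\sC}(M)\odot N$.

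For $x\in M_n$ with $n\ge 1$ and $y\in N$, recall that $[x]_0$ has degree $n$ and $[x]_1$ has degree $n+1$ in $\ul{\sC}(M)$, so that $\ul{\sC}(M)\odot N$ is spanned by the vectors $[x]_0\odot y$ and $[x]_1\odot y$, while $\sC(M\odot N)$ is spanned by $[x\odot y]_0$ and $[x\odot y]_1$. I would define $f\colon\ul{\sC}(M)\odot N\to\sC(M\odot N)$ on these spanning vectors by $f([x]_0\odot y)=[x\odot y]_0$ and $f([x]_1\odot y)=[x\odot y]_1$. Both the source and the target are canonically identified with the direct sum of two copies of $M\otimes N$, and under these identifications $f$ is the identity map; hence $f$ is an isomorphism of vector spaces. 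It then remains to verify that $f$ is $\cI$-linear and, when $N$ is graded, homogeneous.

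For $\cI$-linearity, fix a spanning vector $z\odot y$ with $z$ homogeneous of degree $d$ in $\ul{\sC}(M)$ (so $z=[x]_0$ with $d=n$, or $z=[x]_1$ with $d=n+1$, where $x\in M_n$) and an index $k\ge 1$, and compare $f(\alpha_k(z\odot y))$ with $\alpha_k f(z\odot y)$. One separates the regime $k>d$, where $\alpha_k$ acts through $N$ and both sides reduce to the definitions of $\odot$ and $\sC$, from the regime $1\le k\le d$, where $\alpha_k$ acts through $\ul{\sC}(M)$; inside the latter the case $k=1$ must be isolated, because the formula $\alpha_1[x]_0=[\alpha_1 x]_0+[x]_1$ introduces the extra $[\,\cdot\,]_1$-term, exactly as in the proof of Proposition~\ref{prop:coind}. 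In each of the resulting (six or so) subcases both sides unwind, using the fundamental relation (Proposition~\ref{prop:alpharel}) together with the defining formulas for $\odot$, $\sC$ and $\ul{\sC}$, to the same expression. The one point demanding care is the degree bookkeeping: an element of $M_n$ contributes degree $n$ to the $[\,\cdot\,]_0$-copy inside $\ul{\sC}(M)\odot N$, but $x\odot y$ already sits in degree $n$ inside $\sC(M\odot N)$, so the threshold separating ``$\alpha_k$ acts on $M$'' from ``$\alpha_k$ acts on $N$'' is shifted by one between the two copies; this is the same shift already handled in the proofs of Propositions~\ref{prop:shiftcat} and~\ref{prop:A1xi}. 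Finally, when $N$ is graded, $[x]_i\odot y$ with $y\in N_m$ has degree $n+i+m$ in $\ul{\sC}(M)\odot N$ and is sent by $f$ to $[x\odot y]_i$, which has degree $(n+m)+i$, so $f$ is homogeneous.

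The main obstacle is purely organizational: there is no deep input, but one must run patiently through the subcases of the $\cI$-linearity check, in particular isolating $k=1$ because of the asymmetry in the definition of $\sC$, and keep the one-step degree shift between the two copies of $M\otimes N$ straight. Since this is the same bookkeeping already carried out in the proofs of Propositions~\ref{prop:shiftcat} and~\ref{prop:A1xi}, no genuinely new difficulty arises; one could alternatively assemble $f$ from the canonical sequences of Proposition~\ref{prop:coindseq} applied to $M\odot N$ and to $M$, but this requires the same vector-space level analysis and is no shorter.
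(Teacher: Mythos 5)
Your proposal is correct and follows essentially the same path as the paper: reduce to the pure case, write down the identity map on $[M]_0\oplus[M]_1$ tensored with $N$ as an explicit isomorphism, and verify $\cI$-linearity by separating $\epsilon\in\{0,1\}$ and isolating $k=1$ (the paper defines the map in the other direction, from $\sC(M\odot N)$ to $\ul{\sC}(M)\odot N$, and organizes the check into exactly four cases, but the computation is the same).
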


\begin{proof}
If $M$ is concentrated in degree~0, the isomorphism is clear. It thus suffices to treat the case where $M$ is pure, which we assume from now on. We have
\begin{displaymath}
\sC(M \odot N) = [M \odot N]_0 \oplus [M \odot N]_1, \qquad
\ul{\sC}(M) \odot N = ([M]_0 \oplus [M]_1) \odot N.
\end{displaymath}
Define
\begin{displaymath}
f \colon \sC(M \odot N) \to \ul{\sC}(M) \odot N, \qquad
f([x \odot y]_{\epsilon})=[x]_{\epsilon} \odot y
\end{displaymath}
for $\epsilon \in \{0,1\}$. This is clearly an isomorphism of graded vector spaces, and if $N$ is graded then it is homogeneous. We show that it is $\cI$-linear.

Suppose $x \in M_i$ and $y \in M_j$ so that $x \odot y$ has degree $n=i+j$. Let $k \ge 1$ and $\epsilon \in \{0,1\}$. We must show
\begin{equation} \label{eq:indcat}
f(\alpha_k \cdot [x \odot y]_{\epsilon}) = \alpha_k \cdot f([x \odot y]_{\epsilon}).
\end{equation}
We proceed in four cases.

\textit{\textbf{Case 1:} $\epsilon=0$ and $k=1$.} We have
\begin{displaymath}
\alpha_1 \cdot [x \odot y]_0
= [\alpha_1 \cdot (x \odot y)]_0 + [x \odot y]_1
= [(\alpha_1 x) \odot y]_0 + [x \odot y]_1.
\end{displaymath}
In the second step, we used that $i \ge 1$ because $M$ is pure. On the other hand,
\begin{displaymath}
\alpha_1 \cdot ([x]_0 \odot y)
=(\alpha_1 \cdot [x]_0) \odot y
=([\alpha_1 x]_0+[x]_1) \odot y,
\end{displaymath}
where, again, we used $i \ge 1$ in the first step. Thus \eqref{eq:indcat} follows.

\textit{\textbf{Case 2:} $\epsilon=0$ and $k \ge 2$.} We have
\begin{displaymath}
\alpha_k \cdot [x \odot y]_0
=[\alpha_k \cdot (x \odot y)]_0
=\begin{cases}
[(\alpha_k x) \odot y]_0 & \text{if $k \le i$} \\
[x \odot (\alpha_{k-i} y)]_0 & \text{if $k \ge i+1$}
\end{cases}
\end{displaymath}
On the other hand,
\begin{displaymath}
\alpha_k \cdot ([x]_0 \odot y)
= \begin{cases}
(\alpha_k [x]_0) \odot y & \text{if $k \le i$} \\
[x]_0 \odot (\alpha_{k-i} y) & \text{if $k \ge i+1$}
\end{cases}
\end{displaymath}
As $\alpha_k [x]_0=[\alpha_k x]_0$, \eqref{eq:indcat} follows.

\textit{\textbf{Case 3:} $\epsilon=1$ and $k=1$.} We have
\begin{displaymath}
\alpha_k \cdot [x \odot y]_1 = 0,
\end{displaymath}
and
\begin{displaymath}
\alpha_k \cdot ([x]_1 \odot y) = (\alpha_k [x]_1) \odot y = 0.
\end{displaymath}
Thus \eqref{eq:indcat} holds.

\textit{\textbf{Case 4:} $\epsilon=1$ and $k \ge 2$.} We have
\begin{displaymath}
\alpha_k \cdot [x \odot y]_1
= [\alpha_{k-1} \cdot (x \odot y)]_1
= \begin{cases}
[(\alpha_{k-1} x) \odot y]_1 & \text{if $k-1 \le i$} \\
[x \odot (\alpha_{k-1-i} y)]_1 & \text{if $k-1 \ge i+1$}
\end{cases}
\end{displaymath}
On the other hand,
\begin{displaymath}
\alpha_k \cdot ([x]_1 \odot y)
= \begin{cases}
(\alpha_k [x]_1) \odot y & \text{if $k \le i+1$} \\
[x]_1 \odot (\alpha_{k-i-1} y) & \text{if $k \ge i+2$.}
\end{cases}
\end{displaymath}
Here we have used that $[x]_1$ has degree $i+1$. Since $\alpha_k [x]_1=[\alpha_{k-1} x]_1$, we see that \eqref{eq:indcat} holds.
\end{proof}

\subsection{Splitting coinduction}

Let $M$ be a smooth $\cI$-module. We say that $\sC(M)$ {\bf splits} if the exact sequence of Proposition~\ref{prop:coindseq} splits, and make the analogous definition in the graded case. We note that $\ul{\sC}(M)$ splits if and only if $\sC(\Phi(M))$ splits, by Proposition~\ref{prop:phi-split}. Thus we will confine our investigation of splitting to the smooth case.

Let $\wt{\cI}$ be the monoid generated by $\cI$ and an additional element $\beta_1$ subject to the relations $\beta_1 \alpha_1=1$ and $\beta_1 \alpha_k = \alpha_{k-1} \beta_1$ for $k \ge 2$.

\begin{proposition}
Let $M$ be a smooth $\cI$-module. Then $\sC(M)$ splits if and only if the action of $\cI$ on $M$ can be extended to an action of $\wt{\cI}$.
\end{proposition}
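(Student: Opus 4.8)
The plan is to unwind directly what it means for the canonical sequence to split, using the explicit model $\sC(M)=[M]_0\oplus[M]_1$. By Proposition~\ref{prop:coindseq} the relevant sequence is $0\to\Pi(M)\to\sC(M)\to M\to 0$, where $\Pi(M)$ is the submodule $[M]_1$ and the quotient map is the projection onto $[M]_0\cong M$. A splitting is then the same as an $\cI$-linear section $s\colon M\to\sC(M)$ of this projection; since $s$ must become the identity after projecting, it has the form $s(x)=[x]_0+[\phi(x)]_1$ for a uniquely determined $\bk$-linear map $\phi\colon M\to M$. So the whole question reduces to: for which $\phi$ is $s$ an $\cI$-homomorphism?

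Next I would impose $\cI$-linearity one generator at a time. Applying $\alpha_1$ and using $\alpha_1[x]_0=[\alpha_1 x]_0+[x]_1$ together with $\alpha_1[y]_1=0$ forces $\phi(\alpha_1 x)=x$ for all $x$, i.e.\ $\phi\alpha_1=\id_M$. Applying $\alpha_i$ for $i\ge 2$ and using $\alpha_i[x]_0=[\alpha_i x]_0$ and $\alpha_i[y]_1=[\alpha_{i-1}y]_1$ forces $\phi(\alpha_i x)=\alpha_{i-1}\phi(x)$, i.e.\ $\phi\alpha_i=\alpha_{i-1}\phi$. Thus $\sC(M)$ splits if and only if there is a $\bk$-linear endomorphism $\phi$ of $M$ with $\phi\alpha_1=\id_M$ and $\phi\alpha_i=\alpha_{i-1}\phi$ for all $i\ge 2$. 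These are exactly the relations imposed on the adjoined generator $\beta_1$ in the definition of $\wt{\cI}$.

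Finally I would invoke the universal property of the presentation of $\wt{\cI}$: since $\wt{\cI}$ is generated by $\cI$ together with a single element $\beta_1$ subject only to $\beta_1\alpha_1=1$ and $\beta_1\alpha_k=\alpha_{k-1}\beta_1$ for $k\ge 2$, extending the given $\cI$-action on $M$ to an action of $\wt{\cI}$ amounts precisely to choosing an operator $\phi=\rho(\beta_1)\in\End_{\bk}(M)$ satisfying those two relations. Combining this with the previous paragraph yields the asserted equivalence.

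I do not expect a genuine obstacle here; the argument is a direct computation together with a presentation-chase. The only point deserving explicit care is this last step: one must note that a $\wt{\cI}$-module structure extending the given $\cI$-structure is literally the same data as one endomorphism obeying the two listed relations, with no hidden relations — which is immediate from how $\wt{\cI}$ is defined, but worth stating. (There is also a trivial bookkeeping choice between describing a splitting via a section of $\sC(M)\to M$ or via a retraction onto $\Pi(M)$; the section is the cleaner route and is the one I would use.)
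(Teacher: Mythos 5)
Your argument is correct and is essentially the same as the paper's: both write the putative section as $x \mapsto [x]_0 + [\phi(x)]_1$ (the paper calls $\phi$ ``$\beta_1$''), derive the two relations $\phi\alpha_1 = \id$ and $\phi\alpha_k = \alpha_{k-1}\phi$ for $k\ge 2$ by applying generators and comparing coefficients, and identify these with the defining relations of $\wt{\cI}$, noting the reasoning reverses. The extra remark about the presentation of $\wt{\cI}$ having no hidden relations is a reasonable thing to flag, though it is immediate from the definition.
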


\begin{proof}
Suppose that $\phi \colon M \to \sC(M)$ is a splitting of the canonical surjection $\sC(M) \to M$. We thus have $\phi(x)=[x]_0+[\beta_1 x]_1$ for some unique $\beta_1 x \in M$. Suppose $k \ge 2$. Then
\begin{displaymath}
\alpha_k \phi(x)=[\alpha_k x]_0+[\alpha_{k-1} \beta_1 x]_1
\end{displaymath}
while
\begin{displaymath}
\phi(\alpha_k x)=[\alpha_k x]_0+[\beta_1 \alpha_k x]_1.
\end{displaymath}
Since $\phi$ is $\cI$-linear, the two right sides above agree, and so $\beta_1 \alpha_k x=\alpha_{k-1} \beta_1 x$. Similarly,
\begin{displaymath}
\alpha_1 \phi(x) = [\alpha_1 x]_0+[x]_1,
\end{displaymath}
while
\begin{displaymath}
\phi(\alpha_1 x)=[\alpha_1 x]_0+[\beta_1 \alpha_1 x]_1,
\end{displaymath}
and so $\beta_1 \alpha_1 x=x$. We have thus extended the action of $\cI$ to one of $\wt{\cI}$. The reasoning is entirely reversible: given an action of $\wt{\cI}$, we define a splitting $\phi$ by $\phi(x)=[x]_0+[\beta_1 x]_1$.
\end{proof}

\begin{corollary} \label{cor:coind-split}
If $\alpha_1 \colon M \to M$ is not injective then $\sC(M)$ is not split.
\end{corollary}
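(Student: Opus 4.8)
The plan is to deduce this directly from the preceding proposition, which identifies splittings of $\sC(M)$ with extensions of the $\cI$-action on $M$ to an action of the monoid $\wt{\cI}$. So I would argue by contraposition: suppose $\sC(M)$ splits, and derive that $\alpha_1 \colon M \to M$ is injective.

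Concretely, if $\sC(M)$ splits then by the previous proposition the action of $\cI$ on $M$ extends to an action of $\wt{\cI}$, i.e.\ there is an operator $\beta_1 \colon M \to M$ satisfying the defining relations of $\wt{\cI}$; in particular $\beta_1 \alpha_1 = 1$ as endomorphisms of $M$. Thus $\beta_1$ is a left inverse to $\alpha_1$, which forces $\alpha_1$ to be injective. Taking the contrapositive: if $\alpha_1$ is not injective, no such extension exists, hence $\sC(M)$ is not split. (One may also observe, as a concrete illustration, that for $M = \bB^0$ the map $\alpha_1$ is zero, so $\sC(\bB^0)$ is the non-split self-extension of $\bB^0$ from Remark~\ref{rmk:trivext}.)

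There is essentially no obstacle here — the only thing to be careful about is invoking the correct direction of the preceding proposition and noting that the relation $\beta_1\alpha_1 = 1$ is among the defining relations of $\wt{\cI}$, so it must hold in any $\wt{\cI}$-module. The argument is a one-line consequence once that proposition is in place.
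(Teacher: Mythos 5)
Your argument is correct and matches the paper's proof exactly: both deduce from the preceding proposition that a splitting of $\sC(M)$ gives an action of $\wt{\cI}$, and the relation $\beta_1\alpha_1 = 1$ forces $\alpha_1$ to be injective. The added illustration with $\bB^0$ is a nice sanity check but not part of the paper's argument.
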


\begin{proof}
If the action of $\cI$ extends to $\wt{\cI}$ then $\alpha_1$ must be injective since $\beta_1 \alpha_1 x = x$ for all $x$.
\end{proof}

\begin{corollary} \label{cor:A1coind}
Let $N$ be a smooth $\cI$-module and let $M=\ul{\bA}^1 \odot N$. Then $\sC(M)$ is split.
\end{corollary}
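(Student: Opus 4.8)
The plan is to invoke the splitting criterion established in the preceding proposition: for a smooth $\cI$-module $M$, the sequence $0 \to \Pi(M) \to \sC(M) \to M \to 0$ splits if and only if the action of $\cI$ on $M$ extends to an action of the monoid $\wt{\cI}$, equivalently if and only if there is a $\bk$-linear operator $\beta_1 \colon M \to M$ with $\beta_1 \alpha_1 = 1$ and $\beta_1 \alpha_k = \alpha_{k-1} \beta_1$ for all $k \ge 2$. So the task reduces to exhibiting such an operator on $M = \ul{\bA}^1 \odot N$.

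Write $\{e_i\}_{i \ge 1}$ for the standard basis of $\ul{\bA}^1$ (with $e_i$ in degree $i$) and fix a basis of $N$; then $M$ has basis the vectors $e_i \odot y$ with $y$ running over the chosen basis of $N$. I would set $\beta_1(e_i \odot y) = e_{i-1} \odot y$ for $i \ge 2$ and $\beta_1(e_1 \odot y) = 0$, extended linearly. To verify the relations, recall the concatenation formula $\alpha_k(e_i \odot y) = e_{i+1} \odot y$ for $k \le i$ and $\alpha_k(e_i \odot y) = e_i \odot \alpha_{k-i} y$ for $k > i$; in particular $\alpha_1(e_i \odot y) = e_{i+1} \odot y$ for every $i \ge 1$, which gives $\beta_1 \alpha_1 = 1$ at once. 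The relation $\beta_1 \alpha_k = \alpha_{k-1} \beta_1$ for $k \ge 2$ then reduces to the cases $k \le i$ and $k > i$ when $i \ge 2$, together with the separate case $i = 1$, each a short computation with the displayed action formula. The only slightly delicate point—and the place I expect to have to be careful—is the boundary case $i = 1$: one would naively want $\beta_1(e_1 \odot y) = e_0 \odot y$, but there is no such basis vector, so the value $0$ is forced, and one must confirm the relation still holds there. It does, because for $k \ge 2$ one has $\alpha_k(e_1 \odot y) = e_1 \odot \alpha_{k-1} y$, which again lies in the "$i=1$" part where $\beta_1$ vanishes, so both sides of $\beta_1 \alpha_k = \alpha_{k-1}\beta_1$ applied to $e_1 \odot y$ are zero. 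With these routine checks done, $\beta_1$ exists and the criterion yields the claim.

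As an alternative, more structural route one can argue as follows. Since $(\ul{\bA}^1)_0 = 0$, Proposition~\ref{prop:coindcat} gives a natural isomorphism $\sC(\ul{\bA}^1 \odot N) \cong \ul{\sC}(\ul{\bA}^1) \odot N$, compatible with the canonical surjections onto $M$; and the graded exact sequence $0 \to \ul{\bB}^1 \odot \ul{\bA}^1 \to \ul{\sC}(\ul{\bA}^1) \to \ul{\bA}^1 \to 0$ of Proposition~\ref{prop:coindseq} splits because $\ul{\bA}^1$ is projective in $\uRep(\cI)$. Concatenating a section of the latter sequence with $\mathrm{id}_N$ then produces a section of the sequence for $\sC(M)$. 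In this approach the one thing to check is that the isomorphism of Proposition~\ref{prop:coindcat} carries the canonical quotient map of $\sC(\ul{\bA}^1 \odot N)$ to the quotient map of $\ul{\sC}(\ul{\bA}^1)$ concatenated with $\mathrm{id}_N$, which is immediate from the explicit form of that isomorphism. I would present the first argument as the main proof, since it is entirely self-contained.
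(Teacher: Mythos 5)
Your main argument is precisely the paper's proof: define $\beta_1(e_i \odot y) = e_{i-1} \odot y$ for $i \ge 2$ and $\beta_1(e_1 \odot y) = 0$, then verify the two defining relations of $\wt{\cI}$ by casework, exactly as done there. Your alternative route via Proposition~\ref{prop:coindcat} and projectivity of $\ul{\bA}^1$ is also correct and a bit more conceptual, but since you lead with the explicit computation, this matches the paper's treatment.
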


\begin{proof}
We extend the action of $\cI$ on $M$ to one of $\wt{\cI}$ by
\begin{displaymath}
\beta_1 \cdot (e_i \odot x) = \begin{cases}
0 & \text{if $i=1$} \\
e_{i-1} \odot x & \text{if $i \ge 2$}
\end{cases}
\end{displaymath}
We now verify the requisite identities. We have
\begin{displaymath}
\beta_1 \alpha_1 \cdot (e_i \odot x) = \beta_1 \cdot (e_{i+1} \odot x) = e_i \odot x,
\end{displaymath}
and so $\beta_1 \alpha_1$ acts as the identity, as required. Now suppose $k \ge 2$. Then
\begin{displaymath}
\alpha_k \cdot (e_i \odot x) = \begin{cases}
e_{i+1} \odot x & \text{if $k \le i$} \\
e_i \odot \alpha_{k-i} x & \text{if $k \ge i+1$,} \end{cases}
\end{displaymath}
and so
\begin{displaymath}
\beta_1 \alpha_k \cdot (e_i \odot x) = \begin{cases}
e_i \odot x & \text{if $k \le i$} \\
0 & \text{if $i=1$} \\
e_{i-1} \odot \alpha_{k-i} x & \text{if $i \ge 2$ and $k \ge i+1$.} \end{cases}
\end{displaymath}
(Note that if $i=1$ then $k \ge i+1$, as we have assumed $k \ge 2$.) On the other hand,
\begin{displaymath}
\alpha_{k-1} \beta_1 \cdot (e_i \odot x) =
\begin{cases}
0 & \text{if $i=1$} \\
e_i \odot x & \text{if $i \ge 2$ and $k \le i$} \\
e_{i-1} \odot \alpha_{k-i} x & \text{if $i \ge 2$ and $k \ge i+1$.} \end{cases}
\end{displaymath}
We thus find that $\beta_1 \alpha_k=\alpha_{k-1} \beta_1$ holds on $M$, as required.
\end{proof}


\subsection{Indecomposability of coinduction}

We now investigate the indecomposability of coinduction. For an $\cI$-module $M$, let $M[\alpha_1]=\{x \in M \mid \alpha_1 x =0\}$. This is an $\cI$-submodule of $M$: indeed, if $x \in M[\alpha_1]$ then $\alpha_1 \alpha_k x = \alpha_{k+1} \alpha_1 x = 0$, and so $\alpha_k x \in M[\alpha_1]$. 

\begin{lemma}
For any $\cI$-module $M$, we have $\sC(M)[\alpha_1]=[M]_1$.
\end{lemma}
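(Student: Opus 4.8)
The plan is a direct computation from the definition of the coinduction functor. Recall that as a vector space $\sC(M) = [M]_0 \oplus [M]_1$, and that $\alpha_1$ acts by $\alpha_1 [x]_0 = [\alpha_1 x]_0 + [x]_1$ and $\alpha_1 [x]_1 = 0$ for $x \in M$. The inclusion $[M]_1 \subseteq \sC(M)[\alpha_1]$ is then immediate: every element of $[M]_1$ has the form $[y]_1$, and $\alpha_1 [y]_1 = 0$ by definition.

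For the reverse inclusion, I would take an arbitrary element $z \in \sC(M)[\alpha_1]$ and write it uniquely as $z = [x]_0 + [y]_1$ with $x, y \in M$. Applying $\alpha_1$ and using the formulas above gives
\begin{displaymath}
0 = \alpha_1 z = [\alpha_1 x]_0 + [x]_1.
\end{displaymath}
Since the sum $[M]_0 \oplus [M]_1$ is direct, this forces both $\alpha_1 x = 0$ and $x = 0$ (the second equality, coming from the $[M]_1$-component, is the one we actually use). Hence $z = [y]_1 \in [M]_1$, which establishes $\sC(M)[\alpha_1] \subseteq [M]_1$ and completes the proof.

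There is no real obstacle here; the only point worth noting is that one must read off the $[M]_1$-component of $\alpha_1 z$, which is precisely $[x]_0$'s image under $\alpha_1$ — this is what pins down $x = 0$ regardless of whether $x$ itself is annihilated by $\alpha_1$. No appeal to smoothness or to any earlier result is needed; the computation is valid for an arbitrary $\cI$-module $M$.
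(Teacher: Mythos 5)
Your proof is correct and is essentially the same as the paper's: both compute $\alpha_1([x]_0+[y]_1)=[\alpha_1 x]_0+[x]_1$ and observe that vanishing forces $x=0$ by reading off the $[M]_1$-component. The paper simply states this in one line; you have spelled out the two inclusions explicitly.
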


\begin{proof}
We have
\begin{displaymath}
\alpha_1([x]_0+[y]_1) = [\alpha_1 x]_0+[x]_1,
\end{displaymath}
and this vanishes if and only if $x=0$. The result follows.
\end{proof}

\begin{proposition}
Suppose that $M$ is an indecomposable smooth $\cI$-module and $\sC(M)$ is non-split. Then $\sC(M)$ is also indecomposable.
\end{proposition}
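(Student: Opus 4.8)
The plan is to argue by contradiction: suppose $\sC(M)=P\oplus Q$ for non-zero $\cI$-submodules $P,Q$, and deduce that the canonical sequence $0\to\Pi(M)\to\sC(M)\to M\to0$ of Proposition~\ref{prop:coindseq} splits, contrary to hypothesis. The first ingredient I need is that $[M]_1\cong\Pi(M)$ is indecomposable. This follows from Proposition~\ref{prop:pisubmod}: every $\cI$-submodule of $\Pi(M)=\ul{\bB}^1\odot M$ has the form $\ul{\bB}^1\odot N$ for a unique $\cI$-submodule $N\subseteq M$, so transporting along the vector-space isomorphism $x\mapsto\xi\odot x$ (with $\xi$ spanning $\ul{\bB}^1$), a decomposition $\Pi(M)=A\oplus B$ gives $M=N_A\oplus N_B$; indecomposability of $M$ kills one summand, hence one of $A,B$.

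Next I would use the submodule $[M]_1=\sC(M)[\alpha_1]$ (the preceding lemma). Since $P$ and $Q$ are submodules, $\sC(M)[\alpha_1]=P[\alpha_1]\oplus Q[\alpha_1]$, so $[M]_1=P[\alpha_1]\oplus Q[\alpha_1]$; as $[M]_1$ is indecomposable, one summand vanishes, say (after relabelling) $Q[\alpha_1]=0$. Then $[M]_1\subseteq P$ and $Q\cap[M]_1=Q\cap\sC(M)[\alpha_1]=0$.

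Now recall from Proposition~\ref{prop:coindseq} that the canonical surjection $\sC(M)\to M$ has kernel $[M]_1$. Since $[M]_1\subseteq P$ and $[M]_1\cap Q=0$, passing to the quotient yields $\cI$-module isomorphisms $M\cong\sC(M)/[M]_1\cong(P/[M]_1)\oplus Q$. As $M$ is indecomposable and $Q\neq0$, this forces $P=[M]_1$, and then $\sC(M)\to M$ restricts to an isomorphism $Q\to M$. Hence $Q\hookrightarrow\sC(M)$ is a section of $\sC(M)\to M$, so the sequence of Proposition~\ref{prop:coindseq} splits --- a contradiction. Therefore $\sC(M)$ admits no non-trivial direct-sum decomposition.

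The only genuinely non-formal input is Proposition~\ref{prop:pisubmod} (used to see $\Pi(M)$ is indecomposable); the rest is bookkeeping with the short exact sequence of Proposition~\ref{prop:coindseq} and the $\alpha_1$-torsion submodule. The step I expect to require the most care is the conclusion: having $[M]_1\subseteq P$ only shows $Q$ injects into $M$, and one must additionally invoke indecomposability of $M$ to get $P=[M]_1$, which is what makes $Q$ a complement to $\Pi(M)$ and so exhibits the splitting.
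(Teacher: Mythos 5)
Your proof is correct and uses exactly the same ingredients as the paper (the identity $\sC(M)[\alpha_1]=[M]_1$, the exact sequence of Proposition~\ref{prop:coindseq}, and the indecomposability of $\Pi(M)$ via Proposition~\ref{prop:pisubmod}); the only difference is that you apply indecomposability of $[M]_1$ first and indecomposability of $M$ second, while the paper does the reverse, but this is a trivially equivalent rearrangement of the same argument.
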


\begin{proof}
Suppose $\sC(M)=A \oplus B$ for non-zero $\cI$-submodules $A$ and $B$. Then
\begin{displaymath}
M \cong \sC(M)/\sC(M)[\alpha_1] = A/A[\alpha_1] \oplus B/B[\alpha_1],
\end{displaymath}
where the first isomorphism is a consequence of the previous lemma. Since $M$ is indecomposable, one of the terms on the right vanishes, say $B/B[\alpha_1]$. Thus $\alpha_1$ vanishes on $B$, and so $B \subset [M]_1$. It follows that $[M]_1=B \oplus (A \cap [M]_1)$. However, $[M]_1 \cong \Pi(M)$ is indecomposable by Proposition~\ref{prop:pisubmod}, and so we see that $A \cap [M]_1=0$ and $B=[M_1]$. Thus the surjection $\sC(M) \to M$ maps $A$ isomorphically onto $M$, and is hence split, a contradiction. Thus $\sC(M)$ is indecomposable.
\end{proof}

\begin{corollary} \label{cor:coind-indecomp}
Suppose that $M$ is an indecomposable smooth $\cI$-module with $M[\alpha_1] \ne 0$. Then $\sC(M)$ is also indecomposable.
\end{corollary}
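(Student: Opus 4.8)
The plan is to simply combine the two results immediately preceding the corollary. First I would observe that the hypothesis $M[\alpha_1] \neq 0$ says precisely that there is a non-zero $x \in M$ with $\alpha_1 x = 0$, i.e., that the operator $\alpha_1 \colon M \to M$ fails to be injective. By Corollary~\ref{cor:coind-split}, this forces the canonical exact sequence $0 \to \Pi(M) \to \sC(M) \to M \to 0$ of Proposition~\ref{prop:coindseq} to be non-split; in other words, $\sC(M)$ is non-split in the sense defined above.

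Next I would invoke the preceding proposition: since $M$ is indecomposable and $\sC(M)$ is non-split, that proposition concludes directly that $\sC(M)$ is indecomposable. This finishes the proof. There is essentially no obstacle here — the corollary is a formal consequence of the two cited statements, and the only thing to check is the trivial translation between ``$M[\alpha_1] \neq 0$'' and ``$\alpha_1$ is not injective on $M$,'' which is immediate from the definition $M[\alpha_1] = \{x \in M \mid \alpha_1 x = 0\}$.
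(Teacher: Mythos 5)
Your argument is correct and is precisely the paper's own proof: the hypothesis $M[\alpha_1]\ne 0$ means $\alpha_1$ is not injective, Corollary~\ref{cor:coind-split} gives that $\sC(M)$ is non-split, and the proposition immediately preceding the corollary then yields indecomposability. Nothing to add.
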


\begin{proof}
Indeed, $\sC(M)$ is non-split by Corollary~\ref{cor:coind-split}.
\end{proof}

\subsection{Induction} \label{ss:induction}

We have just defined and studied the coinduction functor. We now define the induction functor. We will not need to use induction in what follows, so we keep this discussion brief; it is included just for the sake of completeness.

Let $M$ be an $\cI$-module. Define a new $\cI$-module, called the {\bf induction} of $M$, and denoted $\sI(M)$, as follows. As a vector space, $\sI(M)=\bigoplus_{n \ge 0} \{M\}_n$ where each $\{M\}_n$ is simply a copy of $M$. For $x \in M$, we denote by $\{x\}_n$ the corresponding element of $\{M\}_n$. The action of $\cI$ on the induction is given by
\begin{displaymath}
\alpha_i \cdot \{x\}_n = \begin{cases}
\{ x \}_{n+1} & \text{if $i \le n+1$} \\
\{ \alpha_{i-n-1} x \}_n & \text{if $i \ge n+2$} \end{cases}
\end{displaymath}
We leave to the reader the simple verification that this is well-defined. It is clear that if $M$ is smooth then so is $\sI(M)$. If $M$ is graded then $\sI(M)$ admits a grading as follows: for $x \in M_n$, we let $\{x\}_k$ have degree $n+k$. We denote this graded $\cI$-module by $\ul{\sI}(M)$. We thus have functors
\begin{displaymath}
\sI \colon \Rep(\cI) \to \Rep(\cI), \qquad
\ul{\sI} \colon \uRep(\cI) \to \uRep(\cI).
\end{displaymath}
It is clear that both are exact and cocontinuous.

\begin{proposition}
The functor $\sI$ is naturally the left adjoint to the shift $\Sigma$. The analogous statement holds in the grade case.
\end{proposition}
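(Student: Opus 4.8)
The plan is to follow the pattern of the proof of Proposition~\ref{prop:coind}: exhibit an explicit unit and counit for the purported adjunction $(\sI,\Sigma)$, check that both are $\cI$-linear using only the fundamental relation of Proposition~\ref{prop:alpharel}, and verify the two triangle identities. Everything is then routine bookkeeping; there is no real conceptual difficulty, which is why the statement is given so briefly.

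Concretely, I would define the unit $\eta_M\colon M\to\Sigma(\sI(M))$ by $\eta_M(x)=\{x\}_0^{\flat}$. Since for $i\ge 1$ the second clause in the definition of $\sI$ applies (as $i+1\ge 2$), one has $\alpha_{i+1}\{x\}_0=\{\alpha_i x\}_0$ in $\sI(M)$, and hence $\alpha_i\eta_M(x)=(\alpha_{i+1}\{x\}_0)^{\flat}=\{\alpha_i x\}_0^{\flat}=\eta_M(\alpha_i x)$, so $\eta_M$ is $\cI$-linear. For the counit, first note the identity $\{z\}_n=\alpha_1^n\{z\}_0$ valid in $\sI(K)$ for any $\cI$-module $K$ (each application of $\alpha_1$ falls under the first clause). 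I would then define $\varepsilon_N\colon\sI(\Sigma(N))\to N$ by $\varepsilon_N(\{y^{\flat}\}_n)=\alpha_1^n y$; its $\cI$-linearity reduces to the two monoid identities $\alpha_i\alpha_1^n=\alpha_1^{n+1}$ for $1\le i\le n+1$ and $\alpha_i\alpha_1^n=\alpha_1^n\alpha_{i-n}$ for $i\ge n+2$, both of which follow by repeatedly applying the fundamental relation. Finally, the triangle identities $\Sigma(\varepsilon_N)\circ\eta_{\Sigma N}=\id_{\Sigma N}$ and $\varepsilon_{\sI M}\circ\sI(\eta_M)=\id_{\sI M}$ reduce, respectively, to $\varepsilon_N(\{y^{\flat}\}_0)=y$ and $\alpha_1^n\{x\}_0=\{x\}_n$, both immediate from the definitions.

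There is also a cleaner conceptual description worth recording: $\Sigma$ is restriction of scalars along the endomorphism of $\bk[\cI]$ sending $\alpha_j\mapsto\alpha_{j+1}$, and since $\cI=\bigsqcup_{k\ge 0}\alpha_1^k\,\cI_{\ge 2}$ (the integer $k$ is recovered from $(\alpha_1^k\sigma)(1)=k+1$ when $\sigma\in\cI_{\ge 2}$ fixes $1$, and every element factors out a leading power of $\alpha_1$ via the fundamental relations), the functor $\sI$ is naturally identified with $\bk[\cI]\otimes_{\bk[\cI_{\ge 2}]}(-)$, where $\cI_{\ge 2}$ acts on the argument through $\alpha_{j+1}\mapsto\alpha_j$. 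The adjunction is then the standard extension/restriction-of-scalars adjunction; one checks it descends to the smooth subcategories because $\sI$ preserves smoothness and $\Sigma$ is exact. Either route gives the result.

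The graded case is handled by literally the same unit and counit, now tracking the degrees prescribed in the definitions of $\ul{\sI}$ and $\ul{\Sigma}$; since all the maps above are built from the monoid action they are automatically homogeneous, so no additional argument is needed. The main (entirely clerical) obstacle is simply pinning down the exponents in the counit formula $\varepsilon_N(\{y^{\flat}\}_n)=\alpha_1^n y$ and correctly reducing the $\cI$-linearity check to the two monomial identities in $\cI$ listed above; once those are in place, the triangle identities are one-line verifications.
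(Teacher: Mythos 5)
Your unit $\eta_M(x)=\{x\}_0^{\flat}$ and counit $\varepsilon_N(\{y^{\flat}\}_n)=\alpha_1^n y$ are exactly the ones the paper uses, and your verifications of $\cI$-linearity and the triangle identities are the details the paper leaves to the reader. The alternative restriction/extension-of-scalars description you record is also precisely the one the paper gives in its ``Explanation of definitions'' subsection, so the proposal is correct and follows the paper's approach throughout.
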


\begin{proof}
The unit $\epsilon \colon M \to \Sigma(\sI(M))$ is defined by $\epsilon(x)=\{x\}_0^{\flat}$. The counit $\eta \colon \sI(\Sigma(M)) \to M$ is defined by $\eta(\{x^{\flat}\}_n)=\alpha_1^n x$. We leave the remainder of the proof to the reader.
\end{proof}

\begin{remark}
In fact, $\sI(M)$ is simply isomorphic to $\ul{\bA}^1 \odot M$, via $\{x\}_n \mapsto e_{n+1} \odot x$.
\end{remark}

\subsection{Explanation of definitions} \label{ss:indexp}

We now explain where the formulas for induction and coinduction come from. The shift functor $\Sigma$ can be described as follows: first, restrict from $\cI$ to $\cI_{\ge 2}$; then, use the isomorphism $\cI \cong \cI_{\ge 2}$ given by $\alpha_i \mapsto \alpha_{i+1}$ to turn the $\cI_{\ge 2}$-module back into an $\cI$-module. Thus the left and right adjoint can be described as follows: first transfer the $\cI$-module to an $\cI_{\ge 2}$-module via the isomorphism; then apply induction or coinduction from $\cI_{\ge 2}$ to $\cI$.

Thus to understand the adjoint operations, we simply need to understand induction or coinduction from $\cI_{\ge 2}$ to $\cI$. For this, we need to understand the structure of $\cI$ as a left or right $\cI_{\ge 2}$-set. From Proposition~\ref{prop:alphagen}, we find
\begin{displaymath}
\cI=\coprod_{n \ge 0} \alpha_1^n \cI_{\ge 2}, \qquad
\cI=\cI_{\ge 2} \amalg \cI_{\ge 2} \alpha_1.
\end{displaymath}
We thus see that if $M$ is an $\cI_{\ge 2}$-module then the natural map
\begin{displaymath}
\bigoplus_{n \ge 0} \alpha_1^n \otimes M  \to \bk[\cI] \otimes_{\bk[\cI_{\ge 2}]} M
\end{displaymath}
is an isomorphism; the $n$th term in this decomposition corresponds to the $\{M\}_n$ summand in the definition of $\sI(M)$. Similarly, the map
\begin{displaymath}
\Hom_{\cI_{\ge 2}}(\cI, M) \to M \oplus M, \qquad f \mapsto (f(1), f(\alpha_1)).
\end{displaymath}
is an isomorphism; the two terms on the right correspond to $[M]_0$ and $[M]_1$ in $\sC(M)$.

\section{Injective modules} \label{s:injectives}

\subsection{Finite length injectives} \label{ss:fininj}

For $n \in \bN_+$, let $\ul{\bJ}^n=\ul{\sC}^{n-1}(\ul{\bB}^1)$; also let $\ul{\bJ}^0=\ul{\bB}^0$. Put $\bJ^n=\Phi(\ul{\bJ}^n)$. We remind the reader that the binomial coefficient $\binom{n}{m}$ vanishes for $m>n$, and is~1 if $n=m$, even for negative values of $n$ and $m$.

\begin{proposition} \label{prop:fininj}
We have the following:
\begin{enumerate}
\item $\ul{\bJ}^n$ is an indecomposable injective in $\uRep(\cI)$ of finite length.
\item $\ul{\bJ}^n$ is the injective envelope of $\ul{\bB}^n$.
\item Every indecomposable injective in $\uRep(\cI)$ that is locally of finite length is isomorphic to some $\ul{\bJ}^n$.
\item The multiplicity of $\ul{\bB}^m$ in $\ul{\bJ}^n$ is $\binom{n-1}{m-1}$.
\end{enumerate}
The analogous results hold in the smooth case.
\end{proposition}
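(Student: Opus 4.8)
The plan is to prove (a)--(d) simultaneously by induction on $n$, taking $\ul{\bJ}^0 = \ul{\bB}^0 \cong \ul{\bA}^0$ and $\ul{\bJ}^1 = \ul{\bB}^1$ as the base cases and using the recursion $\ul{\bJ}^n = \ul{\sC}(\ul{\bJ}^{n-1})$ for $n \ge 2$. The two structural inputs I would lean on are the canonical exact sequence
\begin{displaymath}
0 \to \ul{\bB}^1 \odot \ul{\bJ}^{n-1} \to \ul{\bJ}^n \to \ul{\bJ}^{n-1} \to 0
\end{displaymath}
from Proposition~\ref{prop:coindseq}, and the observation --- from Proposition~\ref{prop:stdcat} and exactness of $\ul{\bB}^1 \odot -$ --- that tensoring with $\ul{\bB}^1$ shifts composition factors by $\ul{\bB}^s \mapsto \ul{\bB}^{s+1}$. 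This shift is the engine behind both the socle computation in (b) and the Pascal recursion in (d).

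First I would dispose of injectivity, which reduces to the base case: $\ul{\bB}^1$ is injective in $\uRep(\cI)$. For this I would check directly that a homogeneous $\cI$-linear map $M \to \ul{\bB}^1$ is nothing more than a linear functional on $M_1$: there are no constraints, because nothing in a degree $\neq 1$ can be moved into degree $1$ by $\cI$ (degrees $\geq 2$ only go up, and degree $0$ is fixed), while on $M_1$ condition (b) of a graded module makes every functional automatically $\cI$-linear. Thus $\uHom_{\cI}(-,\ul{\bB}^1) \cong (-)_1^{\ast}$ is exact, so $\ul{\bB}^1$ is injective. (The identical computation handles $\ul{\bB}^0 = \ul{\bJ}^0$, or one may simply cite injectivity of the principal module $\ul{\bA}^0$.) Then $\ul{\bJ}^n = \ul{\sC}^{n-1}(\ul{\bB}^1)$ is injective by iterating Corollary~\ref{cor:coind}, and of finite length by Corollary~\ref{cor:coindfin}.

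Next, indecomposability and (b). Since $\ul{\sC}(M)[\alpha_1]$ is the copy $[M]_1 \cong \ul{\bB}^1 \odot M$, each $\ul{\bJ}^n$ with $n \geq 1$ has nonzero $\alpha_1$-torsion, so indecomposability propagates from $\ul{\bJ}^1 = \ul{\bB}^1$ via Corollary~\ref{cor:coind-indecomp} (applied to $\bJ^{n-1} = \Phi(\ul{\bJ}^{n-1})$, then Proposition~\ref{prop:phi-indecomp}). Being an indecomposable injective of finite length, $\ul{\bJ}^n$ is the injective envelope of its socle, which is therefore simple; to identify it I would use Proposition~\ref{prop:pisubmod}, by which the simple submodules of $\ul{\bB}^1 \odot \ul{\bJ}^{n-1}$ are exactly the $\ul{\bB}^1 \odot S$ with $S$ simple in $\ul{\bJ}^{n-1}$. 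So the inductive hypothesis $\operatorname{soc}(\ul{\bJ}^{n-1}) = \ul{\bB}^{n-1}$ gives $\operatorname{soc}(\ul{\bB}^1 \odot \ul{\bJ}^{n-1}) = \ul{\bB}^n$, and since this sits inside $\ul{\bJ}^n$ we get $\operatorname{soc}(\ul{\bJ}^n) = \ul{\bB}^n$, proving (b).

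Then (c) is formal: an indecomposable locally finite injective is the injective envelope of a simple submodule, which is some $\ul{\bB}^n$ by the classification of simple graded modules, hence is $\ul{\bJ}^n$ by (b) and uniqueness of injective envelopes. For (d), additivity of multiplicities (Proposition~\ref{prop:multprop}) along the displayed sequence gives $\umu_m(\ul{\bJ}^n) = \umu_{m-1}(\ul{\bJ}^{n-1}) + \umu_m(\ul{\bJ}^{n-1})$, and with $\umu_m(\ul{\bJ}^1) = \delta_{m,1} = \binom{0}{m-1}$ Pascal's rule yields $\umu_m(\ul{\bJ}^n) = \binom{n-1}{m-1}$ (the case $n=0$ giving $\binom{-1}{m-1}$ under the stated binomial convention). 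Finally the smooth statements follow by transporting along $\Phi$: it takes injectives to injectives (Corollary~\ref{cor:phiinj}), restricts to an equivalence on locally finite objects under which injective envelopes correspond (Theorem~\ref{thm:canongr} together with Proposition~\ref{prop:lfinj}), and preserves indecomposability (Proposition~\ref{prop:phi-indecomp}) and multiplicities (Proposition~\ref{prop:mu-umu}). I expect the only genuine obstacle to be that base case --- confirming that $\ul{\bB}^1$ is injective in the full category $\uRep(\cI)$, not merely in $\uRep(\cI)^{\lf}$ --- after which the coinduction formalism of \S\ref{s:coinduction} carries everything else almost mechanically.
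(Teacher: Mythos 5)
Your proof is correct, and the overall architecture --- induction on $n$ through the coinduction recursion $\ul{\bJ}^n=\ul{\sC}(\ul{\bJ}^{n-1})$, with parts (a), (c), (d) handled essentially as in the paper --- is the same as the paper's. There are two places where you take a genuinely different local route. First, for injectivity of the base case $\ul{\bB}^1$ you compute $\uHom_{\cI}(-,\ul{\bB}^1)\cong(-)_1^*$ directly, which is more self-contained; the paper instead observes $\ul{\sC}(\ul{\bB}^0)=\ul{\bB}^0\oplus\ul{\bB}^1$ and cites injectivity of $\ul{\bB}^0$ (i.e.\ of $\ul{\bA}^0$), leaning on the chain of results leading to Corollary~\ref{cor:trivinj}. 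Second, for (b) you compute the socle inductively via Proposition~\ref{prop:pisubmod} (the lattice isomorphism under $\ul{\Pi}$ sends $\operatorname{soc}(\ul{\bJ}^{n-1})=\ul{\bB}^{n-1}$ to $\ul{\bB}^n \subset \ul{\bJ}^n$, which must then be the whole socle by simplicity). The paper is slicker here: it uses the adjunction $(\ul{\Sigma},\ul{\sC})$ to turn the isomorphism $\ul{\Sigma}^{n-1}(\ul{\bB}^n)\cong\ul{\bB}^1$ into a nonzero map $\ul{\bB}^n\to\ul{\bJ}^n$, and then simply notes that an indecomposable injective is the injective envelope of any nonzero subobject, so no socle computation is needed at all. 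Both routes are correct; the adjunction version is a bit shorter and never invokes simplicity of the socle. Your careful remark that Corollary~\ref{cor:coind-indecomp} is stated only in the smooth case and needs to be transported through $\Phi(\ul{\bJ}^{n-1})$ and Proposition~\ref{prop:phi-indecomp} is a valid point; the paper glosses over this. Your (d) via additivity along the exact sequence is the same computation as the paper's $\ul{\rK}(\cI)$ argument, just in different notation.
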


\begin{proof}
(a) It is clear that $\ul{\bJ}^1=\ul{\bB}^1$ is injective (e.g., $\ul{\bB}^0$ is injective, and so $\ul{\sC}(\ul{\bB}^0)=\ul{\bB}^0 \oplus \ul{\bB}^1$ is injective), and obviously indecomposable. It follows from Corollary~\ref{cor:coindfin} that $\ul{\bJ}^n$ is of finite length, from Corollary~\ref{cor:coind} that it is injective, and from Corollary~\ref{cor:coind-indecomp} that it is indecomposable.

(b) By adjunction, the isomorphism $\ul{\Sigma}^{n-1}(\ul{\bB}^n) \to \ul{\bB}^1$ gives a non-zero map $\ul{\bB}^n \to \ul{\bJ}^n$, which is necessarily injective (since $\ul{\bB}^n$ is simple). Since $\ul{\bJ}^n$ is an indecomposable injective, it is the injective envelope of any of its non-zero submodules \stacks{08Y7}, and so in particular of $\ul{\bB}^n$. (The cited result is stated for modules, but holds in the present situation as well.)

(c) Let $J$ be an indecomposable injective in $\uRep(\cI)$ that is locally of finite length. Then $J$ contains a non-zero finite length module, and thus contains $\ul{\bB}^n$ for some $n$. As $J$ is the injective envelope of $\ul{\bB}^n$, it is thus isomorphic to $\ul{\bJ}^n$ \stacks{08Y4}. (Same comment as before.)

(d) Working in $\ul{\rK}(\cI)$ and using Proposition~\ref{prop:coindseq}, we have
\begin{displaymath}
\sC([\ul{\bB}^n])=[\ul{\bB}^n]+[\ul{\bB}^{n+1}]
\end{displaymath}
since $\ul{\Pi}(\ul{\bB}^n)=\ul{\bB}^{n+1}$. Thus we can write
\begin{displaymath}
\ul{\sC}([M]) = (1+[\ul{\bB}^1]) \odot [M]
\end{displaymath}
for finite length $M$. Therefore,
\begin{displaymath}
[\ul{\bJ}^n] = \ul{\sC}^{n-1}([\ul{\bB}^1]) = (1+[\ul{\bB}^1])^{\odot (n-1)} \odot [\ul{\bB}^1] = \sum_{i=0}^{n-1} \binom{n-1}{i} [\ul{\bB}^{i+1}],
\end{displaymath}
as claimed.
\end{proof}

\subsection{Classification of injectives} \label{ss:inj}

Let $\lambda$ be a constraint word of rank $r$. Write
\begin{displaymath}
\lambda=\blacktri^{a_0(\lambda)} \, \whitetri \, \blacktri^{a_1(\lambda)} \cdots \blacktri^{a_{r-1}} \, \whitetri \, \blacktri^{a_r(\lambda)}
\end{displaymath}
with $a_0(\lambda), \ldots, a_r(\lambda) \in \bN$, and put $a(\lambda)=(a_0(\lambda), \ldots, a_r(\lambda)) \in \bN^{r+1}$. In what follows, we partially order $\bN^{n+1}$ by $(a_0, \ldots, a_r) \le (b_0, \ldots, b_r)$ if $a_i \le b_i$ for all $i$. Furthermore, for $a=(a_0, \ldots, a_r) \in \bN^{n+1}$, we let $\vert a \vert=a_0+\cdots+a_r$.

Now, define
\begin{displaymath}
\ul{\bI}^{\lambda}=\ul{\bJ}^{a_0(\lambda)} \odot \ul{\bA}^1 \odot \cdots \odot \ul{\bA}^1 \odot \ul{\bJ}^{a_r(\lambda)}
\end{displaymath}
and put $\bI^{\lambda}=\Phi(\ul{\bI}^{\lambda})$.

\begin{theorem} \label{thm:inj}
We have the following:
\begin{enumerate}
\item $\ul{\bI}^{\lambda}$ is an indecomposable injective object of $\uRep(\cI)$.
\item $\ul{\bI}^{\lambda}$ and $\ul{\bI}^{\mu}$ are isomorphic if and only if $\lambda=\mu$.
\item $\ul{\bI}^{\lambda}$ is the injective envelope of $\ul{\bE}^{\lambda}$.
\item Every indecomposable injective object of $\uRep(\cI)$ is isomorphic to $\ul{\bI}^{\lambda}$, for a unique $\lambda$.
\end{enumerate}
The analogous statements hold in the smooth case.
\end{theorem}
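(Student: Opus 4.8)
The plan is to establish the injectivity and indecomposability of $\ul{\bI}^\lambda$ first (part (a)), deduce that it is the injective envelope of $\ul{\bE}^\lambda$ (part (c)), then prove distinctness (part (b)), then completeness of the list (part (d)), and finally transfer everything to $\Rep(\cI)$ via $\Phi$. The technical heart is a lemma: \emph{if $N\in\uRep(\cI)$ is injective then so is $\ul{\bA}^1\odot N$; if $N$ is indecomposable then so is $\ul{\bA}^1\odot N$}. I would prove this from the identity $\ul{\bA}^1\odot N\cong\bigl(N^{\dag}\odot\ul{\bA}^1\bigr)^{\dag}$, which follows from $(\ul{\bA}^1)^{\dag}\cong\ul{\bA}^1$ (from the proof of Proposition~\ref{prop:transstd}) together with Proposition~\ref{prop:revcat}, combined with $N^{\dag}\odot\ul{\bA}^1=\Gamma(\Phi(N^{\dag}))$ (Proposition~\ref{prop:gammagraded}). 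Since the transpose is an auto-equivalence of $\uRep(\cI)$, while $\Phi$ preserves injectives (Corollary~\ref{cor:phiinj}) and $\Gamma$ preserves injectives (Corollary~\ref{cor:gaminj}), injectivity of $\ul{\bA}^1\odot N$ is immediate; indecomposability follows the same way, using that $\Phi$ reflects indecomposability (Proposition~\ref{prop:phi-indecomp}) and that $\Gamma$, being fully faithful (co-unit an isomorphism, Proposition~\ref{prop:phigamma}(b)), preserves it.

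I would also record three companion facts: $\ul{\sC}$ preserves injectivity (Corollary~\ref{cor:coind}) and preserves indecomposability when applied to $\ul{\bI}^\mu$ (Corollary~\ref{cor:coind-indecomp}, since $\ul{\bJ}^a[\alpha_1]\ne 0$ forces $(\ul{\bJ}^a\odot Z)[\alpha_1]\ne 0$); $\ul{\sC}(\ul{\bJ}^a\odot Z)=\ul{\bJ}^{a+1}\odot Z$ for $a\ge 1$ (Proposition~\ref{prop:coindcat}, $\ul{\bJ}^a$ being pure); and $\ul{\sC}(\ul{\bA}^1\odot N')$ splits (Corollary~\ref{cor:A1coind}), so $\ul{\bB}^1\odot\ul{\bA}^1\odot N'$ is a direct summand of the injective $\ul{\sC}(\ul{\bA}^1\odot N')$ whenever $\ul{\bA}^1\odot N'$ is injective, hence injective (and indecomposable by the submodule correspondence of Proposition~\ref{prop:pisubmod}). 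Part (a) then follows by induction on $\vert\lambda\vert$, peeling off the first letter of $\lambda$: (0) $\lambda=\blacktri^a$, where $\ul{\bI}^\lambda=\ul{\bJ}^a$ and Proposition~\ref{prop:fininj} applies; (1) $\lambda=\whitetri\mu$, where $\ul{\bI}^\lambda=\ul{\bA}^1\odot\ul{\bI}^{\mu}$ and the lemma applies; (2) $\lambda=\blacktri\mu$ with $a_0(\lambda)\ge 2$, where $\ul{\bI}^\lambda=\ul{\sC}(\ul{\bI}^{\mu})$; and (3) $\lambda=\blacktri\whitetri\mu'$, where $\ul{\bI}^\lambda=\ul{\bB}^1\odot\ul{\bA}^1\odot\ul{\bI}^{\mu'}$ and the splitting observation applies. (In each case the smaller words have strictly shorter length.)

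For (c): using $\ul{\bE}^\lambda=\ul{\bB}^{a_0}\odot\ul{\bA}^1\odot\ul{\bB}^{a_1}\odot\cdots\odot\ul{\bA}^1\odot\ul{\bB}^{a_r}$ (Corollary~\ref{cor:gapprod}, with $\ul{\bE}^{\blacktri^a}=\ul{\bB}^a$) and the embeddings $\ul{\bB}^{a_i}\hookrightarrow\ul{\bJ}^{a_i}$ (Proposition~\ref{prop:fininj}(b)), exactness of $\odot$ in each variable gives a monomorphism $\ul{\bE}^\lambda\hookrightarrow\ul{\bI}^\lambda$; since $\ul{\bI}^\lambda$ is an indecomposable injective by (a), it is the injective envelope of this nonzero submodule (\stacks{08Y7}). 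For (b): $\ul{\bI}^\lambda$ is finitely generated (a $\odot$-product of finite-length and finitely generated modules), and filtering each $\ul{\bJ}^{a_i}$ by its simple constituents (Proposition~\ref{prop:fininj}(d)) and applying Corollary~\ref{cor:gapprod} and Proposition~\ref{prop:stdcat} exhibits a filtration of $\ul{\bI}^\lambda$ whose graded pieces are standard modules $\ul{\bE}^\nu$ with $\nu$ of rank $r$ and $a_i(\nu)\le a_i(\lambda)$ for all $i$, with $\ul{\bE}^\lambda$ itself occurring (multiplicity $1$). Thus in the basis of Theorem~\ref{thm:groth} the class $[\ul{\bI}^\lambda]$ has support with unique maximal element $\lambda$, so $\ul{\bI}^\lambda\cong\ul{\bI}^\mu$ implies $[\ul{\bI}^\lambda]=[\ul{\bI}^\mu]$ and hence $\lambda=\mu$.

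For (d): let $J$ be an indecomposable injective in $\uRep(\cI)$. Since $\uRep(\cI)$ is locally noetherian (Theorem~\ref{thm:noeth}), $J$ contains a nonzero finitely generated submodule $M$, which has finite (homogeneous) level; by Proposition~\ref{prop:levfilt2} the bottom term of a suitable filtration of $M$ is a nonzero submodule $N$ isomorphic to a submodule of some $\ul{\bE}^\lambda$. Then the injective envelope $E(N)$ embeds into $E(\ul{\bE}^\lambda)=\ul{\bI}^\lambda$, and being a nonzero injective subobject of the indecomposable injective $\ul{\bI}^\lambda$ it must equal $\ul{\bI}^\lambda$; likewise $E(N)\hookrightarrow J$ and equals $J$, so $J\cong\ul{\bI}^\lambda$. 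The smooth statements then follow formally: $\bI^\lambda=\Phi(\ul{\bI}^\lambda)$ is injective (Corollary~\ref{cor:phiinj}) and indecomposable (Proposition~\ref{prop:phi-indecomp}), the embedding $\bE^\lambda\hookrightarrow\bI^\lambda$ exhibits it as the injective envelope of $\bE^\lambda$, the isomorphism $\phi\colon\ul{\rK}(\cI)\to\rK(\cI)$ (Corollary~\ref{cor:groth}) transports the distinctness argument, and the proof of (d) runs verbatim in $\Rep(\cI)$ using Theorem~\ref{thm:noeth} and the smooth form of Proposition~\ref{prop:levfilt2}. The main obstacle I anticipate is organizing the induction in (a) cleanly — in particular the asymmetry between the boundary blocks $a_0,a_r$ and the interior blocks, and the breakdown $\ul{\bJ}^0=\ul{\bB}^0\ne\ul{\bB}^1=\ul{\bJ}^1$ — which is exactly why peeling a single letter, rather than manipulating a single $a_i$, is the right bookkeeping; the genuinely non-formal inputs are the identification $\ul{\bA}^1\odot N\cong(\Gamma\Phi(N^{\dag}))^{\dag}$ and the use of split coinduction to absorb a leading $\blacktri$.
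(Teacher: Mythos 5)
Your proposal is correct and follows essentially the same route as the paper: the paper's Lemma~\ref{lem:inj} is precisely your key lemma plus companion facts (its $n=0$, $n=1$, $n\ge 2$ cases are your cases (1), (3), (2)), and parts (b)--(d) are argued identically, with the only cosmetic differences being that you peel single letters rather than whole $\blacktri^{a_i}\whitetri$ blocks, and you derive indecomposability of $\ul{\bA}^1\odot N$ from full-faithfulness of $\Gamma$ where the paper runs the equivalent torsion-freeness argument directly. Both organizations are sound.
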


\begin{lemma} \label{lem:inj}
Let $I$ be an indecomposable injective graded $\cI$-module. Then $J_n=\ul{\bJ}^n \odot \ul{\bA}^1 \odot I$ is indecomposable injective for all $n \in \bN$.
\end{lemma}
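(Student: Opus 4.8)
The plan is to strip off the factors $\ul{\bJ}^n$ one at a time and reduce to the case $n=0$, where $J_0 = \ul{\bJ}^0 \odot \ul{\bA}^1 \odot I = \ul{\bA}^1 \odot I$. So the first task is to show $\ul{\bA}^1 \odot I$ is indecomposable injective. The key device here is the transpose: by Proposition~\ref{prop:revcat} and the isomorphism $(\ul{\bA}^1)^{\dag}\cong\ul{\bA}^1$ from the proof of Proposition~\ref{prop:transstd}, together with Proposition~\ref{prop:gammagraded}, one gets
\[
(\ul{\bA}^1 \odot I)^{\dag} \;\cong\; I^{\dag} \odot \ul{\bA}^1 \;\cong\; \Gamma(\Phi(I^{\dag})).
\]
Since $\dag$ is an involutive auto-equivalence of $\uRep(\cI)$, the module $I^{\dag}$ is again indecomposable injective; then $\Phi(I^{\dag})$ is indecomposable injective in $\Rep(\cI)$ (Corollary~\ref{cor:phiinj} and Proposition~\ref{prop:phi-indecomp}), and $\Gamma(\Phi(I^{\dag}))$ is injective (Corollary~\ref{cor:gaminj}). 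Applying $\dag$ again shows $\ul{\bA}^1 \odot I$ is injective.

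For indecomposability of $\ul{\bA}^1 \odot I$ it suffices, by the same identity, to show $\Gamma(\Phi(I^{\dag}))$ is indecomposable. Given a decomposition $\Gamma(\Phi(I^{\dag})) = A \oplus B$, apply $\Psi$ and use $\Psi\Gamma = \id$ (Proposition~\ref{prop:phigamma}(b)) to get $\Phi(I^{\dag}) = \Psi(A) \oplus \Psi(B)$; indecomposability forces one summand, say $B$, to be killed by $\Psi$, i.e.\ $B$ is torsion (Proposition~\ref{prop:phigamma}(c)). But $\Gamma(M)$ has no nonzero torsion elements for any smooth $M$: each map $\alpha_n,\alpha_{n+1},\dots$ occurring in the torsion condition acts on $\Gamma(M)$ as a canonical inclusion $M^{\cI_{\ge k}}\hookrightarrow M^{\cI_{\ge k+1}}$, hence fixes a given homogeneous element $x\in\Gamma(M)_n=M^{\cI_{\ge n}}$, so $\alpha_{n+k}\cdots\alpha_n x = x$; and $\Gamma(M)_0 = 0$. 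A torsion submodule of a torsion-free module vanishes, so $B=0$, and $J_0$ is indecomposable injective.

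Next comes the bootstrap. For $n\ge 2$ I claim $\ul{\sC}(J_{n-1}) \cong J_n$: the graded form of Proposition~\ref{prop:coindcat} applied to $M=\ul{\bJ}^{n-1}$, $N = \ul{\bA}^1\odot I$ has vanishing second summand $M_0\otimes\ul{\sC}(N)$ because $\ul{\bJ}^{n-1}$ is pure ($\ul{\bJ}^1=\ul{\bB}^1$ is pure and $\ul{\sC}$ preserves purity), and $\ul{\sC}(\ul{\bJ}^{n-1}) = \ul{\bJ}^n$ by definition, so $\ul{\sC}(J_{n-1})\cong\ul{\bJ}^n\odot\ul{\bA}^1\odot I = J_n$. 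For $n=1$, $J_1 = \ul{\bB}^1\odot J_0 = \ul{\Pi}(J_0)$, and $\ul{\sC}(J_0)=\ul{\sC}(\ul{\bA}^1\odot I)$ splits (the graded analogue of Corollary~\ref{cor:A1coind}, via $\ul{\sC}(P)$ splits iff $\sC(\Phi(P))$ splits), so $\ul{\sC}(J_0)\cong J_1\oplus J_0$. In either case $J_n$ is a direct summand of $\ul{\sC}(J_{n-1})$; since $\ul{\sC}$ preserves injectives (Corollary~\ref{cor:coind}) and summands of injectives are injective, induction from $J_0$ shows every $J_n$ is injective.

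Finally, indecomposability of $J_n$ for $n\ge 1$: $J_1=\ul{\Pi}(J_0)$ is indecomposable since the bijection of Proposition~\ref{prop:pisubmod} carries a decomposition of $\ul{\Pi}(J_0)$ to one of $J_0$; and for $n\ge2$, $\Phi(J_n)=\sC(\Phi(J_{n-1}))$ with $\Phi(J_{n-1})$ indecomposable by induction and $\Phi(J_{n-1})[\alpha_1]\ne 0$ — indeed $\alpha_1$ acts as $0$ on $\Phi(J_1)=\bB^1\odot\Phi(J_0)$, and for $n-1\ge 2$ the lemma computing $\sC(-)[\alpha_1]$ gives $\Phi(J_{n-1})[\alpha_1]=[\Phi(J_{n-2})]_1\ne 0$ — so Corollary~\ref{cor:coind-indecomp} applies and $\Phi(J_n)$, hence $J_n$ (Proposition~\ref{prop:phi-indecomp}), is indecomposable. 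The main obstacle is the base case: spotting the transpose identity $(\ul{\bA}^1\odot I)^{\dag}\cong\Gamma(\Phi(I^{\dag}))$ and coupling it with the torsion-freeness of $\Gamma$-images; once $J_0$ is in hand, the rest is a formal induction on functors already developed.
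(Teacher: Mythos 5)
Your proof is correct and follows essentially the same route as the paper: the transpose identity $(\ul{\bA}^1\odot I)^{\dag}\cong\Gamma(\Phi(I^{\dag}))$ to handle $J_0$, the split-coinduction argument for $J_1$, and the formula $\ul{\sC}(J_{n-1})\cong J_n$ (via purity of $\ul{\bJ}^{n-1}$ and Proposition~\ref{prop:coindcat}) with Corollary~\ref{cor:coind-indecomp} for $n\ge 2$. The only differences are cosmetic or supply details the paper takes for granted — in particular your explicit verification that $\Gamma(M)$ is torsion-free, and your careful check that $\Phi(J_{n-1})[\alpha_1]\ne 0$ so that Corollary~\ref{cor:coind-indecomp} (stated only in the smooth case) actually applies after passing through $\Phi$ — both of which tighten steps the paper's proof leaves implicit.
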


\begin{proof}
We first treat the $n=0$ case. Since $I$ is injective, so is $I^{\dag}$, and so is $\Gamma(\Phi(I^{\dag}))=I^{\dag} \odot \ul{\bA}^1$, and so is $(I^{\dag} \odot \ul{\bA}^1)^{\dag}=J_0$. Suppose $J_0=A \oplus B$. Then $J_0^{\dag}=I^{\dag} \odot \ul{\bA}^1 = A^{\dag} \oplus B^{\dag}$, and so $\Phi(I^{\dag})=\Psi(A^{\dag}) \oplus \Psi(B^{\dag})$. But $\Phi(I^{\dag})$ is indecomposable by Proposition~\ref{prop:phi-indecomp}, and so $\Psi(A^{\dag})=0$ or $\Psi(B^{\dag})=0$, say the latter. Thus $B^{\dag}$ is torsion. However, $I^{\dag} \odot \ul{\bA}^1$ is torsion-free, and so $B^{\dag}=0$, and thus $B=0$. Thus $J_0$ is indecomposable.

We now treat the $n=1$ case. Since $\ul{\bB}^1=\ul{\bJ}^1$, we have $J_1=\ul{\Pi}(J_0)$. Thus $J_1$ is indecomposable by Proposition~\ref{prop:pisubmod}. By Corollary~\ref{cor:A1coind}, $\ul{\sC}(J_0)$ is split, and so we have a decomposition $\ul{\sC}(J_0) \cong J_0 \oplus J_1$. Since $\ul{\sC}(J_0)$ is injective (Corollary~\ref{cor:coind}), so is the summand $J_1$.

We now treat the $n \ge 2$ case inductively. Since $\ul{\bJ}^{n-1}$ is pure, we have
\begin{displaymath}
\ul{\sC}(J_{n-1})
= \ul{\sC}(\ul{\bJ}^{n-1} \odot \ul{\bA}^1 \odot I)
=\ul{\sC}(\ul{\bJ}^{n-1}) \odot \ul{\bA}^1 \odot I
=J_n,
\end{displaymath}
where the second isomorphism comes from Proposition~\ref{prop:coindcat}. Thus $J_n$ is injective (Corollary~\ref{cor:coind}) and indecomposable (Corollary~\ref{cor:coind-indecomp}).
\end{proof}

\begin{proof}[Proof of Theorem~\ref{thm:inj}]
(a) This follows from inductively applying the previous lemma.

(b) From Proposition~\ref{prop:fininj}(d), we see that $[\ul{\bI}^{\lambda}]=[\ul{\bE}^{\lambda}]+\cdots$, where the remaining terms have the form $[\ul{\bE}^{\nu}]$ with $\nu$ of the same rank as $\lambda$ and $a(\nu)<a(\lambda)$. Thus if $\ul{\bI}^{\lambda}$ is isomorphic to $\ul{\bI}^{\mu}$ then $[\ul{\bI}^{\lambda}]=[\ul{\bI}^{\mu}]$ and so $\lambda=\mu$.

(c) Let $\lambda$ be given. The injections $\ul{\bB}^{a_i(\lambda)} \to \ul{\bJ}^{a_i(\lambda)}$ induces an injection $\ul{\bE}^{\lambda} \to \ul{\bI}^{\lambda}$. Since $\ul{\bI}^{\lambda}$ is indecomposable and injective, it must be the injective envelope of $\ul{\bE}^{\lambda}$.

(d) Let $I$ be an indecomposable injective of $\uRep(\cI)$. Let $M_1 \subset I$ be a non-zero finitely generated submodule, and let $M_2 \subset M_1$ be a non-zero submodule with an injection $M_2 \to \ul{\bE}^{\lambda}$ for some $\lambda$, which exists by Proposition~\ref{prop:levfilt2}. Composing with the injection $\ul{\bE}^{\lambda} \to \ul{\bI}^{\lambda}$, we see that $M_2$ is a submodule of $\ul{\bI}^{\lambda}$. Thus $M_2$ is a subobject of both $I$ and $\ul{\bI}^{\lambda}$. Since both are indecomposable, they are both injective envelopes of $M_2$ \stacks{08Y7}, and thus isomorphic \stacks{08Y4}.
\end{proof}

\begin{corollary} \label{cor:injcond}
Every injective object of $\uRep(\cI)_{\le r}$ remains injective in $\uRep(\cI)$. Similarly in the smooth case.
\end{corollary}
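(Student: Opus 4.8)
The plan is to deduce the corollary from the claim that $\uRep(\cI)_{\le r}$ is closed under injective envelopes formed in $\uRep(\cI)$. Granting this, let $I$ be an injective object of $\uRep(\cI)_{\le r}$ and let $E$ be an injective envelope of $I$ in $\uRep(\cI)$. Then $E$ lies in $\uRep(\cI)_{\le r}$, so the identity of $I$ extends, along the monomorphism $I \hookrightarrow E$, to a morphism $E \to I$ inside $\uRep(\cI)_{\le r}$; thus $I$ is a direct summand of $E$, and since $I \hookrightarrow E$ is essential this forces $I = E$. Hence $I$ is injective in $\uRep(\cI)$. The smooth case is identical, using the smooth versions of the results cited below.

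It remains to prove the closure claim, for which I will need two facts about the injective envelopes $\ul{\bI}^{\lambda}$ of the standard modules. \emph{First}: if $\rank(\lambda) \le r$ then $\ul{\bI}^{\lambda} \in \uRep(\cI)_{\le r}$. This rests on a level bound for the concatenation product, $\uRep(\cI)_{\le p} \odot \uRep(\cI)_{\le q} \subseteq \uRep(\cI)_{\le p+q}$, which one proves exactly as Proposition~\ref{prop:pilev}: since $\odot$ is exact and cocontinuous in each argument, Proposition~\ref{prop:locmap} reduces the assertion to standard modules, where $\ul{\bE}^{\lambda} \odot \ul{\bE}^{\mu} = \ul{\bE}^{\lambda \odot \mu}$ (Proposition~\ref{prop:stdcat}) has rank $\rank(\lambda)+\rank(\mu)$. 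Applying this to $\ul{\bI}^{\lambda} = \ul{\bJ}^{a_0(\lambda)} \odot \ul{\bA}^1 \odot \cdots \odot \ul{\bA}^1 \odot \ul{\bJ}^{a_s(\lambda)}$, a concatenation of $s=\rank(\lambda)$ copies of the level $\le 1$ module $\ul{\bA}^1$ with the finite length (hence level $0$) modules $\ul{\bJ}^{a_i(\lambda)}$, gives $\ulev(\ul{\bI}^{\lambda}) \le s \le r$.

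\emph{Second}: if $\rank(\lambda) > r$ then $\ul{\bI}^{\lambda}$ has no nonzero subobject lying in $\uRep(\cI)_{\le r}$. Since $\ul{\bI}^{\lambda}$ is the injective envelope of $\ul{\bE}^{\lambda}$ (Theorem~\ref{thm:inj}(c)), every nonzero homogeneous submodule of $\ul{\bI}^{\lambda}$ meets $\ul{\bE}^{\lambda}$ nontrivially, so it suffices to show that every nonzero homogeneous submodule $V$ of $\ul{\bE}^{\lambda}$ has level $\ge n := \rank(\lambda)$. By the theorem on level (Theorem~\ref{thm:level}), the proper quotient $\ul{\bE}^{\lambda}/V$ has level $< n$, so $[\ul{\bE}^{\lambda}/V] \in \ul{\rK}(\cI)_{\le n-1}$ by Corollary~\ref{cor:groth-lev}; hence $[V] = [\ul{\bE}^{\lambda}] - [\ul{\bE}^{\lambda}/V]$ does \emph{not} lie in $\ul{\rK}(\cI)_{\le n-1}$, since $[\ul{\bE}^{\lambda}]$ is a basis vector of rank $n$ (Theorem~\ref{thm:groth}, Proposition~\ref{prop:groth-bd}). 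Applying Corollary~\ref{cor:groth-lev} again yields $\ulev(V) \ge n$.

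Now let $M$ be a nonzero object of $\uRep(\cI)_{\le r}$ and $E$ its injective envelope in $\uRep(\cI)$. Since $\uRep(\cI)$ is locally noetherian (Theorem~\ref{thm:noeth}), $E$ decomposes as a direct sum of indecomposable injectives, each isomorphic to some $\ul{\bI}^{\lambda_i}$ (Theorem~\ref{thm:inj}(d)). Each summand is a nonzero subobject of $E$, hence meets $M$ (essentiality of $M \hookrightarrow E$), hence has a nonzero subobject in $\uRep(\cI)_{\le r}$; so by the second fact $\rank(\lambda_i) \le r$, and then by the first fact $\ul{\bI}^{\lambda_i} \in \uRep(\cI)_{\le r}$. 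As $\uRep(\cI)_{\le r}$ is a localizing subcategory, it is closed under coproducts, so $E \in \uRep(\cI)_{\le r}$, proving the closure claim. The step I expect to require the most care is the Grothendieck-group argument of the third paragraph — that a nonzero submodule of a standard module retains its full level — which is why I route it through the theorem on level rather than attempting a direct module-theoretic computation.
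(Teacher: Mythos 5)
Your proof is correct, but it takes a genuinely different and longer route than the paper. The paper observes that the classification argument of Theorem~\ref{thm:inj}(d) runs verbatim inside $\uRep(\cI)_{\le r}$: given an indecomposable injective $I$ of that subcategory, one finds a nonzero finitely generated $M_2 \subset I$ embedding into some $\ul{\bE}^{\lambda}$ with $\rank(\lambda)\le r$; since $\ul{\bI}^{\lambda}$ also lies in $\uRep(\cI)_{\le r}$ and is injective there, both $I$ and $\ul{\bI}^{\lambda}$ are injective envelopes of $M_2$ in the subcategory and hence isomorphic. Combined with the Matlis-type decomposition (locally noetherian), this gives the corollary immediately, since each $\ul{\bI}^{\lambda}$ is injective in the ambient category. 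You instead prove the stronger intermediate statement that $\uRep(\cI)_{\le r}$ is closed under injective envelopes formed in $\uRep(\cI)$, which requires an extra ingredient the paper avoids: your ``second fact'' that $\ul{\bI}^{\lambda}$ with $\rank(\lambda)>r$ has no nonzero subobject of level $\le r$ (proved via Theorem~\ref{thm:level} and Corollary~\ref{cor:groth-lev}). The paper sidesteps this by never leaving the subcategory, so it only needs the inclusion you prove as your ``first fact'' -- that $\ulev(\ul{\bI}^{\lambda}) \le \rank(\lambda)$ -- which the paper leaves implicit but is indeed necessary for its argument as well (it is most directly seen via the standard filtration on $\ul{\bI}^{\lambda}$ coming from Proposition~\ref{prop:fininj}(d), or by the subadditivity of level under $\odot$ as you argue). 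Your write-up of the subadditivity via Proposition~\ref{prop:locmap} should really be phrased as a two-step application (first fix $\ul{\bE}^{\lambda}$ and vary the second factor, then fix the second factor and vary the first), since that proposition is stated for a single functor rather than a bifunctor; but the substance is right. Net: both proofs arrive at the same decomposition of an injective as $\bigoplus \ul{\bI}^{\lambda_i}$, but the paper's is shorter because it deduces $\rank(\lambda_i) \le r$ by classifying injectives inside the subcategory, while you deduce it from essentiality of $M$ in its ambient injective envelope plus the level-rigidity of subobjects of standard modules.
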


\begin{proof}
The argument used to prove Theorem~\ref{thm:inj}(d) implies that every indecomposable injective of $\uRep(\cI)_{\le r}$ is isomorphic to $\ul{\bI}^{\lambda}$ for some $\lambda$ of rank $\le r$. Since $\uRep(\cI)_{\le r}$ is locally noetherian, every injective is a sum of $\ul{\bI}^{\lambda}$'s, and thus remains injective in $\uRep(\cI)$.
\end{proof}

\subsection{Injective resolutions}

Having understood the injective objects, we now turn our attention to injective resolutions.

\begin{proposition} \label{prop:Lres}
We have an injective resolution $\ul{\bB}^n \to J^{\bullet}$, where $J^i=(\ul{\bJ}^{n-i})^{\oplus \binom{n-1}{n-1-i}}$, for any $n \in \bN$.
\end{proposition}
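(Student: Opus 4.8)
The plan is to induct on $n$, using the coinduction functor $\ul{\sC}$ together with: the fundamental short exact sequence $0 \to \ul{\Pi}(M) \to \ul{\sC}(M) \to M \to 0$ of Proposition~\ref{prop:coindseq}; the identity $\ul{\Pi}(\ul{\bB}^m) = \ul{\bB}^{m+1}$; the exactness of $\ul{\sC}$ and the fact that it preserves injectives (Corollary~\ref{cor:coind}); and the relation $\ul{\sC}(\ul{\bJ}^m) = \ul{\bJ}^{m+1}$ for $m \ge 1$, which is immediate from $\ul{\bJ}^m = \ul{\sC}^{m-1}(\ul{\bB}^1)$.

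The cases $n = 0$ and $n = 1$ are base cases: here $\ul{\bJ}^0 = \ul{\bB}^0$ and $\ul{\bJ}^1 = \ul{\bB}^1$ are injective, so $\ul{\bB}^n \to \ul{\bJ}^n$ is itself a resolution, and one checks directly that it matches the asserted formula (bearing in mind the conventions on binomial coefficients, in particular $\binom{m}{k} = 0$ when $k < 0$).

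For the inductive step, fix $n \ge 2$ and assume $\ul{\bB}^{n-1}$ has an injective resolution $\ul{\bB}^{n-1} \to I^{\bullet}$ with $I^i = (\ul{\bJ}^{n-1-i})^{\oplus \binom{n-2}{n-2-i}}$. Since $n-1 \ge 1$, the only potential summand $\ul{\bJ}^0$ would occur in degree $i = n-1$ with multiplicity $\binom{n-2}{-1} = 0$, so every $I^i$ is a finite sum of injectives of the form $\ul{\bJ}^m$ with $m \ge 1$. Applying the exact, injective-preserving functor $\ul{\sC}$ therefore produces an injective resolution $\ul{\sC}(\ul{\bB}^{n-1}) \to \ul{\sC}(I^{\bullet})$ with $\ul{\sC}(I^i) = (\ul{\bJ}^{n-i})^{\oplus \binom{n-2}{n-2-i}}$. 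Meanwhile, Proposition~\ref{prop:coindseq} (with $M = \ul{\bB}^{n-1}$, using $\ul{\Pi}(\ul{\bB}^{n-1}) = \ul{\bB}^n$) supplies a short exact sequence $0 \to \ul{\bB}^n \to \ul{\sC}(\ul{\bB}^{n-1}) \xrightarrow{\pi} \ul{\bB}^{n-1} \to 0$. Lifting $\pi$ to a morphism of injective resolutions $\tilde\pi \colon \ul{\sC}(I^{\bullet}) \to I^{\bullet}$ (the standard comparison theorem), I will set $J^{\bullet} = \cone(\tilde\pi)[-1]$, so $J^i = \ul{\sC}(I^i) \oplus I^{i-1}$; this is a bounded complex whose terms are finite sums of injectives, hence injective. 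The long exact cohomology sequence of the cone, combined with the fact that the map induced by $\tilde\pi$ on $H^0$ is $\pi$ — which is surjective with kernel $\ul{\bB}^n$ — shows that $J^{\bullet}$ has cohomology $\ul{\bB}^n$ concentrated in degree $0$, i.e.\ $\ul{\bB}^n \to J^{\bullet}$ is an injective resolution. Finally, Pascal's rule yields
\begin{displaymath}
J^i = (\ul{\bJ}^{n-i})^{\oplus \binom{n-2}{n-2-i}} \oplus (\ul{\bJ}^{n-i})^{\oplus \binom{n-2}{n-1-i}} = (\ul{\bJ}^{n-i})^{\oplus \binom{n-1}{n-1-i}},
\end{displaymath}
which is precisely the claimed shape.

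Once the induction is set up this way the argument is essentially bookkeeping, and I do not anticipate a genuine obstacle: everything required — exactness and injective-preservation of $\ul{\sC}$, the sequence of Proposition~\ref{prop:coindseq}, and $\ul{\sC}(\ul{\bJ}^m) = \ul{\bJ}^{m+1}$ for $m \ge 1$ — is already available. The points that need care are the sign and indexing conventions in the mapping cone, and the verification that $H^0(\tilde\pi) = \pi$ (so that the shifted cone really resolves $\ker\pi = \ul{\bB}^n$ and not some other subobject). If the smooth analogue is wanted, applying the forgetful functor $\Phi$, which preserves injectives by Corollary~\ref{cor:phiinj}, turns $\ul{\bB}^n \to J^{\bullet}$ into an injective resolution of $\bB^n$ of the same shape.
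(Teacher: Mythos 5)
Your proof is correct and takes essentially the same route as the paper: induct on $n$ using the canonical short exact sequence from Proposition~\ref{prop:coindseq}, apply the exact, injective-preserving functor $\ul{\sC}$ to the inductive resolution, and take a shifted mapping cone, finishing with Pascal's rule. One small stylistic difference: where you invoke the comparison theorem to lift $\pi$ to a chain map $\tilde\pi\colon \ul{\sC}(I^\bullet) \to I^\bullet$, the paper simply uses the surjections $\ul{\sC}(I^i) \to I^i$ from the functorial exact sequence applied termwise, which form a chain map automatically; this avoids the comparison theorem but is otherwise the same construction. Your explicit check that no summand $\ul{\bJ}^0$ occurs (so the blanket use of $\ul{\sC}(\ul{\bJ}^m) = \ul{\bJ}^{m+1}$ is legitimate, even though $\ul{\sC}(\ul{\bJ}^0) = \ul{\bJ}^0 \oplus \ul{\bJ}^1$) is a point the paper glosses over and is worth having made.
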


\begin{proof}
We proceed by induction on $n$. The result is clear for $n=0$ and $n=1$. Suppose now that we have proved the result for $n \ge 1$, and let us prove it for $n+1$. Let $\ul{\bB}^n \to J^{\bullet}$ be the given resolution. Consider the diagram
\begin{displaymath}
\xymatrix{
&& \ul{\sC}(J^{\bullet}) \ar[r] & J^{\bullet} \\
0 \ar[r] & \ul{\bB}^{n+1} \ar[r] & \ul{\sC}(\ul{\bB}^n) \ar[r] \ar[u] & \ul{\bB}^n \ar[u] \ar[r] & 0 }
\end{displaymath}
Here the bottom exact sequence is the canonical exact sequence for $\ul{\sC}$ (Proposition~\ref{prop:coindseq}). The right vertical map is the given injective resolution of $\ul{\bB}^n$. The middle vertical map is also an injective resolution, as $\ul{\sC}$ is exact and takes $\ul{\bJ}^i$ to $\ul{\bJ}^{i+1}$. We thus obtain an injective resolution $K^{\bullet}$ of $\ul{\bB}^{n+1}$ by taking the cone of the map $\ul{\sC}(J^{\bullet}) \to J^{\bullet}$ (and shifting by one). We have
\begin{displaymath}
K^i = \ul{\sC}(J^i) \oplus J^{i-1} = (\bJ^{n-i+1})^{\oplus \binom{n-1}{n-1-i}} \oplus (\bJ^{n-i+1})^{\oplus \binom{n-1}{n-i}} = (\bJ^{n-i+1})^{\oplus \binom{n}{n-i}},
\end{displaymath}
as required.
\end{proof}

\begin{corollary} \label{cor:Lext}
Let $n,m \in \bN$. Then
\begin{displaymath}
\dim \Ext^i_{\cI}(\bB^n, \bB^m) = \dim \uExt^i_{\cI}(\ul{\bB}^n, \ul{\bB}^m)=\begin{cases}
\binom{m-1}{n-1} & \text{if $i=m-n$} \\
0 & \text{otherwise} \end{cases}
\end{displaymath}
\end{corollary}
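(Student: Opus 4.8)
The plan is to compute both $\Ext$ groups directly from the explicit injective resolution of $\ul{\bB}^m$ provided by Proposition~\ref{prop:Lres}. Recall that $\Ext^\bullet(A,B)$ may be computed by applying $\Hom(A,-)$ to an injective resolution of $B$; Proposition~\ref{prop:Lres} gives $\ul{\bB}^m \to J^\bullet$ with $J^i = (\ul{\bJ}^{m-i})^{\oplus \binom{m-1}{m-1-i}}$. The one auxiliary fact I would establish first is that $\uHom_\cI(\ul{\bB}^n, \ul{\bJ}^k)$ is one-dimensional when $k=n$ and zero otherwise: since $\ul{\bJ}^k$ is the injective envelope of $\ul{\bB}^k$ (Proposition~\ref{prop:fininj}(b)), it is an essential extension of $\ul{\bB}^k$, so its socle equals the simple module $\ul{\bB}^k$ with multiplicity one; and a simple module $\ul{\bB}^n$ admits a nonzero map into $\ul{\bJ}^k$ precisely when it is contained in that socle.

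Consequently the complex $\uHom_\cI(\ul{\bB}^n, J^\bullet)$ has at most one nonzero term, namely in cohomological degree $i = m-n$, where it is $\bk^{\oplus \binom{m-1}{m-1-(m-n)}} = \bk^{\oplus \binom{m-1}{n-1}}$; every other term vanishes. (When $m<n$ the index $m-n$ is negative, so all terms vanish, consistent with $\binom{m-1}{n-1}=0$ in that range under the stated binomial conventions.) All differentials in this complex are therefore zero, and one reads off $\dim \uExt^i_\cI(\ul{\bB}^n, \ul{\bB}^m) = \binom{m-1}{n-1}$ for $i=m-n$ and $0$ otherwise.

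For the smooth statement I would note that $\Phi$ is exact and carries injectives to injectives (Corollary~\ref{cor:phiinj}), so $\bB^m \to \Phi(J^\bullet)$ is an injective resolution of $\bB^m$ in $\Rep(\cI)$, and hence $\Ext^\bullet_\cI(\bB^n, \bB^m)$ is the cohomology of $\Hom_\cI(\bB^n, \Phi(J^\bullet))$. Every module in sight is of finite length (the $\ul{\bJ}^k$ are, by Proposition~\ref{prop:fininj}(a)), so Theorem~\ref{thm:canongr} gives $\Hom_\cI(\bB^n, \Phi(J^i)) \cong \uHom_\cI(\ul{\bB}^n, J^i)$ for each $i$. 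Thus the smooth complex agrees term-by-term with the graded one; having a single nonzero term, it too has zero differential, and the dimension count is identical. The only genuine content is the socle computation for $\ul{\bJ}^k$ together with the index bookkeeping in Proposition~\ref{prop:Lres}, so I do not foresee a serious obstacle: this corollary is essentially a direct readout from the resolution.
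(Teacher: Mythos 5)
Your proposal is correct and takes essentially the same approach as the paper: apply $\uHom_{\cI}(\ul{\bB}^n,-)$ to the injective resolution of Proposition~\ref{prop:Lres}, use the socle computation for $\ul{\bJ}^k$ to see at most one term survives, and read off the dimension. The one minor divergence is in the first equality: the paper simply invokes Theorem~\ref{thm:canongr} (tacitly together with the fact that injectives in the locally finite subcategories stay injective), whereas you reach it by pushing the resolution through $\Phi$ using Corollary~\ref{cor:phiinj} and matching Hom-spaces term-by-term; that works and is perhaps more explicit, though it leans on the heavier Corollary~\ref{cor:phiinj} where Proposition~\ref{prop:lfinj} alone (applied to the finite-length resolution) would suffice.
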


\begin{proof}
The first equality follows from Theorem~\ref{thm:canongr}, so it suffices to prove the second. Note that $\uHom_{\cI}(\ul{\bB}^n, \ul{\bJ}^i)$ is one-dimensional if $n=i$ and vanishes otherwise, as $\ul{\bB}^n$ must map into the socle of $\ul{\bJ}^i$, which is $\ul{\bB}^i$. Let $\ul{\bB}^m \to J^{\bullet}$ be the resolution constructed in Proposition~\ref{prop:Lres}. Then $\uExt^{\bullet}_{\cI}(\ul{\bB}^n, \ul{\bB}^m)$ is computed by the complex $\uHom_{\cI}(\ul{\bB}^n, J^{\bullet})$. If $n>m$ then all terms in the complex vanish. Otherwise, the complex has a unique non-zero term, in degree $m-n$, and it has dimension $\binom{m-1}{n-1}$. The result follows.
\end{proof}

\begin{proposition} \label{prop:stdres}
The standard module $\ul{\bE}^{\lambda}$ admits an injective resolution $\ul{\bE}^{\lambda} \to I^{\bullet}$ with the following properties:
\begin{enumerate}
\item $I^0=\ul{\bI}^{\lambda}$.
\item $I^n=0$ for $n \gg 0$.
\item $I^n$ is a finite sum of modules of the form $\ul{\bI}^{\mu}$ where $\mu$ has the same rank as $\lambda$ and satisfies $a(\mu) \le a(\lambda)$ and $\vert a(\lambda)-a(\mu)\vert = n$.
\end{enumerate}
In particular, the injective dimension of $\ul{\bE}^{\lambda}$ is at most $\vert a(\lambda) \vert$. The analogous statements hold in the smooth case.
\end{proposition}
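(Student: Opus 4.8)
The plan is to realize $I^{\bullet}$ as the total complex of the $\odot$-product of the injective resolutions of the elementary factors of $\ul{\bE}^{\lambda}$. Write $a(\lambda) = (a_0, \dots, a_r)$. Corollary~\ref{cor:gapprod} and Proposition~\ref{prop:stdcat} give a canonical isomorphism
\[
\ul{\bE}^{\lambda} \cong \ul{\bB}^{a_0} \odot \ul{\bA}^1 \odot \ul{\bB}^{a_1} \odot \ul{\bA}^1 \odot \cdots \odot \ul{\bA}^1 \odot \ul{\bB}^{a_r},
\]
with $r$ interleaved copies of $\ul{\bA}^1$ (a factor $\ul{\bB}^{a_i}$ with $a_i = 0$ is the unit $\ul{\bB}^0$ and may be dropped). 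For each $i$ let $\ul{\bB}^{a_i} \to J_i^{\bullet}$ be the bounded injective resolution of Proposition~\ref{prop:Lres}, so that $J_i^k = (\ul{\bJ}^{a_i - k})^{\oplus \binom{a_i-1}{a_i-1-k}}$; in particular $J_i^0 = \ul{\bJ}^{a_i}$ and $J_i^k = 0$ for $k > a_i$. Define
\[
I^{\bullet} = \mathrm{Tot}\bigl( J_0^{\bullet} \odot \ul{\bA}^1 \odot J_1^{\bullet} \odot \cdots \odot \ul{\bA}^1 \odot J_r^{\bullet} \bigr),
\]
the total complex of the evident $(r+1)$-fold complex with the standard sign conventions. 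Because $\odot$ is a bifunctor, exact and $\cI$-linear in each slot and distributing over direct sums, each $I^n$ is a graded $\cI$-module, the differentials are $\cI$-linear, and the augmentations $J_i^0 = \ul{\bJ}^{a_i} \to \ul{\bB}^{a_i}$ assemble into an $\cI$-linear map $I^0 \to \ul{\bE}^{\lambda}$.

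First I would verify that $\ul{\bE}^{\lambda} \to I^{\bullet}$ is a resolution. Forgetting the $\cI$-action, each $J_i^{\bullet}$ is a finite complex of $\bk$-vector spaces with cohomology concentrated in degree $0$, equal there to $\ul{\bB}^{a_i}$; since $\bk$ is a field, the K\"unneth theorem identifies the cohomology of $I^{\bullet}$ with $\ul{\bB}^{a_0} \otimes \ul{\bA}^1 \otimes \cdots \otimes \ul{\bA}^1 \otimes \ul{\bB}^{a_r}$ concentrated in degree $0$, with the degree-$0$ isomorphism induced by the augmentations. Under the isomorphism above this is $\ul{\bE}^{\lambda}$ as an $\cI$-module, so $\ul{\bE}^{\lambda} \to I^{\bullet}$ is indeed a resolution.

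Next I would read off the terms. Distributing $\odot$ over direct sums,
\[
I^n = \bigoplus_{k_0 + \cdots + k_r = n} J_0^{k_0} \odot \ul{\bA}^1 \odot \cdots \odot \ul{\bA}^1 \odot J_r^{k_r},
\]
and each summand is a finite direct sum of copies of $\ul{\bJ}^{a_0 - k_0} \odot \ul{\bA}^1 \odot \cdots \odot \ul{\bA}^1 \odot \ul{\bJ}^{a_r - k_r}$, which by definition is $\ul{\bI}^{\mu}$ for the constraint word $\mu$ with $a(\mu) = (a_0 - k_0, \dots, a_r - k_r)$. Thus $\mu$ has the same rank $r$ as $\lambda$, satisfies $a(\mu) \le a(\lambda)$ and $\vert a(\lambda) - a(\mu)\vert = \sum_i k_i = n$, and $\ul{\bI}^{\mu}$ is injective and indecomposable by Theorem~\ref{thm:inj}; this is (iii). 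The unique summand in degree $0$ has all $k_i = 0$, so $I^0 = \ul{\bJ}^{a_0} \odot \ul{\bA}^1 \odot \cdots \odot \ul{\bJ}^{a_r} = \ul{\bI}^{\lambda}$, which is (i). Since $a(\mu) \in \bN^{r+1}$ we have $n = \vert a(\lambda)\vert - \vert a(\mu)\vert \le \vert a(\lambda)\vert$, so $I^n = 0$ for $n > \vert a(\lambda)\vert$; this is (ii) and it bounds the injective dimension by $\vert a(\lambda)\vert$. The smooth case follows by applying $\Phi$, which is exact, carries $\ul{\bE}^{\lambda}$ to $\bE^{\lambda}$ and each $\ul{\bI}^{\mu}$ to $\bI^{\mu}$, and takes injectives of $\uRep(\cI)$ to injectives of $\Rep(\cI)$ by Corollary~\ref{cor:phiinj}.

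The only step needing some care, rather than a real obstacle, is the exactness of the total complex: one must observe that the $J_i^{\bullet}$ are bounded, so that the K\"unneth computation applies with no convergence subtlety, and that the degree-$0$ identification it produces is exactly the one coming from the $\cI$-equivariant augmentations, so that $I^{\bullet}$ is a genuine injective resolution in $\uRep(\cI)$ rather than merely a complex of injectives with the right cohomology groups. Everything else is bookkeeping with the multiplicities of Proposition~\ref{prop:Lres} and the definition of $\ul{\bI}^{\mu}$.
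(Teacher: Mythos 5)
Your proof is correct and takes essentially the same approach as the paper: decompose $\ul{\bE}^{\lambda}$ via the concatenation product as $\ul{\bB}^{a_0} \odot \ul{\bA}^1 \odot \cdots \odot \ul{\bA}^1 \odot \ul{\bB}^{a_r}$, take the injective resolutions of the $\ul{\bB}^{a_i}$ from Proposition~\ref{prop:Lres}, and form the concatenation of the resolutions. The paper states this in a single sentence, leaving the K\"unneth verification and the term bookkeeping implicit; you have simply supplied those routine details.
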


\begin{proof}
Let $\lambda=\lambda_1 \cdots \lambda_t$. For each $1 \le i \le t$, let $\ul{\bB}^{a_i(\lambda)} \to J^{i,\bullet}$ be the injective resolution provided by Proposition~\ref{prop:Lres}. Then
\begin{displaymath}
\ul{\bE}^{\lambda} \to J^{0,\bullet} \odot \ul{\bA}^1 \odot \cdots \odot \ul{\bA}^1 \odot J^{r,\bullet}
\end{displaymath}
is the sought resolution of $\ul{\bE}^{\lambda}$.
\end{proof}

\begin{remark}
In fact, the bound on injective dimension given above is slightly suboptimal. The resolution of $\ul{\bB}^{a_i(\lambda)}$ constructed in Proposition~\ref{prop:Lres} has length~0 if $a_i(\lambda)=0$, and length $a_i(\lambda)-1$ otherwise. We thus see that the injective dimension of $\ul{\bE}^{\lambda}$ is at most $\vert a(\lambda) \vert - k$ where $k$ is the number of non-zero entries in $a(\lambda)$.
\end{remark}

\begin{theorem} \label{thm:injres}
Every finitely generated smooth or graded $\cI$-module has finite injective dimension, and, in fact, admits a finite length resolution by finitely generated injective modules.
\end{theorem}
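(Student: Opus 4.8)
The plan is to deduce this quickly from two results already established: the standard modules generate the bounded finitely generated derived category (Theorem~\ref{thm:dergen}), and each standard module has an explicit finite injective resolution by finite sums of the modules $\ul{\bI}^{\lambda}$ (Proposition~\ref{prop:stdres}). I will carry out the argument in the graded case; the smooth case is verbatim the same, using the ungraded versions of these statements. Two preliminary observations: first, every $\ul{\bI}^{\lambda}$ is a finitely generated injective object --- injectivity is Theorem~\ref{thm:inj}(a), and finite generation holds because $\ul{\bI}^{\lambda}$ is a finite concatenation of the finite length modules $\ul{\bJ}^n$ with the cyclic modules $\ul{\bA}^1$ and $\odot$ preserves finite generation; second, since $\uRep(\cI)$ is locally noetherian (Theorem~\ref{thm:noeth}), any direct summand of a finitely generated injective object is again a finitely generated injective object.

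Next I would introduce the full subcategory $\cC$ of $\rD^b_{\fgen}(\uRep(\cI))$ consisting of those complexes that are quasi-isomorphic to a bounded complex all of whose terms are finitely generated injective modules, and check that $\cC$ is a triangulated subcategory. It is visibly closed under isomorphism and shift. It is closed under cones because any morphism in the derived category between bounded complexes of injectives is realized by an honest chain map (the target being a bounded-below complex of injectives), and the mapping cone of such a chain map is again a bounded complex of finitely generated injectives. By Proposition~\ref{prop:stdres}, every standard module $\ul{\bE}^{\lambda}$, regarded as a complex concentrated in degree $0$, lies in $\cC$; since the standard modules generate $\rD^b_{\fgen}(\uRep(\cI))$ as a triangulated category (Theorem~\ref{thm:dergen}), it follows that $\cC = \rD^b_{\fgen}(\uRep(\cI))$.

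Finally, given a finitely generated graded $\cI$-module $M$, the object $M[0]$ lies in $\cC$, so there is a bounded complex $I^{\bullet}$ of finitely generated injectives with a quasi-isomorphism $M[0] \xrightarrow{\sim} I^{\bullet}$. To convert this into an honest resolution I would perform the standard truncation: since $H^i(I^{\bullet}) = 0$ for $i < 0$, the differential out of the leftmost term of negative degree is a monomorphism, and it splits because its source is injective, so that term splits off as a contractible direct summand; the surviving leftmost term is a direct summand of the next term, hence still a finitely generated injective. Iterating, one obtains a homotopy-equivalent complex $\wt{I}^{\bullet}$ of finitely generated injectives concentrated in degrees $\geq 0$, and unwinding the quasi-isomorphism $M[0] \xrightarrow{\sim} \wt{I}^{\bullet}$ shows that $0 \to M \to \wt{I}^0 \to \wt{I}^1 \to \cdots \to \wt{I}^b \to 0$ is exact. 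This is the desired finite-length resolution by finitely generated injective modules, and finite injective dimension is then immediate.

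I do not expect a genuine obstacle: essentially all the content is imported from Theorem~\ref{thm:dergen} and Proposition~\ref{prop:stdres}. The only place where care is needed is the concluding truncation step, specifically in verifying that the modified terms remain finitely generated --- which is exactly where local noetherianity of $\uRep(\cI)$ is used --- but this is routine homological algebra.
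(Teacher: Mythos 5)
Your argument is correct, and the first half (finite injective dimension via Theorem~\ref{thm:dergen} and Proposition~\ref{prop:stdres}) coincides with the paper's. For the refined claim, however, you take a genuinely different route. The paper reduces it to the statement that every finitely generated module embeds into a finitely generated injective: this holds for standard modules by Theorem~\ref{thm:inj}, hence for their submodules, hence for all finitely generated modules by the filtration of Proposition~\ref{prop:levfilt2}; once one has such embeddings, the resolution is built term by term and terminates because the injective dimension is finite (a standard fact: a cokernel at the right stage is automatically injective). Your version instead stays inside the derived category: the objects quasi-isomorphic to a bounded complex of finitely generated injectives form a triangulated subcategory, it contains the standard modules by Proposition~\ref{prop:stdres}, so by Theorem~\ref{thm:dergen} it is everything, and the na\"ive truncation of negative degrees (using that monomorphisms out of injectives split, and that summands of finitely generated injectives are finitely generated injectives) produces an honest finite resolution. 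Both are valid; the paper's second half avoids any derived-category bookkeeping and is a bit shorter given the surrounding machinery, while yours proves both conclusions at once and has fewer upstream dependencies --- you never invoke Proposition~\ref{prop:levfilt2}. The one small point of hygiene: you don't actually need local noetherianity to see that a direct summand of a finitely generated object is finitely generated --- a direct summand is a quotient, and quotients of finitely generated objects are finitely generated in any Grothendieck category --- but the appeal is harmless.
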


\begin{proof}
Since the $\ul{\bE}^{\lambda}$'s generate $\rD^b_{\fgen}(\uRep(\cI))$ (Theorem~\ref{thm:dergen}) and have finite injective dimension (Proposition~\ref{prop:stdres}), it follows that every finitely generated graded $\cI$-module has finite injective dimension.

To prove the more precise claim, it suffices to prove that every finitely generated graded $\cI$-module injects into a finitely generated injective graded $\cI$-module. We know this is true for standard modules (Theorem~\ref{thm:inj}), and thus submodules of standard modules, and so it is true for all finitely generated modules by Proposition~\ref{prop:levfilt2}.

The same argument applies in the smooth case.
\end{proof}

\begin{corollary} \label{cor:extfin}
Let $M$ and $N$ be graded $\cI$-modules with $N$ finitely generated. Then $\uExt^i_{\cI}(M, N)=0$ for $i \gg 0$. Similarly in the smooth case.
\end{corollary}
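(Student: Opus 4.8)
\textbf{Proof proposal for Corollary~\ref{cor:extfin}.} The plan is to reduce the statement to the finiteness of injective dimension for finitely generated modules, which is exactly Theorem~\ref{thm:injres}. First I would treat the graded case. Since $N$ is finitely generated, Theorem~\ref{thm:injres} gives a finite-length resolution $N \to I^{\bullet}$ by (finitely generated) injective graded $\cI$-modules, say with $I^j = 0$ for $j > d$. Because each $I^j$ is injective, the complex $\uHom_{\cI}(M, I^{\bullet})$ computes $\uExt^{\bullet}_{\cI}(M, N)$; since this complex is concentrated in degrees $0 \le j \le d$, we get $\uExt^i_{\cI}(M, N) = 0$ for $i > d$, with no finiteness hypothesis needed on $M$.

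For the smooth case I would run the identical argument: Theorem~\ref{thm:injres} also provides a finite-length resolution $N \to I^{\bullet}$ of the finitely generated smooth module $N$ by injective smooth $\cI$-modules (the theorem is stated for ``smooth or graded''), and then $\Hom_{\cI}(M, I^{\bullet})$ computes $\Ext^{\bullet}_{\cI}(M, N)$ and vanishes above the length of the resolution. Alternatively, if one prefers not to invoke the smooth half of Theorem~\ref{thm:injres} directly, one can set $N' = \Gamma(N)$, which is finitely generated in $\uRep(\cI)$ with $\Psi(N') \cong N$, and use the adjunction identity $\rR\Hom_{\cI}(M, N) = \rR\uHom_{\cI}(\Gamma(M)?, \ldots)$ — but this is more awkward because $M$ need not be finitely generated, so the clean route is the direct one.

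There is essentially no obstacle here: the work has all been done in Theorem~\ref{thm:injres}. The only point requiring a word of care is that one must use an \emph{injective} resolution of the \emph{second} argument $N$ (not a projective resolution of $M$), so that no finiteness or projective-resolution hypothesis on $M$ is required; this is why the hypothesis is placed on $N$ rather than $M$. I would phrase the proof in one short paragraph covering both cases simultaneously, noting that the injective resolution furnished by Theorem~\ref{thm:injres} has finite length $d = d(N)$ and that $\Ext^i_{\cI}(M,N)$ (resp.\ $\uExt^i_{\cI}(M,N)$) is computed by a complex supported in degrees $0,\ldots,d$, hence vanishes for $i > d$.
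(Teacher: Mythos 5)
Your proposal is correct and matches the paper's (unstated but evidently intended) argument: the corollary is a direct consequence of Theorem~\ref{thm:injres}, obtained by taking a finite injective resolution of $N$ and noting that $\Ext^{\bullet}$ is computed by $\Hom$ into that resolution, which is concentrated in a bounded range of degrees. Your remark about why the finite generation hypothesis is placed on $N$ rather than $M$ is exactly the right point to flag.
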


\section{Local cohomology and saturation} \label{s:loccoh}

\subsection{Generalities} \label{ss:satgen}

We now review local cohomology and saturation in general. See \cite[\S 4]{symu1} for more details (though note that our notation differs from that of loc.\ cit.).

Let $\cA$ be a Grothendieck abelian category and let $\cB$ be a localizing subcategory. We assume the following condition holds:
\[
\label{cond:inj}
\makebox{\parbox{\dimexpr\linewidth-13\fboxsep-2\fboxrule}{Injective objects of $\cB$ remain injective in $\cA$.}} \tag{Inj}
\]
We let $T \colon \cA \to \cA/\cB$ be the localization functor and $S \colon \cA/\cB \to \cA$ its right adjoint. We define the {\bf saturation functor} $\cS \colon \cA \to \cA$ to be the composition $S \circ T$. The saturation functor is left exact, and its derived functor $\rR \cS$ will play a prominent role. We say that $M \in \cA$ is {\bf ($\cS$-)saturated} if the natural map $M \to \cS(M)$ is an isomorphism, and {\bf derived ($\cS$-)saturated} if the natural map $M \to \rR \cS(M)$ is an isomorphism. For $M \in \cA$, we let $\cH(M)$ denote the maximal subobject of $M$ that belongs to $\cB$. This defines a left exact functor $\cH \colon \cA \to \cA$, and we again consider the derived functors $\rR \cH$, which we refer to as {\bf local cohomology}.

\begin{proposition} \label{prop:sattri}
For any $M \in \rD^+(\cA)$, we have a canonical exact triangle
\begin{displaymath}
\rR \cH(M) \to M \to \rR \cS(M) \to
\end{displaymath}
where the first two maps are the canonical ones.
\end{proposition}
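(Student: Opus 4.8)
The plan is to produce the triangle by taking a suitable injective resolution and exhibiting $\rR\cH(M)$ as a subcomplex with quotient $\rR\cS(M)$. First I would reduce to the case of a single injective object: if $M \to I^\bullet$ is an injective resolution with each $I^n \in \cA$, then $\rR\cH(M)$ is computed by $\cH(I^\bullet)$ and $\rR\cS(M)$ by $\cS(I^\bullet)$, so it suffices to produce, for each injective $I$, a short exact sequence $0 \to \cH(I) \to I \to \cS(I) \to 0$, functorial in $I$, and then assemble these into a short exact sequence of complexes $0 \to \cH(I^\bullet) \to I^\bullet \to \cS(I^\bullet) \to 0$, whose associated long exact triangle in $\rD^+(\cA)$ is the desired one (with the connecting maps being the canonical ones, by construction).

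The key step is therefore the following claim about an injective object $I$ of $\cA$: the sequence $0 \to \cH(I) \to I \to \cS(I) \to 0$ is exact. Here $\cH(I)$ is the largest subobject of $I$ lying in $\cB$, so $\cH(I) \to I$ is mono by definition; the content is that $I \to \cS(I) = S(T(I))$ is epi with kernel exactly $\cH(I)$. I would argue as follows. The kernel of the unit map $I \to \cS(I)$ is the largest subobject of $I$ killed by $T$, which is exactly the largest subobject lying in $\cB$, i.e. $\cH(I)$; this is a general fact about localization and does not need injectivity. For surjectivity, note $I/\cH(I)$ is a subobject of $\cS(I)$ with the same image under $T$ (since $T(\cH(I)) = 0$ and $T$ is exact, $T(I) \to T(\cS(I))$ is an isomorphism, and $T$ applied to $I/\cH(I) \hookrightarrow \cS(I)$ is an isomorphism). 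Thus the cokernel $C$ of $I/\cH(I) \to \cS(I)$ satisfies $T(C) = 0$, i.e. $C \in \cB$. Now I use injectivity: $I$ is injective in $\cA$, hence (general nonsense about localization with condition~\eqref{cond:inj}) $\cS(I) = S(T(I))$ is injective in $\cA$ as well, and moreover $\cS(I)$ is $\cB$-closed, meaning it has no nonzero subobject in $\cB$ and $\Ext^1_\cA(B, \cS(I)) = 0$ for $B \in \cB$; actually the cleaner route is: $\cS(I)$ is derived-saturated and in particular saturated, so $\cH(\cS(I)) = 0$. Since $C$ embeds into $\cS(I)/(I/\cH(I))$... more directly, $C \in \cB$ and $C$ is a quotient of $\cS(I)$ — I want instead $C \hookrightarrow$ something. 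Let me restructure: since $T$ is exact and $T(I/\cH(I)) \to T(\cS(I))$ is iso, the cokernel $C$ and kernel of $I/\cH(I) \to \cS(I)$ both lie in $\cB$; the map is already mono (it factors the mono $I/\cH(I) \hookrightarrow \cS(I)$... wait $I \to \cS(I)$ has kernel $\cH(I)$ so $I/\cH(I) \to \cS(I)$ is mono). So only $C \in \cB$ remains. Then $C$ is a subobject of $\cS(I)/(I/\cH(I))$; but $C$ being in $\cB$ and $\cS(I)$ having $\cH(\cS(I)) = 0$ forces, via the snake/extension argument using $\Ext^1_\cA(C', \cS(I))$-vanishing-type statements, that $C = 0$. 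The precise statement I would invoke is Proposition~\ref{prop:sattri}'s ambient setup from \cite[\S 4]{symu1}: for $I$ injective, $\cS(I) = S(T(I))$ is injective and the sequence $0 \to \cH(I) \to I \to \cS(I) \to 0$ is exact — this is essentially \cite[Prop.~4.x]{symu1}, and I would cite it rather than reprove it.

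Given that per-injective short exact sequence (functorial in $I$, since $\cH$, $\cS$, and the identity are all functors and the maps are the canonical natural transformations), I take an injective resolution $M \to I^\bullet$ of an object $M \in \rD^+(\cA)$ (or a bounded-below complex of injectives representing $M$), obtain a short exact sequence of complexes $0 \to \cH(I^\bullet) \to I^\bullet \to \cS(I^\bullet) \to 0$ that is degreewise split as a sequence of graded objects (each term is a split mono of graded objects since we may choose complements, though splitness is not even needed — a short exact sequence of complexes always yields a triangle), and pass to $\rD^+(\cA)$. Since $\cH(I^\bullet)$ computes $\rR\cH(M)$ and $\cS(I^\bullet)$ computes $\rR\cS(M)$ (both $\cH$ and $\cS$ being left exact with the $I^n$ acyclic for them — injectives are $\cH$-acyclic and $\cS$-acyclic by construction), the resulting triangle $\rR\cH(M) \to M \to \rR\cS(M) \to$ is the one asserted, and the first two maps are the canonical ones because at the level of complexes they are induced by $\cH(I^\bullet) \hookrightarrow I^\bullet$ and $I^\bullet \to \cS(I^\bullet)$, which are the canonical natural transformations evaluated on $I^\bullet$.

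\textbf{Main obstacle.} The only non-formal point is the exactness of $0 \to \cH(I) \to I \to \cS(I) \to 0$ for $I$ injective — equivalently, that $I \to \cS(I)$ is surjective with kernel precisely $\cH(I)$. Surjectivity is where condition~\eqref{cond:inj} is genuinely used: without it, $\cS(I)$ need not be injective and the cokernel $C \in \cB$ need not vanish. I expect to handle this by directly invoking the corresponding statement from \cite[\S 4]{symu1}, where this setup (Grothendieck category, localizing subcategory, hypothesis~\eqref{cond:inj}) is developed precisely so that saturation and local cohomology fit into such a triangle; the present proposition is its specialization to $\cA = \Rep(\cI)$ or $\uRep(\cI)$ with $\cB = \Rep(\cI)_{\le r}$, for which~\eqref{cond:inj} holds by Corollary~\ref{cor:injcond}.
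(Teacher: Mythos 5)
The paper's own ``proof'' of this proposition is a one-line citation to \cite[Proposition~4.6]{symu1}, and your proposal is a correct reconstruction of the argument behind that citation: reduce to the per-injective short exact sequence $0 \to \cH(I) \to I \to \cS(I) \to 0$ (which is Proposition~\ref{prop:satinj}, itself cited from \cite[Proposition~4.3]{symu1}), then assemble these over an injective resolution into a short exact sequence of complexes yielding the triangle. Since you too ultimately invoke \cite[\S 4]{symu1} for the key lemma, this is essentially the same approach as the paper's.
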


\begin{proof}
See \cite[Proposition~4.6]{symu1}.
\end{proof}

\begin{proposition} \label{prop:satinj}
Let $I \in \cA$ be injective. Then we have a short exact sequence
\begin{displaymath}
0 \to \cH(I) \to I \to \cS(I) \to 0
\end{displaymath}
where $\cH(I)$ and $\cS(I)$ are both injective. Moreover, $T(I) \in \cA/\cB$ is injective.
\end{proposition}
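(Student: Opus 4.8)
\textbf{Proof proposal for Proposition~\ref{prop:satinj}.}

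The plan is to prove the three assertions in sequence, extracting two of them from standard facts about localization and deriving the injectivity of $\cS(I)$ from a splitting argument. First I would recall that $\cH(I)$, being a subobject of $I$ that lies in $\cB$, is itself an injective object of $\cB$ (an injective object of an abelian category, intersected with a localizing subcategory, has injective hull inside that subcategory; more directly, $\cH(I)$ is a direct summand of $I$ as soon as we know $\cH(I)$ is injective in $\cB$, but let me argue it cleanly). The cleanest route: since $I$ is injective in $\cA$ and $\cB \hookrightarrow \cA$ is exact, the functor $M \mapsto \Hom_{\cA}(M, I)$ is exact on $\cB$; but one checks that $\Hom_{\cB}(-, \cH(I)) = \Hom_{\cA}(-, I)$ on $\cB$ since any map from an object of $\cB$ into $I$ lands in the maximal $\cB$-subobject $\cH(I)$. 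Hence $\cH(I)$ is injective in $\cB$, and then by the hypothesis~\eqref{cond:inj} it is injective in $\cA$. Therefore the inclusion $\cH(I) \hookrightarrow I$ splits, giving a short exact sequence
\begin{displaymath}
0 \to \cH(I) \to I \to I/\cH(I) \to 0
\end{displaymath}
with $I/\cH(I)$ a direct summand of $I$, hence injective in $\cA$.

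Next I would identify $I/\cH(I)$ with $\cS(I)$. The composite $I \to \cS(I) = S(T(I))$ is the unit of the adjunction $(T, S)$. Its kernel is the largest subobject of $I$ killed by $T$, which is exactly $\cH(I)$; and since $\cS(I)$ is $\cB$-closed (i.e.\ $\cH(\cS(I)) = 0$ and $\cS$ is idempotent on it), while the quotient $I/\cH(I)$ has no $\cB$-subobjects, the induced map $I/\cH(I) \to \cS(I)$ is a monomorphism with $\cB$-torsion cokernel that also must vanish because $I/\cH(I)$ is already saturated (here one uses that $T$ is exact and $TS = \id$, so $T(I/\cH(I)) \cong T(I)$, and then that both $I/\cH(I)$ and $\cS(I)$ are the image of $T(I)$ under $S$ up to the unit). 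Thus $I/\cH(I) \cong \cS(I)$, and combining with the previous paragraph yields the desired short exact sequence with both $\cH(I)$ and $\cS(I)$ injective.

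Finally, for the injectivity of $T(I) \in \cA/\cB$: since $T$ is exact and has the exact right adjoint $S$ restricted to injectives (more precisely, $S$ is right adjoint to the exact functor $T$, so $S$ preserves injectives, but we want the reverse), I would instead argue that $T$ preserves injectives because it has an exact left adjoint. Recall that in the Serre quotient setup $T$ does have a left adjoint precisely when $\cB$ is localizing in the strong (Gabriel) sense; in our situation the localizing subcategories in play arise from the level filtration, and one can cite that $T$ commutes with the relevant limits. Alternatively, and more robustly: $\cS(I) = S(T(I))$ is injective in $\cA$ by what we proved, and $S$ is fully faithful with essential image the saturated objects, and $S$ reflects injectivity (an object $X$ of $\cA/\cB$ is injective iff $S(X)$ is injective in $\cA$, because $\Hom_{\cA/\cB}(-, X) \cong \Hom_{\cA}(S(-)\ \text{wait})$…). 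Let me state it as: $\Hom_{\cA/\cB}(T(-), T(I)) \cong \Hom_{\cA/\cB}(T(-), T(S(T(I)))) \cong \Hom_{\cA}(-, \cS(I))$ composed suitably, and since $\cS(I)$ is injective in $\cA$ and $T$ is exact, the functor $\Hom_{\cA/\cB}(-, T(I))$ is exact on $\cA/\cB$; hence $T(I)$ is injective. I expect the main obstacle to be getting this last adjunction chain exactly right — specifically, justifying that exactness of $\Hom_{\cA}(-, \cS(I))$ on $\cA$ transfers to exactness of $\Hom_{\cA/\cB}(-, T(I))$ on $\cA/\cB$, which hinges on $T$ being essentially surjective and exact and on the unit/counit identities. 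Everything else is a routine diagram chase, and the splitting of $\cH(I) \hookrightarrow I$ is the structural heart of the statement, all of which is already contained in \cite[\S 4]{symu1}, so I would ultimately just cite \cite[Proposition~4.something]{symu1} for the bulk and spell out only the adaptation to the notation here.
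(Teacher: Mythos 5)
The paper's own proof of this proposition is a citation to \cite[Proposition~4.3]{symu1}, so there is no in-text argument to compare against line by line; your sketch is effectively a reconstruction. The first two steps are correct and essentially the standard route. For the identification $I/\cH(I) \cong \cS(I)$ you can be cleaner than the cokernel-chasing you gesture at: the summand $I/\cH(I)$ is injective (direct summand of $I$) and has $\cH(I/\cH(I))=0$ (any $\cB$-subobject would pull back to a $\cB$-subobject of $I$ properly containing $\cH(I)$), so $\Ext^i_{\cA}(N, I/\cH(I))=0$ for all $i\ge 0$ and $N\in\cB$ — vanishing for $i=0$ by torsion-freeness, for $i\ge 1$ by injectivity — whence $I/\cH(I)$ is derived saturated by Proposition~\ref{prop:dersatcrit}, and the unit map identifies it with $\cS(I/\cH(I)) = \cS(I)$.

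The third part contains a genuine false start that you should not leave in. In the Serre-quotient setup, $\cB$ being localizing gives $T$ a \emph{right} adjoint $S$; it does not give $T$ a left adjoint, and your parenthetical recollection is backwards. Consequently ``$T$ preserves injectives because it has an exact left adjoint'' is a dead end here, not merely inelegant. Your pivot to the adjunction chain is the right move, and the obstacle you flag — transferring exactness of $\Hom_{\cA}(-,\cS(I))$ to exactness of $\Hom_{\cA/\cB}(-,T(I))$ — is real but closes by a standard fact you did not invoke: every subobject of $T(M)$ in $\cA/\cB$ is (isomorphic to) $T(M')$ for some $M'\subseteq M$, so every short exact sequence in $\cA/\cB$ is, up to isomorphism, the image under $T$ of a short exact sequence in $\cA$. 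Granting that, for $J:=\cS(I)$ (injective and saturated, with $T(J)\cong T(I)$), the natural isomorphism $\Hom_{\cA/\cB}(T(M),T(J))\cong\Hom_{\cA}(M,J)$ of Proposition~\ref{prop:satcrit}(b) converts exactness of $\Hom_{\cA}(-,J)$ into exactness of $\Hom_{\cA/\cB}(-,T(J))$ on all short exact sequences of $\cA/\cB$, which is what injectivity of $T(I)$ means. With the false start deleted and that lifting fact cited, your sketch is a complete and correct proof.
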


\begin{proof}
See \cite[Proposition~4.3]{symu1}.
\end{proof}

\begin{proposition} \label{prop:sattors}
For $M \in \cB$, the natural map $\rR \cH(M) \to M$ is an isomorphism (that is, $\cH(M)=M$ and $\rR^i \cH(M)=0$ for $i>0$) and $\rR \cS(M)=0$.
\end{proposition}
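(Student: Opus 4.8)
\textbf{Proposal for the proof of Proposition~\ref{prop:sattors}.}
The statement is essentially formal, and the plan is to read it off from the definitions of the localization functor and the section functor, together with the general facts about Serre quotients collected in the appendix. First I would recall that $\cB$ is by definition the kernel of the localization functor $T \colon \cA \to \cA/\cB$, so for $M \in \cB$ we have $T(M)=0$; hence $\cS(M)=S(T(M))=S(0)=0$, and the same reasoning applies to any injective resolution of $M$, giving $\rR\cS(M)=0$. (More carefully: one should observe that if $M \to I^{\bullet}$ is an injective resolution in $\cA$, then $\rR\cS(M)$ is computed by $S(T(I^{\bullet}))$; one cannot immediately conclude this vanishes termwise, so instead I would argue that $\rR\cS(M)$ is the cone-completion of the triangle in Proposition~\ref{prop:sattri} and deduce its vanishing from the statement about $\rR\cH(M)$, see below, or directly invoke the adjunction $\rR S \circ \rR T$ and $\rR T(M)=0$ for $M \in \cB$ since $\cB$ is localizing and thus closed under the relevant operations.)

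For the claim $\rR\cH(M)\xrightarrow{\ \sim\ } M$, the key point is that $\cH \colon \cA \to \cA$ takes values in $\cB$ and restricts to the identity on $\cB$: indeed $\cH(M)$ is the maximal subobject of $M$ lying in $\cB$, so if $M$ itself lies in $\cB$ then $\cH(M)=M$. It remains to see $\rR^i\cH(M)=0$ for $i>0$ when $M\in\cB$. Here I would use condition \eqref{cond:inj}: choose an injective resolution $M \to I^{\bullet}$ in the category $\cB$ (which exists since $\cB$, being localizing in a Grothendieck category, is itself Grothendieck and has enough injectives); by \eqref{cond:inj} each $I^j$ remains injective in $\cA$, so $I^{\bullet}$ is also an injective resolution of $M$ in $\cA$ and may be used to compute $\rR\cH(M)$. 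But $\cH(I^j)=I^j$ for each $j$ since $I^j\in\cB$, so the complex $\cH(I^{\bullet})$ equals $I^{\bullet}$, whose cohomology is $M$ in degree $0$ and vanishes elsewhere. This gives $\rR\cH(M)=M$ as desired.

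Finally, the two computations fit together consistently with Proposition~\ref{prop:sattri}: the triangle $\rR\cH(M)\to M \to \rR\cS(M)\to$ becomes $M \xrightarrow{\ \mathrm{id}\ } M \to \rR\cS(M)\to$, forcing $\rR\cS(M)=0$, which is an alternative (and cleaner) way to obtain the vanishing of $\rR\cS(M)$ once the local cohomology statement is established. I do not anticipate a genuine obstacle here; the only mild subtlety is the standard one of making sure that an injective resolution computed in $\cB$ is legitimate for computing the derived functors of $\cH$ and $\cS$ on $\cA$, which is exactly what hypothesis \eqref{cond:inj} is designed to guarantee, so I would simply cite \cite[\S 4]{symu1} (where this setup is developed) for the bookkeeping.
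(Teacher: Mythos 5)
Your proposal is correct and follows essentially the same route as the paper: take an injective resolution $M \to I^{\bullet}$ inside $\cB$, use \eqref{cond:inj} to see it also serves as an injective resolution in $\cA$, and observe that $\cH$ fixes each $I^j$ termwise. One small simplification: the hedge about not being able to conclude termwise vanishing of $\cS(I^{\bullet})$ is unnecessary, because once the resolution is chosen in $\cB$, each $I^j$ lies in $\cB$, so $\cS(I^j)=S(T(I^j))=S(0)=0$ termwise; this is exactly how the paper concludes $\rR\cS(M)=0$ directly, without routing through the triangle of Proposition~\ref{prop:sattri}. Your triangle argument is a valid alternative, but the direct observation is shorter.
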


\begin{proof}
Let $M \to I^{\bullet}$ be an injective resolution in $\cB$, which is an injective resolution in $\cA$ by \eqref{cond:inj}. Thus $\rR \cH(M)$ is computed by $\cH(I^{\bullet})$. But this is just $I^{\bullet}$, since each term belongs to $\cB$, and is thus quasi-isomorphic to $M$ (via the canonical map). Similarly, $\rR \cS(M)$ is compted by $\cS(I^{\bullet})$, and each term vanishes.
\end{proof}

\begin{proposition} \label{prop:satcrit}
Let $M \in \cA$. The following are equivalent:
\begin{enumerate}
\item $M$ is saturated.
\item $\Hom_{\cA}(N, M)=\Hom_{\cA/\cB}(T(N), T(M))$ for all $N \in \cA$.
\item $\Ext^i_{\cA}(N, M)=0$ for $i=0,1$ and all $N \in \cB$.
\end{enumerate}
Moreover, if $\cB$ is locally noetherian then the above are also equivalent to:
\begin{enumerate}
\setcounter{enumi}{3}
\item $\Ext^i_{\cA}(N, M)=0$ for $i=0,1$ and all $N \in \cB^{\fgen}$.
\end{enumerate}
\end{proposition}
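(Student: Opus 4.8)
The plan is to treat this as pure category theory about a localizing subcategory $\cB \subseteq \cA$, organized entirely around the unit $\eta_M \colon M \to \cS(M) = S(T(M))$ of the adjunction $(T,S)$. The key preliminary I would record first is that $\ker(\eta_M)$ and $\coker(\eta_M)$ both lie in $\cB$: since $T$ is exact and the counit $TS \to \id$ is an isomorphism, the triangle identity forces $T(\eta_M)$ to be an isomorphism, and $\cB = \ker(T)$. For (a)~$\Leftrightarrow$~(b), note that the natural comparison map $\Hom_\cA(N,M) \to \Hom_{\cA/\cB}(T(N),T(M))$ is the composite of $\Hom_\cA(N,M) \to \Hom_\cA(N,\cS(M))$ (induced by $\eta_M$) with the adjunction isomorphism $\Hom_\cA(N,\cS(M)) \xrightarrow{\sim} \Hom_{\cA/\cB}(T(N),T(M))$; one checks via the triangle identities that this composite is ``apply $T$''. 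Thus (b) says that $\eta_M$ induces a bijection on $\Hom_\cA(N,-)$ for every $N$, which by Yoneda means $\eta_M$ is an isomorphism, i.e.\ (a).

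For (a)~$\Rightarrow$~(c), suppose $M$ is saturated, so $M \cong S(X)$ with $X = T(M)$. For $N \in \cB$ the adjunction gives $\Hom_\cA(N, S(X)) \cong \Hom_{\cA/\cB}(T(N), X) = 0$ since $T(N)=0$. The $\Ext^1$-vanishing is the one step needing a real idea: given an extension $0 \to S(X) \xrightarrow{\iota} E \to N \to 0$ with $N \in \cB$, applying the exact functor $T$ makes $T(\iota)$ an isomorphism (its cokernel is $T(N)=0$); naturality of $\eta$ then gives $\eta_E \circ \iota = S(T(\iota)) \circ \eta_{S(X)}$, and both factors on the right are isomorphisms ($\eta_{S(X)}$ by the triangle identity, $S(T(\iota))$ because $T(\iota)$ is), so $\iota$ is a split monomorphism and the extension is trivial. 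For (c)~$\Rightarrow$~(a), apply the vanishing of $\Hom_\cA(-,M)$ on $\cB$ to the inclusion $\ker(\eta_M) \hookrightarrow M$ (legitimate since $\ker(\eta_M) \in \cB$) to get $\ker(\eta_M)=0$; then $0 \to M \xrightarrow{\eta_M} \cS(M) \to D \to 0$ with $D := \coker(\eta_M) \in \cB$ is a class in $\Ext^1_\cA(D,M)=0$, so it splits, $\cS(M) \cong M \oplus D'$ with $D' \cong D \in \cB$; finally the inclusion $D' \hookrightarrow \cS(M) = S(T(M))$ lies in $\Hom_\cA(D', S(T(M))) \cong \Hom_{\cA/\cB}(T(D'),T(M)) = 0$, so $D'=0$ and $\eta_M$ is an isomorphism.

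For the ``moreover'', (c)~$\Rightarrow$~(d) is trivial, so assume $\cB$ locally noetherian and (d), and prove (c). Given $N \in \cB$, local noetherianity lets us write $N$ as the directed union of its finitely generated (indeed noetherian) subobjects $N_\alpha$, each lying in $\cB^{\fgen}$ because $\cB$ is closed under subobjects; then $\Hom_\cA(N,M) = \varprojlim_\alpha \Hom_\cA(N_\alpha,M) = 0$. Given an extension $0 \to M \to E \xrightarrow{p} N \to 0$, put $E_\alpha = p^{-1}(N_\alpha)$; by (d) each $0 \to M \to E_\alpha \to N_\alpha \to 0$ splits, and since its splittings form a torsor under $\Hom_\cA(N_\alpha,M)=0$ the splitting $s_\alpha \colon N_\alpha \to E_\alpha$ is unique. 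Uniqueness makes the family compatible (for $\alpha \le \beta$, $s_\beta|_{N_\alpha}$ lands in $E_\alpha$ and splits $E_\alpha \to N_\alpha$, hence equals $s_\alpha$), so the $s_\alpha$ glue to a splitting $N = \varinjlim N_\alpha \to E$, giving $\Ext^1_\cA(N,M)=0$. I expect the only real obstacle to be the $\Ext^1$-vanishing step in (a)~$\Rightarrow$~(c) — extracting a retraction of $\iota$ from the unit map and keeping track of which arrows are genuinely isomorphisms; the rest is formal manipulation of the adjunction $(T,S)$ together with $\cA$ being a Grothendieck category.
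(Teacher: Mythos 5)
Your proposal is correct, and it departs from the paper's proof in two respects. For the equivalence (a)\,$\Leftrightarrow$\,(b)\,$\Leftrightarrow$\,(c) the paper simply cites Gabriel; you instead give a self-contained argument built entirely out of the triangle identities and the exactness of $T$, with the central observation that the comparison map $\Hom_\cA(N,M)\to\Hom_{\cA/\cB}(T(N),T(M))$ factors through $(\eta_M)_*$, together with a clean diagram chase (naturality of $\eta$ plus $T(\iota)$ being an isomorphism) that produces the retraction of $\iota$ for $\Ext^1$-vanishing. For (d)\,$\Rightarrow$\,(c) the paper invokes $\rR\Hom_\cA(N,M)=\rR\varprojlim\rR\Hom_\cA(N_i,M)$ and reads off the vanishing from the derived limit (the $\rR^1\varprojlim$ correction term dies because the $\Hom$'s already vanish); you replace this by the elementary observation that for each finitely generated $N_\alpha\subset N$ the pulled-back extension $0\to M\to E_\alpha\to N_\alpha\to 0$ not only splits but splits \emph{uniquely} (the splittings form a torsor under $\Hom_\cA(N_\alpha,M)=0$), which forces compatibility and allows gluing over the filtered system. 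Your version has the advantage of being self-contained and avoiding any appeal to derived limits; the paper's version is shorter but leans on an external reference and a (correct but unelaborated) derived-limit identity. Both are sound.
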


\begin{proof}
The equivalence of (a) and (c) is is \cite[p.~371]{gabriel}. The equivalence of (b) and (c) is \cite[p.~370, Lemma]{gabriel}. Suppose now that $\cB$ is locally noetherian. Obviously, (c) implies (d). Conversely, suppose that (d) holds. Let $N \in \cB$ and let $\{N_i\}_{i \in I}$ be the set of finitely generated subobjects of $N$. Then
\begin{displaymath}
\rR \Hom_{\cA}(N, M)=\rR \Hom_{\cA}(\varinjlim N_i, M) = \rR \varprojlim \rR \Hom_{\cA}(N_i, M).
\end{displaymath}
Since $\rR^i \Hom_{\cA}(N_i, M)=0$ for $i=0,1$ by our assumption, it follows that $\rR^i \Hom_{\cA}(N, M)=0$ for $i=0,1$, and so (c) holds.
\end{proof}

\begin{proposition} \label{prop:dersatcrit}
Let $M \in \cA$. The following are equivalent:
\begin{enumerate}
\item $M$ is derived saturated.
\item $\Ext^i_{\cA}(N, M)=0$ for all $i \ge 0$ and all $N \in \cB$.
\end{enumerate}
Moreover, if $\cB$ is locally noetherian then the above are also equivalent to:
\begin{enumerate}
\setcounter{enumi}{2}
\item $\Ext^i_{\cA}(N, M)=0$ for all $i \ge 0$ and all $N \in \cB^{\fgen}$.
\end{enumerate}
\end{proposition}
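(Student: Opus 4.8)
The plan is to extract everything from the canonical triangle of Proposition~\ref{prop:sattri},
\[
\rR\cH(M) \to M \to \rR\cS(M) \to,
\]
by showing that an object of $\cB$ ``sees'' only the local cohomology term. First I would record the key vanishing: for every $N \in \cB$ one has $\rR\Hom_{\cA}(N, \rR\cS(M)) = 0$. Indeed, $T$ is exact and preserves injectives (Proposition~\ref{prop:satinj}), so $\rR\cS = \rR S \circ T$ and $\rR S$ is right adjoint to $T$ on derived categories; since $T(N) = 0$, the derived adjunction gives $\rR\Hom_{\cA}(N, \rR S(T(M))) = \rR\Hom_{\cA/\cB}(T(N), T(M)) = 0$. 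Applying $\rR\Hom_{\cA}(N,-)$ to the triangle above then produces a natural isomorphism $\rR\Hom_{\cA}(N, \rR\cH(M)) \cong \rR\Hom_{\cA}(N, M)$ for all $N \in \cB$; call it $(\ast)$.

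Granting $(\ast)$, the implication (a)$\Rightarrow$(b) is immediate: if $M$ is derived saturated then $\rR\cH(M) = 0$ by the triangle, so $\Ext^i_{\cA}(N, M) = 0$ for all $i$ and all $N \in \cB$. For (b)$\Rightarrow$(a) I would argue by contradiction on $C := \rR\cH(M)$. Since $\cH$ is left exact, $C \in \rD^{\ge 0}(\cA)$; and taking an injective resolution $M \to I^{\bullet}$, each cohomology $\rR^i\cH(M) = H^i(\cH(I^{\bullet}))$ is a subquotient of $\cH(I^i) \in \cB$, hence lies in $\cB$. If $C \ne 0$, let $i_0$ be least with $N := \rR^{i_0}\cH(M) \ne 0$; then $N \in \cB$, and the canonical morphism $N[-i_0] = \tau_{\le i_0} C \to C$ is nonzero, so $\Hom_{\rD(\cA)}(N, C[i_0]) \ne 0$. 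But this group is $H^{i_0}$ of $\rR\Hom_{\cA}(N, C)$, which by $(\ast)$ equals $\Ext^{i_0}_{\cA}(N, M) = 0$, a contradiction. Hence $C = 0$ and $M$ is derived saturated.

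Finally, (b)$\Rightarrow$(c) is trivial, and for (c)$\Rightarrow$(b) when $\cB$ is locally noetherian I would repeat the argument from Proposition~\ref{prop:satcrit}: for $N \in \cB$ write $N = \varinjlim_i N_i$ over its finitely generated subobjects, whence $\rR\Hom_{\cA}(N, M) = \rR\varprojlim_i \rR\Hom_{\cA}(N_i, M) = 0$ because each $\rR\Hom_{\cA}(N_i, M)$ vanishes by (c). I expect the only step needing genuine care to be the middle paragraph: verifying that $\rR\cH(M)$ has cohomology concentrated in $\cB$, and then exhibiting a nonzero $\Ext$-class via the $t$-structure truncation of its bottom cohomology; everything else is formal manipulation of adjunctions and the distinguished triangle.
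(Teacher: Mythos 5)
Your proof is correct, and it takes a genuinely different route from the paper: the paper's own proof simply defers the equivalence (a)$\Leftrightarrow$(b) to an external citation (\cite[Proposition~4.7]{symu1}), whereas you supply a self-contained argument built on the canonical triangle $\rR\cH(M) \to M \to \rR\cS(M) \to$ of Proposition~\ref{prop:sattri}. Your key lemma --- the vanishing $\rR\Hom_{\cA}(N, \rR\cS(M)) = 0$ for $N \in \cB$, obtained via the derived adjunction $T \dashv \rR S$ --- is exactly the right thing, and the resulting isomorphism $(\ast)$ makes (a)$\Rightarrow$(b) immediate, while (b)$\Rightarrow$(a) follows from your truncation argument: since $C := \rR\cH(M)$ lies in $\rD^{\ge 0}(\cA)$ and has cohomology objects in $\cB$ (each $\rR^i\cH(M)$ is a subquotient of $\cH(I^i)\in\cB$, and $\cB$ is Serre), the bottom nonvanishing cohomology $N$ yields a nonzero class in $\Hom_{\rD(\cA)}(N, C[i_0])$, contradicting (b) via $(\ast)$. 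The noetherian reduction (c)$\Rightarrow$(b) matches what the paper sketches by reference to Proposition~\ref{prop:satcrit}. The one phrasing nitpick: when you say $T$ ``preserves injectives,'' you of course mean it sends injectives of $\cA$ to injectives of $\cA/\cB$ (this is what Proposition~\ref{prop:satinj} gives and what licenses $\rR\cS = \rR S \circ T$); as written the phrase could be misread as saying $T$ lands in injectives of $\cA$.
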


\begin{proof}
The equivalence of (a) and (b) is proved in \cite[Proposition~4.7]{symu1}. The equivalence of (b) and (c) is proved similar to the equivalence of (c) and (d) in Proposition~\ref{prop:satcrit}.
\end{proof}

\begin{proposition} \label{prop:satfin}
Suppose that $\cB$ is locally noetherian and that for all $N \in \cB^{\fgen}$ the functors $\Ext^i_{\cA}(N, -)$ commute with filtered colimits. Then:
\begin{enumerate}
\item A filtered colimit of saturated objects is saturated.
\item A filtered colimit of derived saturated objects is derived saturated.
\item The functor $\rR^i \cS$ and $\rR^i \cH$ commute with filtered colimits for all $i$.
\end{enumerate}
\end{proposition}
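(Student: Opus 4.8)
The plan is to obtain parts (a) and (b) directly from the Ext criteria for saturation, and then bootstrap to part (c) using the saturation triangle. For (a): since $\cB$ is locally noetherian, Proposition~\ref{prop:satcrit} says that $M$ is saturated if and only if $\Ext^i_{\cA}(N, M) = 0$ for $i = 0, 1$ and all $N \in \cB^{\fgen}$. If $\{M_j\}$ is a filtered system of saturated objects with colimit $M$, then for $N \in \cB^{\fgen}$ the hypothesis that $\Ext^i_{\cA}(N, -)$ commutes with filtered colimits gives $\Ext^i_{\cA}(N, M) = \varinjlim_j \Ext^i_{\cA}(N, M_j) = 0$ for $i = 0, 1$, so $M$ is saturated. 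Part (b) is identical, using Proposition~\ref{prop:dersatcrit} in place of Proposition~\ref{prop:satcrit}, with $i$ now ranging over all non-negative integers.

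For (c), I would first record that $\cS$ and $\cH$ themselves commute with filtered colimits. For $\cH$: any morphism from a noetherian object of $\cB$ into $\varinjlim_j M_j$ factors through some $M_j$, so, by local noetherianity of $\cB$, the object $\cH(\varinjlim_j M_j)$ is the union of the images of such morphisms and hence equals $\varinjlim_j \cH(M_j)$. For $\cS$: the localization functor $T$ inverts every morphism with kernel and cokernel in $\cB$ and $S$ is fully faithful, so $\cS(M)$ is, up to canonical isomorphism, the unique saturated object admitting a map $M \to \cS(M)$ whose kernel and cokernel lie in $\cB$; applying $\varinjlim_j$ to the four-term exact sequences $0 \to \cH(M_j) \to M_j \to \cS(M_j) \to \rR^1 \cH(M_j) \to 0$ furnished by Proposition~\ref{prop:sattri}, and using exactness of filtered colimits, closure of $\cB$ under colimits, and part (a), one sees that $\varinjlim_j \cS(M_j)$ has exactly this universal property for $M = \varinjlim_j M_j$, hence equals $\cS(M)$.

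Now, by the d\'evissage argument used in the proof of Proposition~\ref{prop:rgammacolim} (Proposition~\ref{prop:dercolimit}), together with the fact that $\cS$ commutes with filtered colimits, it remains to show that a filtered colimit of $\cS$-acyclic objects is $\cS$-acyclic. If each $M_j$ is $\cS$-acyclic then $\rR \cS(M_j) = \cS(M_j)$, and since $\rR \cS$ is idempotent (a standard property of the saturation functor) this object is derived saturated; by part (b) and the preceding paragraph, $\cS(\varinjlim_j M_j) = \varinjlim_j \cS(M_j)$ is then derived saturated. Passing to the colimit of the exact sequences above shows that the natural map $\varinjlim_j M_j \to \cS(\varinjlim_j M_j)$ has kernel and cokernel in $\cB$, on which $\rR \cS$ vanishes (Proposition~\ref{prop:sattors}); hence $\rR \cS(\varinjlim_j M_j) \cong \rR \cS(\cS(\varinjlim_j M_j)) = \cS(\varinjlim_j M_j)$ is concentrated in degree $0$, i.e. $\varinjlim_j M_j$ is $\cS$-acyclic. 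Thus $\rR \cS$, and then each $\rR^i \cS$, commutes with filtered colimits. Feeding this into the natural exact triangle $\rR \cH(M) \to M \to \rR \cS(M)$ of Proposition~\ref{prop:sattri} and applying the five lemma to the associated long exact sequences (filtered colimits being exact) gives the same statement for $\rR \cH$ and each $\rR^i \cH$.

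The only genuinely non-formal point is the last reduction, namely that $\cS$-acyclicity is preserved under filtered colimits; this is exactly where parts (a), (b), the characterization of $\cS(M)$, and the local noetherianity of $\cB$ are used in an essential way. Everything else is bookkeeping with the triangle of Proposition~\ref{prop:sattri} and the d\'evissage principle, and I expect no further difficulties.
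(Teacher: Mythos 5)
Your proof is correct, and parts (a) and (b) coincide with the paper's. For part (c) you take a genuinely different route. The paper proves that $\cS$ commutes with filtered colimits by the one-line observation that the comparison map $\phi\colon \varinjlim \cS(M_i) \to \cS(\varinjlim M_i)$ has saturated source (by part (a)) and target, and $T(\phi)$ is the identity on $\varinjlim T(M_i)$, whence $\phi$ is an isomorphism; your argument via the four-term exact sequences of Proposition~\ref{prop:sattri} and a universal characterization of $\cS(M)$ is longer but reaches the same conclusion. The more substantive divergence is in establishing that $\rR \cS$ commutes with filtered colimits. The paper invokes Proposition~\ref{prop:dercolimit} directly and so only needs to show that a filtered colimit of \emph{injective} objects is $\cS$-acyclic; this it does cheaply because, for injective $I$, there is a genuine short exact sequence $0 \to \cH(I) \to I \to \cS(I) \to 0$ with $\cS(I)$ injective (Proposition~\ref{prop:satinj}) and hence derived saturated, so part (b) finishes the job. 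You instead prove the stronger statement that a filtered colimit of arbitrary $\cS$-acyclic objects is $\cS$-acyclic, which requires the additional input that $\rR\cS$ is idempotent (so that $\rR\cS(M_j) = \cS(M_j)$ is derived saturated, not merely saturated). That idempotence is true -- $T$ is exact, $T\circ S \cong \id$, hence $T \circ \rR S \cong \id$ and $\rR\cS \circ \rR\cS \cong \rR\cS$ -- but it is not recorded anywhere in the paper, so if you go this way you should spell it out rather than call it ``a standard property.'' The paper's route avoids needing it entirely by working only with injectives. Both approaches then finish identically via the triangle of Proposition~\ref{prop:sattri} and the five lemma.
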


\begin{proof}
(a) Suppose that $\{M_i\}_{i \in \cU}$ is a filtered system of saturated objects. Let $N \in \cB^{\fgen}$. Then for $j=0,1$, we have
\begin{displaymath}
\Ext^j_{\cA}(N, \varinjlim M_i) = \varinjlim \Ext^j_{\cA}(N, M_i) = 0,
\end{displaymath}
where the first isomorphism follows from the hypothesis of the proposition, and the second from Proposition~\ref{prop:satcrit}. Thus $\varinjlim M_i$ is saturated by Proposition~\ref{prop:satcrit}.

(b) The proof is exactly the same as that of (a).

(c) We first show that $\cS$ commutes with filtered colimits. Thus let $\{M_i\}_{i \in \cU}$ be a filtered system, and consider the canonical map $\phi \colon \varinjlim \cS(M_i) \to \cS(\varinjlim M_i)$. The domain of $\phi$ is saturated by (a), while the target of $\phi$ is obviously saturated. Thus to show that $\phi$ is an isomorphism, it suffices to that $T(\phi)$ is an isomorphism. This is clear: $T(\phi)$ is simply the identity morphism of $\varinjlim T(M_i)$. (Note that $T$ commutes with colimits and satisfies $T \circ \cS=T$.)

We next show that $\rR \cS$ commutes with filtered colimits. It suffices to show that a filtered colimit of injective objects in $\cA$ is $\cS$-acyclic (Proposition~\ref{prop:dercolimit}). Thus let $\{I_i\}_{i \in \cU}$ be a filtered system of injective objects with direct limit $I$. For each $i \in I$, we have the exact sequence
\begin{displaymath}
0 \to \cH(I_i) \to I_i \to \cS(I_i) \to 0,
\end{displaymath}
and so, taking the direct limit, we have an exact sequence
\begin{displaymath}
0 \to \varinjlim \cH(I_i) \to I \to \varinjlim \cS(I_i) \to 0.
\end{displaymath}
Now, $\varinjlim \cH(I_i)$ belongs to $\cB$ and is thus $\cS$-acyclic by Proposition~\ref{prop:sattors}. Since $\cS(I_i)$ is saturated and injective (by Proposition~\ref{prop:satinj}), it is thus derived saturated. Hence $\varinjlim \cS(I_i)$ is derived saturated by (b), and, in particular, $\cS$-acyclic. We thus see that $I$ is $\cS$-acyclic as well.

Finally, we show that $\rR \cH$ commutes with filtered colimits. Let $\{M_i\}_{i \in \cU}$ be a filtered system. Consider the diagram
\begin{displaymath}
\xymatrix{
\varinjlim \rR \cH(M_i) \ar[r] \ar[d] & \varinjlim M_i \ar[r] \ar@{=}[d] & \varinjlim \rR \cS(M_i) \ar[r] \ar[d] & \\
\rR \cH(\varinjlim M_i) \ar[r] & \varinjlim M_i \ar[r] & \rR \cS(\varinjlim M_i) \ar[r] & }
\end{displaymath}
where the vertical maps are the natural ones and the rows come from Proposition~\ref{prop:sattri}. Since the middle and right vertical maps are isomorphisms, so is the middle one, which completes the proof.
\end{proof}

Suppose that $\cB'$ is a second localizing subcategory of $\cA$ that contains $\cB$ and for which $\cB' \subset \cA$ satisfies \eqref{cond:inj}. Let $\cS'$ and $\cH'$ be the saturation and zeroth local cohomology functors for $\cB'$.

\begin{proposition}
We have the following:
\begin{enumerate}
\item Any two of the functors $\{ \cS, \cH, \cS', \cH' \}$ commute.
\item We have $\cH \circ \cH'=\cH$.
\item We have $\cS \circ \cS'=\cS'$.
\item We have $\cS \circ \cH = \cS' \circ \cH' = \cS' \circ \cH=0$.
\end{enumerate}
\end{proposition}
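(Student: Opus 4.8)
The plan is to reduce everything to the abstract results already available (Propositions \ref{prop:sattri}, \ref{prop:sattors}, \ref{prop:satcrit}), together with one elementary general fact and two lemmas about the ``small'' saturation $\cS$. The general fact: for any localizing subcategory, $\rR^i\cH$ is torsion-valued, i.e.\ $\rR^i\cH(M)\in\cB$ for all $i$ — this is immediate since $\rR\cH(M)=H^\bullet(\cH(I^\bullet))$ for an injective resolution $I^\bullet$ and $\cB$ is closed under subquotients. Combined with the triangle of Proposition~\ref{prop:sattri}, the long exact sequence gives a four-term exact sequence $0\to\cH(M)\to M\to\cS(M)\to\rR^1\cH(M)\to 0$, so $M\to\cS(M)$ has kernel $\cH(M)$ and cokernel $\rR^1\cH(M)$, both in $\cB$. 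From this I extract two lemmas: (L1) if $N\in\cB'$ then $\cS(N)\in\cB'$, because $\cS(N)$ is an extension of $\rR^1\cH(N)\in\cB\subset\cB'$ by the quotient $N/\cH(N)\in\cB'$; and (L2) if $P$ is $\cB'$-torsion-free then so is $\cS(P)$, because a $\cB'$-torsion subobject of $\cS(P)$ meets the $\cB'$-torsion-free object $P$ trivially, hence embeds into $\cS(P)/P\cong\rR^1\cH(P)\in\cB$, and is therefore zero since a $\cB$-saturated object has no nonzero $\cB$-subobject (Proposition~\ref{prop:satcrit}(c) with $i=0$, applied to $\cS(P)$).

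Next I handle the easy parts. For (b) and the $\cH$--$\cH'$ half of (a): $\cH(M)\in\cB\subset\cB'$ is a subobject of $M$, so $\cH(M)\subseteq\cH'(M)$; since every $\cB$-subobject of $\cH'(M)$ is a $\cB$-subobject of $M$, we get $\cH(\cH'(M))=\cH(M)$, which is (b), while $\cH'(\cH(M))=\cH(M)$ because $\cH(M)$ is its own largest $\cB'$-subobject. For (d): $\cH(M)\in\cB$ and $\cH(M),\cH'(M)\in\cB'$, so $\rR\cS(\cH M)=0$, $\rR\cS'(\cH M)=0$, $\rR\cS'(\cH' M)=0$ by Proposition~\ref{prop:sattors} applied to $\cB$ or $\cB'$; taking $H^0$ gives $\cS\cH=\cS'\cH=\cS'\cH'=0$. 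This also disposes of all the $\cS$-with-$\cH$-type pairs of (a): $\cS\cH=\cS'\cH'=\cS'\cH=0$ by (d), and $\cH\cS=\cH\cS'=\cH'\cS'=0$ because a ($\cB$- or $\cB'$-) saturated object has no nonzero $\cB$- or $\cB'$-torsion subobject (Proposition~\ref{prop:satcrit}), and a $\cB'$-saturated object is in particular $\cB$-torsion-free. In all these cases ``commute'' just means both composites are the zero functor.

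For (c) and the $\cS$--$\cS'$ half of (a): $\cS'(M)$ is $\cB'$-saturated, so $\Ext^i_\cA(N,\cS'M)=0$ for $i=0,1$ and all $N\in\cB\subset\cB'$, whence $\cS'(M)$ is $\cB$-saturated by Proposition~\ref{prop:satcrit}; thus $\cS\circ\cS'=\cS'$, which is (c). Conversely, $M\to\cS(M)$ has kernel and cokernel in $\cB\subset\cB'$, and $\rR\cS'$ inverts any morphism with $\cB'$-torsion kernel and cokernel (factor through the image and apply Proposition~\ref{prop:sattors} to the two short exact sequences), so $\rR\cS'(M)\xrightarrow{\ \sim\ }\rR\cS'(\cS M)$ and in particular $\cS'\circ\cS=\cS'$. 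Hence $\cS$ and $\cS'$ commute, both composites being $\cS'$.

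The only genuinely nontrivial case is the pair $(\cS,\cH')$, where neither composite vanishes; the plan is to identify both $\cS(\cH'(M))$ and $\cH'(\cS(M))$ with the same subobject of $\cS(M)$. Applying the left-exact functor $\cS$ to $0\to\cH'M\to M\to M/\cH'M\to 0$ gives $0\to\cS(\cH'M)\to\cS(M)\to\cS(M/\cH'M)$, so $\cS(M)/\cS(\cH'M)$ embeds in $\cS(M/\cH'M)$, which is $\cB'$-torsion-free by (L2). Since there are no nonzero maps from a $\cB'$-torsion object to a $\cB'$-torsion-free one, the $\cB'$-torsion subobject $\cH'(\cS M)\subseteq\cS(M)$ dies in $\cS(M)/\cS(\cH'M)$, i.e.\ $\cH'(\cS M)\subseteq\cS(\cH'M)$; conversely $\cS(\cH'M)\subseteq\cS(M)$ lies in $\cB'$ by (L1), so $\cS(\cH'M)\subseteq\cH'(\cS M)$. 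Thus $\cS\cH'=\cH'\cS$ as subfunctors of $\cS$. I expect the only real care to be needed in this last pair — keeping track of which subobjects of $\cS(M)$ are being compared and verifying that the resulting isomorphism is the natural one; everything else is driven mechanically by (L1), (L2), and the saturation criterion, which in turn rest only on $\rR^i\cH$ being torsion-valued.
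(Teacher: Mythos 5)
The paper does not actually prove this proposition; it simply cites \cite[Proposition~4.11]{symu1}, so there is no in-paper argument to compare against. Your blind proof is a genuine self-contained argument, and as far as I can tell it is correct.

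A few comments on the structure. Your two preliminary lemmas (L1) and (L2), together with the observation that $\rR^i\cH$ is torsion-valued (and hence the four-term sequence $0 \to \cH(M) \to M \to \cS(M) \to \rR^1\cH(M) \to 0$), are exactly the right engine for this kind of result, and they cleanly dispatch the five pairs whose composites are either both $\cH$, both $\cS'$, or both zero, reducing everything to the single genuinely two-sided pair $(\cS,\cH')$. That last computation — sandwiching $\cH'(\cS M)$ and $\cS(\cH'M)$ between each other as subobjects of $\cS(M)$ via left-exactness of $\cS$, (L1), and (L2) — is clean and the naturality observation at the end is correct since both sides are subfunctors of $\cS$ and the comparison maps are the identity on the ambient object.

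One small thing worth surfacing when you write this up: in (L2) you treat $P$ as a subobject of $\cS(P)$, which requires the unit $P \to \cS(P)$ to be monic, i.e.\ $\cH(P)=0$. This does follow from $\cH'(P)=0$ because $\cB \subset \cB'$ forces $\cH(P) \subseteq \cH'(P)$, but it is used silently and deserves a sentence. Similarly, in the converse direction of (c) you invoke the fact that $\rR\cS'$ inverts morphisms with $\cB'$-torsion kernel and cokernel; the factor-through-the-image argument you sketch is right, but it uses that $\cB'$ is closed under subquotients (to put the image-related kernels/cokernels in $\cB'$) — fine, just worth saying. Neither of these is a gap; they are the two places where a referee would want a clause added.
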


\begin{proof}
See \cite[Proposition~4.11]{symu1}.
\end{proof}

\subsection{The case of $\cI$-modules} \label{ss:loccoh}

We now apply the theory of the previous section to the categories $\Rep(\cI)$ and $\uRep(\cI)$. For simplicity, we will only treat the smooth case, but everything proceeds in the same manner in the graded case. The inclusion $\Rep(\cI)_{\le r} \subset \Rep(\cI)$ satisfies \eqref{cond:inj} by Corollary~\ref{cor:injcond}. Recall that
\begin{displaymath}
T_{>r} \colon \Rep(\cI) \to \Rep(\cI)_{>r} = \Rep(\cI)/\Rep(\cI)_{\le r}
\end{displaymath}
is the localization functor, and $S_{>r}$ is its right adjoint. We let $\cS_{>r}=S_{>r} \circ T_{>r}$ be the saturation functor with respect to $\Rep(\cI)_{\le r}$, and we let $\cH_{\le r}$ be the zeroth local cohomology functor with respect to $\Rep(\cI)_{\le r}$. All the results of the previous section apply in the present case, though we will not restate them. (We note in particular that the hypothesis of Proposition~\ref{prop:satfin} is met by Propositions~\ref{prop:uextcolim} and~\ref{prop:extcolim}.)

We now compute the derived saturation and local cohomology of standard objects:

\begin{theorem} \label{thm:satstd}
Let $\lambda$ be a constraint word of rank~$r$. Then
\begin{displaymath}
\rR \cH_{\le s}(\bE^{\lambda}) = \begin{cases}
0 & \text{if $s<r$} \\
\bE^{\lambda} & \text{if $s \ge r$} \end{cases}
\qquad
\rR \cS_{>s}(\bE^{\lambda}) = \begin{cases}
\bE^{\lambda} & \text{if $s<r$} \\
0 & \text{if $s \ge r$} \end{cases}
\end{displaymath}
\end{theorem}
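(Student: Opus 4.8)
The strategy is to reduce everything to the two basic cases $\lambda = \whitetri$ (so $\bE^\lambda = \bA^1$, rank $1$) and $\lambda = \blacktri$ (so $\bE^\lambda = \bB^1$, rank $0$), and then propagate through the concatenation product using the decomposition $\bE^\lambda \cong \bE^{\lambda_1} \odot \cdots \odot \bE^{\lambda_t}$ from Corollary~\ref{cor:gapprod}. To make this work I first need to know how $\rR\cS_{>s}$ and $\rR\cH_{\le s}$ interact with the functor $N \odot -$ on the level of derived categories, which is why the proof of Proposition~\ref{prop:stdres} (the explicit injective resolution of $\ul{\bE}^\lambda$ built by tensoring the resolutions of the $\ul{\bB}^{a_i(\lambda)}$ together with copies of $\ul{\bA}^1$) is the essential computational input: it exhibits an injective resolution of $\bE^\lambda$ whose terms are the $\bI^\mu$ with $\mu$ of the same rank $r$ as $\lambda$. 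Applying $\cH_{\le s}$ and $\cS_{>s}$ termwise to this resolution will compute $\rR\cH_{\le s}(\bE^\lambda)$ and $\rR\cS_{>s}(\bE^\lambda)$, provided I understand these functors on the indecomposable injectives $\bI^\mu$.

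\textbf{Key steps.} First I would record the exact sequence $0 \to \cH_{\le s}(\bI^\mu) \to \bI^\mu \to \cS_{>s}(\bI^\mu) \to 0$ with both outer terms injective (Proposition~\ref{prop:satinj}), and identify them: if $\mu$ has rank $r$, then $\bI^\mu = \bJ^{a_0} \odot \bA^1 \odot \cdots \odot \bA^1 \odot \bJ^{a_r}$ has level exactly $r$ (its socle is $\bE^\mu$, of rank $r$), so for $s \ge r$ it lies in $\Rep(\cI)_{\le s}$, hence $\cH_{\le s}(\bI^\mu) = \bI^\mu$ and $\cS_{>s}(\bI^\mu) = 0$; while for $s < r$ I claim $\cH_{\le s}(\bI^\mu) = 0$, i.e. $\bI^\mu$ has no nonzero subobject of level $\le s$ — this follows because $\bI^\mu$ is the injective envelope of $\bE^\mu$, so any nonzero subobject contains a copy of $\bE^\mu$ (the socle), which has level $r > s$. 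Granting this, $\cS_{>s}$ is exact on the injectives $\bI^\mu$ when $s < r$ (it is the identity there) and $\cH_{\le s}$ is exact on them when $s \ge r$ (likewise the identity), so applying the appropriate functor termwise to the resolution $\bE^\lambda \to I^\bullet$ of Proposition~\ref{prop:stdres} — all of whose terms $I^n$ are sums of $\bI^\mu$ with $\mu$ of rank $r$ — yields: for $s \ge r$, $\rR\cH_{\le s}(\bE^\lambda)$ is computed by $I^\bullet$ itself, hence equals $\bE^\lambda$, and then $\rR\cS_{>s}(\bE^\lambda) = 0$ by the triangle of Proposition~\ref{prop:sattri} (or Proposition~\ref{prop:sattors}, since $\bE^\lambda \in \Rep(\cI)_{\le r} \subset \Rep(\cI)_{\le s}$); for $s < r$, $\rR\cS_{>s}(\bE^\lambda)$ is computed by $\cS_{>s}(I^\bullet) = I^\bullet$, hence equals $\bE^\lambda$, and then $\rR\cH_{\le s}(\bE^\lambda) = 0$ by Proposition~\ref{prop:sattri} again. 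In fact the cleaner route is: the case $s \ge r$ is immediate from Proposition~\ref{prop:sattors} applied to $\Rep(\cI)_{\le s}$; only the case $s < r$ requires the resolution argument, and there only the vanishing $\cH_{\le s}(\bI^\mu) = 0$ is needed.

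\textbf{Main obstacle.} The crux is the claim that $\cH_{\le s}(\bI^\mu) = 0$ for $s < \rank(\mu)$, equivalently that $\bI^\mu$ contains no nonzero subobject of level $\le s$. The socle argument sketched above reduces this to knowing that the socle of $\bI^\mu$ is $\bE^\mu$ (so that any nonzero subobject contains $\bE^\mu$, which has level $r$ and hence level $\not\le s$). But $\bE^\mu$ need not be simple, so ``$\bI^\mu$ is the injective envelope of $\bE^\mu$'' does not by itself force every nonzero subobject to contain $\bE^\mu$; I will instead argue that any nonzero subobject $K \subset \bI^\mu$ of level $\le s < r$ would, since $\Rep(\cI)_{\le s}$ is localizing and $\bI^\mu$ has essential socle meeting $K$, produce a nonzero map from a rank-$r$ standard module or simple constituent into $\Rep(\cI)_{\le s}$, contradicting Theorem~\ref{thm:level} (a rank-$r$ standard module, and more generally any module containing $\bE^\mu$ as a submodule, cannot have level $< r$, as its class in $\rK(\cI)$ involves $[\bE^\mu]$ by Proposition~\ref{prop:groth-lev}). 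Pinning down this contradiction cleanly — likely via Corollary~\ref{cor:groth-lev}, that $K$ has level $\le s$ iff $[K] \in \rK(\cI)_{\le s}$, combined with the fact that any nonzero $K \subset \bI^\mu$ has $\mu_?$-multiplicities forcing $[K]$ to involve a rank-$r$ basis element — is the step I expect to take the most care. Everything else is bookkeeping with Proposition~\ref{prop:stdres}, Proposition~\ref{prop:sattri}, and Proposition~\ref{prop:sattors}.
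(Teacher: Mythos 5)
Your proposal takes the same route as the paper: apply $\cH_{\le s}$ and $\cS_{>s}$ termwise to the injective resolution $\bE^{\lambda} \to I^{\bullet}$ from Proposition~\ref{prop:stdres}, whose terms are sums of $\bI^{\mu}$ with $\mu$ of rank $r$, and read off the answer (the paper's proof is exactly this, in four lines). The one step you flag as needing care --- that $\cH_{\le s}(\bI^{\mu})=0$ for $s<r$ --- the paper simply asserts, and your plan works; the cleanest version of it is this: any nonzero $K \subset \bI^{\mu}$ satisfies $K \cap \bE^{\mu} \ne 0$ by essentiality of $\bE^{\mu} \subset \bI^{\mu}$ (Theorem~\ref{thm:inj}(c)), and a nonzero submodule $N \subset \bE^{\mu}$ cannot lie in $\Rep(\cI)_{\le s}$ for $s<r$: indeed $\bE^{\mu}/N$ has level $<r$ by Theorem~\ref{thm:level}, so if $N$ also had level $<r$ then $[\bE^{\mu}]=[N]+[\bE^{\mu}/N]$ would lie in $\rK(\cI)_{<r}$, contradicting the linear independence of the standard classes (Theorem~\ref{thm:groth}, Proposition~\ref{prop:groth-bd}). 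Since $\Rep(\cI)_{\le s}$ is closed under subobjects, $K \cap \bE^{\mu}$ having level $r$ rules out $K$ having level $\le s$, which pins down the vanishing you wanted.
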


\begin{proof}
By Proposition~\ref{prop:stdres}, we have an injective resolution $\bE^{\lambda} \to I^{\bullet}$ where each $I^n$ is a sum of injective objects of the form $\bI^{\mu}$ with $\mu$ of rank $r$. For $s<r$ we have $\cH(\bI^n)=0$ for each $n$, and so $\rR \cH(\bE^{\lambda})=0$. For $s \ge r$ we have $\cH(\bI^n)=\bI^n$ for each $n$, and so $\rR \cH(\bE^{\lambda})=\bE^{\lambda}$. The result for $\rR \cS_{>s}$ now follows from Proposition~\ref{prop:sattri}.
\end{proof}

\begin{corollary}
Suppose that $\lambda$ is a constraint word of rank $r$ and $M$ is a smooth $\cI$-module of level $<r$. Then $\Ext^i_{\cI}(M, \bE^{\lambda})=0$ for all $i \ge 0$.
\end{corollary}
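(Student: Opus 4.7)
The plan is to deduce this immediately from Theorem~\ref{thm:satstd} together with the abstract criterion for derived saturation in Proposition~\ref{prop:dersatcrit}. The point is that Theorem~\ref{thm:satstd} says much more than a vanishing of $\Ext$ groups: it asserts that $\bE^{\lambda}$ is derived $\cS_{>s}$-saturated for every $s<r$, and this saturation property is \emph{equivalent} to $\Ext$-orthogonality against the whole localizing subcategory $\Rep(\cI)_{\le s}$.

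More concretely, I would first set $s = \lev(M)$, so $s < r$ by hypothesis, and in particular $M \in \Rep(\cI)_{\le s}$. From Theorem~\ref{thm:satstd} applied to this value of $s$, the natural map $\bE^{\lambda} \to \rR \cS_{>s}(\bE^{\lambda})$ is an isomorphism; equivalently, $\rR \cH_{\le s}(\bE^{\lambda}) = 0$, so $\bE^{\lambda}$ is derived saturated with respect to the localizing subcategory $\cB = \Rep(\cI)_{\le s}$. Then I would invoke Proposition~\ref{prop:dersatcrit} (whose hypothesis \eqref{cond:inj} holds for $\cB \subset \Rep(\cI)$ by Corollary~\ref{cor:injcond}, and for which $\cB$ is locally noetherian by Theorem~\ref{thm:noeth}): derived saturation of $\bE^{\lambda}$ is equivalent to the vanishing $\Ext^i_{\cI}(N, \bE^{\lambda}) = 0$ for all $i \ge 0$ and all $N \in \cB$. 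Taking $N = M$ gives the desired conclusion.

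There is essentially no obstacle here, since all the work has already been done: Theorem~\ref{thm:satstd} is the substantive input, and the equivalence in Proposition~\ref{prop:dersatcrit} is purely formal. The only thing to double-check is that Proposition~\ref{prop:dersatcrit} is applicable without finite generation hypotheses on $M$, and indeed it is not, provided we use version (b) (vanishing against all $N \in \cB$) rather than (c); alternatively, one can reduce to the finitely generated case by writing $M$ as a filtered colimit of finitely generated submodules, all of which again have level $< r$, and using Proposition~\ref{prop:extcolim} to commute $\Ext^i_{\cI}(-, \bE^{\lambda})$ with the colimit.
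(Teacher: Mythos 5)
Your proof is correct and is essentially the paper's own argument: Theorem~\ref{thm:satstd} gives derived saturation of $\bE^{\lambda}$ with respect to the relevant level subcategory, and Proposition~\ref{prop:dersatcrit} converts this into the $\Ext$-vanishing. The extra care you take about finite generation (using criterion (b), or commuting with filtered colimits) is a reasonable precaution but does not change the route.
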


\begin{proof}
Theorem~\ref{thm:satstd} shows that $\bE^{\lambda}$ is derived saturated with respect to $\Rep(\cI)_{<r}$, and so the result follows from Proposition~\ref{prop:dersatcrit}.
\end{proof}

\begin{corollary} \label{cor:exactsat}
The restriction of $\cS_{\ge r}$ to $\Rep(\cI)_{\le r}$ is exact.
\end{corollary}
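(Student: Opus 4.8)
The plan is to prove the slightly stronger statement that $\rR^i \cS_{\ge r}(M)=0$ for all $i \ge 1$ whenever $M \in \Rep(\cI)_{\le r}$; here $\cS_{\ge r}$ is the saturation functor with respect to $\Rep(\cI)_{<r}=\Rep(\cI)_{\le r-1}$, i.e.\ $\cS_{\ge r}=\cS_{>r-1}$ in the notation of \S\ref{ss:loccoh}. Since $\cS_{\ge r}$ is left exact, this vanishing gives exactness of the restriction: for a short exact sequence $0 \to A \to B \to C \to 0$ in $\Rep(\cI)_{\le r}$, the long exact sequence for $\rR^\bullet \cS_{\ge r}$ reads $0 \to \cS_{\ge r}(A) \to \cS_{\ge r}(B) \to \cS_{\ge r}(C) \to \rR^1\cS_{\ge r}(A) \to \cdots$, so $\rR^1\cS_{\ge r}(A)=0$ forces $0 \to \cS_{\ge r}(A) \to \cS_{\ge r}(B) \to \cS_{\ge r}(C) \to 0$ to be exact. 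Thus everything reduces to the asserted vanishing, which I would establish by d\'evissage to standard modules.

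For the base of the d\'evissage, let $\bE^{\lambda}$ have rank $\rho \le r$. By Theorem~\ref{thm:satstd} (applied with $s=r-1$), $\rR\cS_{\ge r}(\bE^{\lambda})$ equals $\bE^{\lambda}$ if $\rho=r$ and $0$ if $\rho \le r-1$; in either case $\rR^i\cS_{\ge r}(\bE^{\lambda})=0$ for $i \ge 1$. Next I would treat an arbitrary nonzero submodule $A \subseteq \bE^{\lambda}$ with $\rho \le r$. If $\rho \le r-1$, then $A$ lies in $\Rep(\cI)_{<r}$, so $\rR\cS_{\ge r}(A)=0$ by Proposition~\ref{prop:sattors}. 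If $\rho=r$ and $A \ne \bE^{\lambda}$, then the proper quotient $\bE^{\lambda}/A$ has level $<r$ by Theorem~\ref{thm:level}, hence $\rR\cS_{\ge r}(\bE^{\lambda}/A)=0$ by Proposition~\ref{prop:sattors}; feeding $0 \to A \to \bE^{\lambda} \to \bE^{\lambda}/A \to 0$ into the long exact sequence for $\rR^\bullet\cS_{\ge r}$ and using the vanishing of $\rR^i\cS_{\ge r}(\bE^{\lambda})$ for $i \ge 1$ and of every $\rR^j\cS_{\ge r}(\bE^{\lambda}/A)$ yields $\rR^i\cS_{\ge r}(A)=0$ for $i \ge 1$ (the case $A=\bE^{\lambda}$ is already covered).

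With the base cases in hand, I would first treat a finitely generated $M \in \Rep(\cI)_{\le r}$: by Proposition~\ref{prop:levfilt2} it admits a finite filtration whose successive quotients are submodules of standard modules of rank $\le r$, and an induction on the filtration length using the long exact sequences gives $\rR^i\cS_{\ge r}(M)=0$ for $i \ge 1$. For general $M \in \Rep(\cI)_{\le r}$, write $M$ as the filtered colimit of its finitely generated subobjects, each of which again lies in $\Rep(\cI)_{\le r}$ since that subcategory is closed under subobjects. Because $\rR^i\cS_{\ge r}$ commutes with filtered colimits (Proposition~\ref{prop:satfin}(c), whose hypotheses hold by Propositions~\ref{prop:uextcolim} and~\ref{prop:extcolim}), the vanishing passes to $M$, completing the proof. (As a byproduct, Proposition~\ref{prop:sattri} then shows $\cS_{\ge r}$ maps $\Rep(\cI)_{\le r}$ into itself, though this is not needed.)

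The computations are routine once the ingredients are assembled; the only conceptually essential point is the submodule-of-a-standard-module base case, where Theorem~\ref{thm:level} is precisely what makes the quotient vanish under $\rR\cS_{\ge r}$ — this is the heart of the statement. The reduction from finitely generated to arbitrary modules is a standard filtered-colimit argument leaning on the finiteness results (Propositions~\ref{prop:uextcolim} and~\ref{prop:extcolim}) that let $\rR^i\cS_{\ge r}$ commute with filtered colimits.
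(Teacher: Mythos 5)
Your proof is correct and follows essentially the same route as the paper: acyclicity of $\bE^{\lambda}$ via Theorem~\ref{thm:satstd}, propagation to submodules via Theorem~\ref{thm:level} and the long exact sequence, d\'evissage to all finitely generated modules of level $\le r$ via Proposition~\ref{prop:levfilt2}, and passage to arbitrary modules via the filtered-colimit statement in Proposition~\ref{prop:satfin}(c). The only difference is that you spell out the two sub-cases (rank $<r$ vs.\ rank $=r$ standard modules) separately, while the paper folds the rank $<r$ case into the blanket observation that $\Rep(\cI)_{<r}$ is $\cS_{\ge r}$-acyclic; this is a presentational difference, not a mathematical one.
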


\begin{proof}
If $M \in \Rep(\cI)_{<r}$ then $\rR \cS_{\ge r}(M)=0$ by Proposition~\ref{prop:sattors}. Suppose now that $M$ is a non-zero submodule of $\bE^{\lambda}$ with $\lambda$ of rank $r$. Consider the exact sequence
\begin{displaymath}
0 \to M \to \bE^{\lambda} \to N \to 0.
\end{displaymath}
Then $N$ belongs to $\Rep(\cI)_{<r}$ by Theorem~\ref{thm:level}, and so $\rR \cS_{\ge r}(N)=0$. By Theorem~\ref{thm:satstd}, we have $\rR^i \cS_{\ge r}(\bE^{\lambda})=0$ for $i>0$. We thus see that $\rR^i \cS_{\ge r}(M)=0$ for $i>0$, that is $M$ is $\cS_{\ge r}$-acyclic. It now follows from Proposition~\ref{prop:levfilt2} that every finitely generated smooth $\cI$-module of level $\le r$ is $\cS_{\ge r}$-acyclic. Since $\rR \cS_{\ge r}$ commutes with filtered colimits by Proposition~\ref{prop:satfin}(c), we see that all smooth $\cI$-modules of level $
\le r$ are $\cS_{\ge r}$-acyclic, and so the corollary follows.
\end{proof}


\begin{example}
Consider the map $f \colon \bA^2 \to \bA^1$ given by $f(e_{1,2})=e_2$. Let $K$ be the kernel and $C$ the cokernel. Then $C \cong \bB^1$, and thus has level~0. We thus see that the sequence
\begin{displaymath}
0 \to T_{>0}(K) \to T_{>0}(\bA^2) \to T_{>0}(\bA^1) \to 0
\end{displaymath}
is exact in $\Rep(\cI)_{>0}$. Applying $S_{>0}$, and using the fact that $\bA^2$ and $\bA^1$ are derived $\cS_{>0}$-saturated (Theorem~\ref{thm:satstd}), we obtain an exact sequence
\begin{displaymath}
0 \to K \to \bA^2 \to \bA^1 \to \rR^1 \cS_{>0}(K) \to 0.
\end{displaymath}
We thus see that $\rR^1 \cS_{>0}(K) \cong \bB^1$. In particular, this shows that the section and saturation functors are not exact in general.
\end{example}

\subsection{Semistandard modules}

Fix $r \in \bN$. We say that a finitely generated smooth $\cI$-module $M$ is {\bf $r$-semistandard} if it admits a filtration $0=F^0 \subset \cdots \subset F^n=M$ by $\cI$-submodules such that $F^i/F^{i-1}$ is a standard module of rank $r$ for each $1 \le i \le n$. We say that a general smooth $\cI$-module is {\bf $r$-semistandard} if it is the direct union of finitely generated $r$-semistandard submodules. We let $\Rep(\cI)^{\rss}$ be the full subcategory of $\Rep(\cI)$ spanned by the $r$-semistandard modules. We make analogous definitions in the graded case. Throughout this section we work in the smooth case, but all results have analogs in the graded case.

\begin{example}
Let $\lambda$ be a constraint word of rank $r$. Then both $\bE^{\lambda}$ and $\bI^{\lambda}$ are $r$-semistandard.
\end{example}

\begin{proposition} \label{prop:sschar}
Let $M$ be a smooth $\cI$-module. The following are equivalent:
\begin{enumerate}
\item $M$ is $r$-semistandard.
\item $M$ has level $\le r$ and is $\cS_{\ge r}$-saturated.
\item $M$ has level $\le r$ and is derived $\cS_{\ge r}$-saturated.
\item $M$ admits an injection resolution $M \to I^{\bullet}$ where each $I^n$ is a sum of modules of the form $\bI^{\lambda}$ with $\lambda$ of rank $r$.
\end{enumerate}
\end{proposition}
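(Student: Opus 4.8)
The plan is to prove the implications (b)$\Leftrightarrow$(c), (a)$\Rightarrow$(c), (c)$\Rightarrow$(d), (d)$\Rightarrow$(c), and finally (b)$\Rightarrow$(a). Throughout, write $\cS_{\ge r}=\cS_{>r-1}$ for the saturation with respect to $\cB=\Rep(\cI)_{\le r-1}$. Three preliminary remarks do most of the work. (i) If $M$ is $\cS_{\ge r}$-saturated then $\cH_{\le r-1}(M)=0$, since the cohomology sequence of the triangle in Proposition~\ref{prop:sattri} gives $\cH_{\le r-1}(M)=\ker(M\to\cS_{\ge r}(M))$ and this map is an isomorphism. (ii) For $\rank\lambda=r$, both $\bE^{\lambda}$ and $\bI^{\lambda}$ are derived $\cS_{\ge r}$-saturated: for $\bE^{\lambda}$ this is Theorem~\ref{thm:satstd} (taking $s=r-1<r$); for $\bI^{\lambda}$, since $\bE^{\lambda}$ is essential in $\bI^{\lambda}$ (Theorem~\ref{thm:inj}) and $\cH_{\le r-1}(\bE^{\lambda})=0$, any nonzero submodule of $\bI^{\lambda}$ of level $<r$ would meet $\bE^{\lambda}$, so $\cH_{\le r-1}(\bI^{\lambda})=0$, whence Proposition~\ref{prop:satinj} makes $\bI^{\lambda}\to\cS_{\ge r}(\bI^{\lambda})$ an isomorphism, and an injective object is $\cS_{\ge r}$-acyclic. (iii) By Corollary~\ref{cor:exactsat} the functor $\cS_{\ge r}$ is exact on $\Rep(\cI)_{\le r}$, so $\rR^{i}\cS_{\ge r}$ vanishes there for $i>0$; as (b) and (c) both contain the hypothesis $\lev(M)\le r$, this yields (b)$\Leftrightarrow$(c) immediately.

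For (d)$\Rightarrow$(c): the injection $M\hookrightarrow I^{0}$ forces $\lev(M)\le r$, and since the $I^{n}$ are $\cS_{\ge r}$-acyclic and $\cS_{\ge r}$-saturated by (ii), the complex $\cS_{\ge r}(I^{\bullet})$ computes $\rR\cS_{\ge r}(M)$ and equals $I^{\bullet}$, so $M$ is derived $\cS_{\ge r}$-saturated. For (a)$\Rightarrow$(c): a finitely generated $r$-semistandard module is an iterated extension of rank-$r$ standard modules, hence derived $\cS_{\ge r}$-saturated, because derived saturated objects are stable under extensions (five lemma for the triangle of Proposition~\ref{prop:sattri}); a general $r$-semistandard module is a filtered colimit of these, hence derived $\cS_{\ge r}$-saturated by Proposition~\ref{prop:satfin}(b), and it has level $\le r$. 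For (c)$\Rightarrow$(d): let $I^{0}$ be the injective envelope of $M$; by local noetherianity it is a direct sum of indecomposable injectives, each of the form $\bI^{\mu}$ (Theorem~\ref{thm:inj}). Essentiality together with $\cH_{\le r-1}(M)=0$ forces $\rank\mu=r$ for every summand: such a summand meets $M$, hence meets the essential submodule $\bE^{\mu}$, so $\bE^{\mu}$ has a nonzero submodule of level $\le r$, which gives $\rank\mu\le r$ by Theorem~\ref{thm:level}; and if $\rank\mu<r$ that submodule would be a submodule of $M$ of level $<r$, contradicting (i). The cokernel $I^{0}/M$ again has level $\le r$ and is derived $\cS_{\ge r}$-saturated (saturation triangle), so iterating produces the desired resolution.

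The crux is (b)$\Rightarrow$(a). First reduce to $M$ finitely generated: writing $M$ as the union of its finitely generated submodules $N$, replace each $N$ by $\cS_{\ge r}(N)$, which lies in $\cS_{\ge r}(M)=M$, is $\cS_{\ge r}$-saturated, has level $\le r$, and is finitely generated because $\cS_{\ge r}(N)/N$ embeds into $\rR^{1}\cH_{\le r-1}(N)$, which is finitely generated by Theorem~\ref{thm:injres} (compute with a finite resolution of $N$ by finitely generated injectives). Thus $M$ is a directed union of finitely generated $\cS_{\ge r}$-saturated modules of level $\le r$, and it suffices to treat those. For $M$ finitely generated, of level $\le r$, and $\cS_{\ge r}$-saturated, induct on the non-negative integer $\Sigma_{r}(M)=\sum_{\rank\lambda=r}c_{\lambda}$, where $[M]=\sum_{\lambda}c_{\lambda}[\bE^{\lambda}]$ in $\rK(\cI)$; non-negativity, and the fact that $\Sigma_{r}(M)>0$ exactly when $\lev(M)=r$, come from Proposition~\ref{prop:groth-lev}. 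If $\Sigma_{r}(M)=0$ then $\lev(M)<r$, so $M=\cH_{\le r-1}(M)=0$ by (i). Otherwise $\lev(M)=r$, and Proposition~\ref{prop:levfilt2} supplies a nonzero $F\subseteq M$ with an injection $F\hookrightarrow\bE^{\lambda}$, $\rank\lambda=r$ (the rank is $r$ since $F\subseteq M$ has no submodule of level $<r$). Since $\bE^{\lambda}/F$ has level $<r$ (Theorem~\ref{thm:level}) it is annihilated by $T_{>r-1}$, so $\cS_{\ge r}(F)\cong\cS_{\ge r}(\bE^{\lambda})=\bE^{\lambda}$; hence $E:=\cS_{\ge r}(F)\subseteq\cS_{\ge r}(M)=M$ is a rank-$r$ standard submodule. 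Then $M/E$ is finitely generated, of level $\le r$, and derived $\cS_{\ge r}$-saturated (saturation triangle), with $\Sigma_{r}(M/E)=\Sigma_{r}(M)-1$, so by induction $M/E$ is $r$-semistandard, and prepending $E$ gives an $r$-semistandard filtration of $M$.

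The main obstacle is exactly this last implication. Two points are delicate: the reduction to the finitely generated case depends essentially on the finiteness input of Theorem~\ref{thm:injres}, and the inductive step turns on the observation that the categorical saturation $\cS_{\ge r}(F)$ of a submodule $F\hookrightarrow\bE^{\lambda}$ is the entire standard module $\bE^{\lambda}$ — which is precisely where the hypothesis that $M$ be $\cS_{\ge r}$-saturated enters — together with the choice of $\Sigma_{r}$ as a strictly decreasing termination measure. Everything else is formal manipulation with the saturation triangle, the classification of injectives from Theorem~\ref{thm:inj}, and Proposition~\ref{prop:satfin}. (As elsewhere in this section, the graded case is identical.)
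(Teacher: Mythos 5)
Your proof is correct, and it departs from the paper's in the two most substantive implications. For (c)$\Rightarrow$(d) the paper takes an injective resolution of $T_{\ge r}(M)$ in the quotient category $\Rep(\cI)_r$ and applies the section functor $S_{\ge r}$, using Proposition~\ref{prop:satinj} to identify the terms as $\ul{\bI}^{\lambda}$'s; you instead iterate injective envelopes directly in $\Rep(\cI)$, using essentiality and $\cH_{\le r-1}(M)=0$ to pin down the ranks of the summands and the saturation triangle to pass the hypotheses to the cokernel. The resulting resolution is the same but the bookkeeping lives in a different place. For (b)$\Rightarrow$(a) the divergence is larger. The paper establishes as an intermediate claim that $S_{\ge r}(N)$ is $r$-semistandard for every $N\in\Rep(\cI)_r$, by running a d\'evissage through the simple objects $T_{\ge r}(\ul{\bE}^{\lambda})$ of the locally-finite-length category $\Rep(\cI)_r$ (Proposition~\ref{prop:levcat1}), then writes $M=S_{\ge r}(T_{\ge r}(M))$. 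You instead stay entirely inside $\Rep(\cI)$: first reduce to $M$ finitely generated by showing $\cS_{\ge r}(N)$ is finitely generated for $N\subset M$ finitely generated (which pulls in the finiteness of $\rR^1\cH_{\le r-1}$ via Theorem~\ref{thm:injres}), and then do a downward induction on the Grothendieck-group invariant $\Sigma_r$, peeling off a copy of a rank-$r$ standard module $\bE^{\lambda}=\cS_{\ge r}(F)\subset\cS_{\ge r}(M)=M$ at each step. This trades the paper's use of the structure of $\Rep(\cI)_r$ for a termination measure coming from Theorem~\ref{thm:groth} and Proposition~\ref{prop:groth-lev}. Both routes rely on the same hard input (Theorem~\ref{thm:satstd}, Corollary~\ref{cor:exactsat}, Theorem~\ref{thm:level}); yours has the minor advantage of exhibiting the filtration explicitly and not needing to know the simple objects of the quotient category, at the cost of the finite-generation lemma for $\cS_{\ge r}(N)$, which the paper avoids by working with finite-length objects of $\Rep(\cI)_r$ instead. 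Your direct derivation of (b)$\Leftrightarrow$(c) from Corollary~\ref{cor:exactsat} is also a clean shortcut the paper doesn't take (it gets (c)$\Rightarrow$(b) trivially and routes (b)$\Rightarrow$(c) through (a)). No gaps.
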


\begin{proof}
Suppose that $M$ is finitely generated and $r$-semistandard. Then $M$ is a successive extension of standard modules of rank $r$. As such standard modules are derived $\cS_{\ge r}$-saturated by Theorem~\ref{thm:satstd}, we see that $M$ is too. Now suppose that $M$ is an arbitrary $r$-semistandard module. Write $M=\varinjlim M_i$ where each $M_i$ is a finitely generated $r$-semistandard submodule of $M$. Since each $M_i$ is derived $\cS_{\ge r}$-saturated, we see that $M$ is as well by Proposition~\ref{prop:satfin}(b). Thus (a) implies (c).

We now claim that for any $N \in \Rep(\cI)_r$, the module $S_{\ge r}(N)$ is $r$-semistandard. Recall that the category $\Rep(\cI)_r$ is locally of finite length, and the simple objects have the form $T_{\ge r}(\bE^{\lambda})$ with $\lambda$ of rank $r$ (Proposition~\ref{prop:levcat1}). First suppose that $N$ has finite length. Thus it is a successive extension of objects of the form $T_{\ge r}(\bE^{\lambda})$ with $\lambda$ of rank $r$. We have $\rR^i S_{\ge r}(T_{\ge r}(\bE^{\lambda}))=\rR^i \cS_{\ge r}(\bE^{\lambda})$, and this is $\bE^{\lambda}$ for $i=0$ and vanishes for $i>0$. We thus see that $S_{\ge r}(N)$ is a successive extension of modules of the form $\bE^{\lambda}$ with $\lambda$ of rank $r$, and is therefore $r$-semistandard. We now treat the general case. Write $N=\varinjlim N_i$ (filtered colimit of subobjects) where the $N_i$ are finite length. Since $S_{\ge r}$ commutes with filtered colimits (an easy consequence of Proposition~\ref{prop:satfin}(c)), we see that $S_{\ge r}(N)=\varinjlim S_{\ge r}(N_i)$ (filtered colimit of subobjects). As each $S_{\ge r}(N_i)$ is $r$-semistandard, we see that $S_{\ge r}(N)$ is $r$-semistandard.

Now suppose that $M$ has level $\le r$ and is $\cS_{\ge r}$-saturated. Then $M=\cS_{\ge r}(M)=S_{\ge r}(T_{\ge r}(M))$. As $T_{\ge r}(M) \in \Rep(\cI)_r$, we see that $M$ is $r$-semistandard by the previous paragraph. Thus (b) implies (a). It is obvious that (c) implies (b).

Again, suppose that $M$ has level $\le r$ and is derived $\cS_{\ge r}$-saturated. Let $T_{\ge r}(M) \to J^{\bullet}$ be an injective resolution in $\Rep(\cI)$. Then $M \to S_{\ge r}(J^{\bullet})$ is exact, since $M$ is derived saturated, and each $S_{\ge r}(J^n)$ is a sum of modules of the form $\bI^{\lambda}$ with $\lambda$ of rank $r$. Thus (c) implies (d).

Finally, suppose that $M \to I^{\bullet}$ is an injective resolution as in (d). Thus $\rR \cS_{\ge r}(M)$ is computed by $\cS_{\ge r}(I^{\bullet})$, but this equals $I^{\bullet}$ since each $I^n$ is $\cS_{\ge r}$-saturated. We thus see that $M \to \rR \cS_{\ge r}(M)$ is a quasi-isomorphism, and so (c) holds.
\end{proof}

\begin{theorem} \label{thm:ss}
The category $\Rep(\cI)^{\rss}$ of $r$-semistandard objects is an abelian subcategory of $\Rep(\cI)$ that is closed under extensions and filtered colimits. Moreover, the functors $T_{\ge r}$ and $S_{\ge r}$ induce mutually quasi-inverse equivalences between $\Rep(\cI)^{\rss}$ and $\Rep(\cI)_r$.
\end{theorem}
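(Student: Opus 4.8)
The plan is to deduce the entire theorem from the characterizations of $r$-semistandard modules in Proposition~\ref{prop:sschar} together with the exactness of saturation in Corollary~\ref{cor:exactsat}. The key point is characterization (b): a smooth $\cI$-module $M$ is $r$-semistandard if and only if it has level $\le r$ and is $\cS_{\ge r}$-saturated. Thus $\Rep(\cI)^{\rss}$ is exactly the intersection, inside $\Rep(\cI)$, of the localizing subcategory $\Rep(\cI)_{\le r}$ with the reflective subcategory of $\cS_{\ge r}$-saturated objects, and I will verify the three closure properties against this description.

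First I would treat the closure properties. For filtered colimits: $\Rep(\cI)_{\le r}$ is closed under filtered colimits because it is localizing, and a filtered colimit of $\cS_{\ge r}$-saturated objects is $\cS_{\ge r}$-saturated by Proposition~\ref{prop:satfin}(a) (its hypothesis being met by Propositions~\ref{prop:uextcolim} and~\ref{prop:extcolim}); so $\Rep(\cI)^{\rss}$ is closed under filtered colimits. For extensions: given $0 \to M' \to M \to M'' \to 0$ with $M', M'' \in \Rep(\cI)^{\rss}$, the module $M$ has level $\le r$ since $\Rep(\cI)_{\le r}$ is extension-closed; applying $\cS_{\ge r}$, which is exact on $\Rep(\cI)_{\le r}$ by Corollary~\ref{cor:exactsat}, and comparing $0 \to \cS_{\ge r}(M') \to \cS_{\ge r}(M) \to \cS_{\ge r}(M'') \to 0$ with the original sequence via the canonical maps (isomorphisms on the outer terms), the five lemma forces $M \to \cS_{\ge r}(M)$ to be an isomorphism, so $M$ is $r$-semistandard. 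For the abelian-subcategory claim: for a morphism $f \colon M \to N$ in $\Rep(\cI)^{\rss}$, the objects $\ker f$ and $\coker f$ computed in $\Rep(\cI)$ lie in $\Rep(\cI)_{\le r}$; applying the exact functor $\cS_{\ge r}$ to $0 \to \ker f \to M \to N \to \coker f \to 0$ and using $\cS_{\ge r}(M)\cong M$, $\cS_{\ge r}(N)\cong N$, uniqueness of kernels and cokernels shows $\ker f$ and $\coker f$ are $\cS_{\ge r}$-saturated, hence $r$-semistandard. So $\Rep(\cI)^{\rss}$ is closed under kernels and cokernels in $\Rep(\cI)$ and the inclusion is exact --- this is precisely where exactness of $\cS_{\ge r}$ is essential, since for a general Serre quotient the category of saturated objects is abelian only in its own right.

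For the equivalence I would invoke the standard Serre-quotient formalism for the localization $T_{\ge r}\colon \Rep(\cI) \to \Rep(\cI)_{>r-1}$ with section $S_{\ge r}$: the counit $T_{\ge r}\circ S_{\ge r} \to \id$ is an isomorphism and $S_{\ge r}$ is fully faithful with essential image the $\cS_{\ge r}$-saturated objects. By characterization (b), $\Rep(\cI)^{\rss}$ is the full subcategory of saturated objects of level $\le r$; the proof of Proposition~\ref{prop:sschar} already shows $S_{\ge r}$ sends $\Rep(\cI)_r$ into $\Rep(\cI)^{\rss}$, and conversely any $N \in \Rep(\cI)_{>r-1}$ with $S_{\ge r}(N)$ of level $\le r$ satisfies $N \cong T_{\ge r}(S_{\ge r}(N)) \in \Rep(\cI)_r$. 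Hence $S_{\ge r}$ restricts to an equivalence $\Rep(\cI)_r \to \Rep(\cI)^{\rss}$ with quasi-inverse $T_{\ge r}$, the unit $M \to S_{\ge r}(T_{\ge r}(M)) = \cS_{\ge r}(M)$ being an isomorphism for $M \in \Rep(\cI)^{\rss}$ again by (b). The graded case is identical with $\Rep$ replaced by $\uRep$ throughout.

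The main obstacle I anticipate is bookkeeping around the Serre-quotient formalism: making sure that "$S_{\ge r}$ has essential image the saturated objects" and "the counit is an isomorphism" are applied to the correct localizing subcategory ($\Rep(\cI)_{\le r-1}$, not $\Rep(\cI)_{\le r}$), and that the restriction to level $\le r$ on one side matches the restriction to $\Rep(\cI)_r$ on the other. All the genuinely non-formal input --- exactness of saturation on $\Rep(\cI)_{\le r}$, and the fact that sections of objects of $\Rep(\cI)_r$ are $r$-semistandard --- is already established in Corollary~\ref{cor:exactsat} and in the proof of Proposition~\ref{prop:sschar}, so no new computation should be needed.
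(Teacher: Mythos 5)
Your proposal is correct and follows essentially the same route as the paper's proof: both deduce the abelian-subcategory claim by applying the exact functor $\cS_{\ge r}$ to the four-term sequence $0 \to \ker f \to M \to N \to \coker f \to 0$ and comparing with the original via the unit maps (isomorphisms on $M$ and $N$ by Proposition~\ref{prop:sschar}), both get extension and filtered-colimit closure from Propositions~\ref{prop:sschar} and~\ref{prop:satfin}, and both establish the equivalence via the Serre-quotient formalism, with $M \to \cS_{\ge r}(M)$ and $T_{\ge r} S_{\ge r} \to \id$ as the unit and counit. Your explicit flagging that the relevant localizing subcategory for the formalism is $\Rep(\cI)_{\le r-1}$ rather than $\Rep(\cI)_{\le r}$ is a useful clarification but does not change the argument.
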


\begin{proof}
Let $f \colon M \to N$ be a map of $r$-semistandard objects. Let $K$ be the kernel and $C$ the cokernel. Then $K$ and $C$ are both of level $\le r$. Since $\cS_{\ge r}$ is exact on $\Rep(\cI)_{\le r}$ (Corollary~\ref{cor:exactsat}), we see that the sequence
\begin{displaymath}
0 \to \cS_{\ge r}(K) \to \cS_{\ge r}(M) \to \cS_{\ge r}(N) \to \cS_{\ge r}(C) \to 0
\end{displaymath}
is exact. Consider the diagram
\begin{displaymath}
\xymatrix{
0 \ar[r] & K \ar[r] \ar[d] & M \ar[r]^f \ar[d] & N \ar[r] \ar[d] & C \ar[r] \ar[d] & 0 \\
0 \ar[r] & \cS_{\ge r}(K) \ar[r] & \cS_{\ge r}(M) \ar[r] & \cS_{\ge r}(N) \ar[r] & \cS_{\ge r}(C) \ar[r] & 0 }
\end{displaymath}
The middle two vertical maps are isomorphisms by Propsoition~\ref{prop:sschar}. Thus the outer two vertical maps are isomorphisms as well, and so $K$ and $C$ are $\cS_{\ge r}$-saturated. Since $K$ and $C$ are obviously of level $\le r$, we see that they are $r$-semistandard by Proposition~\ref{prop:sschar}. Thus $\Rep(\cI)^{\rss}$ is an abelian subcategory of $\Rep(\cI)$. It is closed under extensions and filtered colimits by Proposition~\ref{prop:sschar} and~\ref{prop:satfin}(a).

It follows from Proposition~\ref{prop:sschar} that $T_{\ge r}$ carries $\Rep(\cI)^{\rss}$ into $\Rep(\cI)_r$, and that $S_{\ge r}$ carries $\Rep(\cI)_r$ into $\Rep(\cI)^{\rss}$. For $M \in \Rep(\cI)^{\rss}$, the natural map $M \to S_{\ge r}(T_{\ge r}(M))=\cS_{\ge r}(M)$ is an isomorphism by Proposition~\ref{prop:sschar}. For $N \in \Rep(\cI)_r$, the natural map $T_{\ge r}(S_{\ge r}(N)) \to N$ is an isomorphism by general properties of Serre quotients. We thus see that $T_{\ge r}$ and $S_{\ge r}$ are mutually quasi-inverse equivalences.
\end{proof}

\begin{remark}
For $r>0$, a subquotient of an $r$-semistandard module is typically not $r$-semistandard: e.g., any proper quotient of a standard object of rank $r$ is not $r$-semistandard, since it has rank $<r$ by Theorem~\ref{thm:level}. Thus $\Rep(\cI)^{\rss}$ is not a localizing subcategory of $\Rep(\cI)$.
\end{remark}

\begin{corollary}
The forgetful functor $\Phi$ induces an equivalence $\uRep(\cI)^{\rss} \to \Rep(\cI)^{\rss}$.
\end{corollary}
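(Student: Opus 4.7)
The plan is to deduce this from two equivalences already established: the level equivalence $\Phi \colon \uRep(\cI)_r \to \Rep(\cI)_r$ of Proposition~\ref{prop:levequiv}, and the identification of $r$-semistandard categories with level categories via $T_{\ge r}/S_{\ge r}$ in Theorem~\ref{thm:ss}. All the work is in assembling a commutative square of equivalences.

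First I would check that $\Phi$ actually takes $\uRep(\cI)^{\rss}$ into $\Rep(\cI)^{\rss}$. If $M \in \uRep(\cI)^{\rss}$ is finitely generated, choose a filtration with graded pieces $\ul{\bE}^{\lambda_j}$ of rank $r$; applying the exact functor $\Phi$ gives a filtration of $\Phi(M)$ with pieces $\bE^{\lambda_j}$, so $\Phi(M)$ is (smooth) $r$-semistandard. For general $M$, write $M$ as the filtered colimit of its finitely generated $r$-semistandard submodules and use that $\Phi$ is cocontinuous (Proposition~\ref{prop:Phi-cont}) together with the fact that $\Rep(\cI)^{\rss}$ is closed under filtered colimits (Theorem~\ref{thm:ss}).

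Next I would establish the key commutativity. Since $\Phi$ is exact and sends $\uRep(\cI)_{\le r-1}$ into $\Rep(\cI)_{\le r-1}$ (Proposition~\ref{prop:levelfunc}(a)), the universal property of Serre quotients produces an induced functor $\ol{\Phi} \colon \uRep(\cI)_r \to \Rep(\cI)_r$ fitting into a natural isomorphism $T_{>r-1} \circ \Phi \cong \ol{\Phi} \circ \ul{T}_{>r-1}$, and $\ol{\Phi}$ is an equivalence by Proposition~\ref{prop:levequiv}. For fully faithfulness of $\Phi$ on $\uRep(\cI)^{\rss}$, I would use that for saturated objects the Hom-sets in $\uRep(\cI)$ and in the Serre quotient agree (Proposition~\ref{prop:satcrit}(b)): given $M,N \in \uRep(\cI)^{\rss}$, the chain
\[
\uHom_{\cI}(M,N) \cong \Hom_{\uRep(\cI)_r}(\ul{T}_{>r-1} M, \ul{T}_{>r-1} N) \cong \Hom_{\Rep(\cI)_r}(T_{>r-1} \Phi(M), T_{>r-1} \Phi(N)) \cong \Hom_{\cI}(\Phi(M),\Phi(N))
\]
gives the required bijection, where the middle isomorphism uses that $\ol{\Phi}$ is an equivalence and the outer ones use saturatedness of $N$ and $\Phi(N)$.

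For essential surjectivity, given $N \in \Rep(\cI)^{\rss}$, choose $M' \in \uRep(\cI)_r$ with $\ol{\Phi}(M') \cong T_{>r-1}(N)$ and set $M = \ul{S}_{\ge r}(M') \in \uRep(\cI)^{\rss}$. Then $\ul{T}_{>r-1}(M) \cong M'$ by the counit isomorphism of the localization adjunction, so $T_{>r-1}(\Phi(M)) \cong \ol{\Phi}(M') \cong T_{>r-1}(N)$. Since both $\Phi(M)$ and $N$ are $\cS_{\ge r}$-saturated (by step one and Proposition~\ref{prop:sschar}), applying $S_{\ge r}$ yields $\Phi(M) \cong N$. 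The only potential obstacle is bookkeeping around the compatibility of $\Phi$ with the Serre quotient and section functors, but both come out of the universal property and exactness once they are set up carefully; there is no genuinely hard analytic input.
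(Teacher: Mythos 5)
Your proof is correct and takes essentially the same route as the paper: both use the commuting square relating $\Phi$ on the $r$-semistandard categories to the induced functor on the level quotients $\uRep(\cI)_r \to \Rep(\cI)_r$, together with the equivalences from Theorem~\ref{thm:ss} and Proposition~\ref{prop:levequiv}. You unpack the paper's ``three of the four arrows in a commuting square are equivalences, so the fourth is too'' step into explicit full-faithfulness (via saturatedness and Proposition~\ref{prop:satcrit}(b)) and essential surjectivity arguments, but the underlying logic is identical.
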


\begin{proof}
It is clear that $\Phi$ maps $\uRep(\cI)^{\rss}$ into $\Rep(\cI)^{\rss}$. Moreover, the diagram
\begin{displaymath}
\xymatrix{
\uRep(\cI)^{\rss} \ar[r]^{\Phi} \ar[d]_{\ul{T}_{\ge r}} &
\Rep(\cI)^{\rss} \ar[d]^{T_{\ge r}} \\
\uRep(\cI)_r \ar[r]^{\Phi} &
\Rep(\cI)_r }
\end{displaymath}
commutes, up to isomorphism. The vertical maps are equivalences by Theorem~\ref{thm:ss}, while the bottom map is an equivalence by Proposition~\ref{prop:levequiv}. Thus the top map is an equivalence, which yields the result.
\end{proof}

\subsection{Semi-orthogonal decompositions} \label{ss:semiorth}

Given a triangulated category $\cD$ and triangulated subcategories $\cT_0, \cT_1, \ldots$, we say that the $\cT$'s give a {\bf semi-orthogonal decomposition} of $\cD$ if the following two conditions hold:
\begin{enumerate}
\item For $X \in \cT_i$ and $Y \in T_j$ with $i<j$, we have $\Hom_{\cD}(X,Y)=0$.
\item The $\cT_i$ generate $\cD$ as a triangulated category.
\end{enumerate}
In this case, we write $\cD=\langle \cT_0, \cT_1, \ldots \rangle$.

\begin{theorem} \label{thm:semiorth}
Let $\cD=\rD^b_{\fgen}(\Rep(\cI))$. For $r \ge 0$, let $\cT_r$ be the full subcategory of $\cD$ spanned by objects $M$ such that $\rH^i(M) \in \Rep(\cI)^{\rss}$ for all $i$. Then:
\begin{enumerate}
\item $\cT_r$ is a triangulated subcategory of $\cD$.
\item We have an equivalence of triangulated categories $\cT_r \cong \rD^b_{\fgen}(\Rep(\cI)^{\rss})$.
\item We have a semi-orthogonal decomposition $\cD=\langle \cT_0, \cT_1, \ldots \rangle$.
\end{enumerate}
The analogous result holds in the graded case.
\end{theorem}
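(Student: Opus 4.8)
The plan is to prove Theorem~\ref{thm:semiorth} by combining the structural results on $r$-semistandard modules (Theorem~\ref{thm:ss}) with the level filtration on the derived category (Theorem~\ref{thm:dergen} and the theorem on level), and to deduce each of the three claims more or less in sequence.

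For part~(a), I would first observe that $\cT_r$, being defined by a condition on cohomology objects that is closed under the formation of kernels, cokernels, and extensions (this is precisely the content of Theorem~\ref{thm:ss}, which says $\Rep(\cI)^{\rss}$ is an abelian subcategory closed under extensions), is automatically a triangulated subcategory of $\cD$: if $X \to Y \to Z \to$ is a triangle with $X, Y \in \cT_r$, the long exact cohomology sequence exhibits each $\rH^i(Z)$ as built from subquotients and extensions of objects $\rH^j(X), \rH^j(Y) \in \Rep(\cI)^{\rss}$, and since $\Rep(\cI)^{\rss}$ is abelian inside $\Rep(\cI)$, each such subquotient lies in $\Rep(\cI)^{\rss}$; hence $Z \in \cT_r$. (Here one uses that $\Rep(\cI)^{\rss}$ is a \emph{full} abelian subcategory, so kernels/cokernels computed in $\Rep(\cI)$ agree with those in $\Rep(\cI)^{\rss}$.) Closure under shifts is trivial.

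For part~(b), the natural functor is realization: the inclusion $\Rep(\cI)^{\rss} \hookrightarrow \Rep(\cI)$ induces $\rD^b(\Rep(\cI)^{\rss}) \to \cD$, landing in $\cT_r$ by definition. To see it is an equivalence onto $\cT_r$, I would use the equivalence $S_{\ge r} \colon \Rep(\cI)_r \xrightarrow{\sim} \Rep(\cI)^{\rss}$ of Theorem~\ref{thm:ss} together with the fact that $\cS_{\ge r}$ is exact on $\Rep(\cI)_{\le r}$ (Corollary~\ref{cor:exactsat}), so that $S_{\ge r}$ extends to an exact functor of derived categories that is quasi-inverse to $\rR T_{\ge r}$ on the relevant pieces; alternatively, one invokes a standard criterion (e.g.\ along the lines of the results in Appendix~\ref{s:catbg}) that $\rD^b(\cB) \to \rD^b(\cA)$ is fully faithful with image the objects having cohomology in $\cB$ whenever $\cB \subset \cA$ is a full abelian subcategory closed under extensions such that $\Ext$-groups computed in $\cB$ and $\cA$ agree; the $\Ext$-agreement follows from derived $\cS_{\ge r}$-saturation of $r$-semistandard modules (Proposition~\ref{prop:sschar}) and Proposition~\ref{prop:dersatcrit}. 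One must also check boundedness and finite generation match up, which is routine since $r$-semistandard modules are finitely generated exactly when they have a finite standard filtration.

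For part~(c), the semi-orthogonality condition~(i) is the key computation: for $X \in \cT_i$, $Y \in \cT_j$ with $i < j$, we need $\Hom_{\cD}(X, Y) = 0$. Reducing to cohomology objects via the hypercohomology spectral sequence, it suffices to show $\Ext^n_{\cI}(M, N) = 0$ for all $n$ when $M$ is $i$-semistandard and $N$ is $j$-semistandard with $i < j$. Now $M$ has level $\le i < j$, while $N$, being $j$-semistandard, is derived $\cS_{\ge j}$-saturated (Proposition~\ref{prop:sschar}), hence by Proposition~\ref{prop:dersatcrit} all $\Ext^n_{\cI}(L, N)$ vanish for $L \in \Rep(\cI)_{< j}$ — and $M \in \Rep(\cI)_{\le i} \subset \Rep(\cI)_{< j}$. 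This gives condition~(i). For condition~(ii), that the $\cT_r$ generate $\cD$: by Theorem~\ref{thm:dergen} the standard modules generate $\cD$, and $\bE^{\lambda}$ of rank $r$ lies in $\cT_r$ (it is $r$-semistandard), so every standard module — hence $\cD$ — is generated by the $\cT_r$'s. This settles~(c), and the graded case is identical, using the graded analogs of all cited results.

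The main obstacle I expect is part~(b): establishing that $\rD^b_{\fgen}(\Rep(\cI)^{\rss}) \to \cT_r$ is an equivalence, rather than merely fully faithful onto a subcategory, requires care because $\Rep(\cI)^{\rss}$ is \emph{not} a localizing (Serre) subcategory of $\Rep(\cI)$ (see the Remark after Theorem~\ref{thm:ss}), so the usual localization-theoretic machinery does not apply directly. The cleanest route is to transport everything through the equivalence $\Rep(\cI)^{\rss} \simeq \Rep(\cI)_r$ and work with the Serre quotient category $\Rep(\cI)_r = \Rep(\cI)_{\le r}/\Rep(\cI)_{\le r-1}$, where one can use that $S_{\ge r}$ has derived functor computed on the $\cS_{\ge r}$-acyclic $r$-semistandard objects; one then checks that the composite $\rD^b_{\fgen}(\Rep(\cI)^{\rss}) \to \cT_r \to \rD^b_{\fgen}(\Rep(\cI)_r)$ and its putative inverse are mutually quasi-inverse. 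The $\Ext$-vanishing inputs needed here are exactly those assembled in Theorem~\ref{thm:satstd} and Proposition~\ref{prop:sschar}, so no genuinely new computation is required — only careful bookkeeping with derived saturation.
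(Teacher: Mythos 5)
Parts (a) and (c) of your argument match the paper's: for (a) you use closure of $\Rep(\cI)^{\rss}$ under extensions from Theorem~\ref{thm:ss}, and for (c) you use derived $\cS_{\ge s}$-saturation of $s$-semistandard objects (Proposition~\ref{prop:sschar}) together with the vanishing of $T_{\ge s}$ on low-level objects, then generation from Theorem~\ref{thm:dergen}. That is exactly what the paper does.

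Part (b) is where you diverge, and your route has a gap at exactly the step you flagged as the main obstacle. You propose deducing the derived equivalence from an abstract criterion that needs $\Ext$-groups in $\Rep(\cI)^{\rss}$ and in $\Rep(\cI)$ to agree, and you claim this agreement ``follows from derived $\cS_{\ge r}$-saturation and Proposition~\ref{prop:dersatcrit}.'' But derived saturation gives $\Ext^n_{\Rep(\cI)}(M,N)\cong\Ext^n_{\Rep(\cI)_{\ge r}}(T_{\ge r}M,T_{\ge r}N)$; to close the loop you still need this to agree with the Yoneda $\Ext$ taken \emph{inside} $\Rep(\cI)^{\rss}$, equivalently inside the locally finite category $\Rep(\cI)_r$, and comparing $\Ext$ in a locally finite subcategory with $\Ext$ in its ambient Grothendieck category requires its own injectivity argument that your sketch does not supply; your alternative route via $\rR T_{\ge r}$ and $S_{\ge r}$ runs into the same issue. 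The tool you never cite, and which lets the paper settle (b) in one stroke, is Proposition~\ref{prop:sschar}(d): every $r$-semistandard module admits an injective resolution whose terms are sums of $\bI^{\lambda}$ with $\lambda$ of rank $r$, and each such $\bI^{\lambda}$ is injective both in $\Rep(\cI)$ and in $\Rep(\cI)^{\rss}$ (the latter via Theorem~\ref{thm:ss} and Proposition~\ref{prop:satinj}). Then $\Hom$ in $\rD^b_{\fgen}(\Rep(\cI)^{\rss})$ and in $\rD^b_{\fgen}(\Rep(\cI))$ are both computed by the same homotopy-category $\Hom$ between such resolutions, giving fully faithfulness directly, and the existence of such a resolution for any object of $\cT_r$ gives essential surjectivity. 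Inserting this observation would repair your argument for (b).
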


\begin{proof}
(a) This follows from the fact that $\Rep(\cI)^{\rss}$ is an abelian subcategory of $\Rep(\cI)$ that is closed under extensions (Theorem~\ref{thm:ss}).

(b) The inclusion $\Rep(\cI)^{\rss} \to \Rep(\cI)$ induces a functor $F \colon \rD^b_{\fgen}(\Rep(\cI)^{\rss}) \to \cT_r$. We first claim that $F$ is fully faithful. To see this, suppose that $M$ and $N$ are objects in $\rD^b_{\fgen}(\Rep(\cI)^{\rss})$. We can then find quasi-isomorphisms $M \to I$ and $N \to J$ where $I^n$ and $J^n$ are sums of modules of the form $\bI^{\lambda}$ with $\lambda$ of rank $r$. We thus see that $\Hom(M, N)$, in either $\rD^b_{\fgen}(\Rep(\cI)^{\rss})$ or in $\rD^b_{\fgen}(\Rep(\cI))$, is computed by $\Hom_{\cK}(I, J)$, where $\cK$ is the homotopy category of complexes, and so $F$ is fully faithful.

We now show that $F$ is essentially surjective. Let $M \in \cT_r$ be given. By Proposition~\ref{prop:sschar}(d), and basic homological algebra, we can find a quasi-isomorphism $M \to I$ where each $I^n$ is a sum of modules of the form $\bI^{\lambda}$ with $\lambda$ of rank $r$. Thus $I$ defines an object of $\rD^b_{\fgen}(\Rep(\cI)^{\rss})$ and we have an isomorphism $F(I) \cong M$. Thus $F$ is essentially surjective.

(c) Suppose that $M$ and $N$ are $\cI$-modules that are $r$- and $s$-semistandard, respectively, with $r<s$. By Proposition~\ref{prop:sschar}(c), $N$ is derived $\cS_{\ge s}$-saturated, and so the natural map
\begin{displaymath}
\rR \Hom_{\cI}(M, N) \to \rR \Hom_{\Rep(\cI)_{\ge s}}(T_{\ge s}(M), T_{\ge s}(N))
\end{displaymath}
is an isomorphism. Since $T_{\ge s}(M)=0$, we thus find that $\rR \Hom_{\cI}(M, N)=0$. It follows that $\Hom_{\cD}(X, Y)=0$ for $X \in \cT_r$ and $Y \in \cT_s$. Since the $\cT$'s generate $\cD$ (by Theorem~\ref{thm:dergen}, for example), the result follows.
\end{proof}

\section{Structure of level categories} \label{s:levcat}

\subsection{Multigraded $\cI^r$-modules}

Let $r \in \bN$, and let $\cI^r$ denote the $r$-fold direct product $\cI \times \cdots \times \cI$. An {\bf $\bN^r$-grading} on a vector space $M$ is a decomposition $M=\bigoplus_{\ul{n} \in \bN^r} M_{\ul{n}}$. A {\bf graded $\cI^r$-module} is a vector space $M$ equipped with a $\bN^r$-grading such that: (1) for $x \in M_{\ul{n}}$ and $\sigma \in \cI^r$ we have $\sigma(x) \in M_{\sigma \ul{n}}$; and (2) if $x \in M_{\ul{n}}$ and $\sigma \in \cI^r$ fixes $\ul{n}$ then $\sigma(x)=x$. We let $\uRep(\cI^r)$ denote the category of graded $\cI^r$-modules.

If $M_1, \ldots, M_r$ are graded $\cI$-modules then their tensor product is naturally a graded $\cI^r$-module, which we denote by $M_1 \boxtimes \cdots \boxtimes M_r$. This defines a functor
\begin{displaymath}
\boxtimes \colon \uRep(\cI)^r \to \uRep(\cI^r).
\end{displaymath}
We say that an object of $\uRep(\cI^r)$ is {\bf factorizable} if it belongs to the essential image of $\boxtimes$.

\begin{proposition} \label{prop:tenhom}
Let $M_1, \ldots, M_r$ and $N_1, \ldots, N_r$ be finitely generated graded $\cI$-modules, and put $M=M_1 \boxtimes \cdots \boxtimes M_r$ and $N=N_1 \boxtimes \cdots \boxtimes N_r$. Then the natural map
\begin{displaymath}
\bigotimes_{i=1}^r \uHom_{\cI}(M_i, N_i) \to \uHom_{\cI^r}(M, N)
\end{displaymath}
is an isomorphism.
\end{proposition}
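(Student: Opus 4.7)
The plan is to reduce the statement, in stages, to the case where every $M_i$ is a graded principal module. For such modules the identity will follow by direct computation, and the general case will then be obtained by resolving each $M_i$ by principal modules and chasing left-exactness.

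First I would record the analog of Proposition~\ref{prop:principal} for $\cI^r$: for any $N \in \uRep(\cI^r)$ and any $\ul{n}=(n_1,\ldots,n_r) \in \bN^r$, the graded $\cI^r$-module $\ul{\bA}^{n_1} \boxtimes \cdots \boxtimes \ul{\bA}^{n_r}$ represents the functor $N \mapsto N_{\ul{n}}$. This is proved exactly as in the single-factor case: the element $e_{1,\ldots,n_1} \otimes \cdots \otimes e_{1,\ldots,n_r}$ lies in degree $\ul{n}$, is fixed by the stabilizer of $\ul{n}$ in $\cI^r$, and any vector in $N_{\ul{n}}$ extends uniquely to a homogeneous $\cI^r$-linear map, because $\cI^r$ acts on $N_{\ul{n}}$ through the $\cI$-actions of each factor and these are already captured by the universal property factor-by-factor. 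Specializing to $N = N_1 \boxtimes \cdots \boxtimes N_r$ gives
\begin{displaymath}
\uHom_{\cI^r}(\ul{\bA}^{n_1} \boxtimes \cdots \boxtimes \ul{\bA}^{n_r}, N_1 \boxtimes \cdots \boxtimes N_r) \cong (N_1)_{n_1} \otimes \cdots \otimes (N_r)_{n_r},
\end{displaymath}
and the corresponding isomorphism $\bigotimes_i \uHom_{\cI}(\ul{\bA}^{n_i},N_i) \cong \bigotimes_i (N_i)_{n_i}$ is the same space; one checks that under these two identifications the natural map of the proposition is the identity. Finite direct sums of such principal modules are handled by additivity in each argument on both sides.

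To pass to arbitrary finitely generated $M_i$, I would use that $\uRep(\cI)$ has enough finitely generated projectives: each $M_i$ fits into a presentation $P_i \to Q_i \to M_i \to 0$ with $P_i, Q_i$ finite sums of graded principal modules. Since $\boxtimes$ is the tensor product of underlying vector spaces followed by the canonical $\cI^r$-action, it is exact in each variable; thus
\begin{displaymath}
P_1 \boxtimes M_2 \boxtimes \cdots \boxtimes M_r \to Q_1 \boxtimes M_2 \boxtimes \cdots \boxtimes M_r \to M \to 0
\end{displaymath}
is exact in $\uRep(\cI^r)$. Applying the left exact functor $\uHom_{\cI^r}(-, N)$ yields a left exact sequence whose first two terms, by the principal case applied to $P_1$ and $Q_1$, are canonically $\bigotimes_i \uHom_\cI(-, N_i)$ with the first tensor factor being $\uHom_\cI(P_1,N_1)$ or $\uHom_\cI(Q_1,N_1)$. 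Since $\uHom_\cI(-,N_1)$ is left exact and tensoring with the fixed vector spaces $\uHom_\cI(M_i,N_i)$ for $i \ge 2$ preserves this left exactness (tensor products over $\bk$ are exact), the five-lemma (or the uniqueness of kernels) identifies $\uHom_{\cI^r}(M, N)$ with $\bigotimes_i \uHom_\cI(M_i, N_i)$ via the natural map.

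Iterating the same presentation-and-five-lemma argument in each of the remaining slots $i=2,\ldots,r$ (equivalently, inducting on $r$) finishes the proof. The only mildly subtle point is that Hom may land in an infinite-dimensional vector space, so one wants to know that the sequences stay exact under the additional tensoring; this is automatic since we are working over a field, where every module is flat. Consequently, there is no real obstacle beyond bookkeeping: the content of the proposition is the principal-module case together with the representability-plus-tensor-product identity $\uHom_\cI(\ul{\bA}^{n_i}, N_i) = (N_i)_{n_i}$.
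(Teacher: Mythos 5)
Your proof is correct and follows essentially the same route as the paper's: reduce, slot by slot, via presentations by finite sums of graded principal modules (using exactness of $\boxtimes$ in each variable, left-exactness of $\uHom$, and exactness of tensoring over $\bk$), and then verify the isomorphism directly in the all-principal case, where both sides are identified with $(N_1)_{m_1} \otimes \cdots \otimes (N_r)_{m_r}$ by the representability of the multidegree functor. The only cosmetic difference is the order of the two steps, and you should phrase the reduction as "to prove it for $(M_1,\ldots,M_r)$ it suffices to prove it for $(P_1,M_2,\ldots,M_r)$ and $(Q_1,M_2,\ldots,M_r)$" rather than invoking the principal case before all slots have been reduced.
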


\begin{proof}
Suppose that we have an exact sequence
\begin{displaymath}
M_1'' \to M_1' \to M_1 \to 0
\end{displaymath}
of finitely generated graded $\cI$-modules. Let $M'$ and $M''$ be defined like $M$, but using $M_1'$ and $M_1''$, and let $H=\bigotimes_{i=2}^r \uHom_{\cI}(M_i, N_i)$. We then get a commutative diagram
\begin{displaymath}
\xymatrix{
0 \ar[r] & \uHom_{\cI}(M_1, N_1) \otimes H \ar[r] \ar[d] & \uHom_{\cI}(M_1', N_1) \otimes H \ar[r] \ar[d] & \uHom_{\cI}(M_1'', N_1) \otimes H \ar[d] \\
0 \ar[r] & \uHom_{\cI^r}(M, N) \ar[r] & \uHom_{\cI^r}(M', N) \ar[r] & \uHom_{\cI^r}(M'', N) }
\end{displaymath}
We thus see that if the right two vertical maps are isomorphisms then so is the leftmost vertical maps. Thus to prove the proposition for $(M_1, \ldots, M_r)$ (and all $N$'s) it suffices to prove it for $(M_1', M_2, \ldots, M_r)$ and $(M_1'', M_2, \ldots, M_r)$. Obviously, the same holds for the other $M_i$'s.

We can thus reduce to the case where each $M_i$ is a finitely generated projective module. Moreover, since everything is additive, we can assume that each $M_i$ is an indecomposable projective, say $M_i=\ul{\bA}^{m_i}$. Thus $\uHom_{\cI}(M_i, N_i)$ is $(N_i)_{m_i}$, the degree $m_i$ piece of $N_i$. One easily sees that $M$ is a projective object in $\uRep(\cI^r)$, and that for any graded $\cI^r$-module $K$ we have $\uHom_{\cI^r}(M, K)=K_{m_1,\ldots,m_r}$. We thus see that
\begin{displaymath}
\uHom_{\cI^r}(M, N)=N_{m_1,\dots,m_r}=(N_1)_{m_1} \otimes \cdots \otimes (N_r)_{m_r}.
\end{displaymath}
Thus the source and target of the map in question are each identified with the same space, and one easily sees that the map in question is identified with the identity map, which proves the proposition.
\end{proof}

The functor $\boxtimes$ realizes $\uRep(\cI^r)$ as the $r$-fold tensor power of the category $\uRep(\cI)$. The precise meaning of this statement is exactly the following proposition:

\begin{proposition} \label{prop:tencat}
Let $\cT$ be a Grothendieck abelian category, and let $\sF_0 \colon \uRep(\cI)^r \to \cT$ be a functor that is cocontinuous in each variable. Then there exists a unique (up to isomorphism) functor $\sF \colon \uRep(\cI^r) \to \cT$ that is cocontinuous and for which there is a functorial isomorhpism $\sF(M_1 \boxtimes \cdots \boxtimes M_r) \cong \sF_0(M_1, \ldots, M_r)$.
\end{proposition}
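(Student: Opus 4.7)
The plan is to exploit the fact that $\uRep(\cI^r)$ admits a family of compact projective generators of factorizable form, namely the modules $\ul{\bA}^{\ul{m}} := \ul{\bA}^{m_1} \boxtimes \cdots \boxtimes \ul{\bA}^{m_r}$ for $\ul{m} \in \bN^r$. Projectivity follows from Proposition~\ref{prop:tenhom}, which identifies $\uHom_{\cI^r}(\ul{\bA}^{\ul{m}}, N)$ with the multihomogeneous piece $N_{\ul{m}}$, manifestly an exact functor of $N$. That the $\ul{\bA}^{\ul{m}}$ generate $\uRep(\cI^r)$ follows from the standard argument: every multihomogeneous element of an $\cI^r$-module is the image of some $\ul{\bA}^{\ul{m}}$, so every object is a quotient of a direct sum of factorizable projectives. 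Consequently, cocontinuous functors out of $\uRep(\cI^r)$ are determined (up to natural isomorphism) by their action on the full subcategory $\cP$ of finite direct sums of these generators.

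\textbf{Construction.} By Proposition~\ref{prop:tenhom}, morphisms between factorizable principals are given by tensor products of morphisms between the individual principal factors in $\uRep(\cI)$. Since $\sF_0$ is additive in each variable (a weak consequence of being cocontinuous in each variable), the universal property of the tensor product of vector spaces of hom-groups produces a well-defined additive functor $\sF_{\cP} \colon \cP \to \cT$ by $\sF_{\cP}(\ul{\bA}^{\ul{m}}) = \sF_0(\ul{\bA}^{m_1}, \ldots, \ul{\bA}^{m_r})$, extended additively to finite direct sums. I extend $\sF_{\cP}$ to infinite direct sums of factorizable principals using coproducts in $\cT$. Finally, for arbitrary $M \in \uRep(\cI^r)$, I choose a projective presentation $P_1 \to P_0 \to M \to 0$ with $P_0, P_1$ direct sums of factorizable principals, and set $\sF(M) := \coker\bigl(\sF_{\cP}(P_1) \to \sF_{\cP}(P_0)\bigr)$. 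A standard abstract argument—any two such presentations admit a comparison map, and maps between projective presentations lifting the identity on $M$ induce the same map on cokernels—shows that $\sF$ is a well-defined functor. Cocontinuity is automatic, since $\sF$ is built from direct sums and cokernels which are preserved by construction.

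\textbf{The isomorphism and uniqueness.} To verify $\sF(M_1 \boxtimes \cdots \boxtimes M_r) \cong \sF_0(M_1, \ldots, M_r)$, choose projective presentations $Q_i^1 \to Q_i^0 \to M_i \to 0$ in $\uRep(\cI)$ for each $i$. Since $\boxtimes$ is right-exact in each variable, one obtains (by a standard double-complex argument) a presentation of $M_1 \boxtimes \cdots \boxtimes M_r$ by direct sums of factorizable projectives; applying $\sF$ expresses its value as an iterated cokernel built from the $\sF_0(Q_1^{i_1}, \ldots, Q_r^{i_r})$. Exploiting the cocontinuity of $\sF_0$ in each variable produces the same iterated cokernel as a model for $\sF_0(M_1, \ldots, M_r)$, yielding a canonical isomorphism, and this isomorphism is natural in all $r$ variables because at each stage the iterated presentations are functorial. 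Uniqueness then falls out: any cocontinuous $\sF'$ with the stated property agrees with $\sF$ on the factorizable principals, hence on $\cP$, hence on arbitrary direct sums of elements of $\cP$ (by cocontinuity), and finally on all of $\uRep(\cI^r)$ via the projective presentations.

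\textbf{Main obstacle.} The genuinely delicate step is the functoriality in the second paragraph: I need to know that $\sF(M)$ does not depend on the chosen presentation, and that a morphism $M \to M'$ induces a well-defined map $\sF(M) \to \sF(M')$ regardless of how it is lifted to presentations. The key input is Proposition~\ref{prop:tenhom}, which pins down the hom-groups between factorizable projectives and thus lets one transport the usual ``calculus of presentations'' argument from $\uRep(\cI)$ to $\uRep(\cI^r)$. Once this bookkeeping is set up correctly the remaining verifications—cocontinuity, the compatibility isomorphism, and uniqueness—are formal.
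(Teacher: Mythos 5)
Your proof is correct and follows essentially the same route as the paper: both arguments rest on the factorizable principals $\ul{\bA}^{m_1}\boxtimes\cdots\boxtimes\ul{\bA}^{m_r}$ being finitely generated projective generators of $\uRep(\cI^r)$, on Proposition~\ref{prop:tenhom} to identify morphisms between them as tensor products of morphisms in $\uRep(\cI)$, and on the principle that a cocontinuous functor out of such a category is determined by an additive functor on these generators. The paper packages your explicit presentation-and-cokernel construction as an appeal to Proposition~\ref{prop:funonproj2} applied to two legs of a square of restriction functors, but the mathematical content is the same.
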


\begin{proof}
For $(n_1, \ldots, n_r) \in \bN^r$, let $\ul{\bA}^{n_1,\ldots,n_r}$ be the graded $\cI^r$-module $\ul{\bA}^{n_1} \boxtimes \cdots \boxtimes \ul{\bA}^{n_r}$. Let $\cQ \subset \uRep(\cI^r)$ be the full subcategory spanned by these modules. Similarly, let $\cQ_0 \subset \uRep(\cI)^r$ be the full subcategory on objects of the form $(\ul{\bA}^{n_1}, \ldots, \ul{\bA}^{n_r})$. Now, consider the following functor categories:
\begin{itemize}
\item $\cX_1$ is the category of all cocontinuous functors $\uRep(\cI^r) \to \cT$.
\item $\cX_2$ is the category of all additive functors $\cQ \to \cT$.
\item $\cX_3$ is the category of all functor $\uRep(\cI)^r \to \cT$ that are cocontinuous in each variable.
\item $\cX_4$ is the category of all functors $\cQ_0 \to \cT$ that are additive in each variable.
\end{itemize}
Consider the diagram of restriction functors:
\begin{displaymath}
\xymatrix{
\cX_1 \ar[r] \ar[d] & \cX_2 \ar[d] \\
\cX_3 \ar[r] & \cX_4 }
\end{displaymath}
The top functor is an equivalence by Proposition~\ref{prop:funonproj2}. A similar argument shows that the bottom functor is also an equivalence.

We now claim that the right functor is an equivalence. Indeed, suppose that $\sG_0 \in \cX_4$. We define a functor $\sG \colon \cQ \to \cT$ as follows. First, we put $\sG(\ul{\bA}^{n_1,\ldots,n_r})=\sG_0(\ul{\bA}^{n_1},\ldots,\ul{\bA}^{n_r})$. Now, consider the diagram
\begin{displaymath}
\xymatrix{
\prod_{i=1}^n \uHom_{\cI}(\ul{\bA}^{n_i}, \ul{\bA}^{m_i}) \ar[r] \ar[d] &
\bigotimes_{i=1}^n \uHom_{\cI}(\ul{\bA}^{n_i}, \ul{\bA}^{m_i}) \ar@{..>}[d] \\
\Hom_{\cT}(\sG(\ul{\bA}^{n_1}, \ldots, \ul{\bA}^{n_r}), \sG(\ul{\bA}^{m_1}, \ldots, \ul{\bA}^{m_r})) \ar@{=}[r] &
\Hom_{\cT}(\sG'(\ul{\bA}^{n_1,\ldots,n_r}), \sG'(\ul{\bA}^{m_1,\ldots,m_r})) }
\end{displaymath}
The left vertical map here is the one induced by $\sG_0$. Since $\sG_0$ is additive in each variable, this map is additive in each variable. The dotted arrow therefore fills in uniquely as an additive map. Since the top right group is $\uHom_{\cI^r}(\ul{\bA}^{n_1,\ldots,n_r},\ul{\bA}^{m_1,\ldots,m_r})$ by Proposition~\ref{prop:tenhom}, the right map defines $\sG$ on morphisms. Thus $\sG$ is a well-defined additive functor. One easily sees that $\sG_0 \mapsto \sG$ defines a functor $\cX_4 \to \cX_2$ quasi-inverse to the restriction functor, which establishes the claim.

It now follows that the restriction functor $\cX_1 \to \cX_3$ is an equivalence, which completes the proof.
\end{proof}

\subsection{The functor $\Omega$}

Let $\Omega_0 \colon \uRep(\cI)^{r+1} \to \uRep(\cI)$ be the functor given by
\begin{displaymath}
\Omega_0(M_0, \ldots, M_r) = M_0 \odot \ul{\bA}^1 \odot \cdots \odot \ul{\bA}^1 \odot M_r.
\end{displaymath}
This is cocontinuous in each argument. Thus, by Proposition~\ref{prop:tencat}, it uniquely extends to a cocontinuous functor $\Omega \colon \uRep(\cI^{r+1}) \to \uRep(\cI)$.

\begin{proposition}
The functor $\Omega$ is exact.
\end{proposition}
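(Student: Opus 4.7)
Since $\Omega$ is cocontinuous by construction, it is right exact; the content of the proposition is that $\Omega$ preserves injections. The plan is to produce an explicit description of $\Omega(M)$ for an arbitrary $M \in \uRep(\cI^{r+1})$, and then read off exactness directly from that description.

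\textbf{Step 1 (explicit formula).} Define a candidate functor $\wt\Omega \colon \uRep(\cI^{r+1}) \to \uRep(\cI)$ by
\[
\wt\Omega(M)_n \;=\; \bigoplus_{1 \le j_1 < j_2 < \cdots < j_r \le n}\; M_{\,j_1 - 1,\; j_2 - j_1 - 1,\; \ldots,\; n - j_r}.
\]
The index data $(j_1, \ldots, j_r)$ should be thought of as recording the positions of the $r$ inserted $\ul{\bA}^1$ factors in the concatenation product defining $\Omega_0$. Equip $\wt\Omega(M)$ with an $\cI$-action by a formula modeled on the one for concatenation in \S\ref{ss:concat}: the generator $\alpha_k$ either shifts the $j_i$'s (when $k$ falls on a separator position), or acts on the corresponding coordinate of $M$ via the appropriate factor of $\cI \subset \cI^{r+1}$ (when $k$ falls in one of the intervals). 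The fundamental relations for $\alpha_k$'s then reduce to instances of the same relations inside each $\cI$-factor of $\cI^{r+1}$, plus combinatorics of reindexing the $j_i$'s — which is exactly what one checks when proving associativity of the concatenation product.

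\textbf{Step 2 (identification with $\Omega$).} The functor $\wt\Omega$ is manifestly cocontinuous, since both $\bN^{r+1}$-graded pieces and direct sums are cocontinuous operations, and I will check that $\wt\Omega(M_0 \boxtimes \cdots \boxtimes M_r)$ is naturally isomorphic to $\Omega_0(M_0, \ldots, M_r)$ as a graded $\cI$-module: indeed, for a factorizable $M$ the sum in the definition of $\wt\Omega(M)_n$ reproduces exactly the degree-$n$ piece of $M_0 \odot \ul{\bA}^1 \odot \cdots \odot \ul{\bA}^1 \odot M_r$ computed in the proof of Proposition~\ref{prop:A1xi} (using $(\ul{\bA}^1)_b = \bk$ for $b \ge 1$), and the $\cI$-actions match by construction. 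By the uniqueness clause of Proposition~\ref{prop:tencat}, $\wt\Omega$ is naturally isomorphic to $\Omega$.

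\textbf{Step 3 (exactness).} From the formula in Step 1, for each $n$ the functor $M \mapsto \Omega(M)_n$ is a \emph{finite} direct sum of the functors $M \mapsto M_{a_0, \ldots, a_r}$, each of which is exact on $\uRep(\cI^{r+1})$. Hence $\Omega$ is exact on underlying graded vector spaces, and since exactness of a sequence in $\uRep(\cI)$ is tested on underlying graded vector spaces, $\Omega$ is exact.

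\textbf{Main obstacle.} The only real work is in Step 1: writing down the $\cI$-action on $\wt\Omega(M)$ and verifying that the fundamental relations $\alpha_n \alpha_m = \alpha_m \alpha_{n-1}$ for $n > m$ hold. This is genuinely mechanical (and in fact already implicit in the associativity of $\odot$ combined with the description of $\ul{\bA}^1$), but it is the step where one has to be careful to handle the boundary cases in which $\alpha_k$ straddles the position of a separator $j_i$.
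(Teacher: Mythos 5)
Your overall strategy is sound and is, at bottom, the same as the paper's — identify $\Omega$ with a manifestly exact functor via the uniqueness clause of Proposition~\ref{prop:tencat} — but the explicit formula in Step~1 is incorrect, and this breaks the verification in Step~2. The degree-$n$ piece of $\Omega_0(M_0,\dots,M_r)$ is indexed not only by the multidegree of the $M$-part but also by the degrees $b_1,\dots,b_r\ge 1$ of the $r$ inserted basis vectors of $\ul{\bA}^1$; the formula you want is
\[
\Omega(M)_n \;\cong\; \bigoplus_{\substack{a_0,\dots,a_r\ge 0,\ b_1,\dots,b_r\ge 1\\ a_0+\cdots+a_r+b_1+\cdots+b_r=n}} M_{a_0,\dots,a_r}.
\]
Your indexing by $1\le j_1<\cdots<j_r\le n$ with summand $M_{j_1-1,\,j_2-j_1-1,\,\dots,\,n-j_r}$ forces every $b_i$ to equal $1$, i.e., it computes $M_0\odot\ul{\bB}^1\odot\cdots\odot\ul{\bB}^1\odot M_r$ rather than $M_0\odot\ul{\bA}^1\odot\cdots\odot\ul{\bA}^1\odot M_r$. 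Concretely, for $r=1$ and $M=\ul{\bA}^0\boxtimes\ul{\bA}^0$ your formula produces a one-dimensional space concentrated in degree~$1$ (that is, $\ul{\bB}^1$), whereas $\Omega(M)=\ul{\bA}^0\odot\ul{\bA}^1\odot\ul{\bA}^0=\ul{\bA}^1$ is one-dimensional in every degree $n\ge 1$. So the claimed agreement with $\Omega_0$ on factorizable objects fails. The error is repairable: with the corrected index set, each $\Omega(M)_n$ is still a finite direct sum of the exact functors $M\mapsto M_{a_0,\dots,a_r}$, so Step~3 would go through once Steps~1--2 are fixed.

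That said, you are taking on more work than necessary, and the part you flag as the main obstacle (defining the $\cI$-action on $\wt{\Omega}(M)$ and verifying the fundamental relations) can be avoided entirely. The paper applies the uniqueness clause of Proposition~\ref{prop:tencat} not to $\Omega$ itself but to the composite $F\circ\Omega$, where $F\colon\uRep(\cI)\to\Vec$ is the forgetful functor: both $F\circ\Omega$ and $M\mapsto M\otimes\bA^r$ are cocontinuous functors $\uRep(\cI^{r+1})\to\Vec$ whose restrictions to factorizable objects agree, hence they are isomorphic. Since exactness in $\uRep(\cI)$ is detected on underlying vector spaces and $M\mapsto M\otimes\bA^r$ is exact, $\Omega$ is exact. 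This sidesteps the need to write down, or check relations for, any $\cI$-action.
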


\begin{proof}
The vector space underlying $\Omega_0(M_0, \ldots, M_r)$ is canonically isomorphic to $M_0 \otimes \cdots \otimes M_r \otimes \bA^r$, where here we are simply regarding all the objects as vector spaces. Thus, if $F \colon \uRep(\cI) \to \Vec$ denotes the forgetful functor, then $F \circ \Omega$ and $M \mapsto M \otimes \bA^r$ are two cocontinuous functors $\uRep(\cI^{r+1}) \to \Vec$ that have isomorphic restrictions to $\uRep(\cI)^{r+1}$ via $\boxtimes$. Thus, by the uniqueness of Proposition~\ref{prop:tencat}, we see that they are isomorphic. In other words, the vector space underlying $\Omega(M)$ is canonically isomorphic to $M \otimes \bA^r$. As this is exact in $M$, we see that $\Omega$ is exact.
\end{proof}

\begin{proposition} \label{prop:omega-ext}
Let $M_0, \ldots, M_r$ and $N_0, \ldots, N_r$ be graded $\cI$-modules of finite length. Put $M = M_0 \boxtimes \ldots \boxtimes M_r$ and similarly define $N$. Then the canonical map
\begin{displaymath}
\bigotimes_{i=0}^r \rR \uHom_{\cI}(M_i, N_i) \to \rR \uHom_{\cI}(\Omega(M), \Omega(N))
\end{displaymath}
is an isomorphism.
\end{proposition}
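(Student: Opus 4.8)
The plan is to reduce the statement, by a dévissage in each slot, to the case where every $M_i$ is an indecomposable finite-length graded $\cI$-module, and then — after a further reduction — to a single computation with the simple modules $\ul{\bB}^n$. The key observation is that both sides of the comparison map are exact triangulated functors of each $M_i$ and each $N_i$, so I only need to verify the isomorphism on generators of the relevant derived categories. This is the same pattern used in the proof of Proposition~\ref{prop:tenhom} above, lifted to the derived level.

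First I would set up the comparison map precisely: the tensor product of the composition maps on $\rR\uHom$ is induced functorially, and on the level of complexes it comes from the fact that $\Omega$ sends a tensor product of injective resolutions to a complex of injectives computing the right-hand $\rR\uHom$. Here I would use that $\Omega$ is exact (the preceding proposition) and that it sends the finite-length injectives $\ul{\bI}^\lambda$ — more to the point, products of $\ul{\bJ}^{a}$'s and $\ul{\bA}^1$'s — to finite sums of modules of the same type; so if $M_i \to I_i^\bullet$ is a finite injective resolution (which exists by Theorem~\ref{thm:injres}, since each $M_i$ is finitely generated), then $\Omega(I_0^\bullet \boxtimes \cdots \boxtimes I_r^\bullet)$ is a bounded complex of finitely generated injectives quasi-isomorphic to $\Omega(M)$, and the comparison map becomes, up to quasi-isomorphism, the map
\begin{displaymath}
\bigotimes_{i=0}^r \uHom_{\cI}^\bullet(I_i^\bullet, N_i) \to \uHom_{\cI}^\bullet\bigl(\Omega(I_0^\bullet \boxtimes \cdots \boxtimes I_r^\bullet), \Omega(N)\bigr),
\end{displaymath}
where on the right I use Proposition~\ref{prop:tenhom}-style identifications. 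Dévissage in the $N_i$'s then lets me assume each $N_i$ is simple, i.e.\ $N_i = \ul{\bB}^{n_i}$; dévissage in the $M_i$'s (here I would run an induction using, for each fixed $(M_1,\dots,M_r)$, a two-step presentation $M_0'' \to M_0' \to M_0 \to 0$ and the five-lemma, exactly as in Proposition~\ref{prop:tenhom}) reduces me to the case where each $M_i$ is a finitely generated projective, hence a finite sum of $\ul{\bA}^{m_i}$'s, hence (by additivity) a single $\ul{\bA}^{m_i}$.

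In that reduced case everything is explicit. On the left, $\rR\uHom_{\cI}(\ul{\bA}^{m_i}, \ul{\bB}^{n_i})$ is concentrated in degree $0$ and equals $(\ul{\bB}^{n_i})_{m_i}$, which is $\bk$ if $m_i = n_i$ and $0$ otherwise; so the left-hand side is $\bk$ if $\ul{m} = \ul{n}$ and $0$ otherwise. On the right, $\Omega(\ul{\bA}^{m_0} \boxtimes \cdots \boxtimes \ul{\bA}^{m_r}) = \ul{\bA}^{m_0} \odot \ul{\bA}^1 \odot \cdots \odot \ul{\bA}^1 \odot \ul{\bA}^{m_r}$, which is a graded principal module (a product of principal modules is principal, as one sees from Proposition~\ref{prop:stdcat} or directly), and it is projective; meanwhile $\Omega(\ul{\bB}^{n_0} \boxtimes \cdots \boxtimes \ul{\bB}^{n_r}) = \ul{\bB}^{n_0} \odot \ul{\bB}^1 \odot \cdots \odot \ul{\bB}^1 \odot \ul{\bB}^{n_r}$ is the simple module $\ul{\bB}^{n_0 + \cdots + n_r + r}$ — wait, more carefully, using $\ul{\bE}^\lambda \odot \ul{\bE}^\mu = \ul{\bE}^{\lambda\odot\mu}$ it is $\ul{\bE}^{\nu}$ where $\nu$ is the all-$\blacktri$ word, i.e.\ it is $\ul{\bB}^{s}$ with $s = \sum n_i + r$. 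So the right-hand side is $\rR\uHom_{\cI}$ of a projective into a simple, concentrated in degree $0$ and equal to a Hom space computable via $\uHom_{\cI}(\ul{\bA}^m, -) = (-)_m$; one checks it is $\bk$ exactly when $\ul{m} = \ul{n}$ and $0$ otherwise, and that the comparison map is the evident identification. This matches the left-hand side, completing the reduced case and hence the proof.

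\textbf{Main obstacle.} The genuine content is the bookkeeping that makes the comparison map, at the chain level, literally the map to which Proposition~\ref{prop:tenhom} applies: I must check that $\Omega$ applied to a box-tensor-product of complexes of finitely generated graded injectives is again a complex of finitely generated graded injectives, and that the Künneth-type map on $\uHom^\bullet$-complexes is a quasi-isomorphism — this is where I invoke that $\Omega$ is exact, that it preserves the relevant class of injectives (via Theorem~\ref{thm:inj} and the explicit form of $\ul{\bI}^\lambda$ as a $\odot$-product of $\ul{\bJ}$'s and $\ul{\bA}^1$'s, together with $\Omega$ being built from $\odot$), and Proposition~\ref{prop:tenhom} for the underived statement in each homological degree. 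The dévissage and the final explicit computation are routine; assembling the chain-level identification carefully is the step that needs care.
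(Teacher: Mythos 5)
There is a genuine gap in the direction of your dévissage, and a separate computational error that partially masks it.

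The decisive problem: you reduce the $M_i$ to finitely generated projectives, i.e.\ to $\ul{\bA}^{m_i}$. But $\ul{\bA}^m$ for $m \ge 1$ is \emph{not} of finite length, so this step exits the hypothesis of the proposition — and it has to exit it, because the claimed isomorphism is simply false for projective $M_i$. Take $r=1$, $M_0 = \ul{\bA}^{m_0}$, $M_1 = \ul{\bA}^{m_1}$, $N_0 = \ul{\bB}^{n_0}$, $N_1 = \ul{\bB}^{n_1}$. The left-hand side is $(\ul{\bB}^{n_0})_{m_0} \otimes (\ul{\bB}^{n_1})_{m_1}$, which is nonzero only when $m_0 = n_0$ \emph{and} $m_1 = n_1$. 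On the right, $\Omega(M) = \ul{\bA}^{m_0} \odot \ul{\bA}^1 \odot \ul{\bA}^{m_1} = \ul{\bA}^{m_0+m_1+1}$ (projective), and $\Omega(N) = \ul{\bB}^{n_0} \odot \ul{\bA}^1 \odot \ul{\bB}^{n_1} = \ul{\bE}^{\mu}$ with $\mu = \blacktri^{n_0}\whitetri\blacktri^{n_1}$ — a rank-one standard module, \emph{not} a simple. So the right side is $(\ul{\bE}^{\mu})_{m_0+m_1+1}$, which grows with $m_0+m_1$ and is typically nonzero and of dimension $>1$. The two sides do not agree, and no identification repairs this. (You wrote $\Omega(\ul{\bB}^{n_0} \boxtimes \cdots \boxtimes \ul{\bB}^{n_r}) = \ul{\bB}^{n_0} \odot \ul{\bB}^1 \odot \cdots \odot \ul{\bB}^1 \odot \ul{\bB}^{n_r}$; this is not $\Omega_0$. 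The interleaved factors are $\ul{\bA}^1$, not $\ul{\bB}^1$, so the answer is the standard module $\ul{\bE}^{\mu}$ of rank $r$, not a simple. Even granting your incorrect formula, $\Omega(M)$ would then be $\ul{\bE}^{\nu}$ with $\blacktri$'s, hence not projective, and the ``projective into simple'' collapse of $\rR\uHom$ that you invoke would also fail.)

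The fix — and what the paper actually does — is to run the dévissage the other way: reduce the $M_i$ to \emph{simples} $\ul{\bB}^{a_i}$ and the $N_i$ to the \emph{finite-length injectives} $\ul{\bJ}^{b_i}$, which exist as injective envelopes inside $\uRep(\cI)^{\lf}$ by Proposition~\ref{prop:fininj}. This stays inside the finite-length category, so the hypothesis is preserved at every step; moreover $\Omega$ sends $\boxtimes_i \ul{\bJ}^{b_i}$ to the injective $\ul{\bI}^{\mu}$, so all the $\rR\uHom$'s on both sides collapse to degree $0$ and the comparison map becomes the $\Hom$-level map between one-dimensional (or zero) spaces. The nonvanishing on the right then has to be checked by a socle argument in the quotient category $\uRep(\cI)_r$ using saturation of $\ul{\bI}^{\mu}$, not by the projectivity argument you propose. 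Your ``two-step presentation plus five-lemma'' pattern, borrowed from Proposition~\ref{prop:tenhom}, works there because that proposition is about $\uHom_{\cI^{r+1}}$ for merely finitely generated modules; here the statement concerns $\uHom_{\cI}$ after applying $\Omega$, and it genuinely requires the finite-length hypothesis, as Proposition~\ref{prop:omega-hom} also makes explicit.
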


\begin{proof}
Suppose that
\begin{displaymath}
0 \to N_0 \to N_0' \to N_0'' \to 0
\end{displaymath}
is an exact sequence in $\uRep(\cI)^{\rf}$, and put
\begin{displaymath}
N'=N_0' \boxtimes N_1 \boxtimes \cdots \boxtimes N_r, \qquad
N''= N_0'' \boxtimes N_1 \boxtimes \cdots \boxtimes N_r.
\end{displaymath}
Since $\boxtimes$ is exact in each variable and $\Omega$ is exact, we have an exact sequence
\begin{displaymath}
0 \to \Omega(N) \to \Omega(N') \to \Omega(N'') \to 0.
\end{displaymath}
Letting $H=\bigotimes_{i=1}^r \rR \Hom_{\cI}(M_i, N_i)$, we thus have a morphism of triangles
\begin{displaymath}
\xymatrix{
\rR \Hom_{\cI}(M_0, N_0) \otimes H \ar[r] \ar[d] &
\rR \Hom_{\cI}(M_0, N'_0) \otimes H \ar[r] \ar[d] &
\rR \Hom_{\cI}(M_0, N'_0) \otimes H \ar[r] \ar[d] &
\\
\rR \Hom_{\cI}(\Omega(M), \Omega(N)) \ar[r] &
\rR \Hom_{\cI}(\Omega(M), \Omega(N')) \ar[r] &
\rR \Hom_{\cI}(\Omega(M), \Omega(N'')) \ar[r] & }
\end{displaymath}
If two of the three vertical maps are isomorphisms, then so is the third. We thus see that if the lemma holds for two of $N_0$, $N_0'$, and $N_0''$ (with the other modules fixed) then it holds for the third as well. Of course, the same holds for the other $N_i$ and the $M$'s.

Using the above observation, we can reduce to the case where each $M_i$ is simple and each $N_i$ is injective; say $M_i=\ul{\bB}^{a_i}$ and $N_i=\ul{\bJ}^{b_i}$. Thus if $\lambda$ and $\mu$ are the constraint words with $a(\lambda)=(a_0,\ldots,a_r)$ and $a(\mu)=(b_0,\ldots,b_r)$ then $\Omega(M)=\ul{\bE}^{\lambda}$ and $\Omega(N)=\ul{\bI}^{\mu}$. Since $N$ and the $N_i$ are injective, all the $\rR \Hom$'s are concentrated in degree~0, and so it suffices to show that the map
\begin{displaymath}
\bigotimes_{i=0}^r \uHom_{\cI}(M_i, N_i) \to \uHom_{\cI}(\Omega(M), \Omega(N))
\end{displaymath}
is an isomorphism. If $\lambda=\mu$ then all these $\Hom$ spaces are one-dimensional, and the map is easily seen to be non-zero, and thus an isomorphism. Now suppose $\lambda \ne \mu$. We claim that both sides vanish. Indeed, we have $a_i \ne b_i$ for some $i$, and so $\uHom_{\cI}(M_i, N_i)=0$, as the socle of $N_i$ is $\ul{\bB}^{b_i}$, which is a simple object not isomorphic to $M_i$. Similarly, we have an injection $\ul{\bE}^{\mu} \to \ul{\bI}^{\mu}$, and so $T_{\ge r}(\ul{\bE}^{\mu})$ is the socle of $T_{\ge r}(\ul{\bI}^{\mu})$ in $\uRep(\cI)_r$. Thus $\uHom(T_{\ge r}(\ul{\bE}^{\lambda}), T_{\ge r}(\ul{\bI}^{\mu}))=0$, which implies that $\uHom_{\cI}(\ul{\bE}^{\lambda}, \ul{\bI}^{\mu})=0$ since $\ul{\bI}^{\mu}$ is $\cS_{\ge r}$-saturated.
\end{proof}

\begin{proposition} \label{prop:omega-hom}
Notation as in Proposition~\ref{prop:omega-ext}, the map
\begin{displaymath}
\uHom_{\cI^{r+1}}(M, N) \to \uHom_{\cI}(\Omega(M), \Omega(N))
\end{displaymath}
induced by $\Omega$ is an isomorphism. In other words, $\Omega$ is fully faithful on objects of $\uRep(\cI^{r+1})$ that are factorizable and of finite length.
\end{proposition}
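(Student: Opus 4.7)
The plan is to deduce this directly from Proposition~\ref{prop:omega-ext} by taking $H^0$ on both sides of the isomorphism
\[
\bigotimes_{i=0}^r \rR \uHom_{\cI}(M_i, N_i) \xrightarrow{\sim} \rR \uHom_{\cI}(\Omega(M), \Omega(N))
\]
and combining this with Proposition~\ref{prop:tenhom}. Both are already available to us, so essentially no new content is needed; the proposition is a formal consequence of the previous two.

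First I would take $H^0$ on the right side, obtaining $\uHom_{\cI}(\Omega(M), \Omega(N))$. For the left side, each factor $\rR \uHom_{\cI}(M_i, N_i)$ is a complex of $\bk$-vector spaces supported in non-negative cohomological degrees. Since $\bk$ is a field, the K\"unneth formula applies with no $\Tor$ terms and gives
\[
H^0\Bigl( \bigotimes_{i=0}^r \rR \uHom_{\cI}(M_i, N_i) \Bigr) = \bigotimes_{i=0}^r H^0(\rR \uHom_{\cI}(M_i, N_i)) = \bigotimes_{i=0}^r \uHom_{\cI}(M_i, N_i),
\]
the point being that the only way to write $0$ as a sum of non-negative integers is with all summands zero. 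Combining this with Proposition~\ref{prop:tenhom} identifies the left side with $\uHom_{\cI^{r+1}}(M, N)$, so the proof reduces to checking that the resulting isomorphism $\uHom_{\cI^{r+1}}(M, N) \to \uHom_{\cI}(\Omega(M), \Omega(N))$ is precisely the map induced by $\Omega$.

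The compatibility with $\Omega$ is the one point requiring some care, but it is essentially built into the proof of Proposition~\ref{prop:omega-ext}: the isomorphism constructed there is natural in $M$ and $N$, and on the canonical pure tensor $f_0 \otimes \cdots \otimes f_r$ it sends the class to the image under $\Omega$ of $f_0 \boxtimes \cdots \boxtimes f_r$. Since the isomorphism of Proposition~\ref{prop:tenhom} sends $f_0 \otimes \cdots \otimes f_r$ to $f_0 \boxtimes \cdots \boxtimes f_r$, the composite $\uHom_{\cI^{r+1}}(M, N) \to \uHom_{\cI}(\Omega(M), \Omega(N))$ agrees on pure tensors (and hence by linearity everywhere) with $f \mapsto \Omega(f)$.

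The main obstacle, such as it is, lies in verifying the compatibility claim in the last paragraph; everything else is a direct application of K\"unneth and of the two prior propositions. One could avoid a verbose verification by instead phrasing the argument as follows: the map induced by $\Omega$ fits into a commutative square whose other three sides are the isomorphism of Proposition~\ref{prop:tenhom}, the degree-zero piece of the map from Proposition~\ref{prop:omega-ext}, and the K\"unneth isomorphism; since three of the four edges are isomorphisms, so is the fourth. This makes the compatibility step essentially automatic from the naturality of the earlier constructions.
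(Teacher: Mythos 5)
Your proposal is correct and follows essentially the same route as the paper: apply $\rH^0$ to the isomorphism of Proposition~\ref{prop:omega-ext} (the K\"unneth observation that $\rH^0$ of a tensor product of complexes in non-negative degrees is the tensor product of the $\rH^0$'s is implicit in the paper's one-line application of $\rH^0$), then combine with Proposition~\ref{prop:tenhom}. The only difference is that you spell out the K\"unneth step and the compatibility of the maps, which the paper leaves as "the natural map."
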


\begin{proof}
Applying $\rH^0$ to the isomorphism in Proposition~\ref{prop:omega-ext}, we see that the natural map
\begin{displaymath}
\bigotimes_{i=0}^r \uHom_{\cI}(M_i, N_i) \to \uHom_{\cI}(\Omega(M), \Omega(N))
\end{displaymath}
is an isomorphism. Since the natural map
\begin{displaymath}
\bigotimes_{i=0}^r \uHom_{\cI}(M_i, N_i) \to \uHom_{\cI^{r+1}}(M, N)
\end{displaymath}
is also an isomorphism (Proposition~\ref{prop:tenhom}), the result follows.
\end{proof}

\subsection{The main theorem}

The categories $\uRep(\cI)^{\rss}$ and $\uRep(\cI)_r$ are somewhat mysterious; at least, it can be difficult to study them directly. The category $\uRep(\cI^{r+1})^{\lf}$, on ther other hand, is very concrete. The following theorem, and its corollary, can therefore be regarded as a solution to the problem of describing the former two categories.

\begin{theorem} \label{thm:levstruc}
The functor $\Omega$ induces an equivalence $\uRep(\cI^{r+1})^{\lf} \to \uRep(\cI)^{\rss}$.
\end{theorem}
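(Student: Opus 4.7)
The plan is to show that $\Omega$ lands in $\uRep(\cI)^{\rss}$, is fully faithful on $\uRep(\cI^{r+1})^{\lf}$, and is essentially surjective onto $\uRep(\cI)^{\rss}$. A direct adaptation of the argument classifying simple graded $\cI$-modules shows that every simple object of $\uRep(\cI^{r+1})$ is factorizable of the form $\ul{\bB}^{a_0}\boxtimes\cdots\boxtimes\ul{\bB}^{a_r}$, and $\Omega$ sends such a module to the standard module $\ul{\bE}^{\lambda}$ with $a(\lambda)=(a_0,\ldots,a_r)$, which has rank $r$. Exactness of $\Omega$ then carries composition series to filtrations with rank-$r$ standard graded pieces, so finite length objects map to $r$-semistandard ones; cocontinuity of $\Omega$ together with the closure of $\uRep(\cI)^{\rss}$ under filtered colimits (Theorem~\ref{thm:ss}) then handles the locally finite case.

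For full faithfulness, the key intermediate statement is that $\rR \uHom_{\cI^{r+1}}(M,N) \to \rR \uHom_{\cI}(\Omega M,\Omega N)$ is a quasi-isomorphism for all finite length $M,N$. In the factorizable case, this follows by combining Proposition~\ref{prop:omega-ext} with a K\"unneth-type isomorphism $\rR \uHom_{\cI^{r+1}}(M_0\boxtimes\cdots,N_0\boxtimes\cdots)\cong \bigotimes_i \rR \uHom_{\cI}(M_i,N_i)$, which is proved by resolving each $M_i$ by finitely generated projectives (available by Theorem~\ref{thm:noeth}) and observing that the $\boxtimes$-product of these resolutions is a projective resolution of $M_0\boxtimes\cdots\boxtimes M_r$. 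D\'evissage, first on $N$ and then on $M$ via composition factors (which are automatically factorizable) using the long exact sequences of $\rR \uHom$, extends the quasi-isomorphism to arbitrary finite length modules. To pass to $\uRep(\cI^{r+1})^{\lf}$, write $M=\varinjlim M_\alpha$ and $N=\varinjlim N_\beta$ as directed unions of finite length submodules; since $M_\alpha$ and $\Omega M_\alpha$ are finitely generated, any map out of either factors through a finite length subobject of the target, giving $\uHom(M,N)=\varprojlim_\alpha\varinjlim_\beta \uHom(M_\alpha,N_\beta)$ and analogously for the target, whence the finite length matching implies the general case.

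For essential surjectivity, since $\uRep(\cI)^{\rss}$ is closed under filtered colimits and every $r$-semistandard module is a directed union of finitely generated $r$-semistandard submodules, it suffices to treat the finitely generated case. I would induct on the length of a filtration by rank-$r$ standard subquotients: the base case is $M=\ul{\bE}^\lambda=\Omega(\ul{\bB}^{a_0(\lambda)}\boxtimes\cdots\boxtimes\ul{\bB}^{a_r(\lambda)})$, and for the inductive step one writes $0\to M'\to M\to \ul{\bE}^\mu\to 0$ with $M'\cong\Omega(K)$ in the image and uses the $\uExt^1$-matching from the previous paragraph to lift the extension class to one in $\uExt^1_{\cI^{r+1}}(\ul{\bB}^{a_0(\mu)}\boxtimes\cdots\boxtimes\ul{\bB}^{a_r(\mu)},K)$, producing $K'\in\uRep(\cI^{r+1})^{\lf}$ with $\Omega(K')\cong M$. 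Finally, a general $M=\varinjlim M_i$ lifts via full faithfulness to a directed system $K_i$ in $\uRep(\cI^{r+1})^{\lf}$, and $\Omega(\varinjlim K_i)=M$ by cocontinuity. I expect the main obstacle to be the derived K\"unneth formula and the resulting $\uExt^1$-matching, which together power the inductive reconstruction of extensions in the essential surjectivity argument; once the $\rR \uHom$-matching on factorizable finite length modules is in hand, everything else is a combination of exactness and d\'evissage.
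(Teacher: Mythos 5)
Your argument is correct, but it follows a different route from the paper. The paper, after the same first step (checking that $\Omega$ lands in $\uRep(\cI)^{\rss}$ by reducing to simples), invokes the abstract equivalence criterion of Proposition~\ref{prop:indinjequiv}: since $\Omega$ is exact, cocontinuous, and preserves finite generation, it suffices to check that it restricts to an equivalence $\IndInj(\uRep(\cI^{r+1})^{\lf}) \to \IndInj(\uRep(\cI)^{\rss})$. The indecomposable injectives on the source side are exactly the factorizable modules $\ul{\bJ}^{a_0}\boxtimes\cdots\boxtimes\ul{\bJ}^{a_r}$, sent to the $\ul{\bI}^{\lambda}$, so Proposition~\ref{prop:omega-hom} gives full faithfulness there and Theorem~\ref{thm:inj} gives essential surjectivity there; one never leaves the factorizable world, and all the d\'evissage is hidden inside the appendix proof of Proposition~\ref{prop:indinjequiv} (which runs through injective copresentations rather than composition series). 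You instead prove fully faithful plus essentially surjective directly, which forces you to establish two things the paper never needs: a derived K\"unneth isomorphism $\rR\uHom_{\cI^{r+1}}(M_0\boxtimes\cdots,N_0\boxtimes\cdots)\cong\bigotimes_i\rR\uHom_{\cI}(M_i,N_i)$ (true, and provable exactly as you sketch, by tensoring finitely generated projective resolutions and applying Proposition~\ref{prop:tenhom} termwise), and the extension of the $\rR\uHom$-matching to non-factorizable finite length modules by d\'evissage in each variable. What your route buys is the stronger intermediate statement that $\Omega$ matches all $\uExt^i$ between arbitrary finite length objects, which powers your extension-lifting induction for essential surjectivity and is of independent interest; what the paper's route buys is brevity, since the only computation needed is Proposition~\ref{prop:omega-hom} on injectives. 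One small imprecision in your limit step: a map $\Omega M_\alpha\to\Omega N$ factors through a finitely generated (not finite length) subobject of $\Omega N$, which by local noetherianity is contained in some $\Omega N_\beta$; the double-limit identification then goes through as you intend.
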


\begin{proof}
We first observe that $\Omega$ does indeed map $\uRep(\cI^{r+1})^{\lf}$ into $\uRep(\cI)^{\rss}$. Indeed, since $\Omega$ is cocontinuous and the target category is closed under direct limits, it suffices to check on finite length objects. Since $\Omega$ is exact and $\uRep(\cI)^{\rss}$ is closed under extensions, it suffices to check on simple objects. A simple object of $\uRep(\cI^{r+1})^{\lf}$ has the form $\ul{\bB}^{a_0} \boxtimes \cdots \boxtimes \ul{\bB}^{a_r}$, and this is sent to $\ul{\bE}^{\lambda}$ under $\Omega$, where $\lambda$ is the constraint word of rank $r$ with $a(\lambda)=(a_0, \ldots, a_r)$. As $\ul{\bE}^{\lambda}$ belongs to $\uRep(\cI)^{\rss}$, the statement follows.

We next claim that $\Omega$ induces an equivalence $\IndInj(\uRep(\cI^{r+1})^{\lf}) \to \IndInj(\uRep(\cI)^{\rss})$, which will complete the proof by Propsosition~\ref{prop:indinjequiv}. For $a=(a_0,\ldots,a_r) \in \bN^{r+1}$, let $\ul{\bJ}^a=\ul{\bJ}^{a_0} \boxtimes \cdots \boxtimes \ul{\bJ}^{a_r}$. Using arguments similar to those in \S \ref{ss:fininj}, one can show that the $\ul{\bJ}^a$ are exactly the indecomposable injectives of $\uRep(\cI^{r+1})^{\lf}$. We have $\Omega(\ul{\bJ}^a)=\ul{\bI}^{\lambda}$, where $\lambda$ is the constraint word with $a(\lambda)=a$. We thus see that $\Omega$ induces a well-defined functor
\begin{displaymath}
\Omega' \colon \IndInj(\uRep(\cI^{r+1})^{\lf}) \to \IndInj(\uRep(\cI)^{\rss}).
\end{displaymath}
Since every indecomposable injective of $\uRep(\cI^{r+1})^{\lf}$ is factorizable, Proposition~\ref{prop:omega-hom} shows that $\Omega'$ is fully faithful. Finally, since every indecomposable injective of $\uRep(\cI)^{\rss}$ has the form $\ul{\bI}^{\lambda}$ for $\lambda$ of rank $r$, we see that $\Omega'$ is essentially surjective. Thus $\Omega'$ is an equivalence, and the theorem is proved.
\end{proof}

\begin{corollary} \label{cor:levstruc}
We have an equivalence $\uRep(\cI)_r \cong \uRep(\cI^{r+1})^{\lf}$.
\end{corollary}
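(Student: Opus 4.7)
The plan is to obtain the desired equivalence by composing two equivalences that are already essentially in hand. First, Theorem~\ref{thm:levstruc} gives an equivalence $\Omega \colon \uRep(\cI^{r+1})^{\lf} \to \uRep(\cI)^{\rss}$. Second, the graded analogue of Theorem~\ref{thm:ss} (stated at the end of that theorem) asserts that the localization functor $\ul{T}_{\ge r}$ and its right adjoint $\ul{S}_{\ge r}$ restrict to mutually quasi-inverse equivalences between $\uRep(\cI)^{\rss}$ and $\uRep(\cI)_r$. So the composition $\ul{T}_{\ge r} \circ \Omega$ should be the required equivalence.

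Concretely, I would first invoke Theorem~\ref{thm:levstruc} to produce $\Omega$, then invoke Theorem~\ref{thm:ss} (graded version) to produce $\ul{T}_{\ge r}$, and simply record that the composition of two equivalences is an equivalence. There is nothing more to verify: each factor has already been shown to be fully faithful and essentially surjective in the respective theorems.

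The only real ``obstacle'' is cosmetic: one wants to be sure the graded analogue of Theorem~\ref{thm:ss} is what is claimed. But the proof of Theorem~\ref{thm:ss} goes through verbatim in the graded case, using Corollary~\ref{cor:exactsat} (and its evident graded analogue), Proposition~\ref{prop:sschar}, and Proposition~\ref{prop:satfin}, all of which are available in both the smooth and graded settings. Thus the corollary is an immediate two-line consequence, and no further genuine work is needed.
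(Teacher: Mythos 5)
Your proposal is correct and matches the paper's own proof exactly: the corollary is obtained by composing the equivalence $\Omega \colon \uRep(\cI^{r+1})^{\lf} \to \uRep(\cI)^{\rss}$ of Theorem~\ref{thm:levstruc} with the equivalence $\uRep(\cI)^{\rss} \cong \uRep(\cI)_r$ of Theorem~\ref{thm:ss} (whose graded analogue the paper explicitly asserts at the start of the semistandard-modules section). Nothing further is needed.
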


\begin{proof}
Combine the previous equivalence with the equivalence $\uRep(\cI)^{\rss} \cong \uRep(\cI)_r$ from Theorem~\ref{thm:ss}.
\end{proof}

\begin{remark} \label{rmk:urepbest}
Consider the four nearly equivalent categories: $\uRep(\cI)$, $\uRep(\cI)_+$, $\Mod_{\OI}$, and the category of co-semi-simplicial vector spaces. If $\cA$ is any one of these categories, one can define the level $r$ subquotient $\cA_r$. For $r>0$, all four choices yield the same subquotient category. But only for $\cA=\uRep(\cI)$ is it true that $\cA_r$ is equivalent to $\cA_0^{\otimes (r+1)}$. Thus, at least from the point of view of this paper, $\uRep(\cI)$ is the best of these categories. In particular, it is most natural to have the grading for a graded $\cI$-module to start at~0.
\end{remark}

\section{Koszul duality} \label{s:koszul}

\subsection{Notation and conventions}

We now set some notation and conventions related to homological algebra. We work exclusively with cochain complexes. Thus, if $M$ is a complex then the differential $d$ maps $M^n$ to $M^{n+1}$. If $M$ is a complex, we let $M[n]$ be the shifted complex: it has $(M[n])^k=M^{n+k}$, and differentials those of $M$ scaled by $(-1)^n$. We have $\rH^k(M[n])=\rH^{n+k}(M)$. If $M$ is a module, regarded as a complex in degree~0, then $M[n]$ is in degree $-n$.

We say that a bigraded vector space (such as a complex of graded vector spaces or graded $\cI$-modules, or the homology of such a complex) is {\bf degreewise finite} if each graded piece is finite dimensional.

\subsection{Tor and minimal resolutions} \label{ss:tor}

Let $M$ be a graded $\cI$-module. Define a graded vector space $\cT(M)$ by
\begin{displaymath}
\cT(M)_n = M_n/\left(\sum_{k=1}^{n-1} \alpha_k M_{n-1} \right).
\end{displaymath}
Thus $\cT(M)_n$ records those elements of $M_n$ which cannot be generated by elements of lower degree; one can regard these as minimal generators of $M$. The functor $\cT$ is right exact, and its left derived functors exist, as $\uRep(\cI)$ has enough projectives. We call $\rL_i \cT$ the $i$th {\bf Tor functor}.

We say that a projective resolution $P_{\bullet} \to M$ is {\bf minimal} if the differentials in the complex $\cT(P_{\bullet})$ all vanish. It is not difficult to prove that $M$ admits a unique minimal projective resolution, up to isomorphism; the proof is just like that for the analogous fact in commutative algebra, see \cite[Theorem~20.2]{eisenbud}. If $P_{\bullet} \to M$ is a minimal resolution then $\rL_i \cT(M)=\cT(P_i)$. Note $\cT(\ul{\bA}^r)$ is one-dimensional and concentrated in degree $r$. We thus see that $\dim \rL_i \cT(M)_r$ is the number of $\ul{\bA}^r$ summands in the $i$th term of the minimal projective resolution of $M$.

\subsection{The Koszul complex} \label{ss:koszul}

We now aim to construct an explicit complex that computes Tor. To motivate our construction, suppose that $M$ is a graded $\cI$-module. We have a short exact sequence
\begin{displaymath}
\bk^{n-1} \otimes M_{n-1} \stackrel{d_1}{\longrightarrow} M_n \longrightarrow \cT(M)_n \to 0
\end{displaymath}
where $d_1$ is given by $e_i \otimes x \mapsto \alpha_i x$. There are some obvious elements of the kernel of $d_1$, coming from the fundamental relations. To be precise, define
\begin{displaymath}
d_2 \colon \lw^2(\bk^{n-1}) \otimes M_{n-2} \to \bk^{n-1} \otimes M_{n-1}, \qquad
e_{j,i} \otimes x \mapsto e_i \otimes \alpha_{j-1} x - e_j \otimes \alpha_i x,
\end{displaymath}
where $e_{j,i}=e_j \wedge e_i$ for $j>i$. Then
\begin{displaymath}
d_1d_2(e_{j,i} \otimes x) = \alpha_i \alpha_{j-1} x- \alpha_j \alpha_i x = 0.
\end{displaymath}
Thus $d_1 \circ d_2=0$.

Continuing in the obvious way, we are lead to the following construction. We define a complex $\cK(M)$ of graded vector spaces. The terms are given by
\begin{displaymath}
\cK^{-m}(M) = \bigoplus_{n \ge m} \lw^m(\bk^{n-1}) \otimes M_{n-m},
\end{displaymath}
where the $n$th summand has degree $n$. The differential $\cK^{-m}(M) \to \cK^{-m+1}(M)$ is given by
\begin{displaymath}
d(e_{a_m,\ldots,a_1} \otimes x) = \sum_{j=1}^m (-1)^{j+1} e_{a_m,\ldots,\hat{a}_j,\ldots,a_1} \otimes \alpha_{a_j-j+1} x
\end{displaymath}
where $1 \le a_1<a_2<\cdots<a_m \le n-1$. Note that $1 \le a_j-j+1 \le n-m$, so $\alpha_{a_j-j+1} x$ belongs to $M_{n-m+1}$; this verifies that the differential is homogeneous. We call $\cK(M)$ the {\bf Koszul complex} for the module $M$. We verify that it is indeed a complex:

\begin{proposition}
Notation as above, we have $d^2=0$.
\end{proposition}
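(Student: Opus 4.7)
My plan is to compute $d^2$ directly on a basis element $e_{a_m,\ldots,a_1} \otimes x$ and show that the resulting double sum cancels in pairs, with each pair reducing to an instance of the fundamental relation (Proposition~\ref{prop:alpharel}).

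First I would expand
\begin{displaymath}
d^2(e_{a_m,\ldots,a_1} \otimes x) = \sum_{j=1}^m (-1)^{j+1} d\bigl( e_{a_m,\ldots,\hat{a}_j,\ldots,a_1} \otimes \alpha_{a_j-j+1} x \bigr),
\end{displaymath}
and then apply $d$ again to each summand. The second application removes a second index $a_k$ from the tuple. The key bookkeeping is that if we remove $a_j$ first and then $a_k$ with $k<j$, the index $a_k$ is still in position $k$ (from the right), but if $k>j$ it has slid down to position $k-1$. Tracking this shift is where the signs come from, and it is essentially the only delicate step; after that the computation is routine.

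Next I would group the terms of $d^2$ into pairs: for each pair $j<k$, one contribution arises from removing $a_j$ first and $a_k$ second, the other from removing $a_k$ first and $a_j$ second. A direct calculation with the sign conventions above gives
\begin{displaymath}
(-1)^{j+k+1}\, e_{a_m,\ldots,\hat{a}_k,\ldots,\hat{a}_j,\ldots,a_1} \otimes \alpha_{a_k-k+2}\,\alpha_{a_j-j+1}\, x
\end{displaymath}
from the first ordering and
\begin{displaymath}
(-1)^{j+k}\, e_{a_m,\ldots,\hat{a}_k,\ldots,\hat{a}_j,\ldots,a_1} \otimes \alpha_{a_j-j+1}\,\alpha_{a_k-k+1}\, x
\end{displaymath}
from the second. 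So the pair cancels precisely when $\alpha_{a_k-k+2}\,\alpha_{a_j-j+1} = \alpha_{a_j-j+1}\,\alpha_{a_k-k+1}$ on $M$.

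The final step is to verify this identity. Setting $p=a_j-j+1$ and $q=a_k-k+1$, strict monotonicity of the $a_i$'s gives $a_k-a_j\ge k-j$, hence $q\ge p$, so $q+1>p$. The fundamental relation $\alpha_n\alpha_m=\alpha_m\alpha_{n-1}$ (valid for all $n>m\ge 1$) then yields $\alpha_{q+1}\alpha_p=\alpha_p\alpha_q$, which is exactly the identity needed. The principal obstacle here is really just the sign-tracking in the second paragraph; once one organizes the double sum by unordered pairs $\{a_j,a_k\}$ and accounts for the position shift, the statement reduces cleanly to the fundamental relation.
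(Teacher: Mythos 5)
Your proof is correct, and it is essentially the same argument as the paper's: expand $d^2$ as a double sum and show cancellation via the fundamental relation. The only cosmetic difference is that you organize the cancellation pairwise (fixing an unordered pair $\{j,k\}$ and matching the two orderings), whereas the paper splits the double sum into the halves $j<i$ and $j>i$ and shows one half is the negative of the other by commuting the $\alpha$'s and swapping the summation indices; the underlying computation and the use of the fundamental relation are identical.
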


\begin{proof}
We compute:
\begin{align*}
d^2(e_{a_n,\ldots,a_1} \otimes x)
=& d \left( \sum_{i=1}^n (-1)^{i+1} e_{a_n,\ldots,\hat{a}_i,\ldots,a_1} \otimes \alpha_{a_i-i+1} x \right) \\
=& \sum_{i=1}^n \sum_{j=1}^{i-1} (-1)^{i+j} e_{a_n,\ldots,\hat{a}_i,\ldots,\hat{a}_j,\ldots,a_1} \otimes \alpha_{a_j-j+1} \alpha_{a_i-i+1} x \\
&+ \sum_{i=1}^n \sum_{j=i+1}^n (-1)^{i+j+1} e_{a_n,\ldots,\hat{a}_j,\ldots,\hat{a}_i,\ldots, a_1} \otimes \alpha_{a_j-j+2} \alpha_{a_i-i+1} x
\end{align*}
The second sum is the negative of the first: to see this, first change the order of the $\alpha$'s (using the fundamental relation), then switch the roles of $i$ and $j$, and finally change the order of summation. \end{proof}

We next study the Koszul complex on principal modules. We begin with a lemma.

\begin{lemma}
Let $1 \le c_1<\cdots<c_m \le n$ be integers, and let $a \in [n] \setminus \{c_1,\ldots,c_m\}$. Then there exists a unique $a' \in [n-m]$ such that $a=\alpha_{c_m} \cdots \alpha_{c_1} a'$.
\end{lemma}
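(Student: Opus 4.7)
The plan is to establish that the map $\sigma := \alpha_{c_m} \cdots \alpha_{c_1} \in \cI$ restricts to a bijection $[n-m] \to [n] \setminus \{c_1,\ldots,c_m\}$; the existence and uniqueness of $a'$ then follow immediately. Since $\sigma \in \cI$ is in particular order-preserving and injective, uniqueness is automatic, and the whole content is to identify the image of $\sigma$ restricted to $[n-m]$.

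I would proceed by induction on $m$. The base case $m=0$ is trivial, since then $\sigma$ is the identity. For the inductive step, set $\sigma' := \alpha_{c_{m-1}} \cdots \alpha_{c_1}$. Because $c_1<\cdots<c_{m-1}\le c_m-1\le n-1$, the inductive hypothesis applied with upper bound $n-1$ gives a bijection $\sigma' \colon [n-m] \to [n-1] \setminus \{c_1,\ldots,c_{m-1}\}$. It then remains to show that $\alpha_{c_m}$ restricts to a bijection $[n-1] \setminus \{c_1,\ldots,c_{m-1}\} \to [n] \setminus \{c_1,\ldots,c_m\}$. This is immediate from the definition of $\alpha_{c_m}$: using $c_m \le n$, it carries $[n-1]$ bijectively onto $[n]\setminus\{c_m\}$, and it fixes each $c_i$ with $i<m$ since $c_i<c_m$.

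An alternative approach is to invoke the explicit piecewise formula for $\sigma$ from the proof of Proposition~\ref{prop:alphagen}(a) with $n_i=c_i$: reading off the ranges shows that the image of $\sigma$ on $\bN_+$ is exactly $\bN_+ \setminus \{c_1,\ldots,c_m\}$, and one then truncates to $[n-m]$. This route is slightly bookkeeping-heavy but does not require a separate induction.

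I do not expect any real obstacle here; the statement is essentially a repackaging of Proposition~\ref{prop:alphagen}(a) tailored to the combinatorics of the Koszul complex. The one point worth checking carefully in the inductive argument is the boundary case $c_m=n$, where the image of $\sigma'$ is contained in $[n-1]=[c_m-1]$, so $\alpha_{c_m}$ acts as the identity on the relevant set; the induction still closes because $[n-1]\setminus\{c_1,\ldots,c_{m-1}\}=[n]\setminus\{c_1,\ldots,c_m\}$ in that case.
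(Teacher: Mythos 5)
Your proof is correct and follows essentially the same route as the paper's: both induct on $m$, peel off the outermost $\alpha_{c_m}$, and use that $\alpha_{c_m}$ fixes everything below $c_m$ while shifting $\{c_m,\ldots,n-1\}$ up by one (your set-level bijection statement is just the paper's two cases $a<c_m$ and $a>c_m$ packaged together). Your explicit remark that uniqueness follows from injectivity of $\sigma$ is a small tidy addition the paper leaves implicit.
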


\begin{proof}
Suppose the result has been proved for $m-1$, and let us prove it for $m$. We consider two cases.

First suppose that $a<c_m$. We have $1 \le c_1 < \cdots \le c_{m-1} \le n-1$ and $a \in [n-1] \setminus \{c_1,\ldots,c_{m-1}\}$. By the inductive hypothesis, we can find $a' \in [(n-1)-(m-1)]=[n-m]$ such that $a=\alpha_{c_{m-1}} \cdots \alpha_{c_1} a'$. As $a=\alpha_{c_m}a$, we have $a=\alpha_{c_m} \cdots \alpha_{c_1} a'$.

Now suppose that $a>c_m$. We have $1 \le c_1 < \cdots \le c_{m-1} \le n-1$ and $a-1 \in [n-1] \setminus \{c_1,\ldots,c_{m-1}\}$. By the inductive hypothesis, we can find $a' \in [n-m]$ such that $a-1=\alpha_{c_{m-1}} \cdots \alpha_{c_1} a'$. As $a=\alpha_{c_m}(a-1)$, we have $a=\alpha_{c_m} \cdots \alpha_{c_1} a'$, as required.
\end{proof}

\begin{proposition} \label{prop:Kprin}
We have $\rH^i(\cK(\ul{\bA}^r))=0$ for all $r \in \bN$ and $i \ne 0$.
\end{proposition}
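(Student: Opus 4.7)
Fix a total degree $n \ge 0$. My plan is to reparametrize the basis of $\cK(\ul{\bA}^r)_n$ so that the complex decomposes as a direct sum of augmented exterior-algebra complexes, each of which is either trivial or standardly acyclic.

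Index the basis of $\cK^{-m}(\ul{\bA}^r)_n = \lw^m(\bk^{n-1}) \otimes \ul{\bA}^r_{n-m}$ by pairs $(A, J)$ of disjoint subsets of $[n]$ with $A \subseteq [n-1]$, $n \in J$, $|A| = m$, and $|J| = r$. Here $A = \{a_1 < \cdots < a_m\}$ corresponds to the wedge factor $e_{a_m,\ldots,a_1}$, while $J \setminus \{n\}$ is the image of the index set $\{i_1 < \cdots < i_{r-1}\} \subseteq [n-m-1]$ of $e_{i_1,\ldots,i_{r-1},n-m}$ under the unique order-preserving bijection $\phi_A \colon [n-m-1] \xrightarrow{\sim} [n-1] \setminus A$.

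The crux of the argument---and the main obstacle---is the identity
\[
d(A, J) = \sum_{j=1}^{m} (-1)^{j+1}\, (A \setminus \{a_j\},\, J),
\]
that is, the differential removes an element of $A$ while leaving $J$ untouched. To verify it, I would observe that $\alpha_{a_j-j+1}$ acts on $e_{i_1,\ldots,i_{r-1},n-m}$ by raising by one every index $\ge a_j-j+1$ (including $n-m$), and then check directly that $a_j$ occupies exactly position $a_j-j+1$ inside $[n-1] \setminus (A \setminus \{a_j\})$; hence $\phi_{A \setminus \{a_j\}}$ applied to the shifted index tuple recovers the same $J \setminus \{n\} \subseteq [n-1]$ that we started with.

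Granting this identity, the complex splits as $\cK(\ul{\bA}^r)_n = \bigoplus_J \cK_J$, where $J$ ranges over the size-$r$ subsets of $[n]$ containing $n$, and $\cK_J$ is the Koszul complex of the functional ``sum of coordinates'' on the vector space $\bk \cdot P_J$ with $P_J := [n-1] \setminus J$. Since $\dim \bk \cdot P_J = n - r$, the standard fact that the Koszul complex of a nonzero linear functional is acyclic in every cohomological degree shows that $\cK_J$ is acyclic whenever $n > r$; when $n = r$ the only such $J$ is $[r]$, and $\cK_J = \bk$ sits in cohomological degree~$0$. In every case, $\rH^i(\cK(\ul{\bA}^r))_n = 0$ for $i \ne 0$, as claimed.
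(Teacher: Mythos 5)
Your proof is correct and is essentially the paper's argument in a different packaging: your direct sum over $r$-subsets $J \ni n$ of $[n]$ is the same decomposition into $\binom{n-1}{r-1}$ summands that the paper obtains (there indexed by $(r-1)$-subsets of $[n-1]$, with each summand identified with $\cK(\ul{\bA}^1)_{n-r+1}$, i.e.\ the augmented simplex complex on $n-r$ vertices), and your positional computation that $a_j$ sits in slot $a_j-j+1$ of $[n-1]\setminus(A\setminus\{a_j\})$ is exactly the content of the paper's auxiliary lemma on factoring through $\alpha_{c_m}\cdots\alpha_{c_1}$. No gaps.
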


\begin{proof}
For $r=1$, the complex $\cK(\ul{\bA}^1)_n$ is the complex computing the simplicial homology of the $n-2$ simplex, and is therefore acyclic away from cohomological degree~0.

We now treat the general case. We claim that the complex $\cK(\ul{\bA}^r)_n$ is isomorphic to a direct sum of $\binom{n-1}{r-1}$ copies of the complex $\cK(\ul{\bA}^1)_{n-r+1}$, which will prove the proposition. To see this, fix a tuple $a=(a_1, \ldots, a_{r-1})$ with $1 \le a_1 < \cdots < a_{r-1} \le n-1$. Let $\sigma \colon [n-r] \to [n-1]$ be the unique order-preserving injection whose image is disjoint from $\{a_1,\ldots,a_{r-1}\}$. For $1 \le b_1<\cdots<b_m \le n-r$, let $1 \le a_i' \le n-m-1$ be the unique integer such that $a_i=\alpha_{\sigma(b_m)} \cdots \alpha_{\sigma(b_1)} a_i'$, which exists by the previous lemma. We define a map of complexes
\begin{displaymath}
\phi_a \colon \cK(\ul{\bA}^1)_{n-r+1} \to \cK(\ul{\bA}^r)_n,
\end{displaymath}
as follows. In cohomological degree $-m$, it is the map
\begin{displaymath}
\lw^m(\bk^{n-r}) \to \lw^m(\bk^{n-1}) \otimes \bA^r_{n-m}, \qquad
e_{b_m,\ldots,b_1} \mapsto e_{\sigma(b_m),\ldots,\sigma(b_1)} \otimes e_{a_1',\ldots,a_{r-1}',n-m}.
\end{displaymath}
(Here we have identified each homogeneous piece of $\ul{\bA}^1$ with $\bk$.) We now verify that $\phi_a$ is a map of complexes. We have
\begin{displaymath}
d(e_{b_m,\ldots,b_1}) = \sum_{j=1}^m (-1)^{j+1} e_{b_m,\ldots,\hat{b}_j,\ldots,b_1}
\end{displaymath}
and
\begin{displaymath}
d(e_{\sigma(b_m),\ldots,\sigma(b_1)} \otimes e_{a_1',\ldots,a_{r-1}',n-m}) =
\sum_{j=1}^m (-1)^{j+1} e_{\sigma(b_m),\ldots,\widehat{\sigma(b_j)},\ldots,\sigma(b_1)} \otimes \alpha_{\sigma(b_j)-j+1} e_{a_1',\ldots,a_{r-1}',n-m}.
\end{displaymath}
It thus suffices to show that
\begin{displaymath}
\phi_a(e_{b_m,\ldots,\hat{b}_j,\ldots,b_1}) = e_{\sigma(b_m),\ldots,\widehat{\sigma(b_j)},\ldots,\sigma_a(b_1)} \otimes \alpha_{\sigma(b_j)-j+1} e_{a_1',\ldots,a_{r-1}',n-m}.
\end{displaymath}
Let $\tau=\alpha_{\sigma(b_m)} \cdots \widehat{\alpha_{\sigma(b_j)}} \cdots \alpha_{\sigma(b_1)}$ and let $a_i''$ be the unique integer such that $a_i=\tau a_i''$. We have
\begin{displaymath}
\phi_a(e_{b_m,\ldots,\hat{b}_j,\ldots,b_1}) = e_{\sigma(b_m),\ldots,\widehat{\sigma(b_j)},\ldots,\sigma(b_1)} \otimes e_{a_1'',\ldots,a_{r-1}'',n-m-1}.
\end{displaymath}
It thus suffices to show that $a_i''=\alpha_{\sigma(b_j)-j+1} a_j'$. We can verify this after appying $\tau$ to each side. Thus we must show that $a_i=\tau \alpha_{\sigma(b_j)-j+1} a_i'$. But
\begin{displaymath}
\tau \alpha_{\sigma(b_j)-j+1} = \alpha_{\sigma(b_m)} \cdots \alpha_{\sigma(b_1)},
\end{displaymath}
and so the result follows from the definition of $a_i'$.

We thus have a map of complexes
\begin{displaymath}
\phi \colon \lw^{r-1}(\bk^{n-1}) \otimes \cK(\ul{\bA}^1)_{n-r+1} \to \cK(\ul{\bA}^r)_n, \qquad
e_{a_1,\dots,a_r} \otimes x \mapsto \phi_a(x).
\end{displaymath}
In each cohomological degree, $\phi$ carries basis elements to basis elements. One easily verifies that it is a bijeciton on basis elements, and so $\phi$ is an isomorphism of complexes. This completes the proof.
\end{proof}

We have defined the Koszul complex $\cK(M)$ of a graded $\cI$-module $M$. More generally, suppose that $M$ is complex of graded $\cI$-modules. We then define
\begin{displaymath}
\cK^{-m}(M) = \bigoplus_{k \in \bZ} \bigoplus_{n \ge m} \lw^k(\bk^{n-1}) \otimes M^{k-m}_{n-k},
\end{displaymath}
where the $(k,n)$ summand is placed in degree $n$. The differential is given by
\begin{displaymath}
d(e_{a_k,\ldots,a_1} \otimes x) = (-1)^{n+k} e_{a_k,\ldots,a_1} \otimes dx + \sum_{j=1}^k (-1)^{j+1} e_{a_k,\ldots,a_{j+1},a_{j-1},\ldots,a_1} \otimes \alpha_{a_j-j+1} x,
\end{displaymath}
where $x \in M^{k-m}_{n-k}$. This is easily seen to square to~0. We note that we have a natural morphism $\cK(M) \to \cT(M)$ in $\Ch(\GrVec)$: this map is zero on $\lw^k(\bk^{n-1}) \otimes M^{k-m}_{n-k}$ if $k>0$, and the quotient map $M^{-m}_n \to \cT(M^{-m}_n)$ if $k=0$.

\begin{proposition} \label{prop:kosztor}
The functor $\cK$ induces a functor $\rD(\uRep(\cI)) \to \rD(\GrVec)$, which is the left derived functor of $\cT$.
\end{proposition}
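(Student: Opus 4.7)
The plan has three parts. First, I would show that $\cK$, viewed as a functor from complexes of graded $\cI$-modules to complexes of graded vector spaces, preserves acyclic complexes (hence quasi-isomorphisms) so that it descends to derived categories. Second, I would verify on projective objects that the natural morphism $\cK \to \cT$ is a quasi-isomorphism. Third, I would assemble these to identify $\cK$ with $\rL\cT$.

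For the first part, observe that $\cK(M^{\bullet})$ is naturally the total complex of a double complex whose horizontal direction is indexed by the wedge power $k \ge 0$ and whose vertical direction is the internal differential of $M^{\bullet}$. The crucial, saving feature is that $\cK(M^{\bullet})$ decomposes as a direct sum over its internal-degree components $\cK(M^{\bullet})_n$, and since $\lw^k(\bk^{n-1}) = 0$ for $k \ge n$, at each fixed $n$ only finitely many columns of the double complex are nonzero. If $M^{\bullet}$ is acyclic, then each column, being obtained from a single shift of $M^{\bullet}$ by tensoring over $\bk$ with $\lw^k(\bk^{n-1})$, is acyclic; and the total complex of a bicomplex with finitely many acyclic columns is acyclic. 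Hence $\cK$ sends acyclic complexes to acyclic complexes, and therefore descends to $\rD(\uRep(\cI)) \to \rD(\GrVec)$.

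For the second part, for a single principal module $P = \ul{\bA}^r$, Proposition~\ref{prop:Kprin} gives $\rH^i(\cK(\ul{\bA}^r)) = 0$ for $i \ne 0$; at $i = 0$, a direct inspection shows $\rH^0(\cK(\ul{\bA}^r))$ is concentrated in internal degree $r$ and one-dimensional there, matching $\cT(\ul{\bA}^r)$, and the natural projection $\cK(\ul{\bA}^r) \to \cT(\ul{\bA}^r)$ is an isomorphism on this line. By additivity the same holds for any projective $P$. For a complex $P^{\bullet}$ of projectives, the map of double complexes $\cK(P^{\bullet}) \to \cT(P^{\bullet})$ is a quasi-isomorphism column-by-column in the $k$-direction, and the same internal-degree-finiteness as above promotes this to a quasi-isomorphism of total complexes (e.g.\ via the column-filtration spectral sequence, which converges unconditionally at each fixed $n$).

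For the third part, given $M \in \rD(\uRep(\cI))$, choose a K-projective resolution $P^{\bullet} \to M$. Then $\cK(P^{\bullet}) \to \cK(M)$ is a quasi-isomorphism by the first part, $\cK(P^{\bullet}) \to \cT(P^{\bullet})$ is a quasi-isomorphism by the second part, and $\cT(P^{\bullet})$ computes $\rL\cT(M)$ by definition. Composing gives the natural isomorphism $\cK(M) \simeq \rL\cT(M)$ in $\rD(\GrVec)$, identifying $\cK$ with the left derived functor of $\cT$. The main obstacle is the bookkeeping for the double complex in the unbounded setting---showing that quasi-isomorphisms from single projectives propagate to arbitrary complexes of projectives---but this is handled throughout by the finiteness of wedge powers: $\lw^k(\bk^{n-1}) = 0$ for $k \ge n$ reduces every spectral sequence argument to a finite double complex at each internal degree.
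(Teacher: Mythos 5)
Your proof is correct but follows a different route from the paper's at the key step of showing $\cK$ preserves quasi-isomorphisms. The paper invokes the abstract criterion of Proposition~\ref{prop:acyclic}: it verifies that $\cK$ is exact and cocontinuous, takes cones to cones, and ``only adds terms to the left,'' and then cites that lemma to conclude acyclicity is preserved. You instead argue directly via the column-filtration spectral sequence of the bicomplex underlying $\cK(M^{\bullet})$, using the crucial observation that at each fixed internal degree $n$ the wedge powers $\lw^k(\bk^{n-1})$ vanish for $k\ge n$, so only finitely many columns are nonzero and convergence is automatic. Both are valid; the paper's approach has the advantage that Proposition~\ref{prop:acyclic} is formulated once and reused elsewhere (e.g.\ for the duality functor $\cD$), whereas your approach avoids needing to verify the somewhat finicky ``cones to cones'' and ``only adds to the left'' conditions and makes the convergence mechanism explicit. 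For the identification with $\rL\cT$ on projectives both proofs rely on Proposition~\ref{prop:Kprin} and the canonical map $\cK(P)\to\cT(P)$, so that step is the same. Finally, for the unbounded case you appeal to K-projective resolutions, which is cleaner and more complete than the paper's brief remark that ``the general case can be deduced from a limiting argument $\ldots$ we omit the details''; in that respect your version is more careful. One small imprecision: at a fixed internal degree $n$ the bicomplex is horizontally bounded (finitely many Koszul columns) but not literally finite if $P^{\bullet}$ is unbounded; what matters, and what you do correctly exploit, is that the column filtration has finitely many steps, which suffices for unconditional convergence of both spectral sequences.
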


\begin{proof}
The functor $\cK$ satisfies the hypotheses of Proposition~\ref{prop:acyclic}, and so, by that proposition, we see that $\cK$ takes quasi-isomorphisms to quasi-isomorphisms. It thus induces a functor on the derived categories. Suppose now that $M$ is a bounded above complex in $\uRep(\cI)$, and choose a quasi-isomorphism $P \to M$ where $P$ is a bounded above complex of projectives. Then the left derived functor of $\cT$ is defined at $M$ and equal to $\cT(P)$. Since the maps $\cK(M) \leftarrow \cK(P) \to \cT(P)$ are quasi-isomorphisms (for the second map, this follows from Proposition~\ref{prop:Kprin}), we see that $\cK(M)$ is the value of the derived functor as well. This shows that $\cK$ is the left derived functor of $\cT$ on the bounded above category. The general case can be deduced from a limiting argument (every complex is a filtered colimit of bounded above complexes, and both $\cK$ and $\rL \cT$ commute with filtered colimits); we will not actually need this, so we omit the details.
\end{proof}

\begin{corollary} \label{cor:kosztor}
Let $M$ be a graded $\cI$-module. Then $\rL_i \cT(M) \cong \rH^{-i}(\cK(M))$.
\end{corollary}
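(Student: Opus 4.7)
The plan is to deduce this statement directly from Proposition~\ref{prop:kosztor}, which already identifies $\cK$ with the total left derived functor $\rL\cT$ on the derived category. Given a graded $\cI$-module $M$, I would regard $M$ as a complex concentrated in cohomological degree $0$ and apply the proposition to obtain a natural isomorphism $\cK(M) \cong \rL\cT(M)$ in $\rD(\GrVec)$. Taking the $(-i)$th cohomology of both sides then yields $\rH^{-i}(\cK(M)) \cong \rH^{-i}(\rL\cT(M)) = \rL_i \cT(M)$, by the standard convention relating left derived functors (indexed homologically) to cohomology of cochain complexes with negative indexing.

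The only verification I would include is that the unbounded-versus-bounded-above discrepancy noted at the end of the proof of Proposition~\ref{prop:kosztor} is not an issue here: since $M$ is a single module placed in degree~$0$, it is bounded above, and the proposition is proved in full for bounded above complexes. Concretely, I would choose a projective resolution $P_\bullet \to M$ with each $P_n$ a finite direct sum of principal modules $\ul{\bA}^r$ (which exists since $\uRep(\cI)$ is locally noetherian with enough finitely generated projectives); by definition $\rL_i \cT(M) = \rH^{-i}(\cT(P_\bullet))$. The maps $\cK(M) \leftarrow \cK(P_\bullet) \to \cT(P_\bullet)$ are both quasi-isomorphisms, the first by Proposition~\ref{prop:acyclic} applied in the bounded-above setting, and the second because each $\cK(\ul{\bA}^r)$ is quasi-isomorphic to $\cT(\ul{\bA}^r)$ by Proposition~\ref{prop:Kprin}.

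There is no real obstacle: this statement is essentially a rephrasing of the preceding proposition in terms of individual $\mathrm{Tor}$ groups, and the only thing to check is the cohomological sign convention. In particular, no additional work with the Koszul differential itself is needed.
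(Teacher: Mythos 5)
Your proof is correct and is exactly the argument the paper intends: the corollary is an immediate consequence of Proposition~\ref{prop:kosztor} applied to $M$ placed in degree~$0$, and your explicit unwinding via a projective resolution $P_\bullet \to M$ and the quasi-isomorphisms $\cK(M) \leftarrow \cK(P_\bullet) \to \cT(P_\bullet)$ reproduces the reasoning already given in the proof of that proposition. Your observation that the bounded-above case suffices here is also correct, so nothing further is needed.
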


\subsection{The duality functor} \label{ss:duality}

For $1 \le k \le n$, let $T_k \colon \ul{\bA}^{n+1} \to \ul{\bA}^n$ be the map given by
\begin{displaymath}
T_k(e_{i_1,\ldots,i_{n+1}}) = e_{i_1,\ldots,i_{k-1},i_{k+1},\ldots,i_{n+1}}.
\end{displaymath}
The $T_k$ give a basis for $\uHom_{\cI}(\ul{\bA}^{n+1}, \ul{\bA}^n)$. Given a complex $M \in \Ch(\uRep(\cI))$ we now define another complex $\cD(M) \in \Ch(\uRep(\cI))$. The terms are given as follows:
\begin{displaymath}
\cD^{-m}(M) = \bigoplus_{k \in \bN} (M_k^{m-k})^* \otimes \ul{\bA}^k.
\end{displaymath}
Here, $(M_k^{m-k})^*$ is simply regarded as an ungraded vector space; the $\uRep(\cI)$ structure on $\cD^{-m}(M)$ comes entirely from $\ul{\bA}^k$. For $\lambda \in (M_k^{m-k})^*$ and $x \in \ul{\bA}^k$, the differential is given by
\begin{displaymath}
d(\lambda \otimes x) =(-1)^k d^*(\lambda) \otimes x + \sum_{i=1}^{k-1} (-1)^{i+1} \alpha_i^*(\lambda) \otimes T_i(x)
\end{displaymath}
where $d^*$ is ``naive'' dual to the differential on $M$ (i.e., we do not introduce any signs). We note that the differential is a morphism in the category $\uRep(\cI)$. We verify below (Lemma~\ref{lem:DK}) that this is actually a complex. Granted this, we have thus defined a functor
\begin{displaymath}
\cD \colon \Ch(\uRep(\cI))^{\op} \to \Ch(\uRep(\cI)).
\end{displaymath}
If this construction seems capricious, see \S \ref{ss:conceptual} for an explanation of its origin. We note that when $M$ is a graded $\cI$-module, regarded as a complex in degree~0, the complex $\cD(M)$ simplifies to 
\begin{displaymath}
\cdots \to M_2^* \otimes \ul{\bA}^2 \to M_1^* \otimes \ul{\bA}^1 \to M_0^* \otimes \ul{\bA}^0 \to 0 \to \cdots
\end{displaymath}
with differential
\begin{displaymath}
d(\lambda \otimes x) = \sum_{i=1}^{k-1} (-1)^{i+1} \alpha_i^*(\lambda) \otimes T_i(x), \qquad
\lambda \otimes x \in M_k^* \otimes \ul{\bA}^k.
\end{displaymath}
The main result of this section is the following theorem:

\begin{theorem} \label{thm:duality}
The functor $\cD$ induces a contravariant functor $\rD(\uRep(\cI)) \to \rD(\uRep(\cI))$. Moreover, there is a canonical natural transformation $M \to \cD(\cD(M))$ in $\rD(\uRep(\cI))$ that is an isomorphism if $\rH^{\bullet}(M)$ is degreewise finite.
\end{theorem}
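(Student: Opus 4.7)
My proof proceeds in three stages.

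\emph{Stage 1: derived functoriality.} Granting the forthcoming Lemma~\ref{lem:DK} that $d^2 = 0$ on $\cD(M)$ (a routine check exploiting the interaction of the $T_i$ on $\ul{\bA}^k$ with the $\alpha_j^*$ on $M$), the construction $\cD$ is a well-defined additive contravariant functor $\Ch(\uRep(\cI)) \to \Ch(\uRep(\cI))$. To descend to the derived category I show that $\cD$ sends acyclic complexes to acyclic complexes. Viewing $\cD(M)$ as the totalization of a double complex whose term in bigrading $(p,q)$ is $(M^q_p)^* \otimes \ul{\bA}^p$, with one differential induced by $d^*$ on $M$ and the other by $\sum_i (-1)^{i+1} \alpha_i^* \otimes T_i$, each graded degree $n$ receives contributions only from bidegrees with $p \le n$ (since $(\ul{\bA}^p)_n = 0$ for $p > n$). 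Consequently the spectral sequence associated to the Koszul filtration converges strongly in each graded degree, with $E_1$-page controlled by $\rH^\bullet(M)$; if $M$ is acyclic, this page vanishes, forcing $\cD(M) \simeq 0$. Hence $\cD$ descends to a contravariant functor on $\rD(\uRep(\cI))$.

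\emph{Stage 2: the biduality map.} The natural transformation $\eta_M \colon M \to \cD(\cD(M))$ is the canonical double-evaluation: an element $x \in M^p_n$ is sent to the component of $\cD(\cD(M))^p_n$ supported in the $k = n$ (outer) and $l = n$ (inner) summands, whose value on $\lambda \in (M^p_n)^*$ paired with the unique basis vector of $(\ul{\bA}^n)_n$ is $\lambda(x)$. That $\eta_M$ commutes with differentials follows by dualizing $d^2 = 0$ for $\cD(M)$, and naturality is immediate from the construction.

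\emph{Stage 3: isomorphism under degreewise finiteness.} The full subcategory of $\rD(\uRep(\cI))$ on which $\eta_M$ is a quasi-isomorphism is closed under shifts and cones, hence triangulated. By Theorem~\ref{thm:dergen}, $\rD^b_{\fgen}(\uRep(\cI))$ is generated by the standard modules $\ul{\bE}^\lambda$, and by Corollary~\ref{cor:gapprod} each $\ul{\bE}^\lambda$ is a concatenation of $\ul{\bA}^1$'s and $\ul{\bB}^1$'s. I will verify (in analogy with Proposition~\ref{prop:revcat}) that $\cD$ is compatible with $\odot$ up to reversing the order, reducing the question to $M = \ul{\bA}^1$ and $M = \ul{\bB}^1$. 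For $\ul{\bB}^1$ direct unwinding gives $\cD(\ul{\bB}^1) \cong \ul{\bA}^1[1]$, a one-term complex. For $\ul{\bA}^1$ the complex $\cD(\ul{\bA}^1)$ has $\ul{\bA}^m$ in cohomological degree $-m$ for $m \ge 1$ with simplicial-type differential $\sum_i (-1)^{i+1} T_i$; since $\alpha_i$ acts as zero on $\ul{\bB}^1$, the Koszul computation of $\rL_\bullet \cT(\ul{\bB}^1)$ (Corollary~\ref{cor:kosztor}) identifies this complex with the shifted minimal projective resolution of $\ul{\bB}^1$, giving $\cD(\ul{\bA}^1) \simeq \ul{\bB}^1[1]$. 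Contravariance then yields $\cD(\cD(\ul{\bA}^1)) \simeq \ul{\bA}^1$ and $\cD(\cD(\ul{\bB}^1)) \simeq \ul{\bB}^1$, with $\eta$ implementing these isomorphisms. To extend beyond $\rD^b_{\fgen}$, I exploit the locality observation that $\cD(\cD(M))^\bullet_n$ depends only on $M^\bullet_k$ for $k \le n$: the natural quotient $M \to M/M_{\ge n+1}$ is an isomorphism in graded degree $n$, and $M/M_{\ge n+1}$ has bounded and finite-dimensional total homology (hence lies in $\rD^b_{\fgen}(\uRep(\cI))$ up to quasi-isomorphism), so $(\eta_M)_n$ is a quasi-isomorphism by naturality and the preceding d\'evissage.

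\emph{Main obstacle.} The chief difficulty is making the locality argument in Stage 3 rigorous: one must verify that $\cD \circ \cD$ commutes with the graded quotient $M \mapsto M/M_{\ge n+1}$ after restriction to graded degree $n$, and that the spectral sequence of Stage 1 genuinely converges in each graded degree despite the overall complex being unbounded above. Additionally, the compatibility of $\cD$ with the concatenation product used in the d\'evissage must be established at the level of complexes (in analogy with Proposition~\ref{prop:revcat}) before one can reduce to the length-one cases; this step is less formal than it appears because $\cD$ is contravariant and involves dualization, so the corresponding isomorphism carries nontrivial signs and shifts.
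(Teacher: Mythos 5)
Your Stage 1 is sound and is a legitimate alternative to the paper's route (the paper instead identifies $\cD(M)_n$ with $\cK(M)_n^*[n]$ and quotes the acyclicity properties of the Koszul complex; your finite-column spectral sequence in each internal degree accomplishes the same thing). The decisive gap is in Stage 2. The ``canonical double-evaluation'' $\eta_M \colon M \to \cD(\cD(M))$ is not a morphism in $\Ch(\uRep(\cI))$: the $\cI$-module structure on $\cD(\cD(M))^{-m} = \bigoplus_k (\cD(M)^{m-k}_k)^* \otimes \ul{\bA}^k$ comes entirely from the outer factor $\ul{\bA}^k$, the other tensor factors being bare vector spaces. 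Your map sends $x \in M^p_n$ into the $k=n$ summand; applying $\alpha_i$ with $i \le n$ to its image keeps it inside the $k=n$ summand (it merely moves the basis vector of $\ul{\bA}^n$), whereas $\alpha_i x$ lies in $M^p_{n+1}$ and its proposed image lies in the $k=n+1$ summand. So $\eta_M$ fails $\cI$-equivariance, and no chain-level map of this shape exists. This is precisely the point the paper's proof is organized around: it introduces the auxiliary complex $\cE(M)^n = \bigoplus_{k,\ell} M^{k+n-\ell}_\ell \otimes (\ul{\bA}^\ell_k)^* \otimes \ul{\bA}^k$ together with a map going the \emph{other} way, $\cE(M) \to M$, $x \otimes \lambda \otimes y \mapsto \pm\,\epsilon(\lambda)\,(y \ast x)$, which is equivariant because it uses the action map $\ul{\bA}^k \otimes N_k \to N$; this map is a quasi-isomorphism, the natural injection $\cE(M) \to \cD(\cD(M))$ is an isomorphism of complexes when $M$ is degreewise finite, and the biduality morphism exists only in the derived category, as the roof $M \leftarrow \cE(M) \to \cD(\cD(M))$.

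Stage 3 has a secondary problem as well: the hypothesis is only that $\rH^{\bullet}(M)$ is degreewise finite, which does not place $M$ or its truncations $M/M_{\ge n+1}$ in $\rD^b_{\fgen}(\uRep(\cI))$ --- the cohomology may be nonzero in infinitely many cohomological degrees and need not be finitely generated --- so your d\'evissage through standard modules and the concatenation compatibility of $\cD$ does not reach the general case. The paper instead replaces $M$ by a quasi-isomorphic degreewise finite subcomplex (Lemma~\ref{lem:degfin}), for which $\cE \to \cD \circ \cD$ is an isomorphism on the nose, and proves that $\cE(M) \to M$ is always a quasi-isomorphism by reducing, via projective resolutions (Proposition~\ref{prop:qi}), to $M = \ul{\bA}^r$.
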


The following lemma shows that $\cD(M)$ is just the Kozul complex $\cK(M)$ with the terms shuffled around and dualized.

\begin{lemma} \label{lem:DK}
Let $M \in \Ch(\uRep(\cI))$.
\begin{enumerate}
\item We have a canonical isomorphism of vector spaces $\cD^{-m}(M)_n \cong (\cK^{m-n}(M)_n)^*$.
\item The isomorphism from (a) is compatible with the differentials, in the sense that the diagram
\begin{displaymath}
\xymatrix{
\cD^{-m}(M)_n \ar[r]^d \ar@{=}[d] & \cD^{-m+1}(M)_n \ar@{=}[d] \\
(\cK^{m-n}(M)_n)^* \ar[r]^{d^*} & (\cK^{m-n-1}(M)_n)^* }
\end{displaymath}
commutes.
\item $\cD(M)$ is a complex, that is, $d^2=0$.
\item We have an isomorphism $\cD(M)_n \cong \cK(M)_n^*[n]$ in $\Ch(\Vec)$.
\item We have an isomorphism of vector spaces $\rH^{-m}(\cD(M))_n \cong \rH^{m-n}(\cK(M))_n^*$.
\end{enumerate}
\end{lemma}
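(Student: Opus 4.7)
The plan is to prove the five parts in sequence, reducing (b)--(e) to a careful identification of basis vectors in (a) together with sign bookkeeping.

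For (a), I will make the identification explicit at the level of basis vectors. A basis of $\ul{\bA}^k_n$ is indexed by sequences $1 \le i_1 < \cdots < i_{k-1} < i_k = n$, equivalently by $(k-1)$-subsets $S = \{i_1,\ldots,i_{k-1}\} \subseteq \{1,\ldots,n-1\}$; taking complements gives an $(n-k)$-subset $S^c$, which indexes a basis element of $\lw^{n-k}(\bk^{n-1})$. Matching graded pieces, one sees that the summand $(M_k^{m-k})^* \otimes \ul{\bA}^k_n$ of $\cD^{-m}(M)_n$ has the same dimension as $\lw^{n-k}(\bk^{n-1}) \otimes M_k^{m-k}$, which, after reindexing $k \mapsto n-k$, is precisely the $(n-k)$-summand of $\cK^{m-n}(M)_n$. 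Dualizing and taking direct sums gives the isomorphism of part (a); I will write it explicitly as sending $\lambda \otimes e_{i_1,\ldots,i_{k-1},n}$ to $\lambda \otimes e_{S^c}^{\,*}$, up to a sign $\epsilon(S)$ coming from the complement on $\lw^\bullet$ that I will choose so as to make (b) clean.

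For (b), the verification is a direct but somewhat tedious sign-tracking calculation. The formula for $d$ on $\cD(M)$ has two types of terms: the first, $(-1)^k d^*(\lambda)\otimes x$, is the internal $M$-differential and should match (after dualization) the first term of $d$ on $\cK$, namely $(-1)^{n+k'}\,e\otimes dx$ under the correspondence $k' = n-k$; one checks the signs $(-1)^k$ and $(-1)^{n+(n-k)} = (-1)^k$ agree. The second type of term involves replacing $T_i(x)$ for $i = 1,\ldots,k-1$ and $\alpha_i^*(\lambda)$; under the complement correspondence, omitting an index $i_p$ on the $\ul{\bA}^k$ side corresponds to inserting that index into the complementary tuple on the $\lw^\bullet$ side. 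The key sign identity I need is that the sign $(-1)^{p+1}$ appearing in $\cD$ matches the sign $(-1)^{r+1}$ in $d$ on $\cK$ (where $r$ is the position of the inserted index $i_p$ in the complement), modulo a Koszul sign coming from the complement bijection $\epsilon(S^c)/\epsilon(S)$. The main obstacle is choosing the signs $\epsilon(S)$ in part (a) so that this matches on the nose; this is a standard but careful wedge-product complement sign computation, and fixes the normalization of the isomorphism in (a).

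Parts (c), (d), and (e) are then essentially formal. For (c), given (b) and the fact that $\cK(M)$ is a complex (verified above the statement of Lemma~\ref{lem:DK}), the commutativity of the square in (b) implies $d^2=0$ on $\cD(M)$ pieceswise in each internal degree $n$. For (d), combining (a) and (b) gives an isomorphism $\cD(M)_n \to \cK(M)_n^*$ of graded vector spaces with differential, but with a cohomological shift: the piece $\cD^{-m}(M)_n$ matches $(\cK^{m-n}(M)_n)^*$, i.e., cohomological degree $-m$ on the left corresponds to $-(m-n) = n-m$ on the right after dualization, so $\cD(M)_n \cong \cK(M)_n^*[n]$ as promised. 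Finally, for (e) I will apply the standard identity $\rH^i(V^*) \cong \rH^{-i}(V)^*$ (valid for complexes of vector spaces, since dualization is exact) together with the shift in (d): $\rH^{-m}(\cD(M))_n = \rH^{-m}(\cK(M)_n^*[n]) = \rH^{n-m}(\cK(M)_n^*) = \rH^{m-n}(\cK(M))_n^*$.
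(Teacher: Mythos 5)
Your proposal follows essentially the same route as the paper: identify $\ul{\bA}^k_n$ with $\lw^{k-1}(\bk^{n-1})$ by dropping the last index, pair with $\lw^{n-k}(\bk^{n-1})$ via the complement/wedge duality, and show the two differentials are adjoint with respect to that pairing by the sign-tracking calculation you outline (your adjustable $\epsilon(S)$ is exactly what the intrinsic wedge pairing $\lw^{k-1}\times\lw^{n-k}\to\lw^{n-1}\cong\bk$ encodes). Parts (c)--(e) are then formal consequences, as you note and as the paper treats them.
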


\begin{proof}
(a) We have
\begin{displaymath}
\cD^{-m}(M)_n = \bigoplus_{k \in \bZ} (M_k^{m-k})^* \otimes \ul{\bA}^k_n
\end{displaymath}
and
\begin{displaymath}
\cK^{m-n}(M)_n = \bigoplus_{k \in \bZ} \lw^{n-k}(\bk^{n-1}) \otimes M^{m-k}_k.
\end{displaymath}
In the second sum, we have changed $k$ to $n-k$. For notational ease, put
\begin{displaymath}
D^{m,n,k}=(M_k^{m-k})^* \otimes \ul{\bA}^k_n, \qquad
K^{m,n,k}=\lw^{n-k}(\bk^{n-1}) \otimes M^{m-k}_k
\end{displaymath}
so that $\cD^{-m}(M)_n$ is the sum of the $D^{m,n,k}$ and $\cK^{m-n}(M)_n$ is the sum of the $K^{m,n,k}$. We note that $D^{m,n,k}=K^{m,n,k}=0$ if $k<0$ or $k>n$. In particular, the above direct sums are finite.

Now, $\ul{\bA}^k_n$ has a basis consisting of elements $e_{i_1,\ldots,i_k}$ with $1 \le i_1 < \cdots < i_k=n$. We identify this with $\lw^{k-1}(\bk^{n-1})$ in the obvious manner (discard $i_k$). Finally, $\lw^{k-1}(\bk^{n-1})$ and $\lw^{n-k}(\bk^{n-1})$ admit a canonical perfect pairing to the one-dimensional space $\lw^{n-1}(\bk^{n-1})$. Via the isomorphism $\mu \colon \lw^{n-1}(\bk^{n-1}) \to \bk$ mapping $e_{1,2,\ldots,n-1}$ to~1, this identifies $\lw^{k-1}(\bk^{n-1})$ with the dual of $\lw^{n-k}(\bk^{n-1})$ and vice versa. We thus see that the terms in the sum for $\cD^{-m}(M)_n$ are identified with the terms in the sum for $\cK^{m-n}(M)_n$. Since both sums are finite, this identifies $\cD^{-m}(M)_n$ with the dual of $\cK^{m-n}(M)_n$.

(b) Let
\begin{displaymath}
\langle, \rangle \colon \cD^{-m}(M)_n \times \cK^{m-n}(M)_n \to \bk.
\end{displaymath}
be the perfect pairing from part (a). Explicitly, given $\lambda \otimes v \in D^{m,n,k}$, with $\lambda \in (M_k^{m-k})^*$ and $v \in \ul{\bA}^k_n \cong \lw^{k-1}(\bk^{n-1})$, and $w \otimes x \in K^{m,n,\ell}$, with $w \in \lw^{n-\ell}(\bk^{n-1})$ and $x \in M_{\ell}^{m-\ell}$, we have
\begin{displaymath}
\langle \lambda \otimes v, w \otimes x \rangle = \delta_{k,\ell} \mu(v \wedge w) (\lambda, x),
\end{displaymath}
where $(,)$ is the canonical pairing between $M^{m-k}_k$ and its dual.

To prove that the isomorphism of $\cD$ with the dual of $\cK$ is compatible with the differentials, it is equivalent to show that the differentials are adjoint with respect to the pairing. Thus let $\lambda \otimes v \in D^{m,n,k}$ and $w \otimes x \in K^{m-1,n,\ell}$ be given. We must show
\begin{displaymath}
\langle d(\lambda \otimes v), w \otimes x \rangle = \langle \lambda \otimes v, d(x \otimes w) \rangle.
\end{displaymath}
It suffices to treat the case where $v$ and $w$ are pure tensors, say $v=e_{a_1,\ldots,a_{k-1}}$ with $1 \le a_1 < \cdots < a_{k-1} \le n-1$ and $w=e_{b_1,\ldots,b_{n-\ell}}$ with $1 \le b_1 < \cdots < b_{n-\ell} \le n-1$. We have
\begin{displaymath}
d(\lambda \otimes v) = (-1)^k d^*(\lambda) \otimes v + \sum_{i=1}^k (-1)^{i+1} \alpha_i^*(\lambda) \otimes e_{a_1,\ldots,\hat{a}_i,\ldots,a_{k-1}}.
\end{displaymath}
Now, the first term above belongs to $D^{m-1,n,k}$ and the second belongs to $D^{m-1,n,k-1}$. We thus have
\begin{displaymath}
\langle d(\lambda \otimes v), w \otimes x \rangle = \begin{cases}
(-1)^k \mu(v \wedge w) ( \lambda, dx) & \text{if $k=\ell$} \\
\sum_{j=1}^k (-1)^{j+1} \mu(e_{a_1,\ldots,\hat{a}_j,\ldots,a_{k-1},b_1,\ldots,b_{n-\ell}}) (\lambda, \alpha_j x) & \text{if $k=\ell+1$} \\
0 & \text{otherwise}
\end{cases}
\end{displaymath}
On the other hand, we have
\begin{displaymath}
d(w \otimes x) = (-1)^{\ell} w \otimes dx + \sum_{i=1}^{n-\ell} (-1)^{i+1} e_{b_1,\ldots,\hat{b}_i,\ldots,b_{n-\ell}} \otimes \alpha_{b_i-i+1} x.
\end{displaymath}
The first term belongs to $K^{m,n,\ell}$ and the second belongs to $K^{m,n,\ell+1}$. We thus see that
\begin{displaymath}
\langle \lambda \otimes v, d(w \otimes x) \rangle = \begin{cases}
(-1)^k \mu(v \wedge w) (\lambda, dx) & \text{if $k=\ell$} \\
\sum_{i=1}^{n-\ell} (-1)^{i+1} \mu(e_{a_1,\ldots,a_k,b_1,\ldots,\hat{b}_i,\ldots,b_{n-l}}) (\lambda, \alpha_{b_i-i+1} x) & \text{if $k=\ell+1$} \\
0 & \text{otherwise}
\end{cases}
\end{displaymath}
If $k \ne \ell+1$ then we clearly have our desired equality. Thus let us now assume $k=\ell+1$. We must prove
\begin{displaymath}
\sum_{j=1}^{\ell+1} (-1)^{j+1} \mu(e_{a_1,\ldots,\hat{a}_j,\ldots,a_{\ell},b_1,\ldots,b_{n-\ell}}) (\lambda, \alpha_j x)
= \sum_{i=1}^{n-\ell} (-1)^{i+1} \mu(e_{a_1,\ldots,a_{\ell+1},b_1,\ldots,\hat{b}_i,\ldots,b_{n-\ell}}) (\lambda, \alpha_{b_i-i+1}x).
\end{displaymath}
The only way for either side to be non-empty is if the sets $\{a_1,\ldots,a_{\ell+1}\}$ and $\{b_1,\ldots,b_{n-\ell}\}$ have exactly one element in common. Thus suppose this is the case, and that $a_j=b_i$. Then it is the $j$th term on the left and $i$th term on the right that are possibly non-zero, so we must show that they coincide. Now, we have
\begin{displaymath}
\{1,\ldots,b_i\} = \{1,\ldots,a_j\} \amalg \{1,\ldots,b_{i-1}\},
\end{displaymath}
and so, counting, we find $b_i=j+i-1$. Thus $\alpha_{b_i-i+1}=\alpha_j$. Furthermore,
\begin{displaymath}
\mu(e_{b_{n-\ell},\ldots,\hat{b}_i,\ldots,b_1,a_1,\ldots,a_{\ell+1}})=(-1)^{i+j} \mu(e_{b_{n-\ell},\ldots,b_1,a_1,\ldots,\hat{a}_j,\ldots,a_{\ell}}).
\end{displaymath}
as it takes $i+j-2$ transpositions to transform the subscript sequence on left to the one on the right: it clearly takes zero transpositions if $i=j=1$, and one more every time $i$ or $j$ is incremented. This proves our desired equality.

(c--e) These statements follow easily from parts (a) and (b).
\end{proof}

\begin{remark}
The lemma implies that $\bigoplus_{n \ge 0} \rH^{m-n}(\cK(M))^*_n$ canonically carries the structure of a graded $\cI$-module, as it is naturally isomorphic to $\rH^{-m}(\cD(M))$. When $M$ is a module, this is the dual of the $m$th linear strand of the minimal resolution of $M$.
\end{remark}

\begin{lemma}
The functor $\cD$ takes quasi-isomorphisms to quasi-isomorphisms. In particular, it induces a functor on the derived category.
\end{lemma}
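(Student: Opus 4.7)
The plan is to reduce the statement to the analogous (already established) property of the Koszul complex $\cK$ via the degreewise identification provided by Lemma~\ref{lem:DK}(d). Specifically, that lemma gives a natural isomorphism $\cD(M)_n \cong \cK(M)_n^*[n]$ of complexes of vector spaces for each $n \ge 0$. One checks naturality simply by unwinding the definition of the pairing used in the proof of Lemma~\ref{lem:DK}(b): it is manifestly functorial in $M$.

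Given naturality, the argument is then essentially formal. Let $f \colon M \to N$ be a quasi-isomorphism in $\Ch(\uRep(\cI))$. First, observe that $\cK$ takes quasi-isomorphisms in $\Ch(\uRep(\cI))$ to quasi-isomorphisms in $\Ch(\GrVec)$; this is part of Proposition~\ref{prop:kosztor} (which was established via Proposition~\ref{prop:acyclic}). A quasi-isomorphism of complexes of graded vector spaces is by definition a quasi-isomorphism in each internal degree, so for each $n$ the induced map $\cK(M)_n \to \cK(N)_n$ is a quasi-isomorphism of complexes of vector spaces.

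Next, dualization of vector spaces is exact (vector spaces being flat and injective over $\bk$), so it preserves quasi-isomorphisms; a cohomological shift trivially does as well. Therefore the map $\cK(N)_n^*[n] \to \cK(M)_n^*[n]$ is a quasi-isomorphism of complexes of vector spaces. Using the natural identification of Lemma~\ref{lem:DK}(d) in each degree $n$, this is exactly the map $\cD(N)_n \to \cD(M)_n$ induced by $\cD(f)$. Finally, cohomology of a complex of graded $\cI$-modules is computed degreewise as a graded vector space (the differentials preserve internal degree), so a map that is a quasi-isomorphism in every internal degree is a quasi-isomorphism of complexes of graded $\cI$-modules. Hence $\cD(f)$ is a quasi-isomorphism, and $\cD$ descends to a contravariant functor on $\rD(\uRep(\cI))$.

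There is essentially no obstacle here: the work has already been done in Lemma~\ref{lem:DK} (identifying $\cD$ degreewise with the dual of $\cK$) and in Proposition~\ref{prop:kosztor} (acyclicity of $\cK$). The only minor point to verify carefully is the naturality of the identification in Lemma~\ref{lem:DK}(d), but this is immediate from the construction.
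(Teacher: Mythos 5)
Your proof is correct and takes essentially the same approach as the paper: reduce to the fact that $\cK$ preserves quasi-isomorphisms (Proposition~\ref{prop:kosztor}) via the identification of $\cD$ with the degreewise dual of $\cK$ from Lemma~\ref{lem:DK}. The paper phrases this directly through part (e) of that lemma (at the level of cohomology) while you work through part (d) (at the level of complexes) and then pass to cohomology; the substance and the appeal to naturality of the identification are the same.
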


\begin{proof}
Let $f \colon M \to N$ be a quasi-isomorphism in $\Ch(\uRep(\cI))$. For any $m,n \in \bZ$, the diagram
\begin{displaymath}
\xymatrix{
\rH^{-m}(\cD(N))_n \ar[r] \ar@{=}[d] & \rH^{-m}(\cD(M))_n \ar@{=}[d] \\
\rH^{m-n}(\cK(N))_n^* \ar[r] & \rH^{m-n}(\cK(M))_n^* }
\end{displaymath}
commutes, where the vertical isomorphisms are those from Lemma~\ref{lem:DK} and the horizontal maps are induced by $f$. Since $\cK$ takes quasi-isomorphisms to quasi-isomorphisms, it follows that the bottom arrow is an isomorphism. Thus the top arrow is as well, which implies that $\cD(f) \colon \cD(N) \to \cD(M)$ is a quasi-isomorphism.
\end{proof}

\begin{proposition} \label{prop:Dprin}
We have $\cD(\ul{\bB}^n) \cong \ul{\bA}^n[n]$ and $\cD(\ul{\bA}^n)=\ul{\bB}^n[n]$.
\end{proposition}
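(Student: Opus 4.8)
The plan is to treat the two isomorphisms separately: the first is immediate from the explicit shape of $\cD$ on a module, and the second follows from a cohomology computation that reduces, via Lemma~\ref{lem:DK}, to facts already proved about the Koszul complex.

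First I would dispose of $\cD(\ul{\bB}^n)$. Since $\ul{\bB}^n$ is one-dimensional and concentrated in internal degree $n$, the simplified description of $\cD$ on a module gives $\cD^{-m}(\ul{\bB}^n) = (\ul{\bB}^n)_m^* \otimes \ul{\bA}^m$, which vanishes for $m \neq n$ and is canonically $\ul{\bA}^n$ for $m = n$. As there is only one nonzero term, the differential is forced to be $0$, so $\cD(\ul{\bB}^n)$ is literally $\ul{\bA}^n$ placed in cochain degree $-n$, i.e. $\ul{\bA}^n[n]$. This step is essentially a one-line verification.

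For $\cD(\ul{\bA}^n)$ I would argue as follows. By Lemma~\ref{lem:DK}(e) we have $\rH^{-m}(\cD(\ul{\bA}^n))_p \cong \rH^{m-p}(\cK(\ul{\bA}^n))_p^*$. By Proposition~\ref{prop:Kprin}, $\rH^i(\cK(\ul{\bA}^n)) = 0$ for $i \neq 0$, so the right-hand side is nonzero only when $m = p$, in which case it equals $\rH^0(\cK(\ul{\bA}^n))_p^* = \cT(\ul{\bA}^n)_p^*$ (using Corollary~\ref{cor:kosztor} together with $\rH^0(\cK(M)) = \rL_0\cT(M) = \cT(M)$). Since $\cT(\ul{\bA}^n)$ is one-dimensional and concentrated in internal degree $n$ (\S\ref{ss:tor}), this forces $\rH^{-m}(\cD(\ul{\bA}^n))_p \neq 0$ only for $m = p = n$, where it is one-dimensional. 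Hence $\cD(\ul{\bA}^n)$ has cohomology concentrated in cochain degree $-n$, and $\rH^{-n}(\cD(\ul{\bA}^n))$ is a one-dimensional graded $\cI$-module concentrated in internal degree $n$. Any such module is isomorphic to $\ul{\bB}^n$: for $\sigma \in \cI$ with $\sigma(n) \neq n$ the action lands in the zero space $M_{\sigma(n)}$, while for $\sigma(n) = n$ condition (b) of the definition of a graded $\cI$-module forces $\sigma$ to act by the identity, which is exactly the action defining $\ul{\bB}^n$. Since a complex with cohomology concentrated in one degree is isomorphic in $\rD(\uRep(\cI))$ to that cohomology placed in that degree (via the truncation functors), we conclude $\cD(\ul{\bA}^n) \cong \ul{\bB}^n[n]$.

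I do not anticipate a real obstacle; the only care needed is bookkeeping with cochain versus internal degrees and the sign conventions in the shift, plus the standard fact that a complex with cohomology in a single degree is quasi-isomorphic to that cohomology. As an alternative for the second isomorphism, once Theorem~\ref{thm:duality} is in hand one can argue formally: $\cD(\ul{\bA}^n)[-n] = \cD(\ul{\bA}^n[n]) = \cD(\cD(\ul{\bB}^n)) \cong \ul{\bB}^n$, using contravariance of $\cD$ and the fact that $\ul{\bB}^n$ has degreewise finite cohomology, whence $\cD(\ul{\bA}^n) \cong \ul{\bB}^n[n]$.
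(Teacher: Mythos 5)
Your main argument is correct and follows the paper's own route: compute $\cD(\ul{\bB}^n)$ directly from the definition (one nonzero term, no differential), and compute $\cD(\ul{\bA}^n)$ via Lemma~\ref{lem:DK}(e) and the acyclicity of $\cK(\ul{\bA}^n)$ away from cohomological degree~0 (Proposition~\ref{prop:Kprin}), identifying $\rH^0(\cK(\ul{\bA}^n))$ with $\cT(\ul{\bA}^n)$ via Corollary~\ref{cor:kosztor}. The extra bookkeeping you supply — that a one-dimensional graded $\cI$-module concentrated in internal degree $n$ must be $\ul{\bB}^n$, and that a complex with cohomology in a single degree is quasi-isomorphic to that cohomology — is correct and makes explicit what the paper leaves implicit.

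One caution on your final paragraph: the alternative argument via Theorem~\ref{thm:duality} is circular in the paper's logical order. Theorem~\ref{thm:duality} is proved after Proposition~\ref{prop:Dprin}, and its proof relies on Lemma~\ref{lem:Eid}, whose proof in turn reduces to the case $M=\ul{\bA}^r$ and explicitly invokes Proposition~\ref{prop:Dprin}. So that alternative cannot be used to establish the proposition in the first place; it is only a consistency check once the whole section is in hand.
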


\begin{proof}
It follows directly from the definition that $\cD^{-m}(\ul{\bB}^n)$ is zero unless $m=n$, and it is then $\ul{\bA}^n$. Thus $\cD^{-m}(\ul{\bB}^n)=\ul{\bA}^n[n]$, as claimed.

From Lemma~\ref{lem:DK}, we have $\rH^{-m}(\cD(\ul{\bA}^r))_n \cong \rH^{m-n}(\cK(\ul{\bA}^r))_n^*$. By Proposition~\ref{prop:Kprin}, $\rH^i(\cK(\ul{\bA}^r))_n$ is one-dimensional if $i=0$ and $n=r$, and vanishes otherwise. Thus $\rH^{-m}(\cD(\ul{\bA}^r))_n$ is one-dimensional if $m=n=r$, and vanishes otherwise. It follows that $\cD(\ul{\bA}^r)=\ul{\bB}^r[r]$, as claimed.
\end{proof}

For a complex of graded $\cI$-modules $M$, define a complex $\cE(M)$ as follows. The terms are given by
\begin{equation} \label{eq:E1}
\cE(M)^n = \bigoplus_{k,\ell \in \bN} M^{k+n-\ell}_{\ell} \otimes (\ul{\bA}^{\ell}_k)^* \otimes \ul{\bA}^k.
\end{equation}
For $x \in M_{\ell}^{k+n-\ell}$ and $\lambda \in (\ul{\bA}^{\ell}_k)^*$ and $y \in \ul{\bA}^k$, the differential is given by
\begin{equation} \label{eq:E2}
\begin{aligned}
d(x \otimes \lambda \otimes y) =&
(-1)^{k+\ell} dx \otimes \lambda \otimes y +
(-1)^k \sum_{i=1}^{\ell} \left[ (-1)^{i+1} \alpha_i(x) \otimes T_i^*(\lambda) \otimes y \right] \\
&+ \sum_{j=1}^{k-1} \left[ (-1)^{j+1} x \otimes \alpha_j^*(\lambda) \otimes T_j(y) \right].
\end{aligned}
\end{equation}
We note that $\cE(M)$ is exactly the complex one obtains by computing $\cD(\cD(M))$ and formally moving duals over tensor products and removing double duals.

\begin{lemma} \label{lem:EDD}
For $M \in \Ch(\uRep(\cI))$ we have a canonical injection $\cE(M) \to \cD(\cD(M))$. If $M$ is degreewise finite then this map is an isomorphism of complexes.
\end{lemma}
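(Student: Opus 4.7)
The proof is essentially a bookkeeping exercise: one unpacks $\cD(\cD(M))$ and matches it with $\cE(M)$ up to a canonical double-dualization. I would proceed in two stages—first the identification of underlying bigraded vector spaces, then the matching of differentials.

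For the first stage, I would work in a fixed bidegree: fix a cohomological index $n \in \bZ$ and an $\cI$-grading $d \in \bN$, and compute $\cD(\cD(M))^n_d$ explicitly. Directly from the definition,
\begin{displaymath}
\cD^n(\cD(M))_d = \bigoplus_{k} \bigl(\cD(M)_k^{-n-k}\bigr)^* \otimes (\ul{\bA}^k)_d,
\end{displaymath}
and similarly $\cD(M)_k^{-n-k} = \bigoplus_{\ell} (M_\ell^{n+k-\ell})^* \otimes (\ul{\bA}^\ell)_k$. For fixed $d$ only $k \le d$ contribute, and for each such $k$ only $\ell \le k$ contribute, since the relevant pieces of the principal modules vanish otherwise. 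Each $(\ul{\bA}^\ell)_k$ and $(\ul{\bA}^k)_d$ is finite-dimensional, so dualization commutes with the finite inner sum and moves through the tensor with $(\ul{\bA}^\ell)_k$, yielding
\begin{displaymath}
\cD^n(\cD(M))_d = \bigoplus_{\ell \le k \le d} (M_\ell^{n+k-\ell})^{**} \otimes ((\ul{\bA}^\ell)_k)^* \otimes (\ul{\bA}^k)_d.
\end{displaymath}
Comparing with \eqref{eq:E1}, the canonical injection $V \hookrightarrow V^{**}$ applied summand-by-summand to $V = M_\ell^{n+k-\ell}$ produces a natural graded injection $\cE(M) \to \cD(\cD(M))$, which is an isomorphism precisely when $M$ is degreewise finite.

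For the second stage, I would verify that this graded injection respects differentials. The differential on $\cD(\cD(M))$ has contributions from two nested applications of the $\cD$-recipe: an outer application acting on the $\ul{\bA}^k$ factor, and an inner application acting within $\cD(M)_k^{-n-k}$. Writing a generic element of $\cE(M)$ in the form $\tilde\mu \otimes y$ with $\tilde\mu$ the image of $x \otimes \lambda$ under the canonical embedding, expanding $d(\tilde\mu \otimes y)$ using the $\cD$-formula, and then re-expressing $d^*_{\cD(M)}(\tilde\mu)$ via the $\cD$-formula on $\cD(M)$, I would match the three groups of terms in \eqref{eq:E2} to their sources: the $(-1)^{k+\ell}\,dx\otimes\lambda\otimes y$ term arises from the inner $(-1)^{\ell} d^*$ composed with the outer $(-1)^{k}$; the middle group involving $\alpha_i(x)\otimes T_i^*(\lambda)\otimes y$ arises from the inner $\alpha_i^*$ terms, with the dualization identity $V\otimes W^*\simeq (V^*\otimes W)^*$ turning $\alpha_i^*$ on the inner dual into $\alpha_i$ on $x$ and $T_i$ on $(\ul{\bA}^\ell)_k$ into $T_i^*$ on $((\ul{\bA}^\ell)_k)^*$; and the last group involving $\alpha_j^*(\lambda)\otimes T_j(y)$ arises directly from the outer $\alpha_j^*$ terms.

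The main obstacle is keeping the signs straight in the middle group, since that is the only place the dualization interacts nontrivially with the differential: I must carefully track the $(-1)^{i+1}$ from the inner $\cD$-formula through the isomorphism $V\otimes W^*\simeq (V^*\otimes W)^*$ and verify that it agrees with the $(-1)^{i+1}$ in the second sum of \eqref{eq:E2}, with the outer $(-1)^k$ reappearing as the prefactor in that sum. Once this sign check is done in one bidegree, the chain-map property holds in all bidegrees by naturality, and the injectivity of the canonical map $\cE(M) \to \cD(\cD(M))$ is immediate from the pointwise injectivity of $V \hookrightarrow V^{**}$; the isomorphism assertion under degreewise finiteness is likewise immediate since $V\to V^{**}$ is then an isomorphism summand-by-summand.
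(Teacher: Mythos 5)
Your proposal is correct and is exactly the paper's argument spelled out in full: the paper's proof is a one-line appeal to the preceding observation that $\cE(M)$ is what one gets from $\cD(\cD(M))$ by moving duals across the (degreewise finite) tensor factors $(\ul{\bA}^{\ell})_k$ and replacing double duals $V^{**}$ by $V$, so the canonical map is built from the injections $V \to V^{**}$ and is an isomorphism when $M$ is degreewise finite. Your sign-tracking for the differentials confirms what the paper leaves implicit.
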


\begin{proof}
This follows from the description of $\cE(M)$ given above.
\end{proof}

\begin{lemma} \label{lem:Eid}
For $M \in \Ch(\uRep(\cI))$ we have a canonical quasi-isomorphism $\cE(M) \to M$.
\end{lemma}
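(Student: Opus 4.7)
The plan is to write down an explicit chain map $\varepsilon_M\colon\cE(M)\to M$, verify the chain-map identity by direct computation, and then show it is a quasi-isomorphism by reducing to the principal modules $\ul{\bA}^r$, where the identification $\cE\cong\cD\circ\cD$ from Lemma~\ref{lem:EDD} together with Proposition~\ref{prop:Dprin} finishes the job.

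\textbf{Definition of $\varepsilon$.} For each $n$, declare $\varepsilon_M^n$ to be zero on every summand $M^p_\ell\otimes(\ul{\bA}^\ell_k)^*\otimes\ul{\bA}^k$ with $k\neq\ell$, and on the $k=\ell$ summand (so necessarily $p=n$) set
\[
\varepsilon_M^n(x\otimes\lambda\otimes y)=(-1)^{\binom{\ell}{2}}\,\lambda(e_{1,\ldots,\ell})\,\phi_x(y),
\]
where $\phi_x\colon\ul{\bA}^\ell\to M^n$ is the unique homogeneous $\cI$-linear map sending $e_{1,\ldots,\ell}$ to $x$ (which exists because $x\in M^n_\ell$ is $\cI_{>\ell}$-invariant).

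\textbf{Chain-map property and reduction.} A termwise computation using \eqref{eq:E2} shows that, after composition with $\varepsilon$, only the $k=\ell$ and $k=\ell+1$ source slots give nonzero contributions. The first case recovers $d_M\circ\varepsilon_M$ by naturality of $\phi_x$; in the second, the identities $\phi_x\circ T_j=\phi_{\alpha_j x}$ and $\alpha_j^*(\lambda)=\lambda(e_{1,\ldots,\hat{j},\ldots,\ell+1})\,(e_{1,\ldots,\ell})^*$ collapse the two Koszul-like sums into a common expression whose signs cancel thanks to the Pascal identity $\binom{\ell+1}{2}\equiv\binom{\ell}{2}+\ell\pmod 2$; this is precisely what dictates the sign in the definition of $\varepsilon$. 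Next, since for each $n$ the functor $M\mapsto\cE(M)^n$ is a direct sum of exact cocontinuous operations in the graded pieces of $M$, the functor $\cE$ is exact, commutes with direct sums and filtered colimits, and (by Proposition~\ref{prop:acyclic}) preserves quasi-isomorphisms on $\Ch(\uRep(\cI))$. Both the source and target of $\varepsilon$ therefore have these properties, so it suffices to verify that $\varepsilon_M$ is a quasi-isomorphism when $M$ is a single finitely generated graded $\cI$-module placed in cohomological degree~$0$. Choosing a projective resolution $P_\bullet\to M$ by finite sums of principal modules and running a spectral sequence coming from the filtration of $\cE(M)$ by cohomological degree of $M$, the claim reduces to the case $M=\ul{\bA}^r$.

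\textbf{The principal case.} Since $\ul{\bA}^r$ is degreewise finite, Lemma~\ref{lem:EDD} identifies $\cE(\ul{\bA}^r)$ with $\cD(\cD(\ul{\bA}^r))$. By Proposition~\ref{prop:Dprin} and Lemma~\ref{lem:DK}(e), the cohomology of the right-hand side is concentrated in degree~$0$ and equal to $\ul{\bA}^r$. The $\ell=r$ summand $\ul{\bA}^r_r\otimes(\ul{\bA}^r_r)^*\otimes\ul{\bA}^r$ of $\cE(\ul{\bA}^r)^0$ is a copy of $\ul{\bA}^r$, mapped by $\varepsilon$ via $e_{1,\ldots,r}\otimes(e_{1,\ldots,r})^*\otimes y\mapsto(-1)^{\binom{r}{2}}y$, so $\varepsilon_{\ul{\bA}^r}$ is surjective on $\rH^0$ and hence induces an isomorphism on cohomology, i.e., is a quasi-isomorphism.

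\textbf{Main obstacle.} The most delicate step is the chain-map verification: it is an intricate sign computation involving all three summands of the differential \eqref{eq:E2}, and without the normalization $(-1)^{\binom{\ell}{2}}$ the map is not even a chain map. A secondary subtlety lies in the reduction step, since a finitely generated $M$ need not admit a finite projective resolution by principal modules; one must check convergence of the spectral sequence associated to the filtration of $\cE(M)$ by cohomological degree of $M$, which follows from the constraint $k\ge\ell$ forcing $p=k+n-\ell\ge n$ in every summand contributing to $\cE(M)^n$.
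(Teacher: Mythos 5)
Your proposal is correct and follows essentially the same route as the paper: the same evaluation map with the same normalization $(-1)^{\binom{\ell}{2}}$ on the $k=\ell$ summands, the same case-by-case chain-map verification, reduction to the principal modules in cohomological degree~$0$ (the paper packages your truncation/spectral-sequence step as Proposition~\ref{prop:qi}), and the same endgame via Lemma~\ref{lem:EDD}, Proposition~\ref{prop:Dprin}, and surjectivity of $\rH^0$ in degree $r$.
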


\begin{proof}
Let $\epsilon \colon (\ul{\bA}^k_k)^* \to \bk$ be the isomorphism taking the standard basis vector of $(\ul{\bA}^k_k)^*$ to $1 \in \bk$. For a graded $\cI$-module $N$, there is a canonical map $\ul{\bA}^k \otimes N_k \to N$ of graded $\cI$-modules stemming from the mapping property of $\ul{\bA}^k$. We let $y \ast x \in N$ denote the image of $y \otimes x \in \ul{\bA}^k \otimes N_k$.

Define a map $f \colon \cE(M) \to M$ as follows. Let $x \otimes \lambda \otimes y$ be an element of $\cE(M)^n$, using notation as in \eqref{eq:E2}. We define:
\begin{displaymath}
f(x \otimes \lambda \otimes y) = \begin{cases}
0 & \text{if $k \ne \ell$} \\
(-1)^{\binom{k}{2}} \cdot \epsilon(\lambda) \cdot (y \ast x) & \text{if $k=\ell$} \end{cases}
\end{displaymath}
We claim that $f$ is a map in the category $\Ch(\uRep(\cI))$. On each term, it is a map in $\uRep(\cI)$ by definition, so it suffices to prove that $f$ commutes with the differentials. Thus let $x \otimes \lambda \otimes y$ as above be given. We proceed in four cases:

\textit{Case 1:} $k<\ell$. There is nothing to prove, as $\ul{\bA}^{\ell}_k=0$.

\textit{Case 2:} $k=\ell$. In \eqref{eq:E2}, we have $T_i^*(\lambda) \in (\ul{\bA}^{k+1}_k)^*=0$ and $\alpha_j^*(\lambda) \in (\ul{\bA}^k_{k-1})^*=0$; furthermore, $(-1)^{k+\ell}=1$ since $k=\ell$. We thus find
\begin{displaymath}
d(x \otimes \lambda \otimes y) = dx \otimes \lambda \otimes y.
\end{displaymath}
Hence,
\begin{displaymath}
df(x \otimes \lambda \otimes y) = (-1)^{\binom{k}{2}} \cdot \epsilon(\lambda) \cdot d(y \ast x), \qquad
f(d(x \otimes \lambda \otimes y))=(-1)^{\binom{k}{2}} \cdot \epsilon(\lambda) \cdot (y \ast dx).
\end{displaymath}
These quantites are equal, since the map $\ast \colon \ul{\bA}^k \otimes N_k \to N$ is functorial in the graded $\cI$-module $N$.

\textit{Case 3:} $k=\ell+1$. We have $f(x \otimes \lambda \otimes y)=0$, and so we must show that $f(d(x \otimes \lambda \otimes y))=0$. Now, in \eqref{eq:E2}, the first time is killed by $f$, since $\lambda \in (\ul{\bA}^{k-1}_k)^*$. We thus have
\begin{align*}
f(d(x \otimes \lambda \otimes y))
=& (-1)^{k+\binom{k}{2}} \sum_{i=1}^{k-1} \left[ (-1)^{i+1} \epsilon(T_i^*(\lambda)) \cdot (y \ast \alpha_i(x)) \right] \\
&+ (-1)^{\binom{k-1}{2}} \sum_{j=1}^{k-1} \left[ (-1)^{j+1} \epsilon(\alpha_j^*(\lambda)) \cdot (T_j(y) \ast x) \right]
\end{align*}
The signs are always opposite of each other. It thus suffices to show
\begin{displaymath}
\epsilon(T_i^*(\lambda))=\epsilon(\alpha_i^*(\lambda)), \qquad
y \ast \alpha_i(x) = T_i(y) \ast x,
\end{displaymath}
which we leave to the reader.

\textit{Case 4:} $k>\ell+1$. Both $x \otimes \lambda \otimes y$ and $d(x \otimes \lambda \otimes y)$ are killed by $f$, as follows directly from the definition.

We now show that $f$ is a quasi-isomorphism. By Proposition~\ref{prop:qi}, it is enough to treat the case where $M=\ul{\bA}^r$, regarded as a complex concentrated in degree~0. We thus assume this is the case. By Lemma~\ref{lem:EDD} and Proposition~\ref{prop:Dprin}, we have $\cE(\ul{\bA}^r) \cong \cD(\cD(\ul{\bA}^r)) \cong \ul{\bA}^r$ in $\rD(\uRep(\cI))$. Thus to verify the claim, it suffices to show that the map $\rH^0(f) \colon \rH^0(\cE(\ul{\bA}^r)) \to \ul{\bA}^r$ is an isomorphism. Since $\rH^0(\cE(\ul{\bA}^r)) \cong \bA^r$, it is enough to show that this map is surjective in degree $r$. Since $\cE(\ul{\bA}^r)^1=0$, it suffices to show that $f \colon \cE(\ul{\bA}^r)^0_r \to \ul{\bA}^r_r$ is surjective. This is clear from the definition of $f$.
\end{proof}

\begin{lemma} \label{lem:vecqi}
Let $W \in \Ch(\Vec)$ be a chain complex of vector spaces such that each cohomology group is finite dimensional, and let $U \subset W$ be a subcomplex such that each term is finite dimensional. Then there exists an intermediate complex $U \subset V \subset W$ such that each term of $V$ is finite dimensional and the inclusion $V \to W$ is a quasi-isomorphism.
\end{lemma}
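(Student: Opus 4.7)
The plan is to reduce the problem to the quotient complex $Q = W/U$ and then exploit the splittings available in $\Ch(\Vec)$.

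First, I would set $Q = W/U$ and consider the short exact sequence of complexes $0 \to U \to W \to Q \to 0$. Since each $U^n$ is finite dimensional, each $\rH^n(U)$ is finite dimensional; combined with the hypothesis that each $\rH^n(W)$ is finite dimensional, the associated long exact sequence in cohomology shows that $\rH^n(Q)$ is finite dimensional for every $n$.

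Next, I would construct a subcomplex $\bar V \subset Q$ with finite-dimensional terms that is quasi-isomorphic to $Q$. For each $n$, choose finitely many cocycles in $\ker(d \colon Q^n \to Q^{n+1})$ whose cohomology classes form a basis of $\rH^n(Q)$, and let $\bar V^n$ be their $\bk$-span. Because every element of $\bar V^n$ is a cocycle, the differential vanishes on $\bar V$, so $\bar V$ is a subcomplex (with zero differential), each $\bar V^n$ is finite dimensional, and the composition $\bar V^n = \rH^n(\bar V) \to \rH^n(Q)$ is an isomorphism by construction; thus $\bar V \hookrightarrow Q$ is a quasi-isomorphism.

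Now I pull back: let $\pi \colon W \to Q$ denote the projection and put $V = \pi^{-1}(\bar V)$. The inclusion $U \subset V$ is automatic since $\pi(U) = 0 \subset \bar V$; $V$ is a subcomplex because $\pi$ intertwines differentials and $\bar V$ is a subcomplex; and choosing any $\bk$-linear lift of $\bar V^n$ to $W^n$ (available in $\Vec$) exhibits $V^n$ as the sum of $U^n$ and a finite-dimensional subspace, hence finite dimensional.

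Finally, to verify that $V \to W$ is a quasi-isomorphism, I would compare the two short exact sequences $0 \to U \to V \to \bar V \to 0$ and $0 \to U \to W \to Q \to 0$: the induced map on the associated long exact sequences in cohomology is the identity on $\rH^\bullet(U)$ and an isomorphism on $\rH^\bullet(\bar V) \to \rH^\bullet(Q)$ by the previous step, so the five lemma forces $\rH^\bullet(V) \to \rH^\bullet(W)$ to be an isomorphism. There is no serious obstacle: the argument is formal once we use that every short exact sequence of vector spaces splits and that finite-dimensionality of cohomology lets us represent $\rH^\bullet(Q)$ by a finite-dimensional subcomplex of cocycles.
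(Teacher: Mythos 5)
Your argument is correct and complete. The paper leaves this lemma to the reader, so there is no official proof to compare against; your route through the quotient $Q=W/U$ is a natural one. The key points all check out: finite-dimensionality of $\rH^\bullet(Q)$ follows from the long exact sequence since $\rH^\bullet(W)$ and $\rH^\bullet(U)$ (hence the terms $U^n$) are finite dimensional; the span $\bar V^n$ of chosen cocycle representatives is a subcomplex with zero differential, and linear independence of the chosen classes in $\rH^n(Q)$ guarantees both that $\dim \bar V^n = \dim \rH^n(Q)$ and that no nonzero element of $\bar V^n$ is a coboundary, so $\bar V \hookrightarrow Q$ is a quasi-isomorphism; and the five lemma applied to the map of long exact sequences for $0 \to U \to V \to \bar V \to 0$ and $0 \to U \to W \to Q \to 0$ finishes the proof. (An equivalent variant works directly inside $W$, taking $V^n$ to be $U^n$ plus finitely many cocycle representatives and coboundary witnesses, but your version is cleaner.)
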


\begin{proof}
Left to the reader.
\end{proof}

\begin{lemma} \label{lem:degfin}
Let $M \in \Ch(\uRep(\cI))$. Suppose that $\rH^{\bullet}(M)$ is degreewise finite. Then there exists a subcomplex $N \subset M$ that is degreewise finite and for which the inclusion is a quasi-isomorphism.
\end{lemma}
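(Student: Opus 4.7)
The key observation is that morphisms in $\uRep(\cI)$ are homogeneous of degree $0$, so the differential in $M$ preserves the internal grading. Thus as a complex of vector spaces $M = \bigoplus_{n \ge 0} M^{\bullet}_n$, and the hypothesis that $\rH^{\bullet}(M)$ is degreewise finite says precisely that each $\rH^i(M^{\bullet}_n)$ is finite-dimensional. The plan is to construct $N$ one internal degree at a time, treating the $\cI$-action as a ``forcing'' constraint that propagates from lower to higher internal degrees, and then using Lemma~\ref{lem:vecqi} to enlarge so as to realize the cohomology.

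More precisely, I will inductively construct graded subspaces $N^i_m \subset M^i_m$ for all $i \in \bZ$ and all $m < n$ satisfying: (i) each $N^i_m$ is finite-dimensional; (ii) $d(N^i_m) \subset N^{i+1}_m$; (iii) $\alpha_k(N^i_{m-1}) \subset N^i_m$ for $1 \le k \le m-1$; and (iv) the inclusion $N^{\bullet}_m \hookrightarrow M^{\bullet}_m$ is a quasi-isomorphism of complexes of vector spaces. For the base case $n=0$, apply Lemma~\ref{lem:vecqi} to $U=0 \subset M^{\bullet}_0$.

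For the inductive step, set $A^i_n = \sum_{k=1}^{n-1} \alpha_k N^i_{n-1} \subset M^i_n$. Since the differential in $M$ is $\cI$-linear, $d$ commutes with each $\alpha_k$, and so $A^{\bullet}_n$ is a subcomplex of $M^{\bullet}_n$; it is finite-dimensional in each term by (i). By Lemma~\ref{lem:vecqi} applied to $A^{\bullet}_n \subset M^{\bullet}_n$, we obtain a degreewise finite intermediate subcomplex $A^{\bullet}_n \subset N^{\bullet}_n \subset M^{\bullet}_n$ whose inclusion into $M^{\bullet}_n$ is a quasi-isomorphism; properties (i)--(iv) for degree $n$ follow. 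Finally, assembling $N^i = \bigoplus_n N^i_n$ gives a graded $\cI$-submodule of $M^i$ (closed under $\alpha_k$ by (iii), and under $\cI$ by Corollary~\ref{cor:alphagen}), and a subcomplex in $\Ch(\uRep(\cI))$ (by (ii)); it is degreewise finite by (i), and the inclusion $N \hookrightarrow M$ is a quasi-isomorphism because it decomposes as a direct sum over $n$ of the quasi-isomorphisms of (iv).

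The one delicate point, and the main source of friction, is ensuring that imposing the $\cI$-equivariance requirement $A^i_n \subset N^i_n$ is compatible with keeping $N^{\bullet}_n$ both degreewise finite and quasi-isomorphic to $M^{\bullet}_n$. That is exactly what Lemma~\ref{lem:vecqi} provides, so the $\cI$-action offers no real obstruction once the induction is set up to propagate it upward in internal degree. No other subtlety arises: boundedness of $M^{\bullet}_n$ as a complex of vector spaces is irrelevant for Lemma~\ref{lem:vecqi}, and the unboundedness of $M$ in cohomological degree causes no trouble since the construction is carried out in each cohomological degree separately at each stage.
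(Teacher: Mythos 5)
Your proof is correct and follows essentially the same strategy as the paper's: an induction on the internal degree $n$, applying Lemma~\ref{lem:vecqi} to $M^{\bullet}_n$ with the subcomplex $U$ taken to be the part forced by $\cI$-equivariance from degrees $<n$ (your $A^{\bullet}_n$ is exactly the degree-$n$ piece of the paper's $N_{n-1}$). The only cosmetic difference is that the paper keeps genuine $\cI$-subcomplexes at each stage while you only assemble the $\cI$-submodule structure at the end via closure under the generators $\alpha_k$; both verifications are equivalent.
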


\begin{proof}
We construct a chain $N_0 \subset N_1 \subset \cdots \subset M$ of subcomplexes with the following properties: (a) each term of $N_n$ is finitely generated (and thus degreewise finite); (b) $N_n \to M$ is quasi-isomorphism in degrees $\le n$; (c) $N_n$ and $N_{n+1}$ are equal in degrees $\le n$. Taking $N=\bigcup_{n \ge 0} N_n$ gives the requisite subcomplex.

Suppose $N_{n-1}$ has been constructed; for $n=0$ we take $N_{n-1}=0$. We now construct $N_n$. Appealing to Lemma~\ref{lem:vecqi}, choose a subcomplex $V$ of $M_n$ containing $(N_{n-1})_n$ such that the terms of $V$ are finite dimensional and the inclusion $V \to M_n$ is a quasi-isomorphism. We then take $N_n$ to be the sum (inside of $M$) of $N_{n-1}$ and the complex of submodules generated by $V$. Clearly, $N_n$ coincides with $N_{n-1}$ in degrees $\le n-1$, and $N_n$ coincides with $V$ in degree $n$. Thus it has the required properties.
\end{proof}

\begin{proof}[Proof of Theorem~\ref{thm:duality}]
Let $M \in \Ch(\uRep(\cI))$. By Lemmas~\ref{lem:EDD} and~\ref{lem:Eid}, we have maps
\begin{displaymath}
M \leftarrow \cE(M) \to \cD(\cD(M)).
\end{displaymath}
Since the first map is a quasi-isomorphism, this diagram defines a morphism $M \to \cD(\cD(M))$ in the derived category. To complete the proof, it suffices to show that the second map is a quasi-isomorphism when $\rH^{\bullet}(M)$ is degreewise finite. Thus suppose this is the case. By Lemma~\ref{lem:degfin}, we can find a subcomplex $N \subset M$ that is degreewise finite and for which the inclusion is a quasi-isomorphism. Now consider the following diagram:
\begin{displaymath}
\xymatrix{
\cE(N) \ar[r] \ar[d] & \cD(\cD(N)) \ar[d] \\
\cE(M) \ar[r] & \cD(\cD(M)) }
\end{displaymath}
Since $\cE$ and $\cD$ take quasi-isomorphisms to quasi-isomorphisms, the vertical maps are quasi-isomorphisms. By Lemma~\ref{lem:Eid}, the top map is a quasi-isomorphism. Thus the bottom map is as well.
\end{proof}

\subsection{Duality and the concatenation product}

We now investiage how the duality functor interacts with the concatenation product. Given complexes $M$ and $N$ of graded $\cI$-modules, we let $M \odot N$ be the complex with
\begin{displaymath}
(M \odot N)^n = \bigoplus_{i+j=n} M^i \odot N^j
\end{displaymath}
and differential
\begin{displaymath}
d(x \odot y) = dx \odot y + (-1)^i x \odot dy
\end{displaymath}
for $x \in M^i$ and $y \in N^j$.

\begin{proposition} \label{prop:dualcat}
For any $M,N \in \Ch(\uRep(\cI))$ we have a canonical morphism
\begin{displaymath}
\cD(M) \odot \cD(N) \to \cD(M \odot N)
\end{displaymath}
in $\Ch(\uRep(\cI))$. It is an isomorphism if at least one of $M$ or $N$ is degreewise finite.
\end{proposition}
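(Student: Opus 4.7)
The plan is to construct the morphism term by term using two natural maps: the canonical isomorphism $\ul{\bA}^{k_1} \odot \ul{\bA}^{k_2} \cong \ul{\bA}^{k_1+k_2}$ provided by Proposition~\ref{prop:stdcat}, and the universal pairing $V^* \otimes W^* \to (V \otimes W)^*$ (for vector spaces $V,W$), which is an isomorphism whenever one factor is finite dimensional. Unpacking the definitions gives
\begin{displaymath}
(\cD(M) \odot \cD(N))^{-n} \;=\; \bigoplus_{m_1+m_2=n}\, \bigoplus_{k_1,k_2} (M_{k_1}^{m_1-k_1})^* \otimes (N_{k_2}^{m_2-k_2})^* \otimes \ul{\bA}^{k_1+k_2},
\end{displaymath}
while
\begin{displaymath}
\cD^{-n}(M \odot N) \;=\; \bigoplus_k\, \Bigl(\,\bigoplus_{k_1+k_2=k}\bigoplus_{s_1+s_2=n-k} M^{s_1}_{k_1} \otimes N^{s_2}_{k_2}\Bigr)^{\!\!*} \otimes \ul{\bA}^k.
\end{displaymath}
Setting $m_i=s_i+k_i$ reindexes the second expression as a product (rather than sum) over the same indexing set as the first; the natural map $F$ on each summand is $\lambda \otimes \mu \otimes (x \odot y) \mapsto (\lambda \otimes \mu)^{\natural} \otimes (x \odot y)$, where we identify $x \odot y \in \ul{\bA}^{k_1} \odot \ul{\bA}^{k_2}$ with its image in $\ul{\bA}^{k_1+k_2}$.

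Next, I would verify that $F$ is a morphism of complexes, which means checking compatibility with the three kinds of terms in the differential formula. Three ingredients make this work. First, on $M \odot N$ the internal differential satisfies the graded Leibniz rule $d(x \odot y) = dx \odot y + (-1)^{|x|} x \odot dy$, so its dual contributes exactly the two $d^*$ terms expected from the tensor product differential on $\cD(M) \odot \cD(N)$, with the sign $(-1)^k$ appearing in $\cD$ matching $(-1)^{k_1+k_2}$ after redistribution. Second, from the definition of concatenation, the action of $\alpha_i$ on $(M_{k_1}^{s_1} \otimes N_{k_2}^{s_2}) \subset (M \odot N)^{s_1+s_2}_{k_1+k_2}$ acts through the $M$ factor when $i \le k_1$ and through the $N$ factor (as $\alpha_{i-k_1}$) when $i > k_1$; dualizing, the $\alpha_i^*$ summand of the $\cD$ differential splits accordingly. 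Third, under the concatenation iso $\ul{\bA}^{k_1} \odot \ul{\bA}^{k_2} \cong \ul{\bA}^{k_1+k_2}$, a direct check on the basis $e_{p_1,\ldots,p_{k_1+k_2}} \leftrightarrow e_{p_1,\ldots,p_{k_1}} \odot e_{p_{k_1+1}-p_{k_1},\ldots,p_{k_1+k_2}-p_{k_1}}$ shows that $T_j$ corresponds to $T_j \odot \id$ for $j \le k_1$ and to $\id \odot\, T_{j-k_1}$ for $j > k_1$. Combining these three decompositions, the differential on $\cD(M \odot N)$ restricted to the image of $F$ matches the Leibniz-type differential on $\cD(M) \odot \cD(N)$ term by term, provided the sign conventions $(-1)^k$, $(-1)^{i+1}$, and the Koszul sign $(-1)^{m_1}$ on the second tensor factor all combine correctly.

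Finally, for the isomorphism statement: if $M$ or $N$ is degreewise finite, then each pairing $(M^{s_1}_{k_1})^* \otimes (N^{s_2}_{k_2})^* \to (M^{s_1}_{k_1} \otimes N^{s_2}_{k_2})^*$ is an isomorphism, and for fixed $(k_1,k_2,s_1,s_2)$ the target summand of $\cD^{-n}(M \odot N)$ appears as a direct sum (not merely a product) because for each fixed $k = k_1+k_2$ and $s = s_1+s_2 = n-k$ only finitely many of the decompositions $(k_1,k_2)$ yield a nonzero contribution (since one of $M^{s_1}_{k_1}$, $N^{s_2}_{k_2}$ must be finite dimensional, but what matters is that the sum over all $(k_1,k_2,s_1,s_2)$ becomes identified with the dual of a finite-in-each-slot sum). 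Reorganizing the index set then gives a bijection of summands.

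The main obstacle will be the sign bookkeeping in the differential check: the $(-1)^k$ appearing in the $d^*$ term of the $\cD$ differential must be split as $(-1)^{k_1}(-1)^{k_2}$ and reconciled with the Koszul sign $(-1)^{m_1}$ from the concatenation of complexes, while simultaneously ensuring that the indexing shift in $T_{j-k_1}$ produces the expected $(-1)^{j+1}$ on the $N$-side after the reindexing $i \leftrightarrow i - k_1$. I expect this to be routine but tedious; once one handles the cleanly separated cases $\deg(\lambda)=k_1,\deg(\mu)=k_2$ arising from $j \le k_1$ versus $j > k_1$, the remaining verification is purely formal and follows from the Leibniz rule and the splitting of $T$-maps established above.
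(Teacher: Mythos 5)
Your approach mirrors the paper's exactly: define the map term by term via the identifications $\ul{\bA}^{k_1} \odot \ul{\bA}^{k_2} \cong \ul{\bA}^{k_1+k_2}$ and $V^* \otimes W^* \to (V \otimes W)^*$, then check compatibility with the differentials by splitting $\alpha_j^*$ and $T_j$ across the join at index $k_1$ and tracking a sign (the paper ends up with $\epsilon^{m,n}_{r,s}=(-1)^{ms}$, which you correctly leave to be determined).

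The one genuine gap is in your third ingredient, at the boundary $j = k_1$. You assert that $T_j$ corresponds to $T_j \odot \id$ for $j \le k_1$, but at $j = k_1$ this does not even typecheck: on $\ul{\bA}^{k_1}$ the maps $T_j$ exist only for $1 \le j \le k_1-1$. More substantively, $T_{k_1}(x \odot y)$ admits no factorization compatible with the concatenation, since deleting the join entry shifts every subsequent index by an amount that depends on the last entry of $x$. The correct breakdown is three cases $j<k_1$, $j=k_1$, $j>k_1$, and the middle case requires a separate argument. What rescues the computation is that the matching dual-side coefficient $\alpha_{k_1}^*(\lambda \odot \mu)$ vanishes identically: under the concatenation action, $\alpha_{k_1}$ can never carry $M_a \otimes N_b$ with $a+b = k_1+k_2-1$ into $M_{k_1} \otimes N_{k_2}$ (the case $k_1 \le a$ forces $a+1=k_1$, contradiction, and the case $k_1 > a$ forces $a=k_1$, contradiction). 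So the unruly $T_{k_1}$ term is multiplied by zero and drops out. The paper records this with a ``$?$'' in its $T_k(xy)$ formula and relies on exactly this vanishing; your verification as written would stall at precisely this point without making the same observation.
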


\begin{proof}
For $K \in \Ch(\uRep(\cI))$, we have
\begin{displaymath}
\cD(K) = \bigoplus_{n,r} (K^n_r)^*[n+r] \otimes \ul{\bA}^r,
\end{displaymath}
as a graded object of $\uRep(\cI)$. In this form, the differential is given by
\begin{displaymath}
d(\lambda \otimes x) = (-1)^r d^*(\lambda) \otimes x + \sum_{i=1}^{r-1} (-1)^{i+1} \alpha_i^*(\lambda) \otimes T_i(x)
\end{displaymath}
for $\lambda \in (K^n_r)^*$ and $x \in \ul{\bA}^i$. We will use this description of $\cD$ in this proof.

We have
\begin{displaymath}
\cD(M) \odot \cD(N) = \bigoplus_{m,n,r,s} ((M^m_r)^*[m+r] \otimes \ul{\bA}^r) \odot ((N^n_s)^*[n+s] \otimes \ul{\bA}^s).
\end{displaymath}
On the other hand,
\begin{displaymath}
\cD(M \odot N) = \bigoplus_{m,n,r,s} (M^m_r \otimes N^n_s)^*[m+n+r+s] \otimes \ul{\bA}^{r+s}.
\end{displaymath}
We thus have a map
\begin{displaymath}
\phi \colon \cD(M) \odot \cD(N) \to \cD(M \odot N)
\end{displaymath}
given by
\begin{displaymath}
\phi(\lambda \otimes x \odot \mu \otimes y) = \epsilon^{m,n}_{r,s} (\lambda \odot \mu) \otimes (x \odot y),
\end{displaymath}
for $\lambda \in (M^m_r)^*$, $\mu \in (N^n_s)^*$, $x \in \ul{\bA}^r$, and $y \in \ul{\bA}^s$. Here $\lambda \odot \mu$ denotes the natural element of $(M^m \odot N^n)^*$ and $x \odot y \in \ul{\bA}^{r+s}=\ul{\bA}^r \odot \ul{\bA}^s$, and $\epsilon^{m,n}_{r,s}$ denotes a sign to be determined. It is clear that, on each term, $\phi$ is a map of graded $\cI$-modules, and that it is an isomorphism if at least one of $M$ or $N$ is degreewise finite. To complete the proof, it suffices to show that $\phi$ is compatible with differentials.

Let $\lambda$, $\mu$, $x$, and $y$ be as above. In what follows, we omit the symbols $\otimes$ and $\odot$. Regarding $\lambda x \mu y \in \cD(M) \odot \cD(N)$, we have
\begin{align*}
d(\lambda x \mu y)
=& d(\lambda x) \mu y + (-1)^{m+r} \lambda x d(\mu y) \\
=& (-1)^r d^*(\lambda)x\mu y+\sum_{i=1}^{r-1} (-1)^{i+1} \alpha_i^*(\lambda) T_i(x) \mu y \\
&+ (-1)^{m+r+s} \lambda x d^*(\mu) y + \sum_{j=1}^{s-1} (-1)^{m+r+j+1} \lambda x \alpha_j^*(\mu) T_j(y)
\end{align*}
and so
\begin{align*}
\phi(d(\lambda x \mu y))
=& (-1)^r \epsilon^{m-1,n}_{r,s} d^*(\lambda)\mu xy+\sum_{i=1}^{r-1} (-1)^{i+1} \epsilon^{m,n}_{r-1,s} \alpha_i^*(\lambda) \mu T_i(x) y \\
&+ (-1)^{m+r+s} \epsilon^{m,n-1}_{r,s} \lambda d^*(\mu) xy + \sum_{j=1}^{s-1} (-1)^{m+r+j+1} \epsilon^{m,n}_{r,s-1} \lambda \alpha_j^*(\mu) x T_j(y).
\end{align*}
On the other hand, regarding $\lambda \mu x y \in \cD(M \odot N)$, we have
\begin{displaymath}
d(\lambda \mu x y) = (-1)^{r+s} d^*(\lambda \mu) xy + \sum_{k=1}^{r+s-1} (-1)^{k+1} \alpha_k^*(\lambda \mu) T_k(xy).
\end{displaymath}
We now come to the key calculations:
\begin{displaymath}
\alpha_k^*(\lambda \mu) = \begin{cases}
\alpha_k^*(\lambda) \mu & \text{if $1 \le k \le r-1$} \\
\lambda \alpha_{k-r}^*(\mu) & \text{if $r+1 \le k \le r+s-1$} \\
0 & \text{if $k=r$} \end{cases}
\end{displaymath}
and
\begin{displaymath}
T_k(xy) = \begin{cases}
T_k(x) y & \text{if $1 \le k \le r-1$} \\
x T_{k-r}(y) & \text{if $r+1 \le k \le r+s-1$} \\
? & \text{if $k=r$.} \end{cases}
\end{displaymath}
The question mark indicates that there is not a nice formula. The proofs are easy and left to the reader. We thus find
\begin{align*}
\epsilon^{m,n}_{r,s} d(\phi(\lambda x \mu y))
=& (-1)^{r+s} d^*(\lambda) \mu xy + (-1)^{r+s+m} \lambda d^*(\mu) xy \\
&+ \sum_{i=1}^{r-1} (-1)^{i+1} \alpha_i^*(\lambda) \mu T_i(x) y + \sum_{j=1}^{s-1} (-1)^{r+j+1} \lambda \alpha_j^*(\mu) x T_j(y).
\end{align*}
We now see that by choosing $\epsilon^{m,n}_{r,s} = (-1)^{ms}$ we obtain $\phi d = d \phi$.
\end{proof}

For a constraint word $\lambda$, define the {\bf conjugate word} $c(\lambda)$ to be the constraint word with $b$ changed to $a$ and vice versa. Combining the proposition with Proposition~\ref{prop:Dprin}, we obtain:

\begin{corollary} \label{cor:dualstd}
We have $\cD(\ul{\bE}^{\lambda}) \cong \ul{\bE}^{c(\lambda)}[\ell]$, where $\ell$ is the length of $\lambda$.
\end{corollary}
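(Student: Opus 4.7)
The plan is to reduce the corollary to the length-one case, where Proposition~\ref{prop:Dprin} already handles it, by using the fact that both $\cD$ and the standard module construction are compatible with the concatenation product.

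First I would write, using Corollary~\ref{cor:gapprod},
\begin{displaymath}
\ul{\bE}^{\lambda} \cong \ul{\bE}^{\lambda_1} \odot \cdots \odot \ul{\bE}^{\lambda_{\ell}},
\end{displaymath}
where each factor $\ul{\bE}^{\lambda_i}$ is either $\ul{\bA}^1$ or $\ul{\bB}^1$, and in particular is degreewise finite. This lets me apply Proposition~\ref{prop:dualcat} repeatedly (since in each step at least one tensor factor is degreewise finite) to obtain a genuine isomorphism of complexes
\begin{displaymath}
\cD(\ul{\bE}^{\lambda}) \cong \cD(\ul{\bE}^{\lambda_1}) \odot \cdots \odot \cD(\ul{\bE}^{\lambda_{\ell}}).
\end{displaymath}

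Next, Proposition~\ref{prop:Dprin} identifies each factor: $\cD(\ul{\bA}^1) \cong \ul{\bB}^1[1]$ and $\cD(\ul{\bB}^1) \cong \ul{\bA}^1[1]$. In both cases this equals $\ul{\bE}^{c(\lambda_i)}[1]$, with $c$ swapping $a$ and $b$. Substituting in gives
\begin{displaymath}
\cD(\ul{\bE}^{\lambda}) \cong \ul{\bE}^{c(\lambda_1)}[1] \odot \cdots \odot \ul{\bE}^{c(\lambda_{\ell})}[1].
\end{displaymath}

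Finally, I would observe that the concatenation product of complexes interacts with shifts by $(M[a]) \odot (N[b]) \cong (M \odot N)[a+b]$ (immediate from the definitions). Applying this inductively moves all the $\ell$ shifts outside and reassembles the concatenation product using Corollary~\ref{cor:gapprod} in reverse (now applied to the word $c(\lambda)$), yielding
\begin{displaymath}
\cD(\ul{\bE}^{\lambda}) \cong (\ul{\bE}^{c(\lambda_1)} \odot \cdots \odot \ul{\bE}^{c(\lambda_{\ell})})[\ell] \cong \ul{\bE}^{c(\lambda)}[\ell],
\end{displaymath}
which is the desired statement. There is no real obstacle here: the content of the corollary has already been packaged into the length-one computation of Proposition~\ref{prop:Dprin} and the monoidality of $\cD$ from Proposition~\ref{prop:dualcat}; the only minor care needed is tracking the sign/shift conventions in the iterated application, which is why I would verify the shift identity for $\odot$ once explicitly before iterating.
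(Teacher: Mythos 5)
Your proposal is correct and follows exactly the paper's (one-line) proof: the paper obtains the corollary by "combining" Proposition~\ref{prop:dualcat} with Proposition~\ref{prop:Dprin}, which is precisely the decomposition via Corollary~\ref{cor:gapprod}, the monoidality of $\cD$, and the length-one computation that you spell out. Your extra care about degreewise finiteness and the shift bookkeeping just makes explicit what the paper leaves implicit.
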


\subsection{The finiteness theorem}

Theorem~\ref{thm:duality} is essentially a formality: its proof does not rely on any difficult results about $\cI$-modules. By contrast, the following theorem relies on one of our main theorems:

\begin{theorem} \label{thm:Dfin}
The duality functor $\cD$ carries $\rD^b_{\fgen}(\uRep(\cI))$ into itself.
\end{theorem}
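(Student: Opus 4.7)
The plan is to exploit the fact that $\rD^b_{\fgen}(\uRep(\cI))$ is generated, as a triangulated category, by the standard modules $\ul{\bE}^{\lambda}$ (Theorem~\ref{thm:dergen}), so that it suffices to understand $\cD$ on these. More precisely, let $\cT \subset \rD(\uRep(\cI))$ be the full subcategory consisting of objects $M$ such that $\cD(M) \in \rD^b_{\fgen}(\uRep(\cI))$. Because $\cD$ is a contravariant triangulated functor (Theorem~\ref{thm:duality}) and $\rD^b_{\fgen}(\uRep(\cI))$ is closed under shifts and cones, $\cT$ is itself a triangulated subcategory of $\rD(\uRep(\cI))$. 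Thanks to Theorem~\ref{thm:dergen}, we will be done as soon as we show that every standard module $\ul{\bE}^{\lambda}$ belongs to $\cT$.

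Next, I would reduce the computation of $\cD(\ul{\bE}^{\lambda})$ to the case of one-letter words by means of the concatenation product. By Corollary~\ref{cor:gapprod}, if $\lambda = \lambda_1 \cdots \lambda_r$ then
\begin{displaymath}
\ul{\bE}^{\lambda} \cong \ul{\bE}^{\lambda_1} \odot \cdots \odot \ul{\bE}^{\lambda_r}.
\end{displaymath}
Each factor is either $\ul{\bA}^1$ or $\ul{\bB}^1$, and in particular degreewise finite. A concatenation product of degreewise finite modules is degreewise finite (each graded piece is a finite sum of finite tensor products), so we may apply Proposition~\ref{prop:dualcat} iteratively to obtain an isomorphism
\begin{displaymath}
\cD(\ul{\bE}^{\lambda}) \cong \cD(\ul{\bE}^{\lambda_1}) \odot \cdots \odot \cD(\ul{\bE}^{\lambda_r})
\end{displaymath}
in $\Ch(\uRep(\cI))$, hence also in $\rD(\uRep(\cI))$.

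The base case is then handled by Proposition~\ref{prop:Dprin}, which gives $\cD(\ul{\bA}^1) \cong \ul{\bB}^1[1]$ and $\cD(\ul{\bB}^1) \cong \ul{\bA}^1[1]$. Combining these with the displayed isomorphism (and shuffling shifts through $\odot$, which is linear in each variable), we find
\begin{displaymath}
\cD(\ul{\bE}^{\lambda}) \cong \ul{\bE}^{c(\lambda)}[r],
\end{displaymath}
where $c(\lambda)$ is the conjugate word and $r$ is the length of $\lambda$ — this is exactly Corollary~\ref{cor:dualstd}. In particular $\cD(\ul{\bE}^{\lambda}) \in \rD^b_{\fgen}(\uRep(\cI))$, so $\ul{\bE}^{\lambda} \in \cT$ for every $\lambda$, and the theorem follows.

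The argument is essentially a formality given the machinery already in place; there is no real obstacle inside the proof of this theorem. The weight of the proof is carried by the preceding work, specifically Theorem~\ref{thm:dergen} (which required the theorem on level) and Corollary~\ref{cor:dualstd} (whose nontrivial ingredients are the Koszul-theoretic computation $\rH^i(\cK(\ul{\bA}^r))=0$ for $i \neq 0$ in Proposition~\ref{prop:Kprin}, used to establish Proposition~\ref{prop:Dprin}, and the compatibility with $\odot$ in Proposition~\ref{prop:dualcat}). Once one has those, the passage from ``$\cD$ behaves well on standard modules'' to ``$\cD$ preserves $\rD^b_{\fgen}$'' is just the observation that the target is a triangulated subcategory closed under the necessary operations.
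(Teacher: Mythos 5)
Your proof is correct and follows essentially the same route as the paper: reduce to standard modules via Theorem~\ref{thm:dergen}, then compute $\cD(\ul{\bE}^{\lambda}) \cong \ul{\bE}^{c(\lambda)}[\ell]$ by combining Proposition~\ref{prop:dualcat} with Proposition~\ref{prop:Dprin}, which is exactly how the paper obtains Corollary~\ref{cor:dualstd}. The paper's proof simply cites that corollary rather than re-deriving it, but the substance is identical.
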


\begin{proof}
By Theorem~\ref{thm:dergen}, $\rD^b_{\fgen}(\uRep(\cI))$ is generated (as a triangulated category) by the standard objects $\ul{\bE}^{\lambda}$. It therefore suffices to show that $\cD(\ul{\bE}^{\lambda})$ belongs to $\rD^b_{\fgen}(\uRep(\cI))$ for all $\lambda$. This follows from Corollary~\ref{cor:dualstd}.
\end{proof}

\begin{corollary} \label{cor:Dfg}
The duality functor induces an equivalence
\begin{displaymath}
\cD \colon \rD^b_{\fgen}(\uRep(\cI))^{\op} \to \rD^b_{\fgen}(\uRep(\cI))
\end{displaymath}
whose square is canonically isomorphic to the identity functor.
\end{corollary}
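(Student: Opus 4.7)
The plan is to assemble the corollary by combining Theorem~\ref{thm:Dfin} (which says $\cD$ preserves $\rD^b_{\fgen}(\uRep(\cI))$) with the double-duality isomorphism of Theorem~\ref{thm:duality}. The only real content is checking that the degreewise finiteness hypothesis in Theorem~\ref{thm:duality} is satisfied by every object of $\rD^b_{\fgen}(\uRep(\cI))$. Once that is known, $\cD \circ \cD \cong \id$ on this subcategory and $\cD$ is automatically an equivalence.

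First I would verify the degreewise finiteness of the cohomology of any $M \in \rD^b_{\fgen}(\uRep(\cI))$. Since the class of finitely generated graded modules is closed under taking subquotients (by local noetherianity, Theorem~\ref{thm:noeth}), each cohomology $\rH^i(M)$ is finitely generated, and $\rH^i(M) = 0$ outside a bounded range. A finitely generated graded $\cI$-module is a quotient of a finite direct sum of principal modules $\ul{\bA}^r$, whose graded pieces $\ul{\bA}^r_n$ have the finite basis $\{e_{i_1,\ldots,i_r} : 1 \le i_1 < \cdots < i_r = n\}$; hence every finitely generated graded $\cI$-module has finite-dimensional graded pieces, so $\rH^\bullet(M)$ is degreewise finite.

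Next I would invoke Theorem~\ref{thm:Dfin} to see that both $\cD(M)$ and $\cD(\cD(M))$ lie in $\rD^b_{\fgen}(\uRep(\cI))$, and then Theorem~\ref{thm:duality}, together with the degreewise finiteness established above, to conclude that the canonical natural transformation $\eta_M \colon M \to \cD(\cD(M))$ is an isomorphism in $\rD^b_{\fgen}(\uRep(\cI))$. Thus the restriction
\[
\cD \colon \rD^b_{\fgen}(\uRep(\cI))^{\op} \to \rD^b_{\fgen}(\uRep(\cI))
\]
is a well-defined contravariant functor equipped with a natural isomorphism $\id \xrightarrow{\sim} \cD \circ \cD$. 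A contravariant functor whose square is isomorphic to the identity is automatically an equivalence (its opposite serves as a quasi-inverse), which yields the equivalence claim.

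There is essentially no obstacle here, because the two deep inputs — preservation of $\rD^b_{\fgen}$ (Theorem~\ref{thm:Dfin}, which rests on the theorem on level and Corollary~\ref{cor:dualstd}) and the abstract double-duality isomorphism (Theorem~\ref{thm:duality}) — are already in hand. The only delicate point worth spelling out in the write-up is the degreewise finiteness of finitely generated graded $\cI$-modules, since Theorem~\ref{thm:duality} demands this hypothesis in order for the map $\cE(M) \to \cD(\cD(M))$ of Lemma~\ref{lem:EDD} to be an isomorphism; after that remark, the corollary is a short formal deduction.
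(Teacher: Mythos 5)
Your proposal is correct and follows exactly the route the paper intends: the corollary is stated as an immediate consequence of Theorem~\ref{thm:Dfin} together with the biduality isomorphism of Theorem~\ref{thm:duality}, and your verification that finitely generated graded $\cI$-modules have finite-dimensional graded pieces (being quotients of finite sums of $\ul{\bA}^r$) is precisely the point needed to apply the latter.
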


\begin{remark}
By Corollary~\ref{cor:Dfg}, $\cD$ induces a self-map of $\ul{\rK}(\cI)$, which we still denote by $\cD$. From Corollary~\ref{cor:dualstd}, we see that $\cD [\ul{\bE}^{\lambda}] = (-1)^{\ell(\lambda)} [\ul{\bE}^{c(\lambda)}]$. Identifying $\ul{\rK}(\cI)$ with $\bZ\{a,b\}$, we see that $\cD$ is the ring automorphism taking $a$ to $-b$ and $b$ to $-a$.
\end{remark}

\subsection{Betti tables} \label{ss:betti}

Let $M$ be a graded $\cI$-module. By analogy with commutative algebra, we define the {\bf Betti table} of $M$, denoted $\beta(M)$, to be the two-dimensional array given by
\begin{displaymath}
\beta(M)_{i,j} = \dim \rL_j \cT(M)_{i+j}.
\end{displaymath}
Thus the entry in row $i$ and column $j$ records the number of generators in degree $i+j$ of the $j$th term in the minimal resolution of $M$. We define the {\bf (Castelnuovo--Mumford) regularity} of $M$, denoted $\rho(M)$, to be the minimum $n$ such that $\beta_{i,j}(M)=0$ for all $i>n$ and all $j \in \bN$, or $\infty$ if no such $n$ exists; thus $\rho(M)$ is the index of the final non-zero row in $\beta(M)$. The following theorem is our main result on these invariants:

\begin{theorem} \label{thm:betti}
Let $M$ be a finitely generated graded $\cI$-module. Then $\rho(M)$ is finite, that is, $\beta(M)$ has only finitely many non-zero rows. Furthermore, each row is eventually polynomial, that is, for each $i$ there is a polynomial $p_i(t) \in \bQ[t]$ such that $\beta(M)_{i,j}=p_i(j)$ for $j \gg 0$.
\end{theorem}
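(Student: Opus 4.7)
The plan is to translate both claims in Theorem~\ref{thm:betti} into statements about the cohomology of the duality functor $\cD$ applied to $M$, and then invoke Theorem~\ref{thm:Dfin} together with the Hilbert series machinery from \S\ref{ss:hilb}. The key identity is the one packaged in Lemma~\ref{lem:DK}(e): combined with Corollary~\ref{cor:kosztor}, it gives
\begin{displaymath}
\beta(M)_{i,j} = \dim \rL_j \cT(M)_{i+j} = \dim \rH^{j-(i+j)}(\cK(M))_{i+j} = \dim \rH^{-i}(\cD(M))_{i+j}.
\end{displaymath}
Thus, for each fixed row index $i$, the Betti numbers $\beta(M)_{i,j}$ are (up to a shift of internal grading by $i$) just the values of the Hilbert function of the single graded $\cI$-module $\rH^{-i}(\cD(M))$.

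With this identification in hand, finite regularity is immediate. Since $M$ is finitely generated, Theorem~\ref{thm:Dfin} (equivalently Corollary~\ref{cor:Dfg}) says that $\cD(M)$ lies in $\rD^b_{\fgen}(\uRep(\cI))$. In particular, $\rH^{-i}(\cD(M))$ vanishes for all but finitely many $i$, so there exists $N$ with $\beta(M)_{i,j}=0$ for $i>N$; this gives $\rho(M) \le N < \infty$.

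For the eventual polynomial behavior of each row, fix $i$ with $\rH^{-i}(\cD(M)) \neq 0$. Again by Theorem~\ref{thm:Dfin}, this cohomology module is a \emph{finitely generated} graded $\cI$-module. By Theorem~\ref{thm:hilb-gr}, its Hilbert series is a rational function whose denominator is a power of $(1-t)$; as a standard consequence (stated as the corollary immediately following Theorem~\ref{thm:hilb-gr}), the function $n \mapsto \dim \rH^{-i}(\cD(M))_n$ agrees with a polynomial $q_i(n) \in \bQ[n]$ for $n \gg 0$. Setting $p_i(j) = q_i(i+j)$ yields a polynomial in $\bQ[j]$ with $\beta(M)_{i,j} = p_i(j)$ for $j \gg 0$.

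Once the translation via Lemma~\ref{lem:DK} is in hand, the argument is essentially formal: all the real work has already been done, in proving Theorem~\ref{thm:Dfin} (which in turn rests on Theorem~\ref{thm:dergen}, the theorem on level, and the explicit computation $\cD(\ul{\bE}^{\lambda}) \cong \ul{\bE}^{c(\lambda)}[\ell]$ of Corollary~\ref{cor:dualstd}) and in the Hilbert series result Theorem~\ref{thm:hilb-gr}. So the main obstacle lies entirely upstream; the present proof is a short combination of these two inputs.
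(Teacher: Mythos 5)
Your proof is correct and is essentially identical to the paper's: the same chain of identifications via Corollary~\ref{cor:kosztor} and Lemma~\ref{lem:DK}(e), followed by Theorem~\ref{thm:Dfin} for finiteness of the rows and Theorem~\ref{thm:hilb-gr} for eventual polynomiality. The only blemish is an index slip in the intermediate Koszul term, which should read $\rH^{-j}(\cK(M))_{i+j}$ rather than $\rH^{j-(i+j)}(\cK(M))_{i+j}=\rH^{-i}(\cK(M))_{i+j}$; the endpoints of your chain are correct, so this does not affect the argument.
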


\begin{proof}
By Corollary~\ref{cor:kosztor} and Lemma~\ref{lem:DK}(e) we have
\begin{displaymath}
\beta(M)_{i,j}= \dim \rL_j \cT(M)_{i+j} =\dim \rH^{-j}(\cK(M))_{i+j} =\dim \rH^{-i}(\cD(M))_{i+j}
\end{displaymath}
By Theorem~\ref{thm:Dfin}, $\rH^{-i}(\cD(M))$ is a finitely generated graded $\cI$-module for all $i$, and non-zero for only finitely many $i$. We thus see that $\beta(M)$ has only finitely many non-zero rows. Furthermore, for fixed $i$, we see that $\beta(M)_{i,j}$ is the dimension of the $i+j$ graded piece of the finitely generated graded $\cI$-module $\rH^{-i}(\cD(M))$, and thus is eventually a polynomial in $j$ by Theorem~\ref{thm:hilb-gr}.
\end{proof}

\subsection{Conceptual explanation of $\cK$ and $\cD$} \label{ss:conceptual}

We now give a conceptual explanation for some of the constructions appearing in this section. Let $A$ denote the shuffle algebra $\Sym_{\medshuffle}(\bk\langle 1 \rangle)$, as defined in \S \ref{ss:shuffle}. Recall that $\Mod_A$ is essentially equivalent to the category of graded $\cI$-modules; for the present discussion, we ignore the minor difference between the two categories. If $M$ is an $A$-module then the usual construction yields a Koszul complex $\lw^{\bullet}_{\medshuffle}(\bk\langle 1 \rangle) \otimes M$. Translating this back across the equivalence of categories yields our Koszul complex $\cK(M)$. Furthermore, the usual form of Koszul duality yields a derived equivalence between $A$ and $B$, where $B=\lw_{\medshuffle}(\bk\langle 1 \rangle)$ is the exterior algebra on $\bk\langle 1 \rangle$. It so happens that $A$ and $B$, while not isomorphic as algebras, are Morita equivalent, that is, their module categories are equivalent; one constructs an equivalence simply by playing with signs. This yields a second equivalence between the derived categories of $A$ and $B$. Combining the two produces a derived auto-equivalence of $A$. Translating this back to $\uRep(\cI)$ yields our duality functor $\cD$.

Thus, from the perspective of shuffle algebras, the rather opaque constructions we made above become somewhat more clear. We chose to stick with the language of $\cI$-modules, however, to maintain consistency with the rest of the paper.

\section{Grothendieck groups revisited} \label{s:groth2}

\subsection{The pairing on $\rK(\cI)$} \label{ss:pairing}

Let $\cA$ be a $\bk$-linear abelian category with Grothendieck group $\rK(\cA)$. Suppose that the following condition holds:
\begin{itemize}
\item[$(\ast)$] For all objects $M,N \in \cA$ the $\bk$-vector space $\Ext^i_{\cA}(M,N)$ is finite dimensional and vanishes for $i \gg 0$.
\end{itemize}
Then $\rK(\cA)$ admits a canonical pairing, as follows:
\begin{displaymath}
\langle [M], [N] \rangle = \sum_{i \ge 0} (-1)^i \dim \Ext^i_{\cA}(M, N).
\end{displaymath}
We have seen (Propositions~\ref{prop:uextcolim} and~\ref{prop:extcolim} and Corollary~\ref{cor:extfin}) that both $\Rep(\cI)^{\fgen}$ and $\uRep(\cI)^{\fgen}$ satisfy condition $(\ast)$, and so $\rK(\cI)$ and $\ul{\rK}(\cI)$ admit pairings.  We let $\langle , \rangle$ denote the pairing on $\ul{\rK}(\cI)$ and $(,)$ the pairing on $\rK(\cI)$.

\subsection{Higher multiplicities} \label{ss:highermult}

Let $\lambda$ be a constraint word of rank $r$. Then $T_{\ge r}(\ul{\bE}^{\lambda})$ is a simple object of the category $\uRep(\cI)_{\ge r}$ (Proposition~\ref{prop:levcat2}), and so there is an associated multiplicity function $\umu_{\lambda}$ (see \S \ref{ss:catmult}). We define $\umu_{\lambda}$ on $\uRep(\cI)$ by simply composing with $T_{\ge r}$, that is, for $M \in \uRep(\cI)$ we put $\umu_{\lambda}(M)=\umu_{\lambda}(T_{\ge r}(M))$. It follows directly from the definition that $\umu_{\lambda}(M)=0$ if $\ulev(M)<r$. We similarly define $\mu_{\lambda}$ in the smooth case.

\begin{proposition} \label{prop:highermult}
Let $\lambda$ be a constraint word and let $M$ be a graded $\cI$-module. Then
\begin{displaymath}
\umu_{\lambda}(M)=\dim \uHom_{\cI}(M, \ul{\bI}^{\lambda}) = \langle [M], [\ul{\bI}^{\lambda}] \rangle
\end{displaymath}
In particular, if $M$ is finitely generated then $\umu_{\lambda}(M)$ is finite. The analogous statements holds in the smooth case.
\end{proposition}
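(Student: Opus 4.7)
My plan is to treat the graded case; the smooth case is formally identical. Set $r = \text{rank}(\lambda)$.

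First I dispose of the second equality. By Theorem~\ref{thm:inj} the module $\ul{\bI}^{\lambda}$ is injective, so $\uExt^i_{\cI}(M,\ul{\bI}^{\lambda}) = 0$ for all $i>0$. Once we know $\uHom_{\cI}(M,\ul{\bI}^{\lambda})$ is finite-dimensional, this immediately yields $\langle [M],[\ul{\bI}^{\lambda}]\rangle = \dim\uHom_{\cI}(M,\ul{\bI}^{\lambda})$. Finite-dimensionality, in turn, follows from Proposition~\ref{prop:uextcolim}(a): the module $\ul{\bI}^{\lambda} = \ul{\bJ}^{a_0(\lambda)} \odot \ul{\bA}^1 \odot \cdots \odot \ul{\bJ}^{a_r(\lambda)}$ is finitely generated, each $\ul{\bJ}^{a_i(\lambda)}$ having finite length by Corollary~\ref{cor:coindfin}, and concatenation with finitely generated factors preserves finite generation.

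Next I turn to the key equality $\umu_{\lambda}(M) = \dim\uHom_{\cI}(M,\ul{\bI}^{\lambda})$. I will first show that $\ul{\bI}^{\lambda}$ is $r$-semistandard. Combining the composition series of each $\ul{\bJ}^{a_i(\lambda)}$ given by Proposition~\ref{prop:fininj}(d) with the compatibility $\ul{\bE}^{\mu_\bullet} \odot \ul{\bA}^1 \odot \cdots \odot \ul{\bE}^{\mu_\bullet} \cong \ul{\bE}^{\nu}$ of Proposition~\ref{prop:stdcat}, we see that $\ul{\bI}^{\lambda}$ has a finite filtration whose graded pieces are standard modules of rank $r$. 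Thus by Proposition~\ref{prop:sschar}, $\ul{\bI}^{\lambda}$ is derived $\cS_{\ge r}$-saturated; in particular it is saturated, so by Proposition~\ref{prop:satcrit} the natural map
\[
\uHom_{\cI}(M,\ul{\bI}^{\lambda}) \longrightarrow \uHom_{\uRep(\cI)_{\ge r}}\bigl(T_{\ge r}M,\, T_{\ge r}\ul{\bI}^{\lambda}\bigr)
\]
is an isomorphism.

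Now I want to recognize $T_{\ge r}\ul{\bI}^{\lambda}$ as the injective envelope of the simple object $T_{\ge r}\ul{\bE}^{\lambda}$ in $\uRep(\cI)_{\ge r}$. It is injective there by Proposition~\ref{prop:satinj}. The inclusion $\ul{\bE}^{\lambda}\hookrightarrow\ul{\bI}^{\lambda}$ localizes to an inclusion $T_{\ge r}\ul{\bE}^{\lambda}\hookrightarrow T_{\ge r}\ul{\bI}^{\lambda}$; since $T_{\ge r}\ul{\bI}^{\lambda}$ is an indecomposable injective (being the image of an indecomposable injective under the equivalence of Theorem~\ref{thm:ss} between $\uRep(\cI)^{\text{$r$-ss}}$ and $\uRep(\cI)_r$) containing a copy of the simple $T_{\ge r}\ul{\bE}^{\lambda}$, it is the injective envelope. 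Applying the standard categorical identification of multiplicity with dimension of Homs into the injective envelope (Proposition~\ref{prop:catmult}; this is where we use that $T_{\ge r}\ul{\bE}^{\lambda}$ is a Schurian simple, which follows from $\uHom_{\cI}(\ul{\bE}^{\lambda},\ul{\bI}^{\lambda}) = \bk$ under the equivalence), we obtain
\[
\umu_{\lambda}(M) = \umu_{T_{\ge r}\ul{\bE}^{\lambda}}(T_{\ge r}M) = \dim\uHom_{\uRep(\cI)_{\ge r}}\bigl(T_{\ge r}M,\, T_{\ge r}\ul{\bI}^{\lambda}\bigr),
\]
completing the first equality.

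The main obstacle is Step~3/4, matching the abstract multiplicity in the quotient $\uRep(\cI)_{\ge r}$—which is not locally of finite length—with $\dim\uHom$ into the injective envelope. To sidestep any subtleties, I can reduce to finitely generated $M$ via the continuity of $\umu_\lambda$ in Proposition~\ref{prop:multprop}(c) and of $\uHom_{\cI}(-,\ul{\bI}^\lambda)$ in Proposition~\ref{prop:uextcolim}(b); for finitely generated $M$, only the rank-$\le r$ simple constituents of $T_{\ge r}M$ contribute to $\umu_\lambda$ and to $\uHom$ into the finite-length-in-$\uRep(\cI)_r$ envelope, so the standard argument in the locally finite category $\uRep(\cI)_r$ applies. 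The smooth statement is then obtained by applying $\Phi$ and using that $\Phi(\ul{\bI}^\lambda) = \bI^\lambda$ together with $\mu_\lambda(\Phi(N)) = \umu_\lambda(N)$ (which follows from Proposition~\ref{prop:levequiv}).
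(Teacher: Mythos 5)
Your argument is correct and is essentially the paper's own proof: saturation of $\ul{\bI}^{\lambda}$ together with the adjunction $(T_{\ge r}, S_{\ge r})$ identifies $\uHom_{\cI}(M,\ul{\bI}^{\lambda})$ with $\Hom(T_{\ge r}M, T_{\ge r}\ul{\bI}^{\lambda})$, and Proposition~\ref{prop:multinj} (the result you cite as ``prop:catmult'') identifies the latter with $\umu_{\lambda}(M)$ because $T_{\ge r}\ul{\bI}^{\lambda}$ is the injective envelope of the Schurian simple $T_{\ge r}\ul{\bE}^{\lambda}$. Two minor remarks: your closing reduction to finitely generated $M$ is unnecessary, since Proposition~\ref{prop:multinj} is stated for an arbitrary $\bk$-linear abelian category with no local-finiteness hypothesis on $\uRep(\cI)_{\ge r}$; and the smooth case should be obtained by running the identical argument in $\Rep(\cI)$ (where the whole saturation theory is developed in parallel) rather than by applying $\Phi$, since $\Phi$ is not full and not every smooth module lies in its essential image.
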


\begin{proof}
It follows from the theory in \S \ref{ss:satgen} that $T_{\ge r}(\ul{\bI}^{\lambda})$ is the injective envelope of $T_{\ge r}(\ul{\bE}^{\lambda})$, where $r$ is the rank of $\lambda$. We thus have:
\begin{align*}
\umu_{\lambda}(M)
&= \dim \Hom(T_{\ge r}(M), T_{\ge r}(\ul{\bI}^{\lambda})) \\
&= \dim \uHom_{\cI}(M, \cS_{\ge r}(\ul{\bI}^{\lambda})) \\
&= \dim \uHom_{\cI}(M, \ul{\bI}^{\lambda}) \\
&= \langle [M], [\ul{\bI}^{\lambda}] \rangle.
\end{align*}
The first step follows from Proposition~\ref{prop:multinj}; the second uses the adjunction between $T_{\ge r}$ and $S_{\ge r}$: the third uses the fact that $\ul{\bI}^{\lambda}$ is saturated (which follows from the theory in \S \ref{ss:satgen}); and the fourth step simply uses the fact that $\ul{\bI}^{\lambda}$ is injective, so that all higher $\Ext$'s into it vanish.
\end{proof}

\begin{corollary}
The multiplicity function $\umu_{\lambda}$ induces a homomorphism $\umu_{\lambda} \colon \ul{\rK}(\cI) \to \bZ$.
\end{corollary}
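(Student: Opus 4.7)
The plan is to deduce the corollary directly from Proposition~\ref{prop:highermult}, using injectivity of $\ul{\bI}^{\lambda}$. First I would recall that, by that proposition, for every finitely generated graded $\cI$-module $M$ we have
\begin{displaymath}
\umu_{\lambda}(M) = \dim_{\bk} \uHom_{\cI}(M, \ul{\bI}^{\lambda}),
\end{displaymath}
and this quantity is finite (finiteness of $\uHom$ being a special case of Proposition~\ref{prop:uextcolim}). Thus $\umu_{\lambda}$ takes integer values on finitely generated modules, which is what is needed to land in $\bZ$.

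Next, to see that $\umu_{\lambda}$ factors through $\ul{\rK}(\cI)$, I would verify additivity on short exact sequences. Given an exact sequence $0 \to M_1 \to M_2 \to M_3 \to 0$ of finitely generated graded $\cI$-modules, injectivity of $\ul{\bI}^{\lambda}$ (Theorem~\ref{thm:inj}) makes the functor $\uHom_{\cI}(-, \ul{\bI}^{\lambda})$ exact, so
\begin{displaymath}
0 \to \uHom_{\cI}(M_3, \ul{\bI}^{\lambda}) \to \uHom_{\cI}(M_2, \ul{\bI}^{\lambda}) \to \uHom_{\cI}(M_1, \ul{\bI}^{\lambda}) \to 0
\end{displaymath}
is a short exact sequence of finite-dimensional $\bk$-vector spaces. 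Taking dimensions gives $\umu_{\lambda}(M_2) = \umu_{\lambda}(M_1) + \umu_{\lambda}(M_3)$, which is exactly the relation needed to factor through $\ul{\rK}(\cI)$.

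There is essentially no obstacle here: the content is entirely packaged into the identification of $\umu_{\lambda}$ with $\dim \uHom_{\cI}(-, \ul{\bI}^{\lambda})$ established in Proposition~\ref{prop:highermult}, together with the injectivity of $\ul{\bI}^{\lambda}$. Equivalently, one may observe that the pairing $\langle -, [\ul{\bI}^{\lambda}] \rangle$ on $\ul{\rK}(\cI)$ is well-defined (as $\ul{\bI}^{\lambda}$ is injective, only the $\uHom$ term contributes), and Proposition~\ref{prop:highermult} identifies $\umu_{\lambda}$ with this linear functional. The analogous argument in the smooth case (using $\bI^{\lambda}$ and Proposition~\ref{prop:extcolim}) shows that $\mu_{\lambda}$ likewise induces a homomorphism $\rK(\cI) \to \bZ$.
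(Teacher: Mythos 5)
Your proof is correct and follows the same route the paper intends: the corollary is an immediate consequence of Proposition~\ref{prop:highermult}, which identifies $\umu_{\lambda}$ with $\langle -, [\ul{\bI}^{\lambda}]\rangle$, a linear functional already defined on $\ul{\rK}(\cI)$. Your explicit verification of additivity via exactness of $\uHom_{\cI}(-,\ul{\bI}^{\lambda})$ (using injectivity and finite generation of $\ul{\bI}^{\lambda}$) is exactly the content being invoked.
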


\begin{example}
If $\lambda=\blacktri^n$ then $\umu_{\lambda}$ is the multiplicity function $\umu_n$ already studied.
\end{example}

\subsection{The non-commutative Hilbert series} \label{ss:noncommhilb}

Let $M$ be a finitely generated graded $\cI$-module. We define the {\bf non-commutative Hilbert series} of $M$ by
\begin{displaymath}
\ul{\rG}_M = \sum_{\lambda} \umu_{\lambda}(M) \lambda,
\end{displaymath}
where the sum is over all constraint words $\lambda$. This is a formal $\bZ$-linear combination of words, and thus an element of the ring $\bZ\ldb a,b \rdb$ of non-commutative power series. We note that $\ul{\rG}_M$ becomes the usual Hilbert series $\ul{\rH}_M(t)$ under the substitutions $a \to 0$ and $b \to t$. We also note that if $M$ has homogeneous level $\le r$ then $\umu_{\lambda}(M)=0$ if $\lambda$ has rank $>r$, and so every monomial appearing in $\ul{\rG}_M(s,t)$ with non-zero coefficient has $a$ appearing at most $r$ times. The invariant $\ul{\rG}$ obviously factors through the Grothendieck group; thus for $x \in \ul{\rK}(\cI)$, or $x \in \bZ\{a,b\}$, we have an associated quantity $\ul{\rG}_x$.

For a finitely generated smooth $\cI$-module $M$, we define
\begin{displaymath}
\rG_M = \sum_{\lambda} \mu_{\lambda}(M) \lambda.
\end{displaymath}
Similar comments to the above apply to this construction.

\begin{proposition} \label{prop:Gsmooth}
Let $M$ be a finitely generated smooth $\cI$-module. Then $\rG_M=\ul{\rG}_{\ul{\Xi}(M)}$.
\end{proposition}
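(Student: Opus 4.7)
The plan is to verify the coefficient-by-coefficient equality $\mu_\lambda(M) = \umu_\lambda(\ul{\Xi}(M))$ for every constraint word $\lambda$, which immediately yields $\rG_M = \ul{\rG}_{\ul{\Xi}(M)}$. The key is that the multiplicity invariants $\mu_\lambda$ and $\umu_\lambda$ have clean Hom-theoretic descriptions via the indecomposable injectives $\bI^\lambda$ and $\ul{\bI}^\lambda$, and that these injectives are linked by the forgetful functor $\Phi$.

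First, I would invoke Proposition~\ref{prop:highermult} in both the smooth and graded settings to write
\[
\mu_\lambda(M) = \dim \Hom_{\cI}(M, \bI^\lambda), \qquad \umu_\lambda(\ul{\Xi}(M)) = \dim \uHom_{\cI}(\ul{\Xi}(M), \ul{\bI}^\lambda).
\]
(Here the higher Ext terms in the Euler characteristic vanish because $\bI^\lambda$ and $\ul{\bI}^\lambda$ are injective, by Theorem~\ref{thm:inj}, so the pairing collapses to the $\Hom$ dimension.) Next, I would apply the adjunction $(\ul{\Xi}, \Phi)$ furnished by Proposition~\ref{prop:xi}(a) to obtain the natural isomorphism
\[
\uHom_{\cI}(\ul{\Xi}(M), \ul{\bI}^\lambda) \;\cong\; \Hom_{\cI}(M, \Phi(\ul{\bI}^\lambda)).
\]
By the very definition of $\bI^\lambda$ in \S\ref{ss:inj} we have $\Phi(\ul{\bI}^\lambda) = \bI^\lambda$, so the right-hand side is $\Hom_{\cI}(M, \bI^\lambda)$. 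Taking dimensions gives $\mu_\lambda(M) = \umu_\lambda(\ul{\Xi}(M))$, and summing over $\lambda$ yields the identity of non-commutative Hilbert series.

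There is essentially no obstacle here; the result is a formal consequence of the $(\ul{\Xi}, \Phi)$-adjunction together with the injectivity of $\ul{\bI}^\lambda$ and the identity $\Phi(\ul{\bI}^\lambda) = \bI^\lambda$. The only thing worth double-checking is that the sum $\rG_M$ makes sense, i.e., that each $\mu_\lambda(M)$ is finite; this follows from Proposition~\ref{prop:highermult} applied to the finitely generated module $M$ (alternatively from finite-dimensionality of $\Hom_{\cI}(M, \bI^\lambda)$, which is part of the $\Ext$-finiteness of Proposition~\ref{prop:extcolim}). No grinding is required.
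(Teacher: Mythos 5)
Your proof is correct and is essentially identical to the paper's: both express $\mu_\lambda(M)$ and $\umu_\lambda(\ul{\Xi}(M))$ as $\Hom$-dimensions into $\bI^\lambda$ and $\ul{\bI}^\lambda$ via Proposition~\ref{prop:highermult}, then apply the adjunction $(\ul{\Xi},\Phi)$ together with $\Phi(\ul{\bI}^\lambda)=\bI^\lambda$.
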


\begin{proof}
Using the adjunction $(\ul{\Xi}, \Phi)$ (Proposition~\ref{prop:xi}), we have
\begin{displaymath}
\umu_{\lambda}(\ul{\Xi}(M)) = \dim \uHom_{\cI}(\ul{\Xi}(M), \ul{\bI}^{\lambda}) = \dim \Hom_{\cI}(M, \bI^{\lambda}) = \mu_{\lambda}(M),
\end{displaymath}
and so the result follows.
\end{proof}

Every element $f$ of $\bZ\ldb a,b \rdb$ can be written uniquely in the form $f=x+ay+bz$ with $x \in \bZ$ and $y,z \in \bZ\ldb a,b \rdb$. Let
\begin{displaymath}
\ul{\rG}_M = \umu_0(M) + a \ul{\rG}^a_M + b \ul{\rG}^b_M
\end{displaymath}
be this decomposition of $\ul{\rG}_M$. Recall (\S \ref{ss:groth}) that we have constructed several operators on the Grothendieck groups, including $(-)^{\dag}$, $\ul{\xi}$, $\psi$, and $\ul{\sigma}$.

\begin{proposition} \label{prop:Gpart}
Let $\kappa$ be the endomorphism of $\ul{\rK}(\cI)$ given by $\kappa(x)=(\ul{\xi}(\psi(x^{\dag})))^{\dag}$. Then for any $x \in \ul{\rK}(\cI)$, we have
\begin{displaymath}
\ul{\rG}_x^{\whitetri} = \ul{\rG}_{\kappa(x)}
\end{displaymath}
and
\begin{displaymath}
\ul{\rG}_x^{\blacktri} = \ul{\rG}_{\ul{\sigma}(x)} - \umu_0(x) - a \ul{\rG}^a_x.
\end{displaymath}
\end{proposition}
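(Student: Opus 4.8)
The identity to prove relates the ``$a$-part'' and ``$b$-part'' of the non-commutative Hilbert series $\ul{\rG}_x$ to the operators $\kappa$ and $\ul{\sigma}$ on $\ul{\rK}(\cI)$. Since both sides of each asserted identity factor through the Grothendieck group $\ul{\rK}(\cI)$, and since the classes $[\ul{\bE}^{\lambda}]$ form a basis of $\ul{\rK}(\cI)$ (Theorem~\ref{thm:groth}), it suffices to verify the two identities when $x = [\ul{\bE}^{\mu}]$ for an arbitrary constraint word $\mu$. So the first step is to reduce to standard modules by linearity.

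\textbf{Key computations on standard modules.} The next step is to compute all the relevant quantities on $[\ul{\bE}^{\mu}]$. For the non-commutative Hilbert series itself, I would use Proposition~\ref{prop:highermult}: $\umu_{\lambda}(M) = \langle [M], [\ul{\bI}^{\lambda}] \rangle$. Combined with the injective resolution of $\ul{\bE}^{\mu}$ from Proposition~\ref{prop:stdres} (whose terms are sums of $\ul{\bI}^{\nu}$'s with $\nu$ of the same rank and $a(\nu) \le a(\mu)$), and the fact that $\uHom_{\cI}(\ul{\bE}^{\mu}, \ul{\bI}^{\nu})$ is one-dimensional when $\mu = \nu$ and vanishes otherwise while higher $\Ext$'s into injectives vanish, one finds $\umu_{\lambda}([\ul{\bE}^{\mu}]) = \delta_{\lambda,\mu}$ — i.e.\ $\ul{\rG}_{[\ul{\bE}^{\mu}]} = \mu$ as a single monomial. (In fact this also follows directly from the functional $\rho_{\lambda}$ constructed in the proof of Theorem~\ref{thm:groth}, since $\rho_{\lambda} = \umu_{\lambda}$ on the level of Grothendieck groups; I would cite whichever is cleanest.) Then $\ul{\rG}^{\whitetri}_{[\ul{\bE}^{\mu}]}$ is $\tau(\mu)$ if $\mu$ starts with $\whitetri$ and $0$ otherwise, where $\tau(\mu)$ strips the first letter, and similarly $\ul{\rG}^{\blacktri}_{[\ul{\bE}^{\mu}]}$ is $\tau(\mu)$ if $\mu$ starts with $\blacktri$ and $0$ otherwise; and $\umu_0([\ul{\bE}^{\mu}]) = \delta_{\mu,\emptyset}$.

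\textbf{Computing $\kappa$ and $\ul{\sigma}$ on standard modules.} For $\kappa([\ul{\bE}^{\mu}]) = (\ul{\xi}(\psi([\ul{\bE}^{\mu}]^{\dag})))^{\dag}$: apply $(-)^{\dag}$ to get $[\ul{\bE}^{\mu^{\dag}}]$ (Proposition~\ref{prop:transstd}), apply $\psi$ to get $[\bE^{\mu'}]$ if $\mu^{\dag}$ ends in $a$ (with $\mu'$ the truncation) and $0$ otherwise (Proposition~\ref{prop:psistd}), apply $\ul{\xi}$ using Proposition~\ref{prop:xistd}, and apply $(-)^{\dag}$ again; tracking the combinatorics, one sees $\mu^{\dag}$ ends in $a$ exactly when $\mu$ begins with $\whitetri$, and $\kappa([\ul{\bE}^{\mu}])$ lands in the span of $[\ul{\bE}^{\nu}]$'s where $\nu$ begins with $\whitetri$. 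The delicate point — and the main obstacle — is that $\ul{\xi}$ on a standard module is a \emph{sum} $\bigoplus_{0 \le i \le n} \ul{\bE}^{\cdot a^i}$, not a single term, so I need to check carefully that, after the outer transpose and after passing to the Hilbert series, the extra summands reorganize correctly to match $\ul{\rG}_{\kappa(x)}$ against the asserted $\ul{\rG}^{\whitetri}_x$; this requires being careful about where the $a$'s land (leftmost vs.\ rightmost, which is exactly why the transposes are there) and summing a geometric-type family of monomials. For $\ul{\sigma}([\ul{\bE}^{\mu}])$ use Proposition~\ref{prop:shiftstd}: it is $[\ul{\bE}^{\mu}] + [\ul{\bE}^{\tau(\mu)}]$ if $\mu$ begins with $\whitetri$, and $[\ul{\bE}^{\tau(\mu)}]$ if $\mu$ begins with $\blacktri$ (and $0$ for $\mu = \emptyset$). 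Taking Hilbert series of $\ul{\sigma}(x)$ and subtracting $\umu_0(x) + a\,\ul{\rG}^a_x$ should then produce exactly $b\,\ul{\rG}^{\blacktri}_x$-worth of terms — the $[\ul{\bE}^{\mu}]$ summand contributes $\umu_0$ and the $a$-part that gets subtracted off, while the $[\ul{\bE}^{\tau(\mu)}]$ summand contributes the $b$-part when $\mu$ begins with $\blacktri$ and a further piece of the $a$-part when $\mu$ begins with $\whitetri$. I would lay out the four cases ($\mu = \emptyset$; $\mu$ begins with $\whitetri$; $\mu$ begins with $\blacktri$) side by side and check equality of coefficients of each monomial. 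The second identity is essentially bookkeeping once $\ul{\rG}_{[\ul{\bE}^{\mu}]} = \mu$ is established; the first identity is where the real work lies, precisely because of the multi-term output of $\ul{\Xi}$.
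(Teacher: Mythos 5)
Your plan founders on a false claim at the very start: the identity $\umu_{\lambda}([\ul{\bE}^{\mu}]) = \delta_{\lambda,\mu}$, i.e.\ $\ul{\rG}_{[\ul{\bE}^{\mu}]} = \mu$ as a single monomial, is wrong whenever $\rank(\mu) > \rank(\lambda)$. The paper itself gives a counterexample right after Theorem~\ref{thm:noncommhilb}: $\ul{\rG}_{aba} = aba + ab^2(1-b)^{-1} + b^2(1-b)^{-1}a + b^3(1-b)^{-2}$, so for instance $\umu_{ab^n}(\ul{\bE}^{aba}) = 1$ for all $n \ge 2$, even though $ab^n \ne aba$. The argument you cite (from the proof of Proposition~\ref{prop:omega-ext}) that $\uHom_{\cI}(\ul{\bE}^{\lambda}, \ul{\bI}^{\mu})$ vanishes when $\lambda \ne \mu$ is only valid when $\lambda$ and $\mu$ have the \emph{same} rank; when $\mu$ has strictly higher rank, the Hom can be nonzero, which is exactly what $\umu_{\lambda}$ is designed to detect. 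For the same reason your parenthetical claim that $\rho_{\lambda} = \umu_{\lambda}$ is false: these two functionals agree when tested against classes of standard modules of rank $\le \rank(\lambda)$, but diverge on higher-rank classes. This means the whole ``reduce to standard modules, verify coefficients of monomials'' strategy is not a bookkeeping exercise --- you would need a closed form for $\ul{\rG}_{[\ul{\bE}^{\mu}]}$ for all $\mu$, which is essentially what the recursion Proposition~\ref{prop:Grecur} (built on the very proposition being proved) is for.

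The paper avoids this entirely by working functorially, without decomposing $x$ in terms of standard modules. The key is to describe how the indecomposable injectives $\ul{\bI}^{\lambda}$ transform under the relevant functors, and then apply adjunctions to move the functor from the target of $\uHom$ to the source. For the $a$-part: $\ul{\bI}^{a\lambda} = \ul{\bA}^1 \odot \ul{\bI}^{\lambda} = \Gamma(\Phi((\ul{\bI}^{\lambda})^{\dag}))^{\dag}$ by Proposition~\ref{prop:gammagraded}, so by the adjunctions $(\Psi, \Gamma)$ and $(\ul{\Xi}, \Phi)$ one gets $\uHom_{\cI}(M, \ul{\bI}^{a\lambda}) \cong \uHom_{\cI}(\ul{\Xi}(\Psi(M^{\dag}))^{\dag}, \ul{\bI}^{\lambda})$, hence $\umu_{a\lambda}(x) = \umu_{\lambda}(\kappa(x))$ for all $x$; the first identity drops out immediately. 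For the $b$-part one uses the computation of $\ul{\sC}(\ul{\bI}^{\lambda})$ from the proof of Lemma~\ref{lem:inj} (it is $\ul{\bI}^{b\lambda}$ if $\lambda$ begins with $b$, and $\ul{\bI}^{b\lambda} \oplus \ul{\bI}^{\lambda}$ otherwise) together with the adjunction $(\ul{\Sigma}, \ul{\sC})$. This sidesteps the problem of computing $\ul{\rG}$ on standard modules and is where the real content of the proposition lies; your ``verify coefficients on $\ul{\bE}^{\mu}$'' approach cannot succeed without effectively re-deriving the recursion you are trying to prove.
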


\begin{proof}
By Proposition~\ref{prop:gammagraded}, we have
\begin{displaymath}
\ul{\bI}^{a \lambda}=\ul{\bA}^1 \odot \ul{\bI}^{\lambda}=\Gamma(\Phi((\ul{\bI}^{\lambda})^{\dag}))^{\dag}.
\end{displaymath}
Thus, for a graded $\cI$-module $M$ we have
\begin{displaymath}
\uHom_{\cI}(M, \ul{\bI}^{a\lambda})=\uHom_{\cI}(\ul{\Xi}(\Psi(M^{\dag}))^{\dag}, \ul{\bI}^{\lambda}),
\end{displaymath}
where we have used the adjunctions $(\Psi, \Gamma)$ and $(\ul{\Xi}, \Phi)$ (Propositions~\ref{prop:phigamma} and~\ref{prop:xi}). It follows that $\umu_{a\lambda}(x)=\umu_{\lambda}(\kappa(x))$ for $x \in \ul{\rK}(\cI)$, and so
\begin{displaymath}
\ul{\rG}^a_x = \sum_{\lambda} \umu_{a \lambda}(x) \lambda = \sum_{\lambda} \umu_{\lambda}(\kappa(x)) \lambda = \ul{\rG}_{\kappa(x)},
\end{displaymath}
which proves the first identity.

We now prove the second. Let $\lambda$ be a constraint word. It follows from the proof of Lemma~\ref{lem:inj} that
\begin{displaymath}
\ul{\sC}(\ul{\bI}^{\lambda}) = \begin{cases}
\ul{\bI}^{b\lambda} & \text{if $\lambda$ has first letter $b$} \\
\ul{\bI}^{b\lambda} \oplus \ul{\bI}^{\lambda} & \text{otherwise.} \end{cases}
\end{displaymath}
Thus, using the adjunction $(\ul{\Sigma}, \ul{\sC})$ (Proposition~\ref{prop:coind}), we see that for a graded $\cI$-module $M$ we have
\begin{displaymath}
\uHom_{\cI}(\ul{\Sigma}(M), \ul{\bI}^{\lambda}) = \begin{cases}
\uHom_{\cI}(M, \ul{\bI}^{b\lambda}) & \text{if $\lambda$ has first letter $b$} \\
\uHom_{\cI}(M, \ul{\bI}^{b\lambda} \oplus \ul{\bI}^{\lambda}) & \text{otherwise.} \end{cases}
\end{displaymath}
Thus for $x \in \ul{\rK}(\cI)$, we have
\begin{displaymath}
\umu_{b\lambda}(x) = \umu_{\lambda}(\ul{\sigma}(x)) - \begin{cases}
0 & \text{if $\lambda$ has first letter $b$} \\
\umu_{\lambda}(x) & \text{otherwise.} \end{cases}
\end{displaymath}
We thus have
\begin{displaymath}
\ul{\rG}^b_x = \sum_{\lambda} \umu_{b\lambda}(x) \lambda
=\sum_{\lambda} \umu_{\lambda}(\ul{\sigma}(x)) - \umu_0(x) - \sum_{\lambda=a \nu} \umu_{\lambda}(x) \lambda
\end{displaymath}
As the first term is $\ul{\rG}_{\ul{\sigma}(x)}$ and the third is $a\ul{\rG}^a_x$, the result follows.
\end{proof}

\begin{proposition} \label{prop:Grecur}
Let $x \in \bZ\{a,b\}$. Then
\begin{displaymath}
\ul{\rG}_{bx}=b \ul{\rG}_x.
\end{displaymath}
Furthermore, assuming $x=1$ or $x \in b \bZ\{a,b\}$ and $n \ge 1$, we have
\begin{displaymath}
\ul{\rG}_{a^n x} = a \ul{\rG}_{(1+\cdots+a^{n-1})x}+b (1-b)^{-1} \ul{\rG}_{a^{n-1}x}.
\end{displaymath}
\end{proposition}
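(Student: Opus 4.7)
The strategy is to apply Proposition~\ref{prop:Gpart} to $bx$ and to $a^n\lambda$, computing the three ingredients $\umu_0$, $\kappa$, and $\ul{\sigma}$ explicitly using the formulas already worked out for standard modules under $\ul{\Xi}$, $\Psi$, $\ul{\Sigma}$, and the transpose. Since every quantity involved is $\bZ$-linear in $x$, it suffices to prove each identity when $x = \lambda$ is a single word (subject to the relevant hypothesis), and then extend by linearity.

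For the first identity, let $\lambda$ be any word and take $x = [\ul{\bE}^\lambda]$. Since $b\lambda$ has length at least $1$, $\ul{\bE}^{b\lambda}$ is pure and $\umu_0(b\lambda) = 0$. Next, $(b\lambda)^{\dag} = \lambda^{\dag} b$, and since $\Psi$ vanishes on any standard module whose constraint word ends in $b$ (Proposition~\ref{prop:psistd}), we get $\psi((b\lambda)^{\dag}) = 0$ and hence $\kappa(b\lambda) = 0$. Finally, Proposition~\ref{prop:shiftstd} gives $\ul{\Sigma}(\ul{\bE}^{b\lambda}) = \ul{\bE}^\lambda$, so $\ul{\sigma}(b\lambda) = \lambda$. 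Substituting into the formula $\ul{\rG}_y = \umu_0(y) + a\ul{\rG}_{\kappa(y)} + b\bigl(\ul{\rG}_{\ul\sigma(y)} - \umu_0(y) - a\ul{\rG}_{\kappa(y)}\bigr)$ from Proposition~\ref{prop:Gpart} immediately yields $\ul{\rG}_{b\lambda} = b\ul{\rG}_\lambda$.

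For the second identity, let $\lambda$ be either empty or starting with $b$, and set $y = a^n\lambda$. Purity of $\ul{\bE}^y$ gives $\umu_0(y) = 0$. For $\ul{\sigma}(y)$, since the first letter of $a^n\lambda$ is $a$, Proposition~\ref{prop:shiftstd} gives $\ul{\Sigma}(\ul{\bE}^{a^n\lambda}) = \ul{\bE}^{a^n\lambda}\oplus \ul{\bE}^{a^{n-1}\lambda}$, so $\ul{\sigma}(y) = a^n\lambda + a^{n-1}\lambda$. The key computation is $\kappa(y)$: we have $y^{\dag} = \lambda^{\dag}a^n$; by Proposition~\ref{prop:psistd}, $\psi(y^{\dag}) = [\bE^{\lambda^{\dag}a^{n-1}}]$; by hypothesis $\lambda^{\dag}$ is empty or ends in $b$, so $\lambda^{\dag}a^{n-1}$ is in the form required by Proposition~\ref{prop:xistd}, yielding $\ul\xi(\psi(y^{\dag})) = \sum_{i=0}^{n-1}[\ul{\bE}^{\lambda^{\dag}a^i}]$; taking transpose and using Proposition~\ref{prop:transstd} gives $\kappa(y) = (1 + a + \cdots + a^{n-1})\lambda$. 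Plugging into Proposition~\ref{prop:Gpart} produces
\begin{displaymath}
\ul{\rG}_{a^n\lambda} = a\ul{\rG}_{(1+\cdots+a^{n-1})\lambda} + b\ul{\rG}_{a^n\lambda} + b\ul{\rG}_{a^{n-1}\lambda} - ba\ul{\rG}_{(1+\cdots+a^{n-1})\lambda},
\end{displaymath}
that is, $(1-b)\ul{\rG}_{a^n\lambda} = (1-b)a\ul{\rG}_{(1+\cdots+a^{n-1})\lambda} + b\ul{\rG}_{a^{n-1}\lambda}$. Multiplying on the left by $(1-b)^{-1} = \sum_{k\ge 0} b^k \in \bZ\ldb a,b\rdb$ and noting that $b$ commutes with $(1-b)^{-1}$ yields the desired formula.

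The genuine content is concentrated in the computation of $\kappa(a^n\lambda)$: the four-step chase through transpose, $\Psi$, $\ul\Xi$, and transpose again only collapses cleanly because $\lambda^{\dag}a^{n-1}$ has exactly $n-1$ trailing $a$'s, which requires precisely the hypothesis that $\lambda$ is empty or begins with $b$. (Were $\lambda$ to begin with $a$, extra terms from $\ul{\Xi}$ would spoil the identification with $(1 + a + \cdots + a^{n-1})\lambda$.) Everything else is a routine substitution into Proposition~\ref{prop:Gpart} followed by inverting $1 - b$.
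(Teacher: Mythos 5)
Your proposal is correct and follows essentially the same route as the paper: compute $\umu_0$, $\kappa$, and $\ul{\sigma}$ on the relevant classes via the standard-module formulas for $\Psi$, $\ul{\Xi}$, $\ul{\Sigma}$, and $\dag$, substitute into Proposition~\ref{prop:Gpart}, and solve the resulting equation by inverting $1-b$. The only cosmetic difference is that the paper solves for $\ul{\rG}^b_{a^nx}$ before reassembling $\ul{\rG}_{a^nx}$, whereas you solve for $\ul{\rG}_{a^nx}$ directly; these are equivalent manipulations.
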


\begin{proof}
We have $\kappa(bx)=(\ul{\xi}(\psi(x^{\dag} b)))^{\dag}=0$ by Proposition~\ref{prop:psistd}, and so $\ul{\rG}^a_{bx}=0$ by Proposition~\ref{prop:Gpart}. As $\ul{\sigma}(bx)=x$ (Proposition~\ref{prop:shiftstd}), we find $\ul{\rG}^b_{bx}=\ul{\rG}_x$ by Proposition~\ref{prop:Gpart}. Finally, we have $\umu_0(bx)=0$ (obvious), and so we conclude $\ul{\rG}_{bx}=b \ul{\rG}^b_{bx}=b \ul{\rG}_x$.

Now suppose $x=1$ or $x \in b\bZ\{a,b\}$ and $n \ge 1$. Then
\begin{displaymath}
\kappa(a^n x)=(\ul{\xi}(\psi(x^{\dag} a^n))^{\dag}=(\ul{\xi}(x^{\dag} a^{n-1}))^{\dag}=(1+\cdots+a^{n-1}) x
\end{displaymath}
(Proposition~\ref{prop:xistd}). Thus $\ul{\rG}^a_{a^nx}=\ul{\rG}_{(1+\cdots+a^{n-1}) x}$ by Proposition~\ref{prop:Gpart}. We have $\ul{\sigma}(a^n x)=a^n x+a^{n-1} x$ (Proposition~\ref{prop:shiftstd}), and so
\begin{displaymath}
\ul{\rG}^b_{a^n x}=\ul{\rG}_{a^n x}+\ul{\rG}_{a^{n-1} x}-\umu_0(a^nx)-a \ul{\rG}_{a^n x}^a=b\ul{\rG}^b_{a^n x}+\ul{\rG}_{a^{n-1} x}.
\end{displaymath}
Thus $\ul{\rG}^b_{a^n x}=(1-b)^{-1} \ul{\rG}_{a^{n-1} x}$. As $\umu_0(a^nx)=0$, we have
\begin{displaymath}
\ul{\rG}_{a^nx}=a\ul{\rG}^a_{a^nx}+b\ul{\rG}^b_{a^nx}=a \ul{\rG}_{(1+\cdots+a^{n-1})x}+b(1-b)^{-1} \ul{\rG}_{a^{n-1}x},
\end{displaymath}
which proves the proposition.
\end{proof}

Proposition~\ref{prop:Grecur} gives recursive formulas that allow one to calculation $\ul{\rG}_x$ for any $x \in \bZ\{a,b\}$ in finitely many steps. These formulas also yield the following theorem:

\begin{theorem} \label{thm:noncommhilb}
Let $M$ be a finitely generated graded $\cI$-module. Then $\ul{\rG}_M$ is a non-commutative rational function. More precisely, it is a finite sum of terms of the form $f_0(b) a f_1(b) a \cdots a f_k(b)$ where $f_0, \ldots, f_k$ are rational functions of $b$ with denominators a power of $1-b$. The analogous statements hold in the smooth case.
\end{theorem}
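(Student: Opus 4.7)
\medskip

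\noindent\textbf{Proof proposal.} Since $\ul{\rG}$ factors through the Grothendieck group and $\ul{\rK}(\cI) \cong \bZ\{a,b\}$ (Proposition~\ref{prop:grothring}), and since $\ul{\rG}$ is $\bZ$-linear, it suffices to prove that $\ul{\rG}_x$ has the stated form for every word $x$ in the alphabet $\{a,b\}$. Let $\cR \subset \bZ\ldb a,b\rdb$ denote the set of finite $\bZ$-linear combinations of non-commutative monomials $f_0(b) \, a \, f_1(b) \, a \cdots a \, f_k(b)$ with $f_i \in \bZ[b][(1-b)^{-1}]$. Equivalently, $\cR$ is the non-commutative polynomial ring in $a$ with coefficients in $\bZ[b][(1-b)^{-1}]$. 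In particular, $\cR$ is closed under multiplication (including left multiplication by $a$, by any power of $b$, and by $(1-b)^{-1}$), which is exactly the closure property we will need.

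The plan is to prove by induction on $m$ that $\ul{\rG}_x \in \cR$ for every $x \in \bZ\{a,b\}$ each of whose monomials contains at most $m$ occurrences of $a$. For the base case $m=0$, iterating the first identity of Proposition~\ref{prop:Grecur} gives $\ul{\rG}_{b^k} = b^k$, and linearity handles arbitrary polynomials in $b$. For the inductive step, by linearity I may assume $x$ is a single monomial with exactly $m \ge 1$ occurrences of $a$. Stripping leading $b$'s, write $x = b^j a^n \mu$ with $j \ge 0$, $n \ge 1$, and $\mu$ either empty or beginning with $b$. The first identity of Proposition~\ref{prop:Grecur} gives $\ul{\rG}_x = b^j \ul{\rG}_{a^n\mu}$, and since $\mu$ is of the required form, the second identity applies:
\begin{displaymath}
\ul{\rG}_{a^n \mu} = a \, \ul{\rG}_{(1+a+\cdots+a^{n-1})\mu} + b(1-b)^{-1} \ul{\rG}_{a^{n-1}\mu}.
\end{displaymath}
Every monomial occurring in the arguments on the right-hand side has strictly fewer than $m$ occurrences of $a$, so by the inductive hypothesis both $\ul{\rG}_{(1+a+\cdots+a^{n-1})\mu}$ and $\ul{\rG}_{a^{n-1}\mu}$ lie in $\cR$. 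The closure properties of $\cR$ then give $\ul{\rG}_x \in \cR$.

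Applying this to $x = [M]$ proves the graded case. For the smooth case, Proposition~\ref{prop:Gsmooth} reduces the statement to the graded case applied to the finitely generated module $\ul{\Xi}(M)$, so $\rG_M = \ul{\rG}_{\ul{\Xi}(M)} \in \cR$ as well.

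I do not anticipate a real obstacle here: all the analytic content is packaged in Proposition~\ref{prop:Grecur}, and the present argument is simply the observation that its recursion decreases the $a$-count in a controlled way while respecting the normal form $b^j a^n \mu$. The only thing that requires care is choosing the induction statistic and the correct normal form for the argument words so that the hypotheses of the second identity of Proposition~\ref{prop:Grecur} (that the trailing factor is either $1$ or begins with $b$) are satisfied at each step.
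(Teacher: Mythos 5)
Your proposal is correct and takes essentially the same approach as the paper. The paper's proof is a one-sentence appeal to Proposition~\ref{prop:Grecur} (which the paper immediately precedes with the remark that its recurrences compute $\ul{\rG}_x$ in finitely many steps), together with Proposition~\ref{prop:Gsmooth} for the smooth case; you have simply spelled out the induction on the number of $a$'s and the choice of normal form $b^j a^n \mu$ that makes the recurrence applicable, which is exactly what the paper is implicitly relying on.
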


\begin{proof}
The statement in the graded case follows immediately from Proposition~\ref{prop:Grecur}, while the smooth case follows from the graded case and Proposition~\ref{prop:Gsmooth}.
\end{proof}

\begin{example}
Using Proposition~\ref{prop:Grecur}, we find
\begin{displaymath}
\ul{\rG}_{aba}=aba+ab^2(1-b)^{-1}+b^2(1-b)^{-1}a+b^3(1-b)^{-2}.
\end{displaymath}
The coefficient of $ab^n$ is~0 if $n=0,1$ and~1 if $n \ge 2$. We thus see that $\ul{\bE}^a$ and $\ul{\bE}^{ab}$ have multiplicity~0 in $\ul{\bE}^{aba}$, while $\ul{\bE}^{ab^n}$ has multiplicity~1 in $\ul{\bE}^{aba}$ for $n \ge 2$. (Technically, these multiplicities should be taken in the quotient category $\uRep(\cI)_{\ge 1}$.)
\end{example}

\subsection{Another non-commutative series} \label{ss:Fseries}

For $x \in \bZ\{a,b\}$, define
\begin{displaymath}
\ul{\rF}_x = \sum_{\lambda} \langle x, \lambda \rangle \lambda, \qquad
\rF_x = \sum_{\lambda} (x, \lambda) \lambda.
\end{displaymath}
Arguing as in the previous section, one can show
\begin{displaymath}
\ul{\rF}_{a^nx}=a\ul{\rF}_{(1+\cdots+a^{n-1})x}+b\ul{\rF}_{a^{n-1}x}, \qquad
\ul{\rF}_{bx} = b(1+b)^{-1} \ul{\rF}_x, \qquad
\rF_x=\ul{\rF}_{\ul{\xi}(x)},
\end{displaymath}
where in the first formula we assume $x=1$ or $x \in b\bZ\{a,b\}$ and $n \ge 1$. These formulas allow one to compute $\ul{\rF}_x$ and $F_x$ in finitely many steps, and thus the pairings $\langle x, \lambda \rangle$ and $(x, \lambda)$ as well.

\begin{example}
We have
\begin{displaymath}
\ul{\rF}_{aba}=(a+b) b(1+b)^{-1} (a+b).
\end{displaymath}
The coefficient of $ab^n$ here is~0 if $n=0,1$ and $(-1)^n$ if $n \ge 2$. It follows that
\begin{displaymath}
\sum_{i \ge 0} (-1)^i \dim \uExt^i_{\cI}(\ul{\bE}^{aba}, \ul{\bE}^{ab^n}) = \begin{cases}
0 & \text{if $n=0,1$} \\
(-1)^n & \text{if $n \ge 2$} \end{cases}
\qedhere
\end{displaymath}
\end{example}

\begin{remark}
One can also consider the following variant series:
\begin{displaymath}
\ul{\rF}'_x = \sum_{\lambda} \langle \lambda, x \rangle \lambda, \qquad
\rF'_x = \sum_{\lambda} (\lambda, x ) \lambda
\end{displaymath}
The duality functor $\cD$ induces an involution $\delta$ on $\ul{\rK}(\cI)$ that satisfies $\langle \delta(x), \delta(y) \rangle = \langle y, x \rangle$. Thus $\ul{\rF}'_x$ is be obtained from $\ul{\rF}_{\delta(x)}$ under the substitutions $a \to -b$ and $b \to -a$. The adjunction $(\Phi, F)$ discussed in \S \ref{ss:phiright} gives $\rF'_x=\ul{\rF}'_y$ where $y=\ul{\sigma}(\ul{\gamma}(x)^{\dag})^{\dag}$. Thus one can effectively compute $\ul{\rF}'_x$ and $\rF'_x$.
\end{remark}

\subsection{Effectivity} \label{ss:effective}

Let $\cA$ be an abelian category. Every element of the Grothendieck group $\rK(\cA)$ can be written in the form $[M]-[N]$ where $M$ and $N$ are objects of $\cA$. We say that an element $\rK(\cA)$ is {\bf effective} if it has the form $[M]$ for some object $M$ of $\cA$. The collection of effective elements forms a submonoid of $\rK(\cA)$. Characterizing this submonoid can be a difficult problem in general. We now solve this problem for $\rK(\cI)$ and $\ul{\rK}(\cI)$.

\begin{theorem} \label{thm:effective}
An element $x$ of $\ul{\rK}(\cI)$ is effective if and only if $\umu_{\lambda}(x) \ge 0$ for all constraint words $\lambda$. The analogous result holds in the smooth case.
\end{theorem}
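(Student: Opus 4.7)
\textit{Plan.} The necessity is immediate: by Proposition~\ref{prop:highermult}, each $\umu_\lambda(M)$ is a non-negative integer-valued multiplicity of a simple object of $\uRep(\cI)_{\ge \mathrm{rank}(\lambda)}$, and additivity on short exact sequences descends $\umu_\lambda$ to a non-negative homomorphism $\ul{\rK}(\cI) \to \bZ$, so $\umu_\lambda(x) \ge 0$ whenever $x = [M]$.

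For sufficiency, I would induct on the maximum rank $r$ occurring in the finite expansion $x = \sum_\lambda c_\lambda[\ul{\bE}^\lambda]$ (Theorem~\ref{thm:groth}). A preliminary observation is that for rank-$r$ words $\lambda$ we have $\umu_\lambda(\ul{\bE}^\nu) = \delta_{\lambda\nu}$ for $\mathrm{rank}(\nu) = r$ (Proposition~\ref{prop:levcat2}, since the $T_{\ge r}(\ul{\bE}^\nu)$ are pairwise non-isomorphic simples in $\uRep(\cI)_{\ge r}$) and $\umu_\lambda(\ul{\bE}^\nu) = 0$ for $\mathrm{rank}(\nu) < r$ (as $\ul{\bE}^\nu \in \uRep(\cI)_{\le r-1}$ vanishes in the quotient). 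Hence $\umu_\lambda(x) = c_\lambda$ at the top rank, so the hypothesis forces $c_\lambda \ge 0$ for all $\mathrm{rank}(\lambda) = r$. The base case $r = 0$ follows at once: $x = [\bigoplus_n c_{b^n} \ul{\bB}^n]$.

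The inductive step is delicate, because the naive recipe $y := x - \sum_{\mathrm{rank}(\lambda) = r} c_\lambda [\ul{\bE}^\lambda]$ can destroy positivity. For example, the effective class $[N] = a^2 - a$ of the kernel $N$ of the surjection $\ul{\bA}^2 \twoheadrightarrow \ul{\bA}^1$, $e_{i,j} \mapsto e_j$, satisfies $\umu_\mu([N]) \ge 0$ for every $\mu$, yet subtracting $[\ul{\bA}^2] = a^2$ leaves $-[\ul{\bA}^1]$ with $\umu_a = -1$. The correct approach is to realize $x$ as the class of a short exact sequence $0 \to K \to M \to Q \to 0$ in which $Q$ is a rank-$r$ module whose class gives the top-rank part of $x$, constructed as a \emph{quotient} (rather than a direct summand) of a larger principal or injective module, so that the kernel $K$ satisfies $[K] = x - [Q]$ with $\umu_\mu(K) \ge 0$ forced by additivity and the extension structure, which then allows the inductive hypothesis to apply to $[K]$.

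The main obstacle is the construction of $Q$, along with the verification that one can assemble it compatibly across all rank-$r$ words $\lambda$ with $c_\lambda > 0$. My plan is to exploit the explicit structure of the injective envelopes $\ul{\bI}^\lambda$ from Theorem~\ref{thm:inj}---they are concatenation products $\ul{\bJ}^{a_0} \odot \ul{\bA}^1 \odot \cdots \odot \ul{\bA}^1 \odot \ul{\bJ}^{a_r}$ admitting finite standard filtrations---together with the $r$-semistandard structure theorem (Theorem~\ref{thm:ss}) and the standard-module injective resolutions of Proposition~\ref{prop:stdres}, to exhibit, for each such $\lambda$, a surjection from a carefully chosen submodule of $\ul{\bI}^\lambda$ onto a rank-$r$ module whose kernel supplies precisely the required lower-rank contribution to $K$. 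The non-commutative Hilbert series $\ul{\rG}_x$ and the recurrences of Proposition~\ref{prop:Grecur} furnish the combinatorial bookkeeping needed to verify that such compatible surjections exist under the positivity hypothesis $\umu_\mu(x) \ge 0$.
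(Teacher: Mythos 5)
Your treatment of necessity and of the top-rank coefficients is correct, and your observation that naively subtracting the top-rank part can destroy $\umu$-positivity is exactly the right difficulty to identify. But the inductive step is left as an unexecuted plan, and the toolkit you propose for it does not supply the missing mechanism. What the argument actually needs is the following chain, which is the heart of the paper's proof: if $[M]-[\ul{\bE}^{\lambda}]$ is $\umu$-positive with $\lambda$ of rank $r$, then $\umu_{\lambda}(M)\ge 1$, so by Proposition~\ref{prop:highermult} there is a \emph{non-zero} map $f\colon M\to\ul{\bI}^{\lambda}$; since $T_{\ge r}(\ul{\bI}^{\lambda})$ is an essential extension of $T_{\ge r}(\ul{\bE}^{\lambda})$, the submodule $K=f(M)\cap\ul{\bE}^{\lambda}$ is non-zero; and then $[M]-[\ul{\bE}^{\lambda}]=[\ker(f)\oplus \im(f)/K]-[\ul{\bE}^{\lambda}/K]$, where the first term is effective and the second lies in $\ul{\rK}(\cI)_{\le r-1}$ because Theorem~\ref{thm:level} says any proper quotient of a rank-$r$ standard module has level $<r$. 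This is what closes the induction: the subtracted term is pushed down a level while the difference is unchanged, so the inductive hypothesis applies. None of Theorem~\ref{thm:ss}, Proposition~\ref{prop:stdres}, or the recurrences of Proposition~\ref{prop:Grecur} plays a role, and it is not clear how your proposed ``compatible surjections from submodules of $\ul{\bI}^{\lambda}$'' would be produced or why their kernels would be controlled; you have named the obstacle without removing it. Note also that the induction runs more smoothly on the statement ``if $x$ is effective, $y\in\ul{\rK}(\cI)_{\le r}$, and $x-y$ is $\umu$-positive, then $x-y$ is effective'' than on the maximal rank appearing in the expansion of $x$, since the output of one step of the argument is precisely a datum of this form one level down.

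A computational slip in your illustrative example: the map $\ul{\bA}^2\to\ul{\bA}^1$, $e_{i,j}\mapsto e_j$, is not surjective (its cokernel is $\ul{\bB}^1$), so the kernel has class $a^2-a+b$, not $a^2-a$; indeed $a^2-a$ is not even $\umu$-positive, as $\umu_{b}(a^2-a)=\dim(\ul{\bA}^2)_1-\dim(\ul{\bA}^1)_1=-1$. The corrected class $a^2-a+b$ does still illustrate your point: subtracting $a^2$ leaves $b-a$, which has $\umu_a=-1$.
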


\begin{proof}
In this proof, we say that $x \in \ul{\rK}(\cI)$ is {\bf $\umu$-positive} if $\umu_{\lambda}(x) \ge 0$ for all constraint words $\lambda$. For a finitely generated graded $\cI$-module $M$, the quantity $\umu_{\lambda}(M)$ is non-negative, and so any effective element of $\ul{\rK}(\cI)$ is $\umu$-positive. We now prove the converse. For $r \in \bZ$, consider the following statement:
\begin{itemize}
\item[$S(r)$:] Let $x \in \ul{\rK}(\cI)$ be effective and let $y \in \ul{\rK}(\cI)_{\le r}$ be such that $x-y$ is $\umu$-positive. Then $x-y$ is effective.
\end{itemize}
Clearly, it suffices to prove $S(r)$ for all $r$. We proceed by induction on $r$; note that $S(r)$ is trivially true for $r<0$.

Let $r \ge 0$ be given and suppose $S(r-1)$ holds. To prove $S(r)$, it suffices to treat the case where $y=[\ul{\bE}^{\lambda}]$ for a constraint word $\lambda$ of rank $r$, as $\ul{\rK}(\cI)_{\le r}$ is spanned by these classes and $\ul{\rK}(\cI)_{\le r-1}$. Thus suppose that $M$ is a finitely generated graded $\cI$-module such that $[M]-[\ul{\bE}^{\lambda}]$ is $\umu$-positive. We will show that it is effective.

We have
\begin{displaymath}
\umu_{\lambda}([M]-[\ul{\bE}^{\lambda}])=\umu_{\lambda}(M)-1 \ge 0,
\end{displaymath}
and so $\umu_{\lambda}(M) \ge 1$. We thus see (Proposition~\ref{prop:highermult}) that there is a non-zero map $f \colon M \to \ul{\bI}^{\lambda}$. Since $T_{\ge r}(\ul{\bI}^{\lambda})$ is an essential extension of $T_{\ge r}(\ul{\bE}^{\lambda})$, it follows that $K=f(M) \cap \ul{\bE}^{\lambda}$ is non-zero. Now, we have
\begin{displaymath}
[M]=[\ker(f)]+[\im(f)/K]+[K],
\end{displaymath}
and
\begin{displaymath}
[\ul{\bE}^{\lambda}]=[K]+[\ul{\bE}^{\lambda}/K].
\end{displaymath}
We thus see that
\begin{displaymath}
[M]-[\ul{\bE}^{\lambda}] = [\ker(f) \oplus \im(f)/K] - [\ul{\bE}^{\lambda}/K].
\end{displaymath}
This element is $\umu$-positive by assumption. Since the first term on the right is effective and the second belongs to $\ul{\rK}(\cI)_{\le r-1}$ (by Theorem~\ref{thm:level}), this element is thus effective by $S(r-1)$. This completes the proof in the graded case, and the same argument applies in the smooth case.
\end{proof}

\begin{corollary}
The effective submonoid of $\ul{\rK}(\cI)$ is saturated: that is, if $x \in \ul{\rK}(\cI)$ and $nx$ is effective for some positive integer $n$ then $x$ is effective. Similarly in the smooth case.
\end{corollary}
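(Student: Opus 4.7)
The plan is to deduce this corollary directly from Theorem~\ref{thm:effective}, which characterizes effectivity in $\ul{\rK}(\cI)$ by the non-negativity of the multiplicity invariants $\umu_{\lambda}$. The key observation is that each $\umu_{\lambda}$ descends to a group homomorphism $\ul{\rK}(\cI) \to \bZ$ (as noted after Proposition~\ref{prop:highermult}), so the set of elements $x$ satisfying $\umu_{\lambda}(x) \ge 0$ for all $\lambda$ is automatically saturated as a submonoid of $\ul{\rK}(\cI)$.

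More precisely, suppose $x \in \ul{\rK}(\cI)$ and $nx$ is effective for some positive integer $n$. First, I would apply Theorem~\ref{thm:effective} to $nx$ to conclude that $\umu_{\lambda}(nx) \ge 0$ for every constraint word $\lambda$. Next, using that $\umu_{\lambda}$ is a homomorphism, I rewrite this as $n \cdot \umu_{\lambda}(x) \ge 0$, and since $n$ is a positive integer, it follows that $\umu_{\lambda}(x) \ge 0$ for every $\lambda$. Finally, applying Theorem~\ref{thm:effective} in the other direction yields that $x$ is itself effective. The argument in the smooth case is identical, using the smooth version of Theorem~\ref{thm:effective} and the fact that each $\mu_{\lambda}$ likewise factors through the Grothendieck group.

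There is essentially no obstacle here; the work has already been done in establishing Theorem~\ref{thm:effective}, and saturation of the effective monoid is a formal consequence of the fact that effectivity is cut out by $\bZ$-valued linear functionals taking non-negative values. The only minor point worth remarking upon in the write-up is the verification that $\umu_{\lambda}$ really is additive on short exact sequences (hence descends to $\ul{\rK}(\cI)$), but this is built into the definition via Proposition~\ref{prop:highermult}, which identifies $\umu_{\lambda}$ with the Euler pairing $\langle -, [\ul{\bI}^{\lambda}] \rangle$.
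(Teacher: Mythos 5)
Your proof is correct and is essentially identical to the paper's: both deduce saturation from Theorem~\ref{thm:effective} by noting that $\umu_{\lambda}(nx)=n\,\umu_{\lambda}(x)\ge 0$ forces $\umu_{\lambda}(x)\ge 0$ for all $\lambda$. No issues.
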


\begin{proof}
Suppose $nx$ is effective. Then $\umu_{\lambda}(nx)=n \umu_{\lambda}(x) \ge 0$, and so $\umu_{\lambda}(x) \ge 0$. Thus $x$ is effective.
\end{proof}

\begin{remark}
Theorem~\ref{thm:effective} on its own is not quite an effective test for effectivity, as it requires verifying infinitely many conditions. However, combining Theorem~\ref{thm:effective} with Proposition~\ref{prop:Grecur} gives an effective test, as follows. Let $x \in \ul{\rK}(\cI) \cong \bZ\{a,b\}$, and suppose we are given $x$ in the form $\sum_{i=1}^n n_i \lambda_i$ where the $n_i$ are integers and the $\lambda_i$ are constraint words. Theorem~\ref{thm:effective} states that $x$ is effective if and only if every coefficient of $\ul{\rG}_x$ is non-negative. Using Proposition~\ref{prop:Grecur}, we can compute $\ul{\rG}_x$ in finitely many steps. In fact, it yields an expression for $\ul{\rG}_x$ as a finite sum of terms of the form $f_0(b)a\cdots a f_k(b)$, where each $f_k(b)$ is given as a polynomial of $b$ divided by some power of $1-b$. We can then test, in finitely many steps, if all coefficients in the series expansion of this expression are non-negative.
\end{remark}

\begin{remark}
The isomorphism $\phi \colon \ul{\rK}(\cI) \to \rK(\cI)$ carries effective elements to effective elements, however, it is not a bijection between the sets of effective elements. Indeed, $[\bA^1]-[\bA^0]$ is an effective element of $\rK(\cI)$ as it is the class of the kernel of the augmentation map $\epsilon \colon \bA^1 \to \bA^0$. However, $[\ul{\bA}^1]-[\ul{\bA}^0]$ is not an effective element of $\ul{\rK}(\cI)$: indeed, it has $\umu_0=-1$.
\end{remark}

\appendix
\section{Categorical background} \label{s:catbg}

\subsection{Multiplicities} \label{ss:catmult}

Let $\cA$ be an abelian category and let $L$ be a simple object of $\cA$. Let $\sL_n$ be the class of all objects $M$ of $\cA$ for which there exists a chain
\begin{displaymath}
F_1 \subset G_1 \subset \cdots \subset F_n \subset G_n \subset M
\end{displaymath}
with $G_i/F_i \cong L$ for all $i$. Clearly, we have $\sL_{n+1} \subset \sL_n$. We define the {\bf multiplicity} of $L$ in $M$, denoted $\mu_L(M)$, to be the maximum $n$ for which $M \in \sL_n$, or $\infty$ if $M$ belongs to $\sL_n$ for all $n$.

\begin{proposition}
We have the following:
\begin{enumerate}
\item $\mu_L$ is additive in short exact sequences, that is, given a short exact sequence
\begin{displaymath}
0 \to M_1 \to M_2 \to M_3 \to 0
\end{displaymath}
we have $\mu_L(M_2)=\mu_L(M_1)+\mu_L(M_3)$.
\item If $N$ is a subquotient of $M$ then $\mu_L(N) \le \mu_L(M)$.
\item Suppose $\cA$ satisfies (AB5). If $M=\bigcup_{i \in I} M_i$ (directed union) then we have $\mu_L(M)=\sup_{i \in I} \mu_L(M_i)$.
\end{enumerate}
\end{proposition}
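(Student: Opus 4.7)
The plan is to prove (a) first, deduce (b) formally from (a), and then obtain (c) by combining (b) with a finite-support argument that uses (AB5).

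The technical key for (a) is the following subobject lemma, which I would establish once and reuse throughout: if $F \subset G$ are subobjects of $M$ with $G/F \cong L$ simple, and $N$ is any subobject of $M$, then $(G \cap N)/(F \cap N) \hookrightarrow G/F = L$, and hence equals either $0$ or $L$. This is a one-line diagram chase: the map $G \cap N \to G/F$ has kernel $G \cap N \cap F = F \cap N$, and simplicity of $L$ finishes it. Granted this lemma, I would prove (a) by two inequalities. The direction $\mu_L(M_2) \ge \mu_L(M_1) + \mu_L(M_3)$ is constructive: take chains witnessing multiplicity in $M_1$ and $M_3$, pull back the chain from $M_3$ along $M_2 \twoheadrightarrow M_3$, and splice it above the chain from $M_1$ viewed inside $M_2$. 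For $\mu_L(M_2) \le \mu_L(M_1) + \mu_L(M_3)$, given a witnessing chain $F_1 \subset G_1 \subset \cdots \subset F_n \subset G_n$ in $M_2$, intersect with $M_1$ and also take images in $M_3$; the subobject lemma applied to $N = M_1$ shows that each layer $G_j/F_j$ contributes a copy of $L$ either to the intersected chain in $M_1$ or to the image chain in $M_3$ (and by a short exact sequence argument, exactly one of the two), so the multiplicities add up.

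Part (b) is then immediate: if $N = M'/M''$ with $M'' \subset M' \subset M$, apply (a) to $0 \to M'' \to M' \to N \to 0$ to get $\mu_L(N) \le \mu_L(M')$, and to $0 \to M' \to M \to M/M' \to 0$ to get $\mu_L(M') \le \mu_L(M)$.

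For (c), the inequality $\sup_i \mu_L(M_i) \le \mu_L(M)$ is immediate from (b). For the reverse, suppose $\mu_L(M) \ge n$, witnessed by a chain $F_1 \subset G_1 \subset \cdots \subset F_n \subset G_n \subset M$ with $G_j/F_j \cong L$. For each $j$ choose an element $x_j$ of $G_j$ (in the sense of generalized elements, or of an embedding into a Grothendieck category representation) whose image in $L = G_j/F_j$ is nonzero; under (AB5) the directed union $M = \bigcup M_i$ is genuine, so each $x_j$ lies in some $M_{i_j}$, and directedness of the index set lets me choose a single $i$ with $x_j \in M_i$ for all $j$. Intersecting the full chain with $M_i$ and invoking the subobject lemma layer by layer, each layer $(G_j \cap M_i)/(F_j \cap M_i) \hookrightarrow L$ is nonzero (since it contains the class of $x_j$), hence is $L$; so $\mu_L(M_i) \ge n$.

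The main obstacle I anticipate is the careful handling of the subobject lemma and the additive bookkeeping in (a): one must argue intrinsically in an abelian category without appealing to elements, and must check that the ``$L$ goes to $M_1$ or to $M_3$'' dichotomy combines chains correctly into a single chain in each of $M_1$ and $M_3$. The (AB5) step in (c) is then almost formal once the element-picking is phrased in terms of generalized elements (or one passes to a faithful embedding into $\Mod_R$ via Gabriel--Popescu, which is legitimate for Grothendieck categories).
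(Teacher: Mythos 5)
Your proposal is correct and follows essentially the same route as the paper: the same splice-and-intersect argument for (a) (the paper's $F'=F\cap M_1$, $F''=(F+M_1)/M_1$ dichotomy is exactly your short-exact-sequence dichotomy), the same formal deduction of (b), and the same (AB5) argument for (c) — the paper just phrases your "pick $x_j$ and a common index $i$" step element-freely, as the statement that the directed family $(F_k\cap M_i)\subset(G_k\cap M_i)$ has colimit quotient $L$ and is monotone in $i$, so you need not invoke Gabriel--Popescu.
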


\begin{proof}
(a) It is clear that $\mu_L(M_3) \ge \mu_L(M_1)+\mu_L(M_3)$: indeed, if $M_1 \in \sL_a$ and $M_3 \in \sL_b$ then we can splice together the given chains in $M_1$ and $M_3$ to get one in $M_2$, which shows that $M_2 \in \sL_{a+b}$. We now prove the reverse inequality. First suppose that $F \subset G \subset M$ satisfy $G/F \cong L$, and let $F'=F \cap M_1$ and $F''=(F+M_1)/M_1$, and similarly define $G'$ and $G''$. Then either $F'/G' \cong L$ or $F''/G'' \cong L$. Now suppose $M \in \sL_n$, and let $F_1 \subset G_1 \subset \cdots \subset F_n \subset G_n \subset M$ be the given chain. Define $F_i'$, $F_i''$, $G'_i$, and $G''_i$ as before. Thus, for each $i$, we have $F'_i/G'_i \cong L$ or $F''_i/G''_i \cong L$. This shows that $M_1 \in \sL_a$ and $M_3 \in \sL_b$ for some $a$ and $b$ summing to $n$, which completes the proof.

(b) This follows immediately from (a).

(c) From (b), we have $\mu_L(M) \ge \mu_L(M_i)$ for each $i$, and so $\mu_L(M) \ge \sup_{i \in I} \mu_L(M_i)$. We now prove the reverse inequality. Thus suppose that $\mu_L(M) \ge n$, and let us show that $\mu_L(M_i) \ge n$ for some $i$.

First suppose $F \subset G \subset M$ satisfy $G/F \cong L$. Let $F^{(i)} = F \cap M_i$ and $G^{(i)} = G \cap M_i$. Since (AB5) holds, we have $\bigcup_{i \in I} F^{(i)} = F$, and similarly for $G$. For each $i$, we have an injection $F^{(i)}/G^{(i)} \to F/G \cong L$, and so $F^{(i)}/G^{(i)}$ is either~0 or isomorphic to $L$, since $L$ is simple. Since (AB5) holds, the direct limit of the $F^{(i)}/G^{(i)}$ is $F/G \cong L$, and so $F^{(i)}/G^{(i)}$ cannot vanish for all $i$, that is, there exists some $i$ for which it is $L$. Note that for $i \le j$ we have an injection $F^{(i)}/G^{(i)} \to F^{(j)}/G^{(j)}$, so if the former is non-zero so is the latter.

Now, returning to the main point, let $F_1 \subset G_1 \subset \cdots \subset F_n \subset G_n \subset M$ be the given chain in $M$. Let $i_1, \ldots, i_n \in I$ be such that $F^{(i_k)}_k/G^{(i_k)}_k \cong L$ for each $k$, and pick $i \in I$ such that $i_1, \ldots, i_n \le i$. Then $F^{(i)}_k/G^{(i)}_k \cong L$ for each $k$. Thus the chian $F^{(i)}_1 \subset G^{(i)}_1 \subset \cdots \subset M_i$ shows that $M_i \in \sL_n$, which completes the proof.
\end{proof}

\begin{proposition}
Suppose $\cA$ is a locally noetherian Grothendieck abelian category, and that $\mu_L(M)=0$ for all simple objects $L$. Then $M=0$.
\end{proposition}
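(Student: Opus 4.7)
The plan is to argue by contradiction: assume $M \neq 0$ and produce a simple object $L$ with $\mu_L(M) \geq 1$, contradicting the hypothesis.

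First I would invoke the locally noetherian hypothesis: since $\cA$ is locally noetherian and $M \neq 0$, $M$ has a non-zero noetherian subobject $N$ (take any non-zero finitely generated subobject of $M$). Next I would extract a simple quotient of $N$. Since $N$ is noetherian, the ACC on subobjects implies via Zorn's lemma that $N$ has a maximal proper subobject $N'$: any chain of proper subobjects stabilizes by ACC, so the union of such a chain equals one of its terms and is therefore still a proper subobject, providing an upper bound. Then $L := N/N'$ is simple by maximality of $N'$.

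Finally, $L$ is a subquotient of $N$, hence of $M$. Concretely, the chain $N' \subset N \subset M$ exhibits $L$ as a quotient of a subobject of $M$ with $N/N' \cong L$, so by definition $M \in \sL_1$, giving $\mu_L(M) \geq 1$. (Equivalently, apply part (b) of the preceding proposition: $\mu_L(M) \geq \mu_L(L) = 1$.) This contradicts the hypothesis $\mu_L(M) = 0$, so $M = 0$.

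There is no real obstacle here; the only step requiring any care is the existence of a maximal proper subobject of a noetherian object, which follows from a standard Zorn argument using ACC. Everything else is immediate from the definitions and the basic properties of $\mu_L$ already established.
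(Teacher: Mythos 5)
Your proof is correct and follows the same route as the paper: extract a non-zero noetherian subobject, take a maximal proper subobject of it to produce a simple subquotient $L$ of $M$, and conclude $\mu_L(M)\ge 1$. The only difference is that you spell out the Zorn/ACC argument for the maximal proper subobject, which the paper takes as known.
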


\begin{proof}
Let $M$ be a non-zero object of $\cA$. Since $\cA$ is locally noetherian, $M$ has a non-zero noetherian submodule $M'$. Since $M'$ is noetherian, it has a maximal proper submodule $M''$. Thus $L=M'/M''$ is a simple subquotient of $M$, and so $\mu_L(M)>0$.
\end{proof}

%

\begin{proposition} \label{prop:multinj}
Let $\cA$ be a $\bk$-linear abelian category, with $\bk$ a field. Suppose that $L$ is a simple object of $\cA$ with $\End(L)=\bk$ and $I$ is its injective envelope. Then for any $M \in \cA$ we have $\mu_L(M)=\dim \Hom(M, I)$.
\end{proposition}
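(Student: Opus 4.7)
The plan is to define $\nu(M) := \dim_{\bk} \Hom_{\cA}(M, I)$ and prove $\mu_L(M) = \nu(M)$ by establishing both inequalities in $\bN \cup \{\infty\}$. The starting point is that $\Hom_{\cA}(-, I)$ is exact because $I$ is injective, so $\nu$ is additive on short exact sequences and monotone under subobjects and quotients. I would then use the injective-envelope hypothesis to compute $\nu$ on simples: since $I$ is the injective envelope of $L$, any nonzero subobject of $I$ meets $L$ and hence contains $L$ by simplicity, so $\mathrm{soc}(I) = L$; combined with $\End(L) = \bk$, this yields $\nu(L) = 1$ and $\nu(L') = 0$ for any other simple $L'$. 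From this one deduces the key equivalence that will be used repeatedly: $\nu(N) \geq 1$ if and only if $N$ has a subquotient isomorphic to $L$. The forward direction uses essentiality (any nonzero $\phi \colon N \to I$ has $L \subset \phi(N)$, so $F := \ker \phi$ and $G := \phi^{-1}(L)$ satisfy $G/F \cong L$), while the reverse uses injectivity of $I$ to extend a surjection $G \to L \hookrightarrow I$ to a nonzero map $N \to I$.

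With these preliminaries, I would handle $\mu_L(M) \leq \nu(M)$ first. Given a chain $F_1 \subset G_1 \subset \cdots \subset F_k \subset G_k \subset M$ realizing $\mu_L(M) \geq k$, additivity applied to $0 \to F_i \to G_i \to L \to 0$ gives $\nu(G_i) = 1 + \nu(F_i) \geq 1 + \nu(G_{i-1})$, and an easy induction produces $\nu(G_k) \geq k$; monotonicity under subobjects then yields $\nu(M) \geq \nu(G_k) \geq k$.

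For the reverse inequality $\nu(M) \leq \mu_L(M)$, I would split into two cases. If $\mu_L(M) = \infty$, the first inequality already forces $\nu(M) = \infty$. The substantive case is $k := \mu_L(M) < \infty$: here I would choose any chain $F_1 \subset G_1 \subset \cdots \subset F_k \subset G_k \subset M$ of length $k$, and argue that the ``gap'' modules $F_i/G_{i-1}$ (with $G_0 := 0$) and $M/G_k$ admit no $L$-subquotient. The hard part---really the pivot of the whole proof---is this maximality observation: any $L$-subquotient of such a gap would allow us to splice in an additional $L$-step between existing ones, producing a chain of length $k+1$ in $M$ and contradicting $k = \mu_L(M)$. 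Once this is established, the key equivalence above forces $\nu$ of each gap to vanish, and telescoping additivity collapses everything: $\nu(M) = \nu(M/G_k) + \nu(G_k) = \nu(G_k) = 1 + \nu(F_k) = 1 + \nu(G_{k-1}) = \cdots = k$, completing the argument.
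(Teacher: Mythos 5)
Your proof is correct, but it is organized differently from the paper's. The paper proves the finite case by induction on $\mu_L(M)$: it peels off a single $L$-subquotient $G/F$, observes that $\mu_L(F)$ and $\mu_L(M/G)$ are strictly smaller, and uses additivity of both $\mu_L$ and $\nu$ on the filtration $0 \subset F \subset G \subset M$ together with $\nu(L)=1$; the infinite case is then handled separately by exhibiting, for each $n$, $n$ linearly independent maps $M \to I$ (extensions of the maps $G_i \to G_i/F_i \cong L \hookrightarrow I$). You instead prove the two inequalities separately: $\nu \ge \mu_L$ unconditionally by additivity and monotonicity along a chain (which absorbs the infinite case for free), and $\nu \le \mu_L$ in the finite case by fixing a chain of maximal length $k$ and showing that the gaps $F_i/G_{i-1}$ and $M/G_k$ admit no nonzero map to $I$. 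The splicing observation you flag as the pivot --- that an $L$-subquotient of a gap could be inserted between consecutive pairs to produce a chain of length $k+1$ --- is sound, and it is precisely the ingredient the paper's induction avoids needing. Both arguments rest on the same three facts (exactness of $\Hom(-,I)$, essentiality of $L \subset I$, and $\End(L)=\bk$ giving $\dim \Hom(L,I)=1$), so the difference is one of bookkeeping rather than substance; your version has the mild advantage of treating the infinite case uniformly, while the paper's avoids having to invoke a chain of maximal length.
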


\begin{proof}
Put $\mu=\mu_L$ and let $\nu$ be defined by $\nu(M)=\dim \Hom(M, I)$. We must show $\mu=\nu$. To begin with, we show that they are equal when $\mu$ is finite, by induction on $\mu$.

First, suppose that $\mu(M)=0$ and let us show that $\nu(M)=0$. Suppose $f \colon M \to I$ is a non-zero map. Then $f(M)$ is a non-zero subobject of $I$, and thus meets $L$, since $I$ is an essential extension of $L$, and thus contains $L$, since $L$ is simple. This shows that $L$ is a subquotient of $M$, which contradicts $\mu(M)=0$. Thus there are no non-zero maps $M \to I$ and so $\nu(M)=0$.

Next, suppose that $\mu(M)=\nu(M)$ whenever $\mu(M)<n$, and let us show that the equality continues to hold when $\mu(M)=n$. Thus let $M$ be given with $\mu(M)=n$. We have already handled the $n=0$ case, so we can assume $n>0$. Thus $L$ occurs as  a subquotient of $M$, say $L=G/F$ with $F \subset G \subset M$. We have $\mu(M)=\mu(F)+\mu(G/F)+\mu(M/G)$. Since $\mu(G/F)=1$, it follows that $\mu(F)$ and $\mu(M/G)$ are both strictly less than $n$. Thus, by the inductive hypothesis, we have $\mu(F)=\nu(F)$ and $\mu(M/G)=\nu(M/G)$. Furthermore, the hypothesis $\End(L)=\bk$ ensures that $\Hom(L, I)$ is one-dimensional, and so $\nu(G/F)=\nu(L)=1$. As $\nu(M)=\nu(F)+\nu(G/F)+\nu(M/G)$, we thus find $\nu(M)=\mu(M)$.

We have thus shown that $\mu(M)=\nu(M)$ whenever $\mu(M)$ is finite. To conclude, let us show that the equality continues to hold when $\mu(M)$ is infinite. Let $n>0$ be an integer. Then, by the definition of $\mu$, we can find $F_1 \subset G_1 \subset \cdots \subset F_n \subset G_n \subset M$ with $G_i/F_i \cong L$ for each $i$. Let $f_i \colon M \to I$ be a map extending the map $G_i \to G_i/F_i \cong L \subset I$; such an extension exists since $I$ is injective. The $f_i$ are linearly independent: indeed, we cannot express $f_i$ as a linear combination of $f_{i+1}, \ldots, f_n$ since $f_i(G_i)$ is non-zero but $f_j(G_i)=0$ for $j>i$. We thus see that $\nu(M)>n$, which completes the proof.
\end{proof}

%
%

\subsection{Derived functors and limits and colimits}

\begin{proposition} \label{prop:dercolimit}
Let $F \colon \cA \to \cB$ be a left-exact functor of abelian categories. Suppose that:
\begin{enumerate}
\item $\cA$ is a Grothendieck category.
\item $F$ commutes with filtered colimits.
\item A filtered colimit of injective objects in $\cA$ is $F$-acyclic.
\end{enumerate}
Then $\rR^i F$ commutes with filtered colimits.
\end{proposition}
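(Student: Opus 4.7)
The plan is to argue by induction on $i$, with dimension shifting used to reduce higher-$i$ cases to hypothesis (b). Fix a filtered system $\{M_j\}_{j \in J}$ in $\cA$, with colimit $M = \varinjlim M_j$. The case $i=0$ is exactly hypothesis (b).

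First I would exploit that $\cA$, being a Grothendieck category, admits a functorial embedding into injectives: there exist an endofunctor $I \colon \cA \to \cA$ and a natural monomorphism $\id \to I$ with $I(X)$ injective for every $X$ (a standard consequence of the small-object argument applied to a generator of $\cA$). Applying this termwise produces a short exact sequence of filtered systems
\[
0 \to \{M_j\} \to \{I(M_j)\} \to \{Q_j\} \to 0,
\]
where $Q_j = I(M_j)/M_j$, and passing to colimits (which are exact in $\cA$) yields a short exact sequence $0 \to M \to \varinjlim I(M_j) \to \varinjlim Q_j \to 0$ in which the middle term is $F$-acyclic by hypothesis (c).

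Then I would compare long exact sequences of derived functors. For each $j$ we obtain a four-term exact sequence
\[
0 \to F(M_j) \to F(I(M_j)) \to F(Q_j) \to \rR^1 F(M_j) \to 0
\]
together with isomorphisms $\rR^i F(M_j) \cong \rR^{i-1} F(Q_j)$ for $i \ge 2$, and the analogous sequence and isomorphisms hold with $M$ in place of $M_j$ (using the $F$-acyclicity of $\varinjlim I(M_j)$). Taking the filtered colimit of the sequence for $M_j$ and invoking hypothesis (b) on the first three terms, a comparison with the corresponding sequence for $M$ gives $\varinjlim \rR^1 F(M_j) \cong \rR^1 F(M)$, settling the case $i=1$. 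For $i \ge 2$, the dimension-shifting isomorphisms reduce the statement for $\rR^i F$ on $\{M_j\}$ to the statement for $\rR^{i-1} F$ on $\{Q_j\}$, which is in hand by the inductive hypothesis.

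The main obstacle is securing the functorial injective embedding: individually chosen injective envelopes $M_j \hookrightarrow E(M_j)$ need not assemble into a filtered system, so without functoriality one cannot form the required exact sequence of systems. This is precisely where the Grothendieck hypothesis on $\cA$ enters essentially. Once the functorial embedding is available, the remainder is routine bookkeeping; a further tacit requirement — that filtered colimits in $\cB$ preserve the relevant four-term exactness — is harmless in the intended applications, where $\cB$ is itself a Grothendieck category.
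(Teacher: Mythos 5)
Your proof is correct and rests on the same key point as the paper's: a Grothendieck category admits functorial injective embeddings, so the filtered system embeds compatibly into a system of injectives and hypothesis (c) applies to the colimit. The paper simply takes a full functorial injective resolution $\{M_j \to J_j^{\bullet}\}$ and reads off all the $\rR^i F$ at once from the resulting $F$-acyclic resolution of $\varinjlim M_j$, whereas you unroll this into a one-step embedding plus dimension-shifting induction; both versions share the same tacit appeal to exactness of filtered colimits in $\cB$, which you rightly flag.
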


\begin{proof}
Let $\{M_i\}_{i \in I}$ be a filtered system in $\cA$. Let $\{M_i \to J_i^{\bullet}\}_{i \in I}$ be a filtered system of injective resolutions; such a system exists since $\cA$ has functorial injective resolutions \stacks{079H}. We thus see that $\varinjlim M_i \to \varinjlim J_i^{\bullet}$ is an $F$-acyclic resolution. The result now follows easily.
\end{proof}

\begin{proposition} \label{prop:dercont}
Let $F \colon \cA \to \cB$ be an additive functor of abelian categories. Assume
\begin{enumerate}
\item $\cA$ is a Grothendieck category.
\item $\cB$ has exact countable products.
\item $F$ commutes with countable products.
\end{enumerate}
Then $\rR F \colon \rD(\cA) \to \rD(\cB)$ commutes with derived limits.
\end{proposition}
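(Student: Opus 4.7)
The plan is to reduce the statement, at least in the case of countable inverse systems (which is what is actually used in the paper, e.g.\ in Proposition~\ref{prop:Rgamma}), to the Milnor-type description of a derived inverse limit. Given an inverse system $\{X_n\}_{n \ge 0}$ in $\rD(\cA)$, the derived limit $\rR \varprojlim X_n$ sits in a canonical distinguished triangle
\begin{displaymath}
\rR \varprojlim X_n \to \prod_n X_n \xrightarrow{\;1 - s\;} \prod_n X_n \to,
\end{displaymath}
where $s$ is induced by the transition maps. My strategy is to apply $\rR F$ to this triangle and identify the result with the analogous triangle for the system $\{\rR F(X_n)\}$ on the target side. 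For a general (non-countable) diagram, the same plan works by expressing $\rR \varprojlim$ as an equalizer of products indexed by objects and morphisms of the diagram; but the countable case is the essential one and illustrates the argument cleanly.

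The key step is to show that $\rR F$ commutes with (countable) products. Since $\cA$ is Grothendieck, every object of $\rD(\cA)$ admits a K-injective resolution. A product of K-injective complexes is K-injective (a general, purely formal fact), so for a family $\{X_n\}$ represented by K-injective complexes $\{J_n\}$, the product $\prod_n J_n$ is K-injective and represents the derived product $\prod_n X_n$ in $\rD(\cA)$. Hence $\rR F(\prod_n X_n)$ is computed by $F(\prod_n J_n)$, which equals $\prod_n F(J_n)$ by hypothesis (c). On the other side, each $\rR F(X_n)$ is computed by $F(J_n)$, and because countable products in $\cB$ are exact (hypothesis (b)), termwise products of complexes in $\Ch(\cB)$ compute derived products in $\rD(\cB)$; consequently $\prod_n \rR F(X_n)$ is computed by the very same complex $\prod_n F(J_n)$. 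This gives the required natural isomorphism $\rR F(\prod_n X_n) \cong \prod_n \rR F(X_n)$.

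Applying $\rR F$ to the Milnor triangle then produces a distinguished triangle whose outer terms, via the identification just established, are precisely $\prod_n \rR F(X_n) \rightrightarrows \prod_n \rR F(X_n)$ with the corresponding shift map; its third vertex must therefore be $\rR \varprojlim \rR F(X_n)$, yielding the desired isomorphism $\rR F(\rR \varprojlim X_n) \cong \rR \varprojlim \rR F(X_n)$. The main obstacle I anticipate is not any single deep input but rather the bookkeeping needed to verify that these identifications are compatible with the transition morphism $1-s$, so that the triangle produced by $\rR F$ is genuinely isomorphic as a triangle to the Milnor triangle on the target side. This compatibility is essentially formal, coming from naturality of products and of K-injective resolutions, but it is the only place where some care is required.
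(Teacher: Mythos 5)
Your proof is correct and is essentially the argument of the reference the paper cites for this statement (\stacks{08U1}): identify $\rR F$ on countable products via K-injective resolutions (using that products of K-injectives are K-injective, that $F$ preserves countable products, and that exact countable products in $\cB$ make termwise products compute derived products), then apply $\rR F$ to the Milnor triangle defining $\rR\varprojlim$. Note that in this setting ``derived limits'' means precisely derived limits of inverse systems indexed by $\bN$, so the countable case you treat is the entire statement and no extension to general diagrams is needed.
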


\begin{proof}
See \stacks{08U1}.
\end{proof}

\subsection{Localizing subcategories} \label{ss:loc}

Let $\cA$ be a Grothendieck abelian category and let $S$ be some collection of objects in $\cA$. Recall that a {\bf localizing subcategory} of $\cA$ is a Serre subcategory closed under arbitrary direct sums (or, equivalently, direct limits). Let $\cB$ be the smallest localizing subcategory of $\cA$ containing $S$; we call this the localizing subcategory {\bf generated} by $S$. The following proposition gives a concrete description of the finitely generated objects in this category under a noetherian hypothesis:

\begin{proposition} \label{prop:locgen}
Suppose that $\cA$ is locally noetherian. Then a finitely generated object $M$ of $\cA$ belongs to $\cB$ if and only if there is a finite length filtration $0=F_0 \subset \cdots \subset F^n=M$ and objects $E_1, \ldots, E_n$ of $S$ such that $F^i/F^{i-1}$ is isomorphic to a subquotient of $E_i$ for all $1 \le i \le n$.
\end{proposition}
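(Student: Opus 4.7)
The plan is to prove the two directions separately, with the easy one going through the Serre properties of $\cB$ and the hard one through a standard ``envelope'' argument: we will identify a full subcategory $\cB'$ of $\cA$ defined by the filtration property, show it is localizing and contains $S$, and then invoke minimality of $\cB$.

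First, the easy direction. Suppose $M$ admits a filtration $0 = F^0 \subset \cdots \subset F^n = M$ with each $F^i/F^{i-1}$ a subquotient of some $E_i \in S$. Since $E_i \in \cB$ and $\cB$ is closed under subobjects and quotients, each graded piece $F^i/F^{i-1}$ lies in $\cB$. Closure of $\cB$ under extensions, applied inductively to the short exact sequences $0 \to F^{i-1} \to F^i \to F^i/F^{i-1} \to 0$, then gives $M \in \cB$.

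For the hard direction, let $\cB' \subset \cA$ be the full subcategory whose objects $X$ have the property that \emph{every} finitely generated subobject $N \subset X$ admits a filtration of the stated form. I would check that $\cB'$ is a localizing subcategory containing $S$. It contains $S$ trivially (take the length-one filtration $0 \subset N$, since $N$ is a subobject of $E \in S$). Closure under subobjects is immediate. For closure under quotients: if $X \in \cB'$ surjects onto $Y$ and $N \subset Y$ is finitely generated, use local noetherianity to find a finitely generated $N' \subset X$ mapping onto $N$; a filtration of $N'$ pushes forward to a filtration of $N$ whose graded pieces are quotients of subquotients of the $E_i$, hence subquotients of the $E_i$. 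For closure under extensions $0 \to X_1 \to X_2 \to X_3 \to 0$: a finitely generated $N \subset X_2$ is noetherian (by local noetherianity of $\cA$), so $N \cap X_1$ is finitely generated and $N/(N \cap X_1)$ embeds into $X_3$ as a finitely generated subobject; concatenate the resulting filtrations.

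The only step requiring a moment's care is closure under filtered colimits (equivalently, arbitrary direct sums). Suppose $X = \varinjlim X_i$ with each $X_i \in \cB'$, and $N \subset X$ is finitely generated. Since $\cA$ is locally noetherian, $N$ is noetherian hence compact, so the inclusion $N \hookrightarrow X$ factors through some $X_j$; the image $N'$ of $N$ in $X_j$ is finitely generated, admits a filtration by hypothesis on $X_j$, and the induced surjection $N' \twoheadrightarrow N$ (which holds because $N \hookrightarrow X$ is mono) transports the filtration to $N$. Thus $\cB'$ is a localizing subcategory containing $S$, so $\cB \subseteq \cB'$. Applying the definition of $\cB'$ to any finitely generated $M \in \cB$, with $N = M$, gives the required filtration. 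The main obstacle is really just keeping the bookkeeping straight in the extension and filtered colimit steps; the substantive input is local noetherianity, which both makes $N \cap X_1$ finitely generated in the extension step and makes finitely generated objects compact in the colimit step.
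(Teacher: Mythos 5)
Your proposal follows essentially the same route as the paper: your $\cB'$ is exactly the paper's auxiliary category $\cC$ (objects all of whose finitely generated subobjects admit the filtration), and both arguments proceed by showing it is a localizing subcategory containing $S$ and invoking minimality, while the easy direction is just the Serre-subcategory properties of $\cB$ applied inductively to the filtration. The only cosmetic differences are that you close under filtered colimits where the paper closes under direct sums (equivalent for Serre subcategories), and your description of the colimit step is slightly garbled — what you actually get is that the image factorization $N \twoheadrightarrow N'$ is an isomorphism, not merely that $N'$ surjects onto $N$, though the surjection is all you need since you've already established closure under quotients.
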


\begin{proof}
Let $\cC_0$ be the subcategory of $\cA$ consisting of finitely generated objects $M$ admitting a filtration as in the statement of the proposition. Then $\cC_0$ is closed under subquotients. Indeed, suppose $M$ belongs to $\cC_0$ and $N$ is a subobject of $M$. Let $F^0 \subset \cdots \subset F^n$ and $E_1, \ldots, E_n$ be the given data on $M$, and let $G^{\bullet} = F^{\bullet} \cap N$. Then $G^i/G^{i-1}$ is naturally a submodule of $F^i/F^{i-1}$, and thus isomorphic to a subquotient of $E_i$. This shows that $N$ belongs to $\cC_0$. The proof for quotients is similar. It is clear that $\cC_0$ is closed under extensions. Thus $\cC_0$ is a Serre subcategory of $\cA$.

Let $\cC$ be the subcategory of $\cA$ consisting of all objects for which every finitely generated subobject belongs to $\cC_0$. We show that $\cC$ is a localizing subcategory:
\begin{itemize}
\item Suppose that $M$ belongs to $\cC$ and $N$ is a subobject of $M$. Since a finitely generated subobject of $N$ is also one of $M$, and thus belongs to $\cC_0$, we see that $N$ belongs to $\cC$.
\item Next, say that $M$ belongs to $\cC$ and $N$ is a quotient of $M$. Let $N_0$ be a finitely generated subobject of $N$, and let $\{M_i\}_{i \in I}$ be the collection of all finitely generated subobjects of $M$. Let $\ol{M}_i$ be the image of $M$ in $N$. Then the image of $M$ in $N$ is contained in $\sum_{i \in I} \ol{M}_i$, and so $\sum_{i \in I} \ol{M}_i=N$. Thus, since $\cA$ is a Grothendieck category, we have $\sum_{i \in I} (N_0 \cap \ol{M}_i)=N_0$. Since $N_0$ is finitely generated, we have $N_0=\ol{M}_i \cap N_0$ for some $i$, and so $N_0 \subset \ol{M}_i$. Since $M_i$ belongs to $\cC_0$ (since $M$ belongs to $\cC$) and $N_0$ is a subquotient of $M_i$, it too belongs to $\cC_0$ (since $\cC_0$ is a Serre subcategory). Thus $N$ belongs to $\cC$.
\item Suppose now that $M$ belongs to $\cA$ and there is a subobject $N$ of $M$ such that $N$ and $M/N$ belong to $\cC$. Let $M_0$ be a finitely generated subobject of $M$. Let $N_0=N \cap M_0$. This is finitely generated, since $M_0$ is finitely generated and $\cA$ is locally noetherian, and thus belongs to $\cC_0$, since $N$ belongs to $\cC$. Similarly, $M_0/N_0$ is finitely generated, being a quotient of $M_0$, and is a subobject of $M/N$, and thus belongs to $\cC_0$, since $M/N$ belongs to $\cC$. Thus $M_0$ is an extension of objects in $\cC_0$ and thus belongs to $\cC_0$ since $\cC_0$ is a Serre subcategory.
\item We have thus shown that $\cC$ is Serre subcategory. It remains to show that it is closed under direct sums. Thus suppose that $\{M_i\}_{i \in I}$ is a collection of objects in $\cC$, and consider $M=\bigoplus_{i \in I} M_i$. Let $N$ be a finitely generated subobject of $M$. For a finite subset $J$ of $I$, let $M_J=\bigoplus_{i \in J} M_i$, so that $M=\sum_J M_J$. Thus $N=\sum_J (M_J \cap N)$. Since $N$ is finitely generated, we have $N=M_J \cap N$ for some $J$, i.e., $N \subset M_J$. Since $M_J$ is a finite sum of objects in $\cC$, it belongs to $\cC$ (since $\cC$ is a Serre subcategory). Thus $N$ belongs to $\cC_0$. This shows that $M$ belongs to $\cC$, verifying the claim.
\end{itemize}
Since $\cC$ is a localizing subcategory and contains $S$, we have $\cB \subset \cC$ by definition of $\cB$. On the other hand, it is clear that $\cC_0 \subset \cB$, and so $\cC \subset \cB$ since $\cB$ is closed under direct limits. Thus $\cC=\cB$. If $M$ is a finitely generated object in this category then it necessarily belongs to $\cC_0$, and thus admits the requisite filtration.
\end{proof}

The following proposition gives a simple criterion for a functor to map one localizing category into another.

\begin{proposition} \label{prop:locmap}
Let $\sF \colon \cA \to \cA'$ be a cocontinuous functor of Grothendieck abelian categories, with $\cA$ locally noetherian. Let $S$ be a collection of objects in $\cA$ generating a localizing subcategory $\cB$, and let $\cB'$ be a localizing subcategory of $\cA'$. Suppose $\sF$ is exact on $\cB$ and $\sF(S) \subset \cB'$. Then $\sF(\cB) \subset \cB'$.
\end{proposition}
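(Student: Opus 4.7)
The plan is to reduce to the case of finitely generated objects via cocontinuity, and then to handle that case using the concrete description of finitely generated objects in $\cB$ supplied by Proposition~\ref{prop:locgen}.

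First, I would dispense with the reduction. Let $M \in \cB$ be arbitrary. Write $M = \varinjlim M_i$ as the filtered colimit of its finitely generated subobjects. Since $\cB$ is closed under subobjects (being a Serre subcategory), each $M_i$ belongs to $\cB$. Since $\sF$ is cocontinuous, $\sF(M) = \varinjlim \sF(M_i)$, and since $\cB'$ is closed under arbitrary direct limits (being localizing), it suffices to show $\sF(M_i) \in \cB'$ for each $i$. Thus we may assume $M$ is finitely generated.

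Next, I would invoke Proposition~\ref{prop:locgen} to obtain a finite filtration $0 = F^0 \subset F^1 \subset \cdots \subset F^n = M$ together with objects $E_1, \ldots, E_n \in S$ such that each $F^i/F^{i-1}$ is isomorphic to a subquotient of $E_i$. I would then proceed by induction on $n$ to show $\sF(M) \in \cB'$. The key observation is that all objects appearing here ($F^i$, $F^i/F^{i-1}$, and the ambient $E_i$'s together with the two subobjects of each $E_i$ realizing the subquotient) belong to $\cB$, since $\cB$ is a Serre subcategory and contains $S$. Hence the exactness hypothesis on $\sF|_\cB$ applies freely. For the base case, write $F^1/F^0 = F^1$ as $A/B$ with $B \subset A \subset E_1$; by exactness of $\sF$ on $\cB$, $\sF(F^1) \cong \sF(A)/\sF(B)$ is a subquotient of $\sF(E_1) \in \cB'$, and since $\cB'$ is Serre, $\sF(F^1) \in \cB'$. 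For the inductive step, apply $\sF$ to the short exact sequence $0 \to F^{n-1} \to F^n \to F^n/F^{n-1} \to 0$ in $\cB$; exactness of $\sF$ on $\cB$ gives a short exact sequence in $\cA'$ whose outer terms lie in $\cB'$ (the left by induction, the right by the same subquotient argument), so $\sF(M) = \sF(F^n) \in \cB'$ since $\cB'$ is closed under extensions.

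I do not anticipate a genuine obstacle here: the whole argument is essentially bookkeeping, stitching together the Serre/localizing closure properties of $\cB'$ with the two hypotheses on $\sF$ (cocontinuity and exactness on $\cB$). The only point requiring a moment's care is the verification that exactness of $\sF$ on $\cB$ suffices to extract the subquotient $\sF(A)/\sF(B)$ of $\sF(E_i)$ — which is fine because $A$, $B$, and $E_i$ all live in $\cB$ — and to propagate the filtration, which again uses only short exact sequences internal to $\cB$.
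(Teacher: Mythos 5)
Your proposal is correct and follows essentially the same route as the paper: reduce to finitely generated objects via cocontinuity and closure of $\cB'$ under direct limits, apply Proposition~\ref{prop:locgen} to get the filtration, and use exactness of $\sF$ on $\cB$ to see that $\sF(M)$ is a successive extension of subquotients of the $\sF(E_i)$. The only difference is cosmetic (you perform the colimit reduction first, the paper does it last).
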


\begin{proof}
Let $M$ be a finitely generated object of $\cB$. By Proposition~\ref{prop:locgen}, we have a filtration $0=F^0 \subset \cdots \subset F^n=M$ and objects $E_1, \ldots, E_n$ of $S$ such that $F^i/F^{i-1}$ is a subquotient of $E_i$. By hypothesis, $\sF(E_i)$ belongs to $\cB'$. Since $\sF$ is exact on $\cB$, we see that $\sF(F^i/F^{i-1})$ is a subquotient of $\sF(E_i)$, and thus belongs to $\cB'$. The exactness of $\sF$ also shows that $\sF(M)$ is a succsesive extension of the objects $\sF(F^i/F^{i-1})$, and therefore belongs to $\cB$.

Now suppose that $M$ is an arbitrary object of $\cB$. Then $M=\varinjlim_{i \in I} M_i$ for some direct set $I$, where the $M_i$ are finitely generated subobjects of $M$. Since $\sF$ is cocontinuous, we have $\sF(M) = \varinjlim_{i \in I} \sF(M_i)$. By the previous paragraph, $\sF(M_i)$ belongs to $\cB'$. Since $\cB'$ is closed under direct limits, we see that $\sF(M)$ also belongs to $\cB'$.
\end{proof}

\subsection{Defining functors on projectives}

\begin{proposition} \label{prop:funonproj}
Let $\cA$ and $\cT$ be abelian categories. Suppose that $\cA$ has enough projectives, and let $\cP$ be the full subcategory spanned by the projective objects. Then the restriction functor
\begin{displaymath}
\Phi \colon \{ \text{right exact functors $\cA \to \cT$} \} \to \{ \text{additive functors $\cP \to \cT$} \}
\end{displaymath}
is an equivalence. Moreover, if $\cA$ and $\cT$ are cocomplete and direct sums are exact then $\Phi$ induces an equivalence
\begin{displaymath}
\{ \text{cocontinuous functors $\cA \to \cT$} \} \to \{ \text{functors $\cP \to \cT$ commuting with all direct sums} \}.
\end{displaymath}
\end{proposition}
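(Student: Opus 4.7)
The plan is to construct an explicit quasi-inverse $\Psi$ by the standard projective-presentation device. Given an additive functor $G \colon \cP \to \cT$, for each $M \in \cA$ choose a projective presentation $P_1 \xrightarrow{d_M} P_0 \to M \to 0$, and define $\Psi(G)(M) = \coker(G(d_M))$. The well-definedness on objects will follow from the comparison lemma for projective presentations: any two presentations of $M$ admit a chain map between them, unique up to homotopy, and since $G$ is additive it carries homotopies to homotopies, so applying $G$ and taking cokernels yields a canonical isomorphism (independent of all choices). For a morphism $f \colon M \to N$, one lifts to a map of presentations using projectivity of $P_0$ and $P_1$; two such lifts differ by a chain homotopy, and applying $G$ then passing to cokernels kills the homotopy, so the induced map $\Psi(G)(f)$ is well-defined and functoriality is immediate.

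Next I would verify that $\Psi(G)$ is right exact. Given $M' \to M \to M'' \to 0$, one uses the horseshoe construction to build compatible projective presentations fitting into a short exact sequence of two-term complexes; applying $G$ and using that direct sums are preserved by $G$ (an additive functor always preserves finite direct sums), the snake lemma in $\cT$ yields exactness of $\Psi(G)(M') \to \Psi(G)(M) \to \Psi(G)(M'') \to 0$. For $\Phi \circ \Psi \cong \id$, note that if $P \in \cP$ then the presentation $0 \to P \xrightarrow{\id} P \to 0$ (with $P_1 = 0$) gives $\Psi(G)(P) = G(P)$ canonically. For $\Psi \circ \Phi \cong \id$, if $F \colon \cA \to \cT$ is already right exact, applying $F$ to a presentation $P_1 \to P_0 \to M \to 0$ yields $F(P_1) \to F(P_0) \to F(M) \to 0$, exact, so $F(M) = \coker(F(d_M)) = \Psi(\Phi(F))(M)$; naturality in $M$ follows from the construction of $\Psi$ on morphisms.

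For the cocontinuous refinement, assuming $\cA$ and $\cT$ are cocomplete with exact direct sums, I would show that if $G$ commutes with arbitrary direct sums of projectives then $\Psi(G)$ commutes with arbitrary direct sums. Given a family $\{M_\alpha\}$ with presentations $P_1^\alpha \to P_0^\alpha \to M_\alpha \to 0$, take the direct sum $\bigoplus P_1^\alpha \to \bigoplus P_0^\alpha \to \bigoplus M_\alpha \to 0$, which is exact by (AB4); since a direct sum of projectives is projective (as $\Hom(\bigoplus P_i^\alpha, -) = \prod \Hom(P_i^\alpha, -)$, exact), this is again a projective presentation, and applying $G$ commutes with the sum by hypothesis, so $\Psi(G)(\bigoplus M_\alpha) = \bigoplus \Psi(G)(M_\alpha)$. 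The conversely direction is immediate. The main obstacle is not conceptual but organizational: carefully verifying that the cokernel construction is functorial and natural in all the choices (presentations, lifts) requires the double-complex/homotopy bookkeeping of the horseshoe lemma, which is routine but must be done cleanly so that exactness of $\Psi(G)$ and the natural isomorphism $\Psi \circ \Phi \cong \id$ assemble into genuine natural transformations rather than merely pointwise isomorphisms.
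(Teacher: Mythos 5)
Your proposal is correct and follows essentially the same route as the paper, which gives only a brief sketch: define $\Psi(G)(M)$ as the cokernel of $G$ applied to a projective presentation, verify via the comparison and horseshoe lemmas that this is a well-defined right exact functor, and observe that $\Phi$ and $\Psi$ are mutually quasi-inverse. Your write-up fills in precisely the routine verifications the paper elides (uniqueness up to homotopy, right exactness, and the direct-sum bookkeeping in the cocontinuous refinement), and there are no gaps.
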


\begin{proof}
Suppose that $G \colon \cP \to \cT$ is an additive functor. Let $M \in \cA$, and choose a presentation $P_1 \to P_0 \to M \to 0$ with $P_1$ and $P_0$ projective. Define $F(M)$ to be the cokernel of the map $G(P_1) \to G(P_0)$. One easily sees that this yields a well-defined right exact functor $F \colon \cA \to \cT$. Define $\Psi(G)=F$. One easily sees that $\Psi$ defines a quasi-inverse functor to $\Phi$. This yields the first equivalence. For the second, simply note that if $F$ and $G$ correspond under the first equivalence then $F$ is cocontinuous if and only if $G$ commutes with all direct sums.
\end{proof}

\begin{proposition} \label{prop:funonproj2}
Let $\cA$ and $\cT$ be Grothendieck abelian categories. Let $\cQ$ be a full subcategory of $\cA$ whose objects are finitely generated projectives. Suppose that every object of $\cA$ is a quotient of a sum of objects of $\cQ$. Then the restriction functor
\begin{displaymath}
\{ \text{cocontinuous functors $\cA \to \cT$} \} \to \{ \text{additive functors $\cQ \to \cT$} \}
\end{displaymath}
is an equivalence.
\end{proposition}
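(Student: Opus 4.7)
The plan is to reduce to Proposition~\ref{prop:funonproj} and handle the passage from the full subcategory $\cP$ of all projectives of $\cA$ down to $\cQ$. First I would observe that $\cA$ has enough projectives: by hypothesis, every $M \in \cA$ admits a surjection from some $\bigoplus_{i \in I} Q_i$ with $Q_i \in \cQ$, and such a direct sum is projective since each $Q_i$ is and direct sums in a Grothendieck category are exact. Proposition~\ref{prop:funonproj} then identifies cocontinuous functors $\cA \to \cT$ with additive functors $\cP \to \cT$ commuting with all direct sums. It therefore suffices to show that further restriction along $\cQ \hookrightarrow \cP$ induces an equivalence from this latter category to the category of all additive functors $\cQ \to \cT$.

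Next I would describe the structure of $\cP$ relative to $\cQ$. Every projective $P$ is a retract of a direct sum $\bigoplus_{i \in I} Q_i$ with $Q_i \in \cQ$, since the surjection guaranteed by hypothesis splits by projectivity of $P$. I would also record the key technical fact that any morphism $Q \to \bigoplus_{i \in I} Q_i$ with $Q \in \cQ$ factors through a finite subsum $\bigoplus_{i \in F} Q_i$: indeed, $Q$ is finitely generated, so its image is finitely generated, and $\bigoplus_{i \in I} Q_i$ is the directed union of its finite subsums, so a finitely generated subobject of the union lies in one term.

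To construct the quasi-inverse, given an additive $G \colon \cQ \to \cT$, I would extend $G$ in stages. First extend to the additive closure of $\cQ$ under finite direct sums in the forced way. Then extend to arbitrary direct sums by $\widetilde{G}(\bigoplus_i Q_i) := \bigoplus_i G(Q_i)$; for a morphism $f \colon \bigoplus_j Q'_j \to \bigoplus_i Q_i$, define $\widetilde{G}(f)$ by using the factoring fact to reduce each component $Q'_j \to \bigoplus_i Q_i$ to a morphism in the finite additive closure, applying $G$ there, and assembling the results via the universal property of the coproduct in $\cT$. Finally, for a general projective $P$, write $P$ as the image of an idempotent $e$ on some $\bigoplus_i Q_i$ (via a splitting of the surjection guaranteed by the hypothesis), and set $\widetilde{G}(P) := \mathrm{im}(\widetilde{G}(e))$, using that idempotents split in the abelian category $\cT$.

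The main obstacle will be verifying that this construction is well-defined up to canonical isomorphism --- independent of the presentation of $P$ as a retract and of the idempotent $e$. I would handle this by observing that any two such presentations are related by morphisms in $\cP$ through which one can transport, via $\widetilde{G}$, to produce a canonical isomorphism between the two candidate values of $\widetilde{G}(P)$; the coherence ultimately comes from the functoriality of the image construction on splittings. Once well-definedness is in hand, functoriality, additivity, and compatibility with direct sums follow from the construction, and restriction to $\cQ$ visibly recovers $G$. Fully faithfulness of the restriction functor is verified similarly: a natural transformation of sum-preserving additive functors on $\cP$ is determined on sums of $\cQ$-objects by its values on $\cQ$ together with sum-compatibility, and then on retracts by naturality with respect to the defining idempotents.
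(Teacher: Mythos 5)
Your proof is correct and follows essentially the same route as the paper: reduce via Proposition~\ref{prop:funonproj} to extending additive functors from $\cQ$ to sum-preserving functors on $\cP$, using finite generation to factor maps out of $\cQ$-objects through finite subsums, and idempotent splitting in $\cT$ to handle retracts. The paper packages your staged extension and its well-definedness concerns more cleanly by forming the category of formal direct sums of $\cQ$-objects and its Karoubian envelope, and showing this is equivalent to $\cP$, but the underlying argument is the same.
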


\begin{proof}
Let $\cP \subset \cA$ be the full subcategory spanned by the projective objects. Appealing to the previous proposition, it suffices to show that the restriction functor
\begin{displaymath}
\{ \text{functors $\cP \to \cT$ commuting with all direct sums} \} \to \{ \text{additive functors $\cQ \to \cT$} \}
\end{displaymath}
is an equivalence.

Let $\cQ'$ be the following category. An object is a family $\{Q_i\}_{i \in I}$ with $Q_i \in \cQ$. A morphism $\{Q_i\}_{i \in I} \to \{Q'_j\}_{j \in J}$ consists of giving, for each $i$, a morphism $Q_i \to \bigoplus_{j\in J_i} Q'_j$, where $J_i$ is a finite subset of $J$. Let $\cQ''$ be the Karoubian envelope of $\cQ'$. There is a natural functor $\cQ' \to \cP$ taking $\{Q_i\}_{i \in I}$ to $\bigoplus_{i \in I} Q_i$, which extends to a functor $\cQ'' \to \cP$ since $\cP$ is Karoubian. This functor is an equivalence: the key point is that if $Q \in \cQ$ then any morphism $Q \to \bigoplus_{i \in I} M_i$ with $M_i \in \cA$ factors through some finite direct sum, since $Q$ is finitely generated.

Now, any functor $\cQ \to \cT$ induces a functor $\cQ'' \to \cT$, since $\cQ''$ is obtained naturally from $\cQ$ and $\cT$ has all direct sums, and thus a functor $\cP \to \cT$. One verifies that this construction provides the requisite quasi-inverse to the restriction functor.
\end{proof}

\subsection{An acyclicity criterion}

\begin{proposition} \label{prop:acyclic}
Let $\cA$ and $\cB$ be Grothendieck abelian categories, and let $\sF \colon \Ch(\cA) \to \Ch(\cB)$ be a functor. Suppose that:
\begin{enumerate}
\item $\sF$ is exact and cocontinuous.
\item $\sF$ takes cones to cones, that is, if $f \colon M \to N$ is a morphism in $\Ch(\cA)$ then we have a canonical isomorphism $\sF(\cone(f)) \cong \cone(\sF(f))$ in $\Ch(\cB)$.
\item $\sF$ ``only adds terms to the left,'' that is, $\sF(\Ch^{\le n}(\cA)) \subset \Ch^{\le n}(\cB)$, where $\Ch^{\le n}$ is the category of chain complexes supported in degrees $\le n$.
\end{enumerate}
Then $\sF$ takes acyclic complexes to acyclic complexes and quasi-isomorphisms to quasi-isomorphisms.
\end{proposition}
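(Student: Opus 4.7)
The first observation is that by (b), $\sF(\cone(f))=\cone(\sF(f))$ for every morphism $f$, so $\sF$ will preserve quasi-isomorphisms if and only if it preserves acyclic complexes (recall $f$ is a quasi-isomorphism iff $\cone(f)$ is acyclic). It therefore suffices to show that $\sF$ carries acyclic complexes to acyclic complexes, and my plan is to do this in three stages: bounded acyclic, then bounded-above acyclic, then general.

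For the \emph{bounded} case I would induct on the width $w$ of the support $[a,b]$. The base $w\le 1$ is a complex of the shape $A\xrightarrow{\cong} A$ in two consecutive degrees, which is isomorphic to $\cone(\id_X)$ for a suitable one-term complex $X$; then (b) gives that $\sF$ applied to it is $\cone(\id_{\sF(X)})$, and the cone of any identity is contractible hence acyclic. For the inductive step, consider the subcomplex $M'\subset M$ concentrated in degrees $[a,a+1]$ with $M^a$ in degree $a$ and $Z^{a+1}=\ker(d\colon M^{a+1}\to M^{a+2})$ in degree $a+1$: since $M^{a-1}=0$ and $M$ is acyclic, $Z^a=0$, so $M'$ is a width-one acyclic complex, while the quotient $M/M'$ is acyclic of width $w-1$ supported in $[a+1,b]$. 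Applying the exact functor $\sF$ to $0\to M'\to M\to M/M'\to 0$ and using the long exact sequence closes the induction.

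For the \emph{bounded-above} case I would write $M=\varinjlim_m \sigma^{\ge -m}M$ as the directed union of brutal truncations. These are bounded but typically not acyclic---their cohomology is concentrated in degree $-m$ and equals $Z^{-m}$. The key trick is the short exact sequence $0\to T_m\to\sigma^{\ge -m}M\to Q_m\to 0$, where $T_m$ is $Z^{-m}$ placed in degree $-m$ and $Q_m$ is the quotient. A direct check shows that $Q_m$ is \emph{bounded acyclic} (its degree-$(-m)$ piece is $M^{-m}/Z^{-m}$, which injects into $M^{-m+1}$). By the bounded case, $\sF(Q_m)$ is acyclic, and by (c) we have $\sF(T_m)\in\Ch^{\le -m}(\cB)$; the long exact sequence then forces $H^k(\sF(\sigma^{\ge -m}M))=0$ for every $k>-m$. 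Since $\sF$ is cocontinuous and filtered colimits are exact in $\Ch(\cB)$, $H^k(\sF(M))=\varinjlim_m H^k(\sF(\sigma^{\ge -m}M))$, a colimit that is eventually zero for each fixed $k$; hence $\sF(M)$ is acyclic.

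The \emph{general} case would then follow by writing $M=\varinjlim_n \tau^{\le n}M$ for the canonical truncations, noting that for acyclic $M$ each $\tau^{\le n}M$ is a bounded-above acyclic subcomplex, hence $\sF(\tau^{\le n}M)$ is acyclic by the previous stage, and finishing via cocontinuity of $\sF$ plus exactness of filtered colimits. The main obstacle will be the bounded-above step: one cannot naively express a bounded-above acyclic complex as a filtered colimit of bounded acyclic \emph{subcomplexes}, so the argument instead controls the cohomology of $\sF(\sigma^{\ge -m}M)$ via the auxiliary sequence involving $T_m$ and $Q_m$, using condition (c) decisively to make the error term disappear in the colimit.
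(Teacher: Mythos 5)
Your proof is correct and follows essentially the same strategy as the paper's: reduce to acyclicity via (b), then handle bounded, bounded-above, and general complexes in turn using exactness, condition (c), and cocontinuity. The only differences are cosmetic choices of truncation — you peel off a width-one acyclic piece from the bottom and use brutal truncations $\sigma^{\ge -m}$ with the auxiliary sequence $0 \to T_m \to \sigma^{\ge -m}M \to Q_m \to 0$, whereas the paper uses canonical truncations $t_{\le n}$ throughout (which makes its bounded-above step a direct argument with no colimit) — but both arguments are valid.
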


\begin{proof}
In light of condition (b), the two conclusions of the proposition are equivalent. We prove the statement about acyclic complexes. Thus let $M$ be an ayclic complex in $\Ch(\cA)$. To show that $\sF(M)$ is acyclic, we proceed in four steps:

\textit{\textbf{Step 1:} $M$ is a 2-term complex.} In this case, $M=\cone(f)$ for some isomorphism $f$ of 1-term complexes. Thus $\sF(M) \cong \cone(\sF(f))$ by condition~(b), and therefore acyclic, since $\sF(f)$ is an isomorphism.

\textit{\textbf{Step 2:} $M$ is a bounded complex.} For a complex $N$, let $t_{\le n}(N)$ denote its canonical truncation: the terms are given by
\begin{displaymath}
t_{\le n}(N)^k = \begin{cases}
N^k & \text{if $k<n$} \\
\ker(d \colon N^n \to N^{n+1}) & \text{if $k=n$} \\
0 & \text{if $k>n$} \end{cases}
\end{displaymath}
Then $t_{\le n}(N)$ is a subcomplex of $N$, and is acyclic if $N$ is. Suppose now that $M^k=0$ for $k>n$. Consider the exact sequence of complexes
\begin{displaymath}
0 \to t_{\le n-1}(M) \to M \to M' \to 0.
\end{displaymath}
Then $M'$ is a 2-term acyclic complex, while $t_{\le n-1}(M)$ is an acyclic complex that is shorter than $M$. By Step~1, $\sF(M')$ is acyclic, while by induction on the length of the complex, $\sF(t_{\le n-1}(M))$ is acyclic. By condition~(a), we see that
\begin{displaymath}
0 \to \sF(t_{\le n-1}(M)) \to \sF(M) \to \sF(M') \to 0
\end{displaymath}
is exact, and so $\sF(M)$ is acyclic.

\textit{\textbf{Step 3:} $M$ is bounded above.} Let $n$ be given, and let us show that $\rH^n(\sF(M))=0$. Consider the exact sequence
\begin{displaymath}
0 \to t_{\le n-1}(M) \to M \to M' \to 0.
\end{displaymath}
All the complexes above are acyclic, and $M'$ is bounded. By Step~2, $\sF(M')$ is acyclic. We thus see that the natural map
\begin{displaymath}
\rH^n(\sF(t_{\le n-1}(M))) \to \rH^n(\sF(M))
\end{displaymath}
is an isomorphism. But $t_{\le n-1}(M)^k=0$ for $k \ge n$, so $\sF(t_{\le n-1}(M))^k=0$ for $k \ge n$ as well by condition~(c). Thus the above homology groups vanish.

\textit{\textbf{Step 4:} the general case.} The complex $M$ is the direct limit of the acyclic subcomplexes $t_{\le n}(M)$, each of which is bounded above. By condition~(a), we thus see that $\sF(M)$ is the direct limit of the complexes $\sF(t_{\le n}(M))$, each of which is acyclic by Step~3. Thus $\sF(M)$ is acyclic.
\end{proof}

\begin{proposition} \label{prop:qi}
Let $\cA$ and $\cB$ be Grothendieck abelian categories, let $\sF,\sG \colon \Ch(\cA) \to \Ch(\cB)$ be functors, and let $\phi \colon \sF \to \sG$ be a natural transformation. Suppose that:
\begin{enumerate}
\item $\sF$ and $\sG$ satisfy the three conditions of Proposition~\ref{prop:acyclic}.
\item $\cA$ has enough projectives.
\item If $P \in \cA$ is projective then $\phi_P \colon \sF(P) \to \sG(P)$ is a quasi-isomorphism.
\end{enumerate}
Then $\phi_M \colon \sF(M) \to \sG(M)$ is a quasi-isomorphism for all $M \in \Ch(\cA)$.
\end{proposition}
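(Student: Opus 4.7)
The strategy is to reduce through a cascade of special cases: general $M$, to bounded above $M$, to bounded above complexes of projectives, to bounded complexes of projectives, and finally to a single projective, where hypothesis~(c) applies directly. Throughout I would exploit three facts: Proposition~\ref{prop:acyclic} (so both $\sF$ and $\sG$ take quasi-isomorphisms to quasi-isomorphisms); that $\sF$ and $\sG$ commute with shifts (since $M[1] \cong \cone(M \to 0)$ and both functors take cones to cones); and that filtered colimits in $\cB$ are exact, hence commute with cohomology and preserve acyclicity.

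First I would handle the reduction to bounded above $M$. Write $M = \varinjlim_n t_{\le n}(M)$ using the canonical truncations employed in Step~4 of the proof of Proposition~\ref{prop:acyclic}. Because $\sF$ and $\sG$ are cocontinuous and $\phi$ is natural, $\cone(\phi_M) \cong \varinjlim_n \cone(\phi_{t_{\le n}(M)})$; a filtered colimit of acyclic complexes is acyclic, so it suffices to treat each bounded above $t_{\le n}(M)$. Next, given a bounded above $M$, choose a quasi-isomorphism $P \to M$ with $P$ a bounded above complex of projectives, which exists since $\cA$ has enough projectives. Proposition~\ref{prop:acyclic} implies that both $\sF(P) \to \sF(M)$ and $\sG(P) \to \sG(M)$ are quasi-isomorphisms, so the naturality square for $\phi$ shows, by two-out-of-three, that $\phi_M$ is a quasi-isomorphism provided $\phi_P$ is. This reduces us to bounded above complexes of projectives.

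For a bounded complex of projectives $Q$ concentrated in degrees $[a,b]$, I would induct on the length $b - a$. The base case $a = b$ gives $Q \cong P[-a]$ for a single projective $P$, and $\phi_Q$ is a quasi-isomorphism by hypothesis~(c) combined with shift-compatibility. For the inductive step, use the stupid truncation to form a short exact sequence of complexes
\[
0 \to \sigma^{\ge b}(Q) \to Q \to \sigma^{<b}(Q) \to 0,
\]
whose outer terms are strictly shorter bounded complexes of projectives. Exactness of $\sF$ and $\sG$ produces short exact sequences of complexes in $\Ch(\cB)$, hence long exact sequences in cohomology. Naturality of $\phi$ gives a morphism between these long exact sequences; the five lemma, together with the inductive hypothesis on the outer terms, forces $\phi_Q$ to be a quasi-isomorphism. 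To pass from bounded to bounded above complexes of projectives, note that a bounded above $P$ is the filtered colimit of its bounded stupid truncations $\sigma^{\ge -n}(P) \subset P$, and apply the same filtered-colimit argument as in the first reduction.

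None of the individual steps is genuinely hard: once Proposition~\ref{prop:acyclic} is available, this is a purely formal upgrade from the single-functor setting to the natural-transformation setting. The only subtlety to keep straight is distinguishing between the canonical truncation (used to produce cocontinuous filtrations in the first and last reductions) and the stupid truncation (used in the bounded case to obtain an honest short exact sequence of complexes), and verifying in each case that the construction commutes with $\sF$, $\sG$, and $\cone$.
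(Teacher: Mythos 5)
Your proposal is correct, and its overall reduction scheme matches the paper's: pass from unbounded complexes to bounded-above ones via canonical truncations and a filtered colimit, from bounded-above complexes to bounded-above complexes of projectives via a projective resolution and two-out-of-three (using that $\sF$ and $\sG$ preserve quasi-isomorphisms by Proposition~\ref{prop:acyclic}), and handle bounded complexes of projectives by induction on length using stupid truncations and the five lemma. The one place your route is genuinely different is the passage from bounded to bounded-above complexes of projectives. You realize a bounded-above $P$ as the filtered colimit $\varinjlim_n \sigma^{\ge -n}(P)$ of its bounded stupid truncations, each again a bounded complex of projectives, and reuse the filtered-colimit argument already needed for the first reduction. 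The paper instead argues one cohomological degree at a time: it decomposes $M$ by a stupid truncation into a bounded piece $K$ and a piece $N$ concentrated in low degrees, and then uses condition~(c) of Proposition~\ref{prop:acyclic} (that $\sF$, $\sG$ only add terms to the left) to show the cohomology of $\sF(N)$, $\sG(N)$ near degree $i$ vanishes, so that $\rH^i(\phi_M)$ can be read off $\rH^i(\phi_K)$. Your version is cleaner, unifies two of the reductions under the same colimit lemma, and avoids the bookkeeping the paper's degree-by-degree argument requires. You are also more explicit than the paper about the base case being a shifted projective: the compatibility of $\sF$, $\sG$ with shifts does need to be justified, and your derivation from $M[1] \cong \cone(M \to 0)$ together with $\sF(0) = 0$ is the right one. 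One small slip in your closing remarks: your last reduction uses the stupid truncations $\sigma^{\ge -n}$, not canonical truncations, though the argument you actually describe in the body is the correct one.
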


\begin{proof}
Suppose now that $M$ is a bounded complex of projective modules. We show that $\sF(M) \to \sG(M)$ is a quasi-isomorphism by induction on the length of the complex. Assuming $M$ is non-zero, let $n$ be maximal such that $M^n$ is non-zero, and let $N$ be the ``stupid truncation'' of $M$ to degrees $<n$; that is, $N^k=M^k$ for $k<n$ and $N^k=0$ for $k \ge n$. We then have an exact sequence of complexes
\begin{displaymath}
0 \to N \to M \to K \to 0,
\end{displaymath}
where $K=M^n[-n]$ is concentrated in degree $n$. This gives rise to a diagram
\begin{displaymath}
\xymatrix{
0 \ar[r] & \sF(N) \ar[r] \ar[d] & \sF(M) \ar[r] \ar[d] & \sF(K) \ar[r] \ar[d] & 0 \\
0 \ar[r] & \sG(N) \ar[r] & \sG(M) \ar[r] & \sG(K) \ar[r] & 0 }
\end{displaymath}
The rows are exact since $\sF$ and $\sG$ are exact. The right vertical map is a quasi-isomorphism by assumption, while the left one is by the inductive hypothesis. Thus the middle one is too.

Now suppose that $M$ is a bounded above complex of projective modules. We show that $\rH^i(\sF(M)) \to \rH^i(\sG(M))$ is an isomorphism for all $i$. Thus let $i$ be given. Let $N$ be the stupid truncation of $M$ to degrees $\le i-1$, and let $K$ be the quotient $M/N$. We again get a diagram like the above. Since $K$ is bounded, we know that $\rH^i(\sF(K)) \to \rH^i(\sG(K))$ is an isomorphism. Since $N$ is supported in degrees $\le i-1$, so is the complex $\sF(N)$, and so $\rH^i(\sF(K))=\rH^{i+1}(\sF(K))=0$; similarly for the $\sG$ versions. It follows that $\rH^i(\sF(M)) \to \rH^i(\sG(M))$ is an isomorphism.

Suppose now that $M$ is an arbitrary bounded above complex. Let $P \to M$ be a quasi-isomorphism, with $P$ a bounded above complex of projectives. By Proposition~\ref{prop:acyclic}, $\sF$ and $\sG$ takes quasi-isomorphisms to quasi-isomorphisms. Thus $\sF(P) \to \sF(M)$ is a quasi-isomorphism, and similarly for $\sG$. We have shown that $\sF(P) \to \sG(P)$ is a quasi-isomorphis. Considering the obvious commutative square, it follows that $\sF(M) \to \sG(M)$ is a quasi-isomorphism.

Finally, suppose that $M$ is an arbitrary complex. Then $M$ is the direct limit of bounded above subcomplexes. Since $\sF$ and $\sG$ commute with direct limits, the result follows.
\end{proof}

\subsection{An equivalence criterion}

For an abelian category $\cA$, let $\Inj(\cA)$, resp.\ $\IndInj(\cA)$, denote the full subcategory spanned by the injective, resp.\ indecomposable injective, objects of $\cA$.

\begin{proposition} \label{prop:indinjequiv}
Let $\sF \colon \cA \to \cB$ be a functor of locally noetherian Grothendieck abelian categories. Suppose that:
\begin{enumerate}
\item $\sF$ is left-exact and commutes with filtered colimits.
\item $\sF$ takes finitely generated objects to finitely generated objects.
\item $\sF$ induces an equivalence $\IndInj(\cA) \to \IndInj(\cB)$.
\end{enumerate}
Then $\sF$ is an equivalence.
\end{proposition}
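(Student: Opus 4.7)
The strategy is to use injective copresentations to reduce fully faithfulness and essential surjectivity to questions about injective objects, and then use the decomposition of every injective in a locally noetherian Grothendieck category into indecomposable injectives to apply (c) directly. A preliminary observation is that $\sF$ commutes with arbitrary direct sums: it is additive, so it commutes with finite direct sums, and an arbitrary coproduct is a filtered colimit of its finite subsums, which $\sF$ commutes with by (a). Since every injective in $\cA$ or $\cB$ decomposes as a direct sum of indecomposable injectives, combining this with (c) shows that $\sF$ takes injectives to injectives and that $\sF \colon \Inj(\cA) \to \Inj(\cB)$ is essentially surjective.

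For fully faithfulness I would bootstrap through five cases. Case~1 is where the source is an injective $I^0 = \bigoplus_\gamma K_\gamma$ and the target is an indecomposable injective $I_\alpha$: both $\Hom_{\cA}(I^0, I_\alpha)$ and $\Hom_{\cB}(\sF I^0, \sF I_\alpha)$ are products of hom-groups indexed by $\gamma$ (coproducts in the first argument become products under $\Hom$), and the terms match pointwise by (c). Case~2 is where the source is noetherian and the target is an indecomposable injective: choose an injective copresentation $0 \to M \to I^0 \to I^1$, which by left-exactness of $\sF$ maps to an exact sequence $0 \to \sF M \to \sF I^0 \to \sF I^1$; applying the exact functor $\Hom(-,I_\alpha)$ and its counterpart in $\cB$ yields right-exact sequences whose middle and leftmost terms are identified by Case~1, so the rightmost terms are identified too. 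Case~3 allows an arbitrary injective $N = \bigoplus I_\alpha$ in the target: noetherianity of $M$ and of $\sF M$ (by (b) together with local noetherianity of $\cB$) lets one distribute $\Hom$ across the direct sum on both sides, reducing to Case~2. Case~4 keeps $M$ noetherian but allows arbitrary $N$: choose an injective copresentation $0 \to N \to J^0 \to J^1$ in $\cA$ and use left-exactness of both $\sF$ and $\Hom(M,-)$ to reduce to Case~3. Finally, Case~5 allows arbitrary $M$: write $M = \varinjlim M_i$ as a filtered colimit of its noetherian subobjects and use that $\sF$ preserves filtered colimits to deduce $\Hom_{\cA}(M,N) = \varprojlim \Hom_{\cA}(M_i,N) \cong \varprojlim \Hom_{\cB}(\sF M_i, \sF N) = \Hom_{\cB}(\sF M, \sF N)$.

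For essential surjectivity, let $N \in \cB$ and take an injective copresentation $0 \to N \to J^0 \to J^1$ in $\cB$; by essential surjectivity on $\Inj$ choose $I^0, I^1 \in \Inj(\cA)$ with $\sF(I^j) \cong J^j$, and by fully faithfulness (now established) lift the map $J^0 \to J^1$ to a map $\varphi \colon I^0 \to I^1$. Then $M := \ker(\varphi)$ satisfies $\sF(M) \cong N$ by left-exactness of $\sF$. The main obstacle is Case~1 above: indecomposable injectives are typically not noetherian, so $\Hom$ out of them does not commute with direct sums. This forces us to work with homs \emph{into} an indecomposable injective---where coproducts in the first argument become products that are automatically matched termwise by (c)---rather than out of one, and dictates the otherwise somewhat convoluted ordering of the cases above.
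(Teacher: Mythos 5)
Your proof is correct and follows essentially the same d\'evissage scheme as the paper's: a preliminary observation that $\sF$ preserves arbitrary direct sums (and hence injectives), a five-step bootstrap for full faithfulness using direct sum decompositions of injectives, injective copresentations, and a filtered-colimit step, and then essential surjectivity by building the preimage as a kernel of a map of injectives. The only difference is a harmless reordering: you keep the source noetherian through the injective-copresentation-of-$N$ step and put the filtered-colimit step last, whereas the paper passes to arbitrary $M$ earlier (at Step~4, before copresenting $N$); both orderings close the induction correctly.
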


\begin{proof}
We first show that $\sF$ is fully faithful. To this end, we say that a pair $(M,N)$ of objects of $\cA$ is {\bf good} if the  natural map
\begin{displaymath}
\Hom_{\cA}(M, N) \to \Hom_{\cB}(\sF(M), \sF(N))
\end{displaymath}
is an isomorphism. From (c), we see that $(I, J)$ is good for all $I,J \in \IndInj(\cA)$. We show that all pairs of objects are good in several steps.

\textit{\textbf{Step 1.}}
Suppose that $I \in \Inj(\cA)$ and $J \in \IndInj(\cA)$. Write $I=\bigoplus_{\alpha \in \cU} I_{\alpha}$ where $\cU$ is an index set and each $I_{\alpha}$ is an indecomposable injective; this is possible because $\cA$ is locally noetherian. We have a commutative square
\begin{displaymath}
\xymatrix{
\Hom_{\cA}(I, J) \ar[r] \ar@{=}[d] &
\Hom_{\cB}(\sF(I), \sF(J)) \ar@{=}[d] \\
\prod_{\alpha \in \cU} \Hom_{\cA}(I_{\alpha}, J) \ar[r] &
\prod_{\alpha \in \cU} \Hom_{\cB}(\sF(I_{\alpha}), \sF(J)) }
\end{displaymath}
To get the right vertical equiality, we used that $\sF(I)=\bigoplus_{\alpha \in \cU} \sF(I_{\alpha})$, which follows from (a). Since $(I_{\alpha}, J)$ is good for all $\alpha$, the bottom map is an isomorphism. Thus the top map is an isomorphism as well. We conclude that $(I, J)$ is good for all $I \in \Inj(\cA)$ and $J \in \IndInj(\cA)$.

\textit{\textbf{Step 2.}}
Now suppose that $J \in \IndInj(\cA)$ and $M \in \cA$ is arbitrary. Choose an exact sequence
\begin{displaymath}
0 \to M \to I_0 \to I_1
\end{displaymath}
with $I_0, I_1 \in \Inj(\cA)$. This is possible because $\cA$ has enough injectives. We then get a diagram
\begin{displaymath}
\xymatrix{
\Hom_{\cA}(I_1, J) \ar[r] \ar[d] & \Hom_{\cA}(I_0, J) \ar[r] \ar[d] & \Hom_{\cA}(M, J) \ar[r] \ar[d] & 0 \\
\Hom_{\cB}(\sF(I_1), \sF(J)) \ar[r] & \Hom_{\cB}(\sF(I_0), \sF(J)) \ar[r] & \Hom_{\cB}(\sF(M), \sF(J)) \ar[r] & 0 }
\end{displaymath}
The rows are exact: this follows from the fact that $\sF$ is left-exact and that $J$ and $\sF(J)$ are injective objects. Since $(I_0, J)$ and $(I_1, J)$ are good (Step~1), the two left vertical maps are isomorphisms, and so the right vertical map is as well. We conclude that $(M, J)$ is good for all $M \in \cA$ and all $J \in \IndInj(\cA)$.

\textit{\textbf{Step 3.}}
Now suppose that $J \in \Inj(\cA)$ and $M \in \cA^{\fgen}$. Write $J=\bigoplus_{\alpha \in \cU} J_{\alpha}$ with $J_{\alpha}$ indecomposable. We have a commutative square
\begin{displaymath}
\xymatrix{
\Hom_{\cA}(M, J) \ar[r] \ar@{=}[d] &
\Hom_{\cB}(\sF(M), \sF(J)) \ar@{=}[d] \\
\bigoplus_{\alpha \in \cU} \Hom_{\cA}(M, J_{\alpha}) \ar[r] &
\bigoplus_{\alpha \in \cU} \Hom_{\cB}(\sF(M), \sF(J_{\alpha})) }
\end{displaymath}
The left vertical equality follows from $M$ being finitely generated (and thus noetherian). The right vertical equality follows from $\sF(M)$ being finitely generated, which comes from (b), and the identification $\sF(J)=\bigoplus_{\alpha \in \cU} \sF(J_{\alpha})$, which comes from (a). Since $(M, J_{\alpha})$ is good (Step~2), the bottom map is an isomorphism. Thus the top one is as well. We conclude that $(M, J)$ is good for all $J \in \Inj(\cA)$ and all $M \in \cA^{\fgen}$.

\textit{\textbf{Step 4.}}
Let $J \in \Inj(\cA)$ and $M \in \cA$. Write $M=\varinjlim_{\alpha \in \cU} M_{\alpha}$ where $\cU$ is a directed set and each $M_{\alpha}$ is a finitely generated subobject of $M$. This is possible since $\cA$ is locally noetherian. We have a commutative square
\begin{displaymath}
\xymatrix{
\Hom_{\cA}(M, J) \ar[r] \ar@{=}[d] &
\Hom_{\cB}(\sF(M), \sF(J)) \ar@{=}[d] \\
\varprojlim \Hom_{\cA}(M_{\alpha}, J) \ar[r] &
\varprojlim \Hom_{\cB}(\sF(M_{\alpha}), \sF(J)) }
\end{displaymath}
Here we are simply using the universal property of direct limits, and the fact that $\sF(M)=\varinjlim \sF(M_{\alpha})$, which comes from (a). Since each $(M_{\alpha}, J)$ is good (Step~3), the lower map is an isomorphism. Thus the upper map is an isomorphism. We conclude that $(M, J)$ is good for all $M \in \cA$ and all $J \in \Inj(\cA)$.

\textit{\textbf{Step 5.}}
Finally, let $M,N \in \cA$ be arbitrary. Pick an exact sequence
\begin{displaymath}
0 \to N \to J_0 \to J_1
\end{displaymath}
with $J_0, J_1 \in \Inj(\cA)$. We obtain a commutative diagram
\begin{displaymath}
\xymatrix{
0 \ar[r] & \Hom_{\cA}(M, N) \ar[r] \ar[d] & \Hom_{\cA}(M, J_0) \ar[r] \ar[d] & \Hom_{\cA}(M, J_1) \ar[d] \\
0 \ar[r] & \Hom_{\cB}(\sF(M), \sF(N)) \ar[r] & \Hom_{\cB}(\sF(M), \sF(J_0)) \ar[r] & \Hom_{\cB}(\sF(M), \sF(J_1)) }
\end{displaymath}
The rows are exact: this follows from the left exactness of $\Hom$ and $\sF$. Since $(M, J_0)$ and $(M, J_1)$ are good (Step~4), the right two vertical maps are isomorphisms. Thus the left vertical map is an isomorphism. We conclude that $(M, N)$ is good for all $M,N \in \cA$.

\textit{\textbf{Completion of proof.}}
We have shown that $\sF$ is fully faithful. To complete the proof, we must show that $\sF$ is essentially surjective. We first claim that given $J \in \Inj(\cB)$ there exists $I \in \Inj(\cA)$ with $\sF(I) \cong J$. Indeed, write $J=\bigoplus_{\alpha \in \cU} J_{\alpha}$ with $J_{\alpha} \in \IndInj(\cB)$. By (c), for each $\alpha$ we can find some $I_{\alpha} \in \IndInj(\cA)$ and an isomorphism $J_{\alpha} \cong \sF(I_{\alpha})$. Let $I=\bigoplus_{\alpha \in \cU} I_{\alpha}$, which is an injective object of $\cA$. Then $\sF(I)=\bigoplus_{\alpha \in \cU} \sF(I_{\alpha}) \cong J$, where in the first identification we have used (a). This establishes the claim.

Now let $N \in \cB$ be an arbitrary object. Choose an exact sequence
\begin{displaymath}
0 \to N \to J_0 \stackrel{g}{\to} J_1
\end{displaymath}
with $J_0, J_1 \in \Inj(\cB)$. By the previous paragraph, we can find $I_0, I_1 \in \Inj(\cA)$ with $\sF(I_0) \cong J_0$ and $\sF(I_1) \cong J_1$. Since $\sF$ is fully faithful, the morphism $g \in \Hom_{\cB}(J_0, J_1)$ comes from a morphism $f \in \Hom_{\cA}(I_0, I_1)$. Let $M=\ker(f)$. Then
\begin{displaymath}
\sF(M) \cong \sF(\ker(f)) \cong \ker(\sF(f)) \cong \ker(g) \cong N,
\end{displaymath}
where in the second identification we used the left-exactness of $\sF$. Thus $\sF$ is essentially surjective, which completes the proof.
\end{proof}

\subsection{Exact sequences of functors}

\begin{proposition} \label{prop:funseq}
Let $\cA$ and $\cB$ be Grothendieck abelian categories. Let
\begin{displaymath}
0 \to F_1 \to F_2 \to F_3 \to 0
\end{displaymath}
be an exact sequence in the functor category $\Fun(\cA, \cB)$.
\begin{enumerate}
\item If two of the functors are exact then so is the third.
\item If two of the functors commute with filtered colimits then so does the third.
\end{enumerate}
\end{proposition}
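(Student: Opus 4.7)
The proposal is to reduce both parts to standard diagram chases, exploiting the fact that both exactness and colimits in $\Fun(\cA, \cB)$ are tested pointwise. The pointwise exactness of $0 \to F_1 \to F_2 \to F_3 \to 0$ means that for every $M \in \cA$ we have a short exact sequence $0 \to F_1(M) \to F_2(M) \to F_3(M) \to 0$ in $\cB$, and this is the only input needed. Part (a) will follow from the $3 \times 3$ (nine) lemma, and part (b) from the five lemma combined with (AB5) in $\cB$.

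For part (a), fix a short exact sequence $0 \to M' \to M \to M'' \to 0$ in $\cA$. Applying the three functors to it, and combining with the pointwise short exact sequences above, produces a commutative $3 \times 3$ diagram in $\cB$ whose three columns $0 \to F_1(M_j) \to F_2(M_j) \to F_3(M_j) \to 0$ (for $M_j \in \{M', M, M''\}$) are short exact. The three rows $F_i(M') \to F_i(M) \to F_i(M'')$ are automatically complexes since the composite $M' \to M \to M''$ in $\cA$ is zero, and $F_i$ is exact precisely when its row is short exact. The $3\times 3$ lemma in the abelian category $\cB$ (in the symmetric form: given a commutative diagram with short exact columns and complex rows, if any two rows are short exact then the third is short exact) yields the conclusion, since the choice of input sequence was arbitrary.

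For part (b), let $\{M_\alpha\}_{\alpha \in \cU}$ be a filtered system in $\cA$ with colimit $M$, and let $\phi_i \colon \varinjlim_\alpha F_i(M_\alpha) \to F_i(M)$ be the canonical comparison map; by definition $F_i$ commutes with filtered colimits iff $\phi_i$ is an isomorphism for every such system. By hypothesis each $0 \to F_1(M_\alpha) \to F_2(M_\alpha) \to F_3(M_\alpha) \to 0$ is short exact, and since $\cB$ is Grothendieck, (AB5) guarantees that $\varinjlim_\alpha$ preserves short exactness. We therefore obtain a commutative ladder of short exact sequences with vertical maps $\phi_1, \phi_2, \phi_3$, and the five lemma shows that if any two of the $\phi_i$ are isomorphisms then the third is too.

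There is no real obstacle here: both parts are direct applications of standard homological algebra (the $3\times 3$ lemma and the five lemma) in the ambient abelian or Grothendieck abelian category, and no structure beyond what is already in the hypotheses is needed.
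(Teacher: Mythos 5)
Your proof is correct and matches the paper's approach exactly: part (a) is the $3\times 3$ lemma applied to the diagram obtained from the pointwise short exact sequences, and part (b) uses (AB5) to produce a ladder of short exact sequences comparing $\varinjlim F_i(M_\alpha)$ with $F_i(\varinjlim M_\alpha)$, then applies the five lemma (equivalently, the observation that if two of three vertical maps between short exact sequences are isomorphisms so is the third). The paper states this more tersely but the argument is the same.
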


\begin{proof}
(a) This is a simple consequence of the 9-lemma (also called the $3 \times 3$ lemma).

(b) Suppose that $\{M_i\}_{i \in \cU}$ is a filtered system. Consider the diagram
\begin{displaymath}
\xymatrix{
0 \ar[r] & \varinjlim F_1(M_i) \ar[r] \ar[d] & \varinjlim F_2(M_i) \ar[r] \ar[d] & \varinjlim F_3(M_i) \ar[r] \ar[d] & 0 \\
0 \ar[r] & F_1(\varinjlim M_i) \ar[r] & F_2(\varinjlim M_i) \ar[r] & F_3(\varinjlim M_i) \ar[r] & 0 }
\end{displaymath}
The vertical maps are the canonical maps. The rows are exact since $\cA$ and $\cB$ are Grothendieck categories. Thus if two of the vertical maps are isomorphisms then so is the third.
\end{proof}

\subsection{Injectives}

The following is a version of Baer's criterion:

\begin{proposition} \label{prop:baer}
Let $\cA$ be a Grothendieck abelian category. Suppose that every object is the union of its finitely generated subobjects; for example, $\cA$ could be locally noetherian. Suppose that $I$ is an object of $\cA$ satisfying the following condition: given any diagram
\begin{displaymath}
\xymatrix{
& I \\
M \ar[r]^i \ar[ru]^f & N \ar@{..>}[u]_g }
\end{displaymath}
in which $M$ and $N$ are finitely generated and $i$ is injective, one can find a morphism $g$ such that the diagram commutes. Then $I$ is injective.
\end{proposition}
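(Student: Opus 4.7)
The proof is a standard Baer-style application of Zorn's lemma, with the main technical care required in handling the fact that intersections of subobjects need not be finitely generated outside the locally noetherian case.

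First, fix an injection $i \colon M' \hookrightarrow N'$ and a morphism $f \colon M' \to I$; we need to extend $f$ to $N'$. Consider the poset $\mathcal{E}$ whose elements are pairs $(N_0, g_0)$ where $i(M') \subseteq N_0 \subseteq N'$ and $g_0 \colon N_0 \to I$ satisfies $g_0 \circ i = f$, ordered by $(N_0, g_0) \leq (N_1, g_1)$ iff $N_0 \subseteq N_1$ and $g_1|_{N_0} = g_0$. This is non-empty, and any chain has an upper bound obtained by taking the (directed) colimit $\bigcup N_0$ inside $N'$ together with the morphism induced from the compatible system of $g_0$'s (here we use that $\cA$ is Grothendieck, so these colimits exist and behave correctly). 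Zorn's lemma then produces a maximal element $(N_0, g_0)$.

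The main step is to show $N_0 = N'$. Suppose not. By the hypothesis that every object of $\cA$ is the union of its finitely generated subobjects, applied to $N'$, we can find a finitely generated subobject $P \subseteq N'$ with $P \not\subseteq N_0$. Let $Q = N_0 \cap P$, a subobject of $P$. The plan is to extend $g_0|_Q \colon Q \to I$ to a morphism $h \colon P \to I$ using the Baer-type hypothesis on $I$; granted this, $g_0$ and $h$ agree on $Q$, and since $N_0 + P$ is the pushout of the span $N_0 \leftarrow Q \rightarrow P$, the universal property produces a morphism $g' \colon N_0 + P \to I$ restricting to $g_0$ on $N_0$ and $h$ on $P$. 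Because $P \not\subseteq N_0$, the subobject $N_0 + P$ strictly contains $N_0$, so $(N_0 + P, g')$ is a strictly larger element of $\mathcal{E}$, contradicting maximality and forcing $N_0 = N'$.

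The main obstacle is producing the extension $h \colon P \to I$ of $g_0|_Q$. In the locally noetherian case — which covers the application of this proposition in the paper, namely to $\Rep(\cI)^{\lf}$ — the subobject $Q$ of the finitely generated $P$ is automatically finitely generated, so one can invoke the hypothesis on $I$ directly to the finitely generated inclusion $Q \hookrightarrow P$ and obtain $h$. In the general case where $Q$ may fail to be finitely generated, I would handle this by an inner Zorn's lemma argument applied to the poset of pairs $(P_0, h_0)$ with $Q \subseteq P_0 \subseteq P$ and $h_0 \colon P_0 \to I$ extending $g_0|_Q$: taking a maximal such $(P_0, h_0)$, if $P_0 \neq P$, one chooses a finitely generated $R \subseteq P$ with $R \not\subseteq P_0$ and iteratively extends $h_0|_{R \cap P_0}$ across $R$ using the hypothesis on the finitely generated pair $(R \cap P_0) \hookrightarrow R$, reducing to a further sub-problem within the finitely generated object $R$; because $P$ itself is finitely generated, this descent can be controlled. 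The cleanest presentation, however, simply invokes the locally noetherian hypothesis once to guarantee $Q$ is finitely generated, which suffices for how this Proposition is used in the paper.
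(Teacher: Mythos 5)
Your argument follows the standard Baer-criterion template (Zorn's lemma on partial extensions, plus a pushout to extend further), which is consistent with the paper's indication that the proof is ``similar to the usual proof of Baer's criterion.'' In the locally noetherian case your argument is complete and correct: since $P$ is finitely generated and hence noetherian, the subobject $Q = N_0 \cap P$ is finitely generated, so the hypothesis on $I$ applies directly to $Q \hookrightarrow P$, and the pushout of $N_0 \leftarrow Q \rightarrow P$ is indeed $N_0 + P$ (the kernel of $(N_0 \oplus P)/\Delta(Q) \to N'$ vanishes because it is $\Delta(N_0 \cap P)/\Delta(Q) = 0$). You also correctly observe that the paper only invokes this proposition in locally noetherian settings.

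However, the proposition as stated assumes only that every object is the union of its finitely generated subobjects, and your sketch for that general case does not close the gap you yourself identified. The inner Zorn argument runs into precisely the same obstacle: after choosing a finitely generated $R \subseteq P$ with $R \not\subseteq P_0$, the intersection $R \cap P_0$ again need not be finitely generated, so the hypothesis on $I$ still cannot be invoked on the pair $(R \cap P_0) \hookrightarrow R$. Worse, since $P$ is itself finitely generated one may simply take $R = P$, and then the inner sub-problem is literally the one it was supposed to reduce. So the claim that ``because $P$ itself is finitely generated, this descent can be controlled'' is not justified as written; subobjects of a finitely generated object can fail to be finitely generated outside the locally noetherian case, and that failure is the entire difficulty. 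If the proposition is to hold in the stated generality, a genuinely different idea is required (the obstruction is not a presentational one); otherwise the hypothesis should simply be strengthened to locally noetherian, which — as you note — is all the paper uses.
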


\begin{proof}
The proof is similar to the usual proof of Baer's criterion; see \stacks{079G} for a similar result.
\end{proof}

\end{document}